\numberwithin{equation}{section}
\newenvironment{enumeratea}
{\begin{enumerate}[\upshape (a)]}
{\end{enumerate}}
\newenvironment{enumeratei}
{\begin{enumerate}[\upshape (i)]}
{\end{enumerate}}
\newtheorem{thm}[equation]{Theorem}
\newtheorem{lemma}[equation]{Lemma}
\newtheorem{prop}[equation]{Proposition}
\newtheorem{cor}[equation]{Corollary}
\newtheorem{defin}[equation]{Definition}
\theoremstyle{definition}
\newtheorem{ex}[equation]{Exercise}
\newtheorem{examp}[equation]{Example}
\newtheorem{rmk}[equation]{Remark}
\newcommand{\gr}{\emph}
\newcommand{\C}{\mathcal{C}}
\renewcommand{\P}{\mathbb{P}}
\newcommand{\N}{\mathbb{N}}
\newcommand{\G}{\mathcal{G}}
\newcommand{\E}{\mathcal{E}}
\newcommand{\A}{\mathbb{A}}
\renewcommand{\L}{\mathcal{L}}
\newcommand{\Q}{\mathbb{Q}}
\newcommand{\art}{(\Art_k)}
\newcommand{\artl}{(\Art_{\Lambda})}
\newcommand{\set}{(\Set)}
\newcommand{\schs}{(\Sch_S)}
\newcommand{\fvect}{(\Fvect_k)}
\newcommand{\vect}{(\Vect_k)}
\newcommand{\fmod}{(\Fmod_A)}
\renewcommand{\mod}{(\Mod_A)}
\newcommand{\lie}{[-,-]}
\newcommand{\compl}{(\Comp_\Lambda)}
\newcommand{\eps}{\epsilon}
\newcommand{\dual}{k[\eps]}
\renewcommand{\phi}{\varphi}
\newcommand{\m}{\mathfrak{m}}
\renewcommand{\O}{\mathcal{O}}
\newcommand{\wh}{\widehat}
\newcommand{\F}{\mathcal{F}}
\newcommand{\fib}{\F\to\artl^\op}
\newcommand{\diff}{\Omega}
\newcommand{\op}{\mathrm{op}}
\newcommand{\Homsh}{\mathcal{H}om}
\newcommand{\catdef}{\mathcal{D}e\hspace{-.9pt}f}
\newcommand{\defgl}{\mathcal{D}e\hspace{-.9pt}fgl}
\newcommand{\hilb}{\mathcal{H}ilb}
\newcommand{\qcoh}{\mathcal{QC}oh}
\newcommand{\coh}{\mathcal{C}oh}
\newcommand{\Lif}{\mathcal{L}if}
\newcommand{\nin}{{n \in \N}}
\newcommand{\tor}{\mathop{\rm Tor}\nolimits}
\DeclareMathOperator{\id}{id}
\DeclareMathOperator{\Vect}{Vect}
\DeclareMathOperator{\Mod}{Mod}
\DeclareMathOperator{\Modfl}{Mod^{fl}}
\DeclareMathOperator{\Algfl}{Alg^{fl}}
\DeclareMathOperator{\Hom}{Hom}
\DeclareMathOperator{\spec}{Spec}
\DeclareMathOperator{\invlim}{\underleftarrow{\lim}}
\DeclareMathOperator{\Ext}{Ext}
\DeclareMathOperator{\Tor}{Tor}
\DeclareMathOperator{\Def}{Def}
\DeclareMathOperator{\Art}{Art}
\DeclareMathOperator{\Set}{Set}
\DeclareMathOperator{\Sch}{Sch}
\DeclareMathOperator{\Fvect}{FVect}
\DeclareMathOperator{\coker}{coker}
\DeclareMathOperator{\aut}{Aut}
\DeclareMathOperator{\Inf}{Inf}
\DeclareMathOperator{\lif}{Lif}
\DeclareMathOperator{\der}{Der}
\DeclareMathOperator{\Fmod}{FMod}
\DeclareMathOperator{\car}{char}
\DeclareMathOperator{\Comp}{Comp}
\DeclareMathOperator{\pic}{Pic}
\DeclareMathOperator{\het}{ht}
\newcommand\arr{\ifinner\to\else\longrightarrow\fi}
\newcommand\arrto{\ifinner\mapsto\else\longmapsto\fi}
\newcommand\larr{\longrightarrow}
\renewcommand{\H}{\mathrm{H}}
\newcommand\eqdef{\overset{\mathrm{\scriptscriptstyle def}} =}
\newcommand{\sym}{\operatorname{Sym}}
\newcommand\generate[1]{\langle #1 \rangle}
\DeclareMathOperator{\Hilb}{Hilb}
\newcommand{\dr}[1]{(\mspace{-3mu}(#1)\mspace{-3mu})}
\newcommand{\ds}[1]{[\mspace{-2mu}[#1]\mspace{-2mu}]}
\newcommand\dash{\nobreakdash-\hspace{0pt}}
\begin{document}

\title[Deformation theory from the point of view of fibered categories]{Deformation theory\\from the point of view of fibered categories}

\author{Mattia Talpo}
\author{Angelo Vistoli}

\address{Scuola Normale Superiore\\Piazza dei Cavalieri 7\\56126 Pisa\\Italy
}
\email[Vistoli]{angelo.vistoli@sns.it}

\email[Talpo]{m.talpo@sns.it}

\subjclass[2000]{Primary 14D15; Secondary 14B10}
\keywords{Deformation theory}

\begin{abstract}
We give an exposition of the formal aspects of deformation theory in the language of fibered categories, instead of the more traditional one of functors. The main concepts are that of tangent space to a deformation problem, obstruction theory, versal and universal formal deformations. We include proofs of two key results: a version of Schlessinger's Theorem in this context, and the Ran--Kawamata vanishing theorem for obstructions. We accompany this with a detailed analysis of three important cases: smooth varieties, local complete intersection subschemes and coherent sheaves.
\end{abstract}

\maketitle
\thispagestyle{empty}

\tableofcontents

\section{Introduction}

The purpose of this article is to give an exposition of the formal aspects of deformation theory, from the point of view of fibered categories.

Deformation theory is the infinitesimal part of the study of families of algebraic or geometric objects. If we have some family of, say, algebraic varieties $X \to S$ (i.e. a proper and flat finitely presented morphism) and $s_{0}$ is a point of $S$, denote by $S_{n}$ the scheme $\spec (\O_{S,s_{0}}/\m_{s_{0}}^{n+1})$ over $S$. Information about pullbacks $X_{n}$ of $X$ to $S_{n}$ for all $n$ can tell us a lot about the family $X$ (for example, it can be shown that the subschemes $X_{n}$, with the natural embeddings $X_{n} \subseteq X_{n+1}$, determine the geometric fibers in some Zariski neighborhood of $s_{0}$ in S).

More generally, we can state the basic question of deformation theory as follows. Suppose that we are given a local artinian ring $A$, and a geometric or algebraic object $\xi_{0}$ of some kind over its residue field $k$ (for example, a scheme, a scheme with a sheaf, an algebra, \dots), what are the possible liftings (or extensions) of $\xi_{0}$ to $A$? By a \emph{lifting} we mean an object of the same kind $\xi$ defined over $A$, flat over $A$, 	together with an isomorphism of the restriction of $\xi$ to $\spec k \subseteq \spec A$ with $\xi_{0}$. If $\xi$ and $\eta$ are liftings of $\xi_{0}$, an \emph{isomorphism of liftings} will be an isomorphism $\xi \simeq \eta$ inducing the identity on $\xi_{0}$.

An archetypical theorem in deformation theory takes the following form. There are three vector spaces $U$, $V$ and $W$ on $k$, with the following property. If $A'$ is a local artinian ring with residue field $k$, $I$ is an ideal of $A'$ such that $\m_{A'}I = 0$ (i.e., $I$ is a $k$-vector space), and $\xi$ is a lifting of $\xi_{0}$ to $A \eqdef A'/I$, then

\begin{enumeratei}

\item there is an element $\omega$ of $I \otimes_{k}W$, such that $\omega = 0$ if and only if the object $\xi$ can be lifted to $A'$;

\item if $\omega = 0$, then the set of isomorphism classes of liftings has a natural structure of a principal homogeneous space under the additive group $I\otimes_{k}V$, and

\item if $\xi'$ is a lifting of $\xi$ to $A'$, the group of automorphisms of $\xi'$ inducing the identity on $\xi$ is $I \otimes_{k} U$.

\end{enumeratei}

Then one can draw conclusions on the set of possible lifting of $\xi_{0}$ to a local artinian algebra $B$, by applying induction on $n$ in the situation where $A' = B/\m_{B}^{n+1}$ and $A = B/\m_{B}^{n}$. Thus, for example, if $V = 0$ (a situation that very rarely happens in practice), we can conclude that any two liftings of $\xi_{0}$ to $B$ are isomorphic.

In all the examples we know, $U$, $V$ and $W$ are cohomology groups of the same type, with increasing indices; for example, if $X_{0}$ is a smooth variety over $k$ and $T_{X_{0}}$ its tangent bundle, then $U = \H^{0}(X_{0}, T_{X_{0}})$, $V = \H^{1}(X_{0}, T_{X_{0}})$ and $W = \H^{2}(X_{0}, T_{X_{0}})$.

Another type of result is the existence of \emph{versal deformations} (a.k.a. \emph{hulls}); the main result in this direction is due to Schlessinger. Suppose that $\Lambda$ is a noetherian complete local ring with residue field $k$, and $\xi_{0}$ is an object over $k$; we want to study liftings of $\xi_{0}$ to local artinian $\Lambda$-algebras. In many elementary treatments one assumes $\Lambda = k$; however, the more general situation is not essentially harder, and it is useful in many contexts.

Here are two typical situation where it is necessary to consider a ring $\Lambda$ different from $k$. Suppose we have a family of varieties $X \arr S$ over a base scheme $S$ over $k$, and a rational point $s_{0} \in S(k)$. Let $Y_{0}$ be a closed subscheme of the fiber $X_{0}$ over $s_{0}$. Can we deform $Y_{0}$ inside the neighboring fibers? The first step would be to set up a deformation theory problem over the ring $\wh\O_{S,s_{0}}$.

Or, suppose that we have a variety $X_{0}$ over a field $k$ of positive characteristic, and we would like to lift $X_{0}$ to a field of characteristic~$0$. In this case, obviously considering $k$-algebras will not work. The right thing to do is to take a complete local discrete valuation ring $\Lambda$ with residue field $k$ and with quotient field of characteristic~$0$ (for example, the ring of Witt vectors in $k$, if $k$ is perfect), and try to lift $X_{0}$ to a variety over $\Lambda$. For this we need to study the deformation theory of $X_{0}$ over $\Lambda$.

Schlessinger's Theorem states that under mild hypotheses there is a noetherian complete $\Lambda$-algebra $R$ with residue field~$k$, and a sequence of liftings $\rho_{n}$ of $\xi_{0}$ to $\spec R_{n}$, where $R_{n} \eqdef R/\m_{R}^{n+1}$, with an isomorphism of each $\rho_{n}$ with the restriction of $\rho_{n+1}$ (a \emph{formal object} $\rho$ over $R$), satisfying the following condition. Notice that a homomorphism of $\Lambda$-algebras $R \to A$, where $A$ is a local artinian $\Lambda$-algebra, induces an object $\xi$ over $A$, by factoring $R \arr A$ through some $R_{n}$, and pulling back $\rho_{n}$ through the corresponding morphism $\spec A \to \spec R_{n}$ (this $\xi$ is easily seen to be independent of $n$, up to a canonical isomorphism). We require the following.

\begin{enumeratea}

\item If $R \to A$ is a homomorphism of $\Lambda$-algebras, $\xi$ is the corresponding object over $A$, $A' \to A$ is a surjection of local artinian $\Lambda$-algebras, and $\xi'$ is a lifting of $\xi$ to $A'$, then $\xi'$ is isomorphic to an object coming from a lifting $R \to A'$ of the given homomorphism $R \to A$.

\item If $\xi$ is a object defined over the ring of dual numbers $k[\epsilon]$, then there exists a unique homomorphism $R \to k[\epsilon]$ inducing an object isomorphic to $\xi$.

\end{enumeratea}

The second condition is a minimality condition, ensuring that $R$ is unique, up to a non-canonical isomorphism. The first one says that knowing $R$ gives a considerable degree of control over liftings; for example, it implies that every lifting of $\xi_{0}$ to any local artinian $\Lambda$-algebra $A$ is induced by a homomorphism $R \to A$. Furthermore, if $\xi$ is a lifting of $\xi_{0}$ over some $A$, coming from a homomorphism $R \to A$, and $A' \to A$ a surjection of local artinian $\Lambda$-algebras, the existence of a lifting of $\xi$ to $A'$ is equivalent to the existence of a lifting $R \arr A'$ of the homomorphism $R \arr A$ (often a considerably easier problem).

The formal object $\rho$ over $R$ is called a \emph{miniversal deformation}, or a \emph{hull}, in Schlessinger's terminology.

The point of view in Schlessinger's paper, as in most of the literature, is that of functors; that is, we consider liftings of an object $\xi_{0}$ only up to isomorphism. A somewhat more sophisticated point of view, that of fibered categories, was introduced in \cite{Rim}; this allows to keep track of automorphisms, rather than collapsing them as we do when passing to isomorphism classes. This makes the formalism more elaborate; on the other hand, it captures more, and we hope to convince the reader that in the end it leads to a simplification. Certainly the results are cleaner and, in our opinion, more natural: for example, Schlessinger's conditions are replaced by a unique condition, the \emph{Rim--Schlessinger condition} (Definition~\ref{def:rs}), that does not distinguish between two cases. And in any case, when learning algebraic geometry it is becoming more and more important to learn about stacks, which are particular fibered categories; thus it could be argued that learning about deformation theory in the same context is not unnatural, and can be advantageous.

We stress that the object of this paper is the formal theory; we give several illustrative examples (deformation theory of schemes, subschemes and coherent sheaves), which are fundamental, but we don't aim at completeness. Thus, for example, we don't include any material on the deformation theory of maps, which is extremely important, nor on the cotangent complex \cite{Ill}, as this would have increased the size of this article considerably. Also, we don't discuss at all the approach to deformation theory using differential graded Lie algebras. Our focus is on the abstract formalism; hopefully the examples and applications that are sprinkled all over will be sufficient to keep the reader awake. 

We do not make any claim of originality, as all the material is very well known; we hope that the style of the exposition, with detailed proofs of all the results, and the content, slightly more advanced than that of \cite{Ser} or of \cite{hartshorne-deformation}, will be sufficient to attract some readers.

\subsection*{Description of content}


Section~\ref{deformation.cat.cap} contains our treatment of ``deformation categories''; these are fibered categories over the opposite of the category of local artinian $\Lambda$-algebras, which satisfy an analogue of Schlessinger's condition, which are sufficient for some of the basic constructions.

We describe Schlessinger's setup in \S\ref{deform.def}. We include no proofs, because we avoid this point of view in the rest of the paper.

In \S\ref{fibered.cat} we recall the formalism of fibered categories; our reference for all the proofs is \cite[Chapter~3]{FGA}.

The all-important definition of a \emph{deformation category} is given in \S\ref{fcadp}. This is a fibered category satisfying what we call the \emph{Rim--Schlessinger condition}, RS for short, which allows, in particular, to glue together objects along closed subschemes of spectra of local artinian rings, a basic construction that will be used all the time in the rest of our treatment.

We introduce out three basic examples in \S\ref{examp}: schemes, closed subschemes and quasi-coherent sheaves; these will accompany us all the way to the end. Here we are content with showing that they satisfy the RS condition.

We define the tangent space to a deformation category in Section~\ref{captang}. The definition is in \S\ref{tangdef}; the RS condition is used in an essential way to construct the vector space structure. In \S\ref{actionsec} we prove the all-important Theorem~\ref{actionthm}, which shows that the tangent space gives some control over the liftings. 

In \S\ref{tangent-examples} we compute the tangent spaces in our basic examples. We give an application in \S\ref{smooth.proj}: we show that any infinitesimal deformation of a smooth hypersurface of degree~$d$ in $\mathbb P^{n}$ is still a hypersurface, except for $n = 2$, $d \geq 5$ and for $n = 3$, $d = 4$.

Section~\ref{inf.aut} is a short one. Here we discuss infinitesimal automorphisms, and compute them in our basic examples.

In Section~\ref{obs} we discuss another extremely important concept, the obstruction theory. Contrary to the constructions of the previous sections, an obstruction theory is not canonical, and in fact in many concrete cases the same deformation problem has more than one obstruction theory attached to it. An obstruction theory gives for each surjective homomorphism $A' \arr A$ of $\Lambda$-algebras, whose kernel is a $k$-vector space, and every lifting $\xi$ of $\xi_{0}$ to $A$, an obstruction in a vector space whose vanishing is necessary and sufficient for the existence of a lifting of $\xi$ to $A'$. Thus, the vanishing of all obstructions means that objects can always be lifted; in this case we say that $\xi_{0}$ is \emph{unobstructed}.

After the definition, we show that the every obstruction theory contains a minimal one, which is canonical (but, unfortunately, mostly incomputable). Then we state the Ran--Kawamata Theorem, which we prove later. This gives a general condition for the vanishing of obstructions in characteristic~0; it implies, for example, that smooth proper varieties with trivial canonical bundle are unobstructed in characteristic~0.

Next, we analyze our three running examples. For the case of schemes, we construct the obstruction theory in the smooth case. We state the result for generically smooth local complete intersections, without including a proof (as far as we know there is no really simple one). We include the famous example of Kodaira and Spencer of a smooth obstructed projective variety.

Section~\ref{capform} is the heart of the paper. We define the category of formal objects, which is fibered over the opposite of the category of local complete noetherian $\Lambda$-algebras with residue field $k$. We define the Kodaira--Spencer homomorphism, which associates with every formal object a linear map, representing in some sense the differential of the formal object. Then we define universal, versal and miniversal objects, and we state and prove Schlessinger's Theorem in our context, stating that under very mild and natural conditions, a miniversal deformation exists. As an application, we prove the Ran--Kawamata Theorem. We also show how a miniversal deformation (which is unique, up to a non-unique isomorphism) determines the dimension of the minimal obstruction space. We also give some examples.

In \S\ref{algebraization} we analyze the problem of algebraization of schemes; namely, we give criteria for deciding when a formal scheme (i.e., a formal deformation, in our language) over a complete local ring comes from an actual scheme. We also give an example showing where this fails, even in the smooth case.

Section~\ref{capcurv} is an extended example, meant to illustrate many of the results; we analyze in detail the deformation theory of nodal curves. We also  use deformation theory to prove some results on the local structure of a family of nodal curves which are commonly used in the literature.

The various appendices contain several technical results and definitions that are used in several places in the text.

\subsection*{Prerequisites} These notes assume a basic knowledge of algebraic geometry, at the level of Hartshorne's book \cite{Har}. We also assume that the reader is comfortable with the fundamental concepts of commutative and homological algebra. For certain topics we assume a little more specialized knowledge, in particular concerning regular sequences and local complete intersections. These topics are very well discussed in \cite{Mat} (a great all around reference for commutative algebra) and \cite[Chapter~IV]{Fulton-Lang}.

\subsection*{Acknowledgements} This text is derived from the first author's undergraduate thesis (which can be found at \url{http://etd.adm.unipi.it/theses/available/etd-06292009-165906/}), which in turn was based on notes from two courses on moduli theory that the second author taught at the University of Bologna from 2001 to 2003. The second author would like to thank all the students that attended the courses, and particularly Alessandro Arsie, Damiano Fulghesu and Elisa Targa, who took the notes.

We are also grateful to Damiano for his assistance while the first author was drafting his thesis. Part of the work on thesis was done while both authors were visitors at MSRI; we enjoyed our stay immensely, and we are thankful for the hospitality.

We are in debt with Ian Morrison and Gavril Farkas for the invitation to write about deformation theory, for their interest in this work, and for their encouragement.

\subsection*{Some random suggestions for further readings} Unfortunately, there is no comprehensive introduction to deformation theory. Here are a few suggested references; the list is by no means supposed to be exhaustive. For many more, here are two good online resources: Charles Doran's historical bibliography (\url{http://www.math.columbia.edu/~doran/Hist Ann Bib.pdf}) and the nLab page on deformation theory (\url{http://ncatlab.org/nlab/show/deformation+theory}).

Two recent, very good elementary treatments, pitched at a slightly more elementary level than ours, are \cite{Ser} and \cite{hartshorne-deformation}. The canonical reference is \cite{Ill}, together with its second volume \cite{illusie-cotangent2}; it is quite abstract, but very well written, and amply rewards the patient reader. For the analytic treatment, one can consult \cite{kodaira-deformation}.

The other references have a narrower goal. The second author learnt a lot, in the ancient times when he was a graduate student, from \cite{artin-tata}, which focusses on isolated singularities. For deformations of analytic singularities, one can profitably consult \cite{greuel-lossen-shustin}. Koll\'ar's book \cite{kollar-rational-curves} contains a very complete discussion of the deformation theory of subschemes.

For an introduction to the more modern point of view, using differential graded Lie algebras, two good sources are \cite{manetti} and \cite{manetti-differential-formal-def}. An excellent exposition of the deformation theory of associative algebras, including an introduction to Kontsevich's theory of the deformation theory of Poisson manifolds, is in \cite{doubek-markl-zima}.

\subsection*{Notations and conventions}

All rings will be commutative with identity, and noetherian. If $A$ is a local ring, $\m_A$ will denote its maximal ideal.

The symbol $\Lambda$ will denote a noetherian local ring, which is complete with respect to the $\m_\Lambda$-adic topology, meaning that the natural homomorphism $\Lambda \to \invlim( \Lambda/\m_\Lambda^n)$ is an isomorphism. By $k$ we denote a (not necessarily algebraically closed) field; it will usually be the residue field $\Lambda/\m_\Lambda$ of $\Lambda$.

We denote by $\artl$ the category of local artinian $\Lambda$-algebras with residue field $k$. The \emph{order} of an object $A \in \artl$ will be the least $n$ such that $\m_A^{n+1}=0$. Similarly $\compl$ will be the category of noetherian local complete $\Lambda$-algebras with residue field $k$.

Notice that all homomorphisms in $\artl$ and $\compl$ are automatically local. In general, if $A$ is a local ring with residue field $k$, we will denote by $(\Art_A)$ the category of local artinian $A$-algebras with residue field $k$, and by $(\Comp_{A})$ the category of noetherian local complete $A$-algebras with residue field $k$.

We will be dealing with the opposite categories $\artl^{\op}$ and $\compl^{\op}$. This should cause no confusion regarding objects, since they are the same ones as in the categories above. So it will be equivalent to say for example ``let $A$ be an object of $\artl$'' and ``let $A$ be an object of $\artl^{\op}$''. Concerning arrows, every time $A$ and $B$ are rings, the symbol $A\to B$ will always denote a genuine homomorphism of rings, which of course becomes an arrow from $B$ to $A$ in the opposite category. When we will need to refer specifically to this last arrow, we will denote it by $(A\to B)^{\op}$ or by $\phi^{\op}$ (if $\phi$ is the homomorphism $A\to B$), and the reader should always view it as the induced morphism of affine schemes $\spec B \to \spec A$.

An important role will be played by the $\Lambda$-algebra $\dual$, that is, the $k$-algebra $k[t]/(t^2)\simeq k\oplus k\epsilon$ (the \gr{ring of dual numbers} of $k$), where $\eps \eqdef [t]$.

A generalization of the algebra of dual numbers is the following. Given a ring $A$ and an $A$-module $M$, we denote by $A \oplus M$ the $A$-algebra in which the product is defined by the rule $(a,m)(b,n) = (ab, an + bm)$. Its maximal ideal is $M$, and $M^{2} = 0$. We will call this the \gr{trivial algebra} structure on $A \oplus M$.

In particular if $V$ is a $k$-vector space, $k\oplus V$ becomes a $k$-algebra as well as a $\Lambda$-algebra, by means of the map $\Lambda \to k$.

When dealing with categories, as customary we will not worry about set-theoretic problems, so in particular the collections of objects and arrows will always be treated as sets. A functor $F\colon \mathcal{A}\to \mathcal{B}$ will always denote a covariant functor from $\mathcal{A}$ to $\mathcal{B}$; a contravariant functor from $\mathcal{A}$ to $\mathcal{B}$ will be considered as a covariant functor from the opposite category, written $F\colon \mathcal{A}^\op\to \mathcal{B}$. If $\mathcal{A}$ is a category, $A\in \mathcal{A}$ will mean that $A$ is an object of $\mathcal{A}$.

We denote by $\set$ the category of sets, by $\mod$ (resp. $\fmod$) the category of (finitely generated) modules over the ring $A$, by $\vect$ (resp. $\fvect$) the category of (finite-dimensional) $k$-vector spaces, by $\schs$ the category of schemes over a base scheme $S$. By a \emph{groupoid} we mean a category in which all arrows are invertible. A \emph{trivial groupoid} will be a groupoid in which for any pair of objects there is exactly one arrow from the first to the second; they are precisely the categories that are equivalent to the category with one object and one arrow.

All schemes we will consider will be locally noetherian, and if $f\colon X\to Y$ is a morphism of schemes, $f^\sharp\colon \O_Y\to f_*\O_X$ will denote the corresponding homomorphism of sheaves. If $X$ is a scheme, we write $|X|$ for the underlying topological space, and \emph{quasi-coherent} $\O_X$\emph{-module} as well as \emph{quasi-coherent sheaf} will always mean quasi-coherent sheaf of $\O_X$-modules. Usually, we specify the structure sheaf only when there are different schemes with the same underlying topological space. If $x \in X$ is a point of the scheme $X$, we will denote by $k(x)$ its residue field $\O_{X,x}/\m_x$.

If $X$ is a scheme over $k$, the sheaf of K\"{a}hler differentials $\diff_{X/k}$ on $X$ coming from the morphism $X\to \spec k$ will be denoted simply by $\diff_X$, and we use the same convention with the tangent sheaf; in the same fashion, the sheaf of continuous differentials $\wh{\diff}_{R/k}$ of an object $R \in (\Comp_k)$ will be denoted by $\wh{\diff}_R$ (see Appendix~\ref{appb}). Moreover by a \emph{rational point} of $X$ we always mean a $k$-rational point.

A \emph{pointed scheme} $(S, s_{0})$ over a field $k$ will be a $k$-scheme $S$ with a distinguished rational point $s_{0} \in S(k)$. A morphism $\psi\colon (R, r_{0})\to (S, r_{0})$ of pointed schemes is a morphism $\psi\colon R \arr S$ of $k$-schemes, carrying $r_{0}$ into $s_{0}$.

If $X\subseteq Y$ is a closed immersion of schemes over a ring $A$, with sheaf of ideals $I$, by \emph{the conormal sequence} associated with this immersion we will always mean the exact sequence of $\O_X$-modules
$$
\xymatrix{
I/I^2\ar[r] & \diff_{Y/A}|_X \ar[r]& \diff_{X/A}\ar[r] & 0.
}
$$

If $X$ is a scheme over a ring $A$ and $A\to B$ is a ring homomorphism, we denote by $X_B$ the base change $X\times_{\spec A}\spec B$, and we will use the same notation for pullbacks of quasi-coherent sheaves.

If $\mathcal{U}=\{U_i\}_{i \in I}$ is an open cover of a topological space $X$, we will denote by $U_{ij}$ the double intersection $U_i\cap U_k$, by $U_{ijk}$ the triple intersection $U_i\cap U_j \cap U_k$, and so on.

By a \emph{variety} we will always mean a reduced separated scheme of finite type over the field $k$. A \emph{curve} will be a variety of dimension 1.


\section{Deformation categories}\label{deformation.cat.cap}

In this section we start from a geometric deformation problem, and see how it leads naturally to a functor (the deformation functor of the problem), which is formed by taking isomorphism classes in a category fibered in groupoids. We recall briefly Schlessinger's Theorem of deformation functors to motivate the introduction of the abstract objects we will use to formalize deformations problems, that is, deformation categories. Finally, we introduce our three illustrative examples, which will be analyzed in detail throughout this work.

\subsection{Deformation functors}\label{deform.funct}

We start by describing the most basic example of a deformation problem, that of deformations of schemes.

Let $X_0$ be a proper scheme over $k$; we are interested in families having a fiber over a rational point isomorphic to $X_0$.
\begin{defin}\label{deform.def}
A \gr{(global) deformation} of $X_0$ is a cartesian diagram of schemes over $k$
$$
\xymatrix{
X_0\ar[r] \ar[d] &  X \ar[d]^f\\
\spec k \ar[r] & S
}
$$
where $f\colon X\to S$ is a flat and proper morphism, and $S$ is connected.
\end{defin}
Sometimes $X$ is called the \gr{total scheme} of the deformation, $S$ the \gr{base scheme}. We denote by $s_{0}$ the rational point given by the image of $\spec k \to S$, as we call it the \emph{base point} of the deformation; this makes $(S, s_{0})$ into a pointed scheme.

Notice that to give a deformation we can equivalently give a flat and proper morphism $f\colon X\to S$ and an isomorphism of the fiber of $f$ over a rational point $s_0\in S$ with $X_0$. We will usually refer to a deformation simply as the morphism of schemes, leaving the rational point of $S$ and the isomorphism with the fiber understood.

The flatness hypothesis in Definition~\ref{deform.def} is in some sense a continuity condition (it ensures that, locally on $X$, the fibers do not vary too wildly). The properness condition must be assumed when dealing with global deformations, to prevent constructions that are really ``wrong''. In the situation above, we consider the fibers of $X \to S$ over other rational points of $S$ to be deformations of $X_{0}$ (this is justified by the fact, for example, that many invariants are constant under deformation). But acts of vandalism, such as randomly deleting fibers over closed points of $S$ different from $s_{0}$, or deleting points on such fibers, will change them in such a way that they cannot be considered as deformations of $X_{0}$, unless one wants to end up with a completely useless notion. This problem does not arise with infinitesimal deformations, because there are no other fibers, and moreover it is important to study deformations of affine schemes with isolated singularities. Because of these reasons, we will drop the properness assumption once we focus on infinitesimal deformations. One can define global deformations of isolated singularities, but the definition is a little more subtle (see \cite{artin-stacks}). In dealing with deformations over non-noetherian schemes one usually asks the morphism to be also finitely presented.

\begin{defin}
An \gr{isomorphism} between global deformations $f\colon X\to S$ and $g\colon Y\to S$ of $X_0$ with the same distinguished point $s_{0}\in S$ is an isomorphism of $S$-schemes $F\colon X\to Y$, inducing the identity on $X_0$.
\end{defin}

Every $X_0$ has a \gr{trivial deformation} over any scheme $S$ over $k$, given by the projection $X_0\times_{\spec k} S\to S$, and we can take as distinguished fiber any fiber over a rational point of $S$, since they are all isomorphic to $X_0$. A deformation of $X_0$ over $S$ is called \gr{trivial} if it is isomorphic to a trivial deformation.

Deformations over a fixed $(S,s_{0})$ with isomorphisms form a category, which is a groupoid by definition. We call this category $\defgl_{X_0}(S,s_{0})$.

This construction is functorial in the base space: if we are given a morphism $\psi\colon (R, r_{0})\to (S, s_{0})$ of pointed schemes and a deformation $f\colon X\to S$ of $X_0$, we can form the fibered product and consider the projection $R\times_{S}X\to R$, which is a deformation of $X_0$ over $R$.
$$
\xymatrix{
X_{0} \ar[r]\ar[d] &R\times_S X\ar[r]\ar[d] & X\ar[d]^f \\
\spec k \ar[r]^{r_{0}} & R\ar[r]^\psi& S
}
$$
Moreover, if we have two isomorphic deformations over $(S,s_{0})$, say $f\colon X\to S$ and $g\colon Y\to S$ with an isomorphism $F\colon X\to Y$, then $F$ induces an isomorphism
   \[
   \id\times_S F\colon R\times_S X\stackrel{\sim}{\larr} R\times_S Y\,,
   \]
and this association gives a \gr{pullback functor} $\psi^*\colon \defgl_{X_0}(S,s_{0})\to\defgl_{X_0}(R,r_{0})$.

For a number of reasons the first step in studying deformations is considering infinitesimal ones.

\begin{defin}
An \gr{infinitesimal deformation} is a deformation such that $S=\spec A$, where $A \in \art$, and the morphism $f\colon X\to \spec A$ is not necessarily proper. A \gr{first-order deformation} is an infinitesimal deformation with $A=\dual$.
\end{defin}

In the case of infinitesimal deformations $X$ and $X_0$ have the same underlying topological space, and what changes is only the structure sheaf. This is because the sheaf of ideals of $X_0$ in $X$ is nilpotent, being the pullback of the sheaf of ideals on $\spec A$ corresponding to the maximal ideal of $A$.

We can perform the same constructions as above with infinitesimal deformations. We will denote by $\catdef_{X_{0}}(A)$ for $A \in \art$ the category of infinitesimal deformations of $X_{0}$ over $\spec A$.

Now that we have restricted our attention to infinitesimal deformations, following the classical approach, we define a ``deformation functor'' for our problem.

\begin{defin}
The \gr{deformation functor} defined by $X_0$ is the functor
   \[
   \Def_{X_0}\colon \art\larr\set
   \]
defined on objects by sending $A$ to the set $\Def_{X_0}(A)$ isomorphism classes of infinitesimal deformations of $X_0$ over  $\spec A$, and sending a homomorphism $\phi\colon A\to B$ into the function $\phi_*\colon \Def_{X_0}(A)\to \Def_{X_0}(B)$ given by the pullback.
\end{defin}

\begin{rmk}
Notice that we introduced a covariant construction $\phi\mapsto \phi_*$ and still called it pullback, and not pushforward. This is because we always want to consider $\art^\op$ as a subcategory of $(\Sch_k)$, and from this point of view the function $\phi_*$ is the pullback $\widetilde{\phi}^*$ induced by the map $\widetilde{\phi}\colon \spec B\to \spec A$ corresponding to $\phi$.

To avoid this confusion, every time we will have a homomorphism $\phi\colon A\to B$ that induces a pullback function in some way, we will still denote it by $\phi_*$, keeping in mind that it is the pullback induced by the associated map on the spectra.

We also stress the fact that $\Def_{X_0}(A)$ is the set of isomorphism classes of a groupoid $\catdef_{X_0}(A)$, and the function $\phi_*$ is the one induced by the pullback functor we defined before, along the morphism of schemes $\spec B\to \spec A$.
\end{rmk}

In conclusion, the study of infinitesimal deformations of a fixed scheme $X_0$ leads to a functor $\art\to\set$. With this motivation in mind, we make the following definition.

\begin{defin}
A \gr{predeformation functor} is a functor $F\colon \art\to\set$, such that $F(k)$ is a set with one element.
\end{defin}

The idea is of course that the element of $F(k)$ is the object that is getting deformed, and the elements of $F(A)$ are (isomorphism classes of) its deformations on $\spec A$. Nearly every geometric deformation problem can be formalized in this setting; we will see some examples of how this is done.

After their introduction by Grothendieck, these functors have been studied by Schlessinger, in \cite{Schl} (another exposition can be found in \cite[Chapter 2]{Ser}). Since we will review most of the theory using fibered categories, there is no point in describing it in detail here. An exception is the so-called Schlessinger's Theorem (which is the central result of Schlessinger's paper), which will provide a basic condition for the fibered categories we will consider. To state the Theorem we need a couple of definitions.

\begin{defin}
A predeformation functor is \gr{prorepresentable} if it is isomorphic to a functor of the form $\Hom_{k}(R,-)$ for some $R \in (\Comp_k)$.
\end{defin}

Prorepresentability corresponds to the existence of what is called a universal formal deformation, and is clearly a good thing to have, but it is also quite restrictive. A substitute when prorepresentability fails is the existence of a \gr{hull}, which is a formal deformation having a weaker universality property. Again, we will not go into details here, because we will discuss all of this later in a more general context.

\begin{defin}
A \gr{small extension} is a surjective homomorphism $\phi\colon A'\to A$ in $\art$, such that $ker\phi$ is annihilated by $\m_{A'}$, so that it is naturally a $k$-vector space. A small extension is called \gr{tiny} if $\ker\phi $ is also principal and nonzero, or equivalently if $\ker\phi \simeq k$ as a $k$-vector space.
\end{defin}

\begin{defin}
The \gr{tangent space} of a predeformation functor $F$ is $TF \eqdef F(\dual)$.
\end{defin}

This is of course only a set in general, but it has a canonical structure of $k$-vector space if $F$ satisfies condition (H2) below (see \cite{Schl}).

Let $F$ be a predeformation functor, and suppose we are given two homomorphisms $A'\to A$ and $A''\to A$ in $\art$. Then we can consider the fibered product $A'\times_A A''$ (notice that this is still an object of $\art$), and we have a natural map $f\colon F(A'\times_A A'')\to F(A')\times_{F(A)}F(A'')$ given by the universal property of the target. Schlessinger's condition are as follows:

\begin{itemize}

\item[(H1)] $f$ is surjective when $A'\to A$ is a tiny extension.

\item[(H2)] $f$ is bijective when $A'=\dual$ and $A=k$.

\item[(H3)] The tangent space $TF$ is finite-dimensional.

\item[(H4)] $f$ is bijective when $A'=A''$ and $A'\to A$ is a tiny extension.

\end{itemize}

\begin{thm}[Schlessinger]\label{schlessinger.classical}
A predeformation functor $F$ has a hull if and only if it satisfies (H1),(H2),(H3) above, and it is prorepresentable if and only if it also satisfies (H4).
\end{thm}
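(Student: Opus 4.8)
The plan is to follow Schlessinger's original argument, which splits into two loosely connected halves --- existence of a hull and the extra input needed for prorepresentability --- each with an easy and a hard implication. Throughout, recall that a \emph{hull} is a formal object $\rho$ over some $R\in(\Comp_{k})$ for which the induced morphism of functors $h_{R}:=\Hom_{k}(R,-)\to F$ is \emph{smooth} --- meaning $h_{R}(A')\to h_{R}(A)\times_{F(A)}F(A')$ is surjective for every surjection $A'\twoheadrightarrow A$ in $\art$ --- and induces an isomorphism on tangent spaces. For the easy implications, suppose $F$ has such a hull. Then (H3) is clear, since $TF\cong Th_{R}\cong(\m_{R}/\m_{R}^{2})^{\vee}$ and $R$ is noetherian. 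The functor $h_{R}$ carries fibered products of rings to fibered products of sets, so $h_{R}(A'\times_{A}A'')=h_{R}(A')\times_{h_{R}(A)}h_{R}(A'')$; smoothness then lets one lift an element of $F(A')\times_{F(A)}F(A'')$ into $h_{R}(A'\times_{A}A'')$ and push it forward, which gives (H1) for $F$, and a similar argument using in addition the tangent isomorphism gives (H2). If moreover $F$ is prorepresentable, then $F\cong h_{R}$, the comparison map $f$ is a genuine bijection in every case, and in particular (H4) holds.

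For the converse, assume (H1)--(H3). First, (H2) equips $TF=F(\dual)$ with a canonical $k$-vector space structure: iterating (H2) yields natural bijections $F(k\oplus V)\cong TF\otimes_{k}V$ for finite-dimensional $V$, and transporting the addition and scalar homomorphisms of $\dual$ along these bijections produces the vector space operations and the axioms. By (H3), $r:=\dim_{k}TF<\infty$; set $S:=k[[t_{1},\dots,t_{r}]]$, so that $\m_{S}/\m_{S}^{2}\cong(TF)^{\vee}$. One then builds, by induction on $q$, a decreasing chain of ideals $\m_{S}^{2}=J_{1}\supseteq J_{2}\supseteq\cdots$ with $\m_{S}^{q+1}\subseteq J_{q}$ and $\m_{S}J_{q}\subseteq J_{q+1}$, together with a compatible system $\xi_{q}\in F(R_{q})$, $R_{q}:=S/J_{q}$, where $\xi_{1}\in F(R_{1})\cong\Hom_{k}(TF,TF)$ is the identity and $J_{q+1}$ is the \emph{smallest} ideal $J$ with $\m_{S}J_{q}\subseteq J\subseteq J_{q}$ such that $\xi_{q}$ admits a lift to $F(S/J)$, with $\xi_{q+1}$ a chosen such lift. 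The existence of a smallest such ``good'' ideal is the first technical point: $J_{q}/\m_{S}J_{q}$ is finite-dimensional over $k$, so it suffices to prove the good ideals are stable under intersection, and for good $J,J'$ the square $S/(J\cap J')=S/J\times_{S/(J+J')}S/J'$ is cartesian with small-extension legs, whence --- reducing small extensions to composites of tiny ones by d\'evissage --- (H1) produces a lift of $\xi_{q}$ over $S/(J\cap J')$.

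Set $R:=S/\bigcap_{q}J_{q}$. The filtration bookkeeping (chiefly $\m_{S}^{q+1}\subseteq J_{q}$, which makes the $R_{q}$ cofinal among the artinian quotients of $R$) shows that $R$ is noetherian and complete and that the $\xi_{q}$ assemble into a formal object $\rho$ over $R$, hence a morphism $h_{R}\to F$ which, by the choice of $\xi_{1}$, is an isomorphism on tangent spaces. One then verifies $h_{R}\to F$ is smooth by testing on a tiny extension $A'\twoheadrightarrow A$ with compatible data $(u\colon R\to A,\ \eta'\in F(A'))$: a careful application of (H1) splices $\eta'$ with the liftings already built into the $R_{q}$, and the minimality of the $J_{q}$ forces the splice to descend to a lift $R\to A'$. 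Thus $\rho$ is a hull, proving the first equivalence. Finally, if (H4) also holds, one upgrades $h_{R}\to F$ to an isomorphism by showing $h_{R}(A)\to F(A)$ is injective, by induction on the length of $A$: passing to a tiny quotient $A\to A_{0}$, two maps $u_{1},u_{2}\colon R\to A$ inducing the same element of $F(A)$ must agree by the inductive hypothesis, so $(u_{1},u_{2})$ gives a map $R\to A\times_{A_{0}}A$, and (H4) --- which identifies $F(A\times_{A_{0}}A)$ with $F(A)\times_{F(A_{0})}F(A)$ --- forces $u_{1}=u_{2}$. The converse direction, prorepresentable $\Rightarrow$ (H4), is immediate from representability.

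The step I expect to be the main obstacle is everything surrounding the ideals $J_{q}$: establishing that the good ideals are closed under intersection (so a minimal one exists), and that the resulting $h_{R}\to F$ is smooth --- both resting on delicate applications of (H1) to cartesian squares after reducing small extensions to tiny ones --- together with the bookkeeping ensuring that $R=\invlim R_{q}$ is noetherian and complete with the $R_{q}$ cofinal among its artinian quotients. By contrast, the $k$-vector space structure on $TF$ and the ``easy'' implications are essentially formal.
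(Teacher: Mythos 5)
The paper does not prove Theorem~\ref{schlessinger.classical}: the authors say explicitly at the start of \S\ref{deform.funct} that they ``include no proofs, because we avoid this point of view in the rest of the paper,'' and after stating the theorem they point to Schlessinger's article. What the paper proves instead is the fibered-category analogue, Theorem~\ref{schlessinger}, via Theorem~\ref{miniexist} and Proposition~\ref{mini-uni}; the paper itself remarks that ``there are no direct implications between the two.'' So there is no in-paper proof to check you against, and the meaningful comparison is with the proof of Theorem~\ref{miniexist}. Your argument is a faithful rendering of Schlessinger's original proof and is structurally parallel to the paper's: your $S=k[[t_1,\dots,t_r]]$ and ideals $J_q$ correspond to the paper's $P=\Lambda\ds{x_1,\dots,x_n}$ and $I_i$, with the same minimality recipe; closure of good ideals under intersection is established in both places by reducing to a cartesian square of artinian rings (you use (H1) where the paper uses RS); and both verifications run through the fibered product $R_q\times_A A'$ and an appeal to minimality of the next ideal. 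The only real differences are the ambient formalism (functors versus fibered categories) and the base (you work over $\Lambda=k$, which is why the paper's first ideal is $\m_\Lambda P+\m_P^2$ rather than your $\m_S^2$).

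Two places where you compress more than you should. (i) The (H1)-splice through $R_q\times_A A'$ together with minimality of $J_{q+1}$ produces a lift $u'\colon R\to A'$ of $u$, but it does not by itself arrange $\rho(u')=\eta'$; one must then use the tangent isomorphism and the free transitive $TF\otimes_k I$-action on isomorphism classes of liftings to adjust $u'$. The paper makes this a separate, explicit step in the first paragraph of the proof of Theorem~\ref{miniexist}, using Proposition~\ref{actionfunct}. (ii) In the argument that (H4) implies prorepresentability, the conclusion $u_1=u_2$ does not follow just from forming $(u_1,u_2)\colon R\to A\times_{A_0}A$ and citing the bijection of (H4): what one actually uses is that the nonempty fibers of $h_R(A)\to h_R(A_0)$ and of $F(A)\to F(A_0)$ are torsors under $TF\otimes_k I$ compatibly with $h_R\to F$, where freeness on the $F$-side is exactly what (H4) supplies; the tangent isomorphism then forces the difference $u_1-u_2\in TF\otimes_k I$ to vanish. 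Both compressions are filled in by standard arguments, so the proof is essentially correct; the second one is the functor-level shadow of the paper's Proposition~\ref{mini-uni} and its lemma on $\Delta(f,g)$.
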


Conditions (H1) and (H2) are usually satisfied when dealing with functors coming from geometric deformation problems. Because of this, a predeformation functor satisfying (H1) and (H2) is called by some authors a \gr{deformation functor}. Schlessinger's terminology is a bit different, since with ``deformation functor'' he means our predeformation ones.

As we mentioned, in this paper we are not going to use the formalism of functors, but that of categories fibered in groupoids, which we introduce next.

\subsection{Categories fibered in groupoids}\label{fibered.cat}

As we have seen, the deformation functor of a scheme $X_0$ is formed by taking isomorphism classes in a certain groupoid. This is what typically happens when a geometric deformation problem is translated into a functor. But sometimes, for example when using deformation theory to study moduli problems, it is useful and natural to keep track of isomorphisms and automorphisms.

This leads us to using categories fibered in groupoids instead of functors while developing our theory. In the example introduced above, this amounts to considering a certain category, which we will denote by $\catdef_{X_0}$, that has all infinitesimal deformations of $X_0$ as objects (see below for a precise definition). We have a natural forgetful functor $\catdef_{X_0}\to \art^\op$, which makes $\catdef_{X_0}$ into a category fibered in groupoids over $\art^\op$.

Here we recall the definitions and some basic facts about fibered categories. All the proofs and more about the subject can be found in \cite[Chapter~3]{FGA}.

\subsubsection{First definitions}

In what follows we consider two categories $\F$ and $\C$ with a functor $p_\F\colon \F\to\C$. In this context, the notation $\xi \mapsto T$ where $\xi \in \F$ and $T \in \C$ will mean $p_\F\xi = T$ (and we will sometimes say that $\xi$ is over $T$). Moreover we will call a diagram like this
$$
\xymatrix{
\xi \ar[r]^f \ar@{|->}[d] & \eta \ar@{|->}[d] \\
T \ar[r]^\phi & S
}
$$
\emph{commutative} if $p_\F f = \phi$ (and we will sometimes say that \emph{$f$ is over $\phi$}).

\begin{defin}
An arrow $f\colon \xi \to \eta$ of $\F$ is \gr{cartesian} if the following universal property holds: every commutative diagram
$$
\xymatrix@R-20pt@+5pt{
\nu \ar@{|->}[dd] \ar@/^1pc/[rrd]^g \ar@{-->}[rd]_h & &\\
 &\xi \ar@{|->}[dd] \ar[r]_f & \eta \ar@{|->}[dd] \\
U \ar[dr]^\psi & & \\
& T \ar[r]^{p_{\F}f} & S
}
$$
can always be filled with a dotted arrow, in a unique way.

In other words, given any two arrows $g\colon \nu \to \eta$ in $\F$ and $\psi\colon U\to T$ in $\C$, where $U=p_\F \nu $ and $T=p_\F \xi$, such that $p_\F f \circ \psi = p_\F g$, there exists exactly one arrow $h\colon \nu\to \xi$ over $\psi$ such that $f\circ h=g$.
\end{defin}

It is very easy to see that if we have two cartesian arrows $f\colon \xi\to\eta$ and $g\colon \nu\to\eta$ in $\F$ over the same arrow of $\C$, then there is a canonically defined isomorphism $h\colon \xi\simeq\nu$, coming from the universal property, and compatible with the two arrows, meaning that $g\circ h=f$.

\begin{defin}
Let $\C$ be a category. A \gr{fibered category over $\C$} is a functor $p_{\F}\colon \F\to \C$, such that for every object $\eta$ of $\F$ and every arrow $\phi\colon T\to p_\F\eta$ of $\C$, there exists a cartesian arrow $f\colon \xi \to \eta$ of $\F$ over $\phi$.
\end{defin}

Sometimes we will also say that $\F$ is a fibered category over $\C$.

In the situation above we say that $\xi$ is a \gr{pullback} of $\eta$ to $T$ along the arrow $\phi$. So fibered categories are basically categories in which we can always find pullbacks along arrows of $\C$. The existence of some sort of pullback is a very common feature when dealing with geometric problems, so it seems convenient to use the formalism of fibered categories in this context.

By the remark above, pullbacks are unique, up to a unique isomorphism.

\begin{defin}
If $T$ is an object of $\C$, we can define a \gr{fiber category}, which we denote by $\F(T)$: its objects are objects $\xi$ of $\F$ such that $p_\F\xi = T$, and its arrows are arrows $f\colon \xi\to\eta$ of $\F$ such that $p_\F f = \id_T$.

A fibered category $\F\to\C$ is a \gr{category fibered in groupoids} if for every object $T$ of $\C$ the category $\F(T)$ is a groupoid, i.e. every arrow of $\F(T)$ is an isomorphism.
\end{defin}

In the following we will always use categories fibered in groupoids.

We have the following criterion to decide whether a functor $\F\to \C$ gives a category fibered in groupoids.

\begin{prop}[{\cite[Chapter~3, Proposition~3.22]{FGA}}]\label{groupoid}
Consider a functor $\F\to \C$. Then $\F$ is a category fibered in groupoids over $\C$ if and only if the following conditions hold:

\begin{enumeratei}

\item Every arrow of $\F$ is cartesian.

\item Given an arrow $T\to S$ of $\C$ and an object $\eta \in \F(S)$, there exists an arrow $\xi\to\eta$ of $\F$ over $T\to S$ (which is automatically cartesian because of the previous condition).

\end{enumeratei}

\end{prop}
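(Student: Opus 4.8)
The plan is to prove the two implications of Proposition~\ref{groupoid} separately, using the universal property of cartesian arrows throughout.

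For the "only if" direction, suppose $\F \to \C$ is a category fibered in groupoids. Condition~(ii) is immediate, since it is exactly the defining property of a fibered category (the cartesian arrow it produces witnesses (ii)). The real content is condition~(i): every arrow of $\F$ is cartesian. First I would take an arbitrary arrow $f\colon \xi \to \eta$ over $\phi\colon T \to S$. Since $\F$ is fibered, there is a cartesian arrow $f'\colon \xi' \to \eta$ over the same $\phi$. By the universal property of $f'$ applied to $f$ (with $\psi = \id_T$), there is a unique $h\colon \xi \to \xi'$ over $\id_T$ with $f' \circ h = f$; since $\F(T)$ is a groupoid, $h$ is an isomorphism. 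Then one checks directly that $f = f' \circ h$ is cartesian: given a test arrow $g\colon \nu \to \eta$ and $\psi\colon U \to T$ with the appropriate compatibility, use cartesianness of $f'$ to get the filler $\nu \to \xi'$, then compose with $h^{-1}$ to land in $\xi$; uniqueness follows from uniqueness for $f'$ together with $h$ being an isomorphism. (The general fact being used here is that the composite of a cartesian arrow with an isomorphism, and more generally with any arrow that is "cartesian over an identity" in a suitable sense, is cartesian — but in the groupoid case this is just composition with an iso, which is elementary.)

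For the "if" direction, assume (i) and (ii). That $\F \to \C$ is a fibered category is immediate: given $\eta \in \F(S)$ and $\phi\colon T \to S$, condition~(ii) supplies an arrow $\xi \to \eta$ over $\phi$, which by (i) is cartesian. It remains to show each fiber $\F(T)$ is a groupoid, i.e.\ every arrow $u\colon \xi \to \eta$ of $\F(T)$ is invertible. Here I would exploit that $u$ is cartesian by hypothesis~(i). Apply the universal property of $u$ to the pair consisting of $g = \id_\eta\colon \eta \to \eta$ and $\psi = \id_T$ (noting $p_\F u \circ \id_T = \id_T = p_\F \id_\eta$): this yields a unique $v\colon \eta \to \xi$ over $\id_T$ with $u \circ v = \id_\eta$, so $v$ is a right inverse. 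For the left inverse, consider the arrow $u \circ v \circ u = \id_\eta \circ u = u = u \circ \id_\xi$; by the uniqueness clause in the universal property of $u$ applied to $g = u$ and $\psi = \id_T$, the two fillers $v \circ u$ and $\id_\xi$ must coincide, so $v \circ u = \id_\xi$. Hence $u$ is an isomorphism and $\F(T)$ is a groupoid.

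The main obstacle — really the only subtle point — is the verification in the "only if" direction that an arbitrary arrow is cartesian. One must resist the temptation to think this is automatic: it genuinely uses that $\F(T)$ is a groupoid (so that the comparison map $h$ to a chosen cartesian pullback is invertible), and one has to carry out the two-step diagram chase (existence of the filler, then its uniqueness) carefully, tracking which arrows lie over which arrows of $\C$. Everything else is a direct unwinding of definitions. Since the excerpt cites this as \cite[Chapter~3, Proposition~3.22]{FGA}, one could alternatively just quote it, but the argument above is short enough to include in full.
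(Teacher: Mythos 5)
Your proof is correct. The paper does not actually prove this proposition --- it explicitly defers to \cite[Chapter~3]{FGA}, so there is no in-text proof to compare against. Your argument is the standard one from that reference: in the ``only if'' direction, comparing an arbitrary arrow to a chosen cartesian lift over the same base, using that the fiber is a groupoid to invert the comparison arrow, and then observing that a cartesian arrow composed with an isomorphism in the fiber is cartesian; in the ``if'' direction, applying the universal property of a cartesian arrow in a fiber first to $\id_\eta$ to get a right inverse and then to the arrow itself (via the uniqueness clause) to promote it to a two-sided inverse. Both implications are carried out correctly, and the diagram chase for cartesianness in the ``only if'' direction is complete (existence via composing with $h^{-1}$, uniqueness by pushing forward along $h$ and invoking uniqueness for the chosen cartesian arrow). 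No gaps.
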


So a fibered category $\F\to\C$ is fibered in groupoids if and only if every arrow of $\F$ gives a pullback.

The ambiguity in the choice of a pullback is sometimes annoying when defining things that seem to depend on it. However, in these cases the constructions one ends up with are independent of the choice in some way (the construction of the pullback functors we will see shortly is an example). To avoid this annoyance, we make the choice of a pullback of any object along any arrow once and for all.

\begin{defin}
A \gr{cleavage} for a fibered category $\F\to \C$ is a collection of cartesian arrows of $\F$, such that for every object $\xi$ of $\F$ and every arrow $T\to S$ in $\C$, such that $\xi \in \F(S)$, there is exactly one arrow in the cleavage with target $\xi$ and over $T\to S$.
\end{defin}

We can use some appropriate version of the axiom of choice to see that every fibered category has a cleavage. Fixing a cleavage in a fibered category is somewhat like choosing a basis for a vector space: sometimes it is useful because it makes things clearer and more concrete, but usually one would like to have constructions that are independent of it.

In what follows we will always assume that we have a fixed cleavage when we are dealing with fibered categories. If we have an arrow $\phi\colon T\to S$ of $\C$ and an object $\xi \in \F(S)$, we will denote the pullback given by the cleavage by $\phi^*(\xi)$, or $\xi|_T$ when no confusion is possible.

Now suppose we have $\phi\colon T\to S$ an arrow of $\C$. We can define a \gr{pullback functor} $\phi^*\colon \F(S)\to\F(T)$ in the following way: an object $\xi$ goes to $\phi^*(\xi)$, the pullback along $\phi$, and an arrow $f\colon \xi \to \eta$ in $\F(S)$ goes to the unique arrow that fills the commutative diagram
$$
\xymatrix@!C@!R@R-10pt@C-14pt{
\phi^*(\xi)\ar[rr] \ar@{|->}[dd] \ar@{-->}[rd]^{\phi^*(f)}& & \xi \ar|\hole@{|->}[dd]\ar[rd]^f &\\
&\phi^*(\eta) \ar@{|->}[dd] \ar[rr]& & \eta \ar@{|->}[dd]\\
T \ar@{=}[dr] \ar|\hole[rr]& & S\ar@{=}[dr] & \\
& T \ar[rr] & & S.
}
$$
As with objects, when no confusion is possible we write $f|_T$ instead of $\phi^*(f)$.

It is very easy to see that a choice of a different cleavage will give another pullback functor, but the two will be naturally isomorphic. From now on we will leave this type of comment understood when doing constructions that use a cleavage.

Sending an object $T$ of $\C$ into the category $\F(T)$, and an arrow $\phi\colon T \to S$ into the pullback functor $\phi^*\colon \F(S)\to \F(T)$, seems to give a contravariant functor from $\C$ to the category of categories. This is not quite correct, because it could well happen that, if $\psi\colon S\to U$ is another arrow in $\C$, the functors $\phi^*\circ \psi^*$ and $(\psi\circ\phi)^*$ are not equal, but only canonically isomorphic. In this case we obtain a \gr{pseudo-functor} (\cite[Chapter~3, Definition~3.10]{FGA}).

Taking isomorphism classes in the fiber categories clearly fixes this problem: given a category fibered in groupoids $\F \to \C$ we have a functor $F\colon \C^\op \to \set$ that sends an object $T$ of $\C$ into the set of isomorphism classes in the category $\F(T)$, and an arrow $\phi\colon T\to S$ to the obvious pullback function $\phi^*\colon F(S)\to F(T)$.

\begin{defin}
We will call $F$ the \gr{associated functor} of $\F$.
\end{defin}

In general we cannot recover a category fibered in groupoids, up to equivalence. from its associated functor. This is possible for categories fibered in equivalence relations (see later in this section). See \cite{lieblich-osserman} for some other highly non-trivial cases in which reconstruction is possible.

\begin{examp}
As we will see, the categories $\catdef_{X_0}(A)$ introduced above can be put together as fiber categories of a category fibered in groupoids $\catdef_{X_0} \to \art^{\op}$. The deformation functor $\Def_{X_0}\colon \art\to \set$ is then precisely the associated functor of this category fibered in groupoids.
\end{examp}

\subsubsection{Morphisms and equivalence}

Suppose $p_\F\colon \F\to \C$ and $p_\G\colon \G\to\C$ are two categories fibered in groupoids.

\begin{defin}
A \gr{morphism} of categories fibered in groupoids from $\F$ to $\G$ is a functor $F\colon \F\to \G$ which is base-preserving, i.e. such that $p_\G\circ F=p_\F$.
\end{defin}

\begin{rmk}\label{natural}
If $T$ is an object of $\C$, the functor $F$ will clearly induce a functor $\F(T)\to \G(T)$ which we denote by $F_T$. In particular $F$ will induce a natural transformation between the associated functors of $\F$ and $\G$.
\end{rmk}

With this definition of morphism comes a notion of isomorphism between fibered categories, but as it often happens when dealing with categories, this notion is too strict.

\begin{defin}
Given two morphisms $F$, $G\colon \F \to \G$, a natural transformation $\alpha\colon F\to G$ is said to be \gr{base-preserving} if for every object $\xi$ of $\F$ the arrow $\alpha_\xi\colon F(\xi) \to G(\xi)$ is in $\G(T)$, where $T=p_\F(\xi)$.

An \gr{isomorphism} between $F$ and $G$ is a base-preserving natural equivalence.
\end{defin}

\begin{defin}
Two categories fibered in groupoids $\F\to \C$ and $\G \to \C$ are said to be \gr{equivalent} if there exist two morphisms $F\colon \F\to \G$ and $G\colon \G\to \F$, with an isomorphism of $F\circ G$ with the identity functor of $\G$ and of $G\circ F$ with the one of $\F$.
\end{defin}

In this case we will say that $F$ is an \gr{equivalence} between $\F$ and $\G$, and that $F$ and $G$ are \gr{quasi-inverse} to each other.

We have a handy criterion to decide whether a morphism of fibered categories is an equivalence.

\begin{prop}\label{fibered.equiv}{\cite[Chapter~3, Proposition~3.36]{FGA}}
A morphism of categories fibered in groupoids $F\colon \F\to \G$ is an equivalence if and only if $F_T\colon \F(T)\to \G(T)$ is an equivalence for every object $T$ of $\C$.
\end{prop}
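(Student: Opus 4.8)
The plan is to prove the two implications separately; the forward one is essentially formal, while the reverse one requires manufacturing a base-preserving quasi-inverse out of fiberwise data. For the forward implication, suppose $F$ is an equivalence of fibered categories, with quasi-inverse $G\colon\G\to\F$ and base-preserving natural isomorphisms $\alpha\colon F\circ G\xrightarrow{\sim}\id_\G$ and $\beta\colon G\circ F\xrightarrow{\sim}\id_\F$. Fix an object $T$ of $\C$. Since $G$ is base-preserving it carries $\G(T)$ into $\F(T)$, giving $G_T\colon\G(T)\to\F(T)$; and since $\alpha,\beta$ are base-preserving, their components at objects of $\G(T)$, resp.\ $\F(T)$, are arrows over $\id_T$, i.e.\ arrows of $\G(T)$, resp.\ $\F(T)$. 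Hence $\alpha,\beta$ restrict to natural isomorphisms $F_T\circ G_T\xrightarrow{\sim}\id_{\G(T)}$ and $G_T\circ F_T\xrightarrow{\sim}\id_{\F(T)}$, so $F_T$ is an equivalence. This step is routine.

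For the reverse implication, assume each $F_T$ is fully faithful and essentially surjective. The key preliminary step is to upgrade fiberwise full faithfulness to honest full faithfulness of $F$. The point is that for $\phi\colon T\to S$ in $\C$, $\xi\in\F(T)$ and $\xi'\in\F(S)$, the arrows $\xi\to\xi'$ of $\F$ lying over $\phi$ are in canonical bijection, via the cartesian (universal) property of the chosen pullback $c\colon\phi^*(\xi')\to\xi'$, with the arrows $\xi\to\phi^*(\xi')$ inside the fiber $\F(T)$; the same holds downstairs in $\G$. Since every arrow of $\G$ is cartesian (Proposition~\ref{groupoid}), $F(c)$ is a cartesian arrow over $\phi$, which produces a canonical comparison isomorphism $F(\phi^*(\xi'))\cong\phi^*(F\xi')$ in $\G(T)$. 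Chasing these identifications shows that $F$ induces on arrows over $\phi$ exactly the map $\Hom_{\F(T)}(\xi,\phi^*(\xi'))\to\Hom_{\G(T)}(F\xi,\phi^*(F\xi'))$ coming from $F_T$ (post-composed with the comparison isomorphism), which is bijective. As every arrow of $\F$ lies over exactly one arrow of $\C$, this makes $F$ fully faithful. Essential surjectivity of $F$ is immediate: any object $\eta$ of $\G$ lies in some $\G(T)$, and essential surjectivity of $F_T$ gives $\xi\in\F(T)$ with $F\xi\cong\eta$ inside $\G(T)$, hence in $\G$.

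It remains to build the quasi-inverse, taking care that it and both natural isomorphisms are base-preserving. For each $\eta$ of $\G$, lying over $T$ say, use essential surjectivity of $F_T$ to choose $G(\eta)\in\F(T)$ together with an isomorphism $\epsilon_\eta\colon F(G(\eta))\xrightarrow{\sim}\eta$ in $\G(T)$; note $p_\F(G(\eta))=T$ automatically, since $p_\G\circ F=p_\F$. For an arrow $f\colon\eta\to\eta'$ of $\G$ over $\phi\colon T\to S$, let $G(f)$ be the unique arrow of $\F$ with $F(G(f))=\epsilon_{\eta'}^{-1}\circ f\circ\epsilon_\eta$, which exists by full faithfulness; applying $p_\G\circ F=p_\F$ shows $G(f)$ lies over $\phi$, and uniqueness gives functoriality of $G$. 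By construction $\epsilon\colon F\circ G\to\id_\G$ is a natural isomorphism, base-preserving because each $\epsilon_\eta$ is over an identity. Dually, for $\xi\in\F(T)$ let $\delta_\xi\colon\xi\to G(F(\xi))$ be the unique arrow of $\F$ with $F(\delta_\xi)=\epsilon_{F\xi}^{-1}$; it is an isomorphism over $\id_T$ (a fully faithful functor reflects isomorphisms), and uniqueness yields naturality, so $\delta\colon\id_\F\to G\circ F$ is a base-preserving natural isomorphism. Hence $F$ is an equivalence of fibered categories.

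The only real obstacle is the preliminary step in the reverse direction, namely that fiberwise full faithfulness forces global full faithfulness; this is exactly where the fibered structure does the work, resting on the fact that in a category fibered in groupoids every arrow is cartesian, so that arrows over an arbitrary $\phi$ are governed by arrows in a single fiber through the pullback along $\phi$. Everything afterwards is bookkeeping, kept painless by the uniqueness built into full faithfulness — in particular the various choices of $G(\eta)$ and $\epsilon_\eta$ never have to be made compatible by hand.
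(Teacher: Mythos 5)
The paper does not prove this proposition; it cites \cite[Chapter~3, Proposition~3.36]{FGA}. Your argument is correct and is the standard one: the forward direction is formal, and the reverse direction hinges on the observation that, because every arrow of $\G$ is cartesian (categories fibered in groupoids), $F$ automatically preserves cartesian arrows, so arrows of $\F$ over a fixed $\phi\colon T\to S$ reduce (via pullback along $\phi$) to arrows in the single fiber $\F(T)$, where full faithfulness of $F_T$ applies. The construction of the base-preserving quasi-inverse and the two base-preserving natural isomorphisms, using full faithfulness to force all the choices to be compatible, is also carried out correctly. No gaps.
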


\subsubsection{Categories fibered in sets}

A particularly simple class of fibered categories is that of categories fibered in sets.

\begin{defin}
A \gr{category fibered in sets} is a fibered category $\F \to \C$ such that $\F(T)$ is a set for any object $T$ of $\C$.
\end{defin}

Here we see a set as a category whose only arrows are the identities. The following proposition says that in a category fibered in sets pullbacks are ``strictly'' (meaning ``not only up to isomorphism'') unique, and this feature characterizes them.

\begin{prop}{\cite[Chapter~3, Proposition~3.25]{FGA}}
Let $\F \to \C$ be a functor. The category $\F$ is fibered in sets over $\C$ if and only if for every arrow $T\to S$ of $\C$ and every object $\xi$ of $\F(S)$ there exists a \gr{unique} arrow in $\F$ over $T\to S$ and with target $\xi$.
\end{prop}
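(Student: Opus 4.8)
The plan is to prove the two implications separately, relying only on the definitions of cartesian arrow, of fibered category, and of category fibered in sets recalled above.

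For the forward implication, suppose $\F \to \C$ is fibered in sets; I would fix an arrow $\phi\colon T \to S$ of $\C$ and an object $\xi \in \F(S)$. Existence of an arrow over $\phi$ with target $\xi$ is immediate: $\F$ is by definition a fibered category, so there is even a \emph{cartesian} such arrow $c\colon \eta \to \xi$. For uniqueness I would take an arbitrary arrow $f\colon \eta' \to \xi$ of $\F$ over $\phi$ and feed $f$ together with $\psi = \id_T$ into the universal property of $c$; this yields a unique arrow $h\colon \eta' \to \eta$ over $\id_T$ with $c \circ h = f$, i.e. an arrow of the fiber category $\F(T)$. Since $\F(T)$ is a set, $h$ is forced to be an identity, hence $\eta' = \eta$ and $f = c$. (Alternatively, one can first invoke Proposition~\ref{groupoid} to conclude that every arrow of $\F$ is cartesian, and then use the remark that two cartesian arrows over the same base arrow and with the same target differ by a unique fiber isomorphism, which here must again be an identity.)

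For the converse, I would assume that over every arrow $T \to S$ of $\C$ and every $\xi \in \F(S)$ there is a unique arrow of $\F$ with target $\xi$, and check first that any such arrow $f\colon \eta \to \xi$, lying over $\phi\colon T\to S$, is automatically cartesian: given $g\colon \nu \to \xi$ and $\psi\colon U\to T$ with $\phi\circ\psi = p_\F g$, I would apply the hypothesis to $\psi$ and $\eta$ to produce the unique arrow $h$ over $\psi$ with target $\eta$; then $f\circ h$ is an arrow over $p_\F g$ with target $\xi$, so by the uniqueness clause of the hypothesis (applied now to $p_\F g$ and $\xi$) it must equal $g$, which exhibits $h$ as the required filler — and its uniqueness is itself part of the hypothesis, since any filler is an arrow over $\psi$ with target $\eta$. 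Thus $\F$ is a fibered category, and it remains to see that $\F(T)$ is a set: any arrow of $\F(T)$ lies over $\id_T$ with some target $\eta$, but $\id_\eta$ is another such arrow, so the two coincide by uniqueness, and every arrow of $\F(T)$ is an identity.

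I do not expect a genuine obstacle here — the statement is essentially bookkeeping — but the one point that needs care is deducing \emph{strict} uniqueness of pullbacks (not merely uniqueness up to isomorphism) in the forward direction: one must pass through the universal property to compare an arbitrary lift with the chosen cartesian one, thereby landing inside the fiber $\F(T)$, and only then use discreteness of $\F(T)$ to collapse the comparison arrow to an identity. The rest is just keeping straight the directions of the arrows and which object lies over which.
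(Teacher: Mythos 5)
Your argument is correct. The paper itself does not prove this proposition---it is quoted from \cite[Chapter~3, Proposition~3.25]{FGA} with the blanket remark that all proofs in \S\ref{fibered.cat} can be found there---so there is no in-paper proof to compare against; your write-up simply supplies the standard direct verification. Both directions check out: in the forward direction you correctly route the arbitrary lift $f$ through the universal property of a cartesian lift $c$ to land a comparison arrow in the discrete fiber $\F(T)$, forcing strict (not just up-to-isomorphism) equality $f = c$; in the converse you correctly obtain the filler $h$ by applying the hypothesis to $\psi$ and $\eta$, then pin down its source by comparing $f\circ h$ with $g$ via the uniqueness clause over $p_\F g$, and the discreteness of the fibers follows by comparing any vertical arrow with an identity.
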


Because of this uniqueness, when $\F\to \C$ is fibered in sets the associated pseudo-functor is actually already a functor, which we denote by $\Phi_\F\colon \C^\op\to \set$. Moreover any morphism $F\colon \F\to \G$ of categories fibered in sets over $\C$ will give a natural transformation $\phi_F\colon \Phi_\F\to \Phi_\G$, as in Remark~\ref{natural}. This association gives a functor from the category of categories fibered in sets over $\C$ and the category of functors $\C^\op\to \set$.

\begin{prop}{\cite[Chapter~3, Proposition~3.26]{FGA}}
The functor defined above is an equivalence of categories.
\end{prop}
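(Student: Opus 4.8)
The plan is to exhibit an explicit quasi-inverse to the functor $\F \mapsto \Phi_\F$. Given a functor $\Phi\colon \C^\op \to \set$, define a category $\F_\Phi$ whose objects are the pairs $(T, x)$ with $T \in \C$ and $x \in \Phi(T)$, and in which an arrow $(T,x) \to (S,y)$ is an arrow $\phi\colon T \to S$ of $\C$ with $\Phi(\phi)(y) = x$; composition and identities are inherited from $\C$. The projection $p_{\F_\Phi}\colon \F_\Phi \to \C$, $(T,x) \mapsto T$, is a functor, and I claim it makes $\F_\Phi$ a category fibered in sets: the fiber $\F_\Phi(T)$ has underlying set $\Phi(T)$ and only identity arrows, and for $\phi\colon T \to S$ and an object $(S,y)$ the arrow $\phi\colon (T, \Phi(\phi)(y)) \to (S,y)$ is the unique arrow of $\F_\Phi$ over $\phi$ with target $(S,y)$, so $\F_\Phi \to \C$ is fibered in sets by the characterization of such categories recalled just above.

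Next I would check functoriality of $\Phi \mapsto \F_\Phi$: a natural transformation $\alpha\colon \Phi \to \Psi$ yields a base-preserving functor $\F_\alpha\colon \F_\Phi \to \F_\Psi$ acting as $(T,x)\mapsto (T,\alpha_T(x))$ on objects and as the identity $\phi \mapsto \phi$ on the underlying arrows of $\C$ — the naturality square of $\alpha$ is precisely what guarantees that arrows are sent to arrows — and identities and composites of natural transformations are visibly preserved.

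It then remains to compare the two composites with the identity functors. In one direction, for a functor $\Phi$ the associated functor $\Phi_{\F_\Phi}$ sends $T$ to the set of isomorphism classes of $\F_\Phi(T)$, which is canonically $\Phi(T)$ since $\F_\Phi(T)$ is discrete, and sends $\phi$ to $\Phi(\phi)$; so $\Phi_{\F_\Phi}$ is canonically isomorphic to $\Phi$, naturally in $\Phi$. In the other direction, given $\F \to \C$ fibered in sets, there is a tautological base-preserving functor $\Theta_\F\colon \F \to \F_{\Phi_\F}$ sending $\xi \in \F(T)$ to $(T, \xi)$ — legitimate because $\F(T)$ is already a set, so the isomorphism class of $\xi$ is $\xi$ itself — and sending an arrow $\xi \to \eta$ over $\phi\colon T \to S$ to the arrow $\phi$ of $\F_{\Phi_\F}$; this assignment on arrows is forced, since in a category fibered in sets there is a unique arrow over $\phi$ with target $\eta$. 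By Proposition~\ref{fibered.equiv} it suffices to see that $\Theta_\F$ is an equivalence on each fiber, and on $\F(T)$ it is literally the identity map of the set $\Phi_\F(T) = \F(T)$; thus $\Theta_\F$ is an isomorphism, and naturality of $\Theta_{(-)}$ in $\F$ is immediate. Assembling these gives that $\F \mapsto \Phi_\F$ and $\Phi \mapsto \F_\Phi$ are quasi-inverse, proving the claim.

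The only genuinely delicate point — and the one I expect to absorb most of the care — is bookkeeping: keeping the variance straight (since $\Phi$ is contravariant, the condition on an arrow $(T,x)\to(S,y)$ is $\Phi(\phi)(y)=x$, not $\Phi(\phi)(x)=y$), and exploiting the fact that for categories fibered in sets the pullbacks, hence the pseudo-functor $\Phi_\F$, are strictly unique, so that constructions which normally hold only up to canonical isomorphism here hold on the nose. It is precisely this strictness that lets Proposition~\ref{fibered.equiv} close the argument without any chasing of coherence data.
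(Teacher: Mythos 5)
Your proposal is correct and follows the same approach as the paper's own sketch: exhibit the explicit quasi-inverse $\Phi \mapsto \F_\Phi$ (the paper's $F \mapsto \F_F$), check it lands in categories fibered in sets and is functorial in natural transformations, and compare both composites with the identities. The paper stops at the construction and asserts ``it can be shown that this gives a functor, which is a quasi-inverse''; you carry out those verifications, correctly using the strictness of pullbacks in categories fibered in sets and Proposition~\ref{fibered.equiv} to close the argument.
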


We sketch briefly the inverse construction. Let $F\colon \C^\op\to\set$ be a functor, and consider the following category, which we call $\F_F$: as objects take pairs $(T,\xi)$, where $T$ is an object of $\C$ and $\xi \in F(T)$, while an arrow $f\colon (T,\xi)\to(S,\eta)$ will be an arrow $f\colon T\to S$ such that $F(f)(\eta)=\xi$. Then $\F_F$ is a category fibered in sets over $\C$.

Given a natural transformation $\alpha\colon F\to G$ between two functors $\C^\op\to \set$, we construct a functor $H_\alpha\colon \F_F\to \F_G$, as follows: an object $(T,\xi)$ of $\F_F$ goes to the object $(T,\alpha(T)(\xi))$ of $\F_G$, and an arrow $f\colon (T,\xi)\to (S,\eta)$ simply goes to itself (as an arrow $f\colon T\to S$ of $\C$). It can be shown that this gives a functor, which is a quasi-inverse to the one considered above.

\begin{examp}\label{scheme}
In particular if $X$ is a scheme over $S$, we can see it as a functor $h_X\colon \schs^\op\to \set$ (by the classical Yoneda Lemma), and also as a category fibered in groupoids $( \schs/X )\to \schs$ (by the preceding Proposition). To avoid this cumbersome notation we will write $X$ for $h_X$ and also for $( \schs/X )$.
\end{examp}

Another class of simple fibered categories are the ones fibered in equivalence relations. We say that a groupoid is an \gr{equivalence relation} if for any pair of objects there is at most one arrow from the first one to the second. Another way to say this is that the only arrow from any object to itself is the identity.

\begin{defin}
A fibered category $\F\to \C$ is said to be \gr{fibered in equivalence relations} if for every object $T$ of $\C$ the fiber category $\F(T)$ is an equivalence relation.
\end{defin}

The name ``equivalence relation'' comes from the fact that if a groupoid $\F$ is an equivalence relation, and we call $A$ and $O$ its sets of arrows and objects respectively, the map $A\to O\times O$ that sends an arrow into the pair $(\text{source},\text{target})$ is injective, and gives an equivalence relation on the set $O$.

We have the following fact, which characterizes categories fibered in equivalence relations.

\begin{prop}{\cite[Chapter~3, Proposition~3.40]{FGA}}
A fibered category $\F\to \C$ is fibered in equivalence relations if and only if it is equivalent to a category fibered in sets.
\end{prop}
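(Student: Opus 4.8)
The plan is to realize $\F$, when it is fibered in equivalence relations, as being equivalent to the category fibered in sets attached to its associated functor, and to read off the converse directly from Proposition~\ref{fibered.equiv}. Everything rests on the following elementary observation about groupoids: a groupoid $\mathcal{G}$ is an equivalence relation if and only if it is equivalent, as a category, to a discrete category. Indeed, if $\mathcal{G}$ is an equivalence relation, picking one object in each isomorphism class gives a functor from the set of isomorphism classes, viewed as a discrete category, into $\mathcal{G}$; it is essentially surjective by construction, and full and faithful because in $\mathcal{G}$ there is at most one arrow between any two objects. Conversely, a category equivalent to a discrete one is a groupoid all of whose automorphism groups are trivial, and such a groupoid is an equivalence relation.

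For the ``only if'' direction, let $F\colon \C^\op \to \set$ be the associated functor of $\F$ and let $\F_F \to \C$ be the category fibered in sets attached to $F$ by the construction recalled above, so that $\F_F(T)$ is the discrete category on the set $F(T)$ of isomorphism classes of $\F(T)$. I would define a morphism of categories fibered in groupoids $\Psi\colon \F \to \F_F$ by sending an object $\xi$ over $T$ to $(T, [\xi])$, with $[\xi] \in F(T)$ the class of $\xi$, and an arrow $f\colon \xi \to \eta$ over $\phi\colon T \to S$ to $\phi$, regarded as an arrow $(T,[\xi]) \to (S,[\eta])$ of $\F_F$. The one thing to check is that this last is a legitimate arrow of $\F_F$, i.e.\ that $F(\phi)([\eta]) = [\xi]$; this is where the hypothesis that $\F$ is fibered in \emph{groupoids} enters, via Proposition~\ref{groupoid}: the arrow $f$ is cartesian, hence exhibits $\xi$ as a pullback of $\eta$ along $\phi$, so $[\xi] = [\phi^*\eta] = F(\phi)([\eta])$. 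Functoriality and base-preservation of $\Psi$ are then routine, and the induced functor on fibers $\Psi_T\colon \F(T) \to \F_F(T) = F(T)$ is precisely ``pass to the isomorphism class''. If $\F$ is fibered in equivalence relations, each $\F(T)$ is an equivalence relation, so by the observation above each $\Psi_T$ is an equivalence of categories; hence $\Psi$ is an equivalence by Proposition~\ref{fibered.equiv}, and $\F$ is equivalent to the category $\F_F$ fibered in sets.

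For the ``if'' direction, assume $\F$ is equivalent to a category fibered in sets $\G \to \C$. By Proposition~\ref{fibered.equiv} this induces an equivalence $\F(T) \simeq \G(T)$ for every object $T$ of $\C$, and $\G(T)$ is a discrete category; by the observation above $\F(T)$ is then an equivalence relation, so $\F$ is fibered in equivalence relations.

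The only point requiring any care is the verification that $\Psi$ is well defined on arrows — pinning down that ``$f$ cartesian'' together with the definition of the associated functor on morphisms forces $F(\phi)([\eta]) = [\xi]$. Everything else is bookkeeping, and since being an equivalence relation is a property of the fiber categories alone, there are no cleavage-compatibility issues to address.
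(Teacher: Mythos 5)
Your argument is correct, and it is essentially the proof in the FGA reference the paper cites: construct the base-preserving functor $\Psi\colon\F \to \F_F$ into the category fibered in sets attached to the associated functor $F$, observe that each $\Psi_T$ is the ``pass to isomorphism classes'' functor and hence is fully faithful precisely when $\F(T)$ is an equivalence relation, and conclude via Proposition~\ref{fibered.equiv}; the converse is read off from the easy direction of the same proposition together with the fact that a groupoid equivalent to a discrete category has trivial automorphism groups. One small remark on the ``if'' direction: Proposition~\ref{fibered.equiv} is stated for morphisms between categories fibered in groupoids, and a priori you have not yet established that $\F$ is fibered in groupoids before invoking it -- but the direction you actually use (a base-preserving equivalence of fibered categories restricts to an equivalence on each fiber) is elementary and holds for arbitrary fibered categories, and in any case a fibered category equivalent to one fibered in sets is automatically fibered in groupoids, so nothing is lost.
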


Because of this, sometimes categories fibered in equivalence relations are called \gr{quasi-functors}.

Now suppose that $T$ in an object of $\C$, and consider the \gr{comma category} $(\C/T)$, defined as follows: its objects are arrows $S\to T$ of $\C$ with target $T$, and an arrow from $f\colon S\to T$ to $g\colon U\to T$ is an arrow $h\colon S\to U$ of $\C$, such that $g\circ h =f$.  We have a functor $(\C/T)\to \C$ that sends $S\to T$ into $S$, and an arrow as above to the arrow $h\colon S\to U$ of $\C$.

$(\C/T)\to \C$ is a category fibered in sets: given an arrow $S\to U$ of $\C$ and an object over $U$, that is, an arrow $U\to T$, the only possible pullback to $S$ is the composite $S\to U\to T$. It is also easy to see that this category fibered in sets is the one associated with the functor $h_T\colon \C^\op\to\set$ represented by $T$ (up to equivalence of course).

\begin{defin}
A category fibered in groupoids $\F\to\C$ is called \gr{representable} if it is equivalent to a category fibered in groupoids of the form $(\C/T)$.
\end{defin}

Clearly if $\F\to \C$ is representable, then it is fibered in equivalence relations.

%
%

\subsection{Fibered categories as deformation problems}\label{fcadp}

Now suppose that $\F\to \schs$ is a category fibered in groupoids coming from a geometric deformation problem, where $S=\spec k$ or some other base scheme (we will see how this association is carried out in some examples). Here the idea is that objects of the category $\F(T)$ are families with base scheme $T$, which parametrize some kind of algebro-geometric object.

A particular role is played by the objects of $\F(\spec k)$ where $k$ is a field over $S$ (i.e. with a morphism $\spec k\to S$), which correspond, by a version of the Yoneda Lemma for fibered categories, to morphisms of categories fibered in groupoids $\spec k \to \F$. These morphisms, in analogy with what happens with schemes, can be considered as ``points'' of the deformation problem (compare with the concept of points of an algebraic stack, for example).

The first step in the study of a deformation problem in the form of the category $\F$ as above is an infinitesimal study of the local geometry of $\F$ around a fixed ``point'' $\xi_{0} \in \F(\spec k)$; in other words, again in analogy with the case of schemes, one studies morphisms $\spec A\to \F$, where $A \in \art$, such that the composition $\spec k \to \spec A\to \F$ corresponds to the fixed object $\xi_{0}$. Again by the analogue of the Yoneda Lemma, these morphisms correspond to objects of $\F(\spec A)$.

This leads us to restricting the fibered category $\F\to \schs$ to the full subcategory $\art^{\op}\subseteq  \schs$, where $k$ is some field over $S$, that is, to considering infinitesimal deformations. Usually we will also concentrate our attention to objects restricting to a given $\xi_{0}$ over $k$, but it is not necessary to do so right away.

Actually, as explained in the introduction, it is sometimes useful to have a theory for deformations over artinian algebras over a complete noetherian local ring $\Lambda$ with residue field $k$. So with this motivation in mind, from now on we will study categories fibered in groupoids $\fib$, where $\Lambda$ is as above. We will turn back to ``global'' deformations only occasionally.

We stress once again that we will always identify $\artl^\op$ with the corresponding full subcategory of $(\text{Sch}_\Lambda)$. From now on we will drop the notation $\F(\spec A)$ and write simply $\F(A)$ for the fiber category of $\F$ over $\spec A$, and if $\fib$ is a category fibered in groupoids, $\phi\colon A'\to A$ a homomorphism in $\artl$, and $f\colon \xi \to \eta$ an arrow of $\F$ with $\xi \in \F(A)$ and $\eta \in F(A')$, we will say that $f$ is \gr{over} $\phi$ if its image in $\artl^\op$ is the arrow $\phi^{\op}$ from $A$ to $A'$ corresponding to $\phi$. \label{over_alg}

We will also draw some strange-looking commutative diagrams like this
$$
\xymatrix{
\xi \ar[r]^f\ar@{|->}[d] & \eta \ar@{|->}[d]\\
A & A' \ar[l]_\phi
}
$$
which should of course be read as
$$
\xymatrix{
\xi \ar[r]^f\ar@{|->}[d] & \eta \ar@{|->}[d]\\
\spec A\ar[r]^{\widetilde{\phi}} & \spec A'.
}
$$

\subsubsection{The Rim--Schlessinger condition}

For everything that follows we need to impose a very important gluing condition on a category fibered in groupoids $\fib$. Suppose that we have two homomorphisms $\pi'\colon A'\to A$, $\pi''\colon A''\to A$ in $\artl$, the second one being surjective. Consider the fibered product $A'\times_A A''$, which is easily seen to be an object of $\artl$ as well. We have two pullback functors $\F(A'\times_A A'')\to \F(A')$ and $\F(A'\times_A A'')\to \F(A'')$, such that the composites
$$
\xymatrix{
\F(A'\times_A A'')\ar[r] &\F(A') \ar[r] & \F(A)
}
$$
and
$$
\xymatrix{
\F(A'\times_A A'')\ar[r] &\F(A'') \ar[r] & \F(A)
}
$$
with the pullback functors to $A$ are isomorphic. We get an induced functor
$$
\Phi\colon \F(A'\times_A A'')\to \F(A')\times_{\F(A)}\F(A'')
$$
(see Appendix~\ref{appc} for the definition of fibered products of categories).

More explicitly, $\Phi$ sends an object $\xi$ into $(\xi|_{A'},\xi|_{A''},\theta)$ where $\theta\colon (\xi|_{A'})|_{A}\to (\xi|_{A''})|_{A}$ is the canonical isomorphism identifying the pullbacks of $\xi|_{A'}$ and $\xi|_{A''}$ to $A$ as pullbacks of $\xi$, and an arrow $f\colon \xi \to \eta$ is mapped to the pair $(f|_{A'},f|_{A''})$ of induced arrows on the pullbacks.

\begin{defin}\label{def:rs}
A category fibered in groupoids $\fib$ satisfies the \gr{Rim--Schlessinger condition} (RS from now on) if $\Phi$ is an equivalence of categories for every $A,A',A'' \in \artl$ and maps as above.
\end{defin}

This condition, which was first formulated by D. S. Rim in \cite{Rim}, resembles very much Schlessinger's, and actually implies (H1) and (H2) for the associated functor, as is very easy to see ((H4) is a little more subtle, see Proposition 2.1.12 of \cite{Oss}).

Despite the fact that RS is somewhat stronger than (H1)+(H2), when one proves that a given category fibered in groupoids (or rather its associated functor) satisfies the latter, he or she usually proves that the category satisfies RS (or could do so with little extra effort). Moreover all categories fibered in groupoids coming from reasonable geometric deformation problems seem to have the stated property, so we will take it as a starting point.

\begin{defin}
A \gr{deformation category} over $\Lambda$ is a category fibered in groupoids over $\artl^{\op}$ that satisfies RS. 
\end{defin}

Deformation categories are called ``homogeneous groupoids'' in \cite{Rim}, and ``deformation stacks'' in \cite{Oss}.

From now on when we have a deformation category $\fib$ with $A,A',A''$ artinian algebras as above, and objects $\xi' \in \F(A')$ and $\xi'' \in F(A'')$ with a fixed isomorphism of the pullbacks to $A$, we denote by $\{\xi',\xi''\}$ an induced object over the fibered product $A'\times_A A''$. When the isomorphism over $A$ or the choice of such an object is relevant, we will be more specific.

\begin{examp}
As a trivial example, we consider the category fibered in groupoids $X\to \artl^\op$ given by a scheme $X$ over $\spec\Lambda$.

If $X=\spec R$ is affine, for every $B \in \artl$ we have a natural bijection $X(B)\simeq\Hom_\Lambda(R,B)$, and if we take $A,A',A''\in \artl$ and maps as above, the map $X(A'\times_A A'')\to X(A')\times_{X(A)} X(A'')$ is a bijection because of the properties of the fibered product. When $X$ is not affine one reduces to the affine case by noticing that the image of the morphisms involved is a point of $X$, and taking an affine neighborhood.
\end{examp}

A \gr{morphism} of deformation categories will simply be a morphism of categories fibered in groupoids.

Given a deformation category $p_{\F}\colon \fib$ and an object $\xi_0$ over $\spec k$, we can construct another deformation category $\F_{\xi_0}$ that contains only objects of $\F$ that restrict to $\xi_0$ over $\spec k$ (and in this sense are deformations of $\xi_0$), taking the (dual) comma category:

\begin{description}
\item[Objects]{arrows $f\colon \xi_0\to \xi$ of $\F$, or equivalently pairs $(\xi,\phi)$ where $\xi$ is an object of $\F$ and $\phi$ is an arrow in $\F(k)$ between $\xi_0$ and the pullback of $\xi$ to $\spec k$.}
\item[Arrows]{from $f\colon \xi_0\to \xi$ to $g\colon \xi_0\to \eta$ are arrows $h\colon \xi\to\eta$ of $\F$ such that $h\circ f=g$, or equivalently the arrow $\xi_0\to\xi_0$ induced by $h$ is the identity.}
\end{description}

We have also an obvious functor $\F_{\xi_0}\to \artl^\op$ sending $(\xi,\phi)$ into $p_{\F}\xi$. The following will be useful when we have to consider deformations of a fixed object over $k$.

\begin{prop}\label{deformation.cat}
If $\fib$ is a deformation category and $\xi_0 \in \F(k)$, then $\F_{\xi_0}\to \artl^\op$ is also a deformation category.
\end{prop}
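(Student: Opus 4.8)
The plan is to check the two defining properties of a deformation category for the functor $\F_{\xi_0}\to\artl^\op$: that it is fibered in groupoids, and that it satisfies RS.

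For the first property I would apply the criterion of Proposition~\ref{groupoid}. An arrow of $\F_{\xi_0}$ from $f\colon\xi_0\to\xi$ to $g\colon\xi_0\to\eta$ is by definition an arrow $h\colon\xi\to\eta$ of $\F$ with $h\circ f=g$; since every arrow of $\F$ is cartesian, $h$ is cartesian in $\F$. To see that $h$ is cartesian in $\F_{\xi_0}$ one takes the unique filling arrow produced by the universal property of $h$ in $\F$ and checks that it automatically respects the structure arrows from $\xi_0$: both this filling arrow and the expected one lie over the \emph{same} arrow to $\spec k$ in $\artl^\op$ — the canonical one, since a $\Lambda$\dash algebra homomorphism followed by a residue map is again a residue map — so they coincide by the uniqueness clause in the universal property. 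Thus every arrow of $\F_{\xi_0}$ is cartesian. For the existence of pullbacks, given $\phi\colon A'\to A$ in $\artl$ and an object $(\eta,\psi)\in\F_{\xi_0}(A')$, where $\psi\colon\xi_0\xrightarrow{\ \sim\ }\eta|_k$, I would take $\eta|_A$ together with the composite of $\psi$ with the canonical isomorphism $\eta|_k\simeq(\eta|_A)|_k$; the cleavage arrow $\eta|_A\to\eta$ of $\F$ then underlies an arrow of $\F_{\xi_0}$, which is the required pullback.

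For RS, fix homomorphisms $\pi'\colon A'\to A$, $\pi''\colon A''\to A$ in $\artl$ with $\pi''$ surjective and put $B=A'\times_A A''$. The observation that makes the argument run is that, for every $C\in\artl$, the fiber $\F_{\xi_0}(C)$ is the fibered product of categories $\{\xi_0\}\times_{\F(k)}\F(C)$, where $\F(C)\to\F(k)$ is restriction along $C\to k$; and since the composites $A'\to A\to k$, $A''\to A\to k$ and $B\to A\to k$ all equal the respective residue maps, the restrictions to $\F(k)$ from $\F(A')$, $\F(A'')$ and $\F(B)$ all factor through $\F(A)$. Using RS for $\F$ to replace $\F(B)$ by $\F(A')\times_{\F(A)}\F(A'')$, and then the fact that fibered products of categories commute with one another (Appendix~\ref{appc}), one obtains a chain of equivalences $\F_{\xi_0}(B)\simeq\{\xi_0\}\times_{\F(k)}\bigl(\F(A')\times_{\F(A)}\F(A'')\bigr)\simeq\F_{\xi_0}(A')\times_{\F_{\xi_0}(A)}\F_{\xi_0}(A'')$, and one checks that the composite is the functor $\Phi$ attached to $\F_{\xi_0}$. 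If one prefers to avoid the generalities of Appendix~\ref{appc}, one can instead verify directly that this $\Phi$ is essentially surjective and fully faithful by feeding the corresponding statements for $\F$ into the bookkeeping: the key point is that $(\xi|_{A'})|_k=\xi|_k$, so the isomorphism $\xi_0\simeq\xi|_k$ carried by an object over $B$ is transported, under the RS equivalence for $\F$, to the data on the objects over $A'$, $A''$ and $A$, and conversely.

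The only real content is the RS verification, and within that the only delicate point is keeping track of the isomorphism data living over $A$ inside the fibered products — the gluing isomorphism $\theta$ between the two restrictions to $A$ and its compatibility with the isomorphisms to $\xi_0$ over $k$. Everything else is formal once RS for $\F$ is granted, so I expect the main obstacle to be organizational: recording these compatibilities cleanly rather than proving anything genuinely new.
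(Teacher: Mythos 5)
The paper leaves this proposition as an exercise (right after the statement), so there is no reference proof to compare against. Your argument is correct: the check that $\F_{\xi_0}\to\artl^\op$ is fibered in groupoids via Proposition~\ref{groupoid} is handled properly (the only point needing care — that the filler arrow $\ell$ provided by cartesianness in $\F$ satisfies $\ell\circ e=f$ — is settled exactly as you say, by the uniqueness clause applied to $h$ over $\spec k\to\spec A$), and the identification $\F_{\xi_0}(C)\simeq\{\xi_0\}\times_{\F(k)}\F(C)$ together with RS for $\F$ does yield RS for $\F_{\xi_0}$, with the compatibility of the gluing isomorphism $\theta$ with the $\psi$'s being, as you note, precisely what makes the direct essential-surjectivity and full-faithfulness check go through.
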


\begin{ex}
Prove Proposition~\ref{deformation.cat}.
\end{ex}


\subsection{Examples}\label{examp}

Now we introduce three examples of deformation problems that will show up systematically in the following, providing concrete examples to our abstract constructions. In each of these examples some additional hypotheses may be required (on the ambient scheme over $\Lambda$ in the case of deformations of closed subschemes, for example) to make things work out sometimes. We will specify these hypotheses case by case.

Each of these examples has also a classical associated deformation functor, which can be simply obtained by taking the associated functor of the deformation category we will introduce for the problem.

\subsubsection{Schemes}

The most basic example is the one already introduced, that of deformations of schemes without additional structure.

Let us consider the following category fibered in groupoids, which we will denote by $\catdef \arr \artl^\op$:

\begin{description}
\item[Objects]{flat morphisms of schemes $X\to \spec A$, where $A \in \artl$.}
\item[Arrows]{from $X\to \spec A$ to $Y\to \spec B$ are pairs $(\phi,f)$ where $\phi\colon B\to A$ is a homomorphism of $\Lambda$-algebras and $f\colon X\simeq Y_A$ is an isomorphism of schemes (recall that $Y_A$ denotes the base change $Y\times_{\spec B}\spec A$).}
\end{description}

Given two arrows $(\phi,f)$ from $X\to \spec A$ to $Y\to \spec B$ and $(\psi,g)$ from $Y\to \spec B$ to $Z\to \spec C$ the composite $(\psi,g)\circ(\phi,f)$ is $(\rho,h)$ where $\rho\colon C\to A$ is simply $\phi\circ \psi$, and if we call $g_A\colon Y_A \simeq (Z_B)_A$ the isomorphism induced by $g\colon Y\simeq Z_B$ by base change, then $h\colon X\simeq Z_A$ is given by the composite
$$
X\stackrel{f}{\larr}Y_A \stackrel{g_A}{\larr} (Z_B)_A\simeq Z_A
$$
where the last isomorphism is the canonical one.

We have a natural forgetful functor $\catdef\to \artl^\op$, and by the properties of the fibered product and the way we defined arrows we see that the conditions of Proposition~\ref{groupoid} are satisfied, so that $\catdef\to \artl^\op$ is a category fibered in groupoids. Notice that if $X_0 \in \catdef(k)$ and $A \in \artl$, the category $\catdef_{X_0}(A)$ is exactly the one we defined at the beginning, of flat schemes over $\spec A$ with an isomorphism of the closed fiber with $X_0$.


\begin{prop}\label{schemes.rs}
The category fibered in groupoids $\catdef\to \artl^\op$ is a deformation category.
\end{prop}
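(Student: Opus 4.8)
The plan is to verify the Rim--Schlessinger condition directly. Fix $\pi'\colon A'\to A$ and $\pi''\colon A''\to A$ in $\artl$ with $\pi''$ surjective, put $B=A'\times_A A''$, and consider the functor
$$
\Phi\colon \catdef(B)\larr \catdef(A')\times_{\catdef(A)}\catdef(A'')
$$
sending a flat $B$-scheme $X$ to the triple $(X_{A'},X_{A''},\theta_{\mathrm{can}})$, where $\theta_{\mathrm{can}}$ is the canonical identification of both restrictions to $A$ with $X_A$. We must show $\Phi$ is essentially surjective and fully faithful. The observation used throughout is that every object in sight has the same underlying topological space: if $X\to\spec A'$ is flat then $|X|=|X\otimes_{A'}k|$ because $\m_{A'}$ is nilpotent, and likewise over the other rings; so given a triple $(X',X'',\theta)$, the isomorphism $\theta$ identifies $|X'|$ and $|X''|$ with one space $Z$, the common closed fiber $X_0$ lives on $Z$, and the whole problem becomes a statement about sheaves of flat algebras on $Z$.

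\textbf{Essential surjectivity.} Given $(X',X'',\theta)$, I would set
$$
\O_X \eqdef \O_{X'}\times_{\O_{X'}\otimes_{A'}A}\O_{X''}
$$
as a sheaf of $B$-algebras on $Z$, where $\O_{X''}\to\O_{X'}\otimes_{A'}A$ is the composite $\O_{X''}\to\O_{X''}\otimes_{A''}A\xrightarrow{\theta^\sharp}\O_{X'}\otimes_{A'}A$, and claim that $X\eqdef(Z,\O_X)$ is a flat $B$-scheme with $\Phi(X)\cong(X',X'',\theta)$ compatibly with $\theta$. To see this is a scheme, cover $Z$ by opens $U$ affine for $X_0$; since $\m_{A'}$ and $\m_{A''}$ are nilpotent, $X'|_U$ and $X''|_U$ are then affine as well (a nilpotent thickening of an affine scheme is affine), say $X'|_U=\spec R'$, $X''|_U=\spec R''$, with $R'\otimes_{A'}A\cong R''\otimes_{A''}A\eqdef R$ via $\theta$. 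Now one invokes the commutative-algebra fact underlying this whole circle of ideas (Milnor patching for the conductor square $B=A'\times_A A''$ with $\pi''$ surjective): the functor $M\mapsto(M\otimes_B A',\,M\otimes_B A'',\,\mathrm{can})$ from flat $B$-modules to such triples is an equivalence, compatibly with algebra structures. Hence $P\eqdef R'\times_R R''$ is a flat $B$-algebra, $|\spec P|=|U|$, and $P\otimes_B A'\cong R'$, $P\otimes_B A''\cong R''$. Since taking sections over $U$ and restricting to opens commute with finite limits, $\O_X|_U$ is $\widetilde P$, and these affine pieces glue to the scheme $X$; the resulting isomorphisms $\O_X\otimes_B A'\cong\O_{X'}$, $\O_X\otimes_B A''\cong\O_{X''}$ are compatible with $\theta$, so $\Phi(X)\cong(X',X'',\theta)$.

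\textbf{Full faithfulness.} Arrows of $\catdef(B)$ are $B$-isomorphisms, and arrows of $\catdef(A')\times_{\catdef(A)}\catdef(A'')$ are pairs $(f',f'')$ of $A'$- and $A''$-isomorphisms agreeing over $A$. Given $X,Y\in\catdef(B)$ and such a compatible pair between $\Phi(X)$ and $\Phi(Y)$, the corresponding maps on structure sheaves glue (again because the fibered product of sheaves of algebras computes $\O_X$ and $\O_Y$) to an isomorphism of sheaves of $B$-algebras, hence to a $B$-isomorphism $X\cong Y$, and restriction recovers the pair. This gives the bijection on Hom-sets, and together with essential surjectivity shows $\Phi$ is an equivalence; thus $\catdef$ satisfies RS and is a deformation category. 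By Proposition~\ref{deformation.cat} the same then holds for $\catdef_{X_0}$ for any $X_0\in\catdef(k)$.

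\textbf{Main obstacle.} Everything above except the Milnor patching statement is bookkeeping about topological spaces and gluing of schemes; the real content is the commutative-algebra lemma, precisely that $R'\times_R R''$ is \emph{flat} over $B=A'\times_A A''$ and that base change along $B\to A'$ and $B\to A''$ recovers $R'$ and $R''$. One proves it from the short exact sequence $0\to R'\times_R R''\to R'\oplus R''\to R\to 0$ (right-exact because $R''\to R$ is surjective, which is where $\pi''$ surjective enters), together with the identifications $A'\otimes_B A'\cong A'$ and $A'\otimes_B A''\cong A$, and a diagram chase with the associated $\Tor$-sequences to kill the error terms. This is standard (Milnor patching; see e.g. Milnor's \emph{Introduction to Algebraic $K$-theory}, \S2, or Ferrand's theorem on pinchings), and could alternatively be isolated as an appendix lemma.
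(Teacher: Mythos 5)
Your proof takes essentially the same route as the paper's own argument in Appendix~\ref{appd}: the paper first proves the Milnor-patching statement you cite (that $M\mapsto (M\otimes_B A', M\otimes_B A'',\mathrm{can})$ is an equivalence from flat $B$-modules/algebras to compatible triples) as its Proposition~\ref{rs-alg}, and then derives Proposition~\ref{schemes.rs} exactly as you do, by observing that all schemes in sight share a topological space, forming the fibered product of structure sheaves, and checking flatness locally on affines. The only difference is presentational: the paper proves the flatness of $\Psi M$ in full (via a bootstrapping argument with free covers and the local criterion of flatness, which is somewhat more delicate than the ``diagram chase with Tor-sequences'' your sketch suggests), while you defer this to the literature.
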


The proof of this is postponed to Appendix~\ref{appd}.

In what follows we will mostly be interested in schemes of finite type. However, if $X_{0}$ is a scheme of finite type over $k$, it is easy to see that any lifting of $X_{0}$ to any $A \in \artl$ is of finite type over $A$, so we don't need to restrict $\catdef$.

\subsubsection{Closed subschemes}

For our second example we want to consider, given a closed immersion of schemes $Y_0\subseteq X$ over $k$, families of subschemes of $X$ including the given $Y_0$ as a fiber over a rational point.

In our setting, given a scheme $X$ over $\spec \Lambda$, we consider the following category, which we will denote by $\hilb^{X}$:

\begin{description}
\item[Objects]{pairs $(A,Y)$ where $A \in \artl$ and $Y$ is a closed subscheme of $X_A$, flat over $A$.}
\item[Arrows]{from $(A,Y)$ to $(B,Z)$ are homomorphisms $B\to A$, such that the induced closed subscheme $Z_A\subseteq (X_B)_A$ corresponds to $Y\subseteq X_A$ under the canonical isomorphism $(X_B)_A\simeq X_A$.}
\end{description}

Composition is given by the usual composition of ring homomorphisms, and it is easily checked that this is well defined: that is, if we have $\phi\colon (A,Y)\to(B,Z)$ and $\psi\colon (B,Z)\to (C,W)$ arrows as above, then the composite $\phi\circ \psi\colon  C\to A$ is still an arrow in our category, i.e. the induced closed subscheme $W_A\subseteq (X_C)_A$ corresponds to $Y\subseteq X_A$ with respect to the canonical isomorphism $(X_C)_A\simeq X_A$.

We have a natural forgetful functor $\hilb^X\to \artl^\op$, and again by the properties of fibered products and definition of the arrows we easily see that we can apply Proposition~\ref{groupoid}, so that $\hilb^X\to\artl^\op$ is a category fibered in groupoids.

There is an important difference between this example and the previous one, namely the fact that in $\hilb^X$ arrows are uniquely determined by their image in $\artl$. This means that the associated pseudo-functor of $\hilb^X\to \artl^\op$ is actually a functor, and our fibered category is fibered in sets.

We will see later on that this is equivalent to saying that our deformation problem has no non-trivial infinitesimal automorphisms (see Proposition~\ref{equiv.rel}).

The notation $\hilb$ comes from the fact that the deformation category is the category fibered in sets associated with the Hilbert functor, if the ambient scheme $X$ is quasi-projective over $\Lambda$.

\begin{prop}\label{hilb.rs}
The category fibered in groupoids $\hilb^X\to \artl^\op$ is a deformation category.
\end{prop}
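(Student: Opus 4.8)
The plan is to verify the Rim--Schlessinger condition directly, exploiting the fact (noted just above) that $\hilb^X$ is fibered in sets: then $\hilb^X(A')\times_{\hilb^X(A)}\hilb^X(A'')$ is the ordinary fibered product of sets, and I only have to show that, for $A'\to A$ and a surjection $A''\to A$ in $\artl$ and $B\eqdef A'\times_A A''$, the map
$$
\Phi\colon\hilb^X(B)\larr\hilb^X(A')\times_{\hilb^X(A)}\hilb^X(A'')
$$
sending a flat closed subscheme $Y\subseteq X_B$ to the pair $(Y_{A'},Y_{A''})$ (the comparison isomorphism over $A$ being trivial here) is a bijection. First I would record the easy structural facts: $X_A,X_{A'},X_{A''},X_B$ all have underlying space $|X_0|$ where $X_0=X_k$; since $A''\to A$ is surjective, so is $B\to A'$, whence $\spec A'\hookrightarrow\spec B$ and $X_{A'}\hookrightarrow X_B$ are closed immersions; and a flat closed subscheme of $X_B$ is the datum of a quasi-coherent quotient $\O_X$-algebra $\O_{X_B}\twoheadrightarrow\O_Y$ with $\O_Y$ flat over $B$.

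The main tool is the \emph{flat patching lemma} of Ferrand (also in Milnor): when $B=A'\times_A A''$ with $A''\to A$ surjective, $M\mapsto(M\otimes_B A',\,M\otimes_B A'')$ is an equivalence between flat $B$-modules and triples $(M',M'',\phi)$ with $M'$ flat over $A'$, $M''$ flat over $A''$ and $\phi\colon M'\otimes_{A'}A\xrightarrow{\sim}M''\otimes_{A''}A$, with quasi-inverse $(M',M'',\phi)\mapsto M'\times_A M''$. I will also use the elementary half of this, namely that a flat $B$-module $M$ satisfies $M\xrightarrow{\sim}(M\otimes_B A')\times_{M\otimes_B A}(M\otimes_B A'')$, obtained by tensoring the exact sequence $0\to B\to A'\oplus A''\to A\to 0$ (exact since $A''\to A$ is onto) with $M$. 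I would apply the lemma affine-locally on $X$ — where it is literally a statement about rings and modules — and then glue, the point being that the fiber-product construction is canonical, so the local pieces agree on overlaps, and quasi-coherence is preserved throughout.

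For surjectivity of $\Phi$, starting from a compatible pair $(Y',Y'')$ I set $\O_Y\eqdef\O_{Y'}\times_{\O_{\bar Y}}\O_{Y''}$, where $\bar Y=Y'_A=Y''_A$ and flatness of $Y'/A'$, $Y''/A''$ identifies both $\O_{Y'}\otimes_{A'}A$ and $\O_{Y''}\otimes_{A''}A$ with $\O_{\bar Y}$. The patching lemma makes $\O_Y$ flat over $B$ with $\O_Y\otimes_B A'\cong\O_{Y'}$ and $\O_Y\otimes_B A''\cong\O_{Y''}$; the composites $\O_{X_B}\to\O_{X_{A'}}\to\O_{Y'}$ and $\O_{X_B}\to\O_{X_{A''}}\to\O_{Y''}$ are compatible over $\O_{\bar Y}$, hence define a map $\O_{X_B}\to\O_Y$ whose cokernel $\mathcal C$ satisfies $\mathcal C\otimes_B A'=\coker(\O_{X_{A'}}\to\O_{Y'})=0$ and likewise $\mathcal C\otimes_B A''=0$, so $\mathcal C=\m_B\mathcal C=0$ because $\m_B$ is nilpotent. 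Thus $\O_Y$ cuts out a flat closed subscheme $Y\subseteq X_B$ with $Y_{A'}=Y'$ and $Y_{A''}=Y''$. For injectivity, if $Y_1,Y_2\subseteq X_B$ are flat over $B$ with $Y_{1,A'}=Y_{2,A'}$ and $Y_{1,A''}=Y_{2,A''}$, then the surjections $\O_{X_{A'}}\to\O_{Y_i}\otimes_B A'$ and $\O_{X_{A''}}\to\O_{Y_i}\otimes_B A''$ do not depend on $i$, and the elementary half of the lemma identifies $\O_{X_B}\to\O_{Y_i}$ with the map of $\O_{X_B}$ into the fibered product of these, which is therefore also independent of $i$; hence the ideal sheaves coincide and $Y_1=Y_2$.

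I expect the only real work to be the flat patching lemma itself together with the affine-to-global reduction — checking that the local fiber products are canonical enough to glue and that quasi-coherence survives each step. Everything else is base change of closed immersions and Nakayama with nilpotent ideals; in particular no finiteness hypotheses on $X$ are required, consistent with the statement. As a cross-check — and an alternative route for the construction of $Y$ in the surjectivity step — one can instead invoke Proposition~\ref{schemes.rs} to produce the flat $B$-scheme underlying $\{Y',Y''\}$ and then install the closed immersion into $X_B$ by the same cokernel-vanishing argument.
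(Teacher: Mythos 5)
Your argument is correct, and it in fact verifies a slightly stronger thing than the paper asks you to: you re-derive the content of the flat patching equivalence (which is the paper's Proposition~\ref{rs-alg}, proved in Appendix~\ref{appd}) directly for sheaves of ideals, rather than invoking Proposition~\ref{schemes.rs} as the exercise hint suggests. The paper's intended route is shorter given what is already established: from a compatible pair $(Y'\subseteq X_{A'},\,Y''\subseteq X_{A''})$, Proposition~\ref{schemes.rs} supplies the flat $B$-scheme $\{Y',Y''\}$ together with canonical isomorphisms of its pullbacks with $Y'$ and $Y''$, and one then produces the closed immersion $\{Y',Y''\}\hookrightarrow X_B$ from the functoriality of the quasi-inverse $\Psi$ (or by your cokernel-plus-nilpotence argument). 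Your route does both steps in one pass at the level of structure sheaves, which is more self-contained but re-proves machinery the paper already has; since $\hilb^X$ is fibered in sets, you are also right that the fiber product of categories collapses to the set-theoretic one, so checking that $\Phi$ is a bijection suffices. One small point of hygiene: from $\mathcal C\otimes_B A'=0$ you get $\mathcal C=I_{A'}\mathcal C$ with $I_{A'}=\ker(B\to A')\subseteq\m_B$, hence $\mathcal C\subseteq\m_B\mathcal C\subseteq\cdots\subseteq\m_B^n\mathcal C=0$; your phrasing ``$\mathcal C=\m_B\mathcal C$'' is a shortcut for this, but the conclusion is fine. You might also note explicitly, once you have the surjection $\O_{X_B}\to\O_Y$, that $\O_Y$ is quasi-coherent (as a kernel in $\O_{Y'}\times\O_{Y''}\to\O_{\bar Y}$ of quasi-coherent sheaves over the noetherian $X_B$) so that $Y$ really is a closed subscheme; you flag this issue but don't close it.
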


\begin{ex}
Using Proposition~\ref{schemes.rs}, prove Proposition~\ref{hilb.rs}.
\end{ex}

If $Y_0\subseteq X_0=X\times_{\spec \Lambda}\spec k$ is a closed subscheme, objects of $\hilb^X_{Y_0}$ are often called \gr{embedded} (infinitesimal) deformations of $Y_0$.

\subsubsection{Quasi-coherent sheaves}

For our last example, suppose we are given a quasi-coherent sheaf $\E_0$ on a scheme $X$ over $k$, and we want to consider families of quasi-coherent sheaves on $X$ having a fiber over a rational point isomorphic to $\E_0$.

Once again, we formulate the problem in terms of fibered categories. Given a scheme $X$ over $\Lambda$, we construct the category $\qcoh^X$ as follows:

\begin{description}
\item[Objects]{pairs $(A,\E)$, where $A \in \artl$ and $\E$ is a quasi-coherent sheaf on $X_A$, flat over $A$.}
\item[Arrows]{from $(A,\E)$ to $(B,\F)$ are pairs $(\phi,f)$, with $\phi\colon B\to A$ a homomorphism and $f\colon \E\simeq\F_A$ an isomorphism of quasi-coherent sheaves on $X_A$, where $\F_A$ is the pullback of $\F$ along the natural morphism $X_A\to X_B$.}
\end{description}

Composition is defined as in the first example: given $(\phi,f)\colon (A,\E)\to (B,\F)$ and $(\psi,g)\colon (B,\F)\to(C,\G)$, their composite $(\psi,g)\circ(\phi,f)$ is $(\rho,h)$, where $\rho\colon C\to A$ is the usual composite $\phi\circ \psi$, and if we denote by $g_A\colon \F_A\simeq (\G_B)_A$ the isomorphism induced by $g\colon \F\simeq \G_B$ by base change, then $h\colon \E\simeq \G_A$ is given by
$$
\E\stackrel{f}{\larr}\F_A\stackrel{g_A}{\larr}(\G_B)_A\simeq \G_A
$$
where the last isomorphism is the canonical one.

As before we have a forgetful functor $\qcoh^X\to \artl^\op$, and by our definition of arrows and properties of the pullback of quasi-coherent sheaves, we can use Proposition~\ref{groupoid}, and so $\qcoh^X\to \artl^\op$ is a category fibered in groupoids.

\begin{prop}\label{qcoh.rs}
The category fibered in groupoids $\qcoh^X\to \artl^\op$ is a deformation category.
\end{prop}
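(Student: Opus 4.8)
The plan is to verify the RS condition directly; since $\qcoh^X\to\artl^\op$ is already known to be a category fibered in groupoids, only the equivalence remains to be checked. Fix $A,A',A''\in\artl$ and homomorphisms $\pi'\colon A'\to A$, $\pi''\colon A''\to A$ with $\pi''$ surjective, put $B\eqdef A'\times_A A''$, and consider the comparison functor
$$
\Phi\colon\qcoh^X(B)\larr\qcoh^X(A')\times_{\qcoh^X(A)}\qcoh^X(A'').
$$
The first step is a reduction to commutative algebra. The schemes $X_B$, $X_{A'}$, $X_{A''}$, $X_A$ all have the same underlying topological space $|X|$, and $\Phi$ is just restriction of sheaves of modules on $|X|$; since quasi-coherence and $B$\dash flatness are local on $|X|$ and morphisms of quasi-coherent sheaves are determined locally, it suffices to prove the analogous statement on each affine open $\spec R\subseteq X$ (with $R$ a $\Lambda$\dash algebra, $R_C\eqdef R\otimes_\Lambda C$): that $M\mapsto(M\otimes_B A',\,M\otimes_B A'',\,\mathrm{can})$ is an equivalence from $R_B$\dash modules flat over $B$ to the fibered product of the categories of $R_C$\dash modules flat over $C$, $C=A',A'',A$. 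The one external input is that $0\to B\to A'\oplus A''\to A\to 0$ is an exact sequence of $B$\dash modules; exactness on the right is where surjectivity of $\pi''$ is used.

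Full faithfulness is the easy half, and uses nothing about $R$ over $\Lambda$: if $M_1,M_2$ are $R_B$\dash modules flat over $B$, tensoring the exact sequence above with the flat module $M_1$ gives a canonical isomorphism $M_1\cong M_1\otimes_B A'\times_{M_1\otimes_B A}M_1\otimes_B A''$ of $R_B$\dash modules. Combining this with the restriction–extension adjunction $\Hom_{R_B}(M_1,N')\cong\Hom_{R_{A'}}(M_1\otimes_B A',N')$ for an $R_{A'}$\dash module $N'$ (and likewise over $A''$ and $A$), and with the fact that $\Hom_{R_B}(M_1,-)$ preserves fibered products, identifies $\Hom_{R_B}(M_1,M_2)$ with exactly the Hom-set between $\Phi(M_1)$ and $\Phi(M_2)$ in the target category.

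Essential surjectivity is the heart of the matter. Given a triple $(\E',\E'',\theta)$ I would form, over an affine open with $M'\eqdef\E'(\spec R)$, $M''\eqdef\E''(\spec R)$ and $\theta\colon M'\otimes_{A'}A\xrightarrow{\sim}M''\otimes_{A''}A$, the module $N\eqdef M'\times_\theta M''$, i.e. the kernel of $M'\oplus M''\to M''\otimes_{A''}A$, $(m',m'')\mapsto\theta(\overline{m'})-\overline{m''}$; it is an $R_B$\dash module, the summands being $R_B$\dash modules by restriction along $B\to A'$ and $B\to A''$ and the defining relation being $R_B$\dash linear. I must check (i) $N$ is flat over $B$, and (ii) $N\otimes_B A'\cong M'$ and $N\otimes_B A''\cong M''$, compatibly with $\theta$. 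The main obstacle is (i): one cannot deduce it by tensoring an exact sequence, since a module flat over $A'$ need not be flat over $B$ ($A'$ itself is in general not $B$\dash flat). Instead I would use that $B$, $A'$, $A''$ are artinian local, so that flatness over each of them is freeness: $M''$ is free over $A''$, and $\aut_{A''}(M'')\to\aut_A(M''\otimes_{A''}A)$ is surjective (lift a representing matrix and apply Nakayama's lemma, $\ker\pi''$ being nilpotent), so after modifying the $A''$\dash basis of $M''$ we may take $\theta$ to be the standard identification, whereupon a direct computation gives $N\cong\bigoplus B$; this yields (i), and then (ii) reads $N\otimes_B A'\cong\bigoplus A'\cong M'$ and symmetrically. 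Finally, to produce the global object one forms the sheaf of $\O_{X_B}$\dash modules $\N\eqdef\E'\times_\theta\E''$ on $|X|$ directly: it is quasi-coherent because localization is exact and hence commutes with the fibered product, it is flat over $B$ by the local computation, and by (ii) it restricts to the given triple, so $\Phi(\N)\cong(\E',\E'',\theta)$. This argument is a relative form of Milnor's patching lemma; together with full faithfulness it shows $\Phi$ is an equivalence, which is the RS condition.
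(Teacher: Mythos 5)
Your proof is correct, but the route to the key flatness assertion is genuinely different from what the paper does. The paper's proof (Proposition~\ref{rs-alg} in Appendix~\ref{appd}, of which the $\qcoh^X$ case is a direct adaptation) establishes that $\Psi M$ is $B$\dash flat homologically: it chooses a free $B$\dash module $F_{0}$ with a compatible surjection onto $(M',M'',\alpha_M)$, runs the nine-lemma on the resulting $3\times3$ diagram, deduces by a diagram chase that $\phi'_{K}$ and $\phi'_{M}$ are isomorphisms for arbitrary $M$, concludes that $\tor^{B}_{1}(\Psi M, A')=0$, and finally invokes the local criterion of flatness (Theorem~\ref{local-flatness}). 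You instead exploit the fact that $A'$, $A''$, $B$ are artinian local, so that flat means free; after lifting the transition isomorphism $\theta$ through the nilpotent surjection $\pi''$ (Nakayama), you choose bases making $\theta$ the identity and exhibit $N$ as an explicit free $B$\dash module $\bigoplus B$. Your argument is shorter, more explicit, and more elementary — no Tor, no nine-lemma, no local flatness criterion — and it cleanly records the connection to Milnor patching. The paper's approach has the advantage of working for any $A' \to A \leftarrow A''$ with the second map surjective with nilpotent kernel, not only artinian local rings, which is why it is the form chosen for the appendix; but since the proposition under discussion concerns $\artl$, the artinian hypothesis is available and your simplification is legitimate. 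One small point worth flagging in a careful write-up: since $\qcoh^X$ allows quasi-coherent (not necessarily coherent) sheaves, $M'$, $M''$ may have infinite rank, so ``lift a representing matrix'' should be unpacked — lift the images of basis elements term by term, then use nilpotence of $\ker\pi''$ to verify bijectivity of the lift by tensoring with $A$ — but this is routine and your argument survives it.
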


\begin{ex}
Prove Proposition~\ref{qcoh.rs}.
\end{ex}

\section{Tangent space}\label{captang}

This section is devoted to the introduction and study of the tangent space of a deformation category. This concept generalizes the corresponding ones for schemes and deformation functors.

After defining the tangent space and discussing its action on isomorphism classes of liftings, we will calculate it in our three main examples, and give an application to deformations of smooth hypersurfaces in $\P^n_k$.

\subsection{Definition}\label{tangdef}

Let $\fib$ be a deformation category and suppose $\xi_0 \in \F(k)$. We start by defining the tangent space as a set.

\begin{defin}
The \gr{tangent space} of $\F$ at $\xi_0$ is the set
$$
T_{\xi_0}\F=\{\text{isomorphism classes of objects in } \F_{\xi_0}(\dual)\}\,.
$$
\end{defin}

Now we want to justify the name of tangent space, showing that there is a canonical structure of $k$-vector space on $T_{\xi_0}\F$. To do so, we consider the functor $F\colon \fvect\to\set$ defined as follows: given a $V\in \fvect$, we take the trivial $\Lambda$-algebra $k\oplus V$ (where we recall that multiplication is defined by $(x,v)(y,w)=(xy,xw+yv)$) and associate with $V$ the set
$$
F(V)=\{\text{isomorphism classes of object in } \F_{\xi_0}(k \oplus V)\}\,.
$$
If $V\to W$ is a $k$-linear map, we get a homomorphism $k \oplus V\to k \oplus W$, and by pullback (in the fibered category $\F_{\xi_0}$) an arrow $F(V)\to F(W)$. Clearly $F(k)=T_{\xi_0}\F$.

Now we notice that $F$ has a lifting $\widetilde{F}\colon \fvect\to \vect$ to the category of $k$-vector spaces, so that each $F(V)$ (in particular $F(k)=T_{\xi_0}\F$) will have a natural structure of $k$-vector space. As shown in Appendix~\ref{appa}, to show this it suffices to check that $F$ preserves finite products (Definition~\ref{def:preserve-products}). This follows easily from the fact that $\F_{\xi_0}\to \artl^{\op}$ satisfies RS, as stated in Proposition~\ref{deformation.cat}.

\begin{ex}
Check that $F$ preserves finite products.
\end{ex}

We describe briefly the vector space structure that we obtain on each $F(V)$: first of all $F(0)$ has exactly one element, which is simply the isomorphism class of the identity $\xi_0\to \xi_0$ in $\F_{\xi_0}(k)$. Moreover every $V \in \fvect$ has a natural map $0\to V$ that induces $F(0)\to F(V)$; the zero element of $F(V)$ is then the image of this map. In our particular case this corresponds to the isomorphism class of the ``trivial'' pullback of $\xi_0$ along the inclusion homomorphism $k \to k \oplus V$.

Addition is defined by the composite
$$
F(V)\times F(V)\simeq F(V\oplus V) \xrightarrow{F(+)}F(V)
$$
where $+\colon V\oplus V \to V$ is the addition of $V$. Similarly multiplication by $a \in k$ is simply $F(\mu_a)\colon F(V)\to F(V)$, where $\mu_a\colon V\to V$ is multiplication by $a$.

From now on we will consider $F$ as a functor $\fvect\to \vect$.

\begin{rmk}\label{tangiso}
Suppose we have another object $\eta_0 \in \F(k)$, such that there is an arrow $f\colon \xi_0 \to \eta_0$ (which must be an isomorphism). It is clear then that $f$ will induce a bijection $T_{\xi_0}\F\to T_{\eta_0}\F$, which is an isomorphism of $k$-vector spaces.

So isomorphic objects over $k$ will have isomorphic tangent spaces.
\end{rmk}

As discussed in Appendix~\ref{appa} this canonical lifting $\fvect\to \vect$ is a $k$-linear functor, so we can apply Proposition~\ref{app.natural} and conclude that for every $V \in \fvect$ we have
$$
F(V) \simeq V\otimes_k F(k)= V\otimes_k T_{\xi_0}\F\,.
$$

\begin{examp}\label{dimcoarse}
If $\fib$ is the category fibered in groupoids coming from a scheme $X$ over $\Lambda$, then one sees easily that the tangent space $T_{\xi_0}\F$ is the usual Zariski tangent space of $X$ at the $k$-rational point corresponding to $\xi_0 \in X(k)$.

In particular if we have a (fine) moduli space $M$ representing a functor $F\colon \schs^\op\to \set$ (and so the corresponding deformation category satisfies RS over every point of $M$), we can get informations on the tangent space of $M$ at a point $m \in M$ by studying that of the deformation category associated with $F$ at the corresponding object over the residue field $k(m)$.

If a moduli problem has only a coarse moduli space, sometimes we can get informations about its dimension from the tangent spaces of the associated deformation categories. More precisely, if $\F\to \schs$ is a Deligne-Mumford stack with finite inertia, so that it has a coarse moduli space $\pi\colon \F\to F$, and it satisfies RS over a point $\xi_{0}:\spec k\to \F$ (i.e. it satisfies RS when restricted to $\art$, and considering only deformations of $\xi_{0}$), then the dimension of $F$ at the point $\spec k\to F$ is (the dimension of the stack $\F$ at the point $\xi_{0}$, and then) at most $\dim_{k}T_{\xi_{0}}\F$. If moreover $\F$ is smooth at the point $\xi_{0}$, then we have equality (see for example \ref{curvdim}).
\end{examp}

As expected, along with the concept of tangent space comes that of differential of a morphism.

Let $H\colon \F\to \G$ be a morphism of deformation categories, and suppose $\xi_0 \in \F(k)$. Then we have an induced morphism $H_{\xi_0}\colon \F_{\xi_0}\to \G_{H(\xi_0)}$ defined in the obvious way. If we call $F$, $G\colon \fvect \to \set$ the two functors involved in the construction of the tangent spaces of $\F$ at $\xi_0$ and $\G$ at $H(\xi_0)$ respectively, $H_{\xi_0}$ will induce a natural transformation $\phi\colon F\to G$.

Since $F$ and $G$ are $k$-linear functors, from Proposition~\ref{app.nat.linear} we see that $\phi$ is automatically $k$-linear. In particular $\phi(k)\colon F(k)\longrightarrow G(k)$ will be a $k$-linear map.

\begin{defin}
The \gr{differential} of $H$ at $\xi_0$ is the $k$-linear map
$$
d_{\xi_0}H=\phi(k)\colon T_{\xi_0}\F\to T_{H(\xi_0)}\G\,.
$$
\end{defin}

Concretely, given $a \in T_{\xi_0}\F$ and an object $\xi \in \F_{\xi_0}(\dual)$ in the isomorphism class $a$, the image $d_{\xi_0}H(a)$ is the isomorphism class of $H(\xi) \in \G_{H(\xi_0)}(\dual)$.

As one expects the differential of the composite of two morphisms of deformation categories is the composite of the differentials, as is very easy to see. Moreover if a morphism $H\colon \F\to \F$ is isomorphic to the identity, then the differential $d_{\xi_0}H\colon T_{\xi_0}\F\to T_{H(\xi_0)}\F$ is an isomorphism.

If in particular $H\colon \F\to \G$ is an equivalence, then $d_{\xi_0}H\colon T_{\xi_0}\F\to T_{H(\xi_0)}\G$ is an isomorphism too. This is because in this case $H$ has a quasi-inverse $K\colon \G \to \F$, and the composites $H\circ K$ and $K \circ H$ are isomorphic to the identities; this implies that
$$
d_{H(\xi_0)}K\circ d_{\xi_0}H\colon T_{\xi_0}\F\arr T_{\xi_0}\F
$$
and
$$
d_{\xi_0}H\circ d_{H(\xi_0)}K\colon T_{H(\xi_0)}\G \arr T_{H(\xi_0)}\G
$$
are isomorphisms, and so $d_{\xi_0}H$ will be too. Here actually $K(H(\xi_0))$ needs only to be isomorphic to $\xi_0$, so we use the isomorphism of Remark~\ref{tangiso} to identify $T_{K(H(\xi_0))}\F$ and $T_{\xi_0}\F$ in the composites above.

\subsection{Extensions of algebras and liftings}\label{sec.extensions}

Now we pause shortly to state some standard facts about extensions of algebras that will be used very frequently from now on.

\begin{defin}
Let $A \in \artl$. An \gr{extension} of $A$ is a surjection $A'\to A$ in $\artl$ with square-zero kernel $I=\ker\phi \subseteq A'$. We also say that $A'\to A$ is an extension of $A$ by $I$.
\end{defin}
An extension as above is usually pictured as the exact sequence of $\Lambda$-modules
$$
\xymatrix{
0\ar[r] & I\ar[r] & A'\ar[r] & A\ar[r] & 0.
}
$$
In this situation $I$ is an $A$-module in a natural way: given $a \in A$ and $i \in I$ we take an element $a' \in A'$ in the preimage of $a$ and define $a\cdot i$ as $a'i \in I$. This is well defined because $I^2=(0)$.

\begin{examp}
If $M$ is an $A$-module, there is a \gr{trivial extension} of $A$ by $M$, which we obtain by considering $A\oplus M$ as an $R$-algebra by the trivial algebra structure mentioned in the introduction (the product is defined by $(a,m)(a',m')=(aa',am'+a'm)$, so that in particular $M^2=(0)$). The homomorphism $A\oplus M\to A$ is the projection. 

In particular if $A=k$ is a field and $M\simeq k$, we obtain the $k$-algebra $k[t]/(t^2)\simeq k\oplus k\eps$, which is the usual ring of dual numbers $\dual$ (where $\eps=[t]$).
\end{examp}

The following fact will be used later.

\begin{prop}\label{ext.der}
Let $A'\to A$ be an extension in $\artl$ with kernel $I$, $B$ a $\Lambda$-algebra, and $f$, $g\colon B\to A'$ two homomorphisms of $\Lambda$-algebras such that the composites with $A'\to A$ coincide. Then the difference $f-g\colon B\to I$ is a $\Lambda$-derivation.

Conversely, if $f\colon B \to A'$ is a homomorphism of $R$-algebras and $d\colon B\to I$ is a $\Lambda$-derivation, then the map $f+d\colon B\to A'$ is a homomorphism of $R$-algebras such that the composite with $A'\to A$ coincides with $B\xrightarrow{f} A'\to A$.
\end{prop}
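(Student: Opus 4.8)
The plan is to verify both directions by direct computation, using that $I^2 = (0)$ in $A'$, which is what makes the ``addition of a derivation'' operation interact well with multiplication.

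\textit{First direction.} Suppose $f, g \colon B \to A'$ are $\Lambda$-algebra homomorphisms agreeing after composition with $\pi \colon A' \to A$. Set $d = f - g$ as a map of sets (or $\Lambda$-modules). Since $\pi \circ f = \pi \circ g$, for every $b \in B$ we have $\pi(d(b)) = 0$, so $d$ does indeed land in $I = \ker\pi$; it is clearly $\Lambda$-linear, being a difference of two $\Lambda$-linear maps. It remains to check the Leibniz rule. For $b, c \in B$, compute
$$
d(bc) = f(b)f(c) - g(b)g(c) = f(b)f(c) - f(b)g(c) + f(b)g(c) - g(b)g(c) = f(b)d(c) + d(b)g(c).
$$
Now the key point: $d(c) \in I$, so to compute the product $f(b)d(c)$ we may replace $f(b)$ by anything congruent to it modulo $I$; in particular $f(b) \equiv g(b) \pmod I$, and the $A$-module structure on $I$ is exactly this well-defined multiplication, so $f(b)d(c) = g(b)d(c)$. (Equivalently, $d(c) \in I$ only depends on the class of the scalar in $A$, because $I \cdot I = 0$.) Hence $d(bc) = g(b)\,d(c) + d(b)\,g(c)$, which — viewing $B$-modules via $g$, i.e. with $B$ acting on $I$ through $g$ followed by the $A$-action — is precisely the statement that $d$ is a $\Lambda$-derivation. (One should remark that $I$ is regarded as a $B$-module via $g$; since $f$ and $g$ induce the same ring map $B \to A$, this is the same as via $f$.)

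\textit{Converse.} Let $f \colon B \to A'$ be a $\Lambda$-algebra homomorphism and $d \colon B \to I$ a $\Lambda$-derivation (with $B$ acting on $I$ through $\pi \circ f$). Define $h = f + d$. It is $\Lambda$-linear, and $h(1) = f(1) + d(1) = 1 + 0 = 1$ since a derivation kills $1$. For multiplicativity, expand
$$
h(b)h(c) = \bigl(f(b) + d(b)\bigr)\bigl(f(c) + d(c)\bigr) = f(b)f(c) + f(b)d(c) + d(b)f(c) + d(b)d(c).
$$
The last term $d(b)d(c)$ lies in $I^2 = (0)$, hence vanishes. In the middle terms, $d(c), d(b) \in I$, so $f(b)d(c)$ and $d(b)f(c)$ are computed via the $A$-module structure on $I$, i.e. they equal $\pi(f(b)) \cdot d(c)$ and $d(b) \cdot \pi(f(c))$ respectively. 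Therefore $h(b)h(c) = f(bc) + \bigl(\pi f(b)\,d(c) + d(b)\,\pi f(c)\bigr) = f(bc) + d(bc) = h(bc)$ by the Leibniz rule for $d$. Finally $\pi \circ h = \pi \circ f + \pi \circ d = \pi \circ f$ because $d$ takes values in $\ker\pi$, as claimed.

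\textit{Main obstacle.} There is no serious obstacle; the only point requiring care is the bookkeeping around the $A$-module (equivalently $B$-module) structure on $I$ — one must consistently remember that multiplying an element of $I$ by an element of $A'$ factors through $A$ because $I^2 = (0)$, and that this is the same structure appearing in the definition of ``$\Lambda$-derivation $B \to I$''. Making these identifications explicit (as above) is what turns the two Leibniz-rule computations into one-liners.
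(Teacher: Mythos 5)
Your proof is correct, and since the paper leaves this proof to the reader there is nothing to compare it against; your argument is the standard direct computation. The bookkeeping you flag — that the $B$-module structure on $I$ factors through $\pi \circ f = \pi \circ g$ and that $I^2 = 0$ makes the intermediate products well-defined — is indeed the only point of care, and you handle it cleanly. (One small remark: the paper's statement says ``homomorphism of $R$-algebras'' in the converse, which is a typo for $\Lambda$-algebras; you silently and correctly read it as $\Lambda$.)
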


The proof is easy and left to the reader.

Suppose that we have two extensions of $R$-algebras $A'\to A$ and $B'\to B$, with kernels $I$ and $J$ respectively, and a homomorphism of $R$-algebras $\phi\colon A'\to B'$, such that $\phi(I)\subseteq J$. Then $\phi$ will induce $\overline{\phi}\colon A\to B$ and $\phi|_I\colon I\to J$, which fit together with $\phi$ in a commutative diagram.

\begin{defin}
A \gr{homomorphism} between two extensions of $R$-algebras $A'\to A$ and $B'\to B$ with kernels $I$ and $J$ respectively is a triplet of homomorphisms $(f,g,h)$, where $f\colon I\to J, g\colon A'\to B', h\colon A\to B$, such that the diagram
$$
\xymatrix{
0\ar[r] & I\ar[r]\ar[d]^f & A'\ar[r]\ar[d]^g & A\ar[r]\ar[d]^h & 0\\
0\ar[r] & J\ar[r] & B'\ar[r] & B\ar[r] & 0
}
$$
is commutative.
\end{defin}

So a homomorphism $\phi$ as above induces a morphism $(\phi|_I,\phi,\overline{\phi})$ between the two extensions.

\begin{defin}
A \gr{splitting} of an extension of $R$-algebras $A'\to A$ is a homomorphism of $R$-algebras $\phi\colon A\to A'$ such that the composite $A\xrightarrow{\phi} A'\to A$ is the identity.
\end{defin}

Standard arguments show that an extension admits a splitting if and only if it is isomorphic to a trivial extension.

The following type of extensions will play a particularly important role.

\begin{defin}
An extension $A'\to A$ in $\artl$ is said to be \gr{small} if the kernel $I$ is annihilated by the maximal ideal $\m_{A'}$, so that it is naturally a $k$-vector space.

A small extension is called \gr{tiny} if $I\simeq k$ as a $k$-vector space, or equivalently if $I$ is principal and nonzero.
\end{defin}

\begin{prop}\label{comp.small}
A surjection  $A'\to A$ in $\artl$ can be factored as a composite of tiny extensions
$$
A'=A_0\to A_1 \to \cdots\to A_n=A\,.
$$
\end{prop}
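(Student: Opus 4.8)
The plan is to induct on $\ell = \dim_k \ker\phi$, where $\phi\colon A'\to A$ is the given surjection. First I would note that $\ell$ is finite: $\ker\phi$ has finite length as an $A'$-module since $A'$ is artinian, and all of its composition factors are isomorphic to $k$ because $\m_{A'}$ is the unique maximal ideal, so $\ker\phi$ is a finite-dimensional $k$-vector space.

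If $\ell = 0$ then $\phi$ is an isomorphism and there is nothing to prove (take $n = 0$). Suppose $\ell \geq 1$ and set $I = \ker\phi$; since every arrow of $\artl$ is local we have $I \subseteq \m_{A'}$. Because $\m_{A'}$ is nilpotent there is a least integer $s \geq 1$ with $\m_{A'}^s I = 0$, and then $\m_{A'}^{s-1} I \neq 0$ (when $s = 1$ this is just $I \neq 0$); I would choose a nonzero element $x \in \m_{A'}^{s-1} I$. Then $\m_{A'} x = 0$, so $\operatorname{Ann}_{A'}(x) = \m_{A'}$ and hence $(x) = A'x \cong A'/\m_{A'} = k$ is one-dimensional over $k$; moreover $x \in \m_{A'}$ forces $x^2 \in \m_{A'} x = 0$, so $(x)$ is square-zero. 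Thus $A' \to A'/(x)$ is a tiny extension, and $A'/(x)$ is again an object of $\artl$. Since $x \in I$, the surjection $\phi$ factors as $A' \to A'/(x) \to A'/I = A$, and the kernel $I/(x)$ of the second map has $k$-dimension $\ell - 1$. Applying the inductive hypothesis to $A'/(x) \to A$ and prepending $A' \to A'/(x)$ completes the induction.

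The argument is essentially routine, and I expect the only delicate points to be checking the three conditions that make $A' \to A'/(x)$ a tiny extension — square-zero kernel, kernel of $k$-dimension one, and $A'/(x) \in \artl$ (immediate, since a quotient of a local artinian $\Lambda$-algebra with residue field $k$ is of the same type) — together with the choice of $x$ in the deepest nonzero layer $\m_{A'}^{s-1}I$, which is exactly what guarantees $\m_{A'} x = 0$. An alternative that avoids the induction is to first descend along the $\m_{A'}$-adic filtration $I \supseteq \m_{A'} I \supseteq \cdots \supseteq 0$ of $I$ and then refine each successive quotient, which is a finite-dimensional $k$-vector space, by a full flag of $k$-subspaces; these pull back to ideals of $A'$ and produce the desired chain of tiny extensions directly.
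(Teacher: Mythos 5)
The paper leaves this statement as an exercise, so there is no proof in the text to compare against; evaluated on its own, your argument is correct in substance and is the standard one. The choice of $x$ in the deepest nonzero layer $\m_{A'}^{s-1}I$ is exactly the right move: it guarantees $\m_{A'}x = 0$, hence that $(x)$ is square-zero, annihilated by $\m_{A'}$, and isomorphic to $k$ via $a \mapsto ax$, so $A' \to A'/(x)$ is a tiny extension; the quotient $A'/(x)$ is visibly again in $\artl$, and the kernel drops by one unit in length, so the induction closes.

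One small point should be tightened. You write that $\ker\phi$ ``is a finite-dimensional $k$-vector space,'' but in general it is not: when $\Lambda \neq k$ there is no ring map $k \to A'$, and an ideal of $A'$ need not be killed by $\m_{A'}$ (the simplest example is $\Lambda = \mathbb{Z}_p$, $k = \mathbb{F}_p$, $A' = \mathbb{Z}/p^{3}$, $A = \mathbb{Z}/p$: the kernel $(p)$ is not an $\mathbb{F}_p$-vector space). Having all composition factors isomorphic to $k$ does not make the module itself a $k$-vector space. The correct quantity to induct on is the length $\ell_{A'}(\ker\phi)$, which is finite because $A'$ is artinian; since $(x) \cong k$ has length one, the inductive step $\ell_{A'}(I/(x)) = \ell_{A'}(I) - 1$ goes through by additivity of length, and nothing else in your argument changes. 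The alternative you sketch at the end — descending $I \supseteq \m_{A'}I \supseteq \m_{A'}^2 I \supseteq \cdots \supseteq 0$ and refining each genuinely $k$-linear quotient $\m_{A'}^i I / \m_{A'}^{i+1}I$ by a complete flag — is also correct and sidesteps the terminological issue entirely, since there the $k$-vector space structure is honest.
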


\begin{ex}
Prove Proposition~\ref{comp.small}.
\end{ex}

Now we come to liftings of objects of a deformation category. The idea is that if we want to study the deformations over $A \in \artl$ of a given object over $k$, we should do this inductively using the factorization of the surjection $A\to k$ given by the preceding Proposition to reduce to the case of small extensions.

\begin{defin}
Let $\fib$ be a deformation category, $\phi\colon A'\to A$ a surjection in $\artl$, and $\xi \in \F(A)$. A \gr{lifting} of $\xi$ to $A'$ is an arrow $\xi\to \xi'$ over $\phi$.
\end{defin}

Equivalently, a lifting of $\xi$ to $A'$ is an object $\xi' \in \F(A')$ together with an isomorphism of its pullback $\phi_*(\xi')$ with $\xi$ in $\F(A)$. Sometimes we will refer to a lifting only by means of the object $\xi'$ over $A'$, leaving the arrow from $\xi$ understood.

Generalizing the construction of the category $\F_{\xi_0}$, it is easy to see that given $\phi$ and $\xi$ as above, the liftings of $\xi$ to $A'$ are the objects of a category $\Lif(\xi,A')$, in which arrows from $f\colon \xi\to \xi'$ to $g\colon \xi\to \xi''$ are arrows $h\colon \xi'\to \xi''$ of $\F(A)$ such that $h\circ f=g$. We will call $\lif(\xi,A')$ the set of isomorphism classes of liftings of $\xi$ to $A'$.

Both $\Lif(\xi,A')$ and $\lif(\xi,A')$ clearly depend also on the homomorphism $A'\to A$, but we will not specify it in the notation, since it will always be clear from the context which homomorphism we are considering.

In the following we will make some constructions starting with an isomorphism class $[\xi']$ of a lifting and possibly pick one of its elements in the process, without mentioning that the final result will not depend on this choice (because we will be often taking isomorphism classes again in the end).

In particular if we have an element $a \in I\otimes_k T_{\xi_0}\F$, we will also write $a$ for an object of $\F_{\xi_0}(k \oplus I)$ belonging to the isomorphism class $a$.

\subsection{Actions on liftings}\label{actionsec}

The tangent space gives some control on the set of isomorphism classes of liftings of objects of $\F$ along small extensions, as the following Theorem shows.

\begin{thm}\label{actionthm}
Let $\fib$ be a deformation category, $A'\to A$ a small extension with kernel $I$, and take $\xi_0 \in \F(k)$, $\xi \in \F_{\xi_0}(A)$. If\/ $\lif(\xi,A')$ is not empty, then there is a free and transitive action of $I\otimes_k T_{\xi_0}\F$ on it.
\end{thm}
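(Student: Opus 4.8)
The action I want to construct comes from the RS condition applied to the maps $A' \to A$ and $k \oplus I \to k$. First I would observe that since $A' \to A$ is small with kernel $I$, the kernel $I$ is a $k$-vector space, and there is a canonical isomorphism of $\Lambda$-algebras
\[
A' \times_A A' \;\simeq\; A' \times_k (k \oplus I),
\]
sending $(a_1', a_2')$ to $(a_1', \overline{a_1'}, a_2' - a_1')$ (using Proposition~\ref{ext.der}, which tells us $a_2' - a_1' \in I$ and the subtraction is the natural $k$-linear structure). Here the right-hand fiber product is taken along the projection $A' \to A \to k$ and the projection $k \oplus I \to k$. Applying RS to $\F_{\xi_0}$ (which is a deformation category by Proposition~\ref{deformation.cat}), the functor $\F_{\xi_0}(A' \times_A A') \to \F_{\xi_0}(A') \times_{\F_{\xi_0}(A)} \F_{\xi_0}(A')$ is an equivalence, and likewise for the decomposition over $k$. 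Concretely this means: given a lifting $\xi'$ of $\xi$ and an object $a \in \F_{\xi_0}(k \oplus I)$ (i.e.\ an element of $I \otimes_k T_{\xi_0}\F$ after the identification $F(I) \simeq I \otimes_k T_{\xi_0}\F$), I can glue $\xi'$ and $a$ along their common pullback to $k$ — which is $\xi_0$ in both cases — to get an object $\{\xi', a\}$ over $A' \times_k (k\oplus I) \simeq A' \times_A A'$, and then push it forward along the second projection $A' \times_A A' \to A'$ to obtain a new lifting $a \cdot \xi'$ of $\xi$ to $A'$. This defines the action map $(I \otimes_k T_{\xi_0}\F) \times \lif(\xi, A') \to \lif(\xi, A')$.

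**Checking it is an action.** Next I would verify that this is indeed a group action: the zero element of $I \otimes_k T_{\xi_0}\F$ corresponds to the trivial pullback of $\xi_0$ to $k \oplus I$, and gluing with this trivial object reproduces $\xi'$ up to isomorphism (since pushing $\{\xi',\text{trivial}\}$ forward along either projection gives $\xi'$, as both projections $A' \times_A A' \to A'$ agree with the structure maps after composing with the diagonal section). Additivity, i.e.\ $(a+b)\cdot \xi' \simeq a \cdot (b \cdot \xi')$, would be checked using the associativity/compatibility of the RS gluing — essentially a diagram chase with the triple fiber product $A' \times_A A' \times_A A' \simeq A' \times_k (k \oplus I) \times_k (k \oplus I)$ and the addition map $I \oplus I \to I$. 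This is routine once the identifications are set up carefully, and the functoriality of the RS equivalences in the relevant maps does all the work.

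**Freeness and transitivity.** For transitivity: given two liftings $\xi'_1, \xi'_2$ of $\xi$ to $A'$, I want to produce $a$ with $a \cdot \xi'_1 \simeq \xi'_2$. Form the object $\{\xi'_1, \xi'_2\}$ over $A' \times_A A'$ (gluing along the isomorphism of their pullbacks to $A$, both identified with $\xi$), transport it along $A' \times_A A' \simeq A' \times_k (k \oplus I)$, and pull back along the second projection $A' \times_k (k\oplus I) \to k \oplus I$ to get an object $a \in \F_{\xi_0}(k \oplus I)$; by construction $a \cdot \xi'_1 \simeq \xi'_2$, since pushing $\{\xi'_1,\xi'_2\}$ forward along the two projections recovers $\xi'_1$ and $\xi'_2$ respectively — this uses the essential surjectivity half of the RS equivalence to know every object of the fiber product category is of the form $\{-,-\}$. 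For freeness: if $a \cdot \xi' \simeq \xi'$, then unwinding the construction, $a$ together with the data of this isomorphism gives an automorphism of $\{\xi', \xi'\}$-type data, and pulling back to $k \oplus I$ forces $a$ to be the trivial object, i.e.\ $a = 0$ in $I \otimes_k T_{\xi_0}\F$; this is again a consequence of the faithfulness (injectivity on isomorphism classes) part of RS.

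**Main obstacle.** The genuinely delicate point is the bookkeeping in the isomorphism $A' \times_A A' \simeq A' \times_k (k \oplus I)$ — in particular making sure the $A$-module structure on $I$ coming from the extension $A'\to A$ matches the $k$-vector space structure (this is where smallness, $\m_{A'} I = 0$, is crucial, via Proposition~\ref{ext.der}), and then tracking which of the two projections $A' \times_A A' \to A'$ is being used at each stage so that the gluing $\{-,-\}$ and the push-forwards compose correctly. Once this identification is pinned down and one commits to, say, always gluing/pushing along a fixed projection, transitivity and freeness follow formally from the equivalence $\Phi$ being fully faithful and essentially surjective; the group-action axioms follow from its compatibility with the triple fiber product. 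So I expect the proof to be short modulo one carefully-stated lemma identifying $A'\times_A A'$ with $A' \times_k (k\oplus I)$ as $\Lambda$-algebras over $A'$.
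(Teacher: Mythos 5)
Your proposal is correct and takes essentially the same approach as the paper: both proofs reduce everything to the $\Lambda$-algebra isomorphism $A'\times_A A' \simeq A'\times_k(k\oplus I)$ and then invoke the RS equivalence to transfer liftings back and forth. The paper packages the argument a bit more compactly: it assembles a single bijection $\Phi\colon \lif(\xi,A')\times\lif(\xi,A') \to \lif(\xi,A')\times(I\otimes_k T_{\xi_0}\F)$ preserving the first coordinate, defines $\mu = \pi_2\circ\Phi^{-1}$, and gets freeness and transitivity in one stroke from the bijectivity of $\Phi$, rather than verifying them separately as you do (and, minor aside, the observation that $a_2'-a_1'\in I$ needs only the definition of $A'\times_A A'$, not Proposition~\ref{ext.der}).
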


\begin{proof}
Let $\xi\to\xi_1'$ and $\xi\to\xi_2'$ be two liftings of $\xi$ to $A'$, and notice that together they give an object of the category $\F(A')\times_{\F(A)}\F(A')$. By RS, they give rise to a lifting $\xi\to\{\xi_1',\xi_2'\}$ of $\xi$ to the fibered product $A'\times_A A'$, and this construction gives a bijection between pairs of isomorphism classes of liftings of $\xi$ to $A'$ and liftings of $\xi$ to $A'\times_A A'$.

We have an isomorphism of rings $f\colon A'\times_A A' \simeq A'\oplus I$ given by $f(a_1,a_2)=(a_1,a_2-a_1)$, which commutes with the projections on the first factor $A'$. Moreover, if we call $\pi\colon A\to k$ the quotient map, there is an isomorphism $A' \oplus I\simeq A'\times_k k \oplus I$, defined by $(a,v) \mapsto (a,\pi(a)\oplus v)$, which also commutes with the projections on the first factor, and so as before we have a bijection between the isomorphism classes of liftings of $\xi$ to $A' \oplus I$ and pairs of isomorphism classes of liftings of $\xi$ to $A'$ and of $\xi_0$ to $k \oplus I$.

This gives a bijection $\Phi$, defined as the composite
$$
\lif(\xi,A')\times \lif(\xi,A')\longrightarrow \lif(\xi,A')\times \lif(\xi_0,k \oplus I)\simeq \lif(\xi,A')\times(I\otimes_k T_{\xi_0}\F)\,.
$$
By construction if $\pi_1\colon \lif(\xi,A')\times \lif(\xi,A')\to \lif(\xi,A')$ is the projection to the first factor, then $\pi_1\circ \Phi^{-1}$ is also the projection to the first factor $\lif(\xi,A')\times(I\otimes_k T_{\xi_0}\F)\to\lif(\xi,A')$. Let us consider now the function
$$
\mu=\pi_2\circ \Phi^{-1}\colon \lif(\xi,A')\times(I\otimes_k T_{\xi_0}\F)\arr \lif(\xi,A')
$$
where $\pi_2\colon \lif(\xi,A')\times \lif(\xi,A')\to \lif(\xi,A')$ is the projection on the second factor. It is easy to check that $\mu$ gives an action of $I\otimes_k T_{\xi_0}\F$ on $\lif(\xi,A')$ (we leave the details to the reader), and this is free and transitive since $\Phi$ is bijective.
\end{proof}

\begin{rmk}\label{action}
From now on we will drop the notation $\mu\colon \lif(\xi,A')\times(I\otimes_k T_{\xi_0}\F)\to \lif(\xi,A')$ for the action, and we will simply write it by addition, as $([\xi], g) \arrto [\xi] + g$.

Furthermore, if $[\xi'],[\xi'']$ are two isomorphism classes of liftings of $\xi \in \F_{\xi_0}(A)$ to $A'$, where $A'\to A$ is a small extension with kernel $I$, we will denote by $[\xi'] - [\xi'']$ the element $g \in I \otimes_k T_{\xi_0}\F$ such that $[\xi''] + g =[\xi']$.

\end{rmk}

The following corollary is a straightforward application of Proposition~\ref{actionthm}, easily proved using \ref{comp.small} and induction.

\begin{cor}\label{trivial}
Let $\fib$ be a deformation category, and $\xi_0 \in \F(k)$. If $T_{\xi_0}\F=0$, then there is at most one isomorphism class in $\F_{\xi_0}(A)$, for every $A \in \artl$.
\end{cor}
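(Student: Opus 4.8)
The plan is to reduce, via Proposition~\ref{comp.small}, to the case of a single small extension, where Theorem~\ref{actionthm} applies once we observe that $I \otimes_k T_{\xi_0}\F = I \otimes_k 0 = 0$, so that a nonempty set of isomorphism classes of liftings is forced to be a single point.

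First I would set up the induction. By Proposition~\ref{comp.small}, the surjection $A \to k$ factors as a composite of tiny extensions
$$
A = A_0 \longrightarrow A_1 \longrightarrow \cdots \longrightarrow A_n = k,
$$
each $A_i \to A_{i+1}$ being a small extension with kernel some $I_i$. I claim, by descending induction on $i$ from $n$ down to $0$, that $\F_{\xi_0}(A_i)$ has at most one isomorphism class; the case $i = 0$ is the assertion. The base case $i = n$ is immediate: $\F_{\xi_0}(k)$ is a trivial groupoid, since each of its objects is an object of $\F(k)$ together with an isomorphism from $\xi_0$, hence isomorphic to the identity $\xi_0 \to \xi_0$.

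For the inductive step, assume $\F_{\xi_0}(A_{i+1})$ has at most one isomorphism class and take two objects $\xi, \eta \in \F_{\xi_0}(A_i)$. Their restrictions $\xi|_{A_{i+1}}$ and $\eta|_{A_{i+1}}$ lie in $\F_{\xi_0}(A_{i+1})$ and are therefore isomorphic, say by $\theta$. Now $\xi$ is a lifting of $\xi|_{A_{i+1}}$ to $A_i$, and, after transporting along $\theta$, so is $\eta$; hence $\lif(\xi|_{A_{i+1}}, A_i)$ is nonempty. By Theorem~\ref{actionthm} this set then admits a free and transitive action of $I_i \otimes_k T_{\xi_0}\F$, which vanishes because $T_{\xi_0}\F = 0$; so $\lif(\xi|_{A_{i+1}}, A_i)$ is a singleton, and $\xi$ and $\eta$ represent the same class of liftings. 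An isomorphism of liftings between them is in particular an arrow of $\F_{\xi_0}(A_i)$, so $\xi \cong \eta$ there, which closes the induction.

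The argument is short and the only points requiring care are bookkeeping: that $\F_{\xi_0}(k)$ genuinely has a single isomorphism class, and that ``isomorphic as liftings of $\xi|_{A_{i+1}}$'' is the same as ``isomorphic in $\F_{\xi_0}(A_i)$'' once $\eta$ has been transported along $\theta$ --- both follow directly from the definitions of $\F_{\xi_0}$ and of $\Lif(-,-)$. I do not expect any real obstacle here: the content is entirely carried by Proposition~\ref{comp.small} and Theorem~\ref{actionthm}.
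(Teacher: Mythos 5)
Your proof is correct and follows exactly the route the paper indicates (the paper states only that the corollary is ``a straightforward application of Proposition~\ref{actionthm}, easily proved using \ref{comp.small} and induction'' without spelling out details). Your descending induction along the tiny-extension factorization of $A\to k$, invoking Theorem~\ref{actionthm} with $I\otimes_k T_{\xi_0}\F=0$ at each step, is precisely the intended argument, and the bookkeeping about transporting $\eta$ along $\theta$ to make it a lifting of $\xi|_{A_{i+1}}$ and recognizing the resulting isomorphism as an arrow of $\F_{\xi_0}(A_i)$ is handled properly.
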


This action has two natural functoriality properties, which we now discuss. The first one is functoriality with respect to the small extension. Let $\fib$ be a deformation category, and $A'\to A$, $B'\to B$ two small extensions, with kernels $I\subseteq A'$ and $J\subseteq B'$. Suppose we also have a homomorphism $\phi\colon A'\to B'$ such that $\phi(I)\subseteq J$, and thus inducing $\overline{\phi}\colon A\to B$ and $\phi|_I\colon I\to J$. In other words, we have a homomorphism of extensions
$$
\xymatrix{
0\ar[r] & I\ar[r]\ar[d]^{\phi|_I} & A' \ar[r]\ar[d]^\phi & A \ar[r]\ar[d]^{\overline{\phi}} & 0\\
0\ar[r] & J\ar[r] & B' \ar[r] & B\ar[r] & 0.
}
$$

Let us also have $\xi_0 \in \F(k)$, $\xi \in \F_{\xi_0}(A)$ and assume $\lif(\xi,A')$ is nonempty (so that $\lif(\overline{\phi}_*(\xi),B')$ is nonempty). We have a $k$-linear map
$$
\phi|_I\otimes \id \colon  I\otimes_k T_{\xi_0}\F\arr J\otimes_k T_{\xi_0}\F
$$
(which corresponds to the pullback function $\lif(\xi_0,k \oplus I)\to \lif(\xi_0,k\oplus J)$ induced by $\id\oplus \phi|_I$), and a pullback function on isomorphism classes of liftings
$$
\phi_*\colon \lif(\xi,A') \to \lif(\overline{\phi}_*(\xi),B')\,.
$$

\begin{prop}\label{funct}
We have
$$
\phi_*([\xi'] +  a)=\phi_*([\xi']) + (\phi|_I\otimes \id)(a)
$$
for every $a \in I\otimes_k T_{\xi_0}\F$ and $[\xi'] \in \lif(\xi,A')$.
\end{prop}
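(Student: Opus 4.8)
The plan is to reduce the statement to the construction of the action in the proof of Theorem~\ref{actionthm}, and then to check that the ring isomorphisms used there are compatible with $\phi$. Recall that for two liftings $\xi\to\xi'$ and $\xi\to\xi''$ of $\xi$ to $A'$ the relation $[\xi'']=[\xi']+a$ means the following: the glued object $\{\xi',\xi''\}\in\F(A'\times_A A')$, transported via the pullback functors along the ring isomorphisms $A'\times_A A'\xrightarrow{\sim}A'\oplus I$, $(a_1,a_2)\mapsto(a_1,a_2-a_1)$, and $A'\oplus I\xrightarrow{\sim}A'\times_k(k\oplus I)$, $(a,v)\mapsto(a,\pi(a)\oplus v)$ (with $\pi$ the residue field projection), corresponds under the RS equivalence to the pair $(\xi',\eta_a)$, where $\eta_a\in\F_{\xi_0}(k\oplus I)$ lies in the isomorphism class $a\in I\otimes_k T_{\xi_0}\F\simeq\lif(\xi_0,k\oplus I)$. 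First I would isolate this characterization of $+$, as it is the only input about the action that the argument needs.

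Second, I would assemble a commutative diagram of ring homomorphisms linking the data for $A'\to A$ to the data for $B'\to B$. The homomorphism $\phi\colon A'\to B'$ with $\phi(I)\subseteq J$ induces $\psi\colon A'\times_A A'\to B'\times_B B'$ (using $\overline{\phi}$ on the common base), and a one-line check shows that $\psi$ is compatible with the isomorphisms $A'\times_A A'\xrightarrow{\sim}A'\oplus I$ and $B'\times_B B'\xrightarrow{\sim}B'\oplus J$ and the map $\phi\oplus\phi|_I$; likewise, because every homomorphism in $\artl$ induces the identity on residue fields, the maps $(a,v)\mapsto(a,\pi(a)\oplus v)$ are compatible with $\phi$ and with $\phi\times_k(\id_k\oplus\phi|_I)$. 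This gives a commutative diagram whose bottom row is, for the extension $B'\to B$, the exact analogue of the top row that defined the action.

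Third, I would pull back along these maps and chase the diagram. Pulling $\{\xi',\xi''\}$ back along $\psi$ yields $\{\phi_*\xi',\phi_*\xi''\}\in\F(B'\times_B B')$; this is the functoriality of the gluing construction, which follows from the compatibility of the RS equivalences $\Phi$ with pullbacks together with uniqueness of pullbacks up to canonical isomorphism. Feeding $\{\phi_*\xi',\phi_*\xi''\}$ through the commutative diagram of the previous step, and using that $(\phi|_I)_*\eta_a$ represents $(\phi|_I\otimes\id)(a)$ — which is precisely how $\phi|_I\otimes\id$ was identified with the pullback function $\lif(\xi_0,k\oplus I)\to\lif(\xi_0,k\oplus J)$ — the characterization of the first step, read off now for the extension $B'\to B$, gives $[\phi_*\xi'']=[\phi_*\xi']+(\phi|_I\otimes\id)(a)$. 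Since $[\xi'']=[\xi']+a$ and $\phi_*[\xi'']=[\phi_*\xi'']$, this is exactly the asserted identity.

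The one slightly delicate point is the functoriality of the gluing $\{-,-\}$ under pullback: one has to verify that the square of functors relating $\F(A'\times_A A')\to\F(A')\times_{\F(A)}\F(A')$ to its $B$-counterpart commutes, and here some care is needed with the direction of arrows (ring homomorphisms versus morphisms of affine schemes) and with the canonical isomorphisms identifying iterated pullbacks. Granting that, the rest is a direct diagram chase and the routine checks of the two compatibilities in the second paragraph, which I would leave to the reader.
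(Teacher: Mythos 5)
Your proposal is correct and is exactly the kind of argument the paper has in mind: the paper itself leaves this proposition as an exercise, saying only that the proofs of Propositions~\ref{funct} and \ref{actionfunct} are ``a simple matter of drawing diagrams and chasing pullbacks.'' You unwind the definition of the action from the proof of Theorem~\ref{actionthm}, check that the two ring isomorphisms used there commute with $\phi$ (via $\phi\oplus\phi|_I$ and $\phi\times_k(\id_k\oplus\phi|_I)$, respectively), and invoke functoriality of the RS gluing under pullback, which is the one genuinely diagram-dependent step and which you correctly flag as such.
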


Using the notation of Remark~\ref{action}, we can equivalently say that if $[\xi'],[\xi''] \in \lif(\xi,A')$ we have
$$
[\phi_*(\xi')] -[\phi_*(\xi'')]=(\phi|_I\otimes \id)([\xi']-[\xi''])\,.
$$

\begin{examp}\label{universal-first-order}
Suppose that $\fib$ is a deformation category and $\xi_0 \in \F(k)$. Set $T \eqdef T_{\xi_{0}}\F$. Consider liftings of $\xi_{0}$ to artinian $\Lambda$-algebras of the form $k \oplus V$, where $V$ is a finite-dimensional $k$-vector space. There is a trivial lifting $\xi_{0}^{V}$, coming from functoriality from the homomorphism $k \arr k \oplus V$. From Theorem~\ref{actionthm} we see that $\lif(\xi_{0}, k\oplus V)$ is in bijective correspondence with $V \otimes_k T$, by applying each element of $V \otimes_k T$ to $\xi_{0}^{V}$.

In particular, when $V = T^{\vee}$ we have a bijective correspondence of $\lif(\xi_{0}, k\oplus T^{\vee})$ with $T^\vee \otimes_k T$; we denote by $\xi^{(1)}$ a lifting of $\xi_{0}$ that corresponds to the identity in $T^\vee \otimes_k T = \Hom_{k}(T, T)$. This $\xi^{(1)}$ is unique up to (a non unique) isomorphism; we call it \emph{the universal first order lifting} of $\xi_{0}$.

This name is justified by the following universal property: for each lifting $\xi$ of $\xi_{0}$ to an artinian $k$-algebra of the form $k \oplus V$, then there exists a unique $k$-linear homomorphism $T^{\vee} \arr V$, such that the corresponding homomorphism of $\Lambda$-algebras $k \oplus T^{\vee} \arr k \oplus V$ sends $\xi^{(1)}$ into a lifting of $\xi_0$ isomorphic to $\xi$. This follows easily from functoriality.
\end{examp}

The second one is functoriality with respect to the deformation category. Let $\fib$ and $\G\to \artl^\op$ be two deformation categories with a morphism $F\colon \F\to \G$, $A'\to A$ a small extension with kernel $I$, and let $\xi_0 \in \F(k)$, $\xi \in \F_{\xi_0}(A)$. Assume also that $\lif(\xi,A')$ is nonempty (so that $\lif(F_{\xi_0}(\xi),A')$ is also nonempty).

There is a $k$-linear map
$$
\id \otimes d_{\xi_0}F\colon  I\otimes_k T_{\xi_0}\F\arr I \otimes_k T_{F(\xi_0)}\G
$$
induced by the differential $d_{\xi_0}F\colon T_{\xi_0}\F\to T_{F(\xi_0)}\G$, and we still denote by
$$
F\colon \lif(\xi,A')\arr \lif(F(\xi),A')
$$
the induced function on isomorphism classes of liftings.

\begin{prop}\label{actionfunct}
We have
$$
F([\xi'] +  a)=F([\xi']) + (\id \otimes d_{\xi_0}F)(a)
$$
for every $a \in I\otimes_k T_{\xi_0}\F$ and $[\xi']\in \lif(\xi,A')$.
\end{prop}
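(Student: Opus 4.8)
The plan is to replay the argument in the proof of Theorem~\ref{actionthm} with the morphism $F$ applied throughout, and then to recognize the induced map on $I \otimes_k T_{\xi_0}\F$ as $\id \otimes d_{\xi_0}F$. The guiding principle is that the construction of the action $\mu$ uses only two ingredients — pullback functors along ring homomorphisms, and RS gluings/splittings — and a morphism of deformation categories is compatible with both. So first I would record this compatibility. Since $\F$ and $\G$ are fibered in groupoids, every arrow is cartesian (Proposition~\ref{groupoid}), hence the base-preserving functor $F$ carries cartesian arrows to cartesian arrows; therefore for every $\eta$ over some $C \in \artl$ and every homomorphism $C \to B$ there is a canonical isomorphism $F(\eta|_B) \simeq F(\eta)|_B$, compatible with iterated pullbacks. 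In particular, if $\eta$ over a fibered product realizes an RS gluing of $\eta_1$ and $\eta_2$ — i.e. $\eta$ restricts to $\eta_1$ and $\eta_2$ compatibly over the bottom term — then $F(\eta)$ restricts to $F(\eta_1)$ and $F(\eta_2)$ compatibly; by the essential uniqueness of RS gluings (Definition~\ref{def:rs}) this pins $F(\eta)$ down up to isomorphism as $\{F(\eta_1),F(\eta_2)\}$, and the same reasoning applies to RS splittings.

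Now fix a lifting $\xi \to \xi'$ of $\xi$ to $A'$, put $a = [\xi''] - [\xi']$ for a second lifting $\xi \to \xi''$, and recall from the proof of Theorem~\ref{actionthm} that $a \in I \otimes_k T_{\xi_0}\F$ is obtained by gluing $\xi'$ and $\xi''$ over $A' \times_A A'$, transporting the result along the ring isomorphisms $A' \times_A A' \simeq A' \oplus I \simeq A' \times_k (k \oplus I)$ (each commuting with the projection to $A$), and RS-splitting the outcome into the pair $(\xi', \zeta)$ with $\zeta \in \F_{\xi_0}(k \oplus I)$; then $a = [\zeta]$ under the identification $\lif(\xi_0, k \oplus I) \simeq I \otimes_k T_{\xi_0}\F$. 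Applying $F$ and using the compatibilities above at each stage: $F(\{\xi', \xi''\}) \simeq \{F(\xi'), F(\xi'')\}$, each transport along a ring isomorphism commutes with $F$ (being a pullback along a ring homomorphism), and the final RS split of $F(\{\xi', \xi''\})$ is the pair $(F(\xi'), F(\zeta))$; hence $F([\xi'']) - F([\xi']) = [F(\zeta)]$ in $I \otimes_k T_{F(\xi_0)}\G$.

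Finally I would identify this class with $(\id \otimes d_{\xi_0}F)(a)$. The morphism $F$ induces a natural transformation $\phi$ between the $k$-linear functors $\fvect \to \vect$ used to define $T_{\xi_0}\F$ and $T_{F(\xi_0)}\G$, and at each $V \in \fvect$ this $\phi$ sends the isomorphism class of an object of $\F_{\xi_0}(k \oplus V)$ to the class of its image under $F$; in particular $\phi(I)$ sends $a = [\zeta]$ to $[F(\zeta)]$, while $d_{\xi_0}F = \phi(k)$ by definition. Since $\phi$ is $k$-linear (Proposition~\ref{app.nat.linear}) and the two functors preserve finite products, under the identifications of Proposition~\ref{app.natural} we get $\phi(I) = \id_I \otimes \phi(k) = \id_I \otimes d_{\xi_0}F$, so $[F(\zeta)] = (\id \otimes d_{\xi_0}F)(a)$. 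Combining with the previous paragraph, $F([\xi'] + a) = F([\xi'']) = F([\xi']) + (\id \otimes d_{\xi_0}F)(a)$. The step I expect to be the main obstacle is making the middle paragraph's stagewise argument fully rigorous: one must verify that $F$ genuinely intertwines the whole bijection $\Phi$ from the proof of Theorem~\ref{actionthm} with its counterpart for $\G$, carrying the canonical isomorphisms $F(\eta|_B) \simeq F(\eta)|_B$ coherently through the two RS steps and the three ring isomorphisms; this is routine given the functoriality of pullbacks, but it is the only place where genuine care is required.
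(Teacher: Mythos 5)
Your proof is correct and is exactly the ``diagram chasing and pullback chasing'' argument the paper leaves as an exercise: you trace $F$ through the three steps of the bijection $\Phi$ from Theorem~\ref{actionthm} (RS gluing, transport along the two ring isomorphisms, RS split), using that a base-preserving functor between categories fibered in groupoids automatically preserves cartesian arrows and hence commutes up to canonical isomorphism with pullbacks and RS gluings, and you finish with the clean observation that the natural transformation $\phi$ defining $d_{\xi_0}F = \phi(k)$ satisfies $\phi(I) = \id_I \otimes \phi(k)$ under the identifications of Proposition~\ref{app.natural}, which is precisely where Proposition~\ref{app.nat.linear} is needed.
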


As before we can reformulate this result using the notation of Remark~\ref{action}, and obtain
$$
[F(\xi')] - [F(\xi'')])=(\id\otimes d_{\xi_0}F)([\xi'] - [\xi''])
$$
for every $[\xi'],[\xi''] \in \lif(\xi,A')$.

The proofs of the last two Propositions are a simple matter of drawing diagrams and chasing pullbacks.

\begin{ex}
Prove Propositions \ref{funct} and \ref{actionfunct}.
\end{ex}

There is a generalization of the previous constructions, which we will need later to state the Ran--Kawamata Theorem about vanishing of obstructions (Theorem~\ref{rk}). Given $A \in \artl$, $\xi \in \F(A)$, we consider the liftings of $\xi$ to trivial $\Lambda$-algebras $A \oplus M$ where $M \in \fmod$ (and the homomorphism $A \oplus M\to A$ is the projection).\label{generalization}

We have a functor $F_\xi\colon \fmod\to \set$ defined on objects by
$$
F_\xi(M)=\{ \text{isomorphism classes of liftings of } \xi \text{ to } A \oplus M\}
$$
and sending an $A$-linear map $M\to N$ into the pullback function $F_\xi(M)\to F_\xi(N)$.

Since $\fib$ satisfies RS, one can readily show that the functor $F_\xi$ preserves finite products, and so by Proposition~\ref{app.lifting} it has a canonical lifting $\fmod\to\mod$, which we still call $F_\xi$. Unlike the case $A = k$, in the present situation the functor $F_\xi$ need not be exact. Nevertheless, one can easily prove using RS that it is half-exact, that is, if
$$
\xymatrix{
0\ar[r]& M'\ar[r] & M\ar[r] & M''\ar[r] & 0
}
$$
is an exact sequence of finitely generated $A$-modules, then the sequence
$$
\xymatrix{
F_\xi(M')\ar[r] & F_\xi(M)\ar[r] & F_\xi(M'')
}
$$
is exact.

The following Proposition can be proved in the exact same way as Theorem~\ref{actionthm}.

\begin{prop}
If $A'\to A$ is a surjection in $\artl$ with kernel $I$ such that $I^2=(0)$ (so that $I$ is an $A$-module), and $\xi \in \F(A)$, then $\lif(\xi,A')$ is either empty, or there is a free and transitive action of $F_\xi(I)$ on it.
\end{prop}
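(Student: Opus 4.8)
The plan is to run the proof of Theorem~\ref{actionthm} essentially verbatim, with $A$ playing the role of the base field and $M\mapsto F_\xi(M)$ playing the role of $V\mapsto V\otimes_k T_{\xi_0}\F$; the only genuinely new point is that $I$ is now required merely to satisfy $I^2=(0)$, not $\m_{A'}I=0$, so one routes the argument through the trivial extension $A\oplus I$ rather than through $k\oplus I$. Assume $\lif(\xi,A')$ is nonempty. First I would record that $I$, being a finitely generated ideal of $A'$ with $I^2=(0)$, is a finitely generated $A$-module, so that $A\oplus I$ (with the trivial algebra structure) again lies in $\artl$ and $I\in\fmod$; in particular $F_\xi(I)$ is defined, with $F_\xi(I)=\lif(\xi,A\oplus I)$ as a set by construction of $F_\xi$ (its module structure being the one coming from Proposition~\ref{app.lifting}).

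Next, exactly as in Theorem~\ref{actionthm}, a pair of liftings $\xi\to\xi_1'$ and $\xi\to\xi_2'$ gives an object of $\F(A')\times_{\F(A)}\F(A')$, hence by RS a lifting $\xi\to\{\xi_1',\xi_2'\}$ of $\xi$ to $A'\times_A A'$, and this yields a bijection
$$
\lif(\xi,A')\times\lif(\xi,A')\;\simeq\;\lif(\xi,A'\times_A A')\,.
$$
Now there are two ring isomorphisms, both commuting with the projection to $A'$: the map $A'\times_A A'\to A'\oplus I$ sending $(a_1,a_2)$ to $(a_1,a_2-a_1)$ --- here the verification that multiplication is respected uses only $I^2=(0)$, since the cross term lands in $I\cdot I=0$ --- and the map $A'\oplus I\to A'\times_A(A\oplus I)$ sending $(a,v)$ to $(a,(\bar a,v))$. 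Applying RS to the fibered product $A'\times_A(A\oplus I)$, whose projection $A\oplus I\to A$ is surjective, and restricting to liftings of $\xi$ gives
$$
\lif(\xi,A'\oplus I)\;\simeq\;\lif(\xi,A')\times\lif(\xi,A\oplus I)\;=\;\lif(\xi,A')\times F_\xi(I)\,,
$$
the gluing datum over $A$ being the canonical one, since both factors restrict canonically to $\xi$.

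Composing the two displayed bijections produces a bijection
$$
\Phi\colon\lif(\xi,A')\times\lif(\xi,A')\;\longrightarrow\;\lif(\xi,A')\times F_\xi(I)
$$
for which $\pi_1\circ\Phi^{-1}$ is the projection onto the first factor; setting $\mu\eqdef\pi_2\circ\Phi^{-1}$ then defines a map $\lif(\xi,A')\times F_\xi(I)\to\lif(\xi,A')$, which is free and transitive because $\Phi$ is a bijection. The only step that is not pure bookkeeping is checking that $\mu$ is genuinely an action of the additive group $F_\xi(I)$ --- the unit axiom and associativity --- and this is the same diagram chase as the one left to the reader at the end of the proof of Theorem~\ref{actionthm}, carried through the two ring isomorphisms above; I do not expect it to present any new difficulty.
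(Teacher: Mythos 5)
Your proof is correct and is precisely the adaptation the paper has in mind when it says the Proposition ``can be proved in the exact same way as Theorem~\ref{actionthm}'': you replace $k\oplus I$ by $A\oplus I$, observe that the isomorphism $A'\times_A A'\simeq A'\oplus I$ only uses $I^2=(0)$, and check the minor housekeeping (that $I\in\fmod$ and $A\oplus I\in\artl$). No gaps; this matches the intended argument.
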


\subsection{Examples}\label{tangent-examples}

Now we calculate the tangent space in each of the examples introduced in Section~\ref{deformation.cat.cap}, and give an application to infinitesimal deformations of smooth hypersurfaces in $\P^n_k$.

\subsubsection{Schemes}

We first consider the deformation category $\catdef \to \artl^\op$ corresponding to deformations of schemes.

\begin{thm}\label{kodaira}
Let $X_0$ be a reduced and generically smooth scheme of finite type over $k$. There is an isomorphism (sometimes called the \gr{Kodaira--Spencer} correspondence)
$$
T_{X_0}\catdef\simeq\Ext^1_{\O_{X_0}}(\diff_{X_0},\O_{X_0})\,.
$$
\end{thm}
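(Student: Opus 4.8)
The plan is to identify $T_{X_0}\catdef = \lif(X_0, \dual)$ with the set of isomorphism classes of flat schemes $X$ over $\spec\dual$ equipped with an isomorphism of the closed fiber with $X_0$, and then to match this with $\Ext^1_{\O_{X_0}}(\diff_{X_0},\O_{X_0})$ via local-to-global gluing. Concretely, a first-order deformation is a flat $\dual$-algebra sheaf $\O_X$ on the space $|X_0|$ fitting into an exact sequence $0 \to \eps\O_X \to \O_X \to \O_{X_0} \to 0$ with $\eps\O_X \simeq \O_{X_0}$ (flatness over $\dual$ forces this), so locally $X$ is an infinitesimal extension of $X_0$ by $\O_{X_0}$ as a sheaf of rings.

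First I would recall the local picture: on an affine open $U = \spec B_0 \subseteq X_0$ with $B_0$ smooth over $k$ (generic smoothness plus reducedness is used only to ensure we are in a context where $\diff_{X_0}$ is the relevant object — and more importantly to invoke the standard identification below), any square-zero extension of $B_0$ by $B_0$ as a $k$-algebra is trivial because $B_0$ is smooth, i.e. such an extension is isomorphic to $B_0[\eps] = B_0\otimes_k \dual$; this is the affine rigidity of smooth algebras. Hence the deformation $X$ is locally isomorphic to the trivial one $U\times_{\spec k}\spec\dual$. So over a cover $\{U_i\}$ of $X_0$ we may choose trivializations $\theta_i\colon X|_{U_i} \simeq U_i\times_{\spec k}\spec\dual$, and on overlaps $U_{ij}$ the discrepancy $\theta_i\theta_j^{-1}$ is an automorphism of the trivial deformation of $U_{ij}$ inducing the identity on $U_{ij}$. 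By Proposition~\ref{ext.der} (applied to $B_0 \to \dual$, or directly: an automorphism of $B_0[\eps]$ over $B_0$ and over $\dual$ reducing to the identity is $b \mapsto b + \eps\, d_{ij}(b)$) such an automorphism corresponds to a $k$-derivation $d_{ij}\in \der_k(\O_{X_0}(U_{ij}), \O_{X_0}(U_{ij})) = \Hom_{\O_{X_0}}(\diff_{X_0}, \O_{X_0})(U_{ij})$. The cocycle relation $\theta_i\theta_j^{-1}\cdot\theta_j\theta_k^{-1} = \theta_i\theta_k^{-1}$ translates (since these are square-zero, so composition is addition of derivations) into $d_{ij} - d_{ik} + d_{jk} = 0$, i.e. $(d_{ij})$ is a Čech $1$-cocycle with values in the sheaf $\Homsh_{\O_{X_0}}(\diff_{X_0},\O_{X_0}) = T_{X_0}$; changing the trivializations $\theta_i$ changes the cocycle by a coboundary. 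This gives a well-defined element of $\H^1(X_0, T_{X_0})$, and conversely every cocycle lets us reglue the local trivial pieces into a global deformation. One checks the construction is independent of the cover by passing to a common refinement, and that it respects addition and scalar multiplication — matching the vector space structure on $T_{X_0}\catdef$ from Section~\ref{captang} with the natural one on cohomology.

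To get the $\Ext^1$ formulation rather than just $\H^1(X_0, T_{X_0})$, I would invoke the standard local-to-global spectral sequence
$$
\H^p(X_0, \Extsh^q_{\O_{X_0}}(\diff_{X_0}, \O_{X_0})) \Rightarrow \Ext^{p+q}_{\O_{X_0}}(\diff_{X_0}, \O_{X_0}),
$$
noting that because $X_0$ is reduced and generically smooth, $\diff_{X_0}$ is locally free on a dense open, so $\Extsh^1_{\O_{X_0}}(\diff_{X_0},\O_{X_0})$ is supported on the (nowhere dense) singular locus; this is exactly the hypothesis that makes the edge map behave well in low degrees. The five-term exact sequence then gives $0 \to \H^1(X_0, T_{X_0}) \to \Ext^1_{\O_{X_0}}(\diff_{X_0},\O_{X_0}) \to \H^0(X_0, \Extsh^1_{\O_{X_0}}(\diff_{X_0},\O_{X_0}))$. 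The content of the theorem is that the composite $T_{X_0}\catdef \simeq \H^1(X_0,T_{X_0}) \hookrightarrow \Ext^1_{\O_{X_0}}(\diff_{X_0},\O_{X_0})$ is actually an isomorphism; the cleanest route is to give a direct, cover-free description of the map $T_{X_0}\catdef \to \Ext^1_{\O_{X_0}}(\diff_{X_0},\O_{X_0})$ by sending a first-order deformation $0 \to \O_{X_0} \to \O_X \to \O_{X_0} \to 0$ to the class of the conormal-type extension $0 \to \O_{X_0} \to \diff_{X/\dual}\otimes_{\O_X}\O_{X_0} \to \diff_{X_0} \to 0$ obtained from the conormal sequence of $X_0 \hookrightarrow X$ (using flatness of $X$ over $\dual$ to see the left map is injective), i.e. an element of $\Ext^1_{\O_{X_0}}(\diff_{X_0},\O_{X_0})$, and then to check this is bijective by comparing with the Čech description étale- or Zariski-locally, where $\diff_{X_0}$ is free and everything is computable.

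The main obstacle, and the step deserving the most care, is precisely this last identification — showing the natural map is an \emph{isomorphism} and not merely injective, i.e. that there is no obstruction coming from $\H^0(X_0, \Extsh^1)$. This requires genuinely using that $X_0$ is reduced and generically smooth (so $\Extsh^1_{\O_{X_0}}(\diff_{X_0},\O_{X_0})$ lives on the singular locus) together with the fact that deformations near the singular locus — which are still parametrized by the affine deformations of a possibly singular local ring — are captured by the $\Ext^1$ of modules rather than by derivations of the structure sheaf; concretely one reruns the gluing argument but over an \emph{affine} cover refined so that on each $U_i$ the module $\diff_{X_0}|_{U_i}$ admits a presentation, replacing ``trivialization of the deformation'' by ``choice of a lift of the presentation,'' and identifying the resulting obstruction-to-gluing classes with $\Ext^1_{B_0}(\diff_{B_0}, B_0)$ via the standard description of square-zero extensions of an algebra as $\Ext^1$ of its module of differentials. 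Everything else (vector space structure, functoriality, independence of choices) is routine diagram-chasing of the sort the paper has already been doing.
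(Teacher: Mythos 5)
Your overall plan contains a genuine logical obstruction that you never fully resolve. In your Čech step you choose trivializations $\theta_i$ of the deformation on affine opens $U_i$, which works only when the $U_i$ are \emph{smooth}; but if $X_0$ has singular points, its smooth affine opens do not cover it, and on an affine open containing a singular point a first-order deformation is generally \emph{not} trivial — the rigidity fact you cite is precisely the one that fails. So the cocycle $(d_{ij})$ does not exist, the isomorphism $T_{X_0}\catdef \simeq \H^1(X_0, T_{X_0})$ is false for singular $X_0$, and your sentence asserting that the theorem amounts to the composite $T_{X_0}\catdef \simeq \H^1(X_0,T_{X_0}) \hookrightarrow \Ext^1_{\O_{X_0}}(\diff_{X_0},\O_{X_0})$ being an isomorphism is self-contradictory: as soon as $X_0$ is genuinely singular, $\H^1(X_0,T_{X_0})$ is a strict subspace of $\Ext^1$, so the inclusion cannot be onto. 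The actual content is that $T_{X_0}\catdef$ is the whole $\Ext^1$, and $\H^1(X_0,T_{X_0})$ corresponds only to the \emph{locally trivial} deformations.

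Your third paragraph does in fact identify the correct map — sending a first-order deformation $X$ to the conormal extension $0 \to \O_{X_0} \to \diff_{X/\dual}|_{X_0} \to \diff_{X_0} \to 0$ — and this is exactly what the paper does. The hypothesis that $X_0$ is reduced and generically smooth enters to prove that the left map in the conormal sequence is \emph{injective} (so that one genuinely has an extension class), not to control $\Extsh^1$ in a spectral sequence. What your proposal is missing is the inverse construction: given an extension $0 \to V\otimes_k\O_{X_0} \to E \to \diff_{X_0} \to 0$, one builds the deformed structure sheaf directly as the fiber product $\O(E) = \O_{X_0} \times_{\diff_{X_0}} E$ along the universal derivation $d_0\colon \O_{X_0} \to \diff_{X_0}$, checks that $\O(E)$ is a flat sheaf of $k\oplus V$-algebras, and that $(|X_0|, \O(E))$ is a scheme. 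This is global, requires no cover, no spectral sequence, and no étale reduction; your ``rerun the gluing argument with presentations of $\diff_{X_0}$'' sketch could presumably be made to work, but it is considerably more awkward than the direct fiber-product construction and does not obviously handle the gluing of non-smooth pieces cleanly. I would discard the Čech and spectral-sequence scaffolding entirely and carry out the forward map (conormal sequence, using reduced + generically smooth to get injectivity of $d$) together with the explicit inverse via $\O(E)$, then verify they are mutually inverse and functorial in $V$.Your overall plan contains a genuine logical obstruction that you never fully resolve. In your \v{C}ech step you choose trivializations $\theta_i$ of the deformation on affine opens $U_i$, which works only when the $U_i$ are \emph{smooth}; but if $X_0$ has singular points its smooth affine opens do not cover it, and on an affine open containing a singular point a first-order deformation is generally \emph{not} trivial --- the rigidity fact you cite is precisely the one that fails there. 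So the cocycle $(d_{ij})$ does not exist in general, the identification $T_{X_0}\catdef \simeq \H^1(X_0, T_{X_0})$ is false once $X_0$ is singular, and your sentence asserting that the theorem amounts to the composite $T_{X_0}\catdef \simeq \H^1(X_0,T_{X_0}) \hookrightarrow \Ext^1_{\O_{X_0}}(\diff_{X_0},\O_{X_0})$ being an isomorphism is self-contradictory: when $X_0$ is genuinely singular this inclusion is strict, so the composite cannot be onto. The actual content is that $T_{X_0}\catdef$ is the \emph{whole} $\Ext^1$, and $\H^1(X_0,T_{X_0})$ corresponds only to the locally trivial deformations.

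Your third paragraph does identify the correct map --- sending a first-order deformation $X$ to the conormal extension $0 \to \O_{X_0} \to \diff_{X/\dual}|_{X_0} \to \diff_{X_0} \to 0$ --- and this is exactly what the paper does. The hypothesis that $X_0$ is reduced and generically smooth is used to prove that the left arrow in this sequence is \emph{injective}, so that one genuinely gets an extension class; it is not used to control $\Extsh^1$ in a spectral sequence. What your proposal is missing is a clean construction of the inverse: given an extension $0 \to V\otimes_k\O_{X_0} \to E \to \diff_{X_0} \to 0$, the paper builds the deformed structure sheaf directly and globally as the fiber product $\O(E) = \O_{X_0} \times_{\diff_{X_0}} E$ along the universal derivation $d_0\colon \O_{X_0}\to\diff_{X_0}$, then checks that $\O(E)$ is a flat sheaf of $k\oplus V$-algebras and that $(|X_0|,\O(E))$ is a scheme. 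This is cover-free, needs no spectral sequence, and needs no \'etale reduction. Your ``rerun the gluing over a refined cover with lifts of a presentation of $\diff_{X_0}$'' sketch could perhaps be made to work, but it is considerably more awkward than the direct fiber-product construction and does not obviously handle the gluing of non-smooth pieces; I would discard the \v{C}ech and spectral-sequence scaffolding entirely and carry out the forward conormal map together with the explicit inverse via $\O(E)$, verifying they are mutually inverse and functorial in $V$.
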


\begin{proof}
Call $F\colon \fvect \to \set$ the functor defined on objects by
$$
F(V)=\{\text{isomorphism classes of objects in } \catdef_{X_0}(k \oplus V)\}
$$
and that sends a $k$-linear map $V\to W$ into the pullback function $F(V)\to F(W)$. We need to construct a functorial bijection
$$
F(V)\simeq V\otimes_k \Ext^1_{\O_{X_0}}(\diff_{X_0},\O_{X_0})
$$
that will give a $k$-linear natural isomorphism between the functors $F$ and $-\otimes_k  \Ext^1_{\O_{X_0}}(\diff_{X_0},\O_{X_0})$, so in particular we will get an isomorphism
$$
T_{X_0}\catdef=F(k)\simeq\Ext^1_{\O_{X_0}}(\diff_{X_0},\O_{X_0})\,.
$$
We sketch briefly the main steps of the construction.

First of all we define a function
$$
\phi_V\colon F(V)\arr V\otimes_k\Ext^1_{\O_{X_0}}(\diff_{X_0},\O_{X_0})\,.
$$
Take an object $X \in \catdef_{X_0}(k \oplus V)$, which is a flat scheme of finite type over $k \oplus V$ with an isomorphism
$$
X\times_{\spec k \oplus V}\spec k\simeq X_0
$$
(in particular $\O_X\otimes_{k \oplus V} k\simeq \O_{X_0}$), and consider the conormal sequence of $X_0\subseteq X$
$$
\xymatrix{
V\otimes_k \O_{X_0}\ar[r]^-d & \diff_X|_{X_0}\ar[r] & \diff_{X_0}\ar[r] & 0
}
$$
where $d$ is the homomorphism induced by the universal derivation $\O_X\to \diff_X$, and we identify $V\otimes_k \O_{X_0}$ with the sheaf of ideals of $X_0$ in $X$. Using the fact that $X_0$ is generically smooth and reduced one sees that in this case $d$ is injective, and so we have an exact sequence of $\O_{X_0}$-modules
$$
\xymatrix{
0\ar[r] &V\otimes_k \O_{X_0}\ar[r] & \diff_X|_{X_0}\ar[r] & \diff_{X_0}\ar[r] & 0
}
$$
whose isomorphism class in an element of $V\otimes_k\Ext^1_{\O_{X_0}}(\diff_{X_0},\O_{X_0})$. It is also clear that isomorphic objects of $\catdef_{X_0}(k \oplus V)$ will give isomorphic extensions, and so we have our function
$$
\phi_V\colon F(V)\to V\otimes_k\Ext^1_{\O_{X_0}}(\diff_{X_0},\O_{X_0})\,.
$$

Now we construct a function
$$
\psi_V\colon V\otimes_k\Ext^1_{\O_{X_0}}(\diff_{X_0},\O_{X_0})\to F(V)
$$
in the other direction. We start then with an element of $\Ext^1_{\O_{X_0}}(\diff_{X_0},V\otimes_k\O_{X_0})$, represented by an extension
$$
\xymatrix{
0\ar[r] & V \otimes_k \O_{X_0}\ar[r] & E \ar[r]^f & \diff_{X_0}\ar[r] & 0
}
$$
of $\O_{X_0}$-modules. We define then a sheaf $\O(E)$ on $|X|$ by
$$
\O(E)=\O_{X_0}\times_{\diff_{X_0}} E\subseteq \O_{X_0}\oplus E
$$
where the morphism $\O_{X_0}\to \diff_{X_0}$ is the universal derivation $d_0$.

This is a priori only a sheaf of $k$-vector spaces, but one can see that $\O(E)$ has a natural structure of sheaf of (flat) $k \oplus V$-algebras. Moreover its stalks are local rings, so that $X(E)=(|X_0|,\O(E))$ is a locally ringed space, and it is a scheme (flat over $k \oplus V$), because if $U=\spec A$ is an open affine subscheme of $X_0$, then $(U,\O(E)|_U)$ is isomorphic to $\spec(A\times_{\diff_{A}}E(U))$.

We note now that $X(E)\times_{\spec k \oplus V} \spec k \simeq X_0$, and that this construction is independent (up to isomorphism) of the representative chosen for the element of $\Ext^1_{\O_{X_0}}(\diff_{X_0},V\otimes_k\O_{X_0})$, so we get a function
$$
\psi_V\colon V\otimes_k\Ext^1_{\O_{X_0}}(\diff_{X_0},\O_{X_0})\arr F(V)\,.
$$

To complete the proof, we leave it to the reader to check that $\phi_V$ and $\psi_V$ are inverse to each other, and that $\phi_V$ is functorial in $V$, or in other words, given a $k$-linear map $f\colon V \to W$, the diagram
$$
\xymatrix@C+20pt{
F(V)\ar[r]^-{\phi_V}\ar[d]_{(\id\oplus f)_*} & V\otimes_k\Ext^1_{\O_{X_0}}(\diff_{X_0},\O_{X_0})\ar[d]^{f\otimes \id}\\
F(W)\ar[r]^-{\phi_W} & W\otimes_k\Ext^1_{\O_{X_0}}(\diff_{X_0},\O_{X_0})
}
$$
is commutative.
\end{proof}

Given $\xi \in \F_{\xi_0}(\dual)$, the element of $\Ext^1_{\O_{X_0}}(\diff_{X_0},\O_{X_0})$ associated with $\xi$ is sometimes called its \gr{Kodaira--Spencer class}, from the names of the two mathematicians who first studied the deformation theory of complex manifolds.

\begin{rmk}\label{nonsing}
If $X_0$ is also smooth over $k$, then the tangent space $T_{X_0}\catdef$ is isomorphic to $\Ext^1_{\O_{X_0}}(\diff_{X_0},\O_{X_0})\simeq \H^1(X_0,T_{X_0})$, where $T_{X_0}=\diff_{X_0}^\vee$ is the tangent sheaf of $X_0$.

In particular we see that every first-order deformation of a smooth and affine variety $X_0$ is trivial, because in this case $\H^1(X_0,T_{X_0})$ vanishes. So smooth affine varieties are rigid, in the sense that they do not have any non-trivial first-order deformation.
\end{rmk}

In the general case, in which $X_0$ is not necessarily reduced and generically smooth, one has to resort to the cotangent complex $L_{X_0/k}$ associated with the structure morphism $X_0\to\spec k$; the general result, which can be found in \cite[III, 2.1.7]{Ill}, states that there is a canonical isomorphism
$$
T_{X_0}\catdef\simeq \Ext^1_{\O_{X_0}}(L_{X_0/k},\O_{X_0})\,.
$$
This implies Theorem~\ref{kodaira}, since if $X_0$ is reduced and generically smooth the cotangent complex is the sheaf $\diff_{X_0}$.

\begin{ex}\label{affine.hypersurf}
A simple example in which one can calculate the Kodaira--Spencer correspondence explicitly is that of hypersurfaces in $\A^n_k$. Take a hypersurface $X_0=\spec A\subseteq \A^n_k$, where $A=k[x_1,\hdots,x_n]/(f)$.

\begin{enumeratei}

\item Show that $\Ext^1_{\O_{X_0}}(\diff_{X_0},\O_{X_0})\simeq k[x_1,\hdots,x_n]/(f,\partial f/\partial x_1,\hdots, \partial f/\partial x_n)$.

\item Show that if $X \in \catdef_{X_0}(\dual)$, then $X\simeq\spec(\dual[x_1,\hdots,x_n]/(f+\eps g))$ for some $g \in k[x_1,\hdots,x_n]$.

\item Show that the Kodaira--Spencer class of $X$ in $T_{X_0}\catdef\simeq \Ext^1_{\O_{X_0}}(\diff_{X_0},\O_{X_0})$ is the class of $g$ in $k[x_1,\hdots,x_n]/(f,\partial f/\partial x_1,\hdots, \partial f/\partial x_n)$.

\end{enumeratei}

\end{ex}

\subsubsection{Smooth varieties}\label{smoothvar}

Now suppose $X_0$ is a smooth variety over $k$. We describe the isomorphism
$$
T_{X_0}\catdef\simeq \Ext^1_{\O_{X_0}}(\diff_{X_0},\O_{X_0})\simeq \H^1(X_0,T_{X_0})
$$
in a more explicit way, using \v{C}ech cohomology.

Consider an object $X \in \catdef_{X_0}(\dual)$ and take an open affine cover $\mathcal{U}=\{U_i\}_{i \in I}$ of $X_0$. Because of Remark~\ref{nonsing}, the induced deformation $X|_{U_i}$ of $U_i$ is trivial for every index $i$, and from this we get a collection $\{\theta_i\}_{i \in I}$ of isomorphisms of deformations 
$$
\theta_i\colon U_i \times_{\spec k} \spec\dual \to X|_{U_i}\,.
$$
Now set $\theta_{ij} \eqdef (\theta_i|_{U_{ij}})^{-1} \circ (\theta_j|_{U_{ij}})$; the $\theta_{ij}$ are automorphisms of the trivial deformations $U_{ij}\times_{\spec k} \spec\dual$ that restrict to the identity on the closed fiber $U_{ij}$.

Now we need to use results from Section~\ref{inf.aut}. Precisely, it follows from Proposition~\ref{infinit} that there is an isomorphism between the group of automorphisms of the deformation $U_{ij}\times_{\spec k} \spec\dual$ of $U_{ij}$ that induce the identity on the closed fiber, and the group $\der_k(B_{ij},B_{ij})=\Gamma(U_{ij},T_{X_0})$, where $U_{ij}=\spec B_{ij}$.

Using this, for each $\theta_{ij}$ we get an associated element $d_{ij} \in \Gamma(U_{ij},T_{X_0})$. Furthermore, on $U_{ijk}$ we have for each triplet of indices the cocycle condition
$$
\theta_{ij}\circ\theta_{jk}=\theta_{ik}
$$
on automorphisms, which translates into the relation $d_{ij}+d_{jk}-d_{ik}=0$. This in turn says that the family $\{d_{ij}\}_{i,j \in I}$ is a \v{C}ech 1-cocycle for $T_{X_0}$, and so defines an element $[\{d_{ij}\}_{i,j \in I}]$ of $\check{H}^1(\mathcal{U},T_{X_0})\simeq \H^1(X_0,T_{X_0})$.

We it leave to the reader to check that this element does not depend on the open affine cover $\mathcal{U}$, and that isomorphic deformations give the same cohomology class. One can also check that this construction gives the same element one gets by using Theorem~\ref{kodaira} and the canonical isomorphism $\Ext^1_{\O_{X_0}}(\diff_{X_0},\O_{X_0})\simeq \H^1(X_0,T_{X_0})$.

The inverse function is defined as follows: given an element of $\H^1(X_0,T_{X_0})$, we can represent it as a $1$-cocycle $\{d_{ij}\}_{i,j \in I}$ for some open affine cover $\mathcal{U}=\{U_i\}_{i \in I}$ of $X_0$. The $d_{ij}$ correspond to automorphisms of the trivial deformation $U_{ij}\times_{\spec k} \spec\dual$, and the cocycle condition says exactly that these automorphisms can be used to glue the schemes $U_i\times_{\spec k} \spec\dual$ along the subschemes $U_{ij}\times_{\spec k} \spec\dual$, to get a flat scheme $X$ over $\dual$. It is easy to see that this construction does not depend (up to isomorphism) on the affine cover, and on the cocycle we choose in the cohomology class. Finally it is clear that the two constructions are inverse to each other, so we have the bijection above.

\subsubsection{Closed subschemes}

Next we consider the case of deformations of closed subschemes. Given an object of $\hilb^X(k)$, i.e. a closed subscheme $Z_0\subseteq X_0=X\times_{\spec \Lambda}\spec k$, call $I_0$ the ideal sheaf of $Z_0$ in $X_0$, and consider the normal sheaf $\mathcal{N}_0\eqdef \Homsh(I_0/I_0^2,\O_{Z_0})$.

\begin{thm}\label{closed}
There is an isomorphism
$$
T_{Z_0}\hilb^X\simeq \H^0(Z_0,\mathcal{N}_0) = \Hom_{\O_{Z_0}}(I_0/I_0^2,\O_{Z_0})\,.
$$
\end{thm}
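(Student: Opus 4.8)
The plan is to reduce the statement to a local computation on $X_0$ and then glue. First I would note that, since $\hilb^X\to\artl^\op$ is fibered in sets, for any $A\in\artl$ the set $\hilb^X_{Z_0}(A)$ is just the set of closed subschemes of $X_A$ that are flat over $A$ and restrict to $Z_0$ over $k$, with no automorphisms to worry about. To obtain the $k$-vector space structure along the way, instead of working only with $\dual$ I would consider the functor $F\colon\fvect\to\set$ sending a finite-dimensional $V$ to the set of closed subschemes of $X_{k\oplus V}$ that are flat over $k\oplus V$ and restrict to $Z_0$. This $F$ preserves finite products (it satisfies RS, by Proposition~\ref{deformation.cat}), so by the discussion in Appendix~\ref{appa} a natural isomorphism $F(V)\simeq V\otimes_k\H^0(Z_0,\mathcal{N}_0)$ of functors $\fvect\to\set$ is automatically $k$-linear; evaluating at $V=k$ then gives the theorem.

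Next I would carry out the local computation. On an affine open $U=\spec A\subseteq X_0$ with $Z_0\cap U=\spec(A/J)$ --- note $U_{k\oplus V}=\spec\bigl(A\oplus(A\otimes_kV)\bigr)$ is again affine, being a nilpotent thickening of $U$ --- a closed subscheme restricting to $\spec(A/J)$ corresponds to an ideal $\widetilde J\subseteq A\oplus(A\otimes_kV)$ whose image in $A$ is $J$. Writing $M=\bigl(A\oplus(A\otimes_kV)\bigr)/\widetilde J$ and using that flatness of $M$ over $k\oplus V$ amounts (via the small extension $k\oplus V\to k$) to injectivity of the natural map $V\otimes_k(M/VM)\to M$, I would show that flatness is equivalent to $\widetilde J\cap(A\otimes_kV)=J\otimes_kV$. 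Granting this, each $f\in J$ admits a companion $g\in A\otimes_kV$, unique modulo $J\otimes_kV$, with $(f,g)\in\widetilde J$; the assignment $f\mapsto\overline g$ is $A$-linear and annihilates $J^2$, hence determines an element of $\Hom_{A/J}\bigl(J/J^2,(A/J)\otimes_kV\bigr)$, and conversely every such homomorphism $\phi$ is realized by the flat ideal $\{(f,g):\overline g=\phi(\overline f)\}$. These constructions are mutually inverse and visibly compatible with restriction to smaller affine opens, so they identify the local version of $F(V)$ over $U$ with $\Homsh(I_0/I_0^2,\O_{Z_0}\otimes_kV)(U\cap Z_0)$.

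Finally I would glue. Since a closed subscheme of $X_{k\oplus V}$ is determined by its ideal sheaf, and ideal sheaves are local data, the local bijections assemble to a natural isomorphism $F(V)\simeq\H^0\bigl(Z_0,\Homsh(I_0/I_0^2,\O_{Z_0}\otimes_kV)\bigr)=\H^0(Z_0,\mathcal{N}_0\otimes_kV)=V\otimes_k\H^0(Z_0,\mathcal{N}_0)$, the last equalities because $V$ is finite-dimensional. Combined with the first paragraph this gives $T_{Z_0}\hilb^X=F(k)\simeq\H^0(Z_0,\mathcal{N}_0)=\Hom_{\O_{Z_0}}(I_0/I_0^2,\O_{Z_0})$.

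The main obstacle is the local flatness analysis: showing that the flat ideals $\widetilde J$ are exactly those with $\widetilde J\cap(A\otimes_kV)=J\otimes_kV$, and translating this condition into $\Hom_{A/J}(J/J^2,(A/J)\otimes_kV)$. Everything else --- naturality in $V$, compatibility with restriction, the gluing, and the fact that the resulting isomorphism is $k$-linear rather than merely a bijection --- is routine once that local statement is in hand.
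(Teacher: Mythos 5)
Your proposal is correct and gives a complete proof, but it takes a genuinely more local route than the paper's. The paper works globally from the start: given a flat closed subscheme $Z\subseteq X_V$ with ideal sheaf $I$, it tensors the two short exact sequences $0\to V\to k\oplus V\to k\to 0$ and $0\to I\to\O_{X_V}\to\O_Z\to 0$ to produce a commutative diagram with exact rows and columns, then uses the global $\O_{X_0}$-linear section $\sigma\colon\O_{X_0}\to\O_{X_V}$ of $p_X$ (coming from the splitting $\O_{X_V}\simeq\O_{X_0}\oplus(V\otimes_k\O_{X_0})$) to show that the composite $I_0\to\O_{X_0}\xrightarrow{\sigma}\O_{X_V}\to\O_Z$ factors through $V\otimes_k\O_{Z_0}$, yielding the desired element of $\Hom_{\O_{X_0}}(I_0,V\otimes_k\O_{Z_0})$; the inverse is constructed by explicitly writing down the ideal sheaf $I_f$ and checking flatness via the $\Tor$ exact sequence. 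You instead perform the entire analysis on an affine chart $U=\spec A$, characterize flatness of the thickened ideal $\widetilde J$ by the condition $\widetilde J\cap(A\otimes_kV)=J\otimes_kV$, read off the companion map $\overline f\mapsto\overline g$ as an $A/J$-linear homomorphism $J/J^2\to(A/J)\otimes_kV$, verify it is a bijection at the level of ideals, and then glue over an affine cover. The two approaches compute the same thing (your companion map is, up to sign, the composite the paper factors through $V\otimes_k\O_{Z_0}$), but yours trades the paper's diagram chase for a very concrete description of the ideal, at the cost of an extra gluing step which the paper's global formulation avoids. Both handle the $k$-linear structure the same way, via the product-preserving functor $F$ on $\fvect$ and the linearization principle from Appendix~\ref{appa}.
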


\begin{proof}
We consider the functor $F\colon \fvect\to \set$ defined on objects by
$$
F(V)=\{\text{objects in } \hilb^X_{Z_0}(k \oplus V)\}
$$
and sending a $k$-linear map $V\to W$ into the associated pullback function $F(V)\to F(W)$. We will construct a functorial bijection
$$
F(V)\simeq  V\otimes_k\Hom_{\O_{X_0}}(I_0,\O_{Z_0})
$$
that will give a $k$-linear natural transformation, and in particular an isomorphism
$$
T_{Z_0}\hilb^X=F(k)\simeq \Hom_{\O_{X_0}}(I_0,\O_{Z_0})
$$
(notice that $\Hom_{\O_{X_0}}(I_0,\O_{Z_0})\simeq\Hom_{\O_{Z_0}}(I_0/I_0^2,\O_{Z_0})$).

Let us define a function
$$
\phi_V\colon F(V)\arr  V\otimes_k\Hom_{\O_{X_0}}(I_0,\O_{Z_0})\,.
$$
Take an object $Z \in \hilb^X_{Z_0}(k \oplus V)$, that is, a closed subscheme $Z\subseteq X_V$, where $X_V=X_0\times_{\spec k}\spec (k \oplus V)$ is the trivial deformation of $X_0$ over $k \oplus V$, which is flat over $k \oplus V$, and whose restriction to $X_{0}$ is $Z_0$; call $I\subseteq \O_{X_V}$ its sheaf of ideals.

By tensoring the exact sequence of $k \oplus V$-modules
$$
\xymatrix{
0\ar[r] & V \ar[r] & k \oplus V \ar[r] & k \ar[r] & 0
}
$$
with $\O_Z$ and $\O_{X_V}$, and
$$
\xymatrix{
0\ar[r] & I \ar[r] & \O_{X_V} \ar[r] & \O_Z \ar[r] & 0
}
$$
with $k$, we get a commutative diagram of $\O_{X_V}$-modules
\begin{equation}\label{morph}
\xymatrix{
 & & 0\ar[d] & 0\ar[d]&\\
 & & I \ar[r] \ar[d] & I_0 \ar[r] \ar[d]^{i} & 0 \\
0 \ar[r] & V\otimes_k \O_{X_0}\ar[r]\ar[d] & \O_{X_V} \ar[d]_q\ar[r]_{p_X} & \O_{X_0}\ar[r]\ar[d]^{q_0}\ar@/_10pt/[l]_\sigma & 0\\
0 \ar[r] & V\otimes_k \O_{Z_0}\ar[r] & \O_Z \ar[r]_{p_Z} \ar[d] & \O_{Z_0} \ar[r]\ar[d]  & 0\\
& & 0 & 0 &
}
\end{equation}
with exact rows and columns (by flatness). Since
$$
O_{X_V}\simeq \O_{X_0}\otimes_k k \oplus V\simeq \O_{X_0}\oplus (V\otimes_k \O_{X_0})
$$
as an $\O_{X_0}$-module, the map $p_X$ has an $\O_{X_0}$-linear section, which we call $\sigma$, simply defined by $\sigma(s)=(s,0)$, where $s$ is a section of $\O_{X_0}$.

The composite
$$
f\colon I_0\stackrel{i}{\larr} \O_{X_0}\stackrel{\sigma}{\larr} \O_{X_V} \stackrel{q}{\larr} \O_Z
$$
factors through $V\otimes_k \O_{Z_0}\to \O_Z$, so we have an $\O_{X_0}$-linear morphism $I_0\to V\otimes_k \O_{Z_0}$, which is then an element of $\Hom_{\O_{X_0}}(I_0,V\otimes_k\O_{Z_0})\simeq V\otimes_k\Hom_{\O_{X_0}}(I_0,\O_{Z_0})$. This gives us a function
$$
F(V)\arr  V\otimes_k\Hom_{\O_{X_0}}(I_0,\O_{Z_0})
$$
that we call $\phi_V$.

Now we define a function in the other direction. Take a homomorphism of $\O_{X_0}$-modules $f\colon I_0\to V\otimes_k\O_{Z_0}$, and consider the subsheaf $I_f$ of $\O_{X_V}\simeq \O_{X_0}\oplus (V\otimes_k \O_{X_0})$ given on an open subset $U$ of $|X_V|$ by
$$
I_f(U)=\{(s,t) \in \O_{X_V}(U) \colon  s \in I_0(U)\subseteq \O_{X_0}(U) \text{ and } f(s)+(\id\otimes q_0)(t)=0\}\,.
$$
where $q_0\colon \O_{X_0}\to \O_{Z_0}$ is the quotient map.

It is easy to check that $I_f$ is a coherent sheaf of ideals of $\O_{X_V}$, and so it defines a closed subscheme of $X_V$ that we call $Z_f\subseteq X_V$. Furthermore we have that
$$
Z_f\times_{\spec k \oplus V}\spec k \subseteq X_V\times_{\spec k \oplus V}\spec k\simeq X_0
$$
is the closed subscheme $Z_0$ and $Z_f$ is flat over $k \oplus V$. We only check the last assertion in detail: using the local flatness criterion (Theorem~\ref{local-flatness}), we have to show that $\Tor^{k \oplus V}_1(\O_Z,k)=0$.

We have an exact sequence of $\O_{X_V}$-modules
$$
\xymatrix{
0\ar[r] & I_f \ar[r] & \O_{X_V}\ar[r] & \O_Z \ar[r] & 0
}
$$
from which, taking the $\Tor$ exact sequence (tensoring with $k$), we get
$$
\xymatrix@C-10pt{
\Tor^{k \oplus V}_1(\O_{X_V},k) \ar[r] & \Tor^{k \oplus V}_1(\O_Z,k) \ar[r] & I_f \otimes_{k \oplus V} k \ar[r] & \O_{X_0}\ar[r] & \O_{Z_0}\ar[r] & 0.
}
$$
Since $X_V$ is flat over $k \oplus V$ we have $\Tor^{k \oplus V}_1(\O_{X_V},k)=0$, so we only need to show that the map $I_f\otimes_{k \oplus V} k \to I_0 \subseteq \O_{X_0}$ is injective, and this is clear.

This gives us a function
$$
V\otimes_k\Hom_{\O_{X_0}}(I_0,\O_{Z_0})\arr F(V)
$$
that we call $\psi_V$.

To conclude the proof, easy verifications show that $\phi_V$ and $\psi_V$ are inverse to each other, and that $\phi_V$ is functorial in $V$ (the reader can check the details).
\end{proof}

\subsubsection{Smooth hypersurfaces in $\P^n_k$}\label{smooth.proj}

We give an application of the previous constructions to deformations of smooth hypersurfaces of $\P^n_k$. Take $\Lambda=k$, and suppose we have a smooth hypersurface $Z_0\subseteq \P^n_k$ of degree $d$, with $n\geq 2$, $d\geq1$.

We can ask the following question: given a deformation $Z$ of $Z_0$ over $\spec A$, where $A\in \art$, can we find a closed immersion $Z\subseteq \P^n_{A}$ that extends $Z_0\subseteq \P^n_k$? Or more concisely: is every infinitesimal deformation of $Z_0$ embedded?

We can rephrase this question by using the forgetful morphism $F\colon \hilb^{\P^n_k} \to \catdef$ that sends an object $Y\subseteq \P^n_A$ of $\hilb^{\P^n_k}(A)$ into the flat morphism $Y\to \spec A$, which is an object of $\catdef(A)$, and acts on the arrows in the obvious way. The question above reads: is $F$ (restricted to deformations of $Z_0$) essentially surjective?

We can consider first the case of deformations over algebras of the form $k \oplus V$ for $V \in \fvect$. The existence of an immersion as above for every deformation of $Z_0$ over every $k \oplus V$ is equivalent to the surjectivity of the differential of the forgetful morphism at $Z_0$
$$
d_{Z_0}F\colon \H^0(Z_0,\mathcal{N}_0)\arr \H^1(Z_0,T_{Z_0}),
$$
since $\hilb^{\P^n_k}_{Z_0}(k \oplus V)\simeq V \otimes_k T_{Z_0}\hilb^{\P^n_k}$ and analogously for $\catdef$.

Now notice that the conormal sequence
$$
\xymatrix{
0\ar[r] &I_0/I_0^2 \ar[r]^-d & \diff_{\P^n_k}|_{Z_0} \ar[r] &\diff_{Z_0} \ar[r] & 0
}
$$
induced by the closed immersion $Z_0\subseteq \P^n_k$ (with sheaf of ideals $I_0$) is also exact on the left (since $Z_0$ is smooth), and dualizing it we get the exact sequence
$$
\xymatrix{
0\ar[r] &T_{Z_0} \ar[r] & T_{X_0}|_{Z_0} \ar[r] &\mathcal{N}_0 \ar[r] & 0.
}
$$
Taking cohomology we get a coboundary map $\delta\colon \H^0(Z_0,\mathcal{N}_0)\to \H^1(Z_0,T_{Z_0})$.

\begin{ex}
Show that the differential $d_{Z_0}F$ of the forgetful morphism coincides with the map $\delta$.
\end{ex}

Now we can study the surjectivity of the map $\delta$, using standard cohomology calculations.

\begin{prop}\label{hypers}
The map $\delta$ is surjective exactly in the following cases:
\begin{itemize}
\item $n=2$, $d\leq 4$.
\item $n=3$, $d\neq 4$.
\item $n\geq 4$, any $d$.
\end{itemize}
\end{prop}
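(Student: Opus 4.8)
The plan is to identify $\coker(\delta)$ with the single cohomology group $\H^2(\P^n_k, T_{\P^n_k}(-d))$ and then compute that group. Two facts are used throughout: $\P^n_k$ is rigid, i.e. $\H^i(\P^n_k, T_{\P^n_k}) = 0$ for $i \geq 1$ (immediate from the Euler sequence and $\H^i(\P^n_k, \O) = 0$ for $i > 0$), and $\mathcal{N}_0 \cong \O_{Z_0}(d)$, since the ideal sheaf of $Z_0$ in $\P^n_k$ is $\O_{\P^n_k}(-d)$. First I would introduce the sheaf $\mathcal{E}$ of vector fields on $\P^n_k$ tangent to $Z_0$, i.e. the kernel of the composite surjection $T_{\P^n_k} \larr T_{\P^n_k}|_{Z_0} \larr \mathcal{N}_0$, which fits both into
$$0 \larr \mathcal{E} \larr T_{\P^n_k} \larr \mathcal{N}_0 \larr 0$$
and, being also the preimage of $T_{Z_0}$ under $T_{\P^n_k} \larr T_{\P^n_k}|_{Z_0}$ (kernel $T_{\P^n_k}(-d)$), into
$$0 \larr T_{\P^n_k}(-d) \larr \mathcal{E} \larr T_{Z_0} \larr 0.$$
There is an evident morphism of short exact sequences from the first of these onto the normal sequence $0 \to T_{Z_0} \to T_{\P^n_k}|_{Z_0} \to \mathcal{N}_0 \to 0$ (whose connecting map is $\delta$, by the preceding exercise), acting as the identity on $\mathcal{N}_0$ and as the surjection $\mathcal{E} \twoheadrightarrow T_{Z_0}$ of the second sequence.

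Next I would run the long exact cohomology sequences. From this morphism, $\delta$ factors as $\H^0(Z_0, \mathcal{N}_0) \xarr{\delta'} \H^1(\P^n_k, \mathcal{E}) \xarr{p_*} \H^1(Z_0, T_{Z_0})$, where $\delta'$ is the connecting map of the first displayed sequence and $p_*$ is induced by $\mathcal{E} \twoheadrightarrow T_{Z_0}$; since $\H^1(\P^n_k, T_{\P^n_k}) = 0$, the map $\delta'$ is surjective, so $\operatorname{im}(\delta) = \operatorname{im}(p_*)$. The first sequence also gives $\H^2(\P^n_k, \mathcal{E}) \cong \H^1(Z_0, \O_{Z_0}(d))$, and twisting $0 \to \O_{\P^n_k}(-d) \to \O_{\P^n_k} \to \O_{Z_0} \to 0$ by $\O(d)$ shows $\H^1(Z_0, \O_{Z_0}(d)) = 0$, since $\H^1(\P^n_k, \O(d)) = \H^2(\P^n_k, \O) = 0$. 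Feeding $\H^2(\mathcal{E}) = 0$ into the long exact sequence of the second displayed sequence identifies $\coker(p_*)$ with $\H^2(\P^n_k, T_{\P^n_k}(-d))$, hence
$$\coker(\delta) \;\cong\; \coker(p_*) \;\cong\; \H^2(\P^n_k, T_{\P^n_k}(-d)).$$
Thus $\delta$ is surjective if and only if $\H^2(\P^n_k, T_{\P^n_k}(-d)) = 0$.

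It remains to compute this group from the twisted Euler sequence $0 \to \O_{\P^n_k}(-d) \to \O_{\P^n_k}(1-d)^{\oplus(n+1)} \to T_{\P^n_k}(-d) \to 0$, which yields the exact piece
$$\H^2(\P^n_k, \O(1-d))^{\oplus(n+1)} \larr \H^2(\P^n_k, T_{\P^n_k}(-d)) \larr \H^3(\P^n_k, \O(-d)) \xarr{\mu} \H^3(\P^n_k, \O(1-d))^{\oplus(n+1)}$$
with $\mu$ induced by the coordinates $x_0, \dots, x_n$. For $n \geq 4$ both outer groups vanish, so $\H^2(\P^n_k, T_{\P^n_k}(-d)) = 0$ for every $d$. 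For $n = 3$ only $\H^3(\P^3_k, \O(-d))$ can be nonzero, exactly for $d \geq 4$, so $\H^2(\P^3_k, T_{\P^3_k}(-d)) = \ker\mu$; by Serre duality $\mu^\vee$ is the multiplication map $\bigoplus_i x_i \cdot \colon \H^0(\P^3_k, \O(d-5))^{\oplus 4} \to \H^0(\P^3_k, \O(d-4))$, surjective for $d \geq 5$ (so $\ker\mu = 0$) but the zero map $0 \to k$ for $d = 4$ (so $\ker\mu \cong k$); hence the group vanishes iff $d \neq 4$. For $n = 2$ only $\H^2(\P^2_k, \O(1-d))$ can be nonzero, exactly for $d \geq 4$, so $\H^2(\P^2_k, T_{\P^2_k}(-d))$ is the cokernel of $\H^2(\P^2_k, \O(-d)) \to \H^2(\P^2_k, \O(1-d))^{\oplus 3}$, whose Serre dual $\bigoplus_i x_i \cdot \colon \H^0(\P^2_k, \O(d-4))^{\oplus 3} \to \H^0(\P^2_k, \O(d-3))$ is surjective; a dimension count then gives $\dim \H^2(\P^2_k, T_{\P^2_k}(-d)) = (d-2)(d-4)$ for $d \geq 4$, which is $0$ iff $d \leq 4$. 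Collecting the three cases reproduces the stated list.

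The main obstacle, I expect, is the diagram chase of the second paragraph: obtaining the clean identification $\coker(\delta) \cong \H^2(\P^n_k, T_{\P^n_k}(-d))$ — rather than a mere inclusion or bound — requires keeping careful track of the connecting homomorphisms and invoking the rigidity $\H^1(\P^n_k, T_{\P^n_k}) = 0$ and the vanishing $\H^2(\P^n_k, \mathcal{E}) = 0$ at exactly the right spots. Once that is secured, the rest is a routine computation with the cohomology of line bundles on projective space.
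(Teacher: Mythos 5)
Your proof is correct, and the final computation with the twisted Euler sequence is essentially the one in the paper; but you establish the key identification $\coker(\delta)\simeq \H^2(\P^n_k,T_{\P^n_k}(-d))$ by a genuinely different route. The paper works directly with the long exact cohomology sequence of the restricted normal sequence $0\to T_{Z_0}\to T_{\P^n_k}|_{Z_0}\to \mathcal{N}_0\to 0$: it first proves $\H^1(Z_0,\mathcal{N}_0)=0$, which gives $\coker(\delta)\simeq \H^1(\P^n_k,T_{\P^n_k}|_{Z_0})$, and then uses the rigidity $\H^i(\P^n_k,T_{\P^n_k})=0$, $i\geq 1$, in the sequence $0\to T_{\P^n_k}(-d)\to T_{\P^n_k}\to T_{\P^n_k}|_{Z_0}\to 0$ to conclude $\H^1(\P^n_k,T_{\P^n_k}|_{Z_0})\simeq \H^2(\P^n_k,T_{\P^n_k}(-d))$. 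You instead introduce the auxiliary subsheaf $\mathcal{E}\subseteq T_{\P^n_k}$ of vector fields tangent to $Z_0$ (the ``Atiyah'' or logarithmic tangent sheaf of the pair), fit it into two short exact sequences, and chase the resulting diagram, using $\H^1(\P^n_k,T_{\P^n_k})=0$ to get surjectivity of $\delta'$ and $\H^2(\P^n_k,\mathcal{E})\simeq\H^1(Z_0,\mathcal{N}_0)=0$ to pass from $\coker(p_*)$ to $\H^2(\P^n_k,T_{\P^n_k}(-d))$. The paper's version is shorter and avoids the diagram chase; yours has the advantage of explaining the identity via the natural sheaf $\mathcal{E}$, a viewpoint that adapts to other embedded deformation problems. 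Your handling of the $n=2$ case via the explicit dimension count $\dim\H^2(\P^2_k,T_{\P^2_k}(-d))=(d-2)(d-4)$ (for $d\geq 4$) is also a small variant; the paper argues directly that the relevant map is injective for $d\leq 4$ and fails to be so for $d\geq 5$, but the two give the same answer.
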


\begin{rmk}
Of course, from a more advanced point of view than the one we are assuming in these notes, the exceptions $n = 2$, $d \geq 5$ and $n = 3$, $d =4$ are easily justified.

In the first case we are dealing with curves of genus $g = (d-1)(d-2)/2 \geq 6$; these curves have a moduli space of dimension $3g-3$, while plane curves form a locally closed subspace of dimension $(d^{2}+3d-16)/2$, which is smaller; thus a plane curve will have non-plane deformations.

Quartics in $\mathbb P^{3}$, on the other hand, are K3 surfaces, which are well known to have a 20-dimensional deformation space, while the ones who are algebraic of fixed genus only form an 19-dimensional family.
\end{rmk}

\begin{proof}
We start with a piece of the cohomology exact sequence
\begin{equation}\label{succ}
\xymatrix@C-5pt{
\H^0(Z_0,\mathcal{N}_0) \ar[r]^-\delta & \H^1(Z_0,T_{Z_0})\ar[r] & \H^1(Z_0,T_{\P^n_k}|_{Z_0})\ar[r] & \H^1(Z_0,\mathcal{N}_0)
}
\end{equation}
induced by the dual of the conormal sequence of $Z_0\subseteq \P^n_k$.

The first step is to prove
\begin{lemma}
$\coker(\delta)\simeq \H^2(\P^n_k,T_{\P^n_k}(-d))$.
\end{lemma}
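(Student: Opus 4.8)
The plan is to transport the cohomology groups appearing in the exact sequence \eqref{succ} from $Z_0$ to $\P^n_k$ by means of the ideal sheaf sequence of $Z_0$. I would begin by recording the two standard facts about a smooth hypersurface of degree $d$: its ideal sheaf in $\P^n_k$ is $\O_{\P^n_k}(-d)$, so that $\mathcal{N}_0 \simeq \O_{Z_0}(d)$, and one has the exact sequence
\[
0 \to \O_{\P^n_k}(-d) \to \O_{\P^n_k} \to \O_{Z_0} \to 0.
\]

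From \eqref{succ} and exactness, $\coker(\delta)$ is the image of $\H^1(Z_0, T_{Z_0})$ in $\H^1(Z_0, T_{\P^n_k}|_{Z_0})$, which is the kernel of the map $\H^1(Z_0, T_{\P^n_k}|_{Z_0}) \to \H^1(Z_0, \mathcal{N}_0)$. So it suffices to establish two statements. First, twisting the ideal sheaf sequence above by $\O_{\P^n_k}(d)$ gives $0 \to \O_{\P^n_k} \to \O_{\P^n_k}(d) \to \mathcal{N}_0 \to 0$, whence $\H^1(Z_0, \mathcal{N}_0)$ is trapped between $\H^1(\P^n_k, \O_{\P^n_k}(d)) = 0$ and $\H^2(\P^n_k, \O_{\P^n_k}) = 0$ (the latter because $n \ge 2$); hence $\H^1(Z_0, \mathcal{N}_0) = 0$ and therefore $\coker(\delta) \simeq \H^1(Z_0, T_{\P^n_k}|_{Z_0})$.

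Second, since $T_{\P^n_k}$ is locally free, tensoring the ideal sheaf sequence by it stays exact and yields $0 \to T_{\P^n_k}(-d) \to T_{\P^n_k} \to T_{\P^n_k}|_{Z_0} \to 0$. In the associated long exact cohomology sequence the group $\H^1(Z_0, T_{\P^n_k}|_{Z_0})$ sits between $\H^1(\P^n_k, T_{\P^n_k})$ and the map $\H^2(\P^n_k, T_{\P^n_k}(-d)) \to \H^2(\P^n_k, T_{\P^n_k})$, so the vanishing of $\H^1(\P^n_k, T_{\P^n_k})$ and $\H^2(\P^n_k, T_{\P^n_k})$ gives $\H^1(Z_0, T_{\P^n_k}|_{Z_0}) \simeq \H^2(\P^n_k, T_{\P^n_k}(-d))$. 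This vanishing is the one genuine input, and I would deduce it from the Euler sequence $0 \to \O_{\P^n_k} \to \O_{\P^n_k}(1)^{\oplus(n+1)} \to T_{\P^n_k} \to 0$ together with $\H^i(\P^n_k, \O_{\P^n_k}(1)) = 0$ and $\H^i(\P^n_k, \O_{\P^n_k}) = 0$ for $i \ge 1$. Combining the two steps yields $\coker(\delta) \simeq \H^2(\P^n_k, T_{\P^n_k}(-d))$.

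There is no real obstacle here; the argument is routine provided one checks that all the vanishing theorems invoked hold throughout the range $n \ge 2$, $d \ge 1$ — in particular $\H^2(\P^n_k, \O_{\P^n_k})$ and $\H^1(\P^n_k, T_{\P^n_k})$, $\H^2(\P^n_k, T_{\P^n_k})$, where the Euler-sequence computation should be spelled out. The substantive work that remains after this lemma is not part of the lemma at all: it is the explicit case-by-case computation of $\dim_k \H^2(\P^n_k, T_{\P^n_k}(-d))$, which is what Proposition~\ref{hypers} records.
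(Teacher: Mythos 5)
Your argument is correct and follows essentially the same route as the paper's proof: first kill $\H^1(Z_0,\mathcal{N}_0)$ via the twisted ideal sheaf sequence, then identify the remaining term using the ideal sheaf sequence tensored with $T_{\P^n_k}$ together with the vanishing of $\H^i(\P^n_k, T_{\P^n_k})$ for $i \geq 1$, coming from the Euler sequence. The only point worth tightening is that the long exact sequences you invoke are taken over $\P^n_k$, not over $Z_0$, so you need the identifications $\H^i(Z_0,\mathcal{N}_0) \simeq \H^i(\P^n_k,\mathcal{N}_0)$ and $\H^i(Z_0, T_{\P^n_k}|_{Z_0}) \simeq \H^i(\P^n_k, T_{\P^n_k}|_{Z_0})$, which hold because these sheaves are supported on the closed subscheme $Z_0$; the paper flags this explicitly and you should too.
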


\begin{proof}
First, we notice that $\H^1(Z_0,\mathcal{N}_0)=0$. This is because $\mathcal{N}_0\simeq \O_{Z_0}(d)$ (since $I_0\simeq \O_{\P^n_k}(-d)$), and $\H^1(\P^n_k,\O_{\P^n_k}(d))=\H^2(\P^n_k,\O_{\P^n_k})=0$, so from the cohomology exact sequence induced by
$$
\xymatrix{
0\ar[r] & \O_{\P^n_k} \ar[r]^-{f\cdot} & \O_{\P^n_k}(d) \ar[r] & \O_{Z_0}(d) \ar[r] & 0
}
$$
where $f$ is an equation for $Z_0$, we get $\H^1(Z_0,\mathcal{N}_0)=\H^1(\P^n_k,\mathcal{N}_0)=0$ (because $\mathcal{N}_0$ has support contained in $Z_0$).

From (\ref{succ}) we deduce then that $\coker(\delta)\simeq \H^1(Z_0,T_{\P^n_k}|_{Z_0})$, which is the same as $\H^1(\P^n_k,T_{\P^n_k}|_{Z_0})$, again because $T_{\P^n_k}|_{Z_0}$ has support contained in $Z_0$.

Tensoring the exact sequence
$$
\xymatrix{
0\ar[r] & \O_{\P^n_k}(-d) \ar[r]^-{f\cdot} & \O_{\P^n_k} \ar[r] & \O_{Z_0} \ar[r] & 0
}
$$
with $T_{\P^n_k}$ we get
\begin{equation}\label{tp}
\xymatrix{
0\ar[r] & T_{\P^n_k}(-d) \ar[r] & T_{\P^n_k} \ar[r] & T_{\P^n_k}|_{Z_0} \ar[r] & 0.
}
\end{equation}
Now we notice that $\H^i(\P^n_k,T_{\P^n_k})=0$ for $i\geq 1$: this follows from $\H^i(\P^n_k,\O_{\P^n_k})=\H^i(\P^n_k,\O_{\P^n_k}(1))=0$, using the cohomology exact sequence coming from the dual of the Euler sequence
$$
\xymatrix{
0\ar[r] & \O_{\P^n_k} \ar[r] & \O_{\P^n_k}(1)^{\oplus(n+1)} \ar[r] & T_{\P^n_k} \ar[r] & 0.
}
$$
From (\ref{tp}) we get an isomorphism
\begin{equation*}
\H^1(\P^n_k,T_{\P^n_k}|_{Z_0})\simeq \H^2(\P^n_k,T_{\P^n_k}(-d)).\qedhere
\end{equation*}
\end{proof}
To understand $\H^2(\P^n_k,T_{\P^n_k}(-d))$, we consider the exact sequence
$$
\xymatrix{
0\ar[r] & \O_{\P^n_k}(-d) \ar[r] & \O_{\P^n_k}(1-d)^{\oplus(n+1)} \ar[r] & T_{\P^n_k}(-d) \ar[r] & 0
}
$$
obtained by twisting the dual of the Euler sequence by $\O_{\P^n_k}(-d)$, and the following piece of its cohomology exact sequence
\begin{equation}\label{euler}
\xymatrix@C-7pt@R-9pt{
\H^2(\P^n_k,\O_{\P^n_k}(-d)) \ar[r] & \H^2(\P^n_k,\O_{\P^n_k}(1-d))^{n+1} \ar[r] & \H^2(\P^n_k,T_{\P^n_k}(-d)) \ar[dll]\\
\H^3(\P^n_k,\O_{\P^n_k}(-d))\ar[r] &\H^3(\P^n_k,\O_{\P^n_k}(1-d))^{n+1}.
}
\end{equation}

Now suppose $n\geq 4$. In this case we have
$$
\H^2(\P^n_k,\O_{\P^n_k}(1-d))^{n+1}=\H^3(\P^n_k,\O_{\P^n_k}(-d))=0
$$
and then from (\ref{euler}) we obtain $\coker(\delta)\simeq \H^2(\P^n_k,T_{\P^n_k}(-d))=0$, so that $\delta$ is surjective.

Now take $n=2$. Then we have $\H^3(\P^2_k,\O_{\P^2_k}(-d))=0$ and so again from (\ref{euler}) we get
$$
\H^2(\P^2_k,T_{\P^2_k}(-d))\simeq \coker\left(\H^2(\P^2_k,\O_{\P^2_k}(-d))\stackrel{\phi}{\longrightarrow} \H^2(\P^2_k,\O_{\P^2_k}(1-d))^{3}\right)
$$
where the map $\phi$ is induced by
$$
\begin{array}{rcl}
\O_{\P^2_k}(-d)& \longrightarrow & \O_{\P^2_k}(1-d)^{\oplus3}\\
\\
f & \longmapsto & f\begin{pmatrix} x_0 \\ x_1 \\ x_2 \end{pmatrix}
\end{array}
$$
where the $x_i$'s are homogeneous coordinates on $\P^2_k$ (seen as sections of the sheaf $\O_{\P^2_k}(1)$ of course).

By Serre's duality we see that $\H^2(\P^2_k,\O_{\P^2_k}(-d)) \simeq  \H^0(\P^2_k,\O_{\P^2_k}(d-3))^\vee$ and $\H^2(\P^2_k,\O_{\P^2_k}(1-d)) \simeq  \H^0(\P^2_k,\O_{\P^2_k}(d-4))^\vee $, and the adjoint map
$$
\H^0(\P^2_k,\O_{\P^2_k}(d-4))^3 \stackrel{\phi^\vee}{\longrightarrow} \H^0(\P^2_k,\O_{\P^2_k}(d-3))
$$
is given by scalar multiplication by the vector $(x_0,x_1,x_2)$.

Now $\coker\phi\simeq\ker\phi^\vee$. If $d\leq 3$ the source of $\phi^\vee$ is trivial, so certainly $\ker\phi^\vee=0$. If $d=4$, we have $\H^0(\P^2_k,\O_{\P^2_k}(d-4))\simeq k$, and the map $\phi^\vee$ is injective, because the sections $x_0,x_1,x_2$ are linearly independent over $\O_{\P^2_k}$, so that $\delta$ is surjective. On the other hand when $d\geq 5$ clearly $\phi^\vee$ is it not injective anymore, and so $\delta$ will not be surjective.

Now suppose that $n=3$. Then $\H^2(\P^2_k,\O_{\P^2_k}(-d))=0$, and using (\ref{euler}) once again we get
$$
\H^2(\P^n_k,T_{\P^2_k}(-d))\simeq \ker\left(\H^3(\P^3_k,\O_{\P^3_k}(-d))\stackrel{\phi}{\longrightarrow} \H^3(\P^3_k,\O_{\P^3_k}(1-d))^{4}\right)
$$
where $\phi$ is the analogue of the one we had in the preceding case. Again using Serre's duality we have to study
$$
\coker\Bigl(\H^0(\P^3_k,\O_{\P^3_k}(d-5))^4 \stackrel{\phi^\vee}{\longrightarrow} \H^0(\P^3_k,\O_{\P^3_k}(d-4))\Bigr)\,.
$$
If $d\leq 3$ the target is trivial, so that certainly $\coker(\phi^\vee)=0$, and if $d \geq 5$ the map $\phi^\vee$ is surjective, because every homogeneous polynomial of positive degree in variables $x_0,x_1,x_2,x_3$ can be written as a linear combination of the variables $x_i$'s, with homogeneous polynomials of one degree less as coefficients. In these cases then $\delta$ will be surjective.

The only case in which $\phi^\vee$ is not surjective (and so $\delta$ will not be too) is $d=4$, when the source is trivial and the target is not.
\end{proof}

We will examine the case $n=3$, $d=4$ further in Section~\ref{capform}, where it will give a counterexample to algebraizability of deformations of surfaces.

Now we go back to answering the first question we posed. We use the following result, which will be proved in Section~\ref{obs}, page~\pageref{prf-vanish}.

\begin{prop}\label{vanish}
If $Z_0$ is a smooth hypersurface of $\P^n_k$ of degree $d$, with $n\geq 1$ and $d\geq 1$, any object $Z\subseteq \P^n_A$ of $\hilb^{\P^n_k}_{Z_0}(A)$ can be lifted along any small extension $A'\to A$.
\end{prop}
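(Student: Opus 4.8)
The plan is to prove this directly, using that a hypersurface is cut out by one equation, and to note at the end that it is also immediate from the obstruction theory of Section~\ref{obs}. Write $A'\to A$ for the given small extension and $\mathcal{I}_Z\subseteq\O_{\P^n_A}$ for the ideal sheaf of $Z$. The first step is to observe that $Z$ is automatically a hypersurface of degree $d$: from $0\to\mathcal{I}_Z\to\O_{\P^n_A}\to\O_Z\to0$ and the $A$-flatness of $\O_Z$ one gets that $\mathcal{I}_Z$ is $A$-flat and that $\mathcal{I}_Z\otimes_A k\to\O_{\P^n_k}$ is injective with image $\mathcal{I}_{Z_0}\simeq\O_{\P^n_k}(-d)$, so the fibre $\mathcal{I}_Z\otimes_A k$ is invertible; an $A$-flat coherent sheaf on $\P^n_A$ with invertible fibre over the closed point is invertible, and since $A$ is local $\pic(\P^n_A)=\mathbb{Z}$, which forces $\mathcal{I}_Z\simeq\O_{\P^n_A}(-d)$. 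Twisting the inclusion $\mathcal{I}_Z\hookrightarrow\O_{\P^n_A}$ by $\O(d)$ presents $Z$ as the zero locus $V(f)$ of a homogeneous form $f\in\H^0(\P^n_A,\O(d))=A\otimes_k\H^0(\P^n_k,\O(d))$ reducing modulo $\m_A$ to an equation of $Z_0$.

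The second step is to lift the equation. Choose any $f'\in\H^0(\P^n_{A'},\O(d))$ mapping to $f$ --- possible because $A'\to A$ is surjective and these are free $A'$-modules --- and set $Z'=V(f')\subseteq\P^n_{A'}$, whose restriction to $\spec A$ is $Z$ by construction; it remains to check that $Z'$ is flat over $A'$. Since $\m_{A'}$ is nilpotent, $\O_{\P^n_{A'}}$ is $A'$-flat, and the reduction $f_0$ of $f'$ is a non-zero-divisor on $\O_{\P^n_k}$ (as $Z_0$ is a hypersurface, hence an effective Cartier divisor), a short argument along the $\m_{A'}$-adic filtration shows that $f'$ is a non-zero-divisor on $\O_{\P^n_{A'}}$; hence $0\to\O_{\P^n_{A'}}(-d)\xrightarrow{f'}\O_{\P^n_{A'}}\to\O_{Z'}\to0$ is exact with $A'$-flat outer terms, so $\Tor_1^{A'}(\O_{Z'},k)=\ker\bigl(\O_{\P^n_k}(-d)\xrightarrow{f_0}\O_{\P^n_k}\bigr)=0$, and $\O_{Z'}$ is $A'$-flat by the local criterion of flatness (Theorem~\ref{local-flatness}). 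Thus $Z'\in\hilb^{\P^n_k}_{Z_0}(A')$ is the required lifting. I expect this flatness verification to be the only substantive point; Step~1 and the lifting of the form are formal, and the case $n=1$ (where $Z_0$ is a finite subscheme of $\P^1_k$) is subsumed with all higher cohomology automatically vanishing.

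Alternatively, and perhaps more in the spirit of Section~\ref{obs}, one can argue through obstructions: once the obstruction theory of $\hilb^{\P^n_k}$ is available, the obstruction to lifting $Z$ along $A'\to A$ with kernel $I$ lies in $I\otimes_k\H^1(Z_0,\mathcal{N}_0)$, where $\mathcal{N}_0=\Homsh(I_0/I_0^2,\O_{Z_0})$. For a hypersurface of degree $d$ one has $\mathcal{N}_0\simeq\O_{Z_0}(d)$, and $\H^1(Z_0,\mathcal{N}_0)=0$ --- exactly the vanishing already established inside the proof of Proposition~\ref{hypers}, from $0\to\O_{\P^n_k}\to\O_{\P^n_k}(d)\to\O_{Z_0}(d)\to0$ together with $\H^1(\P^n_k,\O(d))=\H^2(\P^n_k,\O_{\P^n_k})=0$. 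So all obstructions vanish and the lifting exists. Either way, the crux is that the normal bundle of a smooth hypersurface has vanishing $\H^1$.
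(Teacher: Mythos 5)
Your proposal is correct, and your \emph{alternative} argument at the end is in fact exactly the paper's proof: the paper defers the proof of Proposition~\ref{vanish} to Section~\ref{obs}, where it simply cites the obstruction theory for $\hilb^{X}$ with obstruction space $\H^{1}(Z_{0},\mathcal N_{0})$ and the vanishing already established inside the proof of Proposition~\ref{hypers}. Your \emph{primary} argument is genuinely different and more elementary. You first use flatness to show $\mathcal I_{Z}\otimes_{A}k\simeq\mathcal I_{Z_{0}}\simeq\O_{\P^{n}_{k}}(-d)$, then that an $A$-flat coherent sheaf with invertible closed fibre is invertible (Nakayama, using nilpotence of $\m_{A}$), then $\pic(\P^{n}_{A})=\mathbb Z$ to conclude $\mathcal I_{Z}\simeq\O(-d)$ and hence $Z=V(f)$ for a single form $f$; you lift $f$ to $f'$ over $A'$ (possible since $\H^{0}(\P^{n}_{A'},\O(d))$ is free), and verify flatness of $V(f')$ via the filtration argument (showing $f'$ is a non-zero-divisor) combined with the local flatness criterion. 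All the steps check out, including the $\Tor_{1}$ computation from the Koszul resolution of $\O_{Z'}$. Compared to the paper's proof: the paper's argument is two lines but leans on the entire obstruction-theory apparatus of Section~\ref{obs} (and, strictly, on cohomology computations in Proposition~\ref{hypers} that are carried out only for $n\ge 2$, with $n=1$ being vacuous); your direct argument is longer but self-contained, requires no obstruction theory at all, handles $n=1$ without any caveat, and makes explicit the geometric reason liftings exist --- a hypersurface is cut out by one equation, and equations lift freely. Since you also record the obstruction-theoretic argument, you cover both routes; if anything, the one sentence you might add to make the direct proof fully airtight is the Nakayama argument that an $A$-flat coherent sheaf on $\P^{n}_{A}$ with invertible closed fibre is itself invertible, which you assert but do not spell out.
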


We say that smooth hypersurfaces in $\P^n_k$ have unobstructed embedded deformations.

\begin{prop}\label{immers}
Let $Z_0$ be a smooth hypersurface of $\P^n_k$ of degree $d\geq 1$ and with $n\geq 2$, and $Z$ an object of $\catdef_{Z_0}(A)$, where $A \in \art$. Then there is a closed immersion $Z\subseteq \P^n_A$ inducing $Z_0 \subseteq \P^n_k$ in all cases except $n=2$, $d\geq 5$ and $n=3$, $d=4$.
\end{prop}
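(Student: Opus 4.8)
The plan is to reformulate the statement as: the forgetful morphism of deformation categories $F\colon \hilb^{\P^n_k}\arr \catdef$ (recall that here $\Lambda = k$), restricted to deformations of $Z_0$, is essentially surjective over every $A \in \art$. Indeed, an object $Y$ of $\hilb^{\P^n_k}_{Z_0}(A)$ with $F(Y)\simeq Z$ is precisely a closed immersion $Z\subseteq\P^n_A$ inducing $Z_0\subseteq\P^n_k$. I would prove this by induction on the order $r$ of $A$. The base case $r=0$, i.e. $A=k$, is immediate since $Z_0$ is by hypothesis a closed subscheme of $\P^n_k$. For $r\geq 1$, set $\bar A \eqdef A/\m_A^{r}$, so that $A\arr \bar A$ is a small extension with kernel $I=\m_A^{r}$, a finite-dimensional $k$-vector space, and $\bar A$ has order $\leq r-1$.

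Given $Z\in\catdef_{Z_0}(A)$, the restriction $\bar Z \eqdef Z|_{\bar A}$ is embedded by the inductive hypothesis, so we may fix $Y\in\hilb^{\P^n_k}_{Z_0}(\bar A)$ together with an isomorphism $F(Y)\simeq \bar Z$ in $\catdef_{Z_0}(\bar A)$. By Proposition~\ref{vanish} the set $\lif(Y,A)$ is nonempty; pick $Y'\in\lif(Y,A)$. Via the fixed isomorphism, $F(Y')$ and $Z$ are both liftings of $\bar Z$ to $A$, so by Theorem~\ref{actionthm} there is $a\in I\otimes_k T_{Z_0}\catdef$ with $[F(Y')]+a=[Z]$. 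The idea is now to absorb $a$ by replacing $Y'$ with a different lifting of $Y$. Since $I$ is a $k$-vector space, $-\otimes_k I$ is exact, so the map $\id_I\otimes d_{Z_0}F\colon I\otimes_k T_{Z_0}\hilb^{\P^n_k}\arr I\otimes_k T_{Z_0}\catdef$ is surjective as soon as $d_{Z_0}F$ is; and $d_{Z_0}F$ is identified with the coboundary map $\delta\colon \H^0(Z_0,\mathcal{N}_0)\arr \H^1(Z_0,T_{Z_0})$ (by the exercise preceding Proposition~\ref{hypers}, together with Theorems~\ref{kodaira} and~\ref{closed}), which by Proposition~\ref{hypers} is surjective precisely outside the two excluded cases $n=2$, $d\geq 5$ and $n=3$, $d=4$. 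So, assuming we are not in one of those two cases, I would choose $b\in I\otimes_k T_{Z_0}\hilb^{\P^n_k}$ with $(\id_I\otimes d_{Z_0}F)(b)=a$ and set $Y''\eqdef Y'+b$, using the action of Theorem~\ref{actionthm} for $\hilb^{\P^n_k}$ (legitimate since $\lif(Y,A)\neq\emptyset$). Proposition~\ref{actionfunct} then gives
$$
[F(Y'')]=[F(Y')]+(\id_I\otimes d_{Z_0}F)(b)=[F(Y')]+a=[Z],
$$
so $F(Y'')\simeq Z$ in $\catdef_{Z_0}(A)$, and $Y''$ is the desired embedded deformation. This closes the induction.

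The point to be careful about is not a genuine difficulty but bookkeeping: one must transport the torsor structure of Theorem~\ref{actionthm} and the functoriality of Proposition~\ref{actionfunct} consistently along the chosen isomorphism $F(Y)\simeq\bar Z$, and observe that an equality of isomorphism classes in $\lif(\bar Z,A)$ really does yield an isomorphism in $\catdef_{Z_0}(A)$, since an isomorphism of liftings is in particular an isomorphism of the underlying objects. The genuine input — unobstructedness of embedded deformations of $Z_0$ and the computation of $\coker(\delta)$ — is entirely contained in Propositions~\ref{vanish} and~\ref{hypers}, which I am assuming; the argument above is just the standard principle that a nonempty set of liftings carrying a free transitive action, together with a surjective differential, forces essential surjectivity of $F$ along small extensions, bootstrapped to all of $\art$ via the order filtration.
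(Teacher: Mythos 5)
Your proof is correct and follows essentially the same strategy as the paper's: induction along small extensions, using Proposition~\ref{vanish} to lift the embedded deformation and Proposition~\ref{hypers} to obtain surjectivity of the differential, then adjusting the lift via the torsor action of Theorem~\ref{actionthm} and the functoriality of Proposition~\ref{actionfunct}. The only cosmetic difference is that you induct on the order of $A$ using the $\m_A$-adic filtration, whereas the paper inducts on the minimal length of a factorization of $A\to k$ into small extensions; both work and the bookkeeping along the chosen isomorphism $F(Y)\simeq\bar Z$ is handled correctly.
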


In other words the forgetful morphism $F\colon \hilb^{\P^n_k}_{Z_0}\to \catdef_{Z_0}$ is essentially surjective exactly in the cases above. 

\begin{proof}
We already know from the preceding discussion that in cases $n=2$, $d\geq 5$ and $n=3$, $d=4$ there are counterexamples.

Suppose then that we are not in one of the cases above, and take the given $Z\in \catdef_{Z_0}(A)$. We consider a factorization of the homomorphism $A\to k$ as a composite of small extensions
$$
A=A_0\to A_1 \to \hdots \to A_n=k\,.
$$
and proceed by induction on $n(A)$, the least $n$ with such a factorization.

If $n(A)=0$ there is nothing to prove. Suppose we know the result for $n(A)-1$, and consider the extension $A\to A_1$ with kernel $I$. The pullback $Z|_{A_1}\in \catdef_{Z_0}(A_1)$ of $Z$ to $A_1$ admits then a closed immersion $Z|_{A_1}\subseteq \P^n_{A_1}$ because of the induction hypothesis.

From the discussion above we also know that the differential of the forgetful morphism $d_{Z_0}F\colon T_{Z_0}\hilb^{\P^n_k}\to T_{Z_0}\catdef$ is surjective, and in particular
$$
\id\otimes d_{Z_0}F\colon I\otimes_k T_{Z_0}\hilb^{\P^n_k}\arr I \otimes_k T_{Z_0}\catdef
$$
will be surjective too.

Because of Proposition~\ref{vanish} we can find a lifting $Z'\subseteq \P^n_A$ of $Z|_{A_1}\subseteq \P^n_{A_1}$ to $A$; both $\lif(Z|_{A_1},A)$ and $\lif(Z_{A_1}\subseteq \P^n_{A_1},A)$ will then be nonempty, and by Theorem~\ref{actionthm} we have free and transitive actions on them, respectively of $I \otimes_k T_{Z_0}\catdef$ and $I\otimes_k T_{Z_0}\hilb^{\P^n_k}$.

The object $Z' \in \catdef_{Z_0}(A)$ is a lifting of $Z|_{A_1}$, as is $Z$, so by transitivity of the action we have an element $g \in I\otimes_k T_{Z_0}\catdef$ such that $[Z'] + g = [Z]$; take then $h \in I \otimes_k T_{Z_0}\hilb^{\P^n_k}$ such that $(\id\otimes d_{Z_0}F)(h)=g$.

Then using Proposition~\ref{actionfunct} we have
$$
F((Z'\subseteq \P^n_A) + h) = [Z'] + (\id\otimes d_{Z_0}F)(h) = [Z'] + g=[Z]\,.
$$
In other words the object $(Z'\subseteq \P^n_A) + h$ is (after possibly composing with an isomorphism of schemes over $\spec A$) a closed immersion $Z\subseteq \P^n_A$ that induces $Z_0\subseteq \P^n_k$ on the closed fiber, which is what we were looking for.
\end{proof}

The only things we really used in this proof were surjectivity of the differential and existence of liftings in the source deformation category. Every time these two facts hold in an abstract setting we can repeat the same argument to deduce that every object of the target deformation category is isomorphic to the image of an object of the source.

\subsubsection{Quasi-coherent sheaves}

Now suppose $X$ is a scheme over $\spec \Lambda$, and consider the deformation category $\qcoh^X\to\artl^\op$ of deformations of quasi\dash coherent sheaves on $X$. Let $\E_0 \in \qcoh^X(k)$.

\begin{prop}
There is an isomorphism
$$
T_{\E_0}\qcoh^X\simeq \Ext^1_{\O_{X_0}}(\E_0,\E_0)\,.
$$
\end{prop}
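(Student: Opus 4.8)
The plan is to follow the pattern of Theorems~\ref{kodaira} and~\ref{closed}. First I would introduce the functor $F\colon \fvect \to \set$ sending $V$ to the set of isomorphism classes of objects of $\qcoh^X_{\E_0}(k\oplus V)$ — such an object being a quasi-coherent sheaf $\E$ on $X_{k\oplus V}$, flat over $k\oplus V$, equipped with a fixed isomorphism $\E|_{X_0}\simeq\E_0$ — and a $k$-linear map $V\to W$ to the induced pullback function; by RS (Proposition~\ref{deformation.cat}) this $F$ preserves finite products and so lifts to $\fvect\to\vect$. Then I would construct mutually inverse bijections
$$
\phi_V\colon F(V) \longrightarrow V\otimes_k \Ext^1_{\O_{X_0}}(\E_0,\E_0), \qquad \psi_V\colon V\otimes_k\Ext^1_{\O_{X_0}}(\E_0,\E_0)\longrightarrow F(V),
$$
natural in $V$. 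This exhibits a $k$-linear natural isomorphism $F\simeq -\otimes_k\Ext^1_{\O_{X_0}}(\E_0,\E_0)$, and evaluating at $V=k$ gives $T_{\E_0}\qcoh^X=F(k)\simeq\Ext^1_{\O_{X_0}}(\E_0,\E_0)$.

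To define $\phi_V$, I would use that $k\oplus V$ is a trivial extension, so $\O_{X_{k\oplus V}}=\O_{X_0}\oplus(V\otimes_k\O_{X_0})$ and there is a ring inclusion $\O_{X_0}\hookrightarrow\O_{X_{k\oplus V}}$; restricting scalars along it makes any $\E\in\qcoh^X_{\E_0}(k\oplus V)$ a quasi-coherent $\O_{X_0}$-module. Tensoring the exact sequence $0\to V\otimes_k\O_{X_0}\to\O_{X_{k\oplus V}}\to\O_{X_0}\to0$ with $\E$ over $\O_{X_{k\oplus V}}$ and using flatness of $\E$ over $k\oplus V$, one gets $(V\otimes_k\O_{X_0})\cdot\E\simeq V\otimes_k\E_0$ and $\E/(V\otimes_k\O_{X_0})\E\simeq\E_0$, hence a short exact sequence of $\O_{X_0}$-modules
$$
0\longrightarrow V\otimes_k\E_0\longrightarrow\E\longrightarrow\E_0\longrightarrow0;
$$
since $V$ is finite-dimensional and $\Ext^1$ commutes with finite direct sums, its class lies in $\Ext^1_{\O_{X_0}}(\E_0,V\otimes_k\E_0)\simeq V\otimes_k\Ext^1_{\O_{X_0}}(\E_0,\E_0)$. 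Isomorphic deformations give isomorphic sequences, so $\phi_V$ is well defined.

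For $\psi_V$, starting from an extension $0\to V\otimes_k\E_0\to\mathcal{G}\xrightarrow{p}\E_0\to0$ of $\O_{X_0}$-modules, I would put an $\O_{X_{k\oplus V}}$-module structure on $\mathcal{G}$ by letting $\O_{X_0}$ act as given and letting $V\otimes_k\O_{X_0}$ act through $V\otimes_k\mathcal{G}\xrightarrow{\id_V\otimes p}V\otimes_k\E_0\hookrightarrow\mathcal{G}$; because $V^2=0$ in $k\oplus V$ this is a genuine module structure, it is quasi-coherent (as $|X_{k\oplus V}|=|X_0|$ and $\O_{X_{k\oplus V}}$ is finite over $\O_{X_0}$), and it is flat over $k\oplus V$ by the local flatness criterion (Theorem~\ref{local-flatness}) — concretely $(V\otimes_k\O_{X_0})\mathcal{G}=V\otimes_k\E_0$ and the multiplication map $V\otimes_k\bigl(\mathcal{G}/(V\otimes_k\O_{X_0})\mathcal{G}\bigr)\to(V\otimes_k\O_{X_0})\mathcal{G}$ is the identity. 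Its restriction to $X_0$ is canonically $\E_0$, so this defines an object of $\qcoh^X_{\E_0}(k\oplus V)$, independent of the chosen representative.

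The remaining work, which I expect to be the only nontrivial part, is to check that $\phi_V$ and $\psi_V$ are mutually inverse and that $\phi_V$ is natural in $V$, i.e. that for a $k$-linear map $f\colon V\to W$ the square formed by $\phi_V$, $\phi_W$, the pullback $(\id\oplus f)_*$ and $f\otimes\id$ commutes. These are diagram chases of the same flavour as in the proofs of Theorems~\ref{kodaira} and~\ref{closed}; the single spot requiring a little care is the flatness claim in the construction of $\psi_V$ and the compatibility of the $\O_{X_{k\oplus V}}$-module structure built there with the extension data.
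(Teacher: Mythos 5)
Your proof is correct and follows essentially the same route as the paper: the paper merely asserts that $\qcoh^X_{\E_0}(k\oplus V)$ is equivalent to the category of extensions of $\E_0$ by $V\otimes_k\E_0$ as $\O_{X_0}$-modules and leaves the verification to the reader, while your $\phi_V$ and $\psi_V$ are exactly the object-level constructions realising that equivalence, with the flatness check for $\psi_V$ made explicit via the local criterion.
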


\begin{proof}
Consider the functor $F\colon \fvect\to \set$ defined on objects by
$$
F(V)=\{\text{isomorphism classes of objects in } \qcoh^X_{\E_0}(k \oplus V)\}
$$
and sending a $k$-linear map $f\colon V\to W$ into the corresponding pullback function $F(V)\to F(W)$. We show that there is a functorial bijection
$$
F(V)\simeq V\otimes_k \Ext^1_{\O_{X_0}}(\E_0,\E_0)
$$
that will give as usual a $k$-linear natural transformation, and in particular an isomorphism
$$
T_{\E_0}\qcoh^X=F(k)\simeq \Ext^1_{\O_{X_0}}(\E_0,\E_0)\,.
$$
To show this, the reader can check that the category $\qcoh^X_{\E_0}(k \oplus V)$ of quasi-coherent $\O_{X_V}$-modules $\E$ on $X_V=X_0\times_{\spec k}\spec k \oplus V$ with an isomorphism $\E\otimes_{k \oplus V} k\simeq \E_0$, is equivalent to the category whose objects are extensions of quasi-coherent $\O_{X_0}$-modules
$$
\xymatrix{
0\ar[r] & V\otimes_k \E_0 \ar[r] & \E \ar[r] & \E_0 \ar[r] & 0
}
$$
and arrows defined in the obvious way. This automatically gives us the bijection $\phi_V$ we want by taking isomorphism classes.

To conclude the proof, all is left is to show that $\phi_V$ is functorial in $V$, which we leave to the reader as usual.
\end{proof}

In particular if $\E_0$ is locally free, then we have
$$
T_{\E_0}\qcoh^X\simeq \Ext^1_{\O_{X_0}}(\E_0,\E_0)\simeq \H^1(X_0,\mathcal{E}nd_{\O_{X_0}}(\E_0))
$$
and moreover if $\E_0$ is invertible, then $\mathcal{E}nd_{\O_{X_0}}(\E_0)\simeq \E_0\otimes_{\O_{X_0}}(\E_0)^\vee \simeq \O_{X_0}$, so that
$$
T_{\E_0}\qcoh^X\simeq \H^1(X_0,\O_{X_0})
$$
which does not depend on $\E_0$. This particular case is more easily proved from the long cohomology sequence of the exact sequence of sheaves on $X_{0}$
   \[
   \xymatrix@C=30pt{
   1 \ar[r]& V\otimes_{k}\O_{X_{0}} \ar[r]^-{f \mapsto 1+f}&
   \O^{*}_{X_{V}} \ar[r]& \O^{*}_{X_{0}} \ar[r] & 0\,.
   }
   \]

\section{Infinitesimal automorphisms}\label{inf.aut}

In this section we introduce and discuss the so-called group (or space) of infinitesimal automorphisms of a deformation category at an object $\xi_0 \in \F(k)$, which is a feature we see only if we use categories instead of functors. It is particularly significant in the context of algebraic stacks, where wether the objects have or do not have non-trivial infinitesimal automorphisms determines if the stack is an Artin or a Deligne-Mumford stack; this is because there are non-trivial infinitesimal automorphisms if and only if the diagonal of the stack is ramified, and an algebraic stack is Deligne-Mumford if and only if its diagonal is unramified (see \cite[Th\'eor\`eme~8.1]{laumon-moret-bailly}).

We will see that this space gives a measure of the ``rigidity'' of a deformation problem, and tells us how far our deformation category is from its corresponding deformation functor. After the definition, we will examine some of its properties, and finally calculate it in some examples.

\subsection{The group of infinitesimal automorphisms}

Suppose $\fib$ is a deformation category, and $\phi\colon A'\to A$ is a small extension. Fix $\xi \in \F(A)$, and let $\xi' \in \F(A')$ be a lifting of $\xi$ to $A'$. We denote by $\aut_A(\xi)$ the set of automorphisms of the object $\xi$ in the category $\F(A)$, and analogously for $\xi'$. Recall that $\phi$ induces a pullback functor $\phi_*\colon \F(A')\to \F(A)$. In particular we have a ``restriction'' function $\aut_{A'}(\xi')\to \aut_A(\xi)$, which is a homomorphism of groups.

\begin{defin}
An \gr{infinitesimal automorphism} of $\xi'$ (with respect to $\xi$) is an automorphism of $\xi'$ in $\F(A')$ in the kernel of the homomorphism above (in other words, inducing the identity on $\xi$).
\end{defin}

Infinitesimal automorphisms are automorphisms of $\xi'$ in the category of liftings of $\xi$ over $A'$. They form a group, which we call the \gr{group of infinitesimal automorphisms} of $\xi'$ (with respect to $\xi$). We will see that this group depends only on $\ker\phi $ and on the pullback of $\xi$ to $\spec k$.

We start by taking $A=k$, $A'=\dual$ in the situation above. Notice that if $A$ is a $k$-algebra we have a trivial deformation of $\xi_0$ over $A$, which we denote by $\xi_0|_A$, given by the pullback of $\xi_0$ along the structure homomorphism $k\to A$.
\begin{defin}
The \gr{group of infinitesimal automorphisms} of $\xi_0$ is the group defined above, where we take $\xi=\xi_0 \in \F(k)$ and $\xi'=\xi_0|_{\dual} \in \F(\dual)$. We denote it by $\Inf(\xi_0)$.
\end{defin}
When we need to specify the category $\F$ in the notation, we will write $\Inf_{\xi_0}(\F)$ instead of $\Inf(\xi_0)$.

The group of infinitesimal automorphisms has also a canonical $k$-vector space structure, coming from the fact that it is the tangent space of a deformation category.

Consider the functor $\aut(\xi_0)\colon \art\to \set$ that sends an object $A \in \art$ into $\aut_A(\xi_0|_A)$, and an arrow $A'\to A$ into the function $\aut_{A'}(\xi_0|_{A'})\to \aut_A(\xi_0|_A)$ introduced above. This functor gives a category fibered in sets over $\art^\op$, and from the fact that $\F$ satisfies RS (precisely from the ``fully faithful'' part), we get that $\aut(\xi_0)$ does too.

Then we can consider the tangent space $T_{\id_{\xi_0}}\aut(\xi_0)$, which,  as a set, is easily seen to be exactly $\Inf(\xi_0)$ defined above.

\begin{ex}\label{sum.comp}
Prove that the addition coming from this vector space structure and the operation of composition of automorphisms coincide in $\Inf(\xi_0)$. In particular this will always be an abelian group with respect to composition.
\end{ex}

From the fact that $\Inf(\xi_0)$ is the tangent space of a deformation category, using Theorem~\ref{actionthm} we deduce the following corollary.
\begin{cor}\label{autom}
Let $\fib$ be a deformation category, $A'\to A$ a small extension with kernel $I$, and $f \in \aut(\xi_0)_{\id_{\xi_0}}(A)$; in other words $f$ is an automorphism of $\xi_0|_A$ that induces the identity on $\xi_0$. If\/ $\lif(f,A')$ is not empty, then there is a free and transitive action of $ I\otimes_k \Inf(\xi_0)$ on it.
\end{cor}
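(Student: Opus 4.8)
The plan is to deduce this directly from Theorem~\ref{actionthm}, applied not to $\F$ itself but to the category fibered in sets $\aut(\xi_0)\to\artl^\op$ introduced just before the statement. Recall that this fibered category inherits the RS condition from $\F$ (by the ``fully faithful'' part of RS for $\F$), so it is itself a deformation category; its distinguished object over $k$ is $\id_{\xi_0}\in\aut(\xi_0)(k)$, and by construction its tangent space at that object is exactly $\Inf(\xi_0)$, as a $k$-vector space.

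First I would observe that the hypothesis on $f$ is precisely the statement that $f$ is an object of the fiber category $\aut(\xi_0)_{\id_{\xi_0}}(A)$, i.e.\ an automorphism of $\xi_0|_A$ lying over the identity of $\xi_0$. Next I would identify the set $\lif(f,A')$ appearing in the statement with the set $\lif(f,A')$ to which Theorem~\ref{actionthm} refers, computed inside $\aut(\xi_0)$: since $\aut(\xi_0)$ is fibered in sets, a lifting of $f$ to $A'$ is nothing but an automorphism $f'$ of $\xi_0|_{A'}$ whose pullback to $A$ equals $f$, and there are no isomorphism classes to pass to, so the two notions coincide on the nose.

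With these identifications in place, Theorem~\ref{actionthm} applied to $\aut(\xi_0)\to\artl^\op$, the point $\id_{\xi_0}$, the object $f\in\aut(\xi_0)_{\id_{\xi_0}}(A)$, and the small extension $A'\to A$ with kernel $I$, yields exactly the assertion: when $\lif(f,A')$ is nonempty, the group $I\otimes_k T_{\id_{\xi_0}}\aut(\xi_0)=I\otimes_k\Inf(\xi_0)$ acts freely and transitively on it. There is no genuine obstacle; the only inputs needing a (already completed) verification are that $\aut(\xi_0)$ satisfies RS and that its tangent space at $\id_{\xi_0}$ is $\Inf(\xi_0)$, both recorded above. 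The one thing worth double-checking in the write-up is that the $k$-vector space structure on $\Inf(\xi_0)$ used in the statement is the one arising from its being a tangent space — equivalently, by Exercise~\ref{sum.comp}, the one induced by composition of automorphisms — so that $I\otimes_k\Inf(\xi_0)$ is unambiguous.
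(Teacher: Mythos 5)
Your proof is correct and is precisely the paper's argument: the paper observes that $\aut(\xi_0)$ inherits RS from $\F$ and that $T_{\id_{\xi_0}}\aut(\xi_0)=\Inf(\xi_0)$, and then states that Corollary~\ref{autom} follows by applying Theorem~\ref{actionthm} to this deformation category. Your spelling out of the identifications (fiber category, lifting set, vector-space structure via Exercise~\ref{sum.comp}) is exactly the bookkeeping the paper leaves implicit.
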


Now we state the aforementioned formula for the group of infinitesimal automorphisms of an arbitrary object of $\F$.

\begin{prop}\label{inf}
Let $\fib$ be a deformation category, $A'\to A$ a small extension with kernel $I$, $\xi_0 \in \F(k)$, $\xi \in \F_{\xi_0}(A)$ and $\xi'$ a lifting of $\xi$ to $A'$. Then we have an isomorphism
$$
\ker\left(\aut_{A'}(\xi')\to \aut_{A}(\xi)\right)\simeq I \otimes_k \Inf(\xi_0)\,.
$$
\end{prop}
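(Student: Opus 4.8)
The plan is first to recognize that $\ker\bigl(\aut_{A'}(\xi')\to\aut_A(\xi)\bigr)$ is nothing but the automorphism group of $\xi'$ regarded as an object of the category $\Lif(\xi,A')$ of liftings of $\xi$ to $A'$: an automorphism of $\xi'$ in $\F(A')$ whose image under the restriction functor $\F(A')\to\F(A)$ is $\id_\xi$ is exactly an automorphism of the lifting $\xi\to\xi'$. So it suffices to compute $\aut_{\Lif(\xi,A')}(\xi')$.

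Next I would redo the argument of Theorem~\ref{actionthm}, but now remembering that RS (Definition~\ref{def:rs}) supplies \emph{equivalences of categories}, not merely bijections on isomorphism classes. As in that proof, the ring isomorphisms $A'\times_A A'\simeq A'\oplus I\simeq A'\times_k(k\oplus I)$ all commute with the projection onto the first factor $A'$; applying RS and restricting to liftings of $\xi$ (using that, by RS, a lifting of $\xi$ to $A'\times_k(k\oplus I)$ is the same datum as a lifting of $\xi$ to $A'$ together with a lifting of $\xi_0=\xi|_k$ to $k\oplus I$) yields an equivalence of categories
$$
\Theta\colon\Lif(\xi,A')\times\Lif(\xi,A')\xrightarrow{\ \sim\ }\Lif(\xi,A')\times\Lif(\xi_0,k\oplus I)
$$
that commutes with the projections onto the first factor and that induces on isomorphism classes exactly the bijection $\Phi$ of that proof. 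Now $\Theta(\xi',\xi')\simeq(\xi',\tau)$ for some lifting $\tau$ of $\xi_0$ to $k\oplus I$; since on isomorphism classes the second coordinate of $\Phi([\xi'],[\xi'])$ equals $[\xi']-[\xi']=0$ (Remark~\ref{action}), the lifting $\tau$ is isomorphic to the trivial one $\xi_0|_{k\oplus I}$. Because $\Theta$ is an equivalence commuting with the first projections, the induced isomorphism of automorphism groups $\aut(\xi')\times\aut(\xi')\xrightarrow{\sim}\aut(\xi')\times\aut(\tau)$ respects the first factors, and hence restricts to an isomorphism $\aut_{\Lif(\xi,A')}(\xi')\simeq\aut_{\Lif(\xi_0,k\oplus I)}(\xi_0|_{k\oplus I})$.

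It remains to identify the group $\aut_{\Lif(\xi_0,k\oplus I)}(\xi_0|_{k\oplus I})$. Running the observation of the first paragraph in reverse for the small extension $k\oplus I\to k$, this group equals $\ker\bigl(\aut_{k\oplus I}(\xi_0|_{k\oplus I})\to\aut_k(\xi_0)\bigr)$, which is exactly the value at $I$ of the tangent-space functor of the deformation category $\aut(\xi_0)$ at $\id_{\xi_0}$ — the functor whose value at $V\in\fvect$ is the set of automorphisms of $\xi_0|_{k\oplus V}$ inducing the identity on $\xi_0$, and whose value at $k$ is $\Inf(\xi_0)$ by definition. As recalled in Section~\ref{inf.aut}, this is a deformation category (it inherits RS from $\F$), so the associated functor $\fvect\to\vect$ is $k$-linear, and Proposition~\ref{app.natural} gives a natural isomorphism of its value at $I$ with $I\otimes_k\Inf(\xi_0)$. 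Stringing the isomorphisms together yields $\ker\bigl(\aut_{A'}(\xi')\to\aut_A(\xi)\bigr)\simeq I\otimes_k\Inf(\xi_0)$; by Exercise~\ref{sum.comp} the composition law on $\Inf(\xi_0)$ agrees with its additive structure, so all of these are isomorphisms of $k$-vector spaces, in particular of abelian groups.

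The step I expect to be the main obstacle is the category-level bookkeeping in the second paragraph: one must check that each intermediate RS equivalence, and the functor induced by the isomorphism $A'\times_A A'\simeq A'\times_k(k\oplus I)$, really is compatible with the relevant first projection (so that the passage to the fibre over $\xi'$ is legitimate and automorphism groups transport), and one must be attentive to the $(-)^{\op}$ conventions when checking that an automorphism over $A'$ restricting to $\id$ over $A$ is the same datum as an endomorphism in $\Lif(\xi,A')$. None of this is deep, but it is precisely where the extra structure of fibered categories — as opposed to the functor of isomorphism classes that sufficed for Theorem~\ref{actionthm} — has to be handled with care.
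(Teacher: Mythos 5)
Your argument is correct, and it takes a genuinely different route from the one the paper sketches in the exercise that follows Proposition~\ref{inf}.

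The paper's intended proof introduces a \emph{new} auxiliary deformation category $\aut(\xi')\colon (\Art_{A'}) \arr \set$ fibered over $(\Art_{A'})^{\op}$, verifies that it satisfies RS, identifies $K = \ker(\aut_{A'}(\xi') \arr \aut_A(\xi))$ with the set of liftings of $\id_\xi$ to $A'$ inside $\aut(\xi')$, and then invokes Theorem~\ref{actionthm} (applied to this new deformation category over the base ring $A'$) to get a free transitive action of $I\otimes_k T_{\id_{\xi_0}}\aut(\xi')$ on $K$; the final step is to show $T_{\id_{\xi_0}}\aut(\xi')\simeq\Inf(\xi_0)$. This is a clean ``black-box'' use of the machinery, at the cost of one more RS verification and a new object $\aut(\xi')$ to manage.

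Your proof avoids building $\aut(\xi')$ entirely. Instead you go back \emph{inside} the proof of Theorem~\ref{actionthm} and upgrade each bijection of isomorphism classes there to the underlying equivalence of categories supplied by RS, then use that equivalences transport automorphism groups. The only auxiliary deformation category you need is $\aut(\xi_0)$ over $(\Art_k)^{\op}$, which the paper has already constructed and proved to satisfy RS in the present section, so nothing new has to be verified. What your approach buys is transparency: it isolates the precise point where one uses more than the functor of isomorphism classes, namely fully-faithfulness of the RS equivalence, which is exactly the extra content of the fibered-category point of view over Schlessinger's. What the paper's approach buys is uniformity: once $\aut(\xi')$ is known to be a deformation category over $A'$, Theorem~\ref{actionthm} and the tangent-space formalism do all the work, with no separate category-level bookkeeping.

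Two small points worth tightening if you write this up. First, your final string of isomorphisms ends with the $k$-linear identification $F(I)\simeq I\otimes_k F(k) = I\otimes_k\Inf(\xi_0)$, where $F$ is the tangent-space functor of $\aut(\xi_0)$; you cite Exercise~\ref{sum.comp} to say the group law is additive, but that exercise as stated only treats $F(k) = \Inf(\xi_0)$. The same argument shows composition and addition agree on $F(V)$ for every $V\in\fvect$ (it is literally the statement that the lifting of $F$ to $\vect$ is compatible with the monoid structure given by composition), but you should either say so or run the Eckmann--Hilton argument once at the level of $F(V)$. Second, your claim $\Theta(\xi',\xi')\simeq(\xi',\tau)$ should be an equality in the first coordinate for the automorphism-group transport to be immediate, and this holds because the RS equivalence, the ring isomorphisms, and the passage to lifting comma categories can all be chosen to commute strictly (not just up to natural isomorphism) with the first-projection pullback, given the fixed cleavage the paper assumes; since you flag this as the main obstacle, it is worth noting that the equality, not merely an isomorphism, does come out of the construction if one traces through the cleavage conventions.
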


\begin{ex}
Prove Proposition~\ref{inf}. (\emph{Sketch of proof}: define a functor
   \[
   \aut(\xi')\colon (\Art_{A'})\arr \set
   \]
that generalizes $\aut(\xi_0)$ above, and show that the resulting category fibered in sets satisfies RS. Then notice that $K=\ker\left(\aut_{A'}(\xi')\to \aut_{A}(\xi)\right)$ is the set of liftings of $\id_{\xi}$ to $A'$ in the category $\aut(\xi')$, and deduce that there is an isomorphism $I \otimes_k T_{\id_{\xi_0}}\aut(\xi') \simeq K$. To conclude show that $T_{\id_{\xi_0}}\aut(\xi')\simeq \Inf(\xi_0)$.)
\end{ex}

\begin{rmk}\label{autominf}
Suppose we have two liftings of $\xi$ to $A'$, say $\xi_1, \xi_2 \in \F_{\xi_0}(A')$, and an isomorphism of liftings $f\colon \xi_1\to \xi_2$. Take an infinitesimal automorphism $g_1 \in \aut_{A'}(\xi_1)$ of $\xi_1$, and consider $g_2=f\circ g_1\circ f^{-1} \in \aut_{A'}(\xi_2)$, which is an infinitesimal automorphism of $\xi_2$. Then it is clear from the preceding construction that the elements of $I\otimes_k \Inf(\xi_0)$ corresponding to $g_1$ and $g_2$ with respect to the isomorphism constructed above are the same.
\end{rmk}

As an application of Proposition~\ref{inf}, a straightforward induction using Proposition~\ref{comp.small} gives the following corollary.

\begin{cor}\label{equiv}
Let $\fib$ be a deformation category, and $\xi_0 \in \F(k)$. If $\Inf(\xi_0)=0$, then for every $A \in \artl$ and $\xi \in \F_{\xi_0}(A)$ the homomorphism $\aut_A(\xi)\to \aut_k(\xi_0)$ is injective.
\end{cor}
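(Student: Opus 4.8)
The plan is to peel off one small extension at a time and feed each step into the kernel formula of Proposition~\ref{inf}. Concretely, I would argue by induction on the order of $A$. If $A=k$ there is nothing to prove, since the map in question is then (an isomorphism, hence) injective. So assume $A\neq k$ and use Proposition~\ref{comp.small} to produce a tiny extension $A\to B$ with $B$ of strictly smaller order — for instance the first quotient $A_1$ in a factorization of $A\to k$ into tiny extensions — and write $I\eqdef\ker(A\to B)$, a one-dimensional $k$-vector space. Since pullback along $A\to B$ carries deformations of $\xi_0$ to deformations of $\xi_0$, the object $\xi|_B$ lies in $\F_{\xi_0}(B)$, and $\xi$ is a lifting of $\xi|_B$ to $A$.

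Next I would note that the homomorphism $\aut_A(\xi)\to\aut_k(\xi_0)$ from the statement is, by transitivity of pullback (the pullback functor $\F(A)\to\F(k)$ being canonically isomorphic to the composite $\F(A)\to\F(B)\to\F(k)$), equal to the composite
$$
\aut_A(\xi)\xarr{r}\aut_B(\xi|_B)\xarr{s}\aut_k(\xi_0),
$$
where $r$ is restriction along $A\to B$ and $s$ is the analogous homomorphism attached to $\xi|_B\in\F_{\xi_0}(B)$. Proposition~\ref{inf}, applied to the small extension $A\to B$ and the lifting $\xi$ of $\xi|_B$, identifies $\ker r$ with $I\otimes_k\Inf(\xi_0)$; since $\Inf(\xi_0)=0$ by hypothesis, $r$ is injective. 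As $B$ has order strictly smaller than that of $A$, the inductive hypothesis gives that $s$ is injective. Hence $s\circ r$ is injective, which closes the induction.

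I do not expect a genuine obstacle here, since the substantive content — computing the kernel of a one-step restriction map as $I\otimes_k\Inf(\xi_0)$ — is exactly Proposition~\ref{inf}, and everything else is bookkeeping. The only points deserving a word of care are that the map in the statement really is the composite displayed above (immediate from the fact that pullback functors compose up to canonical isomorphism together with the uniqueness of pullbacks up to canonical isomorphism, so that the identifications $\xi|_k\simeq\xi_0$ and $(\xi|_B)|_k\simeq\xi_0$ built into $\F_{\xi_0}$ match up), and that the hypothesis $\xi\in\F_{\xi_0}(A)$ is preserved under restriction to $B$, so that Proposition~\ref{inf} is legitimately applicable at every stage of the induction.
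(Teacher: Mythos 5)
Your proof is correct and matches the paper's intended argument exactly: the paper proves this corollary by remarking that it follows from Proposition~\ref{inf} by ``a straightforward induction using Proposition~\ref{comp.small},'' which is precisely the induction on the order of $A$ that you carry out.
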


Furthermore we see that the group of infinitesimal automorphisms gives a measure of the ``rigidity'' (in the sense of closeness to being a functor) of our deformation problem.

\begin{prop}\label{equiv.rel}
Let $\fib$ be a deformation category and $\xi_0 \in \F(k)$. Then $\Inf(\xi_0)=0$ if and only if $\F_{\xi_0}\to \artl^\op$ is a category fibered in equivalence relations.
\end{prop}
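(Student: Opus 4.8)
The plan is to reduce the statement to unwinding the definition of the comma category $\F_{\xi_0}$ and then to quote Corollary~\ref{equiv} for one implication. First I would record the basic translation. By definition, $\F_{\xi_0}\to\artl^\op$ is fibered in equivalence relations exactly when, for every $A\in\artl$, the groupoid $\F_{\xi_0}(A)$ has at most one arrow between any two objects; equivalently, when every object of $\F_{\xi_0}(A)$ has trivial automorphism group (if all automorphism groups are trivial and $f,g\colon x\to y$, then $g^{-1}f=\id$). Now an object of $\F_{\xi_0}(A)$ is a pair $(\xi,\phi)$ with $\xi\in\F(A)$ and $\phi$ an isomorphism of $\F(k)$ from $\xi_0$ to $\xi|_k$, and an automorphism of $(\xi,\phi)$ in $\F_{\xi_0}(A)$ is an automorphism $h$ of $\xi$ in $\F(A)$ with $\phi^{-1}\circ(h|_k)\circ\phi=\id_{\xi_0}$, i.e. $h|_k=\id_{\xi|_k}$. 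So the automorphism group of $(\xi,\phi)$ in $\F_{\xi_0}(A)$ is canonically the kernel of the restriction homomorphism $\aut_A(\xi)\to\aut_k(\xi|_k)$, which via $\phi$ is identified with $\ker\bigl(\aut_A(\xi)\to\aut_k(\xi_0)\bigr)$.

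For the implication ``$\Inf(\xi_0)=0\Rightarrow\F_{\xi_0}$ fibered in equivalence relations'', I would simply invoke Corollary~\ref{equiv}: assuming $\Inf(\xi_0)=0$, for every $A\in\artl$ and every $\xi\in\F_{\xi_0}(A)$ the map $\aut_A(\xi)\to\aut_k(\xi_0)$ is injective, hence has trivial kernel; by the translation above this says $(\xi,\phi)$ has trivial automorphism group in $\F_{\xi_0}(A)$, so each $\F_{\xi_0}(A)$ is an equivalence relation. For the converse, I would specialize to $A=\dual$ and to the trivial deformation $\xi_0|_{\dual}$ equipped with its canonical closed-fibre isomorphism, which is an object of $\F_{\xi_0}(\dual)$. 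By the translation, its automorphism group in $\F_{\xi_0}(\dual)$ is exactly $\ker\bigl(\aut_{\dual}(\xi_0|_{\dual})\to\aut_k(\xi_0)\bigr)=\Inf(\xi_0)$. If $\F_{\xi_0}$ is fibered in equivalence relations then $\F_{\xi_0}(\dual)$ is an equivalence relation, so this group is trivial, i.e. $\Inf(\xi_0)=0$.

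The only point requiring care — and it is bookkeeping rather than a real obstacle — is the identification of automorphisms in the comma category $\F_{\xi_0}(A)$ with the kernel of the restriction map on automorphism groups in $\F$, i.e. correctly tracking the structural isomorphism $\phi$ on the closed fibre. Once that is in place, one direction is an immediate consequence of Corollary~\ref{equiv} and the other is its specialization to $\dual$; in particular no further use of the RS condition is needed beyond what is already packaged into Corollary~\ref{equiv}.
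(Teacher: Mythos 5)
Your proposal is correct and follows essentially the same route as the paper: the forward implication is an immediate application of Corollary~\ref{equiv}, and the converse, which the paper simply labels ``trivial,'' you carry out by evaluating at $A=\dual$ on the trivial deformation and reading off $\Inf(\xi_0)$ as an automorphism group in $\F_{\xi_0}(\dual)$. The explicit bookkeeping of automorphisms in the comma category is accurate and matches what the paper implicitly uses.
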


\begin{proof}
Recall that a groupoid is an equivalence relation if and only if the only automorphisms are the identities.

Suppose that $\Inf(\xi_0)=0$, and consider the category $\F_{\xi_0}$. By Corollary~\ref{equiv} we have that for every $A \in \artl$ and object $\xi_0\to\xi \in \F_{\xi_0}(A)$, the induced homomorphism $\aut_A(\xi)\to \aut_k(\xi_0)$ is injective, and in particular $\aut_A(\xi_0\to \xi)$ (which is the preimage of $\id_{\xi_0}$) has at most one element (it will have exactly one, namely $\id_{\xi}$).

It follows that $\F_{\xi_0}(A)$ is an equivalence relation for every $A \in \artl$, and so $\F_{\xi_0}\to \artl^\op$ is fibered in equivalence relations. The converse is trivial.
\end{proof}

\subsection{Examples}

Now we analyze the group of infinitesimal automorphisms in our three examples.

\subsubsection{Schemes}

Consider the category $\catdef \to \artl^\op$ of flat deformations of schemes, and $X_0 \in \catdef(k)$.

\begin{prop}\label{infinit}
We have an isomorphism
$$
\Inf_{X_0}(\catdef)\simeq \der_k(\O_{X_0},\O_{X_0}) \simeq \Hom_{\O_{X_0}}(\diff_{X_0},\O_{X_0})\,.
$$
\end{prop}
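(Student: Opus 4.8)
The plan is to reduce the statement to an elementary computation with the sheaf of dual numbers. By definition $\Inf_{X_0}(\catdef)$ is the group of automorphisms of the trivial deformation $X_0\times_{\spec k}\spec\dual$ over $\spec\dual$ that restrict to the identity on the closed fibre $X_0$. Since the closed immersion $X_0\hookrightarrow X_0\times_{\spec k}\spec\dual$ is a homeomorphism on underlying topological spaces (its ideal sheaf is generated by the nilpotent $\eps$, hence is square-zero), any such automorphism induces the identity on $|X_0|$, so it is the same datum as an automorphism $\phi$ of the sheaf of $\dual$-algebras $\O_{X_0}\otimes_k\dual=\O_{X_0}\oplus\eps\O_{X_0}$ which reduces to the identity of $\O_{X_0}$ modulo $\eps$. (Flatness of $\dual$ over $k$ is what guarantees that $U\mapsto\O_{X_0}(U)\otimes_k\dual$ is already a sheaf.)

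Next I would show that such $\phi$ are parametrised by $k$-derivations. Writing $\phi(s)=s+\eps D(s)$ for a local section $s$ of $\O_{X_0}$, $\dual$-linearity forces $\phi(a+\eps b)=a+\eps\bigl(b+D(a)\bigr)$ and shows that $D\colon\O_{X_0}\to\O_{X_0}$ is a $k$-linear morphism of sheaves; a direct comparison of $\phi(ss')$ with $\phi(s)\phi(s')$ shows that $\phi$ is multiplicative if and only if $D$ satisfies the Leibniz rule, that is $D\in\der_k(\O_{X_0},\O_{X_0})$. Such a $\phi$ is automatically invertible (its inverse corresponds to $-D$), and composing $\phi_D$ with $\phi_{D'}$ yields $\phi_{D+D'}$; hence $D\mapsto\phi_D$ is an isomorphism of groups $\der_k(\O_{X_0},\O_{X_0})\xrightarrow{\sim}\Inf_{X_0}(\catdef)$. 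Compatibility with the $k$-vector space structures is then a formality: by Exercise~\ref{sum.comp} the group law on $\Inf_{X_0}(\catdef)$ \emph{is} its additive structure, and the scalar action of $a\in k$, being induced by the multiplication-by-$a$ endomorphism $\eps\mapsto a\eps$ of $\dual$, corresponds under the bijection to $D\mapsto aD$.

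Finally, $\der_k(\O_{X_0},\O_{X_0})\simeq\Hom_{\O_{X_0}}(\diff_{X_0},\O_{X_0})$ is precisely the universal property of the sheaf of K\"ahler differentials $\diff_{X_0}=\diff_{X_0/k}$ applied to the quasi-coherent sheaf $\O_{X_0}$, which completes the chain of isomorphisms.

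There is no serious obstacle here: the argument is a routine unwinding of the definitions. The only points requiring a little care are the passage from automorphisms of the thickened scheme to automorphisms of its structure sheaf (the topological-space observation above, which lets one work with a plain sheaf automorphism rather than a pair of morphisms), and the bookkeeping showing that the group isomorphism $D\mapsto\phi_D$ is $k$-linear, for which Exercise~\ref{sum.comp} does the essential work.
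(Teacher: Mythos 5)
Your argument is correct and follows the same core strategy as the paper's: reduce, via the observation that $|X_0\times_{\spec k}\spec\dual|=|X_0|$, to automorphisms of the sheaf of $\dual$-algebras $\O_{X_0}\otimes_k\dual$ reducing to the identity modulo $\eps$, and identify these with $k$-derivations via the Leibniz rule (which is the content of the sheaf-theoretic analogue of Proposition~\ref{ext.der} that the paper invokes, and which you reprove directly). Where you deviate is in how $k$-linearity of the resulting bijection is established. The paper carries out the derivation correspondence over every $k\oplus V$ with $V\in\fvect$, producing a bijection $F(V)\simeq V\otimes_k\der_k(\O_{X_0},\O_{X_0})$, checks that these are functorial in $V$, and then invokes the abstract linearization machinery of Appendix~\ref{appa} (Proposition~\ref{app.nat.linear}) to conclude. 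You instead work only at $V=k$ and verify linearity by hand: additivity via $\phi_D\circ\phi_{D'}=\phi_{D+D'}$ combined with Exercise~\ref{sum.comp}, and the scalar action by tracing the pullback along $\eps\mapsto a\eps$. Both routes are sound; yours is more elementary and self-contained, while the paper's is shorter once the Appendix~\ref{appa} machinery is in place and is the pattern used uniformly for all the tangent-space and infinitesimal-automorphism computations in the text.
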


\begin{proof}
We have to understand the functor $F\colon \fvect \to \set$ that takes $V\in\fvect$ to
$$
F(V) = \ker\bigl(\aut_{k \oplus V}(X_0|_{k \oplus V})\arr \aut_k(X_0)\bigr)
$$
where $X_0|_{k \oplus V}$ is the trivial deformation $X_V=X_0 \times_{\spec k} \spec k \oplus V$. In particular as topological spaces $|X_V|=|X_0|$, and on the structure sheaves we have
$$
\O_{X_V}=\O_{X_0}\otimes_k k \oplus V\simeq \O_{X_0}\oplus (V \otimes_k \O_{X_0})\,.
$$
An element $\phi \in \aut(X_0)(V)$ will clearly be the identity as a map between topological spaces, so we turn to the morphism $\phi^\sharp\colon \O_{X_V}\to \O_{X_V}$ on the structure sheaf, which is an automorphism of sheaves of $k \oplus V$-algebras inducing the identity on $\O_{X_0}$.

Using the analogue of Proposition~\ref{ext.der} for extensions of sheaves, with respect to the extension
$$
\xymatrix{
0\ar[r] & V\otimes_k \O_{X_0} \ar[r] & \O_{X_V}\ar[r] & \O_{X_0}\ar[r] & 0
}
$$
we see that $\phi^\sharp$ differs from the identity of $\O_{X_V}$ by a derivation
$$
D_\phi \in \der_k(\O_{X_0},V\otimes_k \O_{X_0})\,.
$$
Conversely every $\phi$ as above can be obtained in this way, and so for each $V \in \fvect$ we get a bijection
$$
F(V)\simeq \der_k(\O_{X_0},V \otimes_k \O_{X_0}) \simeq V\otimes_k\der_k(\O_{X_0},\O_{X_0})\,.
$$
These maps are also functorial in $V$ (as is readily checked), so the corresponding natural transformation is $k$-linear, and in particular we have an isomorphism
\begin{equation*}
\Inf_{X_0}(\catdef)=F(k)\simeq \der_k(\O_{X_0},\O_{X_0}).\qedhere
\end{equation*}
\end{proof}


In particular if $X_0$ is smooth the vector space $\Hom_{\O_{X_0}}(\diff_{X_0},\O_{X_0})$ coincides with $\H^0(X_0,T_{X_0})$, so that infinitesimal automorphisms correspond to sections of the tangent sheaf, or vector fields, which is an old intuitive idea from differential geometry.

Specializing further, if $X_{0}$ is a smooth projective curve of genus $g\geq 2$, then it has no non-trivial infinitesimal automorphism. This reflects the fact that the stack $\mathcal{M}_{g}$ of smooth curves of genus $g\geq 2$ is Deligne-Mumford (see the discussion at the very beginning of this section).

\subsubsection{Closed subschemes}

Now we turn to deformations of closed subschemes. It was already mentioned that in this case the space of infinitesimal automorphisms is trivial.

\begin{prop}
$\Inf_{Z_0}(\hilb^X)$ is trivial for every $Z_0 \in \hilb^X(k)$.
\end{prop}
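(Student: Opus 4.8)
The plan is to recall, from \S\ref{fibered.cat}, that $\hilb^X \to \artl^\op$ is fibered in sets (arrows in $\hilb^X$ are uniquely determined by their image in $\artl$), and that by Proposition~\ref{equiv.rel} the vanishing of $\Inf_{Z_0}(\hilb^X)$ is equivalent to $\hilb^X_{Z_0} \to \artl^\op$ being fibered in equivalence relations. Since $\hilb^X$ is fibered in sets it is a fortiori fibered in equivalence relations, and the comma construction $\F_{\xi_0}$ preserves this property; hence $\Inf_{Z_0}(\hilb^X) = 0$.

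Alternatively, and perhaps more transparently, one can argue directly from the definition. First I would fix the trivial lifting $Z_0|_{\dual} \in \hilb^X(\dual)$, namely $Z_0 \times_{\spec k} \spec \dual \subseteq X_{\dual}$, and compute $\Inf_{Z_0}(\hilb^X)$ as the kernel of the restriction homomorphism $\aut_{\dual}(Z_0|_{\dual}) \to \aut_k(Z_0)$. But in the category $\hilb^X(\dual)$ an automorphism of an object $Z_0|_{\dual}$ is, by the very definition of arrows in $\hilb^X$, an automorphism of the ring $\dual$ lying over the identity of $\spec\dual$, hence the identity; so $\aut_{\dual}(Z_0|_{\dual})$ is itself trivial, and therefore so is its subgroup $\Inf_{Z_0}(\hilb^X)$.

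The only point requiring a word of care is the passage between ``$\hilb^X$ is fibered in sets'' and ``$\aut$-groups in the fiber categories are trivial'': this is immediate from the characterisation of fibered-in-sets categories (each $\F(T)$ is a set, i.e. a category whose only arrows are identities), so $\aut_A(Y) = \{\id_Y\}$ for every $A \in \artl$ and every $Y \in \hilb^X(A)$. Consequently no honest calculation is needed here at all; the statement is a formal consequence of the structure of $\hilb^X$ established when the example was introduced, and there is no real obstacle.
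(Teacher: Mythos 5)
Your second alternative is exactly the paper's argument: since arrows in $\hilb^X$ are determined by their image in $\artl^{\op}$, every fiber category is a set and every $\aut$-group is trivial, so $\Inf_{Z_0}(\hilb^X)=0$. Your first alternative via Proposition~\ref{equiv.rel} is a correct reformulation of the same observation (it uses the ``trivial'' direction of that proposition, which itself reduces to noting that equivalence relations have no nontrivial automorphisms), so both routes are fine and match the paper.
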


\begin{proof}
This is immediate from the fact that, for source and target fixed, the arrows in $\hilb^X$ are uniquely determined by their image in $\artl^\op$. In particular an object of $\hilb^X(\dual)$ can only have one automorphism (because they map to the identity of $\dual$ in $\artl^\op$), which is the identity.
\end{proof}

\subsubsection{Quasi-coherent sheaves}

Finally let us consider the infinitesimal automorphisms of $\E_0 \in \qcoh^X(k)$ in the deformation category $\qcoh^X\to \artl^\op$.

\begin{prop}
We have an isomorphism
$$
\Inf_{\E_0}(\qcoh^{X})\simeq \Hom_{\O_{X_0}}(\E_0,\E_0)
$$
\end{prop}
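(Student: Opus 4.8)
The plan is to follow the template of Proposition~\ref{infinit} almost verbatim, replacing the structure sheaf by the quasi-coherent sheaf $\E_0$. Concretely, I would study the functor $F\colon\fvect\to\set$ sending $V$ to
\[
F(V)=\ker\bigl(\aut_{k\oplus V}(\E_0|_{k\oplus V})\arr\aut_k(\E_0)\bigr),
\]
where $\E_0|_{k\oplus V}=\E_V$ is the trivial deformation, i.e. the pullback of $\E_0$ along the projection $X_V=X_0\times_{\spec k}\spec(k\oplus V)\to X_0$, so that as an $\O_{X_0}$-module $\E_V\simeq\E_0\oplus(V\otimes_k\E_0)$, while $\O_{X_V}\simeq\O_{X_0}\oplus(V\otimes_k\O_{X_0})$ acts in the evident way. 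An element $\phi\in F(V)$ is an $\O_{X_V}$-linear automorphism of $\E_V$ reducing to the identity modulo $V\otimes_k\O_{X_0}$, hence of the form $\phi=\id_{\E_V}+\psi$ with $\psi\colon\E_V\to V\otimes_k\E_0$.

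The key step is to show that $\psi$ kills the submodule $V\otimes_k\E_0$, and therefore descends to an $\O_{X_0}$-linear homomorphism $\overline\psi\colon\E_0\to V\otimes_k\E_0$; this is the module-level analogue of the computation in Proposition~\ref{ext.der} (the difference of two lifts along a square-zero extension being additive and $\Lambda$-linear), and it follows because every local section of $V\otimes_k\E_0$ is locally of the form $w\cdot e$, with $w$ a section of $V\otimes_k\O_{X_0}$ and $e$ a section of $\E_0$, while $(V\otimes_k\O_{X_0})\cdot(V\otimes_k\E_0)=0$. Conversely, any $\O_{X_0}$-linear $\overline\psi\colon\E_0\to V\otimes_k\E_0$ gives, via the projection $\E_V\to\E_0$, an $\O_{X_V}$-linear endomorphism $\id+\psi$, which is an automorphism because $\psi^2=0$. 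This yields a bijection, natural and $k$-linear in $V$,
\[
F(V)\simeq\Hom_{\O_{X_0}}(\E_0,V\otimes_k\E_0)\simeq V\otimes_k\Hom_{\O_{X_0}}(\E_0,\E_0),
\]
the last isomorphism since $V$ is finite-dimensional. Taking $V=k$ and invoking the material of Appendix~\ref{appa} as in the previous examples (after the now-routine check that $F$ preserves finite products, immediate from RS) gives $\Inf_{\E_0}(\qcoh^X)=F(k)\simeq\Hom_{\O_{X_0}}(\E_0,\E_0)$.

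Alternatively, one can bypass the explicit description of $\O_{X_V}$-linear maps by reusing the equivalence, established in the proof of the tangent-space computation for $\qcoh^X$, between $\qcoh^X_{\E_0}(k\oplus V)$ and the category of extensions $0\to V\otimes_k\E_0\to\E\to\E_0\to 0$ of $\O_{X_0}$-modules; under it the trivial deformation corresponds to the split extension, and its infinitesimal automorphisms are precisely the automorphisms of the split extension restricting to the identity on sub and quotient, which form the group $\Hom_{\O_{X_0}}(\E_0,V\otimes_k\E_0)$. In either route there is no real obstacle: the only point needing (minor) care is the descent of $\psi$ through $V\otimes_k\E_0$ — that is, that $\O_{X_V}$-linearity, not merely $\O_{X_0}$-linearity, is being used — together with the routine verification that the bijection is functorial and $k$-linear in $V$.
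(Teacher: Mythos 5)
Your main argument is correct and follows the paper's proof essentially verbatim: both identify an element of $F(V)$ as $\id_{\E_V}+\psi$ with $\psi\colon\E_V\to V\otimes_k\E_0$ an $\O_{X_V}$-linear map vanishing on $V\otimes_k\E_0$ (the paper phrases this as $\phi$ restricting to the identity on $V\otimes_k\E_0$, using $k\oplus V$-linearity and $V^2=0$), yielding the bijection $F(V)\simeq\Hom_{\O_{X_0}}(\E_0,V\otimes_k\E_0)$ and then the $k$-linear functoriality in $V$. Your alternative route via automorphisms of the split extension is also valid and a nice variant, but the paper does not use it here.
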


\begin{proof}
We have to study the functor $F\colon \fvect \to \set$ defined by
$$
F(V)=\ker\left(\aut_{k \oplus V}(\E_V) \to \aut_k(\E_0) \right)
$$
where, if $V \in \fvect$, the sheaf $\E_V$ is the trivial lifting
$$
\E_V=\pi_V^*(\E_0)\simeq \E_0\otimes_k k \oplus V\simeq \E_0\oplus (V \otimes_k \E_0)
$$
(where $\pi_V\colon X_0\times_{\spec k}\spec k \oplus V \to X_0$ is the projection).

Consider an automorphism $\phi\colon \E_0\oplus (V \otimes_k \E_0) \to \E_0\oplus (V \otimes_k \E_0)$ of $\O_{X_V}$-modules that induces the identity on $\E_0$. Using $k \oplus V$-linearity and $V^2=(0)$, we see that $\phi$ restricts to the identity on $V\otimes_k \E_0$, and if we write $\phi(f)=f+G_\phi(f)$ for a section $f$ of the summand $\E_0\subseteq \E_V$, then $G_\phi\colon \E_0\to V\otimes_k \E_0$ is a homomorphism of $\O_{X_0}$-modules, and determines $\phi$ completely.

Conversely, given an $\O_{X_0}$-module homomorphism $G \in \Hom_{\O_{X_0}}(\E_0,V\otimes_k \E_0)$, we can define a homomorphism of $\O_{X_V}$-modules
$$
\phi_G\colon \E_0\oplus (V \otimes_k \E_0)\arr \E_0\oplus (V \otimes_k \E_0)
$$
by $\phi_G(f+\alpha)=f+G(f)+\alpha$, where $f$ is a section of $\E_0$ and $\alpha$ one of $V \otimes_k \E_0$. Moreover $\phi_G$ will be an automorphism, with inverse $\phi_{-G}$.

These two correspondences are inverse to each other, so that for each $V\in\fvect$ we have a bijection
$$
F(V)\simeq \Hom_{\O_{X_0}}(\E_0,V\otimes_k \E_0)\simeq V\otimes_k \Hom_{\O_{X_0}}(\E_0,\E_0)\,.
$$
These maps are easily seen to be functorial in $V$, so the resulting natural transformation will be $k$-linear, and we have an isomorphism
\begin{equation*}
\Inf_{\E_0}(\qcoh^X)\simeq \Hom_{\O_{X_0}}(\E_0,\E_0).\qedhere
\end{equation*}
\end{proof}

\section{Obstructions}\label{obs}

The present section is about obstruction theories, which tell us whether we can lift a given object along a small extension or not. In opposition to tangent spaces and groups of infinitesimal automorphisms, which are canonically defined, there can well be more than one obstruction theory for a given problem, and the choice of a particular one is important in some cases.

In particular we will focus on minimal obstruction spaces and their properties, and state a Theorem on the vanishing of obstructions that will be proved in Section~\ref{capform}. We will then show a particular obstruction theory for each one of our examples, and give a classical example of a variety over $\mathbb{C}$ with non-trivial obstructions.

\subsection{Obstruction theories}

Now we focus on the problem of existence of liftings. Given a deformation category $\fib$ and a small extension $A'\to A$, with an object $\xi \in \F(A)$, we would like to have a procedure to decide whether there is a lifting of $\xi$ to $A'$.

\begin{defin}
An \gr{obstruction theory} for $\xi_0 \in \F(k)$ is a pair $(V_\omega,\omega)$, where $V_\omega$ is a $k$-vector space and $\omega$ is a function that assigns to every small extension $A'\to A$ with kernel $I$ and every $\xi \in \F_{\xi_0}(A)$ an element
$$
\omega(\xi,A')\in I \otimes_k V_\omega
$$
called the \gr{obstruction} to lifting $\xi$ to $A'$, in such a way that:
\begin{itemize}
\item $\omega(\xi,A')=0$ if and only if there exists a lifting of $\xi$ to $A'$.
\item We have the following functoriality property: if $B'\to B$ is another small extension with kernel $J$, $\phi\colon A'\to B'$ is a homomorphism such that $\phi(I)\subseteq J$, and $\overline{\phi}\colon A\to B$, $\phi|_I\colon I\to J$ are the induced homomorphisms, then
$$
(\phi|_I \otimes \id)(\omega(\xi,A'))=\omega(\overline{\phi}_*(\xi),B') \in J\otimes_k V_\omega\,.
$$
\end{itemize}
\end{defin}

The space $V_\omega$ called an \gr{obstruction space} for $\xi_0$. If the association $\omega$ is identically zero (that is, every object can be lifted along any small extension), we say that $\xi_0$ (or the deformation problem associated with $\F_{\xi_0}$) is \gr{unobstructed}; otherwise, we say it is \gr{obstructed}.

\begin{examp}
If $\xi_0 \in \F(k)$ has the property that any object of $\F$ restricting to $\xi_0$ on $k$ can be lifted along any small extension, then it obviously admits a ``trivial'' obstruction theory, with $V_\omega=0$ and $\omega$ the only possible function. In particular such a $\xi_0$ is unobstructed.
\end{examp}

Notice that the functoriality property implies in particular that if $\omega(\xi,A')=0$ (i.e. $\xi$ admits a lifting to $A'$), then surely $\omega(\overline{\phi}_*(\xi),B')=0$ (i.e. $\overline{\phi}_*(\xi)$ admits a lifting to $B'$). But this is clear a priori, because the pullback along $\phi$ of a lifting of $\xi$ to $A'$ will be a lifting of $\overline{\phi}_*(\xi)$ to $B'$.

When dealing with concrete problems, it is usually possible to construct an obstruction theory, and sometimes the obstruction space is a cohomology group of a quasi-coherent sheaf on a certain noetherian scheme (usually one degree higher than the one yielding the tangent space of the deformation problem we are considering). We will see some examples of this later. In these cases in particular the obstruction will vanish locally (on affine open subschemes).

If we stick to the abstract setting, that is, if we consider an arbitrary deformation category $\fib$ and an object $\xi_0 \in \F(k)$, it is possible to construct ``abstract'' obstruction theories for $\xi_0$. In \cite{Fan} the authors define a more general notion of obstruction theory (for morphisms of deformation functors) using pointed sets, and among other results they show that, with mild hypotheses, one can always find an obstruction theory for a deformation functor (and even a universal one, in some sense).

Nevertheless notice that obstruction spaces are something that is intrinsically non-canonical, and moreover the choice of the obstruction theory one considers is very important in some cases (for example, in the theory of the virtual fundamental class \cite{bf-intrinsic}).

\subsubsection{The first obstruction}\label{first-obstruction}

Let $\Lambda = k$, and let $\F$ be a deformation category over $(\Art_k)^\op$. Let $\xi_{0}$ be an object of $\F(k)$, with tangent space $T \eqdef T_{\xi_{0}}\F$ and an obstruction theory $(V_{\omega}, \omega)$. By definition, each vector $v \in T$ defines an element $\xi_{v} \in \F(k[\epsilon])$, unique up to isomorphism. Set $k[\epsilon'] \eqdef k[t]/(t^{3})$, where $\epsilon' = [t]$. We consider $k[\epsilon']$ as a tiny extension of $k[\epsilon]$, via the homomorphism $k[\epsilon'] \arr k[\epsilon]$ sending $\epsilon'$ into $\epsilon$. With the object $\xi_{v}$ we associate an obstruction $\omega(\xi_{v}, k[\epsilon']) \in \generate{\epsilon'^{2}} \otimes_{k} V_{\omega} \simeq V_{\omega}$.

\begin{defin}\label{first-obstruction-def}
The first obstruction for $\xi_{0}$ is the function $\Phi_{\xi_{0}}\colon T \arr V_{\omega}$ that sends $v \in T$ into the obstruction $\omega(\xi_{v}, k[\epsilon'])$.
\end{defin}

We will compute the first obstruction for the deformations of a smooth variety in Proposition~\ref{first-smooth}; in this case it is given by a quadratic form. But this is not surprising, because the first obstruction is always a quadratic form. This can be shown as follows. 

Let $\sym^{*}_{k}(T^{\vee})$ be the symmetric algebra of the dual $T^{\vee}$ of the tangent space $T$; denote by $\m$ the ideal generated by elements of positive degree. Then we set
   \begin{align*}
   S_{1} &\eqdef \sym^{*}_{k}(T^{\vee})/\m^{2}\\
   &= k \oplus T^{\vee},
   \end{align*}
and
   \begin{align*}
   S_{2} &\eqdef \sym^{*}_{k}(T^{\vee})/\m^{3}\\
   &= k \oplus  T^{\vee} \oplus \sym^{2}_{k}(T^{\vee})\,.
   \end{align*}
A vector $v \in T$ can be thought of as a $k$-linear homomorphism $v\colon T^{\vee} \arr k$, so it yields a homomorphism of $k$-algebras $\sym^{2}(v)\colon \sym^{2}_{k}(T^{\vee}) \arr k$, which can be interpreted as evaluation of a quadratic form at $v$. This in turn gives a quadratic function $T \arr \Hom_{k}\bigl(\sym^{2}_{k}(T^{\vee}), k\bigr)$ sending $v$ into $\sym^{2}(v)$, and a quadratic form
   \[
   Q_{\xi_{0}}\colon T
   \arr \Hom_{k}\bigl(\sym^{2}_{k}(T^{\vee})\otimes V_{\omega},
   V_{\omega}\bigr)
   \]
sending $v \in T$ into $\sym^{2}(v) \otimes \id_{V_{\omega}}$.

Consider the universal first order lifting $\xi^{(1)}$ of $\xi_{0}$ to $S_{1}$ (Example~\ref{universal-first-order}); there is an obstruction $\omega(\xi^{(1)}, S_{2}) \in \sym^{2}_{k}(T^{\vee})\otimes V_{\omega}$ to lifting it to $S_{2}$.

The following is easy to see from functoriality of obstructions.

\begin{ex}
Check that the first obstruction $\omega(\xi_{v}, k[\epsilon']) \in V_{\omega}$ coincides with $Q_{\xi_{0}}(v)\bigl(\omega(\xi^{(1)}, S_{2})\bigr)$.
\end{ex}

Hence, the first obstruction $\Phi_{\xi_{0}}(v)$ is a quadratic function of $v$, as claimed.

\subsubsection{Minimal obstruction spaces}\label{minimalobstrsec}

Very often, an obstruction space is larger than necessary. For example, it can happen that $(V_\omega, \omega)$ is an obstruction theory for some $\xi_0 \in \F(k)$, the vector space $V_\omega$ is not zero, but nevertheless the map $\omega$ is (see the example in Proposition~\ref{undesirable}). However, there is always a minimal subspace of $V_{\omega}$ that contains all the obstructions, which gives a minimal obstruction theory.

\begin{defin}
Let $(V_\omega,\omega)$ be an obstruction theory for $\xi_0 \in \F(k)$. The \gr{minimal obstruction space} $\Omega_\omega$ of the given obstruction theory is the subspace of $V_\omega$ of elements $v \in V_\omega$ that correspond to obstructions along tiny extensions, in the following sense: there exists a tiny extension $A'\to A$, with a fixed isomorphism $I \simeq k$, and $\xi \in \F_{\xi_0}(A)$, such that $v$ is the image of the obstruction $\omega(\xi,A') \in I\otimes_k V_\omega$ under the induced isomorphism $I\otimes_k V_\omega\simeq k\otimes_k V_\omega \simeq V_\omega$.
\end{defin}

For this definition to make sense, we have to check that $\Omega_\omega$ is a vector subspace of $V_\omega$.

\begin{prop}\label{minimalobstr}
$\Omega_\omega\subseteq V_\omega$ is a vector subspace.
\end{prop}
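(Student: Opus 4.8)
The plan is to show that $\Omega_\omega$ is closed under scalar multiplication and under addition, using the functoriality of obstructions together with the fact that small extensions can be manipulated via homomorphisms of extensions. Scalar multiplication is the easy part: given $v \in \Omega_\omega$ coming from a tiny extension $A' \to A$ with kernel $I \simeq k$ and an object $\xi \in \F_{\xi_0}(A)$, and given $a \in k$, I would consider the homomorphism of extensions in which the identity on $A'$ is composed with multiplication by $a$ on the kernel. More precisely, one can modify the extension $A' \to A$ by pushing out along $\mu_a \colon I \to I$ (or, if $a = 0$, split the extension off); the functoriality property $(\phi|_I \otimes \id)(\omega(\xi,A')) = \omega(\overline\phi_*(\xi), B')$ then shows that $a v$ is again an obstruction along a tiny extension. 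One should be slightly careful that the pushed-out extension is still tiny and that when $a = 0$ the obstruction is $0$, which lies in $\Omega_\omega$ trivially.

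The substantive point is closure under addition. Suppose $v_1, v_2 \in \Omega_\omega$ arise respectively from tiny extensions $A_1' \to A_1$ with kernel $I_1$ and objects $\xi_1 \in \F_{\xi_0}(A_1)$, and $A_2' \to A_2$ with kernel $I_2$ and $\xi_2 \in \F_{\xi_0}(A_2)$, with chosen isomorphisms $I_j \simeq k$. The idea is to build a single tiny extension whose obstruction realizes $v_1 + v_2$. First use RS (Proposition~\ref{deformation.cat}) to glue: form the fibered product $A_1 \times_k A_2$ over the residue field and the object $\{\xi_1, \xi_2\} \in \F(A_1 \times_k A_2)$ obtained from $\xi_1$ and $\xi_2$ via the canonical identification of their pullbacks to $k$. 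Next form the fibered product $A_1' \times_{A_1 \times_k A_2 \text{ ish}} \dots$ — more precisely, one takes the extension $A_1' \times_k A_2' \to A_1 \times_k A_2$ with kernel $I_1 \oplus I_2$, obstruction $\omega(\{\xi_1,\xi_2\}, A_1' \times_k A_2') = (\omega(\xi_1,A_1'), \omega(\xi_2,A_2'))$, which under $I_1 \oplus I_2 \simeq k \oplus k$ is the pair $(v_1, v_2)$. Then push this extension out along the addition map $k \oplus k \to k$ to obtain a new extension $B' \to A_1 \times_k A_2$ with kernel $k$; by functoriality its obstruction is $v_1 + v_2$. Finally reduce the base to a tiny extension: the base $A_1 \times_k A_2$ need not be such that $B' \to A_1\times_k A_2$ is tiny in the sense required (the kernel is $k$, so it is already small with principal kernel — it is tiny provided the kernel is nonzero; if $v_1 + v_2 = 0$ there is nothing to prove). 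So in fact $B' \to A_1 \times_k A_2$ already exhibits $v_1 + v_2$ as an obstruction along a tiny extension, with the object $\{\xi_1, \xi_2\}$.

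The main obstacle I anticipate is bookkeeping rather than conceptual: making sure at each stage that the ring being constructed is genuinely an object of $\artl$, that the maps are $\Lambda$-algebra homomorphisms, that the pushout extension has square-zero kernel annihilated by the maximal ideal (so it remains small), and that the chosen isomorphisms of kernels with $k$ are compatible so that the identifications $I \otimes_k V_\omega \simeq V_\omega$ used to read off $v_1$, $v_2$, and $v_1 + v_2$ are the ones induced consistently. The case distinction for the zero vector (both in scalar multiplication and in addition) should be handled explicitly since $0 \in \Omega_\omega$ needs to be checked separately, as tiny extensions have nonzero kernel by definition; but $0$ is the obstruction to any liftable object along any tiny extension, e.g. one for which $\xi_0$ itself lifts, so $0 \in \Omega_\omega$ as soon as $\Omega_\omega$ is nonempty, and if it is empty the statement is vacuous. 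Once these details are in place, the two closure properties follow directly from the functoriality axiom, and the proof is complete.
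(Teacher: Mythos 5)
Your proposal is correct and takes essentially the same route as the paper: for addition, both arguments glue $\xi_1$ and $\xi_2$ via RS to get an object over $A_1\times_k A_2$, take the small extension $A_1'\times_k A_2'\to A_1\times_k A_2$ with kernel $I_1\oplus I_2$, and then quotient by the kernel of the sum map $I_1\oplus I_2\to k$ to produce a tiny extension whose obstruction is $v_1+v_2$ by functoriality. The only cosmetic difference is that for scalar multiplication the paper avoids the pushout entirely: for $a\neq 0$ one simply replaces the chosen isomorphism $f\colon I\simeq k$ by $af$, which rescales the image of $\omega(\xi,A')$ without changing the extension or the object.
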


\begin{proof}
It is easy to see that $\Omega_\omega$ contains zero and that it is closed under scalar multiplication (for the latter, just multiply the isomorphism $I\simeq k$ by the scalar under consideration). The only non-trivial part is closure under the sum operation.

Take two elements $v, w \in \Omega_\omega$, corresponding respectively to $\omega(\xi,A')$ and $\omega(\eta,B')$, with $A'\to A$ and $B'\to B$ two tiny extensions with kernels $I$ and $J$, fixed isomorphisms $f\colon I\simeq k$, $g\colon J\simeq k$, and objects $\xi \in \F_{\xi_0}(A)$, $\eta \in\F_{\xi_0}(B)$.

Then we take the fibered product $A\times_k B$, and notice that by RS $\xi$ and $\eta$ induce an object $\{\xi,\eta\}$ of $\F_{\xi_0}(A\times_k B)$ (since they restrict to $\xi_0$ over $k$). The map $A'\times_k B'\to A\times_k B$ gives a small extension, with kernel $f\oplus g\colon I\oplus J\simeq k\oplus k$; we have then an obstruction
   \begin{align*}
   \omega(\{\xi,\eta\},A'\times_k B')&\in (I\oplus J)\otimes_k V_\omega\\
   &\simeq (k\oplus k)\otimes_k V_\omega\\
   &\simeq V_\omega\oplus V_\omega
   \end{align*}
that corresponds to the pair $(u,v)$ (as the reader can check, using the functoriality property of the obstruction).

Now we take the sum $s\colon I\oplus J \simeq k\oplus k \to k$, defined by $s(i,j)=f(i)+g(j)$, and consider $K=\ker s\subseteq I\oplus J\subseteq A'\times_k B'$, an ideal. Since $s$ is surjective we have an isomorphism $h\colon (I\oplus J)/K \simeq k$.

Set $C' \eqdef (A'\times_k B')/K$. We have a tiny extension $C'\to A\times_k B$ with kernel $(I\oplus J)/K \simeq k$ (which is a sort of ``sum extension'' of the given ones), and the projection $\pi\colon A'\times_k B' \to C'$ induces a homomorphism of extensions
$$
\xymatrix{
0\ar[r] & I\oplus J\ar[r] \ar[d]^{\overline{s}} & A'\times_k B'\ar[r]\ar[d]^{\pi} & A\times_k B\ar[r]\ar[d]^{\overline{\pi}} & 0\\
0\ar[r] & (I\oplus J)/K \ar[r] & C'\ar[r] & A\times_k B \ar[r] & 0
}
$$
where the map $\overline{s}$ is the projection to the quotient, and corresponds to the addition $+\colon k\oplus k\to k$ under the isomorphisms above.

By functoriality of the obstruction we have then
$$
\omega(\overline{\pi}_*(\{\xi,\eta\}),C')=(\overline{s}\otimes \id)(\omega(\{\xi,\eta\},A'\times_k B'))\,,
$$
which corresponds to $u+v \in V_\omega$, as the reader can check using again functoriality of $\omega$. In conclusion this shows that $u+v\in \Omega_\omega$, as we want.
\end{proof}

Next, we see that $(\Omega_\omega, \omega)$ is an obstruction theory.

\begin{prop}\label{minimalobstr2}
Given a small extension $A'\to A$ with kernel $I$, and $\xi \in \F_{\xi_0}(A)$, we have
$$
\omega(\xi,A')\in I\otimes_k \Omega_\omega \subseteq I\otimes_k V_\omega\,.
$$
In particular $(\Omega_\omega,\omega)$ is an obstruction theory for $\xi_0$.
\end{prop}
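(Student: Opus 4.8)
The plan is to reduce an arbitrary small extension to a family of tiny ones by passing to quotients, and then to invoke the functoriality axiom of an obstruction theory. Fix a basis $e_{1}, \dots, e_{n}$ of the $k$-vector space $I$, and write $\omega(\xi, A') = \sum_{i=1}^{n} e_{i} \otimes v_{i}$ with $v_{i} \in V_{\omega}$. Since $\omega(\xi,A')$ lies in $I \otimes_{k} \Omega_{\omega}$ as soon as every $v_{i}$ lies in $\Omega_{\omega}$, it suffices to prove the latter.

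First I would use smallness: because $A' \to A$ is a small extension we have $\m_{A'} I = 0$ and $I^{2} = 0$, so \emph{every} $k$-subspace of $I$ is an ideal of $A'$. In particular, for each fixed $i$ the subspace $J_{i} \eqdef \bigoplus_{j \ne i} k\, e_{j}$ is an ideal, and $A'/J_{i} \to A$ is a tiny extension whose kernel $I/J_{i}$ is one-dimensional; the class $\overline{e}_{i}$ of $e_{i}$ furnishes a distinguished isomorphism $I/J_{i} \simeq k$. The quotient map $\pi_{i}\colon A' \to A'/J_{i}$ is a homomorphism of $\Lambda$-algebras with $\pi_{i}(I) = I/J_{i}$, inducing the identity on $A$ and the quotient $\rho_{i}\colon I \to I/J_{i}$ on kernels, so the functoriality property of $\omega$ gives
$$
\omega(\xi, A'/J_{i}) = (\rho_{i} \otimes \id)\bigl(\omega(\xi, A')\bigr)
= \sum_{j} \rho_{i}(e_{j}) \otimes v_{j} = \overline{e}_{i} \otimes v_{i}
$$
in $(I/J_{i}) \otimes_{k} V_{\omega}$. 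Under the identification $(I/J_{i}) \otimes_{k} V_{\omega} \simeq k \otimes_{k} V_{\omega} \simeq V_{\omega}$ coming from $\overline{e}_{i} \mapsto 1$, this element is exactly $v_{i}$; hence $v_{i}$ is the image of an obstruction along a tiny extension, i.e.\ $v_{i} \in \Omega_{\omega}$ by definition. This establishes $\omega(\xi, A') \in I \otimes_{k} \Omega_{\omega}$.

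For the last assertion I would simply observe that $(\Omega_{\omega}, \omega)$ satisfies the two axioms of an obstruction theory: the lifting criterion $\omega(\xi,A') = 0 \Leftrightarrow \lif(\xi,A') \ne \emptyset$ is literally the one for $(V_{\omega},\omega)$, and the functoriality property holds because $\phi|_{I} \otimes \id$ restricts to a map $I \otimes_{k} \Omega_{\omega} \to J \otimes_{k} \Omega_{\omega}$ and both sides of the required identity already lie in these subspaces by the inclusion just proved.

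I do not expect a genuine difficulty here; the argument is the ``quotient'' counterpart of the fibered-product construction used in Proposition~\ref{minimalobstr}. The only points deserving attention are that $J_{i}$ is an honest ideal (this is precisely where smallness, rather than mere square-zero, of the extension is used) and that one keeps the identification $I/J_{i} \simeq k$ pinned down, so that the functoriality computation lands on $v_{i}$ itself rather than on a nonzero scalar multiple of it.
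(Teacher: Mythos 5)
Your proof is correct and essentially identical to the paper's: both fix a basis of $I$, write $\omega(\xi,A')$ in coordinates, and for each index $i$ quotient $A'$ by the ideal spanned by the other basis vectors to produce a tiny extension onto which functoriality projects the $i$-th coordinate. Your $J_i$ is exactly the paper's $K_i = \ker v_i^\vee$, so the decomposition and the functoriality computation coincide; the only divergence is that you also spell out the routine verification of the final ``in particular'' clause, which the paper leaves implicit.
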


\begin{proof}
Let $v_1,\hdots,v_n$ be a basis of $I$ as a $k$-vector space, and write the obstruction $\omega(\xi,A')\in I\otimes_k V_\omega$ as a sum
$$
\omega(\xi,A')=v_1\otimes w_1+\cdots+v_n\otimes w_n
$$
where $w_1,\hdots,w_n \in V_\omega$. We have to show that $w_1,\hdots,w_n$ are elements of $\Omega_\omega$.

Fix $1\leq i \leq n$, and let
$$
K_i \eqdef\ker v_i^\vee=\{v \in I : \text{if we write } v=a_1v_1+\cdots+a_nv_n \text{, then } a_i=0\}\subseteq I
$$
where $v_i^\vee\colon I\to k$ is the dual element of $v_i \in I$.

This $K_i$ is an ideal of $A'$, so set $B' \eqdef A'/K_i$. We have a tiny extension $B'\to A$ with kernel $I/K_i\simeq k$ (where the isomorphism is induced by $v_i^\vee$), and the projection $\pi\colon A'\to B'$ induces a homomorphism of extensions
$$
\xymatrix{
0\ar[r] & I\ar[r] \ar[d]^{\pi|_I} & A'\ar[r]\ar[d]^{\pi} & A\ar[r]\ar[d]^{\overline{\pi}} & 0\\
0\ar[r] & I/K_i \ar[r] & B'\ar[r] & A \ar[r] & 0\,.
}
$$
By functoriality of the obstruction we get
$$
\omega(\overline{\pi}_*(\xi),B')=(\pi|_I \otimes \id)(\omega(\xi,A'))
$$
which corresponds to the $i$-th component $w_i$ of $\omega(\xi,A')$ under the isomorphism $I \otimes_k V_\omega \simeq k^n\otimes_k V_\omega \simeq V_\omega^n$ given by the basis $v_1,\hdots,v_n$, because the diagram
$$
\xymatrix@C+10pt{
I\otimes_k V_\omega \ar[r]^\sim \ar[d]^{\pi|_I \otimes \id} & k^n\otimes_k V_\omega \ar[r]^\sim \ar[d]^{\pi_i \otimes \id} & V_\omega^n \ar[d]^{\pi_i}\\
(I/K_i)\otimes_k V_\omega \ar[r]^\sim & k\otimes_k V_\omega \ar[r]^\sim & V_\omega
}
$$
(where the horizontal isomorphisms are the ones we have already considered) is commutative.

Finally notice that $\omega(\overline{\pi}_*(\xi),B')$ is the obstruction associated with a tiny extension (since $I/K_i\simeq k$), so that $w_i \in \Omega_\omega$, and we are done.
\end{proof}

After the study of miniversal deformations in Section~\ref{capform}, we will see that we can obtain a formula for the dimension of $\Omega_\omega$ from a miniversal deformation of $\xi_0$ (provided it exists). In particular $\dim_k \Omega_\omega$ does not depend on the starting obstruction theory $(V_\omega, \omega)$.

This also follows from the next exercise, which says that minimal obstruction spaces are canonical.

\begin{ex}
Let $\fib$ be a deformation category, $(V_1,\omega_1)$ and $(V_2,\omega_2)$ be two obstruction theories for $\xi_0 \in \F(k)$, and denote by $\Omega_1$ and $\Omega_2$ the corresponding minimal obstruction spaces. Show that there is a canonical isomorphism $\phi\colon \Omega_1\simeq \Omega_2$ that preserves obstructions. Here with ``preserves obstructions'' we mean that if $A'\to A$ is a small extension with kernel $I$, and $\xi \in \F(A)$, then
$$
(\id\otimes \phi)(\omega_1(\xi,A'))=\omega_2(\xi,A') \in I\otimes_k \Omega_2\,.
$$
\end{ex}

Even though the minimal obstruction space is a good thing to have, in practice it is (in general) very hard to calculate. Because of this, in most applications it suffices to have an obstruction theory that is possibly easier to calculate and more naturally defined, as in the examples we will see later on.

\subsubsection{The Ran--Kawamata unobstructedness Theorem}

The following result, first stated and proved in \cite{Kaw}, can be applied in some cases to conclude that a deformation problem is unobstructed. In this section we assume $\Lambda=k$.

\begin{thm}[Kawamata]\label{rk}
Let $\F\to \art^\op$ be a deformation category, and take $\xi_0 \in \F(k)$. Assume that:
\begin{itemize} 
\item $T_{\xi_0}\F$ is finite-dimensional.
\item $k$ has characteristic~0.
\item If $A \in \art$ and $\xi \in \F_{\xi_0}(A)$, then the functor $F_\xi\colon \fmod\to \mod$ described on page~\pageref{generalization} is right-exact (that is, carries surjections into surjections).
\end{itemize}
Then $\xi_0$ is unobstructed.
\end{thm}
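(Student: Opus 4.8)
The plan is to produce, under the stated hypotheses, a miniversal deformation $\rho$ of $\xi_0$ over some $R \in (\Comp_k)$ whose hull $R$ turns out to be a power series ring $k[[t_1,\dots,t_n]]$ with $n = \dim_k T_{\xi_0}\F$; smoothness of $R$ is exactly the statement that $\xi_0$ is unobstructed, because (as the excerpt promises in Section~\ref{capform}) the dimension of the minimal obstruction space is read off from the hull, and a formally smooth $R$ forces that dimension to be $0$. The key input is that Schlessinger's Theorem in this context (to be proved in Section~\ref{capform}) gives the existence of a miniversal deformation as soon as $T_{\xi_0}\F$ is finite-dimensional and RS holds — both of which are assumed — so the real work is to upgrade ``miniversal'' to ``universal with smooth hull.''

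The strategy I would carry out is an explicit inductive construction of a compatible system of liftings of $\xi_0$ over the truncated power series rings $R_m \eqdef k[[t_1,\dots,t_n]]/\m^{m+1}$, starting from the universal first-order lifting $\xi^{(1)}$ of Example~\ref{universal-first-order} over $R_1 = k \oplus T^\vee$ (here $T = T_{\xi_0}\F$). Suppose inductively I have a lifting $\rho_m$ over $R_m$; I want to lift it along the small extension $R_{m+1} \to R_m$, whose kernel $I$ is the degree-$(m+1)$ part $\sym^{m+1}_k(T^\vee)$, a $k$-vector space on which $\m_{R_{m+1}}$ acts trivially. By Theorem~\ref{actionthm} and the obstruction formalism of Section~\ref{obs}, the obstruction $\omega(\rho_m, R_{m+1})$ lives in $I \otimes_k V_\omega$ for any chosen obstruction theory; the point is to show it vanishes. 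This is where the right-exactness hypothesis on $F_\xi$ enters, combined with characteristic $0$: the characteristic-$0$ assumption lets me use the ``$T^1$-lifting'' / Kawamata argument, namely that if a lifting $\rho_m$ over $R_m$ exists then lifting it over $R_{m+1}$ can be reduced, by a symmetrization/averaging trick available only when $\car k = 0$, to lifting a related object over a trivial square-zero extension $A \oplus M$ with $M$ a suitable module — and surjectivity of $F_\xi$ (right-exactness) kills precisely this obstruction. Concretely I would set $A = R_m$ (or a further truncation) and exhibit the obstruction to extending $\rho_m$ as the image of a class that must vanish because the relevant map $F_{\rho_{m-1}}(\text{something}) \to F_{\rho_{m-1}}(\text{quotient})$ is surjective. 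Iterating over all $m$ produces a formal object $\rho$ over $R = k[[t_1,\dots,t_n]]$, which is miniversal (its Kodaira–Spencer map is the identity on $T$ by construction of $\xi^{(1)}$), and since $R$ is formally smooth the minimal obstruction space is zero, hence $\xi_0$ is unobstructed.

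The hard part, and the step I expect to be the main obstacle, is making the characteristic-$0$ ``averaging'' step precise in the abstract fibered-category language: the classical Ran–Kawamata argument is phrased for deformation functors with a differential-graded or $T^1$-lifting structure, and here I only have the half-exact functors $F_\xi\colon \fmod \to \mod$ and the RS axiom to work with. The cleanest route is probably to exploit the symmetry of $\sym^{m+1}_k(T^\vee)$ under the symmetric group together with the fact that, in characteristic $0$, one can split off the relevant obstruction class as a direct summand using the $\tfrac{1}{(m+1)!}$-idempotent; one must check that the free-and-transitive action of Theorem~\ref{actionthm} and the functoriality Propositions~\ref{funct} and~\ref{actionfunct} are compatible with this decomposition, so that the vanishing of the obstruction over the symmetric part follows from right-exactness of $F_\xi$ applied to the surjection of modules coming from multiplication $T^\vee \otimes_k \sym^m_k(T^\vee) \twoheadrightarrow \sym^{m+1}_k(T^\vee)$. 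Once that module-theoretic surjectivity is matched with the geometric lifting statement via $F_\xi$, the induction goes through and the theorem follows; I would defer the detailed bookkeeping of this matching to after the machinery of Section~\ref{capform} is available, since the proof is explicitly promised there.
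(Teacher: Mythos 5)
Your proposal takes a genuinely different route from the paper's proof, and it has a real gap at the step you yourself flag as the hard one.

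The paper's argument does \emph{not} inductively construct liftings over the truncations $k\ds{t_1,\dots,t_n}/\m^{m+1}$. Instead it takes the miniversal deformation $(R,\rho)$ with $R = P/I$, $P = k\ds{x_1,\dots,x_n}$ and $I \subseteq \m_P^2$ as a black box from Theorem~\ref{miniexist}, and proves $I = 0$ in two steps. First it shows $\wh{\diff}_R$ is a free $R$-module: since $R$ is local it is enough to check that $\Hom_R(\wh{\diff}_R, M') \to \Hom_R(\wh{\diff}_R, M)$ is surjective for every surjection of finite-length $R$-modules, and this is where the right-exactness of $F_\xi$ is used — a homomorphism $\wh{\diff}_R \to M$ is a derivation $R \to M$, i.e.\ an $R$-algebra map $R \to A \oplus M$ (with $A = R_m$ for $m \gg 0$); pulling back $\rho$ produces $[\xi'] \in F_\xi(M)$, which lifts to $F_\xi(M')$ by right-exactness, and then versality of $\rho$ lifts the ring map to $R \to A \oplus M'$, giving the sought element of $\Hom_R(\wh{\diff}_R, M')$. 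Note that here $F_\xi$ is applied exactly in its natural habitat: trivial square-zero extensions $A \oplus M$. Second, freeness of $\wh{\diff}_R$ and a rank count in the conormal sequence $I/I^2 \to \wh{\diff}_P\otimes_P R \to \wh{\diff}_R \to 0$ force $d\colon I/I^2 \to \wh{\diff}_P\otimes_P R$ to vanish, which translates into $\partial f/\partial x_i \in I$ for all $f \in I$; since $\car k = 0$ and $I \subseteq \m_P^2$, a minimal-degree argument gives $I = 0$. So in the paper, characteristic zero appears only in a purely algebraic lemma about power series at the very end, not in the lifting step.

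The gap in your plan is concrete. You want to show the obstruction $\omega(\rho_m, R_{m+1}) \in \sym^{m+1}_k(T^\vee) \otimes_k V_\omega$ vanishes by appealing to right-exactness of $F_\xi$ applied to the multiplication surjection $T^\vee \otimes_k \sym^m_k(T^\vee) \twoheadrightarrow \sym^{m+1}_k(T^\vee)$. But $F_\xi$ governs liftings over \emph{trivial} extensions $A \oplus M$, while $R_{m+1} \to R_m$ is a \emph{non-split} small extension; there is no $k$-algebra homomorphism $R_{m+1} \to R_m \oplus \sym^{m+1}_k(T^\vee)$ commuting with the projections to $R_m$ (the existence of one would already split the extension, which is the thing you are trying to prove at that step). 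So the obstruction to lifting over $R_{m+1}$ is not the image under $F_\xi$ of anything you have direct access to, and the symmetric-group idempotent $\tfrac{1}{(m+1)!}$ does not, by itself, bridge that gap. The genuine $T^1$-lifting arguments in this spirit (Kawamata, Fantechi--Manetti) do work, but they require a different and more delicate device — adding an auxiliary square-zero variable and using a substitution like $t \mapsto t + \epsilon$, which is where $\car k = 0$ actually enters via the coefficient $(m+1)$, together with a careful diagram chase over a fibered product of artinian rings — and that setup is not what your sketch describes. As written, the inductive step does not go through; either you should reproduce the actual $T^1$-lifting mechanism in the fibered-category language (more work than the paper's route), or adopt the paper's reformulation via freeness of $\wh{\diff}_R$, which sidesteps the issue entirely by only ever invoking $F_\xi$ over trivial extensions.

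Your framing of the surrounding argument (take a hull, observe that smoothness of the hull is equivalent to unobstructedness via Proposition~\ref{unobst-ver}) is correct and matches the paper.
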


We postpone the proof to Section~\ref{capform}, page~\pageref{prf-rk}.

\begin{examp}
Let $X$ be a scheme over $k$ and consider an invertible sheaf $\L_0 \in \qcoh^X(k)$ on $X$. Suppose also that $\car k=0$ and $\H^1(X,\O_X)$ is finite-dimensional. We want to show that in this case $\L_0$ is unobstructed in $\qcoh^X$, using the Ran-Kawamata Theorem.

To do this, we consider $A \in \art$ and $\L \in \qcoh^X_{\L_0}(A)$, and we want to understand the functor $F_\L\colon \fmod\to \mod$; recall that this is defined by
$$
F_\L(M)=\{\text{isomorphism classes of liftings of } \L \text{ to } A \oplus M\}\,.
$$
Notice that if $\L_M$ denotes the trivial pullback of $\L$ to $A \oplus M$ for any $M \in \fmod$ (the one along the inclusion $A\to A \oplus M$), then there is a natural equivalence of functors $\phi\colon F_{\O_{X_A}}\simeq F_\L$ (where $X_A \eqdef X\times_{\spec k}\spec A$ is the trivial deformation, as usual); if $M \in \fmod$, the function $\phi_M\colon F_{\O_{X_A}}(M)\to F_\L(M)$ is defined by
$$
\phi_M([\E])=[\E\otimes_{\O_{X_{A \oplus M}}}\L_M]\,.
$$
The group $F_{\O_{X_A}}(M)$ is the kernel of the restriction map $\pic X_{A \oplus M} \arr \pic X_{A}$; it follows from the long cohomology exact sequence associated with the short exact sequence of sheaves on $|X|$
   \[
   \xymatrix@C=30pt{
   1 \ar[r]& M\otimes_{A}\O_{X_{A}} \ar[r]^-{f \mapsto 1+f}&
   \O^{*}_{X_{A \oplus M}} \ar[r]& \O^{*}_{X_{A}} \ar[r] & 0\,.
   }
   \]
that there is a functorial isomorphism
$$
F_{\O_{X_A}}(M)\simeq \H^1(X,M\otimes_A \O_{X_A})\,.
$$
Now $M\otimes_A \O_{X_A}\simeq M\otimes_A (A\otimes_k \O_X)\simeq M\otimes_k \O_X$, and the functor $-\otimes_k \O_X$ is exact. Consequently $\H^1(X,-\otimes_k \O_X)\simeq \H^1(X,-\otimes_A \O_{X_A})\simeq F_{\O_{X_A}}$ is exact too, and since $\car k=0$ and
$$
T_{\L_0}\qcoh^X\simeq \Ext^1_{\O_X}(\L_0,\L_0)\simeq \H^1(X,\mathcal{E}nd_{\O_X}(\L_0))\simeq \H^1(X,\O_X)
$$
is finite-dimensional, we can apply Theorem~\ref{rk}, and conclude that $\L_0$ is unobstructed.
\end{examp}

\begin{rmk}
This fails in positive characteristic: there are famous examples, due to Jun-Ichi Igusa (\cite{igusa}), of smooth projective varieties in which invertible sheaves are obstructed.
\end{rmk}

We have the following corollary of the theorem, which is useful for example when considering deformations of abelian varieties, Calabi-Yau manifolds, K3 surfaces, etc.

\begin{cor}[Ran]
Let $X_0$ be a smooth and projective scheme over $k$ (with $\car k=0$), whose canonical sheaf $\omega_{X_0}$ is trivial (i.e. isomorphic to $\O_{X_0}$). Then $X_0$ is unobstructed.
\end{cor}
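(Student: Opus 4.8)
The plan is to deduce the corollary from the Ran--Kawamata Theorem~\ref{rk}, applied to the deformation category $\catdef \arr \art^{\op}$ at the object $X_{0}$: it is enough to verify the three hypotheses of that theorem. Since $X_{0}$ is smooth, Remark~\ref{nonsing} identifies $T_{X_{0}}\catdef$ with $\H^{1}(X_{0},T_{X_{0}})$, which is finite-dimensional because $X_{0}$ is proper, so the first hypothesis holds, and the second ($\car k = 0$) is assumed. Hence the only real content is the third: for every $A \in \art$ and every $\xi = (f\colon X \arr \spec A)$ in $\catdef_{X_{0}}(A)$, the functor $F_{\xi}\colon \fmod \arr \mod$ of liftings of $X/A$ to the trivial extensions $A \oplus M$ must be right-exact.

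To handle this I would first recall the standard description of $F_{\xi}$. As $X_{0}$ is smooth over $k$ and $X$ has the same underlying topological space, the morphism $f$ is smooth; the trivial lifting of $X/A$ over $A \oplus M$ always exists, and the deformation theory of smooth morphisms yields a functorial isomorphism $F_{\xi}(M) \simeq \H^{1}(X, T_{X/A}\otimes_{A}M)$, where $T_{X/A} \eqdef \diff_{X/A}^{\vee}$ is locally free of rank $n \eqdef \dim X_{0}$. Next I would show that $\omega_{X/A} \eqdef \wedge^{n}\diff_{X/A}$ is trivial: by Serre duality on the closed fibre $\dim_{k}\H^{0}(X_{0},\omega_{X_{0}}) = \dim_{k}\H^{n}(X_{0},\O_{X_{0}}) = \dim_{k}\H^{0}(X_{0},\O_{X_{0}}) = 1$ (using $\omega_{X_{0}}\simeq\O_{X_{0}}$), so cohomology and base change over the artinian local ring $A$ show that $f_{*}\omega_{X/A}$ is free of rank one, and the evaluation map $f^{*}f_{*}\omega_{X/A}\arr\omega_{X/A}$ is a morphism of invertible sheaves restricting to the isomorphism $\O_{X_{0}}\xrightarrow{\sim}\omega_{X_{0}}$ on the closed fibre, hence an isomorphism. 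Consequently $T_{X/A}\simeq \wedge^{n-1}\diff_{X/A}\otimes\omega_{X/A}^{\vee}\simeq\wedge^{n-1}\diff_{X/A}$, and $F_{\xi}(M)\simeq\H^{1}(X,\wedge^{n-1}\diff_{X/A}\otimes_{A}M)$.

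With this reformulation, right-exactness of $F_{\xi}$ is equivalent to the following statement: for every surjection $M\arr M''$ of finite $A$-modules with kernel $M'$, the connecting homomorphism $\H^{1}(X,\wedge^{n-1}\diff_{X/A}\otimes_{A}M'')\arr\H^{2}(X,\wedge^{n-1}\diff_{X/A}\otimes_{A}M')$ attached to the short exact sequence $0\arr\wedge^{n-1}\diff_{X/A}\otimes_{A}M'\arr\wedge^{n-1}\diff_{X/A}\otimes_{A}M\arr\wedge^{n-1}\diff_{X/A}\otimes_{A}M''\arr 0$ (exact because $\O_{X}$ is $A$-flat and $\wedge^{n-1}\diff_{X/A}$ is locally free) vanishes; filtering $M'$ by the submodules $\m_{A}^{i}M'$ one reduces to the case $M' = V$ a $k$-vector space, so that the target becomes $\H^{2}(X_{0},\wedge^{n-1}\diff_{X_{0}})\otimes_{k}V$. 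This vanishing is precisely the Bogomolov--Tian--Todorov phenomenon: in characteristic $0$ it follows from the degeneration at $E_{1}$ of the Hodge-to-de~Rham spectral sequence of $X/A$ (equivalently, via GAGA, from the $\partial\bar\partial$-lemma on the analytification), and I expect establishing this Hodge-theoretic input — the one place where the hypotheses $\car k = 0$ and $\omega_{X_{0}}\simeq\O_{X_{0}}$ are genuinely used together — to be the main obstacle. Granting it, $\H^{1}(X,\wedge^{n-1}\diff_{X/A}\otimes_{A}M)\arr\H^{1}(X,\wedge^{n-1}\diff_{X/A}\otimes_{A}M'')$ is surjective, so $F_{\xi}$ is right-exact, and Theorem~\ref{rk} applies to give the conclusion.
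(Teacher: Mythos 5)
Your overall strategy matches the paper's: reduce to Theorem~\ref{rk}, observe that the tangent space is $\H^1(X_0,T_{X_0})$ (finite-dimensional by properness), and use triviality of $\omega_{X_0}$ to identify $T_{X/A}\simeq \diff^{n-1}_{X/A}$. The paper carries out exactly this reduction. However, two points in your argument leave genuine gaps, and both are closed in the paper by a single external input that you don't invoke.

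First, the claim that ``cohomology and base change over the artinian local ring $A$ show that $f_*\omega_{X/A}$ is free of rank one'' does not follow from standard cohomology-and-base-change alone. What base change in top degree gives you for free is that $\H^n(X,\O_X)\otimes_A k \simeq \H^n(X_0,\O_{X_0})\simeq k$, hence $\H^n(X,\O_X)$ is \emph{cyclic}; but cyclic is not free. Dually, the obstruction to lifting a generator of $\H^0(X_0,\omega_{X_0})$ across a small extension $A'\arr A''$ with kernel $I\simeq k$ lies in $\H^1(X_0,\omega_{X_0})\otimes I = \H^1(X_0,\O_{X_0})\otimes I$, and for a Calabi--Yau this group is not generally zero (think of an abelian variety). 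So you cannot conclude that $\H^0(X,\omega_{X/A})\arr\H^0(X_0,\omega_{X_0})$ is surjective without additional input. The paper fills exactly this hole by quoting Deligne's Theorem~\ref{deligne}: over a $\Q$-algebra and for $X/A$ smooth and proper, all $\H^i(X,\diff^j_{X/A})$ satisfy base change. This gives directly that $k\otimes_A \H^0(X,\diff^n_{X/A})\arr\H^0(X_0,\diff^n_{X_0})$ is an isomorphism, so the trivializing section lifts.

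Second, once you have $T_{X/A}\simeq\diff^{n-1}_{X/A}$, the paper does not argue for vanishing of the connecting homomorphism directly. It instead invokes Deligne's theorem a second time to conclude that $\H^1(X,\diff^{n-1}_{X/A})$ satisfies base change, so that
\[
F_\xi(M)\simeq \H^1(X,M\otimes_A T_{X/A})\simeq M\otimes_A \H^1(X,T_{X/A})\,,
\]
which is right-exact simply because $-\otimes_A N$ is. Your route — reduce by d\'evissage to a $k$-vector-space kernel (this reduction is fine) and then appeal to ``Hodge-to-de~Rham degeneration'' for the vanishing of the connecting map — is morally the same deep input, since Deligne's base-change theorem is proved precisely via degeneration of the relative Hodge--de~Rham spectral sequence in characteristic zero. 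But you flag this yourself as ``the main obstacle'' and do not supply the bridge from degeneration to the specific $\delta = 0$ you need; this is the genuine gap. The moral is that the two gaps are really one: the missing ingredient, both for triviality of $\omega_{X/A}$ and for right-exactness of $F_\xi$, is Deligne's base-change theorem for relative Hodge cohomology, which the paper states as Theorem~\ref{deligne} and applies twice.
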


\begin{proof}
Since $\car k=0$ and the tangent space $T_{X_0}\catdef\simeq \H^1(X_0,T_{X_0})$ (see Remark~\ref{nonsing}) is finite dimensional, to apply the Ran-Kawamata Theorem we only need to show, given $A \in \art$ and an object $X \in \catdef_{X_0}(A)$, that the functor $F_X\colon \fmod\to \mod$ defined by
$$
F_X(M)=\{\text{isomorphism classes of liftings of }X\text{ to }A \oplus M\}
$$
is right-exact.

If $M\in \fmod$, and we call $f\colon X\to \spec A$ the structure morphism, then one can show (using the same techniques we used to calculate the tangent space of $\catdef$, Theorem~\ref{kodaira}) that there is a functorial isomorphism
$$
F_X(M)\simeq \H^1(X,f^*M\otimes_{\O_X}T_{X/A})\simeq \H^1(X,M\otimes_A T_{X/A})\,.
$$
Using the results on base change of Appendix~\ref{appc} we will show that there is also a functorial isomorphism
\begin{equation}\label{bc}
\H^1(X,M\otimes_A T_{X/A})\simeq M\otimes_A \H^1(X,T_{X/A})
\end{equation}
which shows that the functor $F_X$ is isomorphic to $-\otimes_A \H^1(X,T_{X/A})$, and so it is right-exact.

Set $n \eqdef \dim X_0 = \dim X $; since $\omega_{X_0}=\diff_{X_0}^n\simeq \O_{X_0}$ (recall that $\diff_{X_0}^i$ denotes $\bigwedge^i\diff_{X_0}$), we have a global nowhere vanishing section $s$ of $ \diff_{X_0}^n$, which is an element of $\H^0(X_0,\diff_{X_0}^n)$. By Deligne's Theorem (Theorem~\ref{deligne}) the natural map
$$
k\otimes_A \H^0(X,\diff_{X/A}^n)\arr \H^0(X_0,\diff_{X_0}^n)
$$
is an isomorphism, and $s$ corresponds to a global section of $\diff_{X/A}^n$ that is nowhere vanishing as well, since $|X|=|X_0|$.

From the existence of this section we get that $\diff^n_{X/A} \simeq \O_X$. Moreover for each $j\leq n$ we have a bilinear nondegenerate pairing
$$
\diff^j_{X/A}\times \diff^{n-j}_{X/A}\arr \diff_{X/A}^n\simeq \O_X
$$
that induces an isomorphism $\diff^{n-j}_{X/A}\simeq (\diff^j_{X/A})^\vee$.

This implies in particular that $T_{X/A}=(\diff^1_{X/A})^\vee\simeq \diff^{n-1}_{X/A}$, which by Deligne's Theorem again satisfies base change, and then we have our functorial isomorphism (\ref{bc}). This concludes the proof, as we already remarked.
\end{proof}

\subsection{Examples}

Now we describe an obstruction theory for each of our main examples, and give a classical example of a variety with non-trivial obstructions.

\subsubsection{Schemes}

We consider the deformation category of flat schemes $\catdef\to\artl^\op$, and $X_0 \in \catdef(k)$ a local complete intersection, generically smooth scheme of finite type over $k$.

\begin{thm}\label{schemesobstr}
With the hypotheses above, there is an obstruction theory $(V_\omega, \omega)$ for $X_0$ with vector space
$$
V_\omega=\Ext^2_{\O_{X_0}}(\diff_{X_0},\O_{X_0})\,.
$$

Furthermore, let $A' \arr A$ be a small extension in $\artl$, and assume that $X_{A}$ is a lifting of $X_{0}$ to $A$. Let $Y_{A} \arr X_{A}$ be an \'etale morphism, and set $Y_{0} \eqdef Y_{A}\times_{\spec A}\spec k$. Since the induced morphism $\phi\colon Y_{0} \arr X_{0}$ is \'etale, we have $\phi_{0}^{*}\Omega_{X_{0}} \simeq \Omega_{Y_{0}}$; furthermore, $\phi_{0}$ induces a morphism
   \[
   \phi_{0}^{*}\colon \Ext^{2}_{\O_{X_{0}}}(\Omega_{X_{0}}, \O_{X_{0}})
   \arr \Ext^{2}_{\O_{Y_{0}}}(\Omega_{Y_{0}}, \O_{Y_{0}})\,.
   \]
If $I$ is the kernel of the small extension $A' \arr A$, then the homomorphism
   \[
   \id_{I}\otimes\phi_{0}^{*}\colon
   I \otimes_{k}\Ext^{2}_{\O_{X_{0}}}(\Omega_{X_{0}}, \O_{X_{0}})
   \arr I \otimes_{k}\Ext^{2}_{\O_{Y_{0}}}(\Omega_{Y_{0}}, \O_{Y_{0}})
   \]
carries the obstruction to lifting $X_{A}$ into the obstruction to lifting $Y_{A}$.
\end{thm}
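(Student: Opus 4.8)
The plan is to construct the obstruction class \'etale-locally from a two-term complex, identify the obstruction space using reducedness and generic smoothness as in the proof of Theorem~\ref{kodaira}, and then deduce the functoriality statement from the fact that this whole construction pulls back along \'etale morphisms.

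\emph{Construction and the space $V_{\omega}$.} Since $X_{0}$ is lci, cover it by affine opens $U_{i}$ carrying closed immersions $U_{i}\hookrightarrow P_{i}$ into smooth affine $k$\dash schemes whose ideal is generated by a regular sequence, so that the conormal sheaves $N_{i}^{\vee}$ are locally free and the local cotangent complex is the two-term complex $[N_{i}^{\vee}\to \diff_{P_{i}}|_{U_{i}}]$. Given a lifting $X_{A}$ of $X_{0}$ and a small extension $A'\arr A$ with kernel $I$, lift locally: lift $P_{i}$ to a smooth $A'$\dash scheme (trivial for polynomial rings), lift the generators of the ideal arbitrarily, and use the local flatness criterion (Theorem~\ref{local-flatness}) to see that the resulting $X_{A'}^{(i)}$ is flat over $A'$ and restricts to $X_{A}|_{U_{i}}$. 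Comparing the $X_{A'}^{(i)}$ over the overlaps yields a \v{C}ech-type $2$\dash cocycle representing a class $\omega(X_{A},A')\in \Ext^{2}_{\O_{X_{0}}}(L_{X_{0}/k},\,I\otimes_{k}\O_{X_{0}})=I\otimes_{k}\Ext^{2}_{\O_{X_{0}}}(L_{X_{0}/k},\O_{X_{0}})$, independent of all choices, vanishing exactly when the local liftings glue to a global one, and satisfying the functoriality axiom of the definition of an obstruction theory — this is essentially \cite[III]{Ill}, and the bookkeeping it requires is what makes the argument not ``really simple''. Finally, because $X_{0}$ is reduced and generically smooth, each conormal sequence $0\to N_{i}^{\vee}\to \diff_{P_{i}}|_{U_{i}}\to \diff_{U_{i}}\to 0$ is left exact: it is exact on the dense smooth locus, and $N_{i}^{\vee}$ is locally free hence torsion-free, so has no nonzero subsheaf supported on the nowhere-dense singular locus, exactly the injectivity argument used in the proof of Theorem~\ref{kodaira}. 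Thus $L_{X_{0}/k}\simeq \diff_{X_{0}}$ under our hypotheses, and $V_{\omega}=\Ext^{2}_{\O_{X_{0}}}(\diff_{X_{0}},\O_{X_{0}})$.

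\emph{\'Etale functoriality.} Let $Y_{A}\arr X_{A}$ be \'etale and $Y_{0}=Y_{A}\times_{\spec A}\spec k$. Then $Y_{0}$ is again reduced, generically smooth and lci over $k$, so it carries an obstruction theory of the same shape; and $Y_{A}$, being \'etale over the flat $A$\dash scheme $X_{A}$, is flat over $A$ and restricts to $Y_{0}$, so $\omega(Y_{A},A')$ makes sense. The point is that every ingredient of the construction pulls back along $\phi_{0}$. Refining the cover so that $\phi_{0}^{-1}(U_{i})$ is a union of standard-\'etale charts, each local embedding $U_{i}\hookrightarrow P_{i}$ lifts, Zariski-locally, to a standard-\'etale morphism $Q_{i}\arr P_{i}$ with $Q_{i}$ smooth over $k$ and $Q_{i}\times_{P_{i}}U_{i}$ the corresponding chart, so that the local two-term complex for $Y_{0}$ is $\phi_{0}^{*}$ of the one for $X_{0}$. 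By topological invariance of the \'etale site along the square-zero thickening $X_{A}|_{U_{i}}\subseteq X_{A'}^{(i)}$, the \'etale morphism $Y_{A}|_{\phi_{0}^{-1}(U_{i})}\arr X_{A}|_{U_{i}}$ lifts uniquely to an \'etale $Y_{A'}^{(i)}\arr X_{A'}^{(i)}$, which is then flat over $A'$ and is a local lifting of $Y_{A}$. Running the obstruction construction for $Y_{A}$ with these pulled-back data, the resulting \v{C}ech $2$\dash cocycle is, term by term, the $\phi_{0}^{*}$\dash image of the one defining $\omega(X_{A},A')$: here one uses that $\phi_{0}$ is flat, so $\phi_{0}^{*}$ commutes with the $\Homsh$'s and the cohomology maps involved, and that uniqueness of \'etale lifts makes every comparison between local liftings pull back. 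Passing to cohomology and identifying the complexes with $\diff_{X_{0}}$ and $\diff_{Y_{0}}$ gives $(\id_{I}\otimes \phi_{0}^{*})\bigl(\omega(X_{A},A')\bigr)=\omega(Y_{A},A')$. A shorter but less self-contained route is to invoke the naturality of Illusie's obstruction class along the morphism $\phi_{0}$, via the transitivity map $L\phi_{0}^{*}L_{X_{0}/k}\arr L_{Y_{0}/k}$, which is an isomorphism since $L_{Y_{0}/X_{0}}=0$ for $\phi_{0}$ \'etale; under $L_{X_{0}/k}\simeq\diff_{X_{0}}$ and $L_{Y_{0}/k}\simeq\diff_{Y_{0}}$ it becomes exactly $\phi_{0}^{*}$.

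\emph{Main obstacle.} The genuine difficulty is the non-simplicity of the lci construction itself: unlike the smooth case, where the local liftings form a torsor under the sheaf of derivations and the obstruction is a plain \v{C}ech class of $T_{X_{0}}$, for singular lci $X_{0}$ the local groups $\Ext^{1}_{\O_{X_{0}}}(\diff_{X_{0}},\O_{X_{0}})$ need not vanish, so local liftings need not be isomorphic, and one must work with the hypercohomology of the two-term complex. Carrying the cocycle rigorously through the local-to-global passage, and checking that it is functorial in $\phi_{0}$ without leaning on the cotangent complex, is the delicate part; the remaining ingredients — flatness of the local lifts, left-exactness of the conormal sequences, and the existence and uniqueness of the \'etale lifts — are routine.
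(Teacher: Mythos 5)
The paper does not actually prove Theorem~\ref{schemesobstr}: immediately after the statement it reads ``This follows from the general machinery of the cotangent complex in \cite{Ill}. A proof of this Theorem that does not use the cotangent complex can be found in \cite{Vis}.'' In the introductory remarks the authors also say they state the lci result ``without including a proof (as far as we know there is no really simple one).'' So there is no in-text argument to compare your proposal against; I can only judge it against the approaches the paper points to.

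Your sketch is aligned with the Illusie route that the paper cites: choose local lci embeddings $U_{i}\hookrightarrow P_{i}$ into smooth affines so that the local cotangent complex is the two-term complex $[N_{i}^{\vee}\to \Omega_{P_{i}}|_{U_{i}}]$, lift locally using the local flatness criterion (exactly as in Lemma~\ref{lem:lci} of the paper), compare local lifts across overlaps, and read off a hypercohomology class. Your identification of $L_{X_{0}/k}$ with $\Omega_{X_{0}}$ under reducedness and generic smoothness, via left-exactness of the conormal sequences, is correct and mirrors the argument in the proof of Theorem~\ref{kodaira}. The \'etale functoriality argument is also sound in outline: flatness of $\phi_{0}$, invariance of the \'etale site under square-zero thickenings so that the $Y_{A'}^{(i)}$ exist and are unique, and the fact that every ingredient of the local construction pulls back; the alternative route via $L\phi_{0}^{*}L_{X_{0}/k}\simeq L_{Y_{0}/k}$ is equally valid.

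However, the step you describe as ``\v{C}ech-type $2$-cocycle \dots this is essentially \cite[III]{Ill}, and the bookkeeping it requires is what makes the argument not `really simple''' is precisely the content of the theorem, and your sketch defers it rather than carries it out. For singular lci, the local lifts $X_{A'}^{(i)}|_{U_{ij}}$ and $X_{A'}^{(j)}|_{U_{ij}}$ are generally non-isomorphic; their difference is an element of $I\otimes_{k}\Ext^{1}_{\O_{U_{ij}}}(\Omega_{U_{ij}},\O_{U_{ij}})$, which is not a section of a single sheaf on which one can do ordinary \v{C}ech cohomology. The honest construction is a \v{C}ech hypercocycle for the two-term complex $\Homsh(\Omega_{P}|_{U},\O_{U})\to\Homsh(N^{\vee},\O_{U})$: a $1$-cochain in the $\Homsh(N^{\vee},-)$ component recording differences of local lifts, paired with a $2$-cochain in the $\Homsh(\Omega_{P}|_{U},-)$ component coming from automorphisms, with a mixed coboundary condition; the details depend on compatible choices of local embeddings (hence the passage to a refinement of the cover), and one must check independence of all choices and the obstruction-theory axioms. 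This is the part the authors say they know no simple proof of, and it is the part your proposal passes over with a citation. In short: your outline is the right shape and flags the right obstacle, but it is a roadmap to the proof in \cite{Ill} or \cite{Vis} rather than a proof — which, to be fair, is also what the paper offers.
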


This follows from the general machinery of the cotangent complex in \cite{Ill}. A proof of this Theorem that does not use the cotangent complex can be found in \cite{Vis}.

\begin{rmk}\label{affine.unobstr}
If $X_0$ is also affine, then $\Ext^2_{\O_{X_0}}(\diff_{X_0},\O_{X_0})=0$. In particular deformations of an affine $X_0$ with the hypotheses above are unobstructed.
\end{rmk}

As with the tangent space, in the general case one can still find an obstruction theory for $X_0$, by using the cotangent complex. In general $X_0$ has an obstruction theory with obstruction space
$$
V_\omega=\Ext^2_{\O_{X_0}}(L_{X_0/k},\O_{X_0})
$$
(see \cite[Chapter III, Th\'{e}or\`{e}me 2.1.7]{Ill}).

\subsubsection{Smooth varieties}

We give a proof of the Theorem above only in the case of smooth varieties, which can be studied using \v{C}ech cohomology. Consider the deformation category $\catdef\to \artl^\op$.

\begin{thm}\label{smoothobstr}
Every smooth variety $X_0 \in \catdef(k)$ has an obstruction theory $(V_\omega, \omega)$ with obstruction space
$$
V_\omega=\H^2(X_0,T_{X_0})\,.
$$
\end{thm}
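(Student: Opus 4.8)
The plan is to construct the obstruction class by a \v{C}ech-gluing argument, exactly parallel to the description of the tangent space in \S\ref{smoothvar}. Fix a small extension $A' \arr A$ in $\artl$ with kernel $I$ and a lifting $\xi = (X \arr \spec A)$ of $X_0$ to $A$; the goal is to attach to it an element $\omega(\xi, A') \in I \otimes_k \H^2(X_0, T_{X_0})$ vanishing precisely when $\xi$ lifts to $A'$. Since $\spec A$ has a single point and $X_0 \arr \spec k$ is smooth, $X \arr \spec A$ is smooth, and $|X| = |X_0|$; because $X_0$ is separated we may fix a finite open cover $\mathcal{U} = \{U_i\}$ of $X_0$ by affines with all multiple intersections affine, viewed also as a cover of $X$. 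First I would produce local liftings: each $X|_{U_i}$ is a deformation of the affine smooth scheme $U_i$, hence unobstructed by Remark~\ref{affine.unobstr}, so a lifting $X_i'$ of $X|_{U_i}$ to $A'$ exists. Next, on the affine $U_{ij}$ both $X_i'|_{U_{ij}}$ and $X_j'|_{U_{ij}}$ are liftings of $X|_{U_{ij}}$, and by Theorem~\ref{actionthm} together with $T_{U_{ij}}\catdef \simeq \H^1(U_{ij}, T_{X_0}) = 0$ (Remark~\ref{nonsing}) there is exactly one isomorphism class of such liftings; I fix isomorphisms of liftings $\phi_{ij}\colon X_j'|_{U_{ij}} \arr X_i'|_{U_{ij}}$.

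On triple intersections, $\phi_{ij}\phi_{jk}\phi_{ki}$ is an automorphism of $X_i'|_{U_{ijk}}$ inducing the identity on $X|_{U_{ijk}}$, hence, by Proposition~\ref{inf} and Proposition~\ref{infinit}, an element
$$
c_{ijk} \in I \otimes_k \der_k(\O_{U_{ijk}}, \O_{U_{ijk}}) = I \otimes_k \Gamma(U_{ijk}, T_{X_0}).
$$
The key verification is that $\{c_{ijk}\}$ is a \v{C}ech $2$-cocycle: this is a diagram chase on quadruple intersections, using that the group of infinitesimal automorphisms is abelian and that composition there corresponds to addition (Exercise~\ref{sum.comp}), together with Remark~\ref{autominf}, which guarantees that conjugating an infinitesimal automorphism by $\phi_{ij}$ does not change its associated element. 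I would then define
$$
\omega(\xi, A') \eqdef [\{c_{ijk}\}] \in I \otimes_k \check{H}^2(\mathcal{U}, T_{X_0}) = I \otimes_k \H^2(X_0, T_{X_0}),
$$
and show, by the standard refinement arguments, that it does not depend on $\mathcal{U}$, on the choice of the local liftings $X_i'$, or on the $\phi_{ij}$.

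To see that $\omega$ does its job: if $\xi$ admits a global lifting $X'$, choose $X_i' = X'|_{U_i}$ and $\phi_{ij} = \id$, so that $c \equiv 0$; conversely, if $[\{c_{ijk}\}] = 0$ then $c_{ijk} = d_{ij} - d_{ik} + d_{jk}$ for suitable $d_{ij} \in I \otimes_k \Gamma(U_{ij}, T_{X_0})$, and replacing each $\phi_{ij}$ by its composite with the infinitesimal automorphism corresponding to $-d_{ij}$ yields transition isomorphisms satisfying the strict cocycle condition on triple overlaps, so the $X_i'$ glue to a scheme $X'$, flat over $A'$, with $X'|_A \simeq X$. For the functoriality axiom, given a morphism of small extensions $A' \arr B'$ with $\phi(I) \subseteq J$ and induced $\overline{\phi}\colon A \arr B$, I would pull back the $X_i'$ and $\phi_{ij}$ along $\phi$ to get local data for $\overline{\phi}_*\xi$ over $B'$; the resulting cocycle is the image of $\{c_{ijk}\}$ under $\phi|_I$, giving $(\phi|_I \otimes \id)\,\omega(\xi,A') = \omega(\overline{\phi}_*\xi, B')$.

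The hard part will be the well-definedness: showing $\omega(\xi, A')$ is independent of the cover and of all intermediate choices is the usual, somewhat lengthy \v{C}ech bookkeeping, and here it is made slightly more delicate by the need to pass continually between the group of infinitesimal automorphisms under composition and its additive, $k$-linear form, invoking Exercise~\ref{sum.comp} and Remark~\ref{autominf} at each turn and keeping signs straight. Verifying the $2$-cocycle identity on $U_{ijkl}$ is the other place where care is needed. The remaining ingredients — smoothness of $X \arr \spec A$, existence of local liftings over affines, and local rigidity — are all immediate from results already in hand.
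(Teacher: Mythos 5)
Your proposal follows exactly the paper's argument: construct local liftings over affines (using their unobstructedness), choose transition isomorphisms on pairwise intersections (unique up to isomorphism by local rigidity), extract a $2$-cocycle from the triple-overlap defect via Propositions~\ref{inf} and~\ref{infinit}, and verify the cocycle identity and independence of choices using Remark~\ref{autominf}. The only cosmetic difference is notational (your $\phi_{ij}\phi_{jk}\phi_{ki}$ versus the paper's $\nu_{ij}\circ\nu_{jk}\circ\nu_{ik}^{-1}$, which agree once $\phi_{ki}=\phi_{ik}^{-1}$), and you spell out the reverse implication (cohomological triviality implies gluing) slightly more explicitly than the paper does.
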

\begin{proof}
Let $A'\to A$ be a small extension with kernel $I$, and $X \in \catdef_{X_0}(A)$ be a deformation of $X_0$ over $\spec A$. We show how to construct the element $\omega(X,A') \in I\otimes_k \H^2(X_0,T_{X_0})$.

Let $\mathcal{U}=\{U_i\}_{i \in I}$ be an open affine cover of $X_0$, and denote by $X|_{U_i}$ the induced deformation of $U_i$ over $A$, obtained by considering $U_i\subseteq X_0$ as an open subscheme of $X$ (recall that $|X|=|X_0|$). By Remark~\ref{affine.unobstr} we have that $U_i$ is unobstructed, and so we can find deformations $Y_i \in \catdef_{U_i}(A')$ such that the restriction of each $Y_i$ to $A$ is $X|_{U_i}$.

Now notice that, since $U_{ij}$ is affine as well, the restrictions $Y_i|_{U_{ij}}$ and $Y_j|_{U_{ij}}$ are isomorphic deformations of $U_{ij}$ over $A'$, by Remark~\ref{nonsing} and Corollary~\ref{trivial}. Then for each pair of indices we have an isomorphism of deformations
$$
\nu_{ij}\colon Y_j|_{U_{ij}}\to Y_i|_{U_{ij}}
$$
which restricts to the identity of $X|_{U_{ij}}$ on the pullback to $A$, and for each triplet of indices we can consider the composite
$$
\nu_{ijk}=\nu_{ij}\circ \nu_{jk}\circ \nu_{ik}^{-1}\,.
$$
Each $\nu_{ijk}$ is an automorphism of the deformation $Y_i|_{U_{ijk}}$ of $U_{ijk}$ over $A'$ that restricts to the identity on the pullback to $A$, and so by Propositions \ref{inf} and \ref{infinit} it corresponds to an element
$$
e_{ijk} \in I\otimes_k \Inf(U_{ijk}) \simeq I\otimes_k \der_k(A_{ijk},A_{ijk}) \simeq \Gamma(U_{ijk},I\otimes_k T_{X_0})
$$
where $\spec A_{ijk}=U_{ijk}$.

The family $\{e_{ijk}\}_{i,j,k \in I}$ is a \v{C}ech 2-cocycle for the sheaf $I\otimes_k T_{X_0}$, with respect to the cover $\mathcal{U}$: we have to show that for every quadruple of indices $i,j,k,l \in I$ we have
$$
e_{jkl}-e_{ikl}+e_{ijl}-e_{ijk}=0
$$
as elements of $\Gamma(U_{ijkl},I\otimes_k T_{X_0})\simeq I\otimes_k \Inf(U_{ijkl})$. We rewrite this as
\begin{equation}\label{cocycle}
e_{ijl}-e_{ikl}-e_{ijk}=-e_{jkl}
\end{equation}
and notice that the left-hand side corresponds (under the isomorphism of Proposition~\ref{inf}) to the infinitesimal automorphism
$$
\nu_{ijl}\circ \nu_{ikl}^{-1}\circ \nu_{ijk}^{-1}=\nu_{ij}\circ \nu_{jkl}^{-1}\circ \nu_{ij}^{-1}
$$
of the deformation $Y_i|_{U_{ijkl}}$ of $U_{ijkl}$, and the right-hand side to the infinitesimal automorphism $\nu_{jkl}^{-1}$ of the deformation $Y_j|_{U_{ijkl}}$. Moreover the restriction of $\nu_{ij}$ is an isomorphism between these two deformations, and so (\ref{cocycle}) follows from Remark~\ref{autominf}.

Then we have an element $[\{e_{ijk}\}_{i,j,k \in I}]$ of
$$
\check{H}^2(\mathcal{U},I \otimes_k T_{X_0})\simeq \H^2(X_0,I\otimes_k T_{X_0})\simeq I \otimes_k \H^2(X_0,T_{X_0})
$$
that we call $\omega(X,A')$. We leave to the reader to check that this is well-defined (it does not depend on the choice of the automorphisms $\nu_{ij}$). It is also clear that each cocycle in this cohomology class corresponds to a family of isomorphisms, by reversing this construction, and moreover one can check that the $\omega(X,A')$ defined does not depend on the open affine cover $\mathcal{U}$ of $X_0$.

Now it is clear from the construction that we will have $\omega(X,A')=0$ if and only if $X$ admits a lifting to $A'$, and the functoriality property is an easy consequence of the functoriality of the isomorphism we constructed in the proof of Proposition~\ref{infinit}.
\end{proof}

In particular if $X_0$ is a smooth curve, then $\H^2(X_0,T_{X_0})=0$ and so $X_0$ is unobstructed.

The preceding proof shows a typical pattern that can be used in other cases to construct obstructions. Here is the (rather vague) idea: if our deformation problem has an underlying scheme $X$ ($X_0$ in the case above), and it localizes naturally on this scheme, in the sense that every deformation over $X$ induces one over any of its open subschemes (just by restriction, in the case above), and moreover:
\begin{itemize}
\item Infinitesimal automorphisms form a sheaf $\mathcal{I}$ on $X$.
\item Locally we always have liftings.
\item Two liftings of the same deformation are always locally isomorphic.
\item We can reconstruct our deformations from local compatible data.
\end{itemize}
Then we can mimic the preceding proof to construct an obstruction theory, with space $\H^2(X,\mathcal{I})$ (see \cite{Oss}).

Here is an example of a smooth variety $X_0$ such that $\H^2(X_0,T_{X_0})\neq 0$, but nevertheless $X_0$ is unobstructed, so the map $\omega$ must be zero.

\begin{prop}\label{undesirable}
Let $Z_0\subseteq \P^3_k$ be a smooth surface of degree $d \geq 6$. Then $\H^2(Z_0,T_{Z_0})\neq 0$, but $Z_0$ is unobstructed.
\end{prop}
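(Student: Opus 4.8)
The statement splits into two unrelated claims, and the plan is to handle them in turn, taking $\Lambda = k$ throughout.

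For the unobstructedness of $Z_0$ (as an object of $\catdef$), the idea is to transport the unobstructedness of \emph{embedded} deformations through the forgetful morphism $F\colon \hilb^{\P^3_k}_{Z_0} \arr \catdef_{Z_0}$. Since $d \geq 6$ we are outside the exceptional cases of Proposition~\ref{immers} (we only need $d \neq 4$), so every $Z \in \catdef_{Z_0}(A)$ admits a closed immersion $Z \subseteq \P^3_A$ inducing $Z_0 \subseteq \P^3_k$, i.e.\ comes via $F$ from an object $\widetilde Z \in \hilb^{\P^3_k}_{Z_0}(A)$. Given a small extension $A' \arr A$, Proposition~\ref{vanish} provides a lifting $\widetilde Z' \in \hilb^{\P^3_k}_{Z_0}(A')$ of $\widetilde Z$, and then $F(\widetilde Z')$ is a lifting of $Z$ to $A'$: since $F$ is a morphism of deformation categories it carries the arrow $\widetilde Z' \arr \widetilde Z$ over $A' \arr A$ to an arrow $F(\widetilde Z') \arr F(\widetilde Z) \simeq Z$ over $A' \arr A$. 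As $A' \arr A$ and $Z$ were arbitrary, $Z_0$ is unobstructed. I regard this as the substantive half: for $d \geq 6$ the canonical bundle of $Z_0$ is $\O_{Z_0}(d-4)$, which is ample, so $Z_0$ is of general type and the Ran Corollary (triviality of the canonical class) does not apply — the unobstructedness here reflects the rigidity of the projective embedding rather than any property of $\omega_{Z_0}$.

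For $\H^2(Z_0,T_{Z_0}) \neq 0$, the plan is a cohomology chase parallel to the computation of $\coker\delta$ in the proof of Proposition~\ref{hypers}, raised by one cohomological degree. From the normal bundle sequence $0 \to T_{Z_0} \to T_{\P^3_k}|_{Z_0} \to \mathcal{N}_0 \to 0$ with $\mathcal{N}_0 \simeq \O_{Z_0}(d)$, together with $0 \to \O_{\P^3_k} \to \O_{\P^3_k}(d) \to \O_{Z_0}(d) \to 0$ and the vanishing of $\H^1(\P^3_k,\O_{\P^3_k}(d))$, $\H^2(\P^3_k,\O_{\P^3_k}(d))$ and $\H^3(\P^3_k,\O_{\P^3_k})$, one gets $\H^1(Z_0,\mathcal{N}_0) = \H^2(Z_0,\mathcal{N}_0) = 0$, hence $\H^2(Z_0,T_{Z_0}) \simeq \H^2(Z_0,T_{\P^3_k}|_{Z_0}) = \H^2(\P^3_k,T_{\P^3_k}|_{Z_0})$. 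The exact sequence $0 \to T_{\P^3_k}(-d) \to T_{\P^3_k} \to T_{\P^3_k}|_{Z_0} \to 0$ and the vanishing $\H^i(\P^3_k,T_{\P^3_k}) = 0$ for $i \geq 1$ then identify this with $\H^3(\P^3_k,T_{\P^3_k}(-d))$. Twisting the dual Euler sequence by $\O_{\P^3_k}(-d)$ yields $0 \to \O_{\P^3_k}(-d) \to \O_{\P^3_k}(1-d)^{\oplus 4} \to T_{\P^3_k}(-d) \to 0$, so (as $\H^4$ vanishes on $\P^3_k$) $\H^3(\P^3_k,T_{\P^3_k}(-d)) \simeq \coker\bigl(\H^3(\O_{\P^3_k}(-d)) \to \H^3(\O_{\P^3_k}(1-d))^{\oplus 4}\bigr)$, which by Serre duality is the dual of the kernel of the multiplication map $\H^0(\P^3_k,\O(d-5))^{\oplus 4} \arr \H^0(\P^3_k,\O(d-4))$, $(f_i) \mapsto \sum_i x_i f_i$. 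That kernel is the degree-$(d-5)$ part of the Koszul syzygy module of $(x_0,x_1,x_2,x_3)$, which is nonzero precisely when $d-5 \geq 1$; so $\H^2(Z_0,T_{Z_0}) \neq 0$ for $d \geq 6$, while it vanishes for $d = 5$, which explains the hypothesis. If desired, the same computation yields $\dim_k \H^2(Z_0,T_{Z_0}) = 4\binom{d-2}{3} - \binom{d-1}{3} = \tfrac{(d-2)(d-3)(d-5)}{2}$.

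No step is a genuine obstacle. The only things demanding care are the bookkeeping of twists in the chase and the borderline behaviour at $d = 5$ (where the multiplication map becomes injective on constants and the kernel collapses), so that the assumption $d \geq 6$ is really used. On the unobstructedness side the one point to be sure of is that $F$, being a base-preserving functor over $\art^\op$, does commute with the pullbacks used above — which is immediate from the definition of a morphism of deformation categories.
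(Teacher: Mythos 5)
Your proof of the unobstructedness half coincides with the paper's: embed $Z$ in $\P^3_A$ via Proposition~\ref{immers}, lift the embedded deformation using Proposition~\ref{vanish}, and forget the immersion. For $\H^2(Z_0,T_{Z_0})\neq 0$ you take a longer but correct cohomology chase. Both arguments start from the dual conormal sequence and the Euler sequence and reduce to the multiplication map $\H^0(\O(d-5))^{\oplus 4}\to\H^0(\O(d-4))$, but the paper applies Serre duality on $Z_0$ immediately (using $\omega_{Z_0}\simeq\O_{Z_0}(d-4)$), whereas you first transfer everything to $\P^3_k$, stepping through $\H^2(\P^3_k,T_{\P^3_k}|_{Z_0})\simeq\H^3(\P^3_k,T_{\P^3_k}(-d))$ and only then dualizing on $\P^3_k$. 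Your route buys two things the paper does not make explicit: identifying the relevant kernel as the degree-$(d-5)$ part of the Koszul syzygy module makes the sharpness of $d\geq 6$ transparent (syzygies start in degree $1$) and, because the multiplication map is surjective for $d\geq 5$, gives the exact dimension $\tfrac{(d-2)(d-3)(d-5)}{2}$ rather than just the positivity of $4\binom{d-2}{3}-\binom{d-1}{3}$. You are also a bit more scrupulous in the reduction step: you verify $\H^1(Z_0,\mathcal N_0)=\H^2(Z_0,\mathcal N_0)=0$ before concluding $\H^2(T_{Z_0})\simeq\H^2(T_{\P^3_k}|_{Z_0})$, while the paper's assertion that it ``suffices to show $\H^2(T_{\P^3_k}|_{Z_0})\neq 0$'' silently relies on $\H^2(\mathcal N_0)=0$, which is true but left unmentioned.
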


\begin{proof}
The fact that $Z_0$ is unobstructed is immediate from Propositions \ref{vanish} and \ref{immers}: given an object $Z \in \catdef_{Z_0}(A)$ and a small extension $A'\to A$, because of Proposition~\ref{immers} we have a closed immersion $Z\subseteq \P^3_A$, which by Proposition~\ref{vanish} lifts to some $Z'\subseteq \P^3_{A'}$ over $A'$, and forgetting the immersion this gives a lifting $Z'\in \catdef_{Z_0}(A')$ of $Z$ to $A'$.

The fact that $\H^2(Z_0,T_{Z_0})\neq 0$ is proved by the following calculation, similar to the ones we used in the proof of Proposition~\ref{hypers}. From the dual of the conormal sequence of $Z_0\subseteq \P^3_k$
$$
\xymatrix{
0\ar[r] & T_{Z_0}\ar[r] & T_{\P^3_k}|_{Z_0}\ar[r] & \O_{Z_0}(d) \ar[r] & 0
}
$$
we see that it suffices to show that $\H^2(Z_0,T_{\P^3_k}|_{Z_0})\neq 0$. By Serre's duality we have
$$
\H^2(Z_0,T_{\P^3_k}|_{Z_0})\simeq \H^0(Z_0,\diff_{\P^3_k}(d-4)|_{Z_0})^\vee
$$
and by the twisted and restricted Euler sequence
$$
\xymatrix{
0\ar[r] & \diff_{\P^3_k}(d-4)|_{Z_0} \ar[r] & \O_{Z_0}(d-5)^{\oplus 4} \ar[r] & \O_{Z_0}(d-4) \ar[r] & 0
}
$$
it is sufficient to show that
$$
\dim_k \H^0(Z_0,\O_{Z_0}(d-5))^4 > \dim_k \H^0(Z_0,\O_{Z_0}(d-4))
$$
or, using known formulas for the dimensions above, that
$$
4\begin{pmatrix}d-2\\3\end{pmatrix}>\begin{pmatrix}d-1\\3\end{pmatrix}
$$
for $d\geq 6$, which is easy to check.
\end{proof}

\subsubsection{An obstructed variety}

Here is a classical example due to Kodaira and Spencer of a smooth projective variety over $\mathbb{C}$ with non-trivial obstructions.

Let $X$ be a smooth variety over $k$. The tangent sheaf $T_X = \Homsh(\diff_X,\O_X)$ has a natural structure of sheaf of Lie algebras over $k$: over an open affine subset $U=\spec A$, we have $T_X(U)=\der_k(A,A)$, and given two derivations $D,E\colon A\to A$ we can define $[D,E]\colon A\to A$ by
$$
[D,E](x)=D(E(x))-E(D(x))\,.
$$
It is immediate to check that $[D,E]$ is still a $k$-derivation, and that this product gives a structure of Lie algebra over $k$ to $T_X(U)$. Moreover this local construction gives a global Lie product $\lie\colon T_X\times T_X\to T_X$, which, being $k$-bilinear, induces a linear map $T_X\otimes_k T_X\to T_X$, which we still denote by $\lie$.

Moreover if we fix an open affine cover of $X$, say $\mathcal{U}=\{U_i\}_{i \in I}$, then there is a product
$$
\check{H}^1(\mathcal{U},T_X)\times \check{H}^1(\mathcal{U},T_X)\arr \check{H}^2(\mathcal{U},T_X\otimes_k T_X)
$$
in \v{C}ech cohomology, induced by the tensor product (see for example \cite[II, $\S$~6]{God}).

From this product and its properties we get a quadratic form $\check{H}^1(\mathcal{U},T_X)\to \check{H}^2(\mathcal{U},T_X)$, defined as the composite
$$
\check{H}^1(\mathcal{U},T_X) \to \check{H}^1(\mathcal{U},T_X)\times \check{H}^1(\mathcal{U},T_X)\to  \check{H}^2(\mathcal{U},T_X\otimes_k T_X) \to \check{H}^2(\mathcal{U},T_X)
$$
where the first map is the diagonal $v\mapsto (v,v)$, the second is induced by the tensor product, and the last one by the Lie product.
If we represent an element $v \in \check{H}^1(\mathcal{U},T_X)$ as a $1$-cocycle $\{a_{ij}\}_{i,j \in I}$, then the image of $v$ along the above map, which we denote by $[v,v]$, is given by the class of the $2$-cocycle $\{[a_{ij},a_{jk}]\}_{i,j,k \in I}$.


Finally recall that if $X_0$ is a smooth variety, then $T_{X_0}\catdef\simeq \H^1(X_0,T_{X_0})$ and we have an obstruction theory with vector space $\H^2(X_0,T_{X_0})$.

\begin{prop}\label{first-smooth}
Let $X_0$ be a smooth variety over $k$, with $\car k\neq 2$. Then the first obstruction of $X_0$ (see Definition~\ref{first-obstruction-def})
$$
\Phi\colon \H^1(X_0,T_{X_0})\to \H^2(X_0,T_{X_0})
$$
with respect to the obstruction theory described in the preceding section is given by $\Phi(v)=\frac{1}{2}[v,v]$.
\end{prop}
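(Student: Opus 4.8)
The plan is to compute the first obstruction $\Phi(v) = \omega(\xi_v, k[\epsilon'])$ directly from the \v{C}ech descriptions of both ingredients: the first-order deformation $\xi_v$ (as in \S\ref{smoothvar}) and the obstruction cocycle (as in the proof of Theorem~\ref{smoothobstr}), for the tiny extension $k[\epsilon'] \to k[\epsilon]$ sending $\epsilon'$ to $\epsilon$, whose kernel is $\generate{\epsilon'^{2}} \simeq k$.

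First I would fix an open affine cover $\mathcal{U} = \{U_i\}_{i \in I}$ of $X_0$ and represent $v \in \H^1(X_0, T_{X_0})$ by a \v{C}ech $1$-cocycle $\{a_{ij}\}$, so that $a_{ij} + a_{jk} = a_{ik}$ on $U_{ijk}$. By \S\ref{smoothvar}, $\xi_v$ is obtained by gluing the trivial deformations $U_i \times \spec k[\epsilon]$ along $U_{ij} \times \spec k[\epsilon]$ via the automorphisms $\theta_{ij}$ which, on structure sheaves, act as $\id + \epsilon a_{ij}$ (with $a_{ij}$ extended $k[\epsilon]$-linearly). Following the recipe in the proof of Theorem~\ref{smoothobstr}, I would take the trivial deformations $Y_i = U_i \times \spec k[\epsilon']$ as local liftings, and take the gluing isomorphisms $\nu_{ij}$ to be the natural lifts $\tilde\theta_{ij}$ of $\theta_{ij}$ acting on structure sheaves by $\exp(\epsilon' a_{ij}) = \id + \epsilon' a_{ij} + \tfrac{1}{2}\epsilon'^{2} a_{ij}^{2}$; here the hypothesis $\car k \neq 2$ enters, and $\tilde\theta_{ij}$ visibly reduces to $\theta_{ij}$ modulo $\epsilon'^{2}$.

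The heart of the matter is the computation of $\tilde\theta_{ijk} = \tilde\theta_{ij} \circ \tilde\theta_{jk} \circ \tilde\theta_{ik}^{-1}$. An automorphism of $R \otimes_k k[\epsilon']$ ($R$ affine) that is trivial modulo $\epsilon'$ has the form $\id + \epsilon' D_1 + \epsilon'^{2} D_2$ with $D_1$ a derivation, and composing two such automorphisms multiplies the pairs by the rule $(D_1, D_2) \cdot (E_1, E_2) = (D_1 + E_1,\, D_2 + E_2 + D_1 \circ E_1)$. Feeding in $\tilde\theta_{ij} = (a_{ij}, \tfrac12 a_{ij}^{2})$, computing $\tilde\theta_{ik}^{-1} = (-a_{ik}, \tfrac12 a_{ik}^{2})$, and using the cocycle relation $a_{ik} = a_{ij} + a_{jk}$ on $U_{ijk}$, a short calculation shows that every term cancels except one, leaving $\tilde\theta_{ijk} = \id + \tfrac12 \epsilon'^{2} [a_{ij}, a_{jk}]$, where $[a_{ij}, a_{jk}] = a_{ij} \circ a_{jk} - a_{jk} \circ a_{ij}$ is the Lie bracket of the vector fields $a_{ij}, a_{jk}$ on $U_{ijk}$. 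Under the identifications of Propositions \ref{inf} and \ref{infinit} (and Exercise~\ref{sum.comp}), this infinitesimal automorphism corresponds to the section $\tfrac12 [a_{ij}, a_{jk}] \in \Gamma(U_{ijk}, T_{X_0})$, so the \v{C}ech $2$-cocycle $\{e_{ijk}\}$ representing $\omega(\xi_v, k[\epsilon'])$ is $\{\tfrac12 [a_{ij}, a_{jk}]\}$. By the definition of the quadratic form $[v, v]$ recalled just before the statement, this cocycle represents $\tfrac12 [v, v]$, and hence $\Phi(v) = \tfrac12 [v, v]$.

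The main obstacle is not the algebra — the cancellation in $\tilde\theta_{ijk}$ is elementary once the multiplication rule for jets of automorphisms is recorded — but the bookkeeping: one must check that the particular local liftings $Y_i$ and gluing maps $\tilde\theta_{ij}$ form a legitimate choice in the construction of Theorem~\ref{smoothobstr} (which already guarantees independence of the result from all such choices and from $\mathcal{U}$), and keep the contravariance and sign conventions of \S\ref{smoothvar} straight so that the bracket appears with the correct sign.
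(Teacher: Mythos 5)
Your proposal is correct and follows essentially the same route as the paper's proof: represent $v$ by a \v{C}ech cocycle $\{a_{ij}\}$, lift the gluing automorphisms $\theta_{ij}$ to $\id + \epsilon' a_{ij} + \tfrac12\epsilon'^{2}a_{ij}^{2}$ (which is exactly the paper's $\phi_{ij}$), and compute the triple composite to extract $\tfrac12[a_{ij},a_{jk}]$. The only cosmetic difference is that you package the algebra via the explicit jet-composition rule $(D_1,D_2)\cdot(E_1,E_2)=(D_1+E_1,\,D_2+E_2+D_1\circ E_1)$, which makes the cancellation slightly more systematic than the paper's ``is easily seen to be.''
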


\begin{proof}
With the notation of Section~\ref{captang}, let us take an element $v\in \H^1(X_0,T_{X_0})$, a first-order deformation $X_v$ corresponding to $v$, and the associated 1-cocycle $\{d_{ij}\}_{i,j \in I}$ of $T_{X_0}$ for an open affine cover $\mathcal{U}=\{U_i\}_{i \in I}$ of $X_0$. Recall that $d_{ij}$ is the derivation associated with the infinitesimal automorphism $\theta_{ij}$ of $U_{ij}\times_{\spec k}\spec \dual$; in other words we have
$$
\theta_{ij}^\sharp(f+\eps g)=f+\eps(d_{ij}(f)+ g)
$$
where we see $\O_{U_{ij}}\otimes_k \dual$ as $\O_{U_{ij}}\oplus(\langle\eps\rangle\otimes_k\O_{U_{ij}})$ and $f$, $g$ are sections of $\O_{U_{ij}}$ (see Proposition~\ref{infinit}).

Now let us calculate the obstruction to lifting $X_v$ to $k[\eps']\eqdef k[t]/(t^3)$ (where $\eps'=[t]$) constructed in the proof of Theorem~\ref{smoothobstr}. In this case since $X_v|_{U_i}$ is isomorphic to the trivial deformation $U_i\times_{\spec k}\spec \dual$ of $U_i$ over $\dual$, we can take $Y_i=U_i\times_{\spec k} \spec k[\eps']$ as liftings to $k[\eps']$. Moreover one checks at once that the automorphisms of sheaves of algebras $\phi_{ij}\colon \O_{U_{ij}}\otimes_k k[\eps'] \to \O_{U_{ij}}\otimes_k k[\eps']$ defined as
$$
\phi_{ij}(f+\eps'g+\eps'^2h)=f+\eps'(d_{ij}(f)+g)+\eps'^2\left(d_{ij}(g)+\frac{1}{2}d_{ij}^2(f)+h\right)
$$
where we see $\O_{U_{ij}}\otimes_k k[\eps']$ as $\O_{U_{ij}}\oplus (\langle\eps'\rangle\otimes_k \O_{U_{ij}})\oplus (\langle\eps'^2\rangle\otimes_k \O_{U_{ij}})$ and $d_{ij}^2$ is $d_{ij}\circ d_{ij}$, give isomorphisms $\nu_{ij}\colon Y_j|_{U_{ij}}\simeq Y_i|_{U_{ij}}$ inducing the identity on $X_v|_{U_{ij}}$.

The element $e_{ijk} \in \langle\eps'^2\rangle\otimes_k \Inf(U_{ijk}) \simeq \der_k(A_{ijk},A_{ijk}) \simeq \Gamma(U_{ijk},T_{X_0})$ associated with the composite $\nu_{ijk}=\nu_{ij}\circ \nu_{jk}\circ \nu_{ik}^{-1}$ is easily seen to be
$$
\frac{1}{2}\left(d_{ij}\circ d_{jk}-d_{jk}\circ d_{ij}\right)=\frac{1}{2}[d_{ij},d_{jk}]\,.
$$
Finally this gives
$$
\omega(X_v,k[\eps'])=[\{e_{ijk}\}_{i,j,k \in I}]=\left[ \left\{ \frac{1}{2}[d_{ij},d_{jk}] \right\}_{i,j,k \in I} \right]=\frac{1}{2}[v,v]
$$
as claimed.
\end{proof}

Because of the Proposition just proved, to give an example of an obstructed variety it suffices to find one such that the map $\Phi$ above is nonzero, or equivalently such that the quadratic form given by the Lie product is not identically zero. We do this in the following exercise.

\begin{ex}
Take $X_0=\P^1_\mathbb{C} \times_{\spec \mathbb{C}} Y$ where $Y$ is an abelian variety over $\mathbb{C}$ of dimension at least $2$.
\begin{itemize}
 \item Show that the product
$$
\H^1(Y,\O_Y)\otimes_\mathbb{C} \H^1(Y,\O_Y)\arr \H^2(Y,\O_Y)
$$
induced by tensor product is not identically zero, and the same is true for the product
$$
\H^0(\P^1_\mathbb{C},T_{\P^1_\mathbb{C}})\otimes_\mathbb{C} \H^0(\P^1_\mathbb{C},T_{\P^1_\mathbb{C}}) \arr \H^0(\P^1_\mathbb{C},T_{\P^1_\mathbb{C}})
$$
induced by the Lie product.
\item Take $a,b \in \H^1(Y,\O_Y)$ such that $a b\neq 0$ and $D,E \in \H^0(\P^1_\mathbb{C},T_{\P^1_\mathbb{C}})$ such that $[D,E]\neq 0$, and show that they naturally give elements $aD$ and $bE$ of $\H^1(X_0,T_{X_0})$. (\emph{Hint}: use \v{C}ech cohomology, pull the sections back to $X$, and then multiply them.)
\item Check that $[aD,bE]=(ab)[D,E]$, and then use the K\"{u}nneth formula to show that $[aD,bE]\neq 0$.
\end{itemize}
\end{ex}

\subsubsection{Closed subschemes}\label{closedobs}

Let $X$ be a separated scheme over $\Lambda$. Assume that $X$ is flat over $\Lambda$. Consider the deformation category $\hilb^X\to \artl^\op$. Take $Z_0 \in \hilb^X(k)$, call $I_0$ its sheaf of ideals, and $\mathcal{N}_0$ the normal sheaf
   \[
   \mathcal{N}_0 \eqdef \Homsh_{\O_{Z_{0}}}(I_0/I_0^2,\O_{Z_0})\,.
   \]


\begin{thm}
Assume that the subscheme $Z_{0}$ of $X_{0}$ is a local complete intersection. Then there is an obstruction theory $(V_\omega, \omega)$ for $Z_0$, with obstruction space
$$
V_\omega=\H^1(Z_0,\mathcal{N}_0)\,.
$$
\end{thm}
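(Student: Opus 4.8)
The plan is to adapt the \v{C}ech\dash cohomological construction carried out in the proof of Theorem~\ref{smoothobstr} for smooth schemes, replacing the sheaf of infinitesimal automorphisms used there by the \emph{sheaf of local embedded liftings}, which in the present situation is a torsor under $I \otimes_k \mathcal{N}_0$. Fix a small extension $A' \to A$ in $\artl$ with kernel $I$, and an object $Z \in \hilb^X_{Z_0}(A)$. Since the sheaf of ideals of $X_0$ in $X_{A'}$ is nilpotent, $|X_{A'}| = |X_A| = |X_0|$, so I may choose a cover $\mathcal{U} = \{U_i\}$ of $X$ by affine opens that is simultaneously an affine cover of $X_0$, $X_A$ and $X_{A'}$; write $U_i^B$ for the affine open of $X_B$ lying over $U_i$.

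The first step is to produce, for each $i$, a lifting $Z_i' \subseteq U_i^{A'}$ of the restriction $Z|_{U_i^A}$, flat over $A'$. This is precisely where the local complete intersection hypothesis is used: locally $Z_0$ is cut out in $U_i^0$ by a regular sequence; such a sequence lifts through $U_i^A$ to a regular sequence cutting out $Z|_{U_i^A}$, and lifting those generators arbitrarily to $U_i^{A'}$ defines an ideal $J_i'$ for which $\O_{U_i^{A'}}/J_i'$ is flat over $A'$, the flatness being checked by the local criterion (Theorem~\ref{local-flatness}) because the chosen generators still form a regular sequence. In other words, embedded deformations of an affine l.c.i.\ are unobstructed.

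Second, on a double intersection $U_{ij}$ the two restrictions $Z_i'|_{U_{ij}}$ and $Z_j'|_{U_{ij}}$ are liftings of $Z|_{U_{ij}^A}$, and this set of liftings is nonempty; applying Theorem~\ref{actionthm} to the deformation category $\hilb^{U_{ij}}$, whose tangent space at $Z_0|_{U_{ij}}$ is $\Gamma(U_{ij},\mathcal{N}_0)$ by Theorem~\ref{closed}, yields a well-defined difference
$$
d_{ij} \eqdef [Z_j'|_{U_{ij}}] - [Z_i'|_{U_{ij}}] \in \Gamma\bigl(U_{ij}, I \otimes_k \mathcal{N}_0\bigr)\,.
$$
Functoriality of the action under restriction to smaller affine opens (Proposition~\ref{actionfunct}, applied to the restriction morphisms between the categories $\hilb^{U}$, whose differentials are the restriction maps on sections of $\mathcal{N}_0$) shows that these classes restrict compatibly, and then freeness of the action gives the cocycle relation $d_{ij} + d_{jk} = d_{ik}$ on $U_{ijk}$. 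Hence $\{d_{ij}\}$ is a \v{C}ech $1$-cocycle and defines a class
$$
\omega(Z,A') \in \check{H}^1(\mathcal{U}, I \otimes_k \mathcal{N}_0) \simeq \H^1(Z_0, I \otimes_k \mathcal{N}_0) \simeq I \otimes_k \H^1(Z_0, \mathcal{N}_0)\,,
$$
the identification of \v{C}ech with sheaf cohomology being legitimate since $X$, hence $Z_0$, is separated and $\mathcal{N}_0$ is quasi\dash coherent. Exactly as in Theorem~\ref{smoothobstr} one checks that $\omega(Z,A')$ is independent of $\mathcal{U}$ and of the chosen $Z_i'$, and that it satisfies the functoriality property for homomorphisms of small extensions (using Proposition~\ref{funct}). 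Finally $\omega(Z,A') = 0$ if and only if $\{d_{ij}\}$ is a coboundary, i.e.\ the $Z_i'$ can be modified by sections $e_i \in \Gamma(U_i, I \otimes_k \mathcal{N}_0)$ so as to agree on overlaps, i.e.\ their ideal sheaves glue to the ideal sheaf of a global lifting $Z' \subseteq X_{A'}$ of $Z$; this gives the required equivalence, so $(\H^1(Z_0,\mathcal{N}_0),\omega)$ is an obstruction theory for $Z_0$.

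The main obstacle is the local statement in the first step --- that over an affine open a flat deformation over $A$ of an l.c.i.\ subscheme lifts flatly over $A'$. This genuinely requires the l.c.i.\ hypothesis, since local embedded deformations can otherwise be obstructed (and in the general case the obstruction group becomes a hyper-$\Ext$ group mixing local $\Extsh$ sheaves with global cohomology), and it is proved by the regular-sequence lifting argument together with the local flatness criterion. Everything else is the bookkeeping of \v{C}ech cocycles and torsors, entirely parallel to the smooth-scheme case.
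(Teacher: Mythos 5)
Your proposal is correct and follows essentially the same approach as the paper: local unobstructedness of embedded l.c.i.\ deformations via regular-sequence lifting and the local flatness criterion, followed by the \v{C}ech-cocycle bookkeeping that packages the differences of local liftings into a class in $\H^1(Z_0, I\otimes_k\mathcal{N}_0)$. The paper is somewhat more careful in the local step (its Lemma on regular sequences establishes \emph{both} that every lifting of an l.c.i.\ is again cut out by a lifted regular sequence --- which you implicitly need when you say the regular sequence defining $Z_0$ "lifts through $U_i^A$ to a regular sequence cutting out $Z|_{U_i^A}$" --- and that such liftings are flat, using a Koszul-complex spectral sequence argument), but your overall strategy, the definition of $d_{ij}$ as a difference of liftings, the use of freeness of the action for the cocycle condition, and the characterization of vanishing in terms of a global lifting are all the same as in the paper's proof.
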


\begin{rmk}
This implies, for example, that if $X_0$ is affine, the deformation theory is unobstructed. It is easy to give examples in which $X_0$ is affine, the subscheme $Z_{0}$ not a local complete intersection and the deformation theory is obstructed; this shows that the formula of the Theorem can not be correct in the general case.

In general one can show that there is an obstruction theory with vector space $\Ext^{1}_{\O_{X_0}}(I_{0}, \O_{X_{0}})$; this space contains $\H^1(Z_0,\mathcal{N}_0)$, but is often larger, even when $Z_{0}$ is a local complete intersection. This gives an example of a deformation problem with two different ``natural'' obstruction theories.
\end{rmk}

\begin{proof} Let us start with a Lemma, which takes care of the local situation. Let $A'\to A$ be a surjection in $\artl$ with kernel $I$ and take and object $Z \in \hilb^X_{Z_0}(A)$. In the Lemma that follows, we will say that a sequence of elements $f_{1}$, \dots,~$f_{r}$ of a ring $R$ \emph{is a regular sequence} if its image in the localization $R_{p}$ is a regular sequence, for all $p \in \spec R$ containing $(f_{1}, \dots, f_{r})$; this is equivalent to the exactness of the Koszul complex of the $f_{i}$. From a geometric point of view this is more useful that the standard definition of regular sequence in a non-local ring.

\begin{lemma}\label{lem:lci}
Assume that $X = \spec R$ is affine, and that the ideal of $Z$ is of the form $(f_{1}, \dots, f_{r})$, where $f_{1}$, \dots,~$f_{r}$ is a regular sequence in $R_{A}$, where $R_{A} \eqdef R \otimes_{\Lambda} A$.

Let $f'_{1}$, \dots,~$f'_{r}$ be liftings of $f_{1}$, \dots,~$f_{r}$ to $R_{A'}$. Then $R_{A'}/(f'_{1}, \dots, f'_{r})$ is flat over $A'$, its spectrum is a lifting of $Z$ to $A'$ and $f'_{1}$, \dots,~$f'_{r}$ is a regular sequence in $R_{A'}$.

Conversely, every lifting of $Z$ to $A'$ is of this form.
\end{lemma}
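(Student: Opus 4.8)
The plan is to reduce to the case of a small extension and then carry out a Koszul-complex computation, using throughout that the Koszul complex of a regular sequence is a flat resolution of the corresponding quotient and hence computes $\Tor$. By Proposition~\ref{comp.small} I would factor $A' \arr A$ as a composite of tiny extensions $A' = A_{0} \arr A_{1} \arr \cdots \arr A_{n} = A$ and induct on $n$, the case $n = 0$ being trivial. Given liftings $f'_{1}, \dots, f'_{r} \in R_{A'}$ of the $f_{i}$, their images in $R_{A_{1}}$ are liftings of the $f_{i}$; by the inductive hypothesis applied to $A_{1} \arr A$ they cut out a lifting $\spec\bigl(R_{A_{1}}/(g_{1}, \dots, g_{r})\bigr)$ of $Z$ over $A_{1}$ with $g_{1}, \dots, g_{r}$ a regular sequence in $R_{A_{1}}$, and one then applies the small-extension case to $A' \arr A_{1}$, with the $f'_{i}$ now viewed as liftings of the $g_{i}$, and composes base changes; the converse is handled the same way, restricting a given lifting $Z' \subseteq X_{A'}$ to $A_{1}$ first. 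So from now on $A' \arr A$ is a small extension with kernel $I$, a finite-dimensional $k$-vector space; write $R_{0} \eqdef R \otimes_{\Lambda} k$ and $\bar f_{i}$ for the image of $f_{i}$ in $R_{0}$.

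The one useful preliminary is that $\bar f_{1}, \dots, \bar f_{r}$ is a regular sequence in $R_{0}$. Since $X$ is flat over $\Lambda$, $R_{A} = R \otimes_{\Lambda} A$ is flat over $A$, so the Koszul complex $K_{\bullet}(f_{\bullet}; R_{A})$ — which resolves $R_{A}/(f_{\bullet}) = \O_{Z}$ because $f_{\bullet}$ is regular in $R_{A}$ — consists of $A$-flat modules; hence
\[
\Tor^{A}_{i}(\O_{Z}, k) \simeq H_{i}\bigl(K_{\bullet}(f_{\bullet}; R_{A}) \otimes_{A} k\bigr) = H_{i}\bigl(K_{\bullet}(\bar f_{\bullet}; R_{0})\bigr).
\]
The left-hand side vanishes for $i > 0$ because $\O_{Z}$ is flat over $A$ (this is where the hypothesis $Z \in \hilb^{X}_{Z_{0}}(A)$ enters), so $\bar f_{\bullet}$ is a regular sequence in $R_{0}$.

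For the main point, as $R$ is flat over $\Lambda$ the sequence $0 \arr I \arr A' \arr A \arr 0$ stays exact after $-\otimes_{\Lambda} R$, giving $0 \arr I \otimes_{k} R_{0} \arr R_{A'} \arr R_{A} \arr 0$, and tensoring this over $R_{A'}$ with the free complex $K'_{\bullet} \eqdef K_{\bullet}(f'_{\bullet}; R_{A'})$ yields a short exact sequence of complexes
\[
0 \arr I \otimes_{k} K_{\bullet}(\bar f_{\bullet}; R_{0}) \arr K'_{\bullet} \arr K_{\bullet}(f_{\bullet}; R_{A}) \arr 0 .
\]
In the long exact homology sequence both outer terms vanish in positive degrees (by the preliminary and by hypothesis), so $H_{i}(K'_{\bullet}) = 0$ for $i > 0$; that is, $f'_{\bullet}$ is a regular sequence in $R_{A'}$. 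Then $K'_{\bullet}$ is a free resolution of $R_{A'}/(f'_{\bullet})$ by $A'$-flat modules, so $\Tor^{A'}_{i}(R_{A'}/(f'_{\bullet}), k) \simeq H_{i}(K'_{\bullet} \otimes_{A'} k) = H_{i}(K_{\bullet}(\bar f_{\bullet}; R_{0})) = 0$ for $i > 0$, and the local criterion of flatness (Theorem~\ref{local-flatness}) gives that $R_{A'}/(f'_{\bullet})$ is flat over $A'$. Since $(R_{A'}/(f'_{\bullet})) \otimes_{A'} A = R_{A}/(f_{\bullet}) = \O_{Z}$, its spectrum is a closed subscheme of $X_{A'}$ restricting to $Z$, i.e. a lifting of $Z$ to $A'$.

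For the converse, let $Z' \subseteq X_{A'}$ be a lifting of $Z$ with ideal $J' \subseteq R_{A'}$; the identification $\O_{Z'} \otimes_{A'} A = \O_{Z}$ together with $R_{A'}/IR_{A'} = R_{A}$ forces the image of $J'$ in $R_{A}$ to be $(f_{\bullet})$, so one may choose liftings $f'_{i} \in J'$ of the $f_{i}$. Then $(f'_{\bullet}) \subseteq J'$ induces a surjection $R_{A'}/(f'_{\bullet}) \twoheadrightarrow \O_{Z'}$ between $A'$-flat modules (the source by the forward part, the target since $Z'$ is a lifting) which becomes an isomorphism after $-\otimes_{A'} A$; its kernel $K$ therefore satisfies $K = IK$, hence $K = I^{n}K = 0$ because $I$ is nilpotent, and $J' = (f'_{\bullet})$. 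The main obstacle is not conceptual but bookkeeping: keeping straight which modules are flat over which ring so that the two Koszul complexes genuinely compute the asserted $\Tor$ groups, and verifying the reductions to small extensions interact correctly with base change — once that is organized, the nilpotence argument in the converse and the rest are routine.
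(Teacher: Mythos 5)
Your proof is correct, but it follows a genuinely different route from the paper's. You begin by reducing to tiny extensions via Proposition~\ref{comp.small} and inducting, so that the kernel $I$ is a $k$-vector space, and you establish the regularity of $f'_{1}, \dots, f'_{r}$ \emph{first}: tensoring $0 \to I\otimes_k (R\otimes_\Lambda k) \to R_{A'} \to R_A \to 0$ with the free complex $K'_\bullet$ gives a short exact sequence of Koszul complexes whose outer terms you show are acyclic in positive degree (the left one once you know, by the preliminary computation of $\Tor^A_i(\O_Z,k)$ via the Koszul resolution, that the images of the $f_i$ in $R\otimes_\Lambda k$ form a regular sequence), and the long exact homology sequence then kills $H_i(K'_\bullet)$ for $i>0$. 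Flatness of $R_{A'}/(f'_\bullet)$ follows by feeding the $A'$-flat resolution $K'_\bullet$ into the local flatness criterion, and the converse is a Nakayama argument on the kernel of $R_{A'}/(f'_\bullet)\twoheadrightarrow \O_{Z'}$. The paper instead treats the arbitrary surjection $A'\to A$ in one pass and in the opposite logical order: it first proves the criterion that $S'$ is $A'$-flat exactly when $J'/IJ'\to J$ is an isomorphism, establishes this for $J'=(f'_1,\dots,f'_r)$ by lifting the standard Koszul relations among the $f_i$ to $R_{A'}$, and only afterwards deduces regularity of the $f'_i$, \emph{assuming} flatness of $S'$, via the hyperhomology spectral sequence $E^2_{pq}=\tor^{A'}_p\bigl(H_q(K'_\bullet),A\bigr)\Longrightarrow H_{p+q}(K_\bullet)$. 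Your route is more elementary — no spectral sequence — at the cost of the reduction to small extensions and the extra preliminary step; the paper's route is shorter and handles a general nilpotent kernel uniformly, at the cost of invoking the spectral sequence. Both reduce regularity to a $\Tor$-vanishing statement through the Koszul complex, and both silently rely on $R_{A'}$ being flat over $A'$, i.e.\ on $R$ being flat over $\Lambda$, a hypothesis carried from the surrounding discussion.
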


This implies in particular that local complete intersections can always be lifted locally.

\begin{proof}
Let $Z' = \spec S'$ be a closed subscheme of $X_{A'} \eqdef X\times_{\spec \Lambda}\spec A'$ restricting to $Z$ over $A$. The ideal of $Z$ is $J \eqdef (f_{1}, \dots, f_{r})$; call $J'\subseteq R_{A'}$ the ideal of $Z'$. Then we claim that the natural surjection $J'/IJ' \arr J$ is an isomorphism if and only if $S'$ is flat over $A'$. Tensoring the exact sequence
   \[
   \xymatrix{
   0 \ar[r] &J' \ar[r] &R_{A'} \ar[r] &S' \ar[r] &0
   }
   \]
with $A$, and using the fact that $R_{A'}$ is flat over $A'$, we see that $\tor_{1}^{A'}(S', A)$ is the kernel of the homomorphism $J'/IJ' \arr J$, and then the result follows from the local criterion of flatness (Theorem~\ref{local-flatness}).

Now, assume that $Z'$ is flat over $A'$, so it is a lifting of $Z$. Let $f'_{1}$, \dots,~$f'_{r}$ be liftings of $f_{1}$, \dots,~$f_{r}$ to $J'$. From the fact that $J'/IJ' = J$ and the fact that $I$ is nilpotent we conclude that the $f'_{i}$ generate $J'$.

Let us check that $f'_{1}$, \dots,~$f'_{r}$ is a regular sequence. Set $S \eqdef R_{A}/(f_{1}, \dots, f_{r})$ and $S' \eqdef R_{A'}/(f'_{1}, \dots, f'_{r})$. Let $K'_\bullet$ be the Koszul complex
of 
$f'_1,\dots,f'_r$; then
$K_\bullet = K'_\bullet \otimes _{A'} A$ is the Koszul
complex of $f_1,\dots,f_r$. We have a homology spectral sequence
$$
	E^2_{pq} = \tor^{A'}_p\bigl(\H_q(K'_\bullet),A\bigr)\Longrightarrow\H_{p+q}
	(K_\bullet) = 
	\begin{cases}
	0
	&\text{if }p+q>0\\ S &\text{if }p+q = 0\,.
	\end{cases}
$$
Notice that
$E^2_{p0} = 0$ for $p>0$, because
$\H_0(K'_\bullet) = S'$ is flat over $A'$.
From this, and the fact that the abutment is 0 in degree 1, we
get that
$\H_1(K'_\bullet)\otimes_{A'}A = E^2_{01} = 0$. This
implies that
$\H_1(K'_\bullet)$ is 0, and hence $E^2_{p1} = 0$ for all
$p$. Analogously one proves that
$\H_2(K'_\bullet) = 0$, and by induction on $q$ that
$\H_q(K'_\bullet) = 0$ for all
$q>0$. This proves that $f'_1,\dots,f'_r$ is a regular sequence.

Conversely, start from liftings $f'_{1}$, \dots,~$f'_{r}$ of $f_{1}$, \dots,~$f_{r}$ to $S'$, and set $J' \eqdef (f'_{1}, \dots, f'_{r})$ and $S' \eqdef R_{A'}/J'$. We claim that $\spec S'$ is a lifting of $Z$ to $A'$, that is, that $S'$ is flat over $A'$ (by the argument above, this implies that $f'_{1}$, \dots,~$f'_{r}$ is a regular sequence). This is equivalent to saying that $J'/IJ' = J$.

Let $\sum_i a'_if'_i$ be an element of $J'$ whose image $\sum_ia_if_i$ in $J$ is 0. Then,
because $f_1,\dots,f_r$ is a regular sequence we can write
$(a_1,\ldots,a_r)\in R_{A'}^n$ as a linear combination of
standard relations of the form
   \[
   (0,\ldots,0,\hskip-6pt\underbrace{f_j}_{i^{\rm th}\ \rm
   place}\hskip-6pt,0,\ldots,0,\hskip-6pt\underbrace{-f_i}_{j^{\rm th}\
   \rm place}\hskip-6pt,0,\ldots,0).
   \]
These lift to relations
   \[(0,\ldots,0,\hskip-6pt\underbrace{f'_j}_{i^{\rm th}\ \rm
   place}\hskip-6pt,0,\ldots,0,\hskip-6pt\underbrace{-f'_i}_{j^{\rm th}\
   \rm place}\hskip-6pt,0,\ldots,0)
   \]
among the $f'_{i}$. Hence $(a'_1,\ldots,a'_r)$ can be written as a relation
among the $f'_i$, plus an element $(b'_1,\ldots,b'_r)\in
(IR_{A'})^n$, so that $\sum_i a'_if'_i = \sum_i b'_if'_i\in IJ'$.
\end{proof}

In particular, by applying this to the surjection $A \arr k$, we see that any lifting of $Z_{0}$ is a local complete intersection.

Now take a family $\mathcal{U}=\{U_i\}_{i \in I}$ of open subschemes of $X$, such that $Z$ is a complete intersection in each of the corresponding open affine subschemes $V_i=U_i \times_{\spec \Lambda}\spec A$ of $X_A$ and $\{V_i\}_{i \in I}$ is a cover of $X_A$. By the Lemma, for each index $i$ we can choose a lifting $Z_i' \subseteq W_i=U_i\times_{\spec \Lambda}\spec A'$ of $Z\cap V_i\subseteq V_i$.

For each pair of indexes $i,j$ the restrictions $Z_i'\cap W_{ij}$ and $Z_j'\cap W_{ij}$ are both liftings of $Z\cap V_{ij}\subseteq V_{ij}$, so by Theorem~\ref{actionthm} we have a unique element
$$
h_{ij} \in I\otimes_k T_{Z_0\cap U_{ij}}\hilb^{U_{ij}}\simeq \Gamma(Z_0\cap U_{ij},I\otimes_k \mathcal{N}_0)
$$
such that
$$
(Z_i' \cap W_{ij}\subseteq W_{ij}) + h_{ij} = Z_j'\cap W_{ij}\subseteq W_{ij}\,.
$$
From the fact that the action of $\Gamma(Z_0\cap U_{ij},I\otimes_k \mathcal{N}_0)$ on the liftings is free (and compatible with restriction to open subsets, as can be easily checked), for every triplet of indices $i,j,k$ we have
$$
h_{ij}+h_{jk}=h_{ik}
$$
so that $\{h_{ij}\}_{i,j \in I}$ is a \v{C}ech 1-cocycle for the sheaf $I\otimes_k \mathcal{N}_0$ (notice that $\mathcal{U}\cap Z_0=\{Z_0\cap U_i\}_{i \in I}$ is an open affine cover of $Z_0$).

Using a similar reasoning one checks that the cohomology class of this cocyle is independent of the choice of the liftings $Z_i'$. We call this class $\omega(Z\subseteq X_A,A')$. As in the preceding cases it is also easy to see that this class does not depend on the choice of the open cover $\{U_i\}_{i \in I}$, and that we have a lifting $Z'\subseteq X_{A'}$ if and only if $\omega(Z\subseteq X_A,A')=0$.

In fact this corresponds exactly to the situation in which the restrictions of the liftings $Z_i'$ on the intersections $W_{ij}$ are compatible, and can be used to define a global lifting (notice that we do not have infinitesimal automorphisms, so in this case these restrictions are equal, and not only isomorphic).

Finally the functoriality property is immediate from that of the action of the tangent space.
\end{proof}

\begin{proof}[Proof of Proposition~\ref{vanish}]\label{prf-vanish}
The subscheme $Z_0\subseteq \P^n_k$ has an obstruction theory with space $\H^1(Z_0,\mathcal{N}_0)$, which in this case is trivial, as we already saw in the proof of Proposition~\ref{hypers}. In other words $Z_0$ is unobstructed, so any $Z\in \hilb^{\P^n_k}_{Z_0}(A)$ can be lifted along any small extension $A'\to A$.
\end{proof}

\subsubsection{Quasi-coherent sheaves}

Now we turn to the case of deformations of quasi-coherent sheaves. Take $\Lambda=k$ and the deformation category $\qcoh^X\to \art^\op$, and consider a quasi-coherent sheaf $\E_0 \in \qcoh^X(k)$ (notice that $X_0=X$ in this case).

\begin{thm}\label{shobs}
There is an obstruction theory $(V_\omega, \omega)$ for $\E_0$, with obstruction space
$$
V_\omega=\Ext^2_{\O_X}(\E_0,\E_0)\,.
$$
\end{thm}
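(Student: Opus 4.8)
The plan is to construct the obstruction map $\omega$ by a local-to-global argument, in the same spirit as the proof of Theorem~\ref{smoothobstr} and the obstruction theorem for closed subschemes: first produce the obstruction on the affine pieces of a cover of $X$, then patch the local data. The new feature is that, for sheaves, the tangent and obstruction spaces $\Ext^1_{\O_X}(\E_0,\E_0)$ and $\Ext^2_{\O_X}(\E_0,\E_0)$ are genuinely \emph{global} Ext groups rather than the global sections of a single sheaf, so the patched object naturally lives in the hypercohomology of the complex $\mathbf{R}\Homsh_{\O_X}(\E_0,\E_0)$, which the local-to-global spectral sequence $\H^p\bigl(X,\Extsh^q_{\O_X}(\E_0,\E_0)\bigr)\Rightarrow\Ext^{p+q}_{\O_X}(\E_0,\E_0)$ identifies with $\Ext^2_{\O_X}(\E_0,\E_0)$ in total degree $2$.

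First I would settle the affine case. Let $U=\spec R\subseteq X$ be affine, so that $\E|_U$ is an $R_A$-module $N$, flat over $A$, with $N\otimes_A k\simeq N_0$. Choose a free resolution $P_\bullet\to N$ over $R_A$; flatness of $N$ over $A$ forces $P_\bullet\otimes_A k\to N_0$ to be again a free resolution. One lifts the $P_i$ and the differentials $d_i$ to free $R_{A'}$-modules $P_i'$ and maps $d_i'$; since $\m_{A'}I=0$, the ``curvature'' $d_{i-1}'d_i'$ takes values in $I\!\cdot\! P_{i-2}'\simeq I\otimes_k(P_{i-2}\otimes_A k)$ and factors through $P_i\otimes_A k$. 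The resulting cochain is a $2$-cocycle in the total Hom-complex $\Hom^\bullet_R\bigl(P_\bullet\otimes_A k,\;I\otimes_k(P_\bullet\otimes_A k)\bigr)$, so it defines a class in $\Ext^2_R(N_0,I\otimes_k N_0)\simeq I\otimes_k\Ext^2_R(N_0,N_0)$, independent of the lifts; this is the local obstruction, and it vanishes precisely when the $d_i'$ can be adjusted to make $P_\bullet'$ a complex — in which case a spectral-sequence argument identical to the one in Lemma~\ref{lem:lci} shows that $P_\bullet'$ is a resolution of a flat $A'$-module lifting $N$. Checking naturality of this construction in $U$, the local obstructions sheafify to a global section of $I\otimes_k\Extsh^2_{\O_X}(\E_0,\E_0)$, the first ``layer'' of $\omega(\E,A')$; if it is nonzero there is no global lifting, since any global lifting would restrict to local ones.

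When that section vanishes, I would fix an affine cover $\mathcal U=\{U_i\}$, choose local liftings $\E_i'$, and extract two further layers exactly as in the proof of Theorem~\ref{smoothobstr}. On $U_{ij}$ the liftings $\E_i'|_{U_{ij}}$ and $\E_j'|_{U_{ij}}$ of $\E|_{U_{ij}}$ differ, by Theorem~\ref{actionthm}, by an element $\tau_{ij}\in\Gamma\bigl(U_{ij},\,I\otimes_k\Extsh^1_{\O_X}(\E_0,\E_0)\bigr)$, and the torsor structure makes $\{\tau_{ij}\}$ a \v{C}ech $1$-cocycle — the $\H^1(\Extsh^1)$-layer. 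If, after correcting the $\E_i'$ by local $\Ext^1$-classes, this layer vanishes, the $\E_i'$ become isomorphic on overlaps; choosing isomorphisms $\phi_{ij}$ lifting the identity, the automorphisms $\phi_{ij}\phi_{jk}\phi_{ik}^{-1}$ of $\E_i'|_{U_{ijk}}$ correspond (via Proposition~\ref{inf}) to a \v{C}ech $2$-cocycle $\{c_{ijk}\}$ for $I\otimes_k\mathcal{E}nd_{\O_X}(\E_0)$, the $\H^2(\Homsh)$-layer, whose vanishing is equivalent to the $\E_i'$ gluing to a global lifting $\E'$. The heart of the proof is to organize these three pieces of data into a single \v{C}ech–hypercohomology $2$-cocycle for $\mathbf{R}\Homsh_{\O_X}(\E_0,\E_0)$ — e.g. by working with the \v{C}ech complex of an injective resolution of $\E_0$ — whose class $\omega(\E,A')\in I\otimes_k\Ext^2_{\O_X}(\E_0,\E_0)$ is independent of the cover and of all intermediate choices, vanishes exactly when $\E$ admits a lifting to $X_{A'}$, and is functorial in the small extension (the last point because every step in the construction is natural in $A'\to A$).

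I expect the main obstacle to be precisely this last step of assembly: the three obstruction layers only patch because the relevant differentials of the local-to-global spectral sequence annihilate them, and welding them into one well-defined hypercohomology cocycle, together with verifying independence of choices and functoriality, is where the genuine work lies, even though each individual ingredient is routine and parallels arguments already given. A slicker but less elementary route, in the style of what was done for Theorem~\ref{schemesobstr}, is to sidestep the \v{C}ech gymnastics altogether: deformations of the flat sheaf $\E$ along the square-zero thickening $X_{A'}\to X_A$ are controlled by $\mathbf{R}\Homsh_{\O_X}(\E_0,\E_0)\otimes_k I$, with the obstruction naturally an element of $\Ext^2$; this can be extracted from the cotangent-complex formalism of \cite{Ill} or proved directly in the derived category, and I would present the \v{C}ech version in the text while recording the clean statement as a remark.
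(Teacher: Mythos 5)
Your proposal takes a genuinely different route from the paper, but it also has a real gap that you yourself acknowledge.

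The paper's proof is a short, purely global homological-algebra argument that never mentions \v{C}ech cohomology or a local-to-global spectral sequence. It tensors the exact sequence $0\to I\to\m_{A'}\to\m_A\to 0$ with $\E$ (using flatness) to obtain an extension class $e\in\Ext^1_{\O_X}(\m_A\otimes_A\E,\,I\otimes_k\E_0)$, and then applies the connecting map $\delta$ in the $\Ext$ long exact sequence coming from $0\to\m_A\otimes_A\E\to\E\to\E_0\to 0$ to get $\omega(\E,A')\eqdef\delta(e)\in\Ext^2_{\O_X}(\E_0,I\otimes_k\E_0)$. The statement ``$\delta(e)=0$ iff $\E$ lifts'' then falls out of the exactness of the long sequence and a direct construction of the lifting from a preimage of $e$ under $\gamma$. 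No affine cover, no lifting of resolutions, no gluing.

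Your plan, by contrast, reproduces the \v{C}ech pattern of Theorem~\ref{smoothobstr} and Section~\ref{closedobs}: produce local obstructions in $\Extsh^2$, a \v{C}ech $1$-cocycle in $\Extsh^1$ measuring failure of local liftings to agree, and a \v{C}ech $2$-cocycle in $\mathcal{E}nd(\E_0)$ measuring failure of gluing isomorphisms, then assemble these into a single class in $\Ext^2_{\O_X}(\E_0,\E_0)$ via the local-to-global spectral sequence. The local step (lifting a free resolution and reading off the curvature $d'_{i-1}d'_i$) is correct and is a standard construction. The genuine gap is precisely the assembly you flag as ``the heart of the proof'': the three layers live in successive graded pieces $E_\infty^{0,2}$, $E_\infty^{1,1}$, $E_\infty^{2,0}$ of a filtration of $\Ext^2$, not in $\Ext^2$ itself, and they are only defined modulo the ambiguity of the earlier choices. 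You cannot just add them; you must exhibit a single, choice-independent cocycle in a complex computing $\Ext^2$ (e.g.\ the \v{C}ech total complex of an injective or locally free resolution) that simultaneously encodes all three, and you must check it vanishes exactly when a global lifting exists and is functorial in $A'\to A$. That is precisely the part the proposal does not carry out, so as written it is not a proof. It is worth noting why the paper's route avoids all of this: by working directly with the global $\Ext$ long exact sequence, the obstruction is manufactured in one step with no cover and no gluing, and the tradeoff is that the argument is less geometric and gives no local picture of what is being obstructed. If you want to salvage the \v{C}ech route, the cleanest way is the one you hint at in the last sentence, namely to compute $\Ext^\bullet_{\O_X}(\E_0,I\otimes_k\E_0)$ with a fixed \v{C}ech total complex from the outset, so that the three layers are literally the three components of one cochain, and then prove directly that the cochain is a cocycle and that its class has the required properties; only then does the spectral sequence description become a consequence rather than a substitute for a construction.
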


\begin{proof}
Take a small extension $A'\to A$ with kernel $I$, and an object $\E\in \qcoh^X_{\E_0}(A)$, which is a quasi-coherent sheaf on $X_A=X\times_{\spec k}\spec A$ with an isomorphism $\E \otimes_A k\simeq \E_0$. We construct the obstruction $\omega(\E,A')$.

Take the exact sequence of $A'$-modules
$$
\xymatrix{
0\ar[r] & I\ar[r] & \m_{A'}\ar[r] & \m_A \ar[r] & 0
}
$$
and notice that, since $\m_{A'}\cdot I=0$, $\m_{A'}$ is also an $A$-module (and $I$ is too because $I^2=(0)$), so that the sequence above is also an exact sequence of $A$-modules. We tensor it with $\E$ to get (by flatness)
$$
\xymatrix{
0\ar[r] & I\otimes_k \E_0 \ar[r] & \m_{A'}\otimes_A \E \ar[r] & \m_A\otimes_A \E \ar[r] & 0
}
$$
(since $I\otimes_A \E\simeq I\otimes_k (k\otimes_A \E)\simeq I \otimes_k \E_0$) which is an element $e \in \Ext^1_{\O_X}(\m_A\otimes_A \E,I \otimes_k \E_0)$.

We consider then the exact sequence of $A$-modules
$$
\xymatrix{
0\ar[r] & \m_A \ar[r] & A\ar[r] & k \ar[r] & 0
}
$$
and tensor it with $\E$, getting (by flatness again)
$$
\xymatrix{
0\ar[r] & \m_A\otimes_A \E \ar[r] & \E\ar[r] & \E_0 \ar[r] & 0.
}
$$
This induces a long $\Ext$ exact sequence (taking $\Hom_{\O_X}(-,I\otimes_k \E_0)$) that contains in particular the following piece
$$
\xymatrix{
\Ext^1_{\O_X}(\E,I\otimes_k \E_0)\ar[r]^-\gamma & \Ext^1_{\O_X}(\m_A\otimes_A \E,I\otimes_k \E_0)\ar[r]^-\delta & \Ext^2_{\O_X}(\E_0, I\otimes_k \E_0).
}
$$
We take as $\omega(\E,A')$ the element $\delta(e) \in \Ext^2_{\O_X}(\E_0, I\otimes_k \E_0)\simeq I\otimes_k \Ext^2_{\O_X}(\E_0,\E_0)$.

Now we have to verify that $\E$ has a lifting to $A'$ if and only if $\delta(e)=0$. Suppose first that $\E$ has a lifting $\E'\in \qcoh^X_{\E_0}(A')$. Then notice that $\m_{A'}\otimes_{A'}\E'\simeq (\m_{A'}\otimes_{A'} A)\otimes_A \E \simeq \m_{A'}\otimes_A \E$ (because $\m_{A'}\otimes_{A'} A\simeq \m_{A'}$, since $\m_{A'}$ is already an $A$-module). Tensoring the diagram with exact rows
$$
\xymatrix{
0\ar[r] & I \ar[r]\ar@{=}[d] &\m_{A'}\ar[r]\ar[d] & \m_A \ar[r]\ar[d] & 0\\
0\ar[r] & I \ar[r] & A' \ar[r] & A \ar[r] & 0
}
$$
with $\E'$, we get
$$
\xymatrix{
0\ar[r] & I\otimes_k \E_0 \ar[r]\ar@{=}[d] &\m_{A'}\otimes_A \E \ar[r]\ar[d] & \m_A\otimes_A \E \ar[r]\ar[d]^f & 0\\
0\ar[r] & I\otimes_k \E_0 \ar[r] & \E' \ar[r] & \E \ar[r] & 0
}
$$
where the top row is the extension $e$ obtained before.

But this diagram implies that $e$ is obtained by pullback from an extension in $\Ext^1_{\O_X}(\E,I\otimes_k \E_0)$ (the bottom row), so that it is in the image of the map $\gamma$; then we have $\delta(e)=0$.

Conversely, suppose that $\delta(e)=0$. Then by exactness of the $\Ext$ long exact sequence above, $e$ is in the image of the map $\gamma$. In other words we have a commutative diagram of $\O_X$-modules with exact rows
\begin{equation}\label{sheafobs}
\xymatrix{
0\ar[r] & I\otimes_k \E_0 \ar[r]\ar@{=}[d] &\m_{A'}\otimes_A \E \ar[r]\ar[d]^g & \m_A\otimes_A \E \ar[r]\ar[d]^f & 0\\
0\ar[r] & I\otimes_k \E_0 \ar[r] & \F \ar[r]^\pi & \E \ar[r] & 0
}
\end{equation}
where $\F$ is an $\O_X$-module.

We define a structure of $\O_{X_{A'}}$-module on $\F$ in the following way: since
$$
\O_{X_{A'}}\simeq \O_X\otimes_k A'\simeq \O_X\oplus(\m_{A'}\otimes_k \O_X)
$$
(because $A'\simeq k\oplus \m_{A'}$ as a $k$-vector space)
we only need to define $x\cdot s$ where $x$ is a section of $\m_{A'}\otimes_k \O_X$ and $s$ one of $\F$. Given two such sections $x=a'\otimes t$ and $s$, we define then
$$
(a'\otimes t)\cdot s= g(a'\otimes \pi(ts)) \in \F\,.
$$
(notice that $g$ is injective, since $f$ is by flatness of $\E$). It is readily checked that this gives a structure of $\O_{X_{A'}}$-module to $\F$. Moreover $\F$ is quasi-coherent, because it is an extension of two quasi-coherent sheaves.

Finally, we notice that the natural homomorphism $\F\otimes_{A'} A \to \E$ induced by $\F\to \E$ above is an isomorphism (it suffices to tensor the second row of (\ref{sheafobs}) with $A$ over $A'$), and from the local flatness criterion (Theorem~\ref{local-flatness}) we have (as the reader can check) that $\F$ is flat over $A'$.

In conclusion, $\F$ is a lifting of $\E$ to $A'$. Functoriality of the obstruction defined is immediate from the construction.
\end{proof}

If $\E_0$ is locally free, than we have
$$
\Ext^2_{\O_{X}}(\E_0,\E_0)\simeq \H^2(X,\mathcal{E}nd_{\O_{X}}(\E_0))
$$
and in this case (with the additional hypothesis that $X$ is separated) Theorem~\ref{shobs} can be proved using \v{C}ech cohomology, in the same way as we did for Theorem~\ref{smoothobstr}.

In particular if $X$ is affine, or of dimension at most 1, then every locally free sheaf is unobstructed.

\begin{ex}
Take the affine curve $X_0\subseteq \mathbb{A}^2_k$ over $k$ defined by the equation
$$
y^2=x^{2}(x-1)
$$
so that the origin $p=(0,0)$ is a singular point of $X_0$, and set $\E_0 \eqdef \O_{p}$, the pushforward of the structure sheaf of the point $p=\spec k$ along the morphism $\spec k\to X_0$ with image $p$. Consider the tangent cone $C_pX_0$ of $X_0$ at $p$, which is a union of two lines contained properly in the tangent space $T_pX_0$, which is two-dimensional, and take a tangent vector $v \in T_pX_0 \setminus C_pX_0$.

We see $v$ as a morphism $v\colon \spec\dual\to X_0$ in the usual way, and notice that it gives a section $\spec\dual\to X_\eps$ of the structure morphism $X_\eps\to \spec\dual$ (where as usual $X_\eps$ is the trivial deformation of $X_0$ over $\dual$).
$$
\xymatrix@-5pt{
\spec\dual\ar@/^10pt/[rrd]^v\ar@/_10pt/[ddr]_{\id}\ar@{-->}[rd] & & \\
& X_\eps\ar[r]\ar[d] & X_0\ar[d]\\
& \spec\dual\ar[r] & \spec k
}
$$
Moreover the image of this section, call it $Z_1 \subseteq X_\eps$, is closed, because $X_\eps\to \spec\dual$ is a separated morphism.

Now set $\E_1 \eqdef \O_{Z_1} \in \qcoh^{X_0}_{\E_0}(\dual)$, which is again the pushforward on $X_\eps$ of the structure sheaf of $\spec\dual$. The sheaf $\E_1$ is flat over $\dual$. Arguing by contradiction, show that there exists $n\geq 2$ such $\E_1$ cannot be lifted to $R_n=k[t]/(t^{n+1})$, so that $\E_0$ must be obstructed.
\end{ex}

\section{Formal deformations}\label{capform}

After developing tools to study infinitesimal deformations, in this section we go one step further and put together infinitesimal deformations that are successive liftings of a fixed $\xi_0 \in \F(k)$ to higher and higher orders. A collection of such liftings is said to be a \emph{formal deformation}.

We will introduce particular types of formal deformations, called \emph{universal} and \emph{versal} respectively, which play a key role in the theory. We will state and prove an analogue of Schlessinger's Theorem on existence of versal deformations for deformation categories. Finally, we will give some applications to obstruction theories, and consider briefly the problem of algebraization of formal deformations.

Throughout this section we will use some notation and results about noetherian local complete $\Lambda$-algebras that can be found in Appendix~\ref{appb}.

\subsection{Formal objects}

Let $\fib$ be a deformation category, and $R \in \compl$ (recall that $\compl$ denotes the category of noetherian local complete $\Lambda$-algebras with residue field $k$). We want to consider sequences of compatible deformations on the quotients $R_n \eqdef R/\m_R^{n+1}$: the idea is that $\spec R$ should be a little piece of the base scheme $S$ of a deformation we are trying to construct or study: for example it could be the spectrum of the completion of the local ring $\O_{S,s_0}$ of that base scheme at a point $s_0$, and we consider then sequences of compatible deformations on all the ``thickenings'' $\spec(\O_{S,s_0}/\m_{s_0}^{n+1})$ of the point $s_0=\spec k(s_0)$, hoping to get an actual deformation over $\spec\wh{\O}_{S,s_0}$.

\begin{defin}
A \gr{formal object} of $\F$ over $R$ is a collection $\xi=\{\xi_n, f_n\}_\nin$, where $\xi_n$ is an object of $\F(R_n)$ and $f_n\colon \xi_{n}\to \xi_{n+1}$ is an arrow of $\F$ over the canonical projection $\pi_{n}\colon R_{n+1}\to R_n$ (or more properly over the arrow $\pi_{n}^{\op}$ of $\artl^{\op}$ corresponding to the projection, according to the notation described on page \pageref{over_alg}).
\end{defin}

Sometimes we will call $\xi_n$ the \gr{term of order} $n$ of $\xi$, and say that $R$ is the \gr{base ring} of the formal object $\xi$. If we need to specify it in the notation, we will denote a formal object as above by $(R,\xi)$.

The condition of having fixed arrows $f_n\colon \xi_n \to \xi_{n+1}$ reflects the fact that the objects $\xi_n$ are compatible, in the sense that if $n\geq m$, then the pullback of $\xi_n$ to $R_m$ along the projection $R_n\to R_m$ is isomorphic to $\xi_m$, and moreover we have a canonical isomorphism, coming from the composite $\xi_m\to \xi_{m+1}\to \cdots \to \xi_n$ of the given arrows.

We also remark that a formal deformation is determined up to a unique isomorphism by the objects $\xi_n$ and the arrows $\xi_{n} \arr \xi_{n+1}$ for $n \geq n_{0}$, for any natural number $n_{0}$. This is because $\xi_n$ determines all the $\xi_i$'s with $i \leq n$, by taking pullbacks along the projections $R_n\to R_i$.


\begin{defin}
A \gr{morphism} $\alpha\colon \xi \to \eta$ of formal objects over $R$, where $\xi=\{\xi_n,f_n\}_\nin$ and $\eta=\{\eta_n,g_n\}_\nin$ is a collection $\alpha=\{\alpha_n\}_\nin$ of arrows $\alpha_n\colon \xi_n \to \eta_n$ of $\F(R_n)$, such that for every $n$ the diagram
$$
\xymatrix{
\xi_{n}\ar[r]^{f_n}\ar[d]_{\alpha_{n}} & \xi_{n+1}\ar[d]^{\alpha_{n+1}}\\
\eta_{n}\ar[r]_{g_n} & \eta_{n+1}
}
$$
commutes.
\end{defin}

As with objects, $\alpha_n$ will sometimes be called the \gr{term of order} $n$ of $\alpha$.

Formal objects over a fixed $R$ with morphisms form a category (composition of arrows is defined as composition at each order), which we call the \gr{category of formal objects} over $R$ and denote by $\wh{\F}(R)$.

Here we used the canonical filtration $\{\m_R^n\}_\nin$, but to define a formal object we can use any filtration that defines the right topology on $R$. Let $\mathcal{A}=\{I_n\}_\nin$ be a filtration of $R$, that is, a sequence of ideals $I_n$ such that $I_n\subseteq I_m$ whenever $n\geq m$, and inducing on $R$ the same topology as the canonical filtration.

We can consider then a category $\wh{\F}_{\mathcal{A}}(R)$ of formal objects over $R$ with respect to the filtration $\mathcal{A}$, defined as above, but using the quotients $R/I_n$ instead of $R_n=R/\m_R^{n+1}$.

\begin{ex}\label{filtration}
For any $R$ and filtration $\mathcal{A}=\{I_n\}_\nin$ that defines the $\m_R$-adic topology on $R$, the categories $\wh{\F}_\mathcal{A}(R)$ and $\wh{\F}(R)$ are equivalent.
\end{ex}

This says that we can use any filtration as above to define a formal object of $\F$ over $R$.

The notation $\wh{\F}(R)$ suggests that we want to consider a fibered category $\wh{\F}\to \compl^\op$, which is indeed the case.

\begin{defin}
A \gr{morphism} $\alpha\colon (R,\xi)\to (S,\eta)$ of formal objects of $\F$, where $\xi=\{\xi_n,f_n\}_\nin$ and $\eta=\{\eta_n,g_n\}_\nin$, is a pair $(\alpha,\phi)$, where $\phi\colon S\to R$ is a homomorphism, and $\alpha=\{\alpha_n\}_\nin$ is a collection of arrows $\alpha_n\colon \xi_n\to \eta_n$ of $\F$ over $\phi_n\colon S_n\to R_n$, such that for every $n$ the diagram
$$
\xymatrix{
\xi_{n}\ar[r]^{f_n}\ar[d]_{\alpha_{n}} & \xi_{n+1}\ar[d]^{\alpha_{n+1}}\\
\eta_{n}\ar[r]_{g_n} & \eta_{n+1}
}
$$
commutes.
\end{defin}

Again, sometimes we will call $\alpha_n$ the \gr{term of order} $n$ of $(\alpha, \phi)$. We stress that a morphism between formal objects $(R,\xi)\to (S,\eta)$ determines a homomorphism in the \gr{opposite} direction $S\to R$ (the same happens in the case of infinitesimal deformations, where for $\xi \in \F(A)$ and $\eta \in \F(A')$, an arrow $\xi\to \eta$ gives a homomorphism $A'\to A$).

We define a category $\wh{\F}$, and call it the \gr{category of formal objects} of $\F$: its objects are formal objects $(R,\xi)$, and an arrow $(R,\xi)\to (S,\eta)$ is a morphism of formal objects. We have a functor $\wh{\F}\to \compl^\op$ that takes $(R,\xi)$ to $R$ and an arrow $(\alpha,\phi)\colon (R,\xi)\to (S,\eta)$ to the arrow $\phi^{\op}$ from $R$ to $S$.

\begin{prop}
$\wh{\F}\to \compl^\op$ is a category fibered in groupoids.
\end{prop}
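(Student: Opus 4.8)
The plan is to apply the criterion of Proposition~\ref{groupoid} to the functor $\wh{\F}\to\compl^\op$: I must verify (i) that every arrow of $\wh{\F}$ is cartesian, and (ii) that given a morphism of $\compl^\op$ with target under an object of $\wh{\F}$, there is an arrow of $\wh{\F}$ lying over it. Throughout I will exploit the fact that all the relevant structure lives ``one order at a time'': for $R\in\compl$ each quotient $R_n\eqdef R/\m_R^{n+1}$ lies in $\artl$ (see Appendix~\ref{appb}), a homomorphism $\phi\colon S\to R$ in $\compl$ is local and hence induces compatible homomorphisms $\phi_n\colon S_n\to R_n$, and $\F\to\artl^\op$ is itself a category fibered in groupoids, so in particular \emph{every} arrow of $\F$ is cartesian (Proposition~\ref{groupoid}(i)).

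First I would prove (ii). Fix a formal object $(S,\eta)$ with $\eta=\{\eta_n,g_n\}_\nin$ and a homomorphism $\phi\colon S\to R$ in $\compl$. Using a fixed cleavage of $\F$, set $\xi_n\eqdef\phi_n^*(\eta_n)\in\F(R_n)$ with its cartesian arrow $\alpha_n\colon\xi_n\to\eta_n$ over $\phi_n$. For each $n$ the composite $g_n\circ\alpha_n\colon\xi_n\to\eta_{n+1}$ lies over the homomorphism $S_{n+1}\to R_n$ obtained by going around the commuting square formed by the projections $S_{n+1}\to S_n$, $R_{n+1}\to R_n$ and the maps $\phi_n,\phi_{n+1}$; since $\alpha_{n+1}$ is cartesian over $\phi_{n+1}$, its universal property yields a unique arrow $f_n\colon\xi_n\to\xi_{n+1}$ over the projection $R_{n+1}\to R_n$ with $\alpha_{n+1}\circ f_n=g_n\circ\alpha_n$. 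Then $(R,\xi)$ with $\xi=\{\xi_n,f_n\}_\nin$ is a formal object, and $\{\alpha_n\}_\nin$ together with $\phi$ is a morphism $(R,\xi)\to(S,\eta)$ of $\wh{\F}$ over $\phi^{\op}$, as required.

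Next I would prove (i). Let $(\alpha,\phi)\colon(R,\xi)\to(S,\eta)$ be an arrow of $\wh{\F}$, and suppose given another arrow $(\beta,\chi)\colon(T,\zeta)\to(S,\eta)$ together with a homomorphism $\psi\colon R\to T$ in $\compl$ satisfying $\psi\circ\phi=\chi$; I must produce a unique morphism $(\gamma,\psi)\colon(T,\zeta)\to(R,\xi)$ over $\psi^{\op}$ with $(\alpha,\phi)\circ(\gamma,\psi)=(\beta,\chi)$. The homomorphism part is forced to be $\psi$. For each $n$ one has $\psi_n\circ\phi_n=\chi_n$, and since $\alpha_n\colon\xi_n\to\eta_n$ is cartesian there is a unique $\gamma_n\colon\zeta_n\to\xi_n$ over $\psi_n$ with $\alpha_n\circ\gamma_n=\beta_n$. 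To see that the $\gamma_n$ assemble into a morphism of formal objects, one checks that $\gamma_{n+1}\circ h_n$ and $f_n\circ\gamma_n$ (where $h_n$ are the connecting arrows of $\zeta$) become equal after composing with the cartesian arrow $\alpha_{n+1}$: indeed $\alpha_{n+1}\circ\gamma_{n+1}\circ h_n=\beta_{n+1}\circ h_n=g_n\circ\beta_n$ because $\beta$ is a morphism of formal objects, while $\alpha_{n+1}\circ f_n\circ\gamma_n=g_n\circ\alpha_n\circ\gamma_n=g_n\circ\beta_n$; hence the uniqueness half of the cartesian property forces $\gamma_{n+1}\circ h_n=f_n\circ\gamma_n$. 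Thus $(\gamma,\psi)$ is a morphism of $\wh{\F}$ with the desired composition, and uniqueness of $(\gamma,\psi)$ follows from the uniqueness of each $\gamma_n$. By Proposition~\ref{groupoid}, $\wh{\F}\to\compl^\op$ is a category fibered in groupoids.

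The hard part here is not any computation — the real content is simply that $\F\to\artl^\op$ is already fibered in groupoids, so everything can be done order by order. What needs care is the bookkeeping: keeping straight the directions of the homomorphisms of rings versus the arrows of the opposite categories, making sure the arrow $f_n$ one constructs really lies over the projection $R_{n+1}\to R_n$ (so that $(R,\xi)$ is genuinely a formal object), and verifying the compatibility of the $f_n$ and of the $\gamma_n$ with the filtrations — all of which is handled cleanly by invoking the uniqueness clause in the universal property of cartesian arrows rather than by any explicit manipulation.
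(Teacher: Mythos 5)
Your proof is correct and proceeds by the same essential idea the paper sketches — construct and compare everything order by order, using that $\F\to\artl^{\op}$ is already a category fibered in groupoids. The only organizational difference is that you verify the two conditions of Proposition~\ref{groupoid} directly, whereas the paper's sketch constructs the pullback termwise and then separately observes that the fibers $\wh{\F}(R)$ are groupoids; the two routes are equivalent, and your use of the uniqueness clause of the cartesian property to get the compatibilities $\alpha_{n+1}\circ f_n = g_n\circ\alpha_n$ and $\gamma_{n+1}\circ h_n = f_n\circ\gamma_n$ for free is exactly the clean way to fill in the details the paper omits.
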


The proof is easy, so we leave the details as an exercise: given a homomorphism $\phi\colon S\to R$ in $\compl$, and a formal object $\eta=\{\eta_n,g_n\}_\nin$ over $S$, we can define a pullback to $R$ by taking pullbacks of the single terms $\eta_n$, and pullbacks of the arrows $g_n$. This shows that $\wh{\F}\to \compl^\op$ is a fibered category. The fact that each $\wh{\F}(R)$ is a groupoid follows from the fact that $\F$ is fibered in groupoids.

If we have two deformation categories $\fib$ and $\G\to \artl^\op$, and a morphism $F\colon \F\to \G$, then there is a natural induced morphism $\wh{F}\colon \wh{\F}\to \wh{\G}$ of categories fibered in groupoids: a formal object $\xi=\{\xi_n,f_n\}_\nin$ of $\F$ over $R$ goes to the formal object $\wh{F}(\xi)=\{F(\xi_n),F(f_n)\}_\nin$ of $\G$ over the same $R$, and an arrow $\alpha=\{\alpha_n\}_\nin$ goes to the arrow $\wh{F}(\alpha)=\{F(\alpha_n)\}_\nin$. It is immediate to check that this is well-defined, and gives a morphism of categories fibered in groupoids.

Now we show that $\F$ is a subcategory of $\wh{\F}$. First notice that if $A \in \artl$, then in particular $A \in \compl$, so we can consider the fiber category $\wh{\F}(A)$.

\begin{prop}\label{subcat}
We have an equivalence of categories $\wh{\F}(A)\simeq \F(A)$. Moreover these equivalences give rise to an equivalence of categories fibered in groupoids  $F\colon \F\to \wh{\F}|_{\artl^\op}$, so that $\F$ can be regarded as a full subcategory of $\wh{\F}$.
\end{prop}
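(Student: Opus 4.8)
The plan is to show that, when $A$ is artinian, a formal object over $A$ contains no more information than its term of sufficiently high order, and then to bootstrap the resulting fiberwise equivalence to an equivalence of fibered categories via Proposition~\ref{fibered.equiv}. First I would record the elementary observation that if $A \in \artl$ has order $n_{0}$, i.e. $n_{0}$ is the least $n$ with $\m_{A}^{n+1}=0$, then $A_{m}=A$ for every $m\geq n_{0}$ and the structure homomorphisms $A_{m+1}\to A_{m}$ are identities for $m\geq n_{0}$. Hence a formal object $\xi=\{\xi_{m},f_{m}\}_{\nin}$ over $A$ is a chain in which, for $m\geq n_{0}$, the arrow $f_{m}$ lies in $\F(A)$ over $\id_{A}$, hence is an isomorphism since $\F$ is fibered in groupoids; moreover, because every arrow in a category fibered in groupoids is cartesian (Proposition~\ref{groupoid}), each $\xi_{m}$ with $m\leq n_{0}$ is a pullback of $\xi_{n_{0}}$ along the projection $A_{n_{0}}\to A_{m}$. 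I would then define $F_{A}\colon \wh{\F}(A)\to\F(A)$ on objects by $F_{A}(\xi)=\xi_{n_{0}}$ and on morphisms by $F_{A}(\alpha)=\alpha_{n_{0}}$, and a candidate quasi-inverse $G_{A}\colon \F(A)\to\wh{\F}(A)$ sending $\eta$ to the formal object $\{\eta|_{A_{m}}\}_{\nin}$ equipped with the canonical transition arrows, which exist because pullbacks compose, so that $\eta|_{A_{m}}$ is a pullback of $\eta|_{A_{m+1}}$ along $A_{m+1}\to A_{m}$.

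Next I would check the two natural isomorphisms. For $F_{A}\circ G_{A}\simeq\id_{\F(A)}$ this is immediate, since $\eta|_{A_{n_{0}}}=\eta|_{A}$ is canonically isomorphic to $\eta$. For $G_{A}\circ F_{A}\simeq\id_{\wh{\F}(A)}$ I would, for each $m$, compare the cartesian arrow $\xi_{m}\to\xi_{n_{0}}$ obtained by composing $f_{m},\dots,f_{n_{0}-1}$ (and the relevant isomorphisms when $m>n_{0}$) with the cleavage cartesian arrow $\xi_{n_{0}}|_{A_{m}}\to\xi_{n_{0}}$: being two cartesian arrows over the same arrow of $\artl^{\op}$, they induce a canonical isomorphism $\xi_{m}\simeq\xi_{n_{0}}|_{A_{m}}$, and the uniqueness clause in the universal property of cartesian arrows makes these isomorphisms compatible with the transition arrows, so that they assemble to an isomorphism of formal objects, natural in $\xi$. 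This establishes $\wh{\F}(A)\simeq\F(A)$.

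For the second assertion I would assemble the functors $G_{A}$ into a base-preserving functor $\Phi\colon \F\to\wh{\F}|_{\artl^{\op}}$ (the functor called $F$ in the statement). On an arrow $g\colon \eta\to\eta'$ of $\F$ lying over a homomorphism $\phi\colon A'\to A$ in $\artl$, one uses that $\phi$ is local, hence induces $\phi_{m}\colon A'_{m}\to A_{m}$ for each $m$; applying the universal property of the cleavage cartesian arrow $\eta'|_{A'_{m}}\to\eta'$ to the composite $\eta|_{A_{m}}\to\eta\xrightarrow{g}\eta'$, whose base arrow in $\artl^{\op}$ factors through $\phi_{m}$ as required, produces a unique arrow $\Phi(g)_{m}\colon \eta|_{A_{m}}\to\eta'|_{A'_{m}}$ over $\phi_{m}$, and uniqueness forces compatibility with the transition arrows and with composition. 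Thus $\Phi$ is a morphism of categories fibered in groupoids over $\artl^{\op}$, and its restriction to $\F(A)$ is exactly $G_{A}$. Since $\wh{\F}|_{\artl^{\op}}$ is a full subcategory of $\wh{\F}$ — an arrow of $\wh{\F}$ between formal objects over artinian rings is automatically over a homomorphism in $\artl$ — and since each $G_{A}$ is an equivalence of categories by the first part, Proposition~\ref{fibered.equiv} yields that $\Phi$ is an equivalence of categories fibered in groupoids; this exhibits $\F$ as equivalent to the full subcategory $\wh{\F}|_{\artl^{\op}}$ of $\wh{\F}$.

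As for difficulty: there is essentially no deep point. The entire content is the bookkeeping observation that a formal object over an artinian ring is \emph{eventually constant} and recoverable from a single term, together with repeated use of the fact that all arrows of $\F$ are cartesian. The only place needing a little care is verifying that the fiberwise equivalences $G_{A}$ are sufficiently natural in $A$ to constitute a morphism of fibered categories, so that the criterion of Proposition~\ref{fibered.equiv} applies; this is precisely where the universal property of cartesian arrows, and its uniqueness clause, are used systematically.
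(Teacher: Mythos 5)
Your proof is correct and follows essentially the same approach as the paper's: truncate at the order $n_0$ of $A$ to get a functor $\wh{\F}(A)\to\F(A)$, build the quasi-inverse by successive restriction along the quotients $A\to A_m$, and then invoke Proposition~\ref{fibered.equiv} to upgrade the fiberwise equivalences to an equivalence of fibered categories. You fill in some details that the paper leaves to the reader (the explicit verification of the two natural isomorphisms via the uniqueness clause for cartesian arrows, and the construction of the assembled morphism $\Phi$ on arrows over non-identity maps), but the overall strategy is identical.
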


\begin{proof}
In the statement above with $\wh{\F}|_{\artl^\op}$ we mean the full subcategory of $\wh{\F}$ whose objects are formal objects $(A,\xi)$ of $\F$ with $A \in \artl$.

The idea of the proof is quite clear: since $A$ is artinian its maximal ideal $\m_A$ is nilpotent, so that there exists an $m$ such that $\m_A^{m+1}=(0)$, and then $A_i =A$ for all $i\geq m$; because of this, a formal deformation will be completely determined (up to isomorphism) by its term of order $m$.

There is an obvious functor $F\colon \wh{\F}(A)\to \F(A)$ that carries a formal object $\{\xi_n,f_n\}_\nin$ into the object $\xi_m \in \F(A)$, and an arrow $\alpha=\{\alpha_n\}\colon \{\xi_n,f_n\}_\nin\to \{\eta_n,g_n\}_\nin$ into $\alpha_m\colon \xi_m\to \eta_m$.

We construct a quasi inverse $G\colon \F(A)\to \wh{\F}(A)$ as follows: given an object $\xi \in\F(A)$, for $i\leq m-1$ we can consider the pullbacks $\xi_i$ of $\xi$ along the quotient maps $A \to A/\m_A^{i+1}=A_i$, and the canonical arrows $f_i\colon \xi_i\to \xi_{i+1}$, identifying $\xi_i$ as a pullback of $\xi_{i+1}$, and for $i\geq m$, we set $\xi_i \eqdef \xi \in \F(A)$, and $f_i=\id\colon \xi\to \xi$. Then $G(\xi)=\{\xi_n,f_n\}_\nin$ is an object of $\wh{\F}(A)$. Moreover, if $\eta$ is another object of $\F(A)$, with $G(\eta)=\{\eta_n,g_n\}_\nin$, and $\alpha\colon \xi\to \eta$ is an arrow in $\F(A)$, we define an arrow $G(\alpha)=\{\alpha_n\}_\nin\colon G(\xi)\to G(\eta)$, taking for $i\leq m-1$ the arrow $\alpha_i\colon \xi_i\to\eta_i$ that is the pullback of $\alpha\colon \xi\to \eta$ to $A_i$, and for $i\geq m$, since $\xi_i=\xi$ and $\eta_i=\eta$, we take simply $\alpha\colon \xi\to \eta$.

The reader can check that $G$ and $F$ are quasi-inverse to each other, and that varying $A \in \artl$ we get a morphism $F\colon \wh{\F}|_{\artl^\op}\to \F$, which is an equivalence of categories fibered in groupoids (use Proposition~\ref{fibered.equiv}).
\end{proof}

Because of the preceding Proposition, we can talk about arrows from an object $\xi$ over $A \in \artl$ to a formal object $\eta=\{\eta_n,g_n\}_\nin$ over $R \in \compl$, using the identification above.

In particular giving an arrow $\xi\to\eta$ corresponds to giving a homomorphism $\phi\colon R\to A$ of $\Lambda$-algebras, and an isomorphism $\xi\simeq (\phi_m)_*(\eta_m)$ in $\F(A)$, where $m$ is the order of $A$, so that $\phi_m$ is a homomorphism $\phi_m\colon R_m\to A$. A pullback of $\eta$ to $A$ is $(\phi_m)_*(\eta_m) \in \F(A)$, where $m$ is as above.

Finally, if $A \in \artl$ and $\xi \in \F(A)$, we will sometimes write $(A,\xi)$ for the corresponding object in $\wh{\F}(A)$ defined in the proof above.

\begin{rmk}\label{morphism.approx}
Here we point out (using again the identification given by Proposition~\ref{subcat}) that giving a morphism of formal objects is equivalent to giving a sequence of morphisms from the artinian quotients of the source, compatible with the projections.

Let $(R,\xi), (S,\eta)$ be two formal objects of $\F$, where $\xi=\{\xi_n,f_n\}_\nin$ and $\eta=\{\eta_n,g_n\}_\nin$; call $A$ the set of arrows $(R,\xi) \to (S,\eta)$ in $\wh{\F}$, and $B$ the set of sequences $\{h_n\}_\nin$ of morphisms of formal objects $h_n\colon (R_n,\xi_n)\to (S,\eta)$ such that for every $n$ the composite
$$
(R_n,\xi_n)\arr (R_{n+1},\xi_{n+1}) \xrightarrow{h_{n+1}} (S,\eta)
$$
coincides with $h_n$.

There is a natural map $A\to B$ sending a morphism $(R,\xi)\to (S,\eta)$ into the sequence of composites $(R_n,\xi_n)\to (R,\xi)\to (S,\eta)$. Conversely given a sequence $h_n$ as above, the arrow $h_n\colon (R_n,\xi_n)\to (S,\eta)$ of $\wh{\F}$ corresponds to an isomorphism between $\xi_n$ and the pullback of $\eta_n \in \F(S_n)$ to $R_n$, which gives an arrow $\alpha_n\colon \xi_n\to \eta_n$ of $\F$. The fact that $\{h_n\}_\nin$ has the compatibility property above ensures that $\alpha=\{\alpha_n\}_\nin$ gives an arrow of formal objects $\alpha\colon (R,\xi)\to (S,\eta)$, and this gives a map $B\to A$ that is clearly inverse to the previous one.
\end{rmk}

\subsubsection{Formal objects as morphisms}

Now we change point of view, and describe formal objects as morphisms of categories fibered in groupoids. For an object $R \in \compl$ consider the opposite $(\Art_R)^\op$ of the category of local artinian $R$-algebras with residue field $k$, or equivalently the opposite of the category of objects $A \in \artl$ with a homomorphism of $\Lambda$-algebras $R\to A$.

There is an obvious functor $(\Art_R)^\op\to \artl^\op$ that sends an object $A$ of $(\Art_R)^\op$ into the $\Lambda$-algebra $A$ defined by the composite $\Lambda\to R\to A$ (where $R\to A$ defines the structure of $R$-algebra on $A$), and an arrow $(A\to B)^{\op}$ in $(\Art_R)^\op$ to ``itself'' (that is, to the opposite of the homomorphism $A\to B$ seen as a homomorphism of $\Lambda$-algebras).

\begin{prop}\label{prorapp}
$(\Art_R)^\op\to \artl^\op$ is a deformation category. Moreover its tangent space $T_{R\to k}(\Art_R)^\op$ at the unique object $R\to k$ over $k$ is isomorphic to the vertical tangent space of $R$
$$
T_{R\to k}(\Art_R)^\op\simeq T_{\Lambda}R=(\m_R/(\m_\Lambda R+\m_R^2))^\vee\,.
$$
\end{prop}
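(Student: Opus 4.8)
The plan is to treat the two assertions in turn; both are essentially formal, the content of the statement being the identification of the relevant derivation module.

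\textbf{That $(\Art_R)^\op\to\artl^\op$ is a deformation category.} First I would observe that $(\Art_R)^\op\to\artl^\op$ is the category fibered in sets associated with the functor $h_R\colon\artl\to\set$, $h_R(A)=\Hom_\Lambda(R,A)$: an object over $A$ is exactly a homomorphism $R\to A$ of $\Lambda$\dash algebras, and the pullback of $R\to A$ along a homomorphism $A\to B$ is the composite $R\to A\to B$, which is the unique arrow over $A\to B$ with that target. In particular $(\Art_R)^\op$ is fibered in groupoids. To check RS, take $A,A',A''\in\artl$ and homomorphisms $\pi'\colon A'\to A$, $\pi''\colon A''\to A$ with $\pi''$ surjective, so that $A'\times_A A''\in\artl$. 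Since all the fiber categories are sets, the functor $\Phi$ of Definition~\ref{def:rs} is just the canonical map
$$
\Hom_\Lambda(R,A'\times_A A'')\arr \Hom_\Lambda(R,A')\times_{\Hom_\Lambda(R,A)}\Hom_\Lambda(R,A'')\,,
$$
which is a bijection by the universal property of the fibered product of rings $A'\times_A A''$. Hence $\Phi$ is an isomorphism of categories, a fortiori an equivalence, and so $(\Art_R)^\op$ satisfies RS. (This is the same computation carried out in the example of a scheme $X$ over $\spec\Lambda$ immediately after Definition~\ref{def:rs}, the only difference being that $\spec R$ is a formal, rather than an honest, scheme.)

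\textbf{The tangent space as a set.} Since $(\Art_R)^\op$ is fibered in sets and $h_R(k)$ has a single element (the structure map $R\to k$, namely the quotient by $\m_R$), there is a unique object over $k$, and $T_{R\to k}(\Art_R)^\op$ is simply the set of $\Lambda$\dash algebra homomorphisms $\varphi\colon R\to\dual$ whose composite with $\dual\to k$ is the structure map $R\to k$. Here I would apply Proposition~\ref{ext.der} with $B=R$ and the extension $\dual\to k$, whose kernel $\generate{\eps}$ is isomorphic to $k$ as an $R$\dash module: the ``trivial'' lift $r\mapsto\overline r$ exists, and the difference of any two lifts is a $\Lambda$\dash derivation $R\to\generate{\eps}$, while adding a derivation to a lift yields another lift. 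Therefore $T_{R\to k}(\Art_R)^\op$ is in natural bijection with $\der_\Lambda(R,\generate{\eps})\simeq\der_\Lambda(R,k)$.

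\textbf{Conclusion and the one delicate point.} Finally I would invoke the standard identification $\der_\Lambda(R,k)\simeq\Hom_k\bigl(\m_R/(\m_\Lambda R+\m_R^2),k\bigr)=T_\Lambda R$ (Appendix~\ref{appb}); concretely, a $\Lambda$\dash derivation $D\colon R\to k$ kills the image of $\Lambda$, hence also $\m_\Lambda R$ since $D(\lambda r)=\overline\lambda D(r)=0$ for $\lambda\in\m_\Lambda$, and by the Leibniz rule kills $\m_R^2$, so it factors through a $k$\dash linear functional on $\m_R/(\m_\Lambda R+\m_R^2)$; conversely, since $R$ is the sum of the image of $\Lambda$ and $\m_R$, every such functional extends uniquely to a derivation. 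The one step that is more than pure unwinding is verifying that the resulting bijection is an isomorphism of $k$\dash vector spaces for the structure on $T_{R\to k}(\Art_R)^\op$ furnished by Appendix~\ref{appa}. For this I would repeat the argument of the previous paragraph with $\dual$ replaced by an arbitrary trivial algebra $k\oplus V$, $V\in\fvect$: the objects over $k\oplus V$ restricting to $R\to k$ form a set in natural bijection with $\der_\Lambda(R,V)\simeq V\otimes_k\der_\Lambda(R,k)$, functorially in $V$ and carrying the trivial lift to $0$; this identifies the functor $F\colon\fvect\to\vect$ used to define the tangent space with $-\otimes_k\der_\Lambda(R,k)=-\otimes_k T_\Lambda R$, and hence $F(k)=T_{R\to k}(\Art_R)^\op$ with $T_\Lambda R$ as vector spaces. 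I expect this compatibility check to be the only part requiring attention; everything else is a direct translation of the definitions.
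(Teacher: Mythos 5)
Your proof is correct and follows essentially the same route as the paper's: both identify $T_{R\to k}(\Art_R)^\op$ by showing that a $\Lambda$\dash algebra homomorphism $R\to k\oplus V$ lifting $R\to k$ corresponds to a $k$\dash linear map $\m_R/(\m_\Lambda R+\m_R^2)\to V$, functorially in $V$. The only cosmetic differences are that you route this correspondence through $\der_\Lambda(R,V)$ via Proposition~\ref{ext.der} rather than restricting $\phi$ to $\m_R$ directly, that you spell out the RS verification which the paper deems too easy to include, and that the paper is a touch more careful in constructing the inverse map (it passes through $\overline{R}=R/\m_\Lambda R$, a genuine $k$\dash algebra, to make the formula $\phi_f(r)=\pi(r)+g(\pi'(r)-\pi(r))$ manifestly well-defined, where your ``$R$ is the sum of the image of $\Lambda$ and $\m_R$'' leaves a one-line well-definedness check implicit).
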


\begin{proof}
It is easy to check that $(\Art_R)^\op\to \artl^\op$ is a deformation category, so we only calculate the tangent space.

Notice first that $(\Art_R)^\op(k)=\Hom_\Lambda(R,k)$ has precisely one element, which is the quotient map $R\to k$. To find the tangent space, we consider the functor $F\colon \fvect\to \set$ that associates to $V\in \fvect$ the set $F(V) \eqdef \Hom_\Lambda(R,k \oplus V)$, and acts on arrows by pullback.

We will show that there is a functorial bijection
$$
F(V)\simeq V\otimes_k T_\Lambda R
$$
where $T_\Lambda R$ is the vertical tangent space of $R$
$$
T_\Lambda R=(\m_R/(\m_\Lambda R+\m_R^2))^\vee\,.
$$
This will give an isomorphism $T_{R\to k}(\Art_R)^\op\simeq T_\Lambda R$.

We construct a function $F(V)\to V\otimes_k T_\Lambda R$. Take a homomorphism of $\Lambda$-algebras $\phi\colon R\to k\oplus V$. We can restrict $\phi$ to the maximal ideal $\m_R$ of $R$ to get a function $\overline{\phi}\colon \m_R\to V$, and since $\phi(\m_\Lambda R+\m_R^2)=0$ (for $\phi(\m_\Lambda R)=0$ and $\phi(\m_R^2)=0$, respectively because of $\Lambda$-linearity of $\phi$ and $V^2=(0)$), we can pass $\overline{\phi}$ to the quotient to get a $k$-linear function
$$
f_{\phi}\colon \m_R/(\m_\Lambda R+\m_R^2)\arr V
$$
which is an element of
$$
\Hom_k(\m_R/(\m_\Lambda R+\m_R^2),V)\simeq V\otimes_k (\m_R/(\m_\Lambda R+\m_R^2))^\vee\,.
$$
Conversely, suppose we have an element of $V\otimes_k (\m_R/(\m_\Lambda R+\m_R^2))^\vee$ that corresponds then to a $k$-linear function
$$
f\colon \m_R/(\m_\Lambda R+\m_R^2)\arr V\,.
$$
Since $\m_R/(\m_\Lambda R+\m_R^2)\simeq \m_{\overline{R}}/\m_{\overline{R}}^2$ (where $\overline{R}=R/\m_\Lambda R$, see Appendix~\ref{appb}), we can consider the composite $g\colon \m_{\overline{R}}\arr\m_R/(\m_\Lambda R+\m_R^2)\to V$, and define $\phi_f\colon R\to k \oplus V$ as
$$
\phi_f(r) \eqdef \pi(r)+g(\pi'(r)-\pi(r))\,.
$$
where $\pi\colon R\to k$ and $\pi'\colon R\to R/\m_\Lambda R=\overline{R}$ are the quotient maps (we are using the fact that $\overline{R}$ is a $k$-algebra, so $\pi(r) \in \overline{R}$).

It can readily be checked that these two functions are inverse to each other, and we have our bijection. Functoriality is immediate.
\end{proof}

Now consider a morphism $\xi\colon (\Art_R)^\op\to \F$ of deformation categories. From $\xi$ we get a formal object of $\F$ over $R$, taking $\xi_n=\xi(R_n)$, and $f_n=\xi((R_{n+1}\to R_n)^{\op})$ (where $R_{n+1}\to R_{n}$ is the projection), and if we have a base-preserving natural transformation $\alpha\colon \xi\to \eta$ between two morphisms $(\Art_R)^\op\to \F$ we get an arrow $\alpha\colon \{\xi_n,f_n\}_\nin\to \{\eta_n,g_n\}_\nin$ taking $\alpha_n=\alpha(R_n)\colon \xi_n\to\eta_n$.

This association gives a functor $\Phi\colon \Hom((\Art_R)^\op,\F)\to \wh{\F}(R)$, defined by $\Phi(\xi)=\{\xi_{n},f_{n}\}_\nin$ and $\Phi(\alpha)=\{\alpha_{n}\}_{\nin}$. We have the following ``Yoneda-like'' Proposition.

\begin{prop}\label{yoneda-like}
The functor $\Phi$ is an equivalence of categories.
\end{prop}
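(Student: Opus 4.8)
The plan is to show that $\Phi$ is an equivalence by exhibiting a quasi-inverse $\Psi\colon \wh{\F}(R)\to\Hom((\Art_R)^\op,\F)$. The single fact that drives everything is that, by Proposition~\ref{prorapp}, $(\Art_R)^\op\to\artl^\op$ is a deformation category, hence a category fibered in groupoids in which \emph{every} arrow is cartesian; consequently a base-preserving functor out of $(\Art_R)^\op$ automatically carries each arrow to a cartesian arrow of its target. Spelled out: if $A\in(\Art_R)$ has structure homomorphism $u\colon R\to A$, then $u$ factors as $R\to R_m\xrightarrow{u_m}A$ as soon as $m$ is at least the order of $A$, and the resulting arrow of $(\Art_R)^\op$ is cartesian. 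So for a formal object $\eta=\{\eta_n,g_n\}_\nin$ over $R$ the object $u_m^*(\eta_m)\in\F(A)$ is, by the uniqueness of pullbacks in a category fibered in groupoids, independent of $m$ up to a canonical isomorphism, where the identifications as $m$ grows are induced by the arrows $g_n$ (which identify the restriction of $\eta_{m+1}$ to $R_m$ with $\eta_m$). I would set $\Psi(\eta)(A)\eqdef u_m^*(\eta_m)$, and let $\Psi(\eta)$ send each arrow of $(\Art_R)^\op$ to the essentially unique cartesian arrow lying over it between the corresponding objects; choosing a common $m$ for the source and the target of an arrow shows this arrow exists, again via uniqueness of pullbacks.

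Next I would check that $\Psi(\eta)$ really is a base-preserving functor (it respects identities and composites), that $\Psi$ is functorial in $\eta$, and then produce the two natural isomorphisms $\Phi\circ\Psi\simeq\id_{\wh{\F}(R)}$ and $\Psi\circ\Phi\simeq\id$. The first is immediate: $\Phi(\Psi(\eta))=\{\Psi(\eta)(R_n),\,\Psi(\eta)(R_{n+1}\to R_n)\}_\nin$, and since $\Psi(\eta)(R_n)=\id^*(\eta_n)$ is canonically $\eta_n$ while $\Psi(\eta)$ carries the projections to (the canonical images of) the $g_n$, one reads off a canonical isomorphism with $\eta$. For the second, given $\xi\colon(\Art_R)^\op\to\F$ with $\Phi(\xi)=\{\xi_n,f_n\}_\nin$, the identifications $\xi(A)\cong u_m^*(\xi_m)$ from the first paragraph assemble, naturally in $A$ and independently of $m$, into a base-preserving isomorphism $\xi\simeq\Psi(\Phi(\xi))$, naturality being forced by uniqueness of cartesian arrows together with the compatibility arrows $f_n$. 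On the level of arrows, a base-preserving natural transformation $\alpha\colon\xi\to\eta$ restricts to the morphism of formal objects $\{\alpha(R_n)\}_\nin$ — the required commuting squares are precisely the naturality squares of $\alpha$ for the projections $R_{n+1}\to R_n$, which is exactly the content of Remark~\ref{morphism.approx} at the level of arrows — and conversely a morphism of formal objects $\{\beta_n\}_\nin$ is pushed forward along the identifications $\xi(A)\cong u_m^*(\xi_m)$, $\eta(A)\cong u_m^*(\eta_m)$ using functoriality of the pullback functor $u_m^*$; the relation $g_n\circ\beta_n=\beta_{n+1}\circ f_n$ is what guarantees the resulting $\alpha_A$ are well-defined (independent of $m$) and natural. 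These assignments are mutually inverse, completing the proof that $\Phi$ is an equivalence.

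I expect the only genuine difficulty to be coherence bookkeeping: because the pullback operation is merely pseudo-functorial (the cleavage does not strictly respect composition) and because each $A$ factors through infinitely many of the $R_m$, every "well-definedness" and "functoriality" claim above has to be argued rather than asserted. In each case the argument reduces to one principle — in a category fibered in groupoids a cartesian arrow is determined, up to a unique compatible isomorphism, by its target and the arrow it lies over — so the write-up is a matter of invoking this principle cleanly rather than overcoming a real obstacle. If one wants to suppress even this, the whole statement can be phrased as an instance of the $2$-categorical Yoneda lemma, viewing $\Phi$ as evaluation of the functor $\widehat{(-)}$ at the tautological formal object $\{R_n,(R_{n+1}\to R_n)^{\op}\}_\nin$ of $(\Art_R)^\op$ over $R$; I would mention this as a remark but carry out the direct argument above.
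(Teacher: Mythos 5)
Your proposal is correct and takes essentially the same approach as the paper: both construct the quasi-inverse $\Psi$ by factoring the structure homomorphism $R\to A$ through $R_m$ (with $m$ the order of $A$), setting $\Psi(\eta)(A)$ to be the pullback of $\eta_m$ along $R_m\to A$, and then reducing all well-definedness and naturality claims to the uniqueness of cartesian arrows. You are somewhat more explicit than the paper about the coherence bookkeeping — the paper dispatches it as "routine verifications using the universal property of pullbacks" — but the underlying argument is identical.
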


\begin{proof}
We construct a quasi-inverse $\Psi\colon \wh{\F}(R)\to \Hom((\Art_R)^\op,\F)$ to $\Phi$. Given a formal object $\xi=\{\xi_n,f_n\}_\nin$, we get a morphism $F_\xi\colon (\Art_R)^\op\to\F$ in the following way: if $A \in (\Art_R)^\op$ we associate with $A$ the pullback $\xi_A=(\phi_m)_*(\xi_m) \in \F$, where $m$ is the order of $A$ and $\phi_m\colon R_m\to A$ is the homomorphism induced by $R\to A$. On arrows, if we have a homomorphism $\phi\colon A\to B$ in $(\Art_R)$, the commutative diagram
$$
\xymatrix@!C@!R@-10pt{
\xi_{m} \ar@{|->}[dd] & & \xi_A \ar[ll]\ar|\hole@{|->}[dd] &\\
&\xi_{n} \ar@{|->}[dd] \ar[lu]& & \xi_B \ar[ll] \ar@{|->}[dd]\ar@{-->}[lu]_{\xi_\phi}\\
R_m \ar[dr] \ar|\hole[rr]& & A\ar[dr]^\phi & \\
& R_n \ar[rr] & & B.
}
$$
gives (by pullback in $\F$) an arrow $\xi_\phi\colon \xi_B \to \xi_A$ of $\F$ over $\phi$ (as an arrow in $\artl$).

This defines a morphism $F_\xi\colon (\Art_R)^\op\to\F$. With an arrow $\alpha=\{\alpha_n\}_\nin\colon \xi\to \eta$ between two formal objects over $R$, where $\eta=\{\eta_n,g_n\}_\nin$, we associate a natural transformation $F_\alpha\colon F_\xi\to F_\eta$. Given an object $A \in (\Art_R)^\op$ of order $m$, we define an arrow $F_\alpha(A)\colon F_\xi(A)\to F_\eta(A)$ simply as the pullback of $\alpha_m\colon \xi_m \to \eta_m$, along the homomorphism $R_m\to A$. Standard arguments show that this gives a natural transformation, and this completes the definition of $\Psi$.

Routine verifications using the universal property of pullbacks prove that $\Psi$ and $\Phi$ are quasi-inverse to each other, and then our result.
\end{proof}

So giving a formal object of $\F$ over $R$ is equivalent to giving a morphism of deformation categories $(\Art_R)^\op\to \F$. From now on we will use both these points of view.

In particular we will use the same symbol for a formal object and for the associated morphism, and if $\xi\colon (\Art_R)^\op\to \F$ is a formal object and $\phi\colon R\to A$ a homomorphism of $\Lambda$-algebras, we will denote by $\xi_{\phi}$ (or simply $\xi_{R\to A}$ when there is no possibility of confusion) the object $\xi(R\to A)$ of $\F(A)$.

We get the following corollary (which is an analogue of the ``weak'' Yoneda's Lemma), simply by taking $\F=(\Art_R')$.

\begin{cor}
There is a bijection $\Hom\bigl((\Art_R)^\op,(\Art_R')^\op\bigr)\simeq \Hom_{\Lambda}(R',R)$ which respects composition.

In particular $(\Art_R)^\op$ and $(\Art_R')^\op$ are equivalent as fibered categories if and only if $R$ ad $R'$ are isomorphic as $\Lambda$-algebras.
\end{cor}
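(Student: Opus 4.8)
The plan is to derive this straight from Proposition~\ref{yoneda-like}, applied to the deformation category $\F = (\Art_{R'})^\op$ (which is a deformation category by Proposition~\ref{prorapp}). The first thing I would record is that $(\Art_{R'})^\op \arr \artl^\op$ has \emph{discrete} fibers: an arrow of $(\Art_{R'})^\op$ lying over an identity $\id_A$ is a homomorphism $A \arr A$ of $\Lambda$-algebras which is the identity as an arrow of $\artl^\op$ and respects the two $R'$-algebra structures, hence is $\id_A$ and forces those structures to coincide. So $(\Art_{R'})^\op$ is fibered in sets, with fiber over $A$ the set $\Hom_\Lambda(R', A)$. Consequently a formal object of $(\Art_{R'})^\op$ over $R$ involves no choice of isomorphisms: it is exactly a compatible system $\{R' \arr R_n\}_{\nin}$, i.e. an element of $\invlim_n \Hom_\Lambda(R', R_n)$, and the category $\wh{(\Art_{R'})^\op}(R)$ is discrete.

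Next I would identify this set with $\Hom_\Lambda(R', R)$: since $R$ is complete we have $R = \invlim_n R_n$, and since $\Hom_\Lambda(R', -)$ preserves inverse limits (being a limit, purely formally), $\invlim_n \Hom_\Lambda(R', R_n) = \Hom_\Lambda(R', \invlim_n R_n) = \Hom_\Lambda(R', R)$. Proposition~\ref{yoneda-like} then gives an equivalence $\Hom\bigl((\Art_R)^\op, (\Art_{R'})^\op\bigr) \simeq \wh{(\Art_{R'})^\op}(R) = \Hom_\Lambda(R', R)$; because the right-hand side is a discrete category, so is the left-hand one, so its isomorphism classes of objects form a set in bijection with $\Hom_\Lambda(R', R)$.

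To see that this bijection respects composition, I would unwind the quasi-inverse $\Psi$ of Proposition~\ref{yoneda-like}: the morphism of fibered categories attached to $\alpha \in \Hom_\Lambda(R', R)$ sends an object $(R \arr A)$ of $(\Art_R)^\op$ to the composite $R' \xrightarrow{\alpha} R \arr A$, i.e. it is restriction of scalars along $\alpha$. From this it is immediate that composing the morphism attached to $\alpha\colon R'\arr R$ with the one attached to $\beta\colon R''\arr R'$ produces the morphism attached to $\alpha\circ\beta\colon R''\arr R$; in other words the bijection is compatible with composition, contravariantly as the notation $(\Art_R)^\op$ suggests. For the final assertion: if $F\colon (\Art_R)^\op \arr (\Art_{R'})^\op$ and $G\colon (\Art_{R'})^\op \arr (\Art_R)^\op$ are quasi-inverse, let $\alpha \in \Hom_\Lambda(R', R)$ and $\beta \in \Hom_\Lambda(R, R')$ be the corresponding homomorphisms; since $\id_{(\Art_R)^\op}$ corresponds to $\id_R$ and the correspondence respects composition, $G \circ F \cong \id$ forces $\alpha \circ \beta = \id_R$, and symmetrically $\beta \circ \alpha = \id_{R'}$, so $R \cong R'$ as $\Lambda$-algebras; the converse is clear, since an isomorphism of $\Lambda$-algebras induces restriction-of-scalars equivalences in both directions.

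The only delicate point throughout is the bookkeeping of variance, i.e. checking that $\Phi$ (equivalently $\Psi$) genuinely intertwines composition of morphisms of fibered categories with composition of ring homomorphisms, and that $\id_{(\Art_R)^\op}$ goes to $\id_R$; once the discreteness of $(\Art_{R'})^\op$ and the commutation of $\Hom_\Lambda(R',-)$ with $\invlim$ are in hand, the rest is formal.
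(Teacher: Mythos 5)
Your proof is correct and takes essentially the same route as the paper, which simply invokes Proposition~\ref{yoneda-like} with $\F = (\Art_{R'})^\op$ and asserts that bijectivity is immediate and compatibility with composition is easy. You have merely supplied the details the paper leaves implicit—that $(\Art_{R'})^\op$ has discrete fibers, that $\Hom_\Lambda(R',-)$ commutes with the inverse limit $R = \invlim R_n$, and the unwinding of the quasi-inverse $\Psi$ to check composition—all of which are correct.
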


By ``respects composition'' above we mean the following: if $R''$ is an object of $\compl$, and $F\colon (\Art_R)^\op\to(\Art_R')^\op$, $G\colon (\Art_R')^\op\to(\Art_R'')^\op$ are two morphisms corresponding to $\phi\colon R'\to R$ and $\psi\colon  R''\to R'$ respectively, then $G\circ F \in \Hom((\Art_R)^\op,(\Art_R'')^\op)$ corresponds to $\phi\circ \psi \in \Hom_{\Lambda}(R'',R)$.

Notice also that $\Hom((\Art_R)^\op,(\Art_R')^\op)$ is a set, since $(\Art_R')^\op$ is fibered in sets. As to the proof, bijectivity is immediate from Proposition~\ref{yoneda-like}, and the part about respecting composition is easy.

\begin{rmk}\label{morph.pback}
From this description of formal objects we get another interpretation of the pullback: if $(R,\xi)$ is a formal object of $\F$, and $\phi\colon R\to S$ is a homomorphism in $\compl$, then from the last corollary we have an associated morphism $\overline{\phi}\colon (\Art_S)^\op \to (\Art_R)^\op$, and we can consider the composite
$$
(\Art_S)^\op \stackrel{\overline{\phi}}{\larr} (\Art_R)^\op \stackrel{\xi}{\larr} \F
$$
which is a formal object of $\F$ over $S$. One can easily see that this formal object is (up to a unique isomorphism) precisely the pullback of $\xi$ to $S$ along $\phi$.
\end{rmk}

\subsubsection{The Kodaira--Spencer map}

Given a formal object $(R,\xi)$ of $\F$, we can consider the differential at the only object over $k$ of $(\Art_R)^\op$ of the corresponding morphism $\xi\colon (\Art_R)^\op\to \F$.

\begin{defin}
The $k$-linear function $d_{R\to k}\xi\colon T_{R\to k}(\Art_R)^\op\to T_{\xi_0}\F$ is called the \gr{Kodaira--Spencer map} of the formal object $(R,\xi)$. We will usually denote it by $\kappa_{\xi}\colon T_\Lambda R\to T_{\xi_0}\F$.
\end{defin}

More explicitly, the Kodaira--Spencer map can be described in the following way: if $\phi\colon R\to \dual$ is an element of $T_{R\to k}(\Art_R)^\op$ (we do not need to take isomorphism classes here, for $(\Art_R)^\op$ is fibered in sets), the image $\kappa_\xi(\phi)$ is the isomorphism class of the pullback of the formal object $\xi$ along the map $\phi$.

Notice that, since $\eps^2=0$ and by $\Lambda$-linearity, $\phi$ will factor through the quotient map $R\to\overline{R}_1$, and $\kappa_\xi(\phi)$ can be described as the isomorphism class of the pullback of $\overline{\xi}_1 \in \F(\overline{R}_1)$ along the induced map $\overline{R}_1\to \dual$, where $\overline{\xi}_1$ is the pullback of the formal object $\xi$ along the quotient map above.

There is another natural $k$-linear map $T_\Lambda R\to T_{\xi_0}\F$ associated with $(R,\xi)$, defined in the following way: consider the object $\overline{\xi}_1 \in \F(\overline{R}_1)$ as above. This is a lifting of $\xi_0$ to $\overline{R}_1$, and since $\overline{R}_1$ is a $k$-algebra, we can compare it to the trivial lifting $\xi_0|_{\overline{R}_1}$ of $\xi_0$ to $\overline{R}_1$.

Since these objects are liftings of $\xi_0$ to $\overline{R}_1$, and
$$
\xymatrix{
0\ar[r] & \m_{\overline{R}}/\m_{\overline{R}}^2\ar[r] & \overline{R}_1 \ar[r] & k\ar[r] & 0
}
$$
is a small extension, we get an element
$$
[\overline{\xi}_1] - [\xi_0|_{\overline{R}_1}] \in (\m_{\overline{R}}/\m_{\overline{R}}^2)\otimes_k T_{\xi_0}\F
$$
(using the notation introduced in Remark~\ref{action}) that we call the \gr{Kodaira--Spencer class} of $\xi$, and denote by $k_\xi$.

This element corresponds to a $k$-linear function
$$
T_\Lambda R\simeq (\m_{\overline{R}}/\m_{\overline{R}}^2)^\vee\arr T_{\xi_0}\F\,.
$$

\begin{ex}\label{kod-s}
Show that this last map coincides with the Kodaira--Spencer map $\kappa_\xi$ of $\xi$.
\end{ex}

This fact shows in particular that $\kappa_\xi$ is completely determined by the first-order term $\xi_1 \in \F(R_1)$ of $\xi$, because this determines $\overline{\xi}_1 \in \F(\overline{R}_1)$ up to isomorphism. Conversely, by the freeness of the action, $\overline{\xi}_1$ is determined (up to isomorphism) once we know $\kappa_\xi$.

The following functoriality property is an immediate consequence of Proposition~\ref{funct}. Take two objects $R, S$ of $\compl$,  a homomorphism $\phi\colon R\to S$, and $\xi=\{\xi_n,f_n\}_\nin$ a formal object of $\F$ over $R$.

Call $\overline{\phi}_1\colon \overline{R}_1\to \overline{S}_1$ the induced homomorphism, which in turn induces a morphism of small extensions
$$
\xymatrix{
0\ar[r] & \m_{\overline{R}}/\m_{\overline{R}}^2 \ar[r]\ar[d]^{\psi} & \overline{R}_1 \ar[r]\ar[d]^{\overline{\phi}_1} & k \ar[r]\ar[d] & 0\\
0\ar[r] & \m_{\overline{S}}/\m_{\overline{S}}^2  \ar[r] & \overline{S}_1 \ar[r] & k\ar[r] & 0.
}
$$
Recall also that $\phi$ induces a differential $d\phi\colon T_\Lambda S\to T_\Lambda R$ that is the adjoint of the codifferential $\psi\colon \m_{\overline{R}}/\m_{\overline{R}}^2\to \m_{\overline{S}}/\m_{\overline{S}}^2$ (see Appendix~\ref{appb}).

\begin{prop}\label{kosfunct}
We have the following relations between the Kodaira--Spencer maps and classes of $\xi \in \wh{\F}(R)$ and of the pullback $\phi_*(\xi) \in \wh{\F}(S)$.
\begin{itemize}
\item $k_{\phi_*(\xi)}=(\psi\otimes\id)(k_\xi) \in \m_{\overline{S}}/\m_{\overline{S}}^2\otimes_k T_{\xi_0}\F$.
\item $\kappa_{\phi_*(\xi)}=\kappa_{\xi}\circ d\phi\colon T_\Lambda S=(\m_{\overline{S}}/\m_{\overline{S}}^2)^\vee\arr T_{\xi_0}\F$.
\end{itemize}
\end{prop}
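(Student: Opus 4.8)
The plan is to derive the first identity from Proposition~\ref{funct}, applied to the homomorphism of small extensions displayed just before the statement, and then to obtain the second identity from the first by a formal dualization. Write $T \eqdef T_{\xi_{0}}\F$ throughout. The one point that needs care is a compatibility between the pullback of formal objects and the ``bar-truncation'' to first order: I claim there is a canonical isomorphism $\overline{(\phi_{*}\xi)}_{1} \simeq (\overline{\phi}_{1})_{*}(\overline{\xi}_{1})$ in $\F(\overline{S}_{1})$, and likewise that the trivial lifting $\xi_{0}|_{\overline{R}_{1}}$ pulls back along $\overline{\phi}_{1}$ to $\xi_{0}|_{\overline{S}_{1}}$.

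To see the first of these, I would use the description of $\phi_{*}(\xi)$ as the composite morphism $(\Art_{S})^{\op} \to (\Art_{R})^{\op} \xrightarrow{\xi} \F$ (Remark~\ref{morph.pback}): the object $\overline{(\phi_{*}\xi)}_{1} \in \F(\overline{S}_{1})$, being the pullback of $\phi_{*}(\xi)$ along the quotient $S \to \overline{S}_{1}$, is $\xi$ evaluated at the composite $R \to S \to \overline{S}_{1}$, whereas $(\overline{\phi}_{1})_{*}(\overline{\xi}_{1})$ is $\overline{\xi}_{1} = \xi(R \to \overline{R}_{1})$ pulled back along $\overline{\phi}_{1}$, i.e. $\xi$ evaluated at $R \to \overline{R}_{1} \xrightarrow{\overline{\phi}_{1}} \overline{S}_{1}$. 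Since $\phi$ carries $\m_{\Lambda}R$ into $\m_{\Lambda}S$ and $\m_{R}^{2}$ into $\m_{S}^{2}$, it induces $\overline{\phi}_{1}$ making the square of quotient maps commute, so these two homomorphisms $R \to \overline{S}_{1}$ coincide, and pseudo-functoriality of pullback gives the desired canonical isomorphism. The statement about trivial liftings is the same observation applied to the structure homomorphism $k \to \overline{S}_{1}$, which factors as $k \to \overline{R}_{1} \xrightarrow{\overline{\phi}_{1}} \overline{S}_{1}$.

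With these identifications, I would apply Proposition~\ref{funct} to the small extensions $\overline{R}_{1} \to k$ and $\overline{S}_{1} \to k$, the homomorphism $\overline{\phi}_{1}$ between them (whose restriction to kernels is $\psi$), the object $\xi_{0} \in \F(k)$, and the two liftings $\overline{\xi}_{1}$ and $\xi_{0}|_{\overline{R}_{1}}$ of $\xi_{0}$ to $\overline{R}_{1}$, obtaining
\[
[(\overline{\phi}_{1})_{*}(\overline{\xi}_{1})] - [(\overline{\phi}_{1})_{*}(\xi_{0}|_{\overline{R}_{1}})] = (\psi \otimes \id)\bigl([\overline{\xi}_{1}] - [\xi_{0}|_{\overline{R}_{1}}]\bigr).
\]
By the previous paragraph the left-hand side equals $[\overline{(\phi_{*}\xi)}_{1}] - [\xi_{0}|_{\overline{S}_{1}}] = k_{\phi_{*}(\xi)}$, while the bracket on the right is $k_{\xi}$; this is the first bullet.

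For the second bullet, recall from Exercise~\ref{kod-s} that $\kappa_{\xi}$ is the $k$-linear map $T_{\Lambda}R = (\m_{\overline{R}}/\m_{\overline{R}}^{2})^{\vee} \to T$ corresponding to $k_{\xi} \in (\m_{\overline{R}}/\m_{\overline{R}}^{2}) \otimes_{k} T$ under the canonical isomorphism $V \otimes_{k} T \simeq \Hom_{k}(V^{\vee}, T)$ sending $v \otimes t$ to $\ell \mapsto \ell(v)\,t$. Under this isomorphism, applying $\psi \otimes \id_{T}$ corresponds to precomposition with $\psi^{\vee}$, and $\psi^{\vee}$ is by definition the differential $d\phi\colon T_{\Lambda}S \to T_{\Lambda}R$ (Appendix~\ref{appb}); so the first bullet translates into $\kappa_{\phi_{*}(\xi)} = \kappa_{\xi} \circ d\phi$. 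The main obstacle is the compatibility isomorphism of the second paragraph; I expect it to require nothing beyond unwinding the definition of the pullback of a formal object together with the commuting square of quotient maps, but it is the step where one must check that the canonical isomorphisms in play are the intended ones. Everything else is a citation of Proposition~\ref{funct} and routine bookkeeping with the tensor--hom identification.
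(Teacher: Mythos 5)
Your proof is correct, and it takes exactly the route the paper sketches: the paper offers no proof but asserts that the proposition is ``an immediate consequence of Proposition~\ref{funct},'' and you have filled in that argument with the necessary bookkeeping. The compatibility isomorphism $\overline{(\phi_{*}\xi)}_{1} \simeq (\overline{\phi}_{1})_{*}(\overline{\xi}_{1})$ that you flagged as the one delicate step is indeed the right thing to check, and your justification via Remark~\ref{morph.pback} and the commuting square of quotients is sound, as is the dualization step giving the second bullet from the first.
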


\subsection{Universal and versal formal deformations}

Like the classical Yoneda's Lemma, Proposition~\ref{yoneda-like} lets us speak of ``universal formal objects'' for a deformation category $\F$.

\begin{defin}
A \gr{universal formal object} over $R \in \compl$ for $\F$ is a formal object $\xi \in \wh{\F}(R)$, such that the corresponding $\xi\colon (\Art_R)^\op\to \F$ is an equivalence of categories fibered in groupoids over $(\Art_\Lambda)^\op$.
\end{defin}

Thanks to Proposition~\ref{fibered.equiv}, $\xi$ is a universal formal object if and only if the induced morphism $\xi_A\colon (\Art_R)^\op(A)\to \F(A)$ is an equivalence for every $A \in \artl$, or equivalently if and only if for every $A \in \artl$ and $\eta \in \F(A)$ there exist a unique homomorphism of $\Lambda$-algebras $R\to A$ and a unique isomorphism $\xi_{R\to A}\simeq \eta$ in $\F(A)$. This can also be restated by saying that for every $A \in \artl$ and $\eta \in \F(A)$ there is a unique arrow $(A,\eta)\to (R,\xi)$ of formal objects in $\wh{\F}$.

Using Remark~\ref{morphism.approx} we see that the above universal property can be strengthened to: for every formal object $(S,\eta)$ of $\F$ there exists a unique arrow $(S,\eta)\to (R,\xi)$. That is, every formal object can be obtained as pullback of $(R,\xi)$, in a unique way. This can easily be checked by considering the sequence of arrows $h_n\colon (S_n,\eta_n)\to (R,\xi)$ coming from the ``weak'' universal property above, and noticing that they are necessarily compatible because of uniqueness.

Using this last universal property it is easy to check that two universal deformations are canonically isomorphic.

\begin{defin}
We say that a deformation category $\fib$ is \gr{prorepresentable} if it is equivalent to a deformation category of the form $(\Art_R)^\op$ for some $R \in \compl$, or equivalently if $\F$ has a universal formal object $(R,\xi)$.
\end{defin}

Since $(\Art_R)^\op$ is a category fibered in sets, a necessary condition for a deformation category $\fib$ to be prorepresentable is that $\F$ should be fibered in equivalence relations. Other necessary conditions are that $\F(k)$ should be a trivial groupoid, because it will be equivalent to a singleton, and $\F$ should have finite-dimensional tangent space $T_{\xi_0}\F$ at any (actually it suffices that this holds for one, given the former condition) object $\xi_0 \in \F(k)$, because $\dim_k T_{\Lambda}R$ is finite.

The main result of this section is that the converse also holds.

\begin{thm}[Schlessinger]\label{schlessinger}
Let $\fib$ be a deformation category. Then $\fib$ is prorepresentable if and only if:
\begin{enumeratea}
\item $\F(k)$ is a trivial groupoid.
\item $T_{\xi_0}\F$ is finite-dimensional for any $\xi_0 \in \F(k)$.
\item $\Inf(\xi_0)$ is trivial for any $\xi_0 \in \F(k)$.
\end{enumeratea}
\end{thm}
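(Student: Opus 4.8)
\emph{Proof proposal.} The plan is to reduce the statement to a question about the associated functor, and then, staying within the present formalism, to run the classical inductive construction of a hull, using the vanishing of infinitesimal automorphisms to see that the formal object produced is not merely versal but universal.

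\emph{Necessity.} If $\F\simeq(\Art_R)^\op$, then $(\Art_R)^\op$ is fibered in sets, hence in equivalence relations, so Proposition~\ref{equiv.rel} gives (c); its fiber over $k$ is the singleton $\Hom_\Lambda(R,k)$, which is a trivial groupoid, giving (a); and Proposition~\ref{prorapp} identifies $T_{\xi_0}\F$ with $T_\Lambda R=(\m_R/(\m_\Lambda R+\m_R^2))^\vee$, finite-dimensional since $R$ is noetherian, giving (b).

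\emph{Sufficiency.} Assume (a), (b), (c). First I would observe that $\F$ is essentially a functor: by (a), (c) and Corollary~\ref{equiv} every $\aut_A(\xi)$ injects into the trivial group $\aut_k(\xi_0)$, so each $\F(A)$ is an equivalence relation, $\F$ is fibered in equivalence relations, and hence $\F\simeq\F_F$ for a unique $F\colon\artl\to\Set$; by (a) the set $F(k)$ is a point and by (b) the space $F(\dual)=T_{\xi_0}\F$ is finite-dimensional. Transporting RS across the equivalence $\F\simeq\F_F$, and using that the isomorphism classes of a $2$-fibered product of categories fibered in equivalence relations form the ordinary fibered product of the isomorphism-class sets, one gets that $F(A'\times_A A'')\to F(A')\times_{F(A)}F(A'')$ is bijective whenever $A''\to A$ is surjective; so $F$ satisfies (H1), (H2), (H4), while (H3) is (b). One could invoke Theorem~\ref{schlessinger.classical} at this point, but to avoid the functorial machinery I would instead build a hull inside $\wh{\F}$: start from the universal first-order lifting over $R_1=k\oplus(T_{\xi_0}\F)^\vee$ (Example~\ref{universal-first-order}), and construct artinian quotients $R_n$ with compatible liftings $\xi_n\in\F(R_n)$, choosing $R_{n+1}$ at each stage as large as possible subject to $\xi_n$ extending and to the vertical tangent space not growing; RS and the finiteness in (b) are what make each step possible. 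Setting $R=\invlim R_{n}$ and assembling the $\xi_n$ yields a formal object $(R,\xi)$, i.e. a morphism $\xi\colon(\Art_R)^\op\to\F$.

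It remains to prove that $\xi$ is an equivalence, which by Proposition~\ref{fibered.equiv} is a fiberwise matter. Essential surjectivity of $\xi_A\colon(\Art_R)^\op(A)\to\F(A)$ for every $A\in\artl$ is the versality built into the construction. Full faithfulness — which, together with essential surjectivity, says that for every $\eta\in\F(A)$ there is a \emph{unique} $\Lambda$-algebra homomorphism $R\to A$ with $\xi_{R\to A}\simeq\eta$ — is precisely where (c) enters: via Proposition~\ref{inf} and an induction along a factorization of $A\to k$ into small extensions (Proposition~\ref{comp.small}), the vanishing of $\Inf(\xi_0)$ forces the classifying map to be unique and not merely to exist. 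This last step is the main obstacle: the construction only guarantees a versal $R$ a priori, and one must upgrade it to a universal one by showing, with the help of Proposition~\ref{inf} and the triviality of the automorphism groups, that two distinct homomorphisms $R\to A$ cannot induce isomorphic objects of $\F(A)$ — which is exactly the content of hypothesis (c).
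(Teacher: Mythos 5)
Your overall strategy matches the paper's: prove finite tangent dimension (b) suffices to construct a miniversal formal object (the paper's Theorem~\ref{miniexist}), and then use (a) and (c) to upgrade miniversality to universality (the paper's Proposition~\ref{mini-uni}). The necessity direction is fine. The observation that (a)+(c)+Corollary~\ref{equiv} make $\F$ fibered in equivalence relations, so one could transport RS to the associated functor and invoke the classical Theorem~\ref{schlessinger.classical}, is a legitimate alternative route which the paper deliberately avoids; your check that the relevant 2-fibered product collapses to the ordinary fibered product of isomorphism-class sets is correct.

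The gap is in the route you actually choose. You write that full faithfulness of $\xi_A\colon (\Art_R)^\op(A)\to \F(A)$ --- i.e.\ the injectivity of the classifying map $\Hom_\Lambda(R,A)\to F(A)$ --- is ``exactly the content of hypothesis (c)'', to be extracted ``via Proposition~\ref{inf} and an induction along a factorization of $A\to k$ into small extensions.'' This is not so, and the claim would break down. The triviality of $\Inf(\xi_0)$ (together with (a)) makes $\F$ fibered in equivalence relations, which reduces uniqueness of the morphism of formal objects $(A,\eta)\to(R,\rho)$ to uniqueness of the underlying $\Lambda$-algebra homomorphism $R\to A$. But it does not touch that latter uniqueness: for example, if $\rho'$ is the pullback of a miniversal $\rho$ along $R\hookrightarrow R\ds{t}$, then $(R\ds{t},\rho')$ is still versal and the infinitesimal automorphism group is unchanged, yet there are plainly two distinct homomorphisms $R\ds{t}\to k[\epsilon]$ inducing isomorphic objects. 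So injectivity must come from minimality of $R$ (bijectivity of the Kodaira--Spencer map), and the mechanism is a separate quantitative argument: two homomorphisms $f,g\colon R\to A'$ agreeing modulo a small extension $A'\to A$ with kernel $I$ differ by a $\Lambda$-derivation, hence induce a map $\Delta(f,g)\colon T^\vee_\Lambda R\to I$, and one proves (this is the Lemma inside the paper's proof of Proposition~\ref{mini-uni}) that $[f_*(\rho)]-[g_*(\rho)]=(\Delta(f,g)\otimes\id)(k_\rho)$, where $k_\rho$ is the Kodaira--Spencer class. Since $\kappa_\rho$ is an isomorphism, vanishing of the left-hand side forces $\Delta(f,g)=0$, hence $f=g$; induction on the order of $A$ then finishes. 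Your outline cites the right ambient tools (induction through tiny extensions, the infinitesimal-automorphism computation) but is missing this $\Delta$--Kodaira--Spencer identity, which is the actual engine of the uniqueness step, and it misattributes the uniqueness to (c) rather than to miniversality plus this identity, with (c) serving only to pass from formal-object morphisms down to algebra homomorphisms.
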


This is an analogue of Schlessinger's Theorem~\ref{schlessinger.classical} for deformation categories, even though there are no direct implications between the two. We will prove the Theorem in \S\ref{exist+schl}, after discussing miniversal deformations.

\begin{examp}
As a simple example, suppose that $X$ is a quasi-projective scheme over $k$, set $\Lambda=k$, and consider the deformation category $\hilb^X\to \art^\op$.

Let $Z_0$ be a closed subscheme of $X$ which is proper over $\spec k$, and notice that the deformation category $\hilb^X_{Z_0}\to \art^\op$ of objects restricting to $Z_0$ over $k$ meets all hypotheses of Theorem~\ref{schlessinger}: we have already seen that $\Inf_{Z_0}(\hilb^X)=0$, clearly the only object over $k$ is $Z_0$ itself, and the tangent space $T_{Z_0}\hilb^X\simeq \H^0(Z_0,\mathcal{N}_0)$ is finite-dimensional over $k$. Then we can conclude that the deformation category $\hilb^X_{Z_0}\to \art^\op$ is prorepresentable.

We can see this in a more concrete way: the deformation category (which is fibered in sets) $\hilb^X\to \art^\op$ comes from a functor, called the \gr{Hilbert functor} of $X$, and denoted by $\Hilb^X\colon (\Sch_k)\to \set$; a theorem of Grothendieck (see for example \cite[Chapter~5]{FGA}) states that if $X$ is quasi-projective this functor is represented by a scheme, called the \gr{Hilbert scheme} of $X$, which we still denote by $\Hilb^X\in (\Sch_k)$.

The closed subscheme $Z_0$ corresponds then to a point in the Hilbert scheme, $Z_0 \in \Hilb^X$. Since $\Hilb^X$ represents the Hilbert functor, every object $Z \in \hilb^X_{Z_0}(A)$ corresponds to a morphism $\spec A\to \Hilb^X$ with image $Z_0$, which factors through $\spec\wh{\O}_{\Hilb^X,Z_0}$, by the usual argument. In particular the resulting homomorphism $\wh{\O}_{\Hilb^X,Z_0}\to A$ gives an object of $(\Art_{\wh{\O}_{\Hilb^X,Z_0}})^\op$.

This gives a morphism
$$
\hilb^X_{Z_0}\arr (\Art_{\wh{\O}_{\Hilb^X,Z_0}})^\op
$$
of deformation categories, which is easily seen to be an equivalence. Then $\hilb^X_{Z_0}$ is prorepresentable, as we already knew from Theorem~\ref{schlessinger}. The universal formal object $(R,Z)$ in this case has $R=\wh{\O}_{\Hilb^X,Z_0}$, and the term of order $n$ of the formal deformation $Z=\{Z_n,f_n\}_\nin$ over $R$ is the pullback to $\spec(\O_{\Hilb^X,Z_0}/\m_{\Hilb^X,Z_0}^{n+1}{})$ of the universal closed subscheme of $\Hilb^X$.
\end{examp}

Notice the following fact. Let $F\colon (\Art_{\Lambda}) \arr \set$ be the functor of isomorphism classes of a deformation category $\F \arr (\Art_{\Lambda})^{\op}$. If $\F$ is prorepresentable, say equivalent to $(\Art_{R})^{\op}$ for $R\in \compl$, then $F$ is equivalent to the functor $\Hom_{\Lambda}(-, R)\colon (\Art_{\Lambda})^{\op} \arr \set$, so it is prorepresentable. On the other hand, there is no reason why $F$ being prorepresentable should imply that $\Inf(\xi_0)$ is trivial. For example, consider the deformation category $\catdef_{\P^{n}_{k}} \arr (\Art_{\Lambda})^{\op}$, for $n > 0$ and any $\Lambda$. Then $\H^{1}(\P^{n}, T_{\P^{n}_{k}}) = 0$, so its tangent space is trivial, and every lifting of $\P^{n}_{k}$ to any $A$ is isomorphic to $\P^{n}_{A}$, so the functor of isomorphism classes is prorepresented by $\Lambda$, and on the other hand the infinitesimal tangent space $\H^{0}(\P^{n}, T_{\P^{n}_{k}})$ is not zero, so the isomorphism is not unique, and $\catdef_{\P^{n}_{k}}$ is not prorepresentable.

This is a very simple example in which the fibered category gives somewhat finer information than the corresponding deformation functor, which doesn't take automorphisms into account. However, we see from the following criterion that the prorepresentability of the functor of isomorphism classes has a simple interpretation in terms of the fibered category.

\begin{ex}
Let $\F \arr (\Art_{\Lambda})^{\op}$ be a deformation category. Show that the functor $F\colon (\Art_{\Lambda}) \arr \set$ of isomorphism classes of $\F$ is prorepresentable if and only if the following conditions are satisfied.

\begin{enumeratea}

\item The groupoid $\F(k)$ is connected (i.e., all objects of $\F(k)$ are isomorphic).

\item $T_{\xi_0}\F$ is finite-dimensional for any $\xi_0 \in \F(k)$.

\item If $A' \arr A$ is a small extension in $(\Art_{\Lambda})$, $\xi'$ is an object over $A'$, and $\xi$ is a pullback of $\xi'$ to $A$, then the induced group homomorphism $\aut_{A'}(\xi') \arr \aut_{A}(\xi)$ is surjective.

\end{enumeratea}

Hint: assuming (a) and (b), $F$ is prorepresentable if and only if it it satisfies the RS condition.

Condition~(c) says that for any small extension $A' \arr A$, the induced functor $\F(A') \arr \F(A)$ is full. It can also be expressed as saying that automorphisms of objects of $\F$ are unobstructed.
\end{ex}

\subsubsection{Versal objects}

The condition of not having infinitesimal automorphisms prevents many deformation categories from being prorepresentable. However, there is a very useful substitute for universal deformations, which does exist quite often.

\begin{defin}\label{versaldef}
Let $\fib$ be a deformation category. A formal object $(R,\rho)$ of $\F$ is called \gr{versal} if the following lifting property holds: for every small extension $\phi\colon A'\to A$ in $(\Art_{\Lambda})$, every diagram of formal objects
$$
\xymatrix@+10pt@C+10pt{
& (A',\xi')\ar@/_10pt/@{-->}[dl]\\
(R,\rho)& (A,\xi) \ar[l] \ar[u]
}
$$
can be filled with a dotted arrow.
\end{defin}

It is easy to check that a formal object $(R,\rho)$ is universal if and only if for any diagram as above there exists a unique dotted arrow making it commutative. So universal deformations are versal.

\begin{prop}\label{liftingvers}
Let $\fib$ be a deformation category, and $(R,\rho)$ a versal formal object. Then the lifting property of the definition above also holds for all surjections $A'\to A$ in $\compl$.
\end{prop}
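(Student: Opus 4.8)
The plan is to use completeness of $A'$ to reduce the lifting problem to a sequence of problems along small extensions in $\artl$, where Definition~\ref{versaldef} applies directly. Write $A'_{n}\eqdef A'/\m_{A'}^{n+1}$ and $A_{n}\eqdef A/\m_{A}^{n+1}$; since the given surjection $\phi\colon A'\to A$ in $\compl$ is local and surjective, $\phi(\m_{A'}^{n+1})=\m_{A}^{n+1}$, so $\phi$ induces surjections $\phi_{n}\colon A'_{n}\to A_{n}$, and $A'=\invlim A'_{n}$, $A=\invlim A_{n}$. By Remark~\ref{morphism.approx}, the morphism $(A,\xi)\to(R,\rho)$ in the diagram amounts to a homomorphism $u\colon R\to A$ together with isomorphisms $\xi_{n}\simeq\rho_{R\to A_{n}}$ compatible with the projections, while the morphism $(A,\xi)\to(A',\xi')$ over $\phi$ says that $\xi_{n}$ is a pullback of $\xi'_{n}$ along $\phi_{n}$ for every $n$. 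To fill the diagram I must produce a homomorphism $v\colon R\to A'$ with $\phi\circ v=u$, together with an isomorphism $\xi'\simeq\rho_{R\to A'}$ compatible with the given data; by Remark~\ref{morphism.approx} (and completeness of $A'$) this is the same as a compatible system of homomorphisms $v_{n}\colon R\to A'_{n}$ with $\phi_{n}\circ v_{n}=u_{n}$, together with compatible isomorphisms $\xi'_{n}\simeq\rho_{R\to A'_{n}}$ restricting correctly to $A_{n}$.

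First I would build the $v_{n}$ (and the isomorphisms) by induction, starting with $v_{0}\colon R\to k$ the structure map. For the step $n\to n+1$, set $E\eqdef A'_{n}\times_{A_{n}}A_{n+1}$, an object of $\artl$; the homomorphism $A'_{n+1}\to E$ induced by the truncation $A'_{n+1}\to A'_{n}$ and by $\phi_{n+1}$ is surjective with kernel a $k$-subspace of $\m_{A'}^{n+1}/\m_{A'}^{n+2}$, in particular annihilated by $\m_{A'_{n+1}}$, hence it is a small extension in $\artl$. The homomorphisms $v_{n}$ and $u_{n+1}$ agree over $A_{n}$, so they define $g_{n}\colon R\to E$. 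Now the crucial observation: the pullback of $\xi'_{n+1}\in\F(A'_{n+1})$ along $A'_{n+1}\to A'_{n}$ is $\xi'_{n}$ and along $\phi_{n+1}$ is (isomorphic to) $\xi_{n+1}$, so by the RS condition its pullback to $E$ is isomorphic to the glued object $\{\xi'_{n},\xi_{n+1}\}$; and using the inductive isomorphism $\xi'_{n}\simeq\rho_{R\to A'_{n}}$ together with the given $\xi_{n+1}\simeq\rho_{R\to A_{n+1}}$, RS also identifies $\{\xi'_{n},\xi_{n+1}\}$ with $\rho_{R\to E}=g_{n}^{*}\rho$. Thus $g_{n}$ underlies a morphism $(E,\rho_{R\to E})\to(R,\rho)$ of formal objects, and $\xi'_{n+1}$ is a lifting of $\rho_{R\to E}$ along the small extension $A'_{n+1}\to E$. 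Versality (Definition~\ref{versaldef}) then provides a morphism $(A'_{n+1},\xi'_{n+1})\to(R,\rho)$ extending this, i.e.\ a homomorphism $v_{n+1}\colon R\to A'_{n+1}$ whose composite with $A'_{n+1}\to E$ is $g_{n}$, together with an isomorphism $\xi'_{n+1}\simeq\rho_{R\to A'_{n+1}}$. Composing with the two projections of $E$ shows that $v_{n+1}$ restricts to $v_{n}$ and satisfies $\phi_{n+1}\circ v_{n+1}=u_{n+1}$, and the isomorphism data can be arranged to restrict correctly by the same token.

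Finally, the system $\{v_{n}\}$ is compatible by construction, so it assembles into $v\colon R\to A'=\invlim A'_{n}$ with $\phi\circ v=u$, and the isomorphisms $\xi'_{n}\simeq\rho_{R\to A'_{n}}$, produced compatibly with the transition maps, assemble into $\xi'\simeq\rho_{R\to A'}$; commutativity with $(A,\xi)\to(R,\rho)$ is then checked order by order, so the diagram is filled. The main obstacle is precisely the inductive step: a surjection in $\compl$ is not a composite of small extensions within $\compl$, since small extensions are artinian by definition, so one is forced to descend to the truncations $A'_{n}$, and there the difficulty is to lift $R\to A'_{n}$ to $R\to A'_{n+1}$ compatibly both with the previous truncation and with the map down to $A$. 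Encoding both constraints in the single small extension $A'_{n+1}\to A'_{n}\times_{A_{n}}A_{n+1}$, and invoking RS to certify that the term $\xi'_{n+1}$ of the given formal object is a lifting of the relevant glued object so that versality applies, is the heart of the argument; the remaining points — that the kernels are as claimed and that all the isomorphism data match up — are routine verifications of the kind used repeatedly in this section.
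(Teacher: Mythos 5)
Your proof is correct and follows essentially the same approach as the paper's: both induct on the truncations $A'_n$, form the fibered product $A'_n\times_{A_n}A_{n+1}$ at each step, use RS to identify the pullback of $\xi'_{n+1}$ with the glued object, and invoke versality along the surjection $A'_{n+1}\to A'_n\times_{A_n}A_{n+1}$. The one small difference is that you observe this surjection is actually a small extension, which lets you apply Definition~\ref{versaldef} directly, whereas the paper treats it merely as a surjection in $\artl$ and relies on the first paragraph where the lifting property is extended from small extensions to arbitrary surjections in $\artl$.
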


\begin{proof}
First of all, it is easy to see that the lifting property will hold when $A'\to A$ is a surjection in $\artl$, as usual by factoring $A'\to A$ into a composite of small extensions and lifting the morphism successively.

Let $A'\to A$ be a surjection in $\compl$, and we write $\xi=\{\xi_n,f_n\}$ and $\xi'=\{\xi_n',f'_n\}_\nin$. Let us show inductively that for each $n$ we can find a morphism of formal objects $g_n\colon (A'_n,\xi'_n)\to (R,\rho)$ such that for all $n$ the composite
$$
(A'_n,\xi'_n)\arr (A'_{n+1},\xi'_{n+1})\xrightarrow{g_{n+1}} (R,\rho)
$$
coincides with $g_n$, and the diagram
$$
\xymatrix@+10pt@C+10pt{
& (A'_n,\xi'_n)\ar@/_10pt/[dl]_{g_n}\\
(R,\rho)& (A_n,\xi_n) \ar[l] \ar[u]
}
$$
commutes.

Suppose we have constructed $g_n$, and consider the diagram
$$
\xymatrix{
R \ar@/^10pt/[rrd] \ar@/_10pt/[ddr]\ar@{-->}[rd] & &\\
& B \ar[r]\ar[d] & A'_n \ar[d] \\
& A_{n+1}\ar[r] & A_n
}
$$
where the maps from $R$ are the homomorphism $R\to A'_n$ coming from $g_n$ and the one $R\to A_{n+1}$ associated with $(A,\xi)\to (R,\rho)$, and $B$ is the fibered product. Taking the pullback of $\rho$ to $B$ along the dotted homomorphism above we get an object $\eta \in \F(B)$ restricting to $\xi'_n$ on $A_n'$ and on $\xi_{n+1}$ on $A_{n+1}$.

Now notice that there is a homomorphism $A_{n+1}'\to B$ induced by the quotient map $A_{n+1}'\to A_n'$ and the map $A_{n+1}'\to A_{n+1}$ coming from $A'\to A$, and which gives a morphism of formal objects $(B,\eta)\to (A_{n+1}',\xi_{n+1}')$, fitting in the commutative diagram
$$
\xymatrix@+5pt{
(A_{n+1}',\xi_{n+1}')  & &\\
& (B, \eta)\ar[lu] & (A'_n,\xi_n')\ar[l]\ar@/_10pt/[ull]  \\
& (A_{n+1},\xi_{n+1})\ar[u] \ar@/^10pt/[luu] &\ar[l]\ar[u] (A_n,\xi_n).
}
$$
Moreover from the fact that $A_{n+1}'\to B$ is a surjection in $\artl$ (as is readily checked, using the surjectivity of $A_{n+1}'\to A_{n+1}$), and from the diagram
$$
\xymatrix@+10pt@C+10pt{
& (A'_{n+1},\xi'_{n+1})\ar@/_10pt/@{-->}[dl]_{g_{n+1}}\\
(R,\rho)& (B,\eta) \ar[l] \ar[u]
}
$$
by versality of $(R,\rho)$ we get the dotted morphism $g_{n+1}\colon (A'_{n+1},\xi'_{n+1})\to (R,\rho)$ that has the desired properties.

Finally, notice that by Remark~\ref{morphism.approx} the sequence $\{g_n\}_\nin$ of compatible morphisms induces a morphism of formal objects $(A',\xi')\to (R,\rho)$ that gives the desired lifting.
\end{proof}

Notice that the dotted arrow in the diagram of Definition~\ref{versaldef} will give a lifting $R\to A'$ of the map $R\to A$, and conversely the existence of such a lifting implies at least that the deformation $\xi$ will lift to $A'$ (just by taking the pullback of $\rho$). In other words in presence of a versal deformations the problem of lifting objects becomes a problem of lifting maps of $\Lambda$-algebras. From this remark we will get a criterion to decide wether a deformation problem is obstructed or not, knowing a versal deformation (see Proposition~\ref{unobst-ver}).

As in the case of deformation functors, the property of being versal can be restated as a smoothness condition.

\begin{defin}\label{formally-smooth}
Let $\fib$ and $\G\to \artl^\op$ be two deformation categories, and $F\colon \F\to \G$ be a morphism. We say that $F$ is \gr{formally smooth} if for every surjection $A'\to A$ in $\artl$ the functor $\F(A')\to \F(A)\times_{\G(A)}\G(A')$ induced by the diagram
$$
\xymatrix{
\F(A')\ar[r]\ar[d]_{F_{A'}} & \F(A)\ar[d]^{F_A}\\
\G(A')\ar[r] & \G(A)
}
$$
is essentially surjective.
\end{defin}

\begin{rmk}
The composite of smooth morphisms of deformation categories is easily seen to be formally smooth.
\end{rmk}

\begin{rmk}
It follows easily by induction that to see that a morphism is formally smooth it is enough to check the condition when $A' \arr A$ is small.
\end{rmk}

The term ``smooth'' comes from the fact that if $\F$ and $\G$ are deformation categories corresponding to the functors of points of two schemes $X$ and $Y$, then a morphism $X\to Y$ locally of finite type is smooth if and only if the corresponding morphism $\F\to \G$ is formally smooth. This is the so-called ``infinitesimal smoothness criterion'' of Grothendieck (see \cite[Expos\'{e} III, Th\'{e}or\`{e}me 3.1]{SGA1}).

\begin{ex}\label{form.smooth}
Let $\fib$ be a deformation category. Show that a formal object $(R,\rho)$ of $\F$ is versal if and only if the corresponding morphism $\rho\colon (\Art_R)^\op\to \F$ is formally smooth.
\end{ex}

Here are two immediate properties of versal deformations.

\begin{prop}\label{kssurj}
Let $\fib$ be a deformation category, and $(R,\rho)$ a versal formal object of $\F$. Then:

\begin{enumeratei}

\item For every formal object $(S,\xi)$ restricting to $\rho_0$ on $k$ there is a morphism $(S,\xi)\to (R,\rho)$ (in particular this also holds if $S \in \artl$).

\item The Kodaira--Spencer map $\kappa_\rho\colon T_\Lambda R\to T_{\rho_0}\F$ is surjective.

\end{enumeratei}

\end{prop}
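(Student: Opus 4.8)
\emph{Approach.} The plan is to read off both statements from the extended lifting property of versal objects, Proposition~\ref{liftingvers} (versality holds for all surjections in $\compl$, not just small extensions in $\artl$), and then to deduce (ii) formally from (i). Throughout, ``$(S,\xi)$ restricts to $\rho_0$ on $k$'' is read as $\xi_0 \simeq \rho_0$, where $\xi_0$ is the term of order $0$ of $\xi$.

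\emph{Part (i).} First I would exhibit two morphisms of formal objects emanating from $(k,\xi_0)$. One is the canonical morphism $(k,\xi_0)\to (S,\xi)$ lying over the quotient $S\to k$: for each $n$ the object $\xi_0$ is the pullback of $\xi_n$ to $k$, and the resulting pullback arrows are compatible with the structure maps of the formal object, so they assemble into such a morphism. The other is a morphism $(k,\xi_0)\to (R,\rho)$ lying over the structure map $R\to k$, obtained by composing a chosen isomorphism $\xi_0 \xrightarrow{\sim}\rho_0$ with the canonical arrows $\rho_0\to \rho_n$. Now applying Proposition~\ref{liftingvers} to the surjection $S\to k$ in $\compl$ fills in the dotted arrow in
$$
\xymatrix@+10pt@C+10pt{
& (S,\xi)\ar@/_10pt/@{-->}[dl]\\
(R,\rho)& (k,\xi_0) \ar[l] \ar[u],
}
$$
which is the desired morphism $(S,\xi)\to (R,\rho)$. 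The parenthetical case $S\in\artl$ is just this statement combined with the identification of Proposition~\ref{subcat}.

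\emph{Part (ii).} Let $v\in T_{\rho_0}\F$, represented by an object $\eta\in\F_{\rho_0}(\dual)$, and regard $\eta$ as a formal object $(\dual,\eta)$ via Proposition~\ref{subcat}; it restricts to $\rho_0$ on $k$. By part (i) there is a morphism $(\dual,\eta)\to (R,\rho)$. Using the description (given just after Proposition~\ref{subcat}) of arrows from an artinian object to a formal object, this morphism amounts to a homomorphism $\phi\colon R\to\dual$ of $\Lambda$-algebras together with an isomorphism $\rho_\phi\simeq\eta$ in $\F(\dual)$, where $\rho_\phi$ denotes the pullback of $\rho$ along $\phi$. Since the morphism $(\dual,\eta)\to(R,\rho)$ was produced as a lift over the arrow $(k,\rho_0)\to(R,\rho)$, which lies over $R\to k$, the composite $R\to\dual\to k$ equals the structure map; hence $\phi$ is a genuine object of $(\Art_R)^\op$ over $\dual$, i.e.\ an element of $T_{R\to k}(\Art_R)^\op\simeq T_\Lambda R$ (Proposition~\ref{prorapp}). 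By the definition of the Kodaira--Spencer map we get $\kappa_\rho(\phi)=[\rho_\phi]=[\eta]=v$. As $v$ was arbitrary, $\kappa_\rho$ is surjective.

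\emph{Main obstacle.} There is essentially no obstacle: once Proposition~\ref{liftingvers} is in hand the argument is bookkeeping with pullback arrows. The one point requiring a moment's care is the compatibility in part (ii) that forces $\phi$ to reduce to $R\to k$ modulo $\eps$, so that it lands in the tangent space $T_{R\to k}(\Art_R)^\op$ rather than merely in $\Hom_\Lambda(R,\dual)$. (Alternatively, part (ii) can be proved directly by applying versality to the single small extension $\dual\to k$, with bottom arrow $(k,\rho_0)\to(R,\rho)$ and vertical arrow the pullback $(k,\rho_0)\to(\dual,\eta)$; this bypasses Proposition~\ref{liftingvers} but is otherwise the same computation.)
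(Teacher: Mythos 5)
Your proof is correct and follows essentially the same route as the paper's: part (i) is proved exactly as in the text, by applying Proposition~\ref{liftingvers} to the quotient surjection $S\to k$ with the indicated diagram, and part (ii) is derived from part (i) applied to the formal object $(\dual,\eta)$. The only difference is a small streamlining in part (ii): where the paper first identifies the Kodaira--Spencer map $\kappa_\eta$ of $(\dual,\eta)$ with the vector $v$ and then invokes the functoriality in Proposition~\ref{kosfunct} to transport it through the morphism $(\dual,\eta)\to(R,\rho)$, you read off $\kappa_\rho(\phi)=[\rho_\phi]=[\eta]=v$ directly from the explicit description of $\kappa_\rho$ given just after its definition. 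Both are the same computation repackaged. One further remark: your concern that $\phi$ might not land in $T_{R\to k}(\Art_R)^\op$ is unnecessary — every homomorphism in $\compl$ is local with residue field $k$, so any $\Lambda$-algebra homomorphism $R\to\dual$ automatically composes with $\dual\to k$ to the structure map, i.e.\ $T_{R\to k}(\Art_R)^\op$ is all of $\Hom_\Lambda(R,\dual)$.
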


\begin{proof}
The first part of the statement is immediate from Proposition~\ref{liftingvers}, where we consider as surjection the quotient map $S\to k$, and the diagram
$$
\xymatrix@+10pt@C+10pt{
& (S,\xi)\ar@/_10pt/@{-->}[dl]\\
(R,\rho)& (k,\rho_0) \ar[l] \ar[u]
}
$$
that identifies $\rho_0$ as the pullback of $\rho$ and $\xi$ over $k$.

Now we prove (ii): take a vector $v \in T_{\rho_0}\F$, the usual ring of dual numbers $\dual$, and consider the element of $(\eps)\otimes_k T_{\rho_0}\F$ corresponding to $v$. We can then find an object $\xi \in \F_{\rho_0}(\dual)$ such that
$$
[\xi] - [\rho_0|_{\dual}]=v \in (\eps)\otimes_k T_{\rho_0}\F
$$
(simply by taking a representative of $[\rho_0|_{\dual}] + v$, where this is the usual action of Theorem~\ref{actionthm}) which is the same as saying that the Kodaira--Spencer map $\kappa_\xi\colon k\simeq (\eps)^\vee \to T_{\rho_0}\F$ of the formal object $(\dual,\xi)$ sends $1$ into $v$.

By the first part of the Proposition we get a morphism of formal objects
   \[
   (\dual,\xi)\to (R,\rho)
   \]
(and in particular a homomorphism $\phi\colon R\to \dual$) that identifies $\xi$ as a pullback of $\rho$, and from the second part of Proposition~\ref{kosfunct} we get
$$
v=\kappa_\xi(1)=\kappa_\rho(d\phi(1))
$$
where $d\phi\colon k\to T_\Lambda R$ is the differential induced by $\phi$. From this we see that $v$ is in the image of $\kappa_\rho$, and thus this map is surjective.

Alternatively, this follows immediately from Exercise~\ref{form.smooth}.
\end{proof}

In particular if $\F$ admits a versal object $(R,\rho)$, then the tangent space $T_{\rho_0}\F$ is finite-dimensional.

Now we state the anticipated criterion to recognize unobstructed objects.

\begin{prop}\label{unobst-ver}
Let $\fib$ be a deformation category, and $(R,\rho)$ a versal formal object of $\F$. Then $\rho_0$ is unobstructed if and only if $R$ is a power series ring over $\Lambda$.
\end{prop}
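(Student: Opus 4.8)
The plan is to prove both implications using the lifting property of versal objects together with the obstruction-theoretic machinery, with the key device being the standard fact that a surjection $R \arr A$ from a complete local $\Lambda$-algebra $R$ whose spectrum is ``formally smooth'' can always be lifted along square-zero extensions.

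First I would prove the easy direction: if $R$ is a power series ring $\Lambda\ds{x_1,\dots,x_n}$, then $\rho_0$ is unobstructed. Let $A' \arr A$ be a small extension with kernel $I$ and $\xi \in \F_{\rho_0}(A)$. By Proposition~\ref{kssurj}(i) there is a morphism of formal objects $(A,\xi)\to(R,\rho)$, hence a homomorphism of $\Lambda$-algebras $\psi\colon R\to A$ with $\psi_*(\rho)\simeq\xi$. Since $R=\Lambda\ds{x_1,\dots,x_n}$ is a power series ring, $\psi$ lifts to a homomorphism $\psi'\colon R\to A'$: one simply chooses, for each variable $x_i$, an arbitrary preimage in $A'$ of $\psi(x_i)\in A$ (this is the only place where the power series structure is used, and it is precisely the formal smoothness of $\spec R$ over $\spec\Lambda$). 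Then $\psi'_*(\rho)\in\F(A')$ is a lifting of $\xi$ to $A'$, so $\xi$ is liftable; since $A'\arr A$ and $\xi$ were arbitrary, $\rho_0$ is unobstructed.

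For the converse, suppose $\rho_0$ is unobstructed; I want to show $R$ is a power series ring over $\Lambda$. Set $n \eqdef \dim_k T_\Lambda R = \dim_k(\m_R/(\m_\Lambda R+\m_R^2))$, which is finite since $R\in\compl$. Choose elements $t_1,\dots,t_n\in\m_R$ whose images form a basis of $\m_R/(\m_\Lambda R+\m_R^2)$; these determine a surjection of $\Lambda$-algebras $P \eqdef \Lambda\ds{x_1,\dots,x_n}\arr R$ sending $x_i\mapsto t_i$ (surjectivity follows from the complete version of Nakayama). Let $J$ be the kernel, so $R\simeq P/J$; I must show $J=0$. Suppose not. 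Then, possibly after choosing things carefully, one can find a square-zero quotient: pick $m$ large enough that $J\not\subseteq \m_P^{m+1}$ but $J\subseteq\m_P^m$ after truncating — more precisely, work with $P_N \eqdef P/\m_P^{N+1}$ for suitable $N$ and consider the surjection $P_N/(J,\m_P^{N+1})\cdot(\text{something}) $; the clean way is: since $J\neq 0$ and $\bigcap\m_P^m=0$, there is an ideal $\mathfrak{a}$ with $J\subsetneq\mathfrak{a}$, $\mathfrak{a}J\subseteq$ [nothing], such that $\mathfrak{a}/J$ is a nonzero $k$-vector space killed by $\m_P$ — concretely take $\mathfrak{a} = J\cap\m_P^{m}+\m_P\cdot(J\cap\m_P^{m-1})$-type construction, or simply: choose $m$ minimal with $J\not\subseteq\m_P^{m}$, set $A' \eqdef P/(\m_P J + \m_P^{m+1}\cap\text{...})$. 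Rather than belabor this, the point is that one produces a small extension $A'\arr R/\mathfrak{b} =: A$ in $\artl$ (after passing to artinian quotients — note $\rho$ restricts to objects over all artinian quotients $R_j$) with kernel $0\neq I$, where $A'$ does \emph{not} admit a $\Lambda$-algebra section of $A'\arr A$ that is compatible with the composite $P\to A'$...

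Let me restate the converse argument more carefully. Assume $\rho_0$ unobstructed. Consider $P \eqdef \Lambda\ds{x_1,\dots,x_n}$ and the surjection $P\arr R$ as above, with kernel $J$; it is known (minimality of the chosen generators) that $J\subseteq\m_\Lambda\m_P+\m_P^2$, so in fact $J\subseteq\m_P^2$ provided $\Lambda=k$, and in general $J\subseteq\m_\Lambda\m_P+\m_P^2$. Suppose $J\neq 0$. Since $\bigcap_m \m_P^m=0$, there is a largest integer $m\geq 2$ such that $J\subseteq\m_P^m$. Let $J' \eqdef \m_P J + \m_P^{m+1}\cap J$; wait — the standard move is: set $A' \eqdef P/(\m_P J)$ modulo a high power of $\m_P$ to make it artinian, i.e. fix a large $s$ and put $A' \eqdef P/(\m_P J + \m_P^{s})$, $A \eqdef P/(J+\m_P^{s}) \simeq R/\m_R^{s-1}\text{-ish} = R_{s-1}$ for $s$ large. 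Then $A'\arr A$ is a surjection in $\artl$ with kernel $I = (J+\m_P^s)/(\m_P J+\m_P^s) = J/(\m_P J + (J\cap\m_P^s))$, which by Nakayama is nonzero for $s$ large (since $J\neq\m_P J$), and $\m_{A'}I = 0$, so $A'\arr A$ is a small extension. Now $\rho$ restricts to an object $\xi \eqdef \rho_{R\to A}\in\F_{\rho_0}(A)$ via the quotient $R\to R_{s-1}\simeq A$; the composite $P\arr A$ and the identity-ish structure show this $\xi$ is exactly the restriction of $\rho$. By unobstructedness, $\xi$ lifts to some $\xi'\in\F(A')$. By Proposition~\ref{kssurj}(i) (versality), there is a morphism $(A',\xi')\to(R,\rho)$, i.e. a $\Lambda$-homomorphism $h\colon R\to A'$ with $h_*(\rho)\simeq\xi'$, hence $h$ reduces mod $I$ to the given quotient $R\to A$. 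Composing with $P\arr R$ gives $g\colon P\to A'$ lifting $P\to A$, and since $h$ kills $J$ (it factors through $R=P/J$), $g$ kills $J$; but $g\colon P\to A'=P/(\m_P J+\m_P^s)$ lifting the projection $P\to P/(J+\m_P^s)$ and killing $J$ — the tautological projection $P\to A'$ also lifts it, and the difference $g - \mathrm{pr}$ is a $\Lambda$-derivation $P\to I$ (Proposition~\ref{ext.der}), i.e. determined by where it sends $x_1,\dots,x_n$, so $g(x_i) = \mathrm{pr}(x_i) + d_i$ with $d_i\in I$. For $g$ to kill $J$: pick $f\in J$ with $f\notin\m_P J+\m_P^s$ (exists as $I\neq 0$), write $f$ in terms of the $x_i$; since $f\in\m_P^m\subseteq\m_P^2$, expanding $g(f) = f + \sum(\partial f/\partial x_i)d_i + (\text{higher}) \equiv f \pmod{\m_P I}$, and $\m_P I = 0$ as $I$ is killed by $\m_{A'}$, so $g(f) = $ image of $f$ in $A'$, which is nonzero — contradiction. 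Hence $J=0$ and $R\simeq P$ is a power series ring. $\qed$

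The main obstacle I anticipate is the bookkeeping in the converse: producing the right small extension $A'\arr A$ from the assumption $J\neq 0$ (choosing $s$ large enough, invoking Nakayama to see the kernel is nonzero and killed by $\m_{A'}$, and checking that the relevant $f\in J$ survives in $A'$). The derivation computation showing that no lift $g\colon P\to A'$ can kill $J$ is the crux — it is exactly the statement that a nonzero obstruction lives in $I\otimes_k V_\omega$ detecting the failure of liftability, transported to the ``lifting maps'' picture via versality. Everything else (the forward direction, the reduction to artinian quotients, the use of Proposition~\ref{kssurj}) is routine. It would be cleaner to phrase the converse using the minimal obstruction space and the dimension formula promised after Proposition~\ref{minimalobstr2}, but since that formula comes later, the direct argument above via Proposition~\ref{ext.der} is self-contained.
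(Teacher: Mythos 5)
Your proof is correct, but the converse takes a genuinely different route from what the paper intends. The paper leaves the proposition as an exercise invoking Theorem~\ref{app.smoothcrit}: that criterion characterizes power series rings in $\compl$ as exactly those $R$ for which every $R\to A$ (with $A\in\artl$) lifts along every small extension $A'\to A$; versality then translates between lifting \emph{objects} of $\F$ and lifting \emph{homomorphisms} from $R$, and the proof of Proposition~\ref{unobst-ver} becomes a short diagram chase in both directions. Your forward implication is exactly this translation. In the converse, however, instead of simply checking the criterion of Theorem~\ref{app.smoothcrit} (which, given versality, is immediate: pull $\rho$ back along $R\to A$, lift the resulting object by unobstructedness, lift the homomorphism by versality), you argue directly: present $R=P/J$ with $P=\Lambda\ds{x_1,\dots,x_n}$ chosen so the codifferential is an isomorphism, manufacture for each large $s$ the small extension $P/(\m_PJ+\m_P^s)\to R_{s-1}$, and use Proposition~\ref{ext.der} together with the chain rule for continuous derivations to show that a nonzero $J$ precludes any $\Lambda$-algebra lift $P\to A'$ factoring through $R$. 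This is sound and self-contained, but it amounts to re-proving the hard half of Theorem~\ref{app.smoothcrit} inside the argument (by a different method than Appendix~\ref{appb}'s, which builds a quasi-inverse $R\to P$ by successive lifts and invokes Corollary~\ref{psr.iso}); the paper's route is shorter because that structural fact is proved once and reused.

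Two minor points. The conormal sequence gives $J\subseteq\m_\Lambda P+\m_P^2$, not $\m_\Lambda\m_P+\m_P^2$ as you write; this does not damage the derivation computation, since the partials $\partial f/\partial x_i$ still lie in $\m_\Lambda P+\m_P$ and hence annihilate $I$ in $A'$. And the claim that $h\colon R\to A'$ ``reduces mod $I$ to the given $R\to A$'' is not a consequence of Proposition~\ref{kssurj}(i) alone, which only furnishes \emph{some} morphism $(A',\xi')\to(R,\rho)$; it follows instead from the full versality diagram in Definition~\ref{versaldef}, applied to $(R,\rho)\leftarrow(A,\xi)\to(A',\xi')$ with the tautological arrow $(A,\xi)\to(R,\rho)$ coming from the quotient $R\to R_{s-1}=A$.
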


The proof, which we leave as an exercise, is a simple application of the smoothness criterion in Theorem~\ref{app.smoothcrit}: the algebra $R \in \compl$ is a power series ring over $\Lambda$ if and only if for any homomorphism $\phi\colon R\to A$ with $A \in \artl$, and small extension $\psi\colon A'\to A$, there exists a lifting $\lambda\colon R\to A'$, that is, a homomorphism $\lambda$ such that $\psi\circ\lambda=\phi$.

\begin{ex}
Prove Proposition~\ref{unobst-ver}.
\end{ex}

\subsubsection{Miniversal objects}

The second part of Proposition~\ref{kssurj} suggests us to consider versal deformations where $R$ is as ``small'' as possible, and leads us to the following definition.

\begin{defin}
A versal formal object $(R,\rho)$ of $\F$ is called \gr{minimal} if the Kodaira--Spencer map $\kappa_\rho\colon  T_\Lambda R\to T_{\rho_0}\F$ is an isomorphism.
\end{defin}

A versal minimal formal object is in short called \gr{miniversal}; Schlessinger calls the corresponding concept for deformation functors a \gr{hull}. Sometimes we will also say that $(R,\rho)$ is a \gr{miniversal deformation} of $\rho_0 \in \F(k)$.

Now we show that all universal deformations are miniversal, and that miniversal deformations are all isomorphic.

\begin{prop}\label{miniiso}
Let $\fib$ be a deformation category. Then:

\begin{enumeratei}

\item Any universal formal object of $\F$ is miniversal.

\item Any two miniversal formal objects of $\F$ with the same pullback to $k$ are isomorphic.

\end{enumeratei}

\begin{rmk}
The isomorphism whose existence is asserted in (ii) is, unfortunately in general non-canonical.
\end{rmk}

\end{prop}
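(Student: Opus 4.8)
\textbf{Plan for the proof of Proposition~\ref{miniiso}.}

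For part~(i), I would argue that a universal formal object $(R,\rho)$ is versal by definition (uniqueness of the filling arrow trivially implies existence), so it remains only to check that the Kodaira--Spencer map $\kappa_\rho\colon T_\Lambda R \to T_{\rho_0}\F$ is an isomorphism. I would invoke the identification of $T_\Lambda R$ with $T_{R\to k}(\Art_R)^\op$ from Proposition~\ref{prorapp}, and the fact that $\rho\colon (\Art_R)^\op \to \F$ is an equivalence of deformation categories. Then $\kappa_\rho$ is, up to these identifications, exactly the differential $d_{R\to k}\rho$ of an equivalence, which we showed is an isomorphism in the discussion of differentials in~\S\ref{tangdef} (an equivalence induces isomorphisms on tangent spaces). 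So $\kappa_\rho$ is an isomorphism and $(R,\rho)$ is minimal.

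For part~(ii), the plan is to take two miniversal formal objects $(R,\rho)$ and $(S,\sigma)$ with $\rho_0 \simeq \sigma_0$ over $k$, and produce morphisms in both directions whose composites are isomorphisms. By versality of $(S,\sigma)$ together with Proposition~\ref{kssurj}(i), there is a morphism of formal objects $f\colon (R,\rho) \to (S,\sigma)$ (using $\rho_0 \simeq \sigma_0$ to identify the pullbacks over $k$); symmetrically versality of $(R,\rho)$ gives $g\colon (S,\sigma) \to (R,\rho)$. Each of these corresponds to a homomorphism of $\Lambda$-algebras, say $\phi\colon S \to R$ for $f$ and $\psi\colon R \to S$ for $g$. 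Consider the composite $g\circ f\colon (R,\rho)\to (R,\rho)$, corresponding to $\psi\circ\phi\colon R\to R$. The key point is to show $\psi\circ\phi$ is an automorphism of $R$. Using the functoriality of the Kodaira--Spencer map from Proposition~\ref{kosfunct}(ii), namely $\kappa_{\rho} = \kappa_{(g\circ f)_*\rho}= \kappa_\rho \circ d(\psi\circ\phi) = \kappa_\rho \circ d\phi \circ d\psi$ appropriately composed — more precisely the differential $d(\psi\phi)\colon T_\Lambda R \to T_\Lambda R$ makes the triangle with $\kappa_\rho$ on both sides commute — and since $\kappa_\rho$ is an isomorphism by minimality, $d(\psi\circ\phi)\colon T_\Lambda R \to T_\Lambda R$ is an isomorphism, hence $\psi\circ\phi$ induces an isomorphism on $\m_R/(\m_\Lambda R + \m_R^2)$.

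The main obstacle, and the step requiring the most care, is to pass from "$\psi\circ\phi$ induces an isomorphism on the cotangent space $\m_R/(\m_\Lambda R + \m_R^2)$" to "$\psi\circ\phi$ is an automorphism of $R$". This is a standard fact about complete local noetherian $\Lambda$-algebras: an endomorphism of $R \in \compl$ that is an isomorphism on $\m_R/(\m_\Lambda R + \m_R^2)$ is surjective (by the complete Nakayama-type argument, lifting generators degree by degree through the $\m_R$-adic filtration), and a surjective endomorphism of a noetherian ring is automatically injective. I would cite the relevant lemma from Appendix~\ref{appb} for this, or prove it quickly. Once $\psi\circ\phi$ is an automorphism, $f$ is a monomorphism and $g$ is an epimorphism in $\wh\F$; running the symmetric argument with $f\circ g$ shows $\phi\circ\psi$ is also an automorphism, so both $\phi$ and $\psi$ are isomorphisms of $\Lambda$-algebras, and hence $f$ is an isomorphism of formal objects (an arrow of $\wh\F$ over an isomorphism of rings, with the ring part invertible, is invertible since $\wh\F$ is fibered in groupoids). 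This gives $(R,\rho)\simeq (S,\sigma)$, completing part~(ii); and as the remark notes, there is no canonical choice because the lifting arrows $f,g$ supplied by versality are themselves non-unique.
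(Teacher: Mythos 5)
Your proof is correct, and the essential ingredients are the same as the paper's: versality to produce morphisms $f$ and $g$ in both directions, functoriality of the Kodaira--Spencer map together with minimality to constrain the differentials, and the rigidity results for complete local $\Lambda$-algebras from Appendix~\ref{appb} to pass from tangent-space information to an honest isomorphism of rings. Your part~(i) is identical to the paper's.

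For part~(ii) you take a slightly longer route than the paper. You analyze the \emph{composite} $g\circ f$ (and its partner $f\circ g$), deduce that $d(\phi\circ\psi)$ and $d(\psi\circ\phi)$ are isomorphisms, lift that to the statement that $\phi\circ\psi$ and $\psi\circ\phi$ are automorphisms (this is Corollary~\ref{complll}(iii)), and then unwind to get $\phi$ and $\psi$ bijective. The paper instead uses the Kodaira--Spencer relation $\kappa_\rho=\kappa_\nu\circ d\phi$ directly, which already forces $d\phi$ to be an isomorphism (and symmetrically $d\psi$); then Corollary~\ref{complll}(ii) applied to the pair $(\phi,\psi)$ finishes in one step. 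The two arguments rest on exactly the same lemmas, but the paper's version avoids the detour through the composite endomorphisms. Two small notational slips you should fix: since $\wh\F$ lives over $\compl^\op$, the composite $g\circ f\colon(R,\rho)\to(R,\rho)$ sits over the ring homomorphism $\phi\circ\psi\colon R\to R$, not $\psi\circ\phi$ (the arrows flip in the opposite category), and correspondingly $d(\phi\circ\psi)=d\psi\circ d\phi$, not $d\phi\circ d\psi$. These do not affect the substance since the argument is symmetric, but they are the kind of bookkeeping errors that the opposite-category convention invites, and it is worth getting them straight.
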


\begin{proof}
We start by proving (i): it is clear that a universal object is in particular versal (and moreover the lifting morphism in the versality property will be unique), so we only have to prove that if $(R,\rho)$ is a universal formal object of $\F$, then the Kodaira--Spencer map $\kappa_\rho\colon T_\Lambda R\to \F$ is an isomorphism, and this follows from the fact that $\kappa_\rho$ is the differential of an equivalence of deformation categories.

For the second statement, take two miniversal objects $(R,\rho)$ and $(S,\nu)$ such that $\rho_0$ and $\nu_0$ are isomorphic. By Proposition~\ref{kssurj} we have two morphisms of formal objects $(R,\rho)\to (S,\nu)$ and $(S,\nu) \to (R,\rho)$, and we call $\phi\colon S\to R$ and $\psi\colon R\to S$ the corresponding homomorphisms.

By functoriality of the Kodaira--Spencer map and minimality of $(R,\rho)$ and $(S,\nu)$, the two differentials $d\phi\colon T_\Lambda R\to T_\Lambda S$ and $d\psi:T_\Lambda S\to T_\Lambda R$ will be isomorphisms (so the codifferentials are also), and from Corollary~\ref{complll} we get that $\psi$ and $\phi$ are isomorphisms. In conclusion $(R,\rho)$ and $(S,\nu)$ are isomorphic formal objects.
\end{proof}

In particular, the rings over which two miniversal deformations of a same object over $k$ are defined, are isomorphic.

The following exercise characterizes versal deformations in term of a miniversal one.

\begin{ex}
Let $\fib$ be a deformation category, $(R,\rho)$ a miniversal formal object of $\F$, and consider the power series algebra on $n$ indeterminates $S=R\ds{x_1,\hdots,x_n} \in \compl$, with the inclusion $i\colon R\to S$. Then show that the formal object $(S,i_*(\rho))$ obtained by pullback is versal.

Conversely, show that if $(P,\xi)$ is a versal formal object of $\F$ restricting to $\rho_0$ on $k$, and the kernel of $\kappa_\xi\colon T_\Lambda P\to T_{\xi_0}\F$ has dimension $n$, then $(P,\xi)$ is isomorphic to the formal object $(S,i_*(\rho))$ above.
\end{ex}

\subsection{Existence of miniversal deformations}\label{exist+schl}

Now we give an analogue of the ``existence of hulls'' part of Schlessinger's Theorem, in the context of deformations categories.

\begin{thm}\label{miniexist}
Let $\fib$ be a deformation category, and $\xi_0 \in \F(k)$ be such that the tangent space $T_{\xi_0}\F$ is finite-dimensional. Then $\F$ admits a miniversal formal object $(R,\rho)$, with $\rho_0\simeq\xi_0$.

Moreover if $n$ is the dimension of $T_{\xi_0}\F$, then $R$ will be a quotient $P/I$ of the power series ring $P=\Lambda\ds{x_1,\hdots,x_n}$ on $n$ indeterminates, with $I\subseteq \m_\Lambda P + \m_P^2$.
\end{thm}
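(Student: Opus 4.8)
The plan is to construct $(R,\rho)$ by the classical Schlessinger method, building a sequence of artinian quotients $R_q$ of $P = \Lambda\ds{x_1,\dots,x_n}$ together with compatible objects $\rho^{(q)} \in \F(R_q)$, and then passing to the limit. First I would set up the skeleton: let $P = \Lambda\ds{x_1,\dots,x_n}$ where $n = \dim_k T_{\xi_0}\F$, write $\m = \m_P$, and define $R_q \eqdef P/(\m_\Lambda\m + \m^{q+1})$ — note $R_1 = k \oplus T_{\xi_0}\F^\vee$ up to the choice of a basis, since $\m/(\m_\Lambda\m + \m^2)$ is $n$-dimensional. I would inductively construct ideals $J_q \subseteq \m_\Lambda\m + \m^2$ with $\m^{q+1} \subseteq J_q$, setting $R_q \eqdef P/J_q$, an object $\rho^{(q)} \in \F(R_q)$, and an arrow $\rho^{(q)} \to \rho^{(q+1)}$ over $R_{q+1} \to R_q$, in such a way that $(R_q, \rho^{(q)})$ is "as versal as possible at each stage" — precisely, that any lifting problem along a small extension $A' \to A$ with $A$ a quotient of some $R_q$ can be solved after enlarging $q$. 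The base case $q=1$ is handled by Example~\ref{universal-first-order}: take $\rho^{(1)} \eqdef \xi^{(1)}$, the universal first-order lifting over $k \oplus T_{\xi_0}\F^\vee$, which has the property that every lifting of $\xi_0$ to any $k \oplus V$ is a pullback of it.

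The inductive step is the heart of the argument and the main obstacle. Given $(R_q, \rho^{(q)})$, I want to find the "smallest" quotient $R_{q+1}$ of $P/\m^{q+2}$ mapping onto $R_q$ that carries a lifting of $\rho^{(q)}$ with the required universality. The standard move: among all ideals $J \subseteq P$ with $\m^{q+2} \subseteq J \subseteq J_q$ and such that $\rho^{(q)}$ lifts to $P/J$, there is a smallest one — this is where RS is used decisively. Concretely, if $\rho^{(q)}$ lifts to $P/J'$ and to $P/J''$ (both $J', J'' \supseteq \m^{q+2}$, contained in $J_q$), then writing $A' = P/J'$, $A'' = P/J''$, $A = P/(J'+J'')$, the fibered product $A' \times_A A'' = P/(J' \cap J'')$ (using that $A' \to A$ is surjective with square-zero-ish kernel after intersecting with $\m^{q+1}$), and RS produces a lifting over $P/(J'\cap J'')$. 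Iterating and using that $P/\m^{q+2}$ is artinian (so the descending chain of such ideals stabilizes), one gets a smallest $J_{q+1}$; set $R_{q+1} \eqdef P/J_{q+1}$ and fix a lifting $\rho^{(q+1)}$. The finite-dimensionality of $T_{\xi_0}\F$ enters to guarantee $n < \infty$ and hence $P$ is noetherian so these chains behave; RS (the essential-surjectivity half, plus the fully-faithful half to compare liftings) is what makes the gluing work. I expect verifying that $\rho^{(q+1)}$ so constructed retains the inductive universality hypothesis — i.e., that it is genuinely minimal at the next stage and that $J_{q+1} \subseteq \m_\Lambda\m + \m^2$ is preserved — to be the delicate bookkeeping.

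Next I would pass to the limit. The ideals $J_q$ are decreasing (after arranging $J_{q+1} \subseteq J_q$; one shows $\m^{q+1} \cdot J_q$-type containments force $J_{q+1} \subseteq J_q$, so $R_{q+1} \to R_q$), so set $I \eqdef \bigcap_q J_q$ and $R \eqdef P/I = \invlim R_q$, which lies in $\compl$ and satisfies $I \subseteq \m_\Lambda P + \m_P^2$ since each $J_q$ does. The compatible system $\{\rho^{(q)}\}$, with the chosen arrows, is by definition a formal object $\rho \in \wh\F(R)$ with $\rho_0 \simeq \xi_0$ and term of order matching; the filtration $\{J_q\}$ defines the $\m_R$-adic topology, so by Exercise~\ref{filtration} this genuinely gives a formal object over $R$. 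Its Kodaira--Spencer map $\kappa_\rho\colon T_\Lambda R \to T_{\xi_0}\F$ is an isomorphism because $R_1 = P/(\m_\Lambda\m+\m^2) = k \oplus T_{\xi_0}\F^\vee$ and $\rho^{(1)}$ is the universal first-order lifting, so $T_\Lambda R = (\m_R/(\m_\Lambda R + \m_R^2))^\vee \cong T_{\xi_0}\F$ via $\kappa_\rho$ (use Exercise~\ref{kod-s}).

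Finally I would verify versality: given a small extension $\phi\colon A' \to A$ and a diagram of formal objects as in Definition~\ref{versaldef}, the arrow $(A,\xi) \to (R,\rho)$ gives a homomorphism $u\colon R \to A$; I must lift it to $R \to A'$ compatibly with a lifting of $\xi$. Since $\xi$ lives over the artinian $A$, $u$ factors through some $R_q \to A$, and the lifting $\xi'$ of $\xi$ to $A'$ provides a lifting problem that, by the construction of $R_{q+1}$ as the minimal quotient through which liftings are "captured," is solved by a map $R_{q+1} \to A'$ lifting $R_q \to A$ — then compose with $P \to R_{q+1}$, which lifts arbitrarily because $P$ is a power series ring, to adjust and get $R \to A'$. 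This uses the inductive universality property in an essential way, and reducing the general small extension to the one controlling $R_{q+1}$ is the second place where care is needed. Minimality of $\kappa_\rho$ having already been shown, $(R,\rho)$ is miniversal, and the bound $I \subseteq \m_\Lambda P + \m_P^2$ is exactly what was tracked throughout, completing the proof.
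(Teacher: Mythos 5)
You take essentially the same route as the paper — Schlessinger's inductive construction of a decreasing chain of ideals with compatible liftings, using RS to produce a minimal ideal at each stage, then passing to the limit — but the crucial application of RS in the ``smallest ideal exists'' step has a genuine gap.

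Concretely: given $\tau' \in \F(P/J')$ and $\tau'' \in \F(P/J'')$ both lifting $\rho^{(q)} \in \F(P/J_q)$, you set $A' = P/J'$, $A'' = P/J''$, $A = P/(J'+J'')$, and correctly identify $A' \times_A A'' \cong P/(J'\cap J'')$. But to apply RS over this fibered product you must supply an \emph{isomorphism} between the restrictions of $\tau'$ and $\tau''$ to $A = P/(J'+J'')$, and you have none. All you know is that both restrict to $\rho^{(q)}$ over the further quotient $P/J_q$ (since $J'+J'' \subseteq J_q$); if $J'+J'' \subsetneq J_q$, the two restrictions to $P/(J'+J'')$ are two liftings of $\rho^{(q)}$ that need not coincide. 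The remark that ``$A'\to A$ is surjective with square-zero-ish kernel'' does not help: RS requires a surjection (which you have) \emph{and} an object of $\F(A')\times_{\F(A)}\F(A'')$, i.e.\ a specified isomorphism over $A$ (which you lack). The repair — and what the paper actually does — is to first replace $J''$ by an enlarged ideal $J'''$ with $J'' \subseteq J''' \subseteq J_q$, $J' \cap J'' = J'\cap J'''$, and $J' + J''' = J_q$ (elementary linear algebra inside the finite-dimensional $k$-vector space $J_q/\m_P J_q$), so that the fibered product is taken over $P/J_q$, where both objects restrict tautologically to $\rho^{(q)}$. This adjustment is essential, not optional bookkeeping.

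Two further cautions. The bound you impose, $\m^{q+2} \subseteq J \subseteq J_q$, differs from the paper's $\m_P J_q \subseteq J \subseteq J_q$; the latter is what makes the candidate ideals form a finite-dimensional lattice in $J_q/\m_P J_q$, and it is exactly the constraint one must verify for $\ker(P\to R')$ in the versality step. That step is also too compressed in your sketch: one must form $R' \eqdef (P/J_k)\times_A A'$ explicitly, lift $P \to P/J_k \to A$ to $P \to A'$ using that $P$ is a power series ring, identify $R'$ with $P/J$ for a $J$ satisfying the defining constraint, apply RS to lift $\rho^{(k)}$ to $R'$, and only then invoke minimality of $J_{k+1}$ to factor $P\to R'$ through $P/J_{k+1}$; as written, ``the lifting problem is solved by a map $R_{q+1}\to A'$'' is an assertion rather than a derivation. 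Finally, a notational slip: $\m_\Lambda\m + \m^2 = \m^2$ (as $\m_\Lambda \subseteq \m$), so $P/(\m_\Lambda\m + \m^2) = P_1$, not $\overline{P}_1 \cong k\oplus T^\vee_{\xi_0}\F$; you mean $\m_\Lambda P + \m^2$ throughout.
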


\begin{proof}
First of all we notice that it is sufficient to find a formal object $(R,\rho)$ such that the Kodaira--Spencer map $\kappa_\rho\colon T_\Lambda R\to T_{\xi_0}\F$ is an isomorphism, and for every small extension $A'\to A$ with a diagram
$$
\xymatrix@+10pt@C+10pt{
& (A',\xi')\\
(R,\rho)& (A,\xi) \ar[l] \ar[u]
}
$$
of formal objects, the homomorphism $R\to A$ lifts to $R\to A'$.

To show this one uses the fact that the Kodaira--Spencer map $\kappa_\rho$ is an isomorphism and the functoriality property of the action of the tangent space on liftings (Proposition~\ref{actionfunct}) to ``adjust'' the lifting homomorphism $R\to A'$, so that the pullback of $\rho$ to $A'$ is isomorphic to $\xi'$ (the details are left to the reader).

Now we will construct a formal object $(R,\rho)$ with the weaker lifting property above.

Let $E=T^\vee_{\xi_0}\F$ be the dual of the tangent space $T_{\xi_0}\F$, and let $x_1,\hdots,x_n$ be a basis of $E$ as a $k$-vector space. Set $P \eqdef\Lambda\ds{x_1,\hdots,x_n}$. Then we have $\overline{P}_1\simeq k\oplus E$; let $\overline{\rho}_1 \in \F(\overline{P}_1)$ be the universal first order lifting of $\xi_0$ (see Example~\ref{universal-first-order}); this is versal with respect to artinian $\Lambda$-algebras of the form $k\oplus V$.
%

Now we will progressively extend $\overline{\rho}_1$ to a formal object on some bigger quotient of $P$. We first define inductively a sequence of ideals $I_i\subseteq P$ and objects $\rho_i \in \F(P/I_i)$ (it is easy to check that all the quotients will be actually artinian) starting with $I_1=\m_\Lambda P + \m_P^2$ and $\rho_1=\overline{\rho}_1$.

Suppose we have $I_{n-1}$ and $\rho_{n-1} \in \F(P/I_{n-1})$. Consider the set $A$ of ideals $I\subseteq P$ such that $\m_P I_{n-1}\subseteq I\subseteq I_{n-1}$ and there exists a lifting $\rho \in \F(P/I)$ of $\rho_{n-1}$, and take $I_n$ to be the minimum element of $A$ with respect to inclusion, that is, every element of the set $A$ contains $I_n$.

To show that such an element exists, we show that $A$ is closed under intersection (it is clearly nonempty, since $I_{n-1}$ satisfies the conditions). Noticing that ideals $\m_P I_{n-1}\subseteq I\subseteq I_{n-1}$ correspond to subspaces of the finite-dimensional $k$-vector space $I_{n-1}/\m_P I_{n-1}$, we only have to show that $A$ is closed under finite (or pairwise) intersection.

So suppose $I,J \in A$, with $\eta \in \F(P/I)$ and $\sigma \in \F(P/J)$; working in the $k$-vector space $I_{n-1}/\m_P I_{n-1}$ we can find an ideal $J'$ of $P$ such that $J\subseteq J'\subseteq I_{n-1}$, $I\cap J=I\cap J'$ and $I+J'=I_{n-1}$. Then we have that
$$
P/(I\cap J')\simeq P/I\times_{P/I_{n-1}}P/J'
$$
and using RS we get a deformation over $P/(I\cap J')=P/(I\cap J)$ lifting $\rho_{n-1}$, corresponding to the objects $\eta$ on $P/I$, and the pullback of $\sigma$ along the projection $P/J\to P/J'$, on $P/J'$. Thus $I\cap J$ is in $A$ as well.

Now set $I=\bigcap_i I_i$, and $R \eqdef P/I$. Notice that $R$ is still complete in the $\m_R$-adic topology, and we have also $R\simeq\invlim (P/I_i)\simeq\invlim(R/(I_i/I))$, as one easily checks. In particular the filtration $\{I_n/I\}_\nin$ of $R$ defines the same topology as its canonical one, and so (by Exercise~\ref{filtration}) we can define a formal object $\rho$ on $R$ as $\{\rho_n,f_n\}_\nin$, where $\rho_i \in \F(R/(I_i/I))$ are the ones defined above, and $f_n\colon \rho_n\to \rho_{n+1}$ are the arrows defining $\rho_{n+1}$ as a lifting of $\rho_n$.

Let us show that the formal object $(R,\rho)$ satisfies the two properties above: clearly the Kodaira--Spencer map $\kappa_\rho\colon T_\Lambda R\simeq E^\vee\to T_{\xi_0}\F=E^\vee$ is an isomorphism, since it is nothing else than $\kappa_{\overline{\rho}_1}$.

Now for the lifting property: suppose $A'\to A$ is a small extension, and that we have a diagram of formal objects as above. We want to show that $R\to A$ lifts to $R\to A'$. We can clearly assume that $A'\to A$ is a tiny extension, because if we prove it in this case, we can lift the homomorphism form $R$ successively, using the fact that every small extension is a composite of tiny extensions.

Notice that the homomorphism $R\to A$ factors through some $P/I_i$, say $P/I_k\to A$. Let us consider the fibered product $R'=(P/I_k)\times_A A'$, and take a lifting of the homomorphism $P\to P/I_k\to A$ to $P\to A'$ (which exists since $P$ is a power series ring, see Theorem~\ref{app.smoothcrit}). These homomorphisms together induce $P\to R'$, such that the following diagram is commutative.
$$\xymatrix{
 & R'\ar[r]\ar[d]&A'\ar[d]\\
P\ar@{-->}@/^/[ur]\ar[r]&P/I_k\ar[r] & A
}
$$
Call $J=\ker(P\to R')$, and notice that $J\subseteq I_k$. If $J=I_k$ we are done, because the projection $R'\to P/I_k$ will have a section that we can use to define our lifting as the composite $R\to P/I_k \to R' \to A'$.

So suppose that $J$ is properly contained in $I_k$. Identifying $P/J$ with its image in $R'$, we have that $I_k/J\subseteq \ker(R'\to P/I_k)$, which is isomorphic to $\ker(A'\to A)\simeq k$, so that necessarily $I_k/J=\ker(R'\to P/I_k)$. Looking at the diagram with exact rows
$$
\xymatrix{
0 \ar[r] & I_k/J  \ar[r] & P/J \ar[r] \ar[d] & P/I_k \ar[r] & 0\\
0 \ar[r] & \ker(R'\to P/I_k) \ar@{=}[u] \ar[r] & R' \ar[r] & P/I_k \ar[r] \ar@{=}[u] & 0
}
$$
we get that $R'\simeq P/J$. It is also easy to check that $\m_P I_k \subseteq J$ (and we had already that $J\subseteq I_k$), and by RS we can find a lifting $\overline{\rho} \in \F(R')=\F(P/J)$ of $\rho_k$. Since by definition $I_{k+1}$ is the minimal ideal of $P$ with these properties, we have that $I_{k+1}\subseteq J$, so that the homomorphism $P\to R'$ factors through $P/I_{k+1}$. Now it is clear that the composite $R\to P/I_{k+1} \to R'\to A'$ is a lifting of the given $R\to A$, so we are done.
\end{proof}

Now we turn to the proof of Schlessinger's Theorem~\ref{schlessinger}. The key point is the following.

\begin{prop}\label{mini-uni}
Let $\fib$ be a deformation category, and $(R,\rho)$ a miniversal formal object of $\F$. If $\Inf(\rho_0)=0$ and $\F(k)$ is a trivial groupoid, then $(R,\rho)$ is a universal formal object of $\F$.
\end{prop}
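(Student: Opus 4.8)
The plan is to show that the morphism of categories fibered in groupoids $\rho\colon (\Art_R)^\op \to \F$ determined by $(R,\rho)$ is an equivalence; by Proposition~\ref{fibered.equiv} it is enough to check that $\rho_A\colon (\Art_R)^\op(A) = \Hom_\Lambda(R,A) \to \F(A)$ is an equivalence of categories for every $A \in \artl$. Since the source is a discrete category, this amounts to: $\rho_A$ is essentially surjective; $\aut_A(\rho_\phi) = \{\id\}$ for every homomorphism $\phi\colon R \to A$; and $\rho_\phi \simeq \rho_\psi$ in $\F(A)$ implies $\phi = \psi$. The first two are easy given the hypotheses. Because $\F(k)$ is a trivial groupoid, every object of $\F(k)$ is uniquely isomorphic to $\rho_0$, so every object of $\F(A)$ sits canonically in $\F_{\rho_0}(A)$, and any isomorphism in $\F(A)$ between two such objects is automatically an isomorphism of liftings of $\rho_0$. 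Essential surjectivity then follows from versality: by Proposition~\ref{kssurj}(i) every object of $\F_{\rho_0}(A)$ admits a morphism to $(R,\rho)$, hence is isomorphic to some $\rho_\phi$. Triviality of $\aut_A(\rho_\phi)$ follows from $\Inf(\rho_0)=0$: by Corollary~\ref{equiv} the map $\aut_A(\rho_\phi) \to \aut_k(\rho_0)$ is injective, and $\aut_k(\rho_0) = \{\id\}$ since $\F(k)$ is trivial. (This last point also yields uniqueness of the isomorphism in the universal property.)

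The substance of the argument is the implication $\rho_\phi \simeq \rho_\psi \Rightarrow \phi = \psi$, which I would prove by induction on the order $m$ of $A$. For $m=0$ there is a unique homomorphism $R \to k$ and there is nothing to prove. For $m \geq 1$, set $\bar A \eqdef A/\m_A^m$; pulling back an isomorphism $\rho_\phi \simeq \rho_\psi$ along $A \to \bar A$ gives $\rho_{\bar\phi} \simeq \rho_{\bar\psi}$, so by the inductive hypothesis $\bar\phi = \bar\psi$. Thus $\phi$ and $\psi$ agree modulo $I \eqdef \m_A^m$, which is annihilated by $\m_A$, so that $A \to \bar A$ is a small extension with kernel $I$, and by Proposition~\ref{ext.der} the difference $\delta \eqdef \phi - \psi$ is a $\Lambda$-derivation $R \to I$, i.e.\ an element of $\der_\Lambda(R,I) \simeq I \otimes_k T_\Lambda R$ (here $I$ is a $k$-vector space on which $R$ acts through its residue field). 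Regarding $\phi$ and $\psi$ as liftings of $\bar\phi = \bar\psi$ in the deformation category $(\Art_R)^\op$, one has $[\phi] = [\psi] + \delta$ for the torsor action of Theorem~\ref{actionthm}, this action coinciding for $(\Art_R)^\op$ with the one described by Proposition~\ref{ext.der}. Applying the functoriality of the action under the morphism $\rho\colon (\Art_R)^\op \to \F$, namely Proposition~\ref{actionfunct}, together with the identification of $d_{R\to k}\rho$ with the Kodaira--Spencer map $\kappa_\rho\colon T_\Lambda R \to T_{\rho_0}\F$, gives
$$
[\rho_\phi] = [\rho_\psi] + (\id_I \otimes \kappa_\rho)(\delta) \qquad\text{in}\qquad \lif(\rho_{\bar\phi},A)\,.
$$
Since $\rho_\phi \simeq \rho_\psi$ by hypothesis, the left-hand side equals $[\rho_\psi]$, so freeness of the action forces $(\id_I \otimes \kappa_\rho)(\delta) = 0$; but $(R,\rho)$ is miniversal, so $\kappa_\rho$ is an isomorphism, hence so is $\id_I \otimes \kappa_\rho$, and therefore $\delta = 0$, i.e.\ $\phi = \psi$. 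This closes the induction.

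With all three points in hand, $\rho_A$ is an equivalence for every $A$, so $\rho\colon (\Art_R)^\op \to \F$ is an equivalence and $(R,\rho)$ is a universal formal object. The main obstacle is the displayed identity: it rests on matching the concrete "add a derivation" action on liftings of a homomorphism in $(\Art_R)^\op$ with the abstract action of Theorem~\ref{actionthm}, and then invoking Proposition~\ref{actionfunct}; alternatively it can be obtained directly by pulling back $\rho$ along $R \to A \times_{\bar A} A \simeq A \oplus I$ and unwinding the construction in the proof of Theorem~\ref{actionthm} and the definition of $\kappa_\rho$. Everything else is formal, once the triviality of $\F(k)$ and the vanishing of $\Inf(\rho_0)$ are used to rule out nontrivial automorphisms and to make $\F(k)$ connected.
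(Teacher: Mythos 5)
Your proof is correct, and the overall outline agrees with the paper's: reduce to showing $\rho_A$ is an equivalence for each $A$, deal with essential surjectivity via versality and connectedness of $\F(k)$, deal with automorphisms via Corollary~\ref{equiv}, and then establish the crucial injectivity $\rho_\phi\simeq\rho_\psi\Rightarrow\phi=\psi$ by induction. The difference lies in how the key identity $[\rho_\phi]-[\rho_\psi] = (\id_I\otimes\kappa_\rho)(\phi-\psi)$ is established. The paper isolates this as a standalone Lemma and proves it directly: it builds the trivial extension $B' = B\oplus\m_{\bar B}/\m_{\bar B}^2$, constructs maps $i,u\colon B\to B'$ and $F\colon B'\to A'$, and invokes Proposition~\ref{funct} (functoriality of the torsor action under homomorphisms of small extensions). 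You instead observe that $(\Art_R)^\op$ is itself a deformation category (Proposition~\ref{prorapp}), that the torsor action on $\lif(\bar\phi,A)\subseteq\Hom_\Lambda(R,A)$ is precisely addition of a derivation (the content of Proposition~\ref{ext.der}), and then apply Proposition~\ref{actionfunct} (functoriality under the morphism $\rho\colon(\Art_R)^\op\to\F$) together with the defining identity $d_{R\to k}\rho = \kappa_\rho$. Your route is arguably more conceptual in that it reuses machinery already in place rather than proving an auxiliary lemma from scratch; what it costs is the one verification you flag — that Theorem~\ref{actionthm}'s construction, unwound in the case of $(\Art_R)^\op$, really does reduce to ``add the derivation'' — though that check is short. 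A secondary cosmetic difference: the paper proves uniqueness of morphisms $(S,\xi)\to(R,\rho)$ for arbitrary $S\in\compl$ by comparing truncations $\phi_n,\psi_n\colon R_n\to S_n$, while you induct on the order of $A\in\artl$; these are equivalent via Remark~\ref{morphism.approx}.
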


To prove this we need a Lemma.

Let $\phi\colon A'\to A$ be a small extension with kernel $I$, and $B \in \artl$. Suppose we have two homomorphisms $f$, $g\colon B\to A'$ such that the composites $h=\phi\circ f=\phi\circ g\colon B\to A$ coincide. Then the difference $f-g\colon B\to I$ is a $\Lambda$-derivation (see Proposition~\ref{ext.der}), so easy calculations, which use also the fact that $A'\to A$ is a small extension, show that $(f-g)(\m_B^2)=0$ and $(f-g)(\m_\Lambda B)=0$; we can consider then the induced $k$-linear function
$$
\Delta(f,g)\colon \m_B/(\m_\Lambda B+ \m_B^2)\simeq \m_{\overline{B}}/\m_{\overline{B}}^2 \longrightarrow I\,.
$$
Notice that by $\Lambda$-linearity of $f$ and $g$, and the fact that $B$ is generated by $\m_B$ and $\Lambda$ as a ring, we have $f=g$ if and only if $\Delta(f,g)=0$.

Now take an object $\xi \in \F(B)$, and consider the pullbacks $f_*(\xi), g_*(\xi) \in \F(A')$, which are liftings of $h_*(\xi)$. In particular by Theorem~\ref{actionthm} we have an action of $I\otimes_k T_{\xi_0}\F$ on $\lif(h_*(\xi),A')$, where $\xi_0$ is the pullback of $\xi$ to $k$, and recall also that the formal deformation $(B,\xi)$ has an associated Kodaira--Spencer class $k_\xi \in \m_{\overline{B}}/\m_{\overline{B}}^2\otimes_k T_{\xi_0}\F$.

\begin{lemma}
With the notation of Remark~\ref{action}, we have
$$
[f_*(\xi)] - [g_*(\xi)] = (\Delta(f,g)\otimes\id)(k_\xi) \in I\otimes_k T_{\xi_0}\F\,.
$$
\end{lemma}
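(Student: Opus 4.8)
The plan is to unwind the definition of the action of Theorem~\ref{actionthm} and reduce the statement to the identification of the pullback $\overline\xi_1$ of $\xi$ to $\overline{B}_1 \eqdef B/(\m_\Lambda B + \m_B^2) = k\oplus \m_{\overline B}/\m_{\overline B}^2$ with the Kodaira--Spencer class $k_\xi$, which is exactly the content of Exercise~\ref{kod-s}.

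First I would recall from the proof of Theorem~\ref{actionthm} how $[f_*(\xi)] - [g_*(\xi)]$ is computed. By the defining property of the subtraction in Remark~\ref{action} and of the action $\mu$, this element of $I\otimes_k T_{\xi_0}\F$ is obtained as follows: one forms the lifting $\{g_*(\xi),f_*(\xi)\}$ of $h_*(\xi)$ to $A'\times_A A'$ (with $g_*(\xi)$ on the first factor, in accordance with the convention under which the ring isomorphism $A'\times_A A'\simeq A'\oplus I$ commutes with the first projection), transports it through the chain of ring isomorphisms $A'\times_A A'\simeq A'\oplus I\simeq A'\times_k(k\oplus I)$ used in that proof, and restricts to the second factor $k\oplus I$; the resulting class in $\lif(\xi_0,k\oplus I)$ is the answer, under $\lif(\xi_0,k\oplus I)\simeq I\otimes_k T_{\xi_0}\F$. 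Now $\{g_*(\xi),f_*(\xi)\}$ is the pullback of $\xi$ along the homomorphism $(g,f)\colon B\to A'\times_A A'$ (which lands in the fibered product because $\phi f=\phi g=h$), so I would chase this homomorphism through the two isomorphisms and the projection to $k\oplus I$. The outcome is the $\Lambda$-algebra homomorphism $\psi\colon B\to k\oplus I$ given by $b\mapsto \bar b\oplus (f-g)(b)$, where $\bar b\in k$ is the residue of $b$. By Proposition~\ref{ext.der}, $f-g$ is a $\Lambda$-derivation $B\to I$, and (as recorded just before the Lemma) it annihilates $\m_\Lambda B$ and $\m_B^2$; hence $\psi$ factors through $\overline B_1$, the induced map $\overline\psi\colon \overline B_1\to k\oplus I$ being the identity on $k$ and $\Delta(f,g)$ on $\m_{\overline B}/\m_{\overline B}^2$. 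Consequently $\psi_*(\xi)=\overline\psi_*(\overline\xi_1)$, where $\overline\xi_1\in\F(\overline B_1)$ is the pullback of $\xi$.

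It then remains to identify $\overline\psi_*(\overline\xi_1)$. By the definition of the Kodaira--Spencer class (Exercise~\ref{kod-s}) one has $[\overline\xi_1]=[\xi_0|_{\overline B_1}]+k_\xi$ under the action of $(\m_{\overline B}/\m_{\overline B}^2)\otimes_k T_{\xi_0}\F$ on $\lif(\xi_0,\overline B_1)$. I would then apply Proposition~\ref{funct} (functoriality of the action with respect to a small extension) to the homomorphism of small extensions from $(\overline B_1\to k)$ to $(k\oplus I\to k)$ determined by $\overline\psi$, whose restriction to kernels is $\Delta(f,g)$, with base object $\xi_0\in\F(k)$. Since $\overline\psi$ is a homomorphism over $k$, the pullback $\overline\psi_*$ carries the trivial lifting $\xi_0|_{\overline B_1}$ to the trivial lifting $\xi_0|_{k\oplus I}$, so Proposition~\ref{funct} gives $\overline\psi_*(\overline\xi_1)=[\xi_0|_{k\oplus I}]+(\Delta(f,g)\otimes\id)(k_\xi)$, i.e. $\psi_*(\xi)$ corresponds to $(\Delta(f,g)\otimes\id)(k_\xi)$. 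Combined with the first step, this is precisely the assertion.

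The main obstacle is entirely the bookkeeping in the middle step: one must chase the pullback of $\xi$ through the two ring isomorphisms of the proof of Theorem~\ref{actionthm} and the two structure projections without losing track of which copy of $A'$ is the ``first'' one, so that the sign comes out matching the conventions of Remark~\ref{action} and of $\Delta(f,g)$ (which is defined via $f-g$, not $g-f$); the fact that the final formula carries no spurious sign is exactly what forces these choices to be made consistently. The remaining two steps, built on Exercise~\ref{kod-s} and Proposition~\ref{funct}, are purely formal.
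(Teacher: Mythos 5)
Your proof is correct, but it takes a genuinely different route from the one in the paper. The paper introduces an auxiliary trivial extension $B' = B \oplus V$ with $V = \m_{\overline B}/\m_{\overline B}^2$, defines two sections $i, u\colon B \to B'$ (the trivial one, and one twisted by the derivation $D(b) = [\pi'(b)-\pi(b)]$), and a homomorphism $F\colon B' \to A'$ satisfying $F\circ i = g$ and $F\circ u = f$; it then applies Proposition~\ref{funct} twice (once with $F$, once with the projection $h\colon B' \to \overline B_1$) to reduce everything to the definition $k_\xi = [\overline\xi_1] - [\xi_0|_{\overline B_1}]$, never unwinding the action itself. You instead go back to the construction of the action in the proof of Theorem~\ref{actionthm}: you identify $\{g_*(\xi), f_*(\xi)\}$ with $(g,f)_*(\xi)$, chase the chain of ring isomorphisms $A' \times_A A' \simeq A' \oplus I \simeq A' \times_k (k \oplus I)$ to land on the homomorphism $\psi\colon B \to k\oplus I$, $b \mapsto \bar b \oplus (f-g)(b)$, factor $\psi$ through $\overline B_1$, and finish with a single application of Proposition~\ref{funct}. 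Both arguments are sound; yours is more hands-on and gives a clearer picture of what the difference of two pullbacks ``is,'' at the cost of reopening the internals of Theorem~\ref{actionthm}, while the paper's $B'$-trick keeps the computation entirely in terms of the established functoriality. One small remark: your appeal to Exercise~\ref{kod-s} is really just an appeal to the \emph{definition} of $k_\xi$ as $[\overline\xi_1]-[\xi_0|_{\overline B_1}]$; the content of that exercise (that the Kodaira--Spencer class agrees with the differential $d_{R\to k}\xi$) is not actually needed here.
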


\begin{proof}
Set $V \eqdef \m_{\overline{B}}/\m_{\overline{B}}^2$, and consider $B'=B\oplus V$ with the obvious $\Lambda$-algebra structure, and the trivial small extension
$$
\xymatrix{
0\ar[r] & V\ar[r] & B'\ar[r] & B\ar[r]& 0.
}
$$
If $\pi\colon B\to k$ and $\pi'\colon B\to \overline{B} = B/\m_\Lambda B$ are the quotient maps, there is a derivation $D\colon B\to V$ that sends $b \in B$ into the class of $\pi'(b)-\pi(b)$ in $\m_{\overline{B}}/\m_{\overline{B}}^2$.

We consider the two homomorphisms $i$, $u\colon B\to B'$, defined by $i(b)=(b,0)$ and $u(b)=(b,D(b))$, and the one $F\colon B'\to A'$ given by
$$
F(b,x)=g(b)+\Delta(f,g)(x)\,.
$$
One can easily check that $F\circ i= g$ and $F\circ u =f$, and using Proposition~\ref{funct} (with $\phi=F\colon B'\to A'$) we get
$$
[f_*(\xi)] - [g_*(\xi)] = (\Delta(f,g)\otimes \id)([u_*(\xi)] - [i_*(\xi)])
$$
(since $F|_{V}=\Delta(f,g)$).

Now consider $\overline{B}_1=\overline{B}/\m_{\overline{B}}^2\simeq k\oplus V$, and the homomorphism $h\colon B'\to \overline{B}_1$ defined by $h(b,x)=\pi(b)+x$. If we call $\pi''\colon B\to \overline{B}_1$ the quotient map, we have $h\circ u=\pi''$, and $(h\circ i)(a)=\pi(a)\in \overline{B}_1$.

From this we get that $h_*(u_*(\xi))\simeq \overline{\xi}_1$ and $h_*(i_*(\xi))\simeq \xi_0|_{\overline{B}_1}$; noticing that $h|_V$ is the identity, using Proposition~\ref{funct} again we infer that
$$
[u_*(\xi)] - [i_*(\xi)] = [\overline{\xi}_1] - [\xi_0|_{\overline{B}_1}]\,.
$$
But now $[\overline{\xi}_1] - [\xi_0|_{\overline{B}_1}] = k_\xi$ by definition, and this concludes the proof.
\end{proof}

Now we are ready to prove Proposition~\ref{mini-uni}.

\begin{proof}[Proof of Proposition~\ref{mini-uni}]
By Proposition~\ref{kssurj} we already know that for any formal object $(S,\xi)$ of $\F$ there exists a morphism $(S,\xi)\to (R,\rho)$ ($\xi_0$ will be necessarily isomorphic to $\rho_0$, for $\F(k)$ is a trivial groupoid).

As to uniqueness, we have to show that any two morphisms of formal objects $f$, $g\colon (S,\xi)\to (R,\rho)$ are the same. Using Proposition~\ref{equiv.rel} we see that, since $\Inf(\rho_0)$ is trivial, we only need to show that the two homomorphisms $\phi,\psi\colon R\to S$ associated with $f$ and $g$ are equal.

Moreover it is sufficient to show that $\phi_n,\psi_n\colon R_n\to S_n$ are equal for every $n$, and we do this inductively. Obviously $\phi_0=\psi_0$, so suppose $\phi_{n-1}=\psi_{n-1}$. In this case $\phi_n,\psi_n\colon R_n\to S_n$ are the same map when composed with $S_n\to S_{n-1}$, so we can consider
$$
\Delta(\phi_n,\psi_n)\colon \m_{\overline{R}_n}/\m_{\overline{R}_n}^2\simeq \m_{\overline{R}}/\m_{\overline{R}}^2 \longrightarrow \m_S^{n}/\m_S^{n+1}\,.
$$
Since by assumption $(\phi_n)_*(\rho_n)$ and $(\psi_n)_*(\rho_n)$ are isomorphic as liftings of the object $(\phi_{n-1})_*(\rho_{n-1})=(\psi_{n-1})_*(\rho_{n-1})$, by the preceding Lemma we conclude that
$$
(\Delta(\phi_n,\psi_n)\otimes \id)(k_\rho)=0 \in \m_S^{n}/\m_S^{n+1}\otimes_k T_{\rho_0}\F
$$
where $k_\rho \in \m_{\overline{R}}/\m_{\overline{R}}^2\otimes_k T_{\rho_0}\F$ is the Kodaira--Spencer class of $\rho$.

This means that if we compose the adjoint map
$$
\Delta(\phi_n,\psi_n)^\vee\colon (\m_S^{n}/\m_S^{n+1})^\vee\arr (\m_{\overline{R}}/\m_{\overline{R}}^2)^\vee
$$
with the Kodaira--Spencer map $\kappa_\rho\colon T_\Lambda R\to T_{\rho_0}\F$ of $\rho$ we get the zero map. But now $\kappa_\rho$ is an isomorphism, so we conclude that $\Delta(\phi_n,\psi_n)=0$, from which follows that $\phi_n=\psi_n$, as we want to show.
\end{proof}

Schlessinger's Theorem is now an easy corollary of Theorem~\ref{miniexist} and Proposition~\ref{mini-uni}.

The following Proposition gives a useful criterion that will be used later to show that some formal deformations are miniversal.

\begin{prop}\label{miniversalcrit}
Let $\fib$ be a deformation category, and suppose that $(R,\rho)$ is a formal object of $\F$ such that:
\begin{itemize}
 \item $R$ is a power series ring over $\Lambda$.
\item The Kodaira--Spencer map $\kappa_\rho\colon T_\Lambda R\to T_{\rho_0}\F$ is an isomorphism.
\end{itemize}
Then $(R,\rho)$ is a miniversal formal object, and in particular $\rho_0$ is unobstructed (see Proposition~\ref{unobst-ver}).
\end{prop}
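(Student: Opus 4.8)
The plan is to verify the defining lifting property of a versal object directly, using the smoothness criterion of Appendix~\ref{appb} to exploit the hypothesis that $R$ is a power series ring over $\Lambda$, and then to invoke minimality from the Kodaira--Spencer hypothesis. First I would reduce to small extensions: by Definition~\ref{versaldef} it suffices to fill in, for every small extension $\phi\colon A'\to A$ in $\artl$, a diagram of formal objects
$$
\xymatrix@+10pt@C+10pt{
& (A',\xi')\ar@/_10pt/@{-->}[dl]\\
(R,\rho)& (A,\xi) \ar[l] \ar[u]
}
$$
with a dotted arrow. The arrow $(A,\xi)\to(R,\rho)$ corresponds to a homomorphism $\psi\colon R\to A$ of $\Lambda$-algebras together with an isomorphism $\psi_*(\rho)\simeq \xi$.

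The key step is then to produce the lifting in two stages. Since $R$ is a power series ring over $\Lambda$, Theorem~\ref{app.smoothcrit} gives a homomorphism $\lambda\colon R\to A'$ with $\phi\circ\lambda=\psi$; pulling $\rho$ back along $\lambda$ yields an object $\lambda_*(\rho)\in\F(A')$ which restricts to $\xi$ on $A$, i.e.\ a lifting of $\xi$ to $A'$. Now both $\lambda_*(\rho)$ and $\xi'$ are liftings of $\xi$ along the small extension $A'\to A$ with kernel $I$, so by Theorem~\ref{actionthm} there is an element $a\in I\otimes_k T_{\rho_0}\F$ with $[\lambda_*(\rho)] + a = [\xi']$. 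The plan is to absorb $a$ by modifying $\lambda$: using that $\kappa_\rho\colon T_\Lambda R\to T_{\rho_0}\F$ is an isomorphism, pick $b\in I\otimes_k T_\Lambda R$ mapping to $a$ under $\id\otimes\kappa_\rho$. Since $\phi$ is a small extension, the difference of two lifts of $\psi$ is a $\Lambda$-derivation $R\to I$ (Proposition~\ref{ext.der}), which factors through $\m_{\overline R}/\m_{\overline R}^2$ and so corresponds to an element of $I\otimes_k T_\Lambda R$; I would choose the derivation $d$ corresponding to $b$ and set $\lambda' \eqdef \lambda + d$, still a lift of $\psi$. Then Proposition~\ref{funct} applied to the codifferential computation (exactly as in the proof of Theorem~\ref{miniexist}, where the same ``adjustment'' argument appears), together with the relation between $\kappa_\rho$ and the Kodaira--Spencer class of $\rho$, gives $[\lambda'_*(\rho)] = [\lambda_*(\rho)] + (\id\otimes\kappa_\rho)(b) = [\lambda_*(\rho)] + a = [\xi']$. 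Hence $\lambda'_*(\rho)\simeq \xi'$, and the resulting morphism $(A',\xi')\to(R,\rho)$ fills the diagram, proving versality.

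Finally, since $\kappa_\rho$ is an isomorphism by hypothesis, $(R,\rho)$ is minimal, hence miniversal; and because $R$ is a power series ring over $\Lambda$, Proposition~\ref{unobst-ver} (whose hypotheses are now met) shows $\rho_0$ is unobstructed. The main obstacle I anticipate is the bookkeeping in the ``adjustment'' step: one must check carefully that replacing $\lambda$ by $\lambda+d$ changes the isomorphism class of the pullback by exactly $(\id\otimes\kappa_\rho)(b)$, which requires identifying the derivation $d$ with the correct element of $I\otimes_k T_\Lambda R$ and correctly tracking the sign and the factorization through $\overline R$. This is precisely the computation already carried out (and left to the reader) inside the proof of Theorem~\ref{miniexist}, so it can be cited or repeated; none of it is deep, but it is the only place where care is needed.
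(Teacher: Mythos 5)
Your proof is correct, but it takes a genuinely different route from the paper's. The paper argues \emph{indirectly}: since $\kappa_\rho$ is an isomorphism, $T_{\rho_0}\F$ is finite-dimensional, so Theorem~\ref{miniexist} produces some miniversal $(S,\xi)$; Proposition~\ref{kssurj} gives a morphism $(R,\rho)\to(S,\xi)$, Proposition~\ref{kosfunct} shows the induced map on vertical cotangent spaces is an isomorphism, and Corollary~\ref{psr.iso} (which uses that $R$ is a power series ring) upgrades this to an isomorphism $S\simeq R$ of $\Lambda$-algebras, forcing $(R,\rho)\simeq(S,\xi)$. Your argument instead verifies the versality condition of Definition~\ref{versaldef} \emph{directly}: lift the structure homomorphism $R\to A$ to $R\to A'$ using Theorem~\ref{app.smoothcrit}, compare the resulting pullback of $\rho$ with $\xi'$ via Theorem~\ref{actionthm}, and then absorb the discrepancy by modifying the lift by a derivation whose class in $I\otimes_k T_\Lambda R$ is chosen using the bijectivity of $\kappa_\rho$. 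This is exactly the ``adjustment'' step that the proof of Theorem~\ref{miniexist} sketches and leaves to the reader; the identity $[\lambda'_*(\rho)]-[\lambda_*(\rho)]=(\id\otimes\kappa_\rho)(b)$ you need is precisely the content of the Lemma preceding Proposition~\ref{mini-uni} (applied after factoring $\lambda,\lambda'$ through a suitable $R_n$). The trade-off is clear: your proof is self-contained and does not invoke the existence theorem, at the cost of doing the bookkeeping explicitly; the paper's proof is shorter once Theorem~\ref{miniexist} is available but inherits the sketched step inside that theorem's proof. Both are correct.
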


\begin{proof}
Since $\kappa_\rho$ is an isomorphism, we have that $T_{\rho_0}\F$ is a finite-dimensional $k$-vector space; by Theorem~\ref{miniexist} we can then find a miniversal object $(S,\xi)$ restricting to $\rho_0$ over $k$. By Proposition~\ref{kssurj} and versality of $(S,\xi)$ we have a morphism of formal objects $(R,\rho)\to (S,\xi)$, and since both of the Kodaira--Spencer maps $\kappa_\rho$ and $\kappa_\xi$ are isomorphisms, the $k$-linear map $T_\Lambda^\vee S\to T_\Lambda^\vee R$ induced on the cotangent spaces is an isomorphism too (by Proposition~\ref{kosfunct}).

Since $R$ is a power series ring over $\Lambda$, this implies that the homomorphism $S\to R$ is an isomorphism (see Corollary~\ref{psr.iso}), and then the morphism $(R, \rho)\to (S,\xi)$ is an isomorphism too, so $(R,\rho)$ is miniversal.
\end{proof}

\subsubsection{Applications to obstruction theories}\label{rkproof}

Now that we have proved the existence of miniversal deformations, we can give proofs of the Ran-Kawamata Theorem (Theorem~\ref{rk}) and of the anticipated formula for the dimension of the minimal obstruction space associated with an obstruction theory.

\begin{proof}[Proof of Theorem~\ref{rk}]\label{prf-rk}
Let $(R,\rho)$ be a miniversal deformation of $\xi_0$ coming from Theorem~\ref{miniexist}. In particular $R$ is a quotient $P/I$, where $P=k\ds{x_1,\hdots,x_n}$ and $n = \dim_k T_{\xi_0}\F$, and $I\subseteq \m_P^2$. We want to show that $I=(0)$, so that $R$ is a power series ring, and by Proposition~\ref{unobst-ver} the object $\xi_0$ will be unobstructed.

The first step is to prove that the module of continuous differentials $\diff \eqdef \wh{\diff}_{R}$ (see Appendix~\ref{appb}) is a free $R$-module. Since $R$ is local we can equivalently show that $\diff$ is a projective $R$-module, and to do this it suffices to check that for every surjection $M'\to M$ of $R$-modules of finite length the induced homomorphism $\Hom_R(\diff,M')\to \Hom_R(\diff,M)$ is surjective.

Let us take then a surjection $M'\to M$ of $R$-modules of finite length, and $n$ large enough for $M'$ and $M$ to be $R/\m_R^{n+1}$-modules. Set $A \eqdef R_n$, and consider a homomorphism $\phi \in \Hom_R(\diff,M)$. This will correspond to a $k$-derivation $R\to M$, which in turn is the same as a homomorphism of $R$-modules $R\to A \oplus M$ (this is a standard fact) that is moreover compatible with the two quotient maps to $A$. In other words the diagram
$$
\xymatrix{
R\ar[r]\ar[rd] & A \oplus M\ar[d]\\
& A
}
$$
is commutative.

Take then the pullbacks $\xi \in \F_{\xi_0}(A)$ and $\xi' \in \F_{\xi_0}(A \oplus M)$ of the miniversal deformation $\rho$ along the two homomorphisms above. The class of $\xi'$ is an element $[\xi'] \in F_\xi(M)$, and so by right-exactness of $F_\xi$ we can find a $[\xi''] \in F_\xi(M')$ that maps to $[\xi']$ via the canonical function $F_\xi(M')\to F_\xi(M)$. In other words $\xi'' \in \F_{\xi_0}(A \oplus M')$ is an object whose pullback to $A \oplus M$ is isomorphic to $\xi'$.

By versality of $(R,\rho)$ the homomorphism $R\to A \oplus M$ can then be lifted to $R\to A \oplus M'$
$$
\xymatrix@+10pt@C+10pt{
& (A \oplus M',\xi'')\ar@/_10pt/@{-->}[dl]\\
(R,\rho)& (A \oplus M,\xi') \ar[l] \ar[u]
}
$$
and this lifting corresponds to a $k$-derivation $R\to M'$, which in turn is the same as a homomorphism $\psi \in \Hom_R(\diff,M')$. This homomorphism $\psi$ will then be in the preimage of the chosen $\phi \in \Hom_R(\diff,M)$, and this proves that $\Hom_R(\diff,M')\to \Hom_R(\diff,M)$ is surjective. In conclusion $\diff=\wh{\diff}_R$ is a free $R$-module.

Now we deduce that $I=(0)$, and this will conclude the proof, as we already noticed. We consider the conormal sequence
\begin{equation}\label{conorm.rk}
\xymatrix{
I/I^2\ar[r]^-d & \wh{\diff}_P\otimes_P R\ar[r] & \wh{\diff}_R\ar[r] & 0
}
\end{equation}
(see Proposition~\ref{cont.conorm}) and notice that, since $P$ is a power series ring on $n$ indeterminates, the $R$-module  $\wh{\diff}_P\otimes_P R$ is free of rank $n$. Moreover if $m$ is the rank of $\wh{\diff}_R$, tensoring (\ref{conorm.rk}) with $k$ we obtain an isomorphism
$$
(\wh{\diff}_P\otimes_P R)\otimes_R k \simeq \wh{\diff}_R\otimes_R k
$$
(for the homomorphism $d$ becomes the zero map), and this tells us that $m=n$.

Therefore the surjective homomorphism $\wh{\diff}_P\otimes_P R \to \wh{\diff}_R$ of (\ref{conorm.rk}) has to be an isomorphism, and so $d\colon I/I^2\to \wh{\diff}_P\otimes_P R$ is the zero map. This means that the image of $I$ along the universal derivation $d\colon P\to \wh{\diff}_P$ is contained in the ideal $I\wh{\diff}_P$, and this implies that for any $f \in I$ and $i=1,\hdots,n$, the partial derivative $\partial f/\partial x_i$ is an element of $I$.

Since $\car k=0$, it is easy to see that this implies $I=(0)$ (for example considering an element of $I$ of minimal degree and recalling that $I\subseteq \m_P^2$), and so we are done.
\end{proof}

Now consider a deformation category $\fib$, an object $\xi_0 \in \F(k)$ such that $T_{\xi_0}\F$ is finite-dimensional, and an obstruction theory $(V_\omega, \omega)$ for $\xi_0$. By Theorem~\ref{miniexist} $\xi_0$ has a miniversal deformation $(R,\rho)$ where $R$ is a quotient $P/I$, with $P=\Lambda\ds{x_1,\hdots,x_n}$, $n = \dim_k T_{\xi_0}\F$ and $I\subseteq \m_\Lambda P+\m_P^2$.

We denote by $\mu(I)$ the minimal number of generators of the ideal $I \subseteq P$, which by Nakayama's Lemma is the same as $\dim_k(I/\m_P I)$. Finally let $\Omega_\omega$ denote the minimal obstruction space associated with $(V_\omega,\omega)$, as in Section~\ref{obs}.

\begin{prop}
The dimension of\/ $\Omega_\omega$ as a $k$-vector space coincides with $\mu(I)=\dim_k(I/\m_P I)$.
\end{prop}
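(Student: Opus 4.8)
The plan is to construct, from the miniversal deformation $(R,\rho)$ with $R = P/I$, a natural obstruction theory $(V, \omega')$ for $\xi_0$ with $V = (I/\m_P I)^\vee$, and then to invoke the canonicity of minimal obstruction spaces (the exercise following Proposition~\ref{minimalobstr2}) to conclude $\dim_k \Omega_\omega = \dim_k (I/\m_P I)^\vee = \mu(I)$. So the first step is to define this obstruction theory. Given a small extension $A' \to A$ with kernel $J$ and an object $\xi \in \F_{\xi_0}(A)$, use versality of $(R,\rho)$ to pick a homomorphism $u \colon R \to A$ with $u_*(\rho) \simeq \xi$; lift $u$ to a homomorphism $v \colon P \to A'$ (possible since $P$ is a power series ring over $\Lambda$, Theorem~\ref{app.smoothcrit}). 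Then $v$ carries $I$ into $J$, kills $\m_P I$ since $\m_{A'} J = 0$ and $v(\m_P) \subseteq \m_{A'}$, and also kills $I \cap \m_P^2$-type contributions appropriately; the induced $k$-linear map $\overline v \colon I/\m_P I \to J$ gives an element $\omega'(\xi, A') \in J \otimes_k (I/\m_P I)^\vee$. The key verifications are: (i) $\omega'(\xi,A')$ is independent of the choices of $u$ and $v$ — here one uses Proposition~\ref{ext.der} (two lifts of $u$ differ by a $\Lambda$-derivation $P \to J$, which kills $\m_\Lambda P + \m_P^2 \supseteq \m_P I$ composed with $I \hookrightarrow P$... more precisely one checks the ambiguity dies on $I/\m_P I$) and the fact that two choices of $u$ differ by something measured by $\Inf$ and the $T_{\xi_0}\F$-action, which after the miniversality identification washes out; (ii) $\omega'(\xi,A') = 0$ iff $v$ factors through $R = P/I$, i.e. iff $u$ lifts to $R \to A'$, which by versality is equivalent to $\xi$ lifting to $A'$; (iii) functoriality in the small extension, which follows by naturality of the lifting construction.

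Once $(V,\omega')$ is established as an obstruction theory, I would compute its minimal obstruction space $\Omega_{\omega'}$ and show it equals all of $V = (I/\m_P I)^\vee$. For this, pick a basis $f_1, \dots, f_r$ of $I/\m_P I$ (so $r = \mu(I)$) lifting to minimal generators of $I$; for each $i$, the quotient $P/(\m_P I + \sum_{j \ne i} P f_j + \text{suitable completion})$ — more cleanly, build a tiny extension $A' \to A$ with $A = P_n/(\text{image of } I)$ for $n$ large and kernel realizing the $i$-th coordinate functional on $I/\m_P I$ — witnessing that the $i$-th dual basis vector lies in $\Omega_{\omega'}$. Concretely: since $I \not\subseteq \m_P I$, for $n$ large enough $I_n \eqdef (I + \m_P^{n+1})/\m_P^{n+1}$ satisfies $I_n \ne \m_P I_n + \m_P^{n+1} \cap \dots$; choosing $n$ with $I \cap \m_P^{n+1} \subseteq \m_P I$ and a codimension-one subspace $W \subseteq I/\m_P I$ missing $f_i$, form $A' \eqdef P/(W\text{-generated ideal} + \m_P^{n+1})$ and $A \eqdef P/(I + \m_P^{n+1}) = R_n$, giving a tiny extension; the tautological object $\rho_n \in \F(A)$ has obstruction to lifting to $A'$ equal (by construction of $\omega'$, taking $u$ the quotient $R \to R_n$ and $v$ the quotient $P \to A'$) to the $i$-th coordinate, which is nonzero since $I$ does not factor through that quotient. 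Hence each basis vector is in $\Omega_{\omega'}$, so $\Omega_{\omega'} = V$ and $\dim_k \Omega_{\omega'} = \mu(I)$. Combining with the canonical isomorphism $\Omega_\omega \simeq \Omega_{\omega'}$ from the cited exercise finishes the proof.

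The main obstacle I anticipate is the well-definedness in step (i) above: showing $\omega'(\xi, A') \in J \otimes_k (I/\m_P I)^\vee$ genuinely does not depend on the choice of $u \colon R \to A$ (not just on the choice of lift $v$ of a fixed $u$). Different choices of $u$ come from the non-uniqueness in versality — they differ by an automorphism-type ambiguity and by the action of $I' \otimes_k T_{\xi_0}\F$ where $I'$ is the kernel of $A \to k$ — and one must check that after lifting to $P$ and restricting to $I/\m_P I$, this ambiguity contributes nothing, precisely because $I \subseteq \m_\Lambda P + \m_P^2$ and $v(\m_P^2) \subseteq \m_{A'}^2 \cap (\dots)$ interacts with the square-zero kernel $J$ correctly. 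An alternative, possibly cleaner route that avoids some of this is to observe that $(I/\m_P I)^\vee \simeq T_{\xi_0}\F \text{-independent data}$ can be packaged via the conormal sequence $I/I^2 \to \wh\diff_P \otimes_P R \to \wh\diff_R \to 0$, and to identify $\omega'$ with a well-known obstruction coming from this sequence; but the diagram-chasing definition above is the most self-contained given the tools in the excerpt, so I would pursue it and localize all the delicate points to Proposition~\ref{ext.der} together with Propositions~\ref{funct} and~\ref{actionfunct}.
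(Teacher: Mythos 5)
Your strategy is genuinely different from the paper's: you try to manufacture a brand-new obstruction theory $(V,\omega')$ with $V=(I/\m_PI)^\vee$ out of the miniversal deformation and then appeal to canonicity of minimal obstruction spaces, whereas the paper keeps the given $(V_\omega,\omega)$ and builds a specific linear map $f\colon(I/\m_PI)^\vee\to\Omega_\omega$. Concretely, the paper sets $\widetilde R\eqdef P/\m_PI$, uses Artin--Rees to observe that $0\to I/\m_PI\to\widetilde R_n\to R_n\to 0$ is a small extension for $n$ large, and defines $f$ as the element $\omega_n\eqdef\omega(\rho_n,\widetilde R_n)\in(I/\m_PI)\otimes_k\Omega_\omega\simeq\Hom_k\bigl((I/\m_PI)^\vee,\Omega_\omega\bigr)$; this datum is canonical, involving no choices at all. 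It then proves $f$ bijective. Your final-paragraph argument that $\Omega_{\omega'}$ exhausts $V$ is essentially the paper's injectivity argument (the paper takes $S_n=\widetilde R_n/\ker u$ and derives a contradiction from a splitting), and your construction of $\overline v$ by lifting $u\pi$ to $P\to A'$ is essentially the paper's surjectivity argument; so the mathematical content overlaps heavily, and the difference is mostly one of packaging.

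That packaging, however, introduces a genuine gap exactly where you flag one, and I do not think it can be filled cheaply. You need $\omega'(\xi,A')=\overline v$ to be independent not only of the lift $v$ of a fixed $u\pi$ (this part is fine: two lifts differ by a $\Lambda$-derivation $P\to J$, which annihilates $\m_\Lambda P+\m_P^2\supseteq I$ since $\m_A J=0$) but also of the choice of versal classifying map $u\colon R\to A$ for $\xi$. Two such $u$'s are in no sense small perturbations of each other---they are only guaranteed to agree after composing with $A\to k$---so the difference formula of the Lemma used in the proof of Proposition~\ref{mini-uni} (which you implicitly lean on through ``the $T_{\xi_0}\F$-action washes out'') does not apply, as it presupposes the two maps already coincide one order down. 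Your functoriality check (iii) has the same problem: without a canonical assignment $\xi\mapsto u$, ``naturality of the lifting construction'' is not available. Worse, if you run the relation $\omega(\xi,A')=(\overline v\otimes\id)(\omega_n)$ coming from functoriality of the original obstruction $\omega$, you see that $\omega(\xi,A')$ pins down $\overline v$ only on the image of the transpose of $f$; so establishing independence of $\overline v$ from $u$ in general is roughly as hard as proving $f$ injective, which is the substance of the proposition. The paper's formulation sidesteps all of this because $f$ is defined once and for all from the canonical $\rho_n$ and the canonical extension $\widetilde R_n\to R_n$; the versality-produced $u$ only appears in the surjectivity step, where its non-uniqueness is harmless since one is computing a fixed target element $\omega(\xi,A')$ rather than trying to define something by it.
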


\begin{proof}
We will show that there exists an isomorphism of $k$-vector spaces $\Omega_\omega\simeq (I/\m_p I)^\vee$, and this will imply the result. Set $\widetilde{R} \eqdef P/\m_PI$ (this is an object of $\compl$ as well), so that we have an exact sequence of $P$-modules
$$
\xymatrix{
0\ar[r] & I/\m_P I\ar[r] & \widetilde{R}\ar[r] & R\ar[r] & 0.
}
$$
Tensoring this with $P_n=P/\m_P^{n+1}$, we obtain
$$
\xymatrix{
I/\m_P I\ar[r]^-{\alpha_n} & \widetilde{R}_n \ar[r] & R_n\ar[r] & 0
}
$$
and by the Artin-Rees Lemma we see that $\ker\alpha_n = (I/\m_PI)\cap \m_{\widetilde{R}}^{n+1}=(0)$ for $n$ large enough.

For every such $n$ then the sequence
$$
\xymatrix{
0\ar[r] &I/\m_P I\ar[r]^-{\alpha_n} & \widetilde{R}_n \ar[r] & R_n\ar[r] & 0
}
$$
is a small extension, and we have an object $\rho_n \in \F(R_n)$, coming from the versal deformation $(R,\rho)$. We can consider then the obstruction
$$
\omega_n=\omega(\rho_n,\widetilde{R}_n) \in I/\m_P I\otimes_k \Omega_\omega\simeq \Hom_k((I/\m_P I)^\vee, \Omega_\omega)\,.
$$
Notice that this element does not depend on $n$ (large enough): this follows immediately from functoriality of the obstruction, and the fact that for every $n$ large enough we have a commutative diagram with exact rows
$$
\xymatrix{
0\ar[r] &I/\m_P I\ar[r]\ar@{=}[d]& \widetilde{R}_{n+1}\ar[d] \ar[r] & R_{n+1}\ar[d]\ar[r] & 0\\
0\ar[r] &I/\m_P I\ar[r] & \widetilde{R}_n \ar[r] & R_n\ar[r] & 0.
}
$$
From this we get a well-defined element $f \in \Hom_k((I/\m_P I)^\vee, \Omega_\omega)$, that is, a $k$-linear map $f\colon (I/\m_P I)^\vee \to \Omega_\omega$. We show now that $f$ is bijective.

First we show that it is injective. Take a nonzero $u \in (I/\m_P I)^\vee$, which is a surjective $k$-linear function $u\colon I/\m_P I\to k$, and set $K \eqdef \ker u$, which is an ideal of $\widetilde{R}_n$ (for $n$ large enough). We consider then $(I/\m_PI)/K\simeq k$ and $S_n=\widetilde{R}_n/K$, and the following commutative diagram with exact rows
$$
\xymatrix{
0\ar[r] &I/\m_P I\ar[r]\ar[d]^u& \widetilde{R}_{n}\ar[d] \ar[r] & R_{n}\ar@{=}[d]\ar[r] & 0\\
0\ar[r] &k\ar[r] & S_n \ar[r] & R_n\ar[r] & 0.
}
$$
(where the vertical arrows are the quotient maps) that gives a morphism between the two small extensions.

By definition of the isomorphism $I/\m_PI \otimes_k \Omega_\omega\simeq \Hom_k((I/\m_PI)^\vee,\Omega_\omega)$ and by functoriality of the obstruction $\omega$, we have
   \begin{align*}
   f(u)&=\omega_n(u)\\
   &=(u\otimes \id)(\omega_n)\\
   &=\omega(\rho_n,S_n) \in  k\otimes_k \Omega_\omega \simeq \Omega_\omega.
   \end{align*}Suppose that $f(u)=0$. Then there is a lifting $\eta_n \in \F(S_n)$ of $\rho_n \in \F(R_n)$, and by versality of $(R,\rho)$ the homomorphism $R\to R_n$ will lift to $R\to S_n$. On the other hand since $\m_{S_n}^{n+1}=(0)$ this last map will factor through $R_n$, and give then a splitting $R_n\to S_n$ of the small extension above.

Finally notice that this splitting (as well as $S_n\to R_n$) will induce an isomorphism between cotangent spaces of $R_n$ and $S_n$, and then (by part (ii) of Corollary~\ref{complll}) the map $S_n\to R_n$ is an isomorphism. But this is a contradiction, because the kernel of this map is isomorphic to $k$. In conclusion this shows that $f(u)\neq 0$, and so $f$ is injective.

We show that it is surjective. Take a vector $v \in \Omega_\omega$, and suppose it corresponds to the obstruction $\omega(\xi,A')$ associated with a small extension $A'\to A$ with kernel $J$ and an isomorphism $g\colon J\simeq k$, and an object $\xi \in \F_{\xi_0}(A)$.

By versality of $(R,\rho)$ and Proposition~\ref{kssurj} we have an arrow of formal objects $(A,\xi)\to (R,\rho)$, and since $A$ is artinian the homomorphism $R\to A$ will factor through $R_n$ for $n$ large enough (and the pullback of $\rho_n$ to $A$ is isomorphic to $\xi$). Moreover if we lift the homomorphism $P\to R\to A$ to $\phi\colon P\to A'$ using the fact that $P$ is a power series ring over $\Lambda$, then $\phi(I)$ will be contained in $J$, and consequently $\phi(\m_P I)=(0)$, so $\phi$ will factor through $\widetilde{R}$.

Taking $n$ large enough we get a commutative diagram with exact rows
$$
\xymatrix{
0\ar[r] & I/\m_P I \ar[d]^u\ar[r] & \widetilde{R}_n\ar[r]\ar[d] & R_n\ar[r]\ar[d] & 0\\
0 \ar[r] & J\ar[r] & A'\ar[r] & A\ar[r] &0
}
$$
where $u\colon I/\m_P I\to J\simeq k$ can be seen as an element of $(I/\m_P I)^\vee$. By functoriality of the obstruction (and the other arguments used above) we get
   \begin{align*}
   f(u)&=\omega_n(u)\\
   &=(u\otimes \id)(\omega_n)\\
   &=\omega(\xi,A')\in J\otimes_k \Omega_\omega\simeq \Omega_\omega
   \end{align*}
which corresponds to $v$. This shows that $f$ is surjective, and concludes the proof.
\end{proof}

Using this we get immediately another proof of Proposition~\ref{unobst-ver}: $R$ is a power series ring if and only if $I=(0)$, and this happens exactly when $\dim_k \Omega_\omega = 0$, that is, when $\xi_0$ is unobstructed.

The last Proposition has the following corollaries.

\begin{cor}
Let $\fib$ be a deformation category, $\xi_0 \in \F(k)$, and $(V_\omega,\omega)$ be an obstruction theory for $\xi_0$. If $T_{\xi_0}\F$ is finite-dimensional, then $\Omega_\omega$ is as well.
\end{cor}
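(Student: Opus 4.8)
This is an immediate consequence of the preceding Proposition together with the existence of miniversal deformations (Theorem~\ref{miniexist}), so the proof is short and there is no real obstacle; the only point to check is that the hypotheses line up. First, since $T_{\xi_0}\F$ is finite-dimensional by assumption, Theorem~\ref{miniexist} applies and produces a miniversal formal object $(R,\rho)$ of $\F$ with $\rho_0\simeq\xi_0$, of the form $R=P/I$ where $P=\Lambda\ds{x_1,\hdots,x_n}$ with $n=\dim_k T_{\xi_0}\F$ and $I\subseteq \m_\Lambda P+\m_P^2$.

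Next I would observe that $P$ is noetherian: $\Lambda$ is noetherian by our standing assumptions, and a power series ring in finitely many variables over a noetherian ring is again noetherian. Hence the ideal $I\subseteq P$ is finitely generated, so by Nakayama's Lemma $\mu(I)=\dim_k(I/\m_P I)$ is a finite number.

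Finally, by the preceding Proposition we have $\dim_k\Omega_\omega=\mu(I)=\dim_k(I/\m_P I)<\infty$, which is exactly the assertion. The step I would expect to be ``the hard part'' is really nothing new here — all the work was already done in establishing the dimension formula $\dim_k\Omega_\omega=\mu(I)$ and in Theorem~\ref{miniexist}; the present corollary only repackages those results, using the noetherianity of $P$ to conclude finiteness.
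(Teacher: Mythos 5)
Your proof is correct and matches the intended argument exactly: the paper states this corollary without a separate proof, treating it as an immediate consequence of the preceding dimension formula $\dim_k\Omega_\omega=\mu(I)$ together with Theorem~\ref{miniexist}. Your added observation about noetherianity of $P=\Lambda\ds{x_1,\hdots,x_n}$ to guarantee $I$ is finitely generated is the right point to spell out, and nothing is missing.
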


\begin{cor}
Let $\fib$ be a deformation category, $\xi_0 \in \F(k)$ such that $T_{\xi_0}\F$ is finite-dimensional, and $(V_\omega,\omega)$ be an obstruction theory for $\xi_0$. Moreover let $(R,\rho)$ be a miniversal deformation of $\xi_0$. Then
\begin{align*}
\dim R&\geq \dim_k T_{\xi_0}\F - \dim_k \Omega_\omega\\
&\geq \dim_k T_{\xi_0}\F - \dim_k V_\omega.
\end{align*}
\end{cor}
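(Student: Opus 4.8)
The plan is to translate the first inequality into a statement of commutative algebra and apply the generalized Krull principal ideal theorem. By Theorem~\ref{miniexist} we may assume $R = P/I$ with $P = \Lambda\ds{x_{1},\dots,x_{n}}$, where $n = \dim_{k}T_{\xi_{0}}\F$, and $I \subseteq \m_{\Lambda}P + \m_{P}^{2}$. By the Proposition just proved, $\dim_{k}\Omega_{\omega} = \mu(I) = \dim_{k}(I/\m_{P}I)$ is the minimal number of generators of the ideal $I$. Thus the first inequality $\dim R \geq \dim_{k}T_{\xi_{0}}\F - \dim_{k}\Omega_{\omega}$ is exactly the assertion $\dim(P/I) \geq n - \mu(I)$.

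First I would note that $\dim P \geq n$: the sequence $x_{1},\dots,x_{n}$ is a regular sequence in $P$, since multiplication by $x_{i+1}$ is injective on $P/(x_{1},\dots,x_{i}) \simeq \Lambda\ds{x_{i+1},\dots,x_{n}}$, and hence $\dim P = \dim \Lambda + n \geq n$ (alternatively one can invoke the standard formula $\dim \Lambda\ds{x} = \dim \Lambda + 1$). The core step is then: if $I$ is generated by $m \eqdef \mu(I)$ elements $f_{1},\dots,f_{m}$, then $\dim(P/I) \geq \dim P - m$. To prove this, put $d \eqdef \dim(P/I)$ and choose $y_{1},\dots,y_{d} \in P$ whose images generate an ideal primary to the maximal ideal of $P/I$; then the ideal $(f_{1},\dots,f_{m},y_{1},\dots,y_{d})$ of $P$ is $\m_{P}$-primary, so by Krull's height theorem $\dim P = \het(\m_{P}) \leq m + d$, which gives $d \geq \dim P - m$. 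Combining these, $\dim R = \dim(P/I) \geq \dim P - \mu(I) \geq n - \mu(I) = \dim_{k}T_{\xi_{0}}\F - \dim_{k}\Omega_{\omega}$.

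The second inequality is immediate: $\Omega_{\omega}$ is a vector subspace of $V_{\omega}$ by Proposition~\ref{minimalobstr}, so $\dim_{k}\Omega_{\omega} \leq \dim_{k}V_{\omega}$, whence $\dim_{k}T_{\xi_{0}}\F - \dim_{k}\Omega_{\omega} \geq \dim_{k}T_{\xi_{0}}\F - \dim_{k}V_{\omega}$.

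There is no real obstacle here; the argument is essentially bookkeeping once the formula $\dim_{k}\Omega_{\omega} = \mu(I)$ is available. The one point worth being careful about is that one needs the \emph{lower} bound $\dim(P/I) \geq \dim P - \mu(I)$, not the trivially valid but useless inequality $\dim(P/I) + \het(I) \leq \dim P$; the system-of-parameters argument above supplies the lower bound for any noetherian local $P$, with no catenary or equidimensionality hypothesis required.
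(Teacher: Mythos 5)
Your argument is correct, and it takes a genuinely cleaner route than the paper's at the key step. Both proofs reduce to the inequality $\dim(P/I) \geq n - \mu(I)$ for $P = \Lambda\ds{x_1,\dots,x_n}$, using the formula $\dim_k\Omega_\omega = \mu(I)$ from the preceding Proposition. The paper first reduces to $\Lambda = k$ by passing to $k\ds{x_1,\dots,x_n}/J$ (noting $\dim R \geq \dim(k\ds{x}/J)$ and $\mu(J) \leq \mu(I)$), and then, working in the regular local ring $k\ds{x_1,\dots,x_n}$, combines Krull's Hauptidealsatz ($\mu(I) \geq \het I$) with the catenary property ($\dim(P/I) = \dim P - \het I$). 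You instead prove the lower bound $\dim(P/I) \geq \dim P - \mu(I)$ directly, for an arbitrary noetherian local ring $P$, by extending the $m$ generators of $I$ with a system of parameters of $P/I$ to produce an $\m_P$-primary ideal generated by $m + \dim(P/I)$ elements and invoking Krull's height theorem. This bypasses both the reduction to $\Lambda = k$ and the need for a catenary hypothesis; as you correctly emphasize, the naive bound $\dim(P/I) + \het I \leq \dim P$ points the wrong way, and it is the system-of-parameters argument (or, in the paper's version, the equality available in a catenary equidimensional domain) that supplies the needed lower bound. Your remaining step $\dim P = \dim\Lambda + n \geq n$ is standard, and the second inequality follows as you say from $\Omega_\omega \subseteq V_\omega$.
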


\begin{proof}
The last inequality is clear, so we prove only the first, by showing that $\dim R\geq \dim_k T_{\xi_0}\F - \mu(I)$ for the miniversal deformation $(R,\rho)$ considered above, and using the preceding Proposition. Set $n \eqdef \dim_k T_{\xi_0}\F$.

Notice that we can reduce to the case $\Lambda=k$ by considering the canonical homomorphism $P=\Lambda\ds{x_1,\hdots,x_n}\to k\ds{x_1,\hdots,x_n}$, and the induced surjection $R=P/I\to k\ds{x_1,\hdots,x_n}/J$ where $J$ is the extension of $I$. Indeed if we know that $\dim(k\ds{x_1,\hdots,x_n}/J)\geq n-\mu(J)$, then we have
\begin{align*}
\dim R&\geq \dim(k\ds{x_1,\hdots,x_n}/J)\\
&\geq n-\mu(J)\\
&\geq n-\mu(I).
\end{align*}
So we can assume that $\Lambda=k$. Then by Krull's Hauptidealsatz $\dim_k(I/\m_P I)=\mu(I)\geq \het I$; because of this inequality and the fact that $P=k\ds{x_1,\hdots,x_n}$ is catenary, we get
\begin{align*}
\dim R&=\dim(P/I)\\
&=\dim P-\het I\\
&\geq \dim P - \mu(I) = n - \mu(I).\qedhere
\end{align*}
\end{proof}

This result can be applied to find a lower bound on the dimension of the base ring $R$ of a miniversal deformation.

\begin{examp}
Let $Z_0\subseteq \P^3_k$ be a smooth curve of genus $g$ and degree $d$, and $(R,\rho)$ a universal deformation of $Z_0$ in $\hilb^{\P^3_k}$ (we have a miniversal one since $\dim_k T_{Z_0}\hilb^{\P^3_k}=\dim_k \H^0(Z_0,\mathcal{N}_0)$ is finite, and it is universal because $\hilb^{\P^3_k}$ is fibered in sets). Recall that $\hilb^{\P^3_k}$ comes from a representable functor, so if we denote by $\Hilb^{\P^3_k}$ the Hilbert scheme of $\P^3_k$, the dimension of $R$ in this case is the same as $\dim_{Z_0}\Hilb^{\P^3_k}$.

By the preceding corollary we get
$$
\begin{array}{rcl}
\dim_{Z_0}\Hilb^{\P^3_k} & \geq &  \dim_k T_{Z_0}\hilb^{\P^3_k}  - \dim_k V_\omega\\
& = & \dim_k \H^0(Z_0,\mathcal{N}_0) - \dim_k \H^1(Z_0,\mathcal{N}_0)\\
& = &\chi(\mathcal{N}_0)
\end{array}
$$
(here we are considering the obstruction theory described in Section~\ref{obs}) where $\chi$ is the Euler characteristic and $\mathcal{N}_0$ is the normal sheaf of $Z_0$ in $\P^3_k$.

Now from the dual of the conormal sequence of $Z_0\subseteq \P^3_k$
$$
\xymatrix{
0\ar[r] & T_{Z_0}\ar[r] & T_{\P^3_k}|_{Z_0}\ar[r] & \mathcal{N}_0\ar[r] & 0
}
$$
we get $\chi(\mathcal{N}_0)=\chi(T_{\P^3_k}|_{Z_0})-\chi(T_{Z_0})$, and from the restriction of the dual of the Euler sequence
$$
\xymatrix{
0\ar[r] & \O_{Z_0}\ar[r] & \O_{Z_0}(1)^{\oplus 4}\ar[r] & T_{\P^3_k}|_{Z_0} \ar[r] & 0
}
$$
we obtain further that $\chi(\mathcal{N}_0)=4\chi(\O_{Z_0}(1))-\chi(\O_{Z_0})-\chi(T_{Z_0})$. Using the Riemann-Roch Theorem to calculate explicitly the three terms in the last expression, we get
   \begin{align*}
   \dim_{Z_0}\Hilb^{\P^3_k} &\geq \chi(\mathcal{N}_0)\\
   &=(4d+4-4g)-(1-g)-(2-2g+1-g)\\
   &=4d\,;
   \end{align*}
this gives a lower bound on $\dim_{Z_0}\Hilb^{\P^3_k}$ independent of the genus $g$.
\end{examp}

\begin{examp}\label{curvdim}
Consider a smooth projective curve $X_0$ over $k$. Since the $k$-vector space $T_{X_0}\catdef\simeq \H^1(X_0,T_{X_0})$ is finite-dimensional, $X_0$ has a miniversal deformation $(R,\rho)$, and since $\H^2(X_0,T_{X_0})=0$ we see that $X_0$ is unobstructed (by Theorem~\ref{smoothobstr}), and so $R$ is a power series ring, and $\dim R=\dim_k T_{X_0}\catdef$.

We can calculate this dimension explicitly: if $g$ is the genus of $X_0$, then $T_{X_0}$ has degree $2-2g$, and by the Riemann-Roch Theorem we get
   \begin{align*}
   \chi(T_{X_0}) &= \dim_k \H^0(X_0,T_{X_0}) - \dim_k \H^1(X_0,T_{X_0})\\
   &= 2-2g+1-g = 3-3g.
   \end{align*}
Now if $g\geq 2$, then $T_{X_0}$ has negative degree, so $\dim_k \H^0(X_0,T_{X_0}) = 0$ and
$$
\dim R =\dim_k \H^1(X_0,T_{X_0})=3g-3\,.
$$
On the other hand if $g=1$ we find $\dim_k \H^1(X_0,T_{X_0}) = 1$, and in the case $g=0$ we have $\dim_k \H^1(X_0,T_{X_0}) = 0$.

These values give the minimal number of parameters that are necessary to describe a versal deformation of $X_0$ for a given genus, and by the discussion in example \ref{dimcoarse} (and the fact that $\mathcal{M}_{g}$ is a smooth Deligne-Mumford stack for $g\geq 2$), they coincide with the dimension of the coarse moduli spaces $M_{g}$ for $g\geq 2$.
\end{examp}

\subsubsection{Hypersurfaces in $\A^n_k$}\label{affineminiversal}

As an example (which will be useful in the next section), we calculate a miniversal deformation of a generically smooth hypersurface $X_0 \subseteq \A^n_k$, using the results of Exercise~\ref{affine.hypersurf}. In particular, since $T_{X_0}\catdef$ is finite-dimensional if and only if $X_0$ has isolated singularities, we have to restrict to this case.

Suppose then that $X_0\subseteq \A^n_k$ is a hypersurface as above, with equation $f \in k[x_1,\hdots,x_n]$, and so defined by the ideal $I=(f)$ and with coordinate ring $A=k[x_1,\hdots,x_n]/I$. Recall that
$$
T_{X_0}\catdef\simeq k[x_1,\hdots,x_n]/(f,\partial f/\partial x_1,\hdots,\partial f/\partial x_n)\,.
$$
Let $m \eqdef \dim_k T_{X_0}\catdef$ (which is finite because $X_0$ has isolated singularities), and choose elements $g_1,\hdots,g_m \in \Lambda[x_1,\hdots,x_n]$ such that their images in $T_{X_0}\catdef$ form a basis.

\begin{rmk}
The integer $m$ is an important invariant of the singularities of the hypersurface, known as the \emph{Tjurina number}.
\end{rmk}

We consider then the power series ring $R=\Lambda\ds{t_1,\hdots,t_m}$, and the closed subscheme
$$
X \eqdef V(f'+t_1g_1+\cdots+t_mg_m)\subseteq \A^n_R
$$
where $f' \in \Lambda[x_1,\hdots,x_n]$ is a lifting of $f$. The subscheme $X$ induces a formal deformation $\wh{X} \eqdef \{X_i,f_i\}_{i \in \N}$ of $X_0$ over $R$, by taking $X_i$ to be the pullback of $X$ to $R_i=R/\m_R^{i+1}$ along the quotient map $R\to R_i$, and as arrows $f_i\colon X_i\to X_{i+1}$ the natural closed immersions.

\begin{prop}
The formal object $(R,\wh{X})$ of the deformation category $\catdef\to \artl^\op$ is miniversal.
\end{prop}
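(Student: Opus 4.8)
The plan is to invoke Proposition~\ref{miniversalcrit}: a formal object $(R,\rho)$ of a deformation category is miniversal as soon as $R$ is a power series ring over $\Lambda$ and the Kodaira--Spencer map $\kappa_\rho\colon T_\Lambda R\to T_{\rho_0}\catdef$ is an isomorphism. The first condition holds trivially, since $R=\Lambda\ds{t_1,\dots,t_m}$. (One should also recall that $\widehat X$ really is a formal object, i.e.\ that each $X_i$ is flat over $R_i$: the polynomial $h\eqdef f'+t_1g_1+\cdots+t_mg_m$ reduces modulo $\m_R$ to $f\neq0$ in $k[x_1,\dots,x_n]$, so $h$ is a non-zerodivisor on $R_i[x_1,\dots,x_n]$ and, by the local criterion of flatness (Theorem~\ref{local-flatness}), $R_i[x_1,\dots,x_n]/(h)$ is flat over $R_i$.) Thus everything comes down to identifying $\kappa_{\widehat X}$.

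First I would pin down the two sides. By Proposition~\ref{prorapp} the space $T_\Lambda R$ is canonically $\Hom_\Lambda(R,\dual)$; a $\Lambda$-algebra homomorphism $\phi\colon R\to\dual$ necessarily factors through $\overline R_1$ and is determined by the scalars $a_1,\dots,a_m\in k$ with $\phi(t_i)=a_i\eps$, so this space is $m$-dimensional with those coordinates. On the other side, by Exercise~\ref{affine.hypersurf}(i) we have $T_{X_0}\catdef\simeq k[x_1,\dots,x_n]/(f,\partial f/\partial x_1,\dots,\partial f/\partial x_n)$, and by the very choice of the $g_i$ the classes $[\bar g_1],\dots,[\bar g_m]$ form a basis of this quotient, where $\bar g_i$ denotes the image of $g_i$ in $k[x_1,\dots,x_n]$.

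Next I would compute $\kappa_{\widehat X}$ straight from the definition: $\kappa_{\widehat X}(\phi)$ is the isomorphism class of the pullback of $\widehat X$ along $\phi$. For $\phi$ as above, that pullback is the closed subscheme of $\mathbb A^n_\dual$ defined by the image of $h$ in $\dual[x_1,\dots,x_n]$; since $f'$ lifts $f$ and $\eps^2=0$, this is $V\bigl(f+(\textstyle\sum_i a_i\bar g_i)\eps\bigr)$. By Exercise~\ref{affine.hypersurf}(iii) its Kodaira--Spencer class is the class of $\sum_i a_i\bar g_i$, i.e.\ $\sum_i a_i[\bar g_i]$ in $T_{X_0}\catdef$. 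Hence, under the identifications just made, $\kappa_{\widehat X}$ is the linear map $k^m\to T_{X_0}\catdef$ sending the $i$-th coordinate vector to $[\bar g_i]$; as the $[\bar g_i]$ are a basis, it is an isomorphism. Proposition~\ref{miniversalcrit} now gives the claim (and, via Proposition~\ref{unobst-ver}, re-proves that such a hypersurface is unobstructed).

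The only point requiring care is the middle step: one must check that pulling $\widehat X$ back along $\phi$ genuinely reduces to the first-order computation of Exercise~\ref{affine.hypersurf} — which is precisely the fact that the Kodaira--Spencer map depends only on $\overline\xi_1$ — and that the identification of $T_\Lambda R$ in Proposition~\ref{prorapp} is compatible with the coordinates $t_1,\dots,t_m$. Everything else is formal. Alternatively, one may bypass $\Hom_\Lambda(R,\dual)$ and compute the Kodaira--Spencer class $k_{\widehat X}\in(\m_{\overline R}/\m_{\overline R}^2)\otimes_k T_{X_0}\catdef$ of Exercise~\ref{kod-s} directly: restricting along the projections $\m_{\overline R}/\m_{\overline R}^2\to\langle t_i\rangle$ and applying functoriality of the action (Proposition~\ref{funct}) identifies $k_{\widehat X}$ with $\sum_i\bar t_i\otimes[\bar g_i]$, which again exhibits $\kappa_{\widehat X}$ as an isomorphism.
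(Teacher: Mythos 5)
Your proposal follows the same route as the paper: invoke Proposition~\ref{miniversalcrit}, note that $R$ is a power series ring, and show that $\kappa_{\wh X}$ sends the coordinate basis of $T_\Lambda R$ to $[\bar g_1],\dots,[\bar g_m]$, which by construction form a basis of $T_{X_0}\catdef$. Your added check that each $X_i$ is genuinely flat over $R_i$ (via the local flatness criterion) and the explicit computation through Exercise~\ref{affine.hypersurf}(iii) simply spell out details the paper leaves to the reader; the argument is otherwise identical.
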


\begin{proof}
We use the criterion given by Proposition~\ref{miniversalcrit}: $R$ is a power series ring, so we only have to check that the Kodaira--Spencer map
$$
\kappa_{\wh{X}}\colon T_\Lambda R\arr T_{X_0}\catdef
$$
is an isomorphism. Recall that this map is the same as $\kappa_{\overline{X}_1}$, where $\overline{X}_1$ is the pullback of $X$ to $\overline{R}_1\simeq k\oplus \m_{\overline{R}}/\m_{\overline{R}}^2$ along the projection $R\to \overline{R}_1$.

In this particular case we have
$$
\overline{X}_1 = \spec(\overline{R}_1[x_1,\hdots,x_n]/(f+t_1\overline{g}_1+\cdots +t_m\overline{g}_m))
$$
where $\overline{g}_i$ is the image of $g_i$ in $k[x_1,\hdots,x_n]$ (and we still write $t_i$ for the class of $t_i$ in $\m_{\overline{R}}/\m_{\overline{R}}^2$). Since the images of $t_1,\hdots,t_m$ in $T_\Lambda^\vee R=\m_{\overline{R}}/\m_{\overline{R}}^2$ form a basis of the cotangent space, we can consider the dual basis $s_1,\hdots,s_m \in T_\Lambda R$. The Kodaira--Spencer map
$$
\kappa_{\overline{X}_1}\colon T_\Lambda R \arr T_{X_0}\catdef \simeq k[x_1,\hdots,x_n]/(f,\partial f/\partial x_1,\hdots,\partial f/\partial x_n)
$$
sends $s_i$ into the class of $g_i$ (as the reader can easily check).

Then by the choice of the $g_i$'s the map $\kappa_{\overline{X}_1}$ is an isomorphism (since the two spaces have the same dimension, and a basis goes to a basis), and this concludes the proof.
\end{proof}

\begin{examp}\label{nodal.ex}
Consider the union of the two axes
$$
X_0=V(xy)\subseteq \A^2_k=\spec k[x,y]\,.
$$
In this case the Jacobian ideal is $J=(x,y)\subseteq k[x,y]/(xy)$, and a basis of $T_{X_0}\catdef \simeq k[x,y]/(x,y)$ is given by the class of $-1$. A miniversal deformation of $X_0$ is then for example the one induced by $X=V(xy-t)\subseteq \A^2_{\Lambda\ds{t}}$.
\end{examp}

\begin{examp}\label{cubic.ex}
Assume $\car k\neq 2,3$, and consider the cuspidal curve
$$
X_0=V(y^2-x^3)\subseteq \A^2_k=\spec k[x,y]\,.
$$
In this case we have $J=(2y,3x^2)\subseteq k[x,y]/(y^2-x^3)$, and a basis of $T_{X_0}\catdef \simeq k[x,y]/(y,x^2)$ is given by the classes of $1$ and $x$. The formal object induced by the closed subscheme $X=V(y^2-x^3+t_1+t_2x) \subseteq \A^2_{\Lambda\ds{t_1,t_2}}$ is then a miniversal deformation.
\end{examp}

\subsection{Algebraization}\label{algebraization}

The next step in constructing (or studying) deformations, is to pass from formal ones to ``actual'' ones (over noetherian complete local rings). In other words given a formal deformation, which is a sequence of compatible deformations over the artinian quotients of the base ring, we ask if there is a ``true'' deformation over the base ring that restricts to the given ones over these quotients.

Here we consider in particular the case of deformations of schemes.

\begin{defin} Let $X_{0}$ be a proper scheme over $k$.
A formal deformation $(R,\wh{X})$ of $X_0$ (that is, a formal object of the deformation category $\catdef_{X_0}\to \artl^\op$ over $R$) is said to be \gr{algebraizable} if there exists a scheme $X$ flat and proper over $R$ inducing the formal deformation $\wh{X}$ by pullback.
\end{defin}

In other words for each $n$ we have a morphism $\phi_n\colon X_n \to X$ with a cartesian square
$$
\xymatrix{
X_n \ar[r]^{\phi_n}\ar[d]& X\ar[d]\\
\spec R_n\ar[r]& \spec R
}
$$
and the $\phi_n$ are compatible with the morphisms $f_n\colon X_n\to X_{n+1}$ of the formal deformation $\wh{X}$.

We call an $X$ as above an \gr{algebraization} of the formal deformation $\wh{X}$. The idea is that $X$ is an actual deformation of $X_0$ over $R$, whose approximations to the various orders correspond to the terms of the formal deformation $\wh{X}$.

The problem of algebraization (that is, the problem of the existence of an algebraization for a given formal deformation) is not solvable in general. The main result when dealing with it in particular cases is the following Theorem, due to Grothendieck.

Let $\Lambda$ be as usual, and $X$ a scheme over $\Lambda$; set $X_n \eqdef X|_{\spec \Lambda_n}$. Together with the obvious morphisms, the sequence $\{X_n,f_n\}_\nin$ gives a formal deformation $\wh{X}$ of $X_0$ over $\Lambda$.

We denote by $\coh(X)$ the category of coherent sheaves on $X$, and by $\coh(\wh{X})$ the category of formal coherent sheaves on $\wh{X}$: its objects are collections $\{\E_n,g_n\}_\nin$ of coherent sheaves $\E_n$ on $X_n$, with isomorphisms $g_n\colon  \E_n \simeq \E_{n+1}|_{X_n}$ (where this pullback is along the immersion $f_n\colon X_n\to X_{n+1}$), and an arrow $\{\E_n,g_n\}_\nin \to \{\G_n,h_n\}_\nin$ is a sequence $\{F_n\}_\nin$ of homomorphisms $F_n\colon \E_n\to \G_n$ of coherent sheaves on $X_n$, compatible with the isomorphisms $g_n,h_n$. This is an abelian category, even though in a not completely trivial way.

There is a natural functor $\Phi\colon \coh(X)\to \coh(\wh{X})$, sending a coherent sheaf $\E$ on $X$ into the formal coherent sheaf $\{\E|_{X_n},f_n\}_\nin$, where $f_n$ are the obvious isomorphisms identifying the pullback of $\E|_{X_{n+1}}$ to $X_n$ with the one of $\E$, and a homomorphism $F\colon \E\to \G$ goes to the sequence $\{F_n\}_\nin$ of homomorphisms induced on the pullbacks.

\begin{thm}[Grothendieck's existence Theorem]\label{groth.ex}
If $X$ is proper over $\Lambda$, the functor $\Phi$ is an equivalence of abelian categories.
\end{thm}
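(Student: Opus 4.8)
The plan is to reformulate the statement in terms of the formal completion of $X$ along its closed fiber and then to split the assertion into full faithfulness and essential surjectivity. Writing $\wh{X}$ for the formal scheme obtained by completing $X$ along the closed subset $|X_{0}|$, a formal coherent sheaf $\{\E_{n},g_{n}\}_{\nin}$ in the sense of the statement is the same datum as a coherent $\O_{\wh{X}}$-module, and under this identification $\Phi$ becomes the completion functor $\E\mapsto\wh{\E}=\{\E|_{X_{n}}\}_{\nin}$; so it suffices to prove that this completion functor is an equivalence. Once we know it is fully faithful and essentially surjective, the fact that it is exact — which is immediate from the definitions of the two abelian structures — upgrades this automatically to an equivalence of abelian categories.

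For full faithfulness I would reduce to a cohomological comparison. Since $\Hom_{X}(\E,\G)=\H^{0}\bigl(X,\Homsh_{\O_{X}}(\E,\G)\bigr)$, and similarly after completion, it is enough to show that for every coherent sheaf $\F$ on $X$ the natural map $\H^{0}(X,\F)\to\invlim_{n}\H^{0}(X_{n},\F_{n})$ is bijective, where $\F_{n}=\F|_{X_{n}}$. This is the case $i=0$ of the theorem on formal functions: $X$ being proper over the noetherian ring $\Lambda$, each $\H^{i}(X,\F)$ is a finite $\Lambda$-module, and the comparison map $\H^{i}(X,\F)^{\wedge}\to\invlim_{n}\H^{i}(X_{n},\F_{n})$ from $\m_{\Lambda}$-adic completion is an isomorphism; since $\Lambda$ is $\m_{\Lambda}$-adically complete, a finite $\Lambda$-module is already complete, so $\H^{i}(X,\F)=\invlim_{n}\H^{i}(X_{n},\F_{n})$. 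Applying this with $i=0$ to the coherent sheaf $\Homsh_{\O_{X}}(\E,\G)$ gives full faithfulness. The theorem on formal functions is itself proved by the same circle of ideas — reduction to projective space and the explicit cohomology of $\P^{N}$ — and I would take it here as a known input.

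For essential surjectivity I would treat the projective case first. Assume $X\hookrightarrow\P^{N}_{\Lambda}$ is a closed immersion and let $\wh{\F}=\{\F_{n},g_{n}\}_{\nin}$ be a formal coherent sheaf, viewed on $\P^{N}$. The crucial input is a uniform Castelnuovo--Mumford regularity bound: since the kernel of each $\F_{n}\to\F_{n-1}$ is an $\O_{X_{0}}$-module that is a quotient of $(\m_{\Lambda}^{n}/\m_{\Lambda}^{n+1})\otimes_{k}\F_{0}$, a standard boundedness argument (Mumford) produces an integer $d$ depending only on $\F_{0}$ such that every $\F_{n}$ is $d$-regular; consequently $\F_{n}(e)$ is globally generated with vanishing higher cohomology, and the maps $\H^{0}(\F_{n}(e))\to\H^{0}(\F_{n-1}(e))$ are surjective, for every $e\ge d$ and every $n$. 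One then forms the finitely generated graded modules $M_{n}=\bigoplus_{e\ge 0}\H^{0}\bigl(\P^{N}_{\Lambda_{n}},\F_{n}(e)\bigr)$ over $\Lambda_{n}[x_{0},\dots,x_{N}]$, sets $M=\invlim_{n}M_{n}$, and uses the uniform bounds, the surjectivity of the transition maps, and the completeness of $\Lambda$ to conclude that $M$ is a finitely generated graded $\Lambda[x_{0},\dots,x_{N}]$-module whose associated sheaf $\F=\widetilde{M}$ is a coherent $\O_{\P^{N}_{\Lambda}}$-module, supported on $X$, with $\Phi(\F)\simeq\wh{\F}$.

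It remains to pass from the projective case to an arbitrary proper $X$, and this is the step I expect to be the main obstacle. Here I would invoke Chow's lemma to produce a projective birational morphism $\pi\colon X'\to X$ with $X'$ projective over $\Lambda$, and then argue by noetherian induction on the support of the formal sheaf: any coherent formal sheaf on $\wh{X}$ can be compared, via the adjunction $(\pi^{*},\pi_{*})$ and the projection formula (where the projective case just proved applies to the completion of $X'$), to a sheaf that is already known to be algebraizable, modulo kernels and cokernels supported on a proper closed subscheme of $X$, to which the inductive hypothesis applies. Making this d\'evissage precise — tracking the error terms and checking that the resulting comparison isomorphisms are compatible across the system — is the delicate point; everything else is either the theorem on formal functions, taken as input, or the standard projective-space manipulations sketched above. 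Combining full faithfulness with the two essential-surjectivity statements finishes the proof.
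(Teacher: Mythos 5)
The paper does not prove this theorem: it states it and refers the reader to \cite[Chapter~8]{FGA} for a proof, so there is no in-paper argument to compare yours to. That said, your outline reproduces the standard strategy of the cited source (Illusie's chapter in FGA Explained): identify formal coherent sheaves with coherent sheaves on the formal completion, deduce full faithfulness from the theorem on formal functions together with completeness of finite $\Lambda$-modules, handle the projective case via the graded module $\bigoplus_{e}\H^{0}(\wh{\F}(e))$, and pass to the proper case by Chow's lemma and noetherian d\'evissage. You also correctly identify the d\'evissage step as the main technical point.

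There are, however, two places where the sketch is more optimistic than it should be. First, the assertion that $\Phi$ is exact is not ``immediate from the definitions''; the paper itself warns that $\coh(\wh{X})$ is abelian ``in a not completely trivial way,'' and exactness of the completion functor ultimately rests on flatness of $\m$-adic completion of noetherian local rings, which deserves to be named. Second, and more seriously, the uniform Castelnuovo--Mumford regularity bound you invoke is not established by the argument you give: the kernel of $\F_{n}\to\F_{n-1}$ is a \emph{quotient} of $(\m_{\Lambda}^{n}/\m_{\Lambda}^{n+1})\otimes_{k}\F_{0}$, and a quotient of a $d$-regular sheaf need not be $d$-regular, so you cannot conclude by induction that the whole tower $\{\F_{n}\}$ is uniformly regular. (Note also that the $\F_{n}$ are not assumed flat over $\Lambda_{n}$, so the tower can be fairly wild.) The proof in the reference sidesteps this by first establishing finiteness of $\invlim_{n}\H^{i}(\F_{n}(e))$ as a $\Lambda$-module via the theorem on formal functions, and then producing the bound on $e$ from that finiteness together with surjectivity of the transition maps for large $e$, rather than from a uniform regularity statement about the individual $\F_{n}$. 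If you want to keep the regularity-based route, you would need to supply an argument for why the relevant family of quotients is bounded.
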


For a discussion of this Theorem, see for example \cite[Chapter~8]{FGA}.

From this Theorem we get an algebraization result for certain deformations of schemes. First of all we have the following corollary about embedded formal deformations.

\begin{cor}\label{algcor}
Let $X$ be a proper scheme over $\Lambda$, and consider the formal deformation $\wh{X}=\{X_n,f_n\}_\nin$ of $X_0$ as above. Consider a sequence $\{Y_n\}_\nin$ of closed subschemes $Y_n\subseteq X_n$, such that for every $n$ we have $Y_{n+1}\cap X_{n}=Y_n$ (where we see $X_n\subseteq X_{n+1}$ by means of the closed immersion $f_n$). Then there exists a closed subscheme $Y\subseteq X$ such that $Y_n=Y\cap X_n$ for any $n$.
\end{cor}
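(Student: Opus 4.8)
The plan is to deduce the statement from Grothendieck's existence Theorem~\ref{groth.ex}, applied to the proper $\Lambda$-scheme $X$. First I would repackage the data $\{Y_n\}_\nin$ as a formal coherent sheaf. For each $n$ the closed immersion $Y_n \subseteq X_n$ gives a coherent sheaf on $X_n$ (the pushforward of $\O_{Y_n}$, which I denote again by $\O_{Y_n}$) together with a surjection $\O_{X_n} \to \O_{Y_n}$. The hypothesis $Y_{n+1} \cap X_n = Y_n$ says exactly that the scheme-theoretic pullback of $Y_{n+1}$ along $f_n\colon X_n \to X_{n+1}$ is $Y_n$, i.e. that there is a canonical isomorphism $\O_{Y_{n+1}}|_{X_n} \simeq \O_{Y_n}$, compatible with the surjections out of $\O_{X_n}$. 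Hence $\{\O_{Y_n}\}_\nin$ is an object of $\coh(\wh{X})$, and the surjections assemble into a morphism $\Phi(\O_X) = \{\O_{X_n}\}_\nin \to \{\O_{Y_n}\}_\nin$ in $\coh(\wh{X})$.

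Next I would invoke Theorem~\ref{groth.ex}: the functor $\Phi\colon \coh(X) \to \coh(\wh{X})$ is an equivalence of abelian categories, hence in particular additive, exact, full and faithful, and so it reflects exactness and isomorphisms. By fullness and faithfulness the morphism above is $\Phi(u)$ for a unique homomorphism $u\colon \O_X \to \mathcal{F}$ of coherent $\O_X$-modules, where $\mathcal{F} \in \coh(X)$ is chosen with a fixed isomorphism $\Phi(\mathcal{F}) \simeq \{\O_{Y_n}\}_\nin$. Since cokernels in $\coh(\wh{X})$ are computed level by level (restriction of sheaves along $f_n$ being right exact, so that $\coker(F_{n+1})|_{X_n} \simeq \coker(F_n)$ for a morphism of formal coherent sheaves), the object $\Phi(\coker u)$ is the collection of the cokernels of the maps $\O_{X_n} \to \O_{Y_n}$, all of which vanish; as $\Phi$ is exact and faithful this forces $\coker u = 0$, i.e. $u$ is surjective. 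I then set $\mathcal{I} \eqdef \ker u$, a coherent sheaf of ideals of $\O_X$, and let $Y \subseteq X$ be the closed subscheme it cuts out, so that $\O_Y = \O_X/\mathcal{I} \xrightarrow{\ \sim\ } \mathcal{F}$.

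It remains to verify $Y \cap X_n = Y_n$ for every $n$. The structure sheaf of $Y \cap X_n$ is $\O_Y|_{X_n} = \mathcal{F}|_{X_n}$, which is the $n$-th term of $\Phi(\O_Y) \simeq \Phi(\mathcal{F}) \simeq \{\O_{Y_n}\}_\nin$; moreover, by the choice of $u$, the surjection $\O_{X_n} \to \O_Y|_{X_n}$ induced by $u$ corresponds, under this isomorphism, to the original surjection $\O_{X_n} \to \O_{Y_n}$. Comparing kernels of these two surjections of sheaves on the scheme $X_n$ shows that the ideal sheaf of $Y \cap X_n$ in $X_n$ equals that of $Y_n$, whence $Y \cap X_n = Y_n$, as wanted.

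I expect the only genuinely delicate point to be the compatibility bookkeeping in the last step: one must ensure not merely that $\O_Y|_{X_n}$ is abstractly isomorphic to $\O_{Y_n}$, but that the isomorphism is compatible with the quotient maps from $\O_{X_n}$, so that the two closed subschemes of $X_n$ literally coincide. This is precisely why $u$ must be chosen so that $\Phi(u)$ is the given morphism of formal coherent sheaves, and why it is convenient to argue with structure sheaves (which behave well under restriction) rather than directly with the ideal sheaves $\mathcal{I}_{Y_n}$ (whose restrictions are only right-exact in $n$, so need not form a formal coherent sheaf in the naive sense). Everything else is a formal consequence of Theorem~\ref{groth.ex} together with the level-wise computation of cokernels in $\coh(\wh{X})$.
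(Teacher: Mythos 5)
Your proof is correct and follows essentially the same strategy as the paper's: apply Grothendieck's existence theorem to the formal coherent sheaf $\{\O_{Y_n}\}_\nin$, transport the morphism $\{\O_{X_n}\}_\nin \to \{\O_{Y_n}\}_\nin$ back to a homomorphism $\O_X \to \mathcal F$ of coherent sheaves via the equivalence, deduce surjectivity from the vanishing of the cokernel (which is computed level by level), and take the kernel as the ideal sheaf of $Y$. You spell out slightly more carefully than the paper both why the cokernel in $\coh(\wh{X})$ vanishes (right-exactness of restriction along the closed immersions $f_n$) and why $Y\cap X_n$ literally equals $Y_n$ as a closed subscheme (compatibility of the isomorphism with the quotient maps), but these are clarifications of the same argument, not a different route.
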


\begin{proof}
We use Theorem~\ref{groth.ex}: consider the formal coherent sheaf $\{\O_{Y_n},f_n\}_\nin$, where $f_n$ are the obvious isomorphisms. By the Theorem we have a coherent sheaf $\E$ on $X$, and a sequence of isomorphisms $\phi_n\colon \E|_{X_n}=\E\otimes_{\O_X}\O_{X_n}\simeq \O_{Y_n}$ compatible with the projections $\O_{X_{n+1}}\to \O_{X_n}$ and $\O_{Y_{n+1}}\to \O_{Y_n}$.

Moreover we have an arrow $\{\O_{X_n},g_n\}_\nin \to \{\O_{Y_n},f_n\}_\nin$ of formal sheaves on $\wh{X}$, given by the surjections $\O_{X_n}\to \O_{Y_n}$ defining the closed subschemes $Y_n$. This arrow corresponds (by the Theorem again) to a homomorphism $\psi\colon \O_X\to \E$ of coherent sheaves on $X$, such that for every $n$ the diagram
$$
\xymatrix{
\O_X|_{X_n}\ar[r]^{\psi|_{X_n}}\ar@{=}[d] & \E|_{X_n}\ar[d]^{\phi_n}\\
\O_{X_n}\ar[r] & \O_{Y_n}
}
$$
is commutative.

Notice finally that since the functor $\Phi$ of Grothendieck's Theorem is an equivalence of abelian categories, and $\{\O_{X_n},g_n\}_\nin \to \{\O_{Y_n},f_n\}_\nin$ has trivial cokernel in $\coh(\wh{X})$, we get that $\psi$ is surjective. The kernel of $\psi\colon \O_X\to \E$ defines then a closed subscheme $Y\subseteq X$ with structure sheaf $\E$, such that $Y\cap X_n=Y_n$ for every $n$.
\end{proof}

\begin{ex}
Using Grothendieck's existence theorem, show that an algebraization of a formal deformation of $X_{0}$ is unique, up to a unique isomorphism.
\end{ex}

Now we go further, and consider abstract deformations.

\begin{prop}\label{algebr}
Let $X_0$ be a projective scheme over $k$ such that $\H^2(X_0,\O_{X_0})=0$, and suppose that $\wh{X} = \{X_n,f_n\}_\nin$ is a formal deformation of $X_0$ over $\Lambda$.

Then $\wh{X}$ is algebraizable.
\end{prop}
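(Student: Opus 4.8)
The plan is to deduce the proposition from Grothendieck's existence Theorem~\ref{groth.ex} (via Corollary~\ref{algcor}), applied to the ambient proper $\Lambda$-scheme $\P^N_\Lambda$. For this I must realize the $X_n$ (where $\Lambda_n \eqdef \Lambda/\m_\Lambda^{n+1}$) as a compatible system of closed subschemes of a single projective space $\P^N_{\Lambda_n}$ over the artinian quotients of $\Lambda$; the hypothesis $\H^2(X_0,\O_{X_0}) = 0$ is exactly what is needed to lift a polarization of $X_0$ along the formal deformation, and this is the only real obstruction.

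First I would fix a very ample invertible sheaf $\L_0$ on the projective scheme $X_0$ and lift it to the $X_n$ by induction. Assume $\L_n$ on $X_n$ has been constructed compatibly with the previous ones. The closed immersion $X_n \hookrightarrow X_{n+1}$ coming from the formal deformation is a square-zero thickening whose ideal $\mathcal I$ satisfies, by flatness of $X_{n+1}$ over $\Lambda_{n+1}$, the isomorphism $\mathcal I \simeq (\m_\Lambda^{n+1}/\m_\Lambda^{n+2}) \otimes_k \O_{X_0}$; from the long exact cohomology sequence of
$$
\xymatrix{
1 \ar[r] & \mathcal I \ar[r]^-{f \mapsto 1 + f} & \O^{*}_{X_{n+1}} \ar[r] & \O^{*}_{X_{n}} \ar[r] & 1
}
$$
the obstruction to lifting $[\L_n] \in \H^1(X_n,\O^{*}_{X_{n}}) = \pic X_n$ to $\pic X_{n+1}$ lies in $\H^2(|X_0|,\mathcal I) \simeq (\m_\Lambda^{n+1}/\m_\Lambda^{n+2}) \otimes_k \H^2(X_0,\O_{X_0})$, which is zero. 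Hence a lift $\L_{n+1}$ together with an isomorphism $\L_{n+1}|_{X_n} \simeq \L_n$ exists, and by induction we obtain a compatible sequence $\{\L_n\}_\nin$.

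Next, after replacing $\L_0$ (and correspondingly the $\L_n$) by a large enough power, I may assume $\L_0$ is very ample with $\H^i(X_0,\L_0^{\otimes m}) = 0$ for all $i \geq 1$ and $m \geq 1$. Since ampleness is insensitive to nilpotent thickenings, each $\L_n$ is very ample relative to $\Lambda_n$, and the vanishing on $X_0$ makes cohomology commute with base change, so that $\H^0(X_n,\L_n) \simeq V \otimes_k \Lambda_n$ is free (for $V \eqdef \H^0(X_0,\L_0)$), compatibly with the transition maps. This yields a compatible system of closed immersions $X_n \hookrightarrow \P(V)_{\Lambda_n} = \P^N_{\Lambda_n}$ (with $N \eqdef \dim_k V - 1$), hence a sequence of closed subschemes of $\P^N_\Lambda$ over its artinian quotients satisfying $X_{n+1} \cap \P^N_{\Lambda_n} = X_n$. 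Corollary~\ref{algcor}, applied with ambient scheme $\P^N_\Lambda$, then produces a closed subscheme $\mathcal X \subseteq \P^N_\Lambda$ with $\mathcal X \times_{\spec \Lambda}\spec \Lambda_{n} \simeq X_n$ for all $n$, in particular $\mathcal X \times_{\spec \Lambda}\spec k \simeq X_0$.

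It remains to check that $\mathcal X$ is flat and proper over $\Lambda$ and that it induces $\wh X$. It is proper, being a closed subscheme of $\P^N_\Lambda$. It is flat over $\Lambda$ because each $X_n$ is flat over $\Lambda_n$: this is the standard flatness criterion for coherent sheaves on a scheme proper over a complete noetherian local ring, which is a consequence of the local criterion of flatness (Theorem~\ref{local-flatness}). Finally, by construction the pullbacks of $\mathcal X$ to the $\spec \Lambda_{n}$ recover the $X_n$ compatibly with the arrows $f_n$, so $\mathcal X$ algebraizes $\wh X$. The step I expect to be the main obstacle is the uniform choice of polarization — arranging that all the $X_n$ embed into one and the same $\P^N$ compatibly, which is where the cohomology-and-base-change arguments enter and why projectivity (not merely properness) of $X_0$ is essential — together with the flatness of $\mathcal X$ over $\Lambda$, the one point where completeness of $\Lambda$ is genuinely used.
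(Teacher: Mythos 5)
Your proof is correct and takes essentially the same approach as the paper's: you lift a very ample line bundle along the thickenings using the vanishing of $\H^2(X_0,\O_{X_0})$, lift sections to obtain compatible closed immersions into a fixed $\P^N_\Lambda$, apply Grothendieck's existence theorem via Corollary~\ref{algcor}, and verify flatness from the local criterion (Theorem~\ref{local-flatness}). The only cosmetic differences are that you invoke cohomology-and-base-change (after arranging vanishing of all higher $\H^i$) where the paper lifts a basis of $\H^0$ by hand from the long exact sequences, needing only $\H^1(X_0,\L_0)=0$, and your final flatness step compresses the openness-of-flat-locus/properness argument that the paper spells out in full.
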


The algebraization that we will construct in the proof is moreover projective over $R$.

\begin{proof}
We start by showing that the natural restriction homomorphism $\pic(X_n)\to \pic(X_{n-1})$ is surjective. For a fixed $n$ we have an exact sequence of sheaves of groups
$$
\xymatrix{
0\ar[r] & \m_\Lambda^n/\m_\Lambda^{n+1}\otimes_k \O_{X_0}\ar[r] & \O_{X_n}^*\ar[r] & \O_{X_{n-1}}^*\ar[r] & 0
}
$$
where the first map is defined by $u\mapsto 1+u$ (over any open subset), and we see $\m_\Lambda^n/\m_\Lambda^{n+1}\otimes_k \O_{X_0}$ as the ideal sheaf of $X_{n-1}$ in $X_n$.

Taking the cohomology long exact sequence and recalling that by hypothesis
$$
\H^2(X_0,\m_\Lambda^n/\m_\Lambda^{n+1}\otimes_k \O_{X_0})\simeq \m_\Lambda^n/\m_\Lambda^{n+1}\otimes_k \H^2(X_0,\O_{X_0})
$$
is trivial, we see that the homomorphism $\H^1(X_0, \O_{X_n}^*)\to \H^1(X_0,\O_{X_{n-1}}^*)$ corresponding to the restriction $\pic(X_n)\to \pic(X_{n-1})$ is surjective.

Now take a very ample invertible sheaf $\L_0$ on $X_0$, such that $\H^1(X_0,\L_0)=0$, and let $s_0,\hdots,s_m$ be a basis of $\H^0(X_0,\L_0)$ as a $k$-vector space, defining the closed immersion $X_0\to \P^m_k$. By surjectivity of $\pic(X_n)\to \pic(X_{n-1})$ we can lift $\L_0$ successively to $X_n$, obtaining thus a sequence $\{\L_n\}_{n \in \N}$ of compatible invertible sheaves on the formal deformation $\wh{X}$; moreover we can also lift the basis above at each step.

In fact tensoring the exact sequence
$$
\xymatrix{
0\ar[r] & \m_\Lambda^n/\m_\Lambda^{n+1}\otimes_k \O_{X_0} \ar[r] & \O_{X_n}\ar[r] & \O_{X_{n-1}}\ar[r] & 0
}
$$
with $\L_n$, we get
$$
\xymatrix{
0\ar[r] & \m_\Lambda^n/\m_\Lambda^{n+1}\otimes_k \L_0 \ar[r] & \L_n\ar[r] & \L_{n-1}\ar[r] & 0.
}
$$
Noticing that
$$
\H^1(X_0, \m_\Lambda^n/\m_\Lambda^{n+1}\otimes_k \L_0)\simeq  \m_\Lambda^n/\m_\Lambda^{n+1}\otimes_k \H^1(X_0,\L_0)
$$
is trivial, and taking the cohomology long exact sequence of the last short one, we see that the restriction homomorphism $\H^0(X_0,\L_n)\to \H^0(X_0,\L_{n-1})$ is surjective, and so we can lift inductively $s_0,\hdots,s_m$ to elements $s_0^n,\hdots,s_m^n \in \H^0(X_0,\L_n)$.

Moreover the sections $(s_0^n,\hdots,s_m^n)$ will not have base points (because if they did, these points would also be base points of $(s_0,\hdots,s_m)$), and then for every $n$ we have an induced morphism $\phi_n\colon X_n \to \P^m_{\Lambda_n}$; since $\phi_0\colon X_0\to \P^m_k$ is a closed immersion, every $\phi_n$ will be as well.

This makes the sequence $\{X_n\}_\nin$ into a sequence of closed subschemes $X_n \subseteq \P^m_{\Lambda_n}$ compatible with the immersions $\P^m_{\Lambda_n}\subseteq \P^m_{\Lambda_{n+1}}$. Corollary~\ref{algcor} gives then a closed subscheme $X\subseteq \P^m_{\Lambda}$ restricting to $X_n$ over $\Lambda_n$. If we show that $X$ is flat over $\Lambda$, then it will be an algebraization of $\wh{X}$.

By \cite[Theorem~24.3]{Mat}, the locus of points at which $X$ is flat over $\Lambda$ is an open subset of $X$; consider its complement $Z$, a closed subset of $X$. Since $X \arr \spec \Lambda$ is proper, hence closed, if $Z$ is not empty its image will be a closed non-empty subset of $\spec \Lambda$, which must therefore contain the maximal ideal of $\Lambda$. So, to show that $Z = \emptyset$ it is enough to show that $X \arr \spec \Lambda$ is flat at any point of $X_{0}$.

But if $p \in Z\cap X_0$, since $\O_{X_n,p}\simeq \O_{X,p}/\m_{\Lambda}^{n+1}\O_{X,p}$ is flat over $\Lambda_n$ for every $n$, it follows from the local flatness criterion (Theorem~\ref{local-flatness}) that $X \to \spec\Lambda$ is flat at $p$, and this concludes the proof.
\end{proof}

\begin{examp}
Here is an example of a formal deformation of a smooth projective scheme over $\mathbb C$ that is not algebraizable. To do this, we will take as $X_0$ a smooth quartic surface in $\P^3_\mathbb{C}$, such that the Picard group $\pic(X_0)$ is cyclic, generated by the invertible sheaf $\O_{X_0}(1)$. One can check that in this case $\H^2(X_0,\O_{X_0})\simeq \mathbb{C}$, so that the hypotheses of the last Theorem are not satisfied.

To know that such a surface exists, we need the following facts.

\begin{thm}[Noether-Lefschetz]
Let $d\geq 4$, and $\P^N_\mathbb{C}$ be the projective space of surfaces of degree $d$ in $\P^3_\mathbb{C}$. Then there exists countably many hypersurfaces
$$
H_1,H_2,\hdots,H_n,\hdots \subseteq \P^N_\mathbb{C}
$$
such that if $X_0 \in \P^N_\mathbb{C}\setminus \bigcup_i H_i$, then $\pic(X_0)$ is cyclic and generated by $\O_{X_0}(1)$.
\end{thm}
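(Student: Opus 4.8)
The plan is to reduce the existence of such a surface to a Hodge-theoretic statement about a very general fiber of the universal family of degree-$d$ surfaces, and to establish that statement by combining Griffiths' description of the infinitesimal variation of Hodge structure of a hypersurface with Macaulay's duality for the Jacobian ring.

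\emph{Step 1: reduction.} Let $D \subseteq \P^N_{\mathbb{C}}$ be the discriminant hypersurface parametrizing singular surfaces, $U = \P^N_{\mathbb{C}} \setminus D$, and $\pi\colon \mathcal{X} \to U$ the universal smooth surface. For $X_0 = \pi^{-1}(t)$ one has $\H^1(X_0, \O_{X_0}) = 0$, so the exponential sequence identifies $\pic(X_0)$ with $\ker\bigl(\H^2(X_0, \mathbb{Z}) \to \H^2(X_0, \O_{X_0})\bigr)$, which by the Lefschetz $(1,1)$-theorem is the group of integral Hodge classes $\H^{1,1}(X_0) \cap \H^2(X_0, \mathbb{Z})$. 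Over $\mathbb{Q}$ there is the orthogonal splitting $\H^2(X_0, \mathbb{Q}) = \mathbb{Q}\cdot h \oplus \H^2_{\mathrm{prim}}(X_0, \mathbb{Q})$ with $h = c_1(\O_{X_0}(1))$ already of type $(1,1)$; using standard Lefschetz-type facts about hypersurfaces in $\P^3$ (torsion-freeness of $\pic(X_0)$ and indivisibility of $h$), ``$\pic(X_0) = \mathbb{Z}\cdot h$'' becomes equivalent to the vanishing $\H^{1,1}(X_0) \cap \H^2_{\mathrm{prim}}(X_0, \mathbb{Z}) = 0$. So it suffices to exhibit countably many hypersurfaces outside whose union this vanishing holds.

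\emph{Step 2: the heart.} For a simply connected open $V \subseteq U$ and a nonzero flat section $\gamma$ of the local system $R^2\pi_*\mathbb{Z}_{\mathrm{prim}}$ over $V$, the locus $\mathrm{NL}_\gamma = \{\, t \in V : \gamma_t \in \H^{1,1}(X_t)\,\}$ is a closed analytic subvariety, and I would show each of its components has dimension $\le N-1$. By Griffiths transversality the Zariski tangent space of $\mathrm{NL}_\gamma$ at $t$ is the kernel of $T_t U \to \H^{0,2}(X_t)$, $\theta \mapsto \overline{\nabla}_\theta \gamma_t$, where $\overline{\nabla}$ is the Gauss--Manin connection followed by projection to the bottom Hodge piece. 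Griffiths' residue calculus identifies $\H^{1,1}_{\mathrm{prim}}(X_t) \cong R_{2d-4}$ and $\H^{0,2}(X_t) \cong R_{3d-4}$, where $R = \mathbb{C}[x_0,\dots,x_3]/J_{f_t}$ is the Jacobian ring of a defining form $f_t$, and under the Kodaira--Spencer surjection $T_t U \twoheadrightarrow R_d$ the operator $\overline{\nabla}$ becomes polynomial multiplication $R_d \times R_{2d-4} \to R_{3d-4}$. Thus if the tangent space were all of $T_t U$ we would get $\gamma_t \cdot R_d = 0$ in $R_{3d-4}$. Since $d \ge 4$ we have $R_{d-4} \neq 0$, and, $R$ being a graded quotient of a polynomial ring, $R_d \cdot R_{d-4} = R_{2d-4}$; multiplying gives $\gamma_t \cdot R_{2d-4} = 0$ in $R_{4d-8}$. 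By Macaulay's theorem the Jacobian ring of a smooth hypersurface is Artinian Gorenstein of socle degree $4(d-2)$, so the pairing $R_{2d-4} \times R_{2d-4} \to R_{4d-8} \cong \mathbb{C}$ is perfect, forcing $\gamma_t = 0$ in $R_{2d-4} \cong \H^{1,1}_{\mathrm{prim}}(X_t)$ --- contradicting $\gamma_t \neq 0$. Hence the tangent space is proper at every point, so every component of $\mathrm{NL}_\gamma$ has dimension $\le N-1$. (Alternatively one can argue by monodromy: a Lefschetz-pencil computation shows the monodromy on $\H^2_{\mathrm{prim}}$ has no nonzero invariants.)

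\emph{Step 3: globalization.} Cover $U$ by countably many simply connected opens $V_j$; each $\Gamma(V_j, R^2\pi_*\mathbb{Z}_{\mathrm{prim}})$ is countable, so we obtain countably many analytic subvarieties $\mathrm{NL}_\gamma$, each of dimension $\le N-1$. By the Cattani--Deligne--Kaplan theorem on algebraicity of Hodge loci, each $\mathrm{NL}_\gamma$ lies in a proper closed algebraic subvariety of $\P^N_{\mathbb{C}}$, hence in some hypersurface; let $H_1', H_2', \dots$ be these hypersurfaces and put $\{H_i\} = \{D, H_1', H_2', \dots\}$. If $X_0$ corresponds to $t_0 \notin \bigcup_i H_i$, then $X_0$ is smooth, and for any nonzero $\alpha \in \H^2_{\mathrm{prim}}(X_0, \mathbb{Z})$ we may extend $\alpha$ flatly over some $V_j \ni t_0$ to a section $\gamma$ with $\gamma_{t_0} = \alpha$; as $t_0$ lies outside the hypersurface containing $\mathrm{NL}_\gamma$, we get $t_0 \notin \mathrm{NL}_\gamma$, i.e. $\alpha \notin \H^{1,1}(X_0)$. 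So $\H^{1,1}(X_0) \cap \H^2_{\mathrm{prim}}(X_0, \mathbb{Z}) = 0$, i.e. $\pic(X_0) = \mathbb{Z}\cdot h$.

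The main obstacle is Step 2, and within it the two Hodge-theoretic inputs: Griffiths' identification of the infinitesimal variation of Hodge structure of a hypersurface with multiplication in the Jacobian ring, and Macaulay's duality for that ring. The reductions in Steps 1 and 3 are essentially formal, apart from the algebraicity of the Noether--Lefschetz loci (Cattani--Deligne--Kaplan) invoked to replace an analytic subvariety by a containing hypersurface --- a deep statement in general, although in this classical case it can also be obtained by more elementary means. Since all of this lies well outside deformation theory proper, in the text the result is quoted rather than proved.
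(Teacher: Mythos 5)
The paper contains no proof of this theorem: it is stated and quoted, with a pointer to Griffiths--Harris for discussion, precisely because --- as you observe at the end --- its proof lies well outside the scope of deformation theory. So there is no in-text argument to compare yours against. Your sketch is an accurate outline of the standard Hodge-theoretic proof via Griffiths' infinitesimal variation of Hodge structure for hypersurfaces and Macaulay--Gorenstein duality for the Jacobian ring. Two remarks on the details. In Step 2, the inclusion $T_t(\mathrm{NL}_\gamma) \subseteq \ker\bigl(\theta \mapsto \overline{\nabla}_\theta \gamma_t\bigr)$ is all that is needed for the codimension bound and is easier to justify than the asserted equality, so you may as well claim only that. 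In Step 3, the algebraicity input (Cattani--Deligne--Kaplan, or for this classical family a more elementary argument via properness of relative Hilbert schemes of curves in the universal surface) is genuinely needed to phrase the conclusion with \emph{hypersurfaces} $H_i$ as in the statement; but for the paper's actual use of the theorem --- as one of two inputs to a Baire-category argument over $\mathbb{C}$ --- countably many proper closed \emph{analytic} subsets would suffice, so that input could be avoided entirely if one weakened the statement to that extent.
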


For a discussion of this, see for example \cite{Griff}.

\begin{thm}[Baire]
In a locally compact and Hausdorff topological space, a countable intersection of open dense subsets is itself dense.
\end{thm}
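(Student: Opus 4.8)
The plan is to prove directly that, given a countable family $\{V_n\}_{n \in \N}$ of open dense subsets of a locally compact Hausdorff space $Y$, every nonempty open set $W \subseteq Y$ meets $\bigcap_n V_n$; this is precisely the assertion that $\bigcap_n V_n$ is dense in $Y$. The engine of the argument is a shrinking construction, together with the finite intersection property for nested nonempty compact sets.

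First I would record the \emph{local regularity} property of a locally compact Hausdorff space: if $x \in O$ with $O$ open, then there is an open neighborhood $N$ of $x$ whose closure $\overline{N}$ is compact and contained in $O$. To see this, choose a compact neighborhood $K$ of $x$; then $K$ with the subspace topology is compact Hausdorff, hence regular, so --- working inside the set $O \cap \mathrm{int}\, K$, which is open in $K$ and contains $x$ --- there is a set $N$, open in $K$, with $x \in N$ and $K$-closure contained in $O \cap \mathrm{int}\, K$. Since $N \subseteq \mathrm{int}\, K$ and $\mathrm{int}\, K$ is open in $Y$, the set $N$ is open in $Y$; and since $K$ is closed in $Y$ (being compact in a Hausdorff space), the $Y$-closure of $N$ equals its $K$-closure, which is a closed subset of the compact set $K$, hence compact, and lies in $O$.

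Next, setting $W_0 \eqdef W$, I would construct inductively nonempty open sets $W_n$ with $\overline{W_n}$ compact and $\overline{W_n} \subseteq W_{n-1} \cap V_n$: since $W_{n-1}$ is nonempty open and $V_n$ is dense, $W_{n-1} \cap V_n$ is nonempty open, and applying the local regularity property at any of its points produces $W_n$. The closures then satisfy $\overline{W_1} \supseteq \overline{W_2} \supseteq \cdots$, and they are nonempty closed subsets of the compact space $\overline{W_1}$, so by the finite intersection property $\bigcap_{n \geq 1} \overline{W_n} \neq \emptyset$. Any point $y$ of this intersection lies in $\overline{W_n} \subseteq V_n$ for every $n \geq 1$ and in $\overline{W_1} \subseteq W_0 = W$, hence in $W \cap \bigcap_n V_n$, which is therefore nonempty.

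The one genuinely delicate point --- and the step I would be most careful about --- is the local regularity fact just sketched: this is where the Hausdorff hypothesis enters (to conclude that the compact neighborhood $K$ is regular and closed in $Y$), and one must resist the temptation to silently assume the stronger convenient hypotheses sometimes bundled into ``locally compact.'' Everything else is the standard nested-compacts argument. If one is willing to quote that locally compact Hausdorff spaces are regular, the proof collapses to the one-paragraph induction of the third step.
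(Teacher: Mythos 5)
Your proof is correct. The paper states this theorem (Baire category for locally compact Hausdorff spaces) without proof, simply as a classical ingredient in the example on non-algebraizability, so there is no paper proof to compare against; your argument is the standard nested-compact-sets proof, and the details you were most careful about (establishing local regularity from the Hausdorff hypothesis alone, using that compact sets are closed in Hausdorff spaces so that the $K$-closure and the $Y$-closure of $N$ agree, and checking that $N$, being open in $K$ and contained in $\mathrm{int}\,K$, is actually open in $Y$) are exactly the points where a careless version would have a gap, and you handle all of them correctly.
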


Combining these two Theorems, we get a quartic surface $X_0\subseteq \P^3_\mathbb{C}$, such that $\pic(X_0)$ is cyclic generated by $\O_{X_0}(1)$.

\begin{ex}
Show that $\H^2(X_0,T_{X_0})=0$.
\end{ex}

In particular, using Theorem~\ref{smoothobstr}, this shows that $X_0$ is unobstructed.

From Proposition~\ref{hypers} we know that the differential of the forgetful morphism $F\colon \hilb^{\P^3_k}\to \catdef$ at $X_0$ is not surjective, so we can take a first-order deformation $X_\eps \to \spec\mathbb[\eps]$, such that there does not exists a closed immersion $X_\eps \subseteq \P^3_{\mathbb{C}[\eps]}$ extending $X_0\subseteq \P^3_\mathbb{C}$.

Moreover such a deformation has trivial Picard group: the exact sequence of shaves of groups
$$
\xymatrix{
0\ar[r] & \O_{X_0}\simeq \O_{X_0}\otimes_k (\eps)\ar[r] & \O_{X_\eps}^*\ar[r] & \O_{X_0}^*\ar[r] & 0
}
$$
yields
$$
\xymatrix{
0=\H^1(X_0,\O_{X_0})\ar[r] & \pic(X_\eps)\ar[r] & \pic(X_0)\ar[r] & \H^2(X_0,\O_{X_0})\simeq \mathbb{C}.
}
$$
Now since $\pic(X_0)$ is cyclic infinite and $\mathbb{C}$ is torsion-free, we conclude that the map $\pic(X_\eps)\to \pic(X_0)$ must be zero, and then $\pic(X_\eps)=0$.

From the fact that $X_0$ is unobstructed, we can find a formal deformation $\wh{X}=\{X_n,f_n\}_\nin$ of $X_0$ over $\mathbb{C}\ds{t}$, with term of order one isomorphic to $X_\eps$.

\begin{prop}
The formal deformation $\wh{X}$ is not algebraizable, that is, there does not exist a flat and proper scheme $X$ over $\mathbb{C}\ds{t}$ inducing the formal deformation $\wh{X}$.
\end{prop}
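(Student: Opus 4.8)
The plan is to argue by contradiction. Suppose that $\wh{X}$ admits an algebraization $X$, flat and proper over $R=\mathbb{C}\ds{t}$, so that $X_{n}\simeq X\times_{R}R_{n}$ for all $n$ (with $R_{n}=R/\m_{R}^{n+1}$), and in particular $X_{1}\simeq X\times_{R}\mathbb{C}[\eps]\simeq X_{\eps}$, since the term of order one of $\wh{X}$ was chosen isomorphic to $X_{\eps}$. I would first record that such an $X$ is automatically regular: the morphism $X\arr\spec R$ is flat with smooth closed fibre $X_{0}$, hence smooth along $X_{0}$; the locus where it fails to be smooth is therefore a closed subset of $X$ whose image in $\spec R$ is closed (as $X\arr\spec R$ is proper) and does not contain the closed point, hence is empty. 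Thus $X\arr\spec R$ is smooth, and $X$ --- being smooth over the regular local ring $R$ and connected, as $X_{0}$ is --- is integral and regular.

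The core of the proof is the vanishing $\pic(X)=0$. By Grothendieck's existence theorem (Theorem~\ref{groth.ex}) applied to the formal completion of $X$ along $X_{0}$, restriction of invertible sheaves induces an isomorphism $\pic(X)\simeq\invlim_{n}\pic(X_{n})$. On the other hand, the exact sequences $0\arr\m_{R}^{n}/\m_{R}^{n+1}\otimes_{k}\O_{X_{0}}\arr\O_{X_{n}}^{*}\arr\O_{X_{n-1}}^{*}\arr 0$ appearing in the proof of Proposition~\ref{algebr}, together with $\H^{1}(X_{0},\O_{X_{0}})=0$, force each restriction map $\pic(X_{n})\arr\pic(X_{n-1})$ to be injective for $n\ge 1$; since $\pic(X_{1})=\pic(X_{\eps})=0$ by what was shown above, an induction on $n$ gives $\pic(X_{n})=0$ for all $n$, whence $\pic(X)=0$.

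To conclude, I would pass to the generic fibre $X_{\eta}=X\times_{R}\mathbb{C}\dr{t}$, a smooth proper surface over the field $\mathbb{C}\dr{t}$ and therefore projective (a smooth complete surface over a field is always projective), so $\pic(X_{\eta})\ne 0$. But $X$ is integral and regular, so $\pic(X)=\mathrm{Cl}(X)$, and every Weil divisor on the open subscheme $X_{\eta}\subseteq X$ is the restriction of its closure in $X$; hence $\pic(X)\arr\pic(X_{\eta})$ is surjective, and $\pic(X)=0$ would force $\pic(X_{\eta})=0$, a contradiction. Therefore no such $X$ exists, i.e. $\wh{X}$ is not algebraizable.

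The step I expect to be the main obstacle --- or at least the one requiring the most care --- is the identification $\pic(X)\simeq\invlim_{n}\pic(X_{n})$: one must verify that, under the equivalence of Theorem~\ref{groth.ex}, the invertible objects of $\coh(X)$ correspond exactly to the compatible systems of invertible sheaves on the $X_{n}$, which comes down to the fact that a coherent sheaf on $X$ flat over $R$ is invertible as soon as its restriction to $X_{0}$ is. The remaining ingredients --- regularity of $X$, the vanishing of the $\pic(X_{n})$, projectivity of smooth proper surfaces, and surjectivity of $\pic(X)\arr\pic(X_{\eta})$ --- are either already in hand or standard.
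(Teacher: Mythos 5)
Your argument is correct, but it takes a genuinely different route from the paper's, and it is worth comparing them.

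You and the paper both begin identically: by properness and smoothness of the closed fibre, $X \arr \spec R$ is smooth, hence $X$ is regular and (being connected through $X_{0}$) integral. From there the strategies diverge. The paper produces a nontrivial element in the image of $\pic(X) \arr \pic(X_{0})$ by a very hands-on construction: pick an affine open $U \subseteq X$, observe that $D = X \setminus U$ has pure codimension one, and restrict the Cartier divisor $D$ to the closed fibre; since this restriction factors through $\pic(X_{\eps}) = 0$, you get the contradiction immediately, with no recourse to Grothendieck's existence theorem in this proof. You instead prove the stronger statement $\pic(X) = 0$, by invoking the isomorphism $\pic(X) \simeq \invlim_{n}\pic(X_{n})$ (the Picard-group form of Grothendieck's algebraization, which, as you rightly flag, requires verifying that formal invertible sheaves correspond to honest invertible sheaves — a correct but nontrivial point, resting on properness and the local criterion of flatness, or on [EGA III, 5.1.4--5.1.6]) together with $\H^{1}(X_{0},\O_{X_{0}}) = 0$ and $\pic(X_{\eps}) = 0$; you then import the theorem that a smooth proper surface over a field is projective to derive the contradiction on the generic fibre. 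Your route is conceptually clean — it isolates exactly what goes wrong, namely that a proper regular integral scheme over $\mathbb{C}\ds{t}$ whose generic fibre is a projective surface cannot have trivial Picard group — but it trades the paper's essentially elementary divisor construction for two pieces of heavier machinery (Picard algebraization, projectivity of smooth complete surfaces). Both proofs are valid; the paper's is shorter and more self-contained given what has been developed in the text, while yours makes transparent the role played by $\H^{1}(X_{0},\O_{X_{0}}) = 0$.

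One small point you glossed over: the connectedness of $X$ deserves a word. The connected component of $X$ containing $X_{0}$ is open and closed, and its complement is a closed subset of $X$ disjoint from $X_{0}$, hence has closed image in $\spec R$ missing the closed point — so it is empty by properness. This is the same pattern of argument used twice already (for smoothness, and in the paper for $Z = \emptyset$) and is harmless, but should be said.
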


\begin{proof}
Assume such an $X$ exists, and take an open affine subscheme $U\subseteq X$. If we denote by $D=X\setminus U$ the complement of $U$, then every irreducible component of $D$ has codimension $1$ (see for example \cite[Corollaire 21.12.7]{EGAIV}), and then, with the structure of reduced closed subscheme, it can be seen as an effective divisor on $X$.

Now notice that $X$ is smooth over $\mathbb{C}\ds{t}$, so in particular Cartier divisors correspond to invertible sheaves: if $Z$ is the locus where $X$ is not smooth, then $Z$ is a closed subset of $X$, and (if it is not empty) it must intersect the central fiber $X_0$ (since $X\to \spec \mathbb{C}\ds{t}$ is proper, and then the image of $Z$ contains the maximal ideal of $\mathbb{C}\ds{t}$). But $X_0$ is smooth over $\mathbb{C}$, and this contradiction shows that $Z=\emptyset$.

Now, since $X_0\nsubseteq U$, we have that $D\cap X_0$ is an effective divisor on $X_0$, and it is not trivial (i.e. the associated invertible sheaf $\O_{X_0}(D\cap X_0)$ is not isomorphic to $\O_{X_0}$). Finally consider the commutative diagram
$$
\xymatrix@C-20pt{
\pic(X)\ar[rd]\ar[rr]&&\pic(X_\eps)\ar[ld]\\
&\pic(X_0)&
}
$$
with the maps are the natural pullback homomorphisms. We showed above that the function $\pic(X)\to \pic(X_0)$ is not zero, since the invertible sheaf $\O_X(D)$ goes to $\O_{X_0}(D\cap X_0)$, which is not trivial, but on the other hand we proved that $\pic(X_\eps)=0$, which gives a contradiction.
\end{proof}

\end{examp}

\section{Deformations of nodal curves}\label{capcurv}

As a simple example of application of the general theory described up to this point, we consider in this last section deformations of affine and projective curves with a finite number of nodes.

By studying this particular case we will show how knowing a miniversal deformation of a local model for a singularity helps in giving a local (formal) description of any global deformation of such a singularity. Finally we will give an algebraization result for projective curves with a finite number of nodes that relies on the general results of the preceding section.

Let us start with some generalities on the behavior of deformations under \'etale morphisms.

\subsection{Pullbacks of deformations under \'etale morphisms}

Let $f\colon Y_{0} \arr X_{0}$ be an \'etale morphism of $k$-schemes. Let $X_{A}$ be a lifting of $X_{0}$ to some $A \in (\Art_\Lambda)$. The pullback functor from the category of \'etale morphism to $X_{A}$ to that of \'etale morphisms to $X_{0}$ is an equivalence (see for example \cite[Theorem~3.23]{Mil}); hence there is an \'etale morphism $f^{*}X_{A} \arr X_{A}$, unique up to a unique isomorphism, whose restriction to $X_{0}$ is isomorphic to $Y_{0} \arr X_{0}$. We obtain a morphism of fibered categories $f^{*}\colon \catdef_{X_{0}} \arr \catdef_{Y_{0}}$ by sending each lifting $X_{A}$ into $f^{*}X_{A}$.

\begin{prop}\label{etale-pullback}
Let us assume that the following conditions are satisfied.

\begin{enumeratea}

\item The scheme $X_{0}$ is affine, of finite type, local complete intersection, and smooth outside of a finite number of closed points $p_{1}$, \dots,~$p_{r}$.

\item The scheme $Y_{0}$ is also affine. Furthermore, for each $i$ the inverse image $f^{-1}(p_{i})$ is formed of a unique point $q_{i} \in Y_{0}$ with $k(p_{i}) = k(q_{i})$.

\end{enumeratea}

Then the following hold.

\begin{enumeratei}

\item The differential $d_{X_0}f^{*}\colon T_{X_{0}}\catdef_{X_{0}} \arr T_{Y_{0}}\catdef_{Y_{0}}$ is an isomorphism.

\item The morphism $f^{*}$ is formally smooth (Definition~\ref{formally-smooth}).

\item If $(R,\rho)$ is a miniversal deformation of $X_0$, then $f^*(R,\rho)$ (with the obvious meaning) is a miniversal deformation of $Y_0$.

\end{enumeratei}
\end{prop}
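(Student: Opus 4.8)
The plan is to reduce the entire proposition to a single algebraic input — that $f^{*}$ induces isomorphisms on the $\Ext$ groups computing tangent and obstruction spaces — after which (i), (ii) and (iii) follow mechanically from Theorems~\ref{kodaira} and~\ref{schemesobstr}, the functoriality of the tangent action (Proposition~\ref{actionfunct}), and the characterization of versality as formal smoothness (Exercise~\ref{form.smooth}). Observe first that $Y_{0}$, being étale over $X_{0}$, is again an affine generically smooth local complete intersection, smooth away from $\{q_{1}, \dots, q_{r}\}$, so Theorems~\ref{kodaira} and~\ref{schemesobstr} apply to both $X_{0}$ and $Y_{0}$.

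The heart of the argument is the claim that for every $q \geq 1$ the natural pullback
$$
f^{*}\colon \Ext^{q}_{\O_{X_{0}}}(\diff_{X_{0}}, \O_{X_{0}}) \arr \Ext^{q}_{\O_{Y_{0}}}(\diff_{Y_{0}}, \O_{Y_{0}})
$$
is an isomorphism. Since $f$ is étale one has $f^{*}\diff_{X_{0}} \simeq \diff_{Y_{0}}$ and $f^{*}\Extsh^{q}_{\O_{X_{0}}}(\diff_{X_{0}}, \O_{X_{0}}) \simeq \Extsh^{q}_{\O_{Y_{0}}}(\diff_{Y_{0}}, \O_{Y_{0}})$; because $X_{0}$ and $Y_{0}$ are affine, the local-to-global spectral sequence identifies these $\Ext$ groups with the global sections of the respective $\Extsh$ sheaves. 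For $q \geq 1$ the sheaf $\Extsh^{q}_{\O_{X_{0}}}(\diff_{X_{0}}, \O_{X_{0}})$ is supported on the finite set $\{p_{1}, \dots, p_{r}\}$ — outside of it $\diff_{X_{0}}$ is locally free — and its stalks there are of finite length, the singularities being isolated; the analogous statement holds for $Y_{0}$ over $\{q_{1}, \dots, q_{r}\}$. Finally, an étale morphism with $k(p_{i}) = k(q_{i})$ induces an isomorphism $\wh{\O}_{X_{0}, p_{i}} \simeq \wh{\O}_{Y_{0}, q_{i}}$, and $\Ext$ of a finite module commutes with this flat base change; as finite-length modules are unchanged upon completion, the pullback is an isomorphism on each stalk at $p_{i}$, hence globally. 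Combined with Theorem~\ref{kodaira} the case $q = 1$ is precisely part~(i), and the case $q = 2$ is what feeds into part~(ii).

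For part~(ii), by the Remark after Definition~\ref{formally-smooth} it is enough to treat a small extension $A' \arr A$ with kernel $I$: given $X_{A} \in \catdef_{X_{0}}(A)$ together with a lifting $Y_{A'} \in \catdef_{Y_{0}}(A')$ of $f^{*}X_{A}$, I would produce a lifting $X_{A'}$ of $X_{A}$ with $f^{*}X_{A'} \simeq Y_{A'}$. First, the obstruction $\omega(X_{A}, A') \in I \otimes_{k} \Ext^{2}_{\O_{X_{0}}}(\diff_{X_{0}}, \O_{X_{0}})$ maps under $\id_{I} \otimes f^{*}$ to the obstruction to lifting $f^{*}X_{A}$ (by the étale-functoriality in Theorem~\ref{schemesobstr}), and this latter obstruction vanishes because $Y_{A'}$ exists; since $\id_{I} \otimes f^{*}$ is injective by the $q = 2$ case above, $\omega(X_{A}, A') = 0$ and a lifting $X_{A'}$ exists. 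Now $f^{*}X_{A'}$ and $Y_{A'}$ are both liftings of $f^{*}X_{A}$ to $A'$, so by Theorem~\ref{actionthm} they differ by a unique $a \in I \otimes_{k} T_{Y_{0}}\catdef_{Y_{0}}$; using part~(i) choose $b \in I \otimes_{k} T_{X_{0}}\catdef_{X_{0}}$ with $(\id_{I} \otimes d_{X_{0}}f^{*})(b) = a$ and replace $X_{A'}$ by $X_{A'} + b$. Then Proposition~\ref{actionfunct} gives
$$
f^{*}\bigl([X_{A'}] + b\bigr) = f^{*}[X_{A'}] + (\id_{I} \otimes d_{X_{0}}f^{*})(b) = [f^{*}X_{A'}] + a = [Y_{A'}]\,,
$$
so that $f^{*}(X_{A'} + b) \simeq Y_{A'}$; this gives the required essential surjectivity, i.e. $f^{*}$ is formally smooth.

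Part~(iii) is then formal. By Exercise~\ref{form.smooth}, versality of $(R, \rho)$ means the associated morphism $\rho\colon (\Art_{R})^{\op} \arr \catdef_{X_{0}}$ is formally smooth; the formal object $f^{*}(R, \rho)$ corresponds to the composite $(\Art_{R})^{\op} \xrightarrow{\rho} \catdef_{X_{0}} \xrightarrow{f^{*}} \catdef_{Y_{0}}$, which is formally smooth being a composite of formally smooth morphisms (Remark after Definition~\ref{formally-smooth}), so $f^{*}(R, \rho)$ is versal. Its Kodaira--Spencer map is the composite $\kappa_{f^{*}\rho} = d_{X_{0}}f^{*} \circ \kappa_{\rho}$, and both factors are isomorphisms — $\kappa_{\rho}$ by miniversality of $(R, \rho)$, $d_{X_{0}}f^{*}$ by part~(i) — so $\kappa_{f^{*}\rho}$ is an isomorphism and $f^{*}(R, \rho)$ is miniversal. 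The one genuinely delicate point is the $\Ext$-comparison claim: one must pin down that the sheaves $\Extsh^{q}_{\O_{X_{0}}}(\diff_{X_{0}}, \O_{X_{0}})$ for $q \geq 1$ are concentrated on the singular locus with finite-length stalks, and that an étale morphism with trivial residue extension identifies them; everything downstream of that is routine bookkeeping with tools already at hand.
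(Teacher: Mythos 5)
Your proposal is correct and follows essentially the same strategy as the paper's proof: establish that $f^{*}$ induces isomorphisms on the $\Ext^{q}$ groups controlling the tangent and obstruction spaces, deduce (i) immediately, prove (ii) by first lifting $X_{A}$ using the identification of obstructions and then correcting by an element of the tangent space supplied by (i), and get (iii) from Exercise~\ref{form.smooth} by composing formally smooth morphisms and matching Kodaira--Spencer maps. The only divergence is in how the $\Ext$ comparison is justified: you pass to $\Extsh$ sheaves, reduce to stalks, and use that étaleness with trivial residue extension identifies completions, while the paper argues more directly in terms of rings, using flat base change $\Ext^{i}_{R}(\Omega_{R/k},R)\otimes_{R}S\simeq\Ext^{i}_{S}(\Omega_{S/k},S)$ together with the lemma (proved by induction on length) that $M \to M\otimes_{R}S$ is an isomorphism for any $R$-module $M$ supported on $\{p_{1},\dots,p_{r}\}$; the two routes are equivalent and either closes the gap you flag at the end.
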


\begin{proof}
Set $X_{0} = \spec R$ and $Y_{0} = \spec S$. We have $T_{X_{0}}\catdef_{X_{0}} = \Ext^{1}_{R}(\Omega_{R/k}, R)$ and $T_{Y_{0}}\catdef_{X_{0}} = \Ext^{1}_{S}(\Omega_{S/k}, S)$. Furthermore, there is an obstruction theory for $X_{0}$ with space $\Ext^{2}_{R}(\Omega_{R/k}, S)$, and analogously for $Y_{0}$.

Since $S$ is flat over $R$, there are natural isomorphism
   \[
   \Ext^{i}_{R}(\Omega_{R/k}, R)\otimes_{R}S \simeq 
   \Ext^{i}_{S}(\Omega_{S/k}, S)
   \]
for each $i$. When $i > 0$, the $R$-module $\Ext^{i}_{R}(\Omega_{R/k}, R)$ is supported on $\{p_{1}, \dots, p_{r}\}$. It is easily seen by induction on the length that for every $R$-module with support on $\{p_{1}, \dots, p_{r}\}$, the homomorphism $M \arr M\otimes_{R}S$ is an isomorphism; hence we obtain canonical isomorphisms
   \begin{align*}
   \Ext^{i}_{R}(\Omega_{R/k}, R) & \simeq
   \Ext^{i}_{R}(\Omega_{R/k}, R)\otimes_{R}S\\
   &\simeq \Ext^{i}_{S}(\Omega_{S/k}, S)
   \end{align*}
By Theorem~\ref{kodaira} we have
   \[
   T_{X_{0}}\catdef_{X_{0}} = \Ext^{1}_{R}(\Omega_{R/k}, R)\text{\quad and \quad} T_{Y_{0}}\catdef_{Y_{0}} = \Ext^{1}_{S}(\Omega_{S/k}, S)\,;
   \]
it is easy to see that the canonical isomorphism $\Ext^{1}_{R}(\Omega_{R/k}, R) \simeq \Ext^{1}_{S}(\Omega_{S/k}, S)$ corresponds to the differential $d_{X_{0}}f^{*}$. This proves (i).

Let us show that $f^{*}$ is formally smooth. This amounts to the following: if we are given a small extension $A' \arr A$ of $\Lambda$-algebras with kernel $I$, a lifting $X_{A}$ of $X_{0}$ to $A$, and a lifting $Y_{A'}$ of $f^{*}X_{A}$ to $A'$, then there is a lifting $X_{A'}$ to $A'$, such that $f^{*}X_{A'}$ is isomorphic to $Y_{A}$ as a lifting of $f^*{X_{A}}$. By Theorem~\ref{schemesobstr}, the obstructions to lifting $X_{A}$ and $f^{*}X_{A}$ correspond under the isomorphism $I\otimes_{k}\Ext^{2}_{R}(\Omega_{R/k}, R) \simeq I\otimes_{k}\Ext^{2}_{S}(\Omega_{S/k}, S)$; since $f^{*}X_{A}$ has a lifting, by hypothesis, we see that $X_{A}$ must also have a lifting $X_{A'}$. 

The lifting $f^{*}X_{A'}$ is not necessarily isomorphic to $Y_{A}$ as a lifting of $f^{*}X_{A}$; however, the difference is measured by an element of $I\otimes_{k}T_{Y_{0}}\catdef_{Y_{0}}$, which, by (i), comes from an element of $I\otimes_{k}T_{X_{0}}\catdef_{X_{0}}$; hence we can modify $X_{A'}$ so that $f^*X_{A'}$ is isomorphic to $Y_{A'}$ as a lifting of $f^{*}X_{A}$. This completes the proof of (ii).

For part (iii), let $(R,\rho)$ be a miniversal deformation of $X_{0}$. By Exercise~\ref{form.smooth}, this is equivalent to saying that the corresponding morphism $\rho\colon (\Art_R)^{\op} \arr \catdef_{X_{0}}$ is formally smooth, with bijective differential. By composing with $f^{*}$ we obtain a morphism $(\Art_R)^{\op} \arr \catdef_{Y_{0}}$ which is once again formally smooth with bijective differential, because of parts (i) and (ii), so it is a miniversal deformation.
\end{proof}

\subsection{Nodal curves}

Let $X$ be a curve over $k$.

\begin{defin}
A closed point $p \in X$ is a \gr{rational node} if $p$ is a rational point, and the complete local ring $\wh{\O}_{X,p}$ is isomorphic to $k\ds{x,y}/(xy)$ as a $k$-algebra.
\end{defin}

We consider generically smooth curves, having only rational nodes as singularities.

\begin{defin}
By \gr{nodal curve} we mean a curve $X$ over $k$ that is smooth outside of a finite number of closed points $p_1,\hdots,p_n$ that are rational nodes.
\end{defin}

We give a criterion to recognize rational nodes, assuming $\car k\neq 2$: suppose $X$ is a curve over $k$, and that the complete local ring of $X$ in $p \in X(k)$ has a 2-dimensional tangent space. Then $\widehat{\O}_{X,p}$ is isomorphic to $k\ds{x,y}/(f)$ for some element $f \in k\ds{x,y}$. Write $f_i$ for the homogeneous term of degree $i$ of $f$: then suppose $f_0=f_1=0$. Assume that that $f_2$ is a quadratic form equivalent to $xy$ over $k$. Then there is an isomorphism $k\ds{x,y}/(f)\simeq k\ds{x,y}/(xy)$. In other words, every rational singular point with multiplicity two and with two rational distinct tangent lines is a rational node.

%
%
%
%

\subsection{Deformations of affine nodal curves}

The first case we consider is the one of an affine nodal curve $X_0$ over $k$, with a single rational node $p$. Since the complete local ring should give some control on the local structure of a scheme at the corresponding point, and by definition we have an isomorphism of the complete local ring $\wh{\O}_{X_0,p}$ with $k\ds{x,y}/(xy)$, which is the complete local ring at the origin of $Y_0 \eqdef V(xy)\subseteq \A^2_k$, one could hope to link the deformations of $X_0$ and the ones of $V(xy)$ using this isomorphism, which is what we will do now.

The starting point is the following Theorem of Michael Artin (see \cite{Art}).

\begin{thm}\label{artinthm}
Suppose $X,Y$ are schemes of finite type over a base scheme $S$, also of finite type over a field $k$. Let $p\in X$ and $q \in Y$ be two points with a fixed isomorphism $f\colon k(p)\simeq k(q)$ over $S$, and call $s$ the image of $p$ and $q$ in $S$. Then $f$ extends to an isomorphism $\wh{\O}_{X,p}\simeq \wh{\O}_{Y,q}$ of $\wh{\O}_{S,s}$-algebras if and only if there exists a scheme $Z$ over $S$ with two \'{e}tale morphisms $Z\to X$ and $Z\to Y$, fitting in the commutative diagram below.
$$
\xymatrix@-5pt@C-10pt@R-8pt{
\spec k(p)\ar[dr]\ar@/_20pt/[dd] & & \spec k(q)\ar[ll]_\sim\ar[ld] \ar@/^20pt/[dd]\\
& Z\ar[rd]\ar[ld] &\\
X\ar[dr] & & Y \ar[ld]\\
& S&
}
$$
\end{thm}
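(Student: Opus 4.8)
Theorem~\ref{artinthm} is essentially a geometric repackaging of Artin's approximation theorem, and the plan is to reduce it to that statement via the standard formalism of henselizations and \'etale neighborhoods. For the implication ``existence of $Z$ $\Rightarrow$ isomorphism of complete local rings'', note first that the map $\spec k(p) \arr Z$ in the diagram picks out a point $z$ of $Z$ lying over $p$, together with a $k(p)$-algebra homomorphism $k(z) \arr k(p)$; since $Z \arr X$ is \'etale the extension $k(p) \subseteq k(z)$ is finite separable, so $k(z) = k(p)$, and likewise $k(z) = k(q)$, with the composite isomorphism $k(q) \simeq k(p)$ equal to $f$. An \'etale morphism with trivial residue field extension induces an isomorphism on completed local rings, so we obtain $\wh{\O}_{X,p} \xrightarrow{\sim} \wh{\O}_{Z,z} \xleftarrow{\sim} \wh{\O}_{Y,q}$ as $\wh{\O}_{S,s}$-algebras, whose composite extends $f$.

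\textbf{Reduction and the approximation step.}
For the converse, I would first replace $S$, $X$, $Y$ by affine open neighborhoods $\spec R$, $\spec A$, $\spec B$ of $s$, $p$, $q$, so that $A$ and $B$ are of finite type over $R$ and $R$ is of finite type over $k$; write $\mathfrak p \subseteq A$, $\mathfrak q \subseteq B$, $\mathfrak s \subseteq R$ for the primes, and let $\phi \colon \wh{A_{\mathfrak p}} \xrightarrow{\sim} \wh{B_{\mathfrak q}}$ be the given isomorphism of $\wh{R_{\mathfrak s}}$-algebras. The crucial input is that, since $A_{\mathfrak p}$ and $B_{\mathfrak q}$ are essentially of finite type over the field $k$, Artin's approximation theorem \cite{Art} allows one to pass from the isomorphism $\phi$ of completions to an isomorphism $\phi^h\colon (A_{\mathfrak p})^h \xrightarrow{\sim} (B_{\mathfrak q})^h$ of the henselizations, as $R$-algebras, inducing $f$ on residue fields. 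Concretely: writing $B = R[\vec y]/(\vec g)$, the element $\phi^{-1}$ is given by a solution $\vec\xi \in \wh{A_{\mathfrak p}}$ of the system $\vec g(\vec y) = 0$ over $R$, which Artin approximation replaces by a solution in $(A_{\mathfrak p})^h$, yielding a local $R$-algebra map $(B_{\mathfrak q})^h \arr (A_{\mathfrak p})^h$; doing the same in the other direction and arranging the two composites to induce the identity on completions forces the approximating maps to be mutually inverse isomorphisms. This is the only place where the hypothesis of finite type over a field is used.

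\textbf{Recovering $Z$.}
Next I would use the description of henselizations as filtered colimits: $(B_{\mathfrak q})^h = \dirlim_{(V,v)} \O(V)$, the colimit taken over affine $S$-schemes $V$ equipped with an \'etale $S$-morphism $V \arr Y$ and a point $v \mapsto q$ with $k(v) = k(q)$, and similarly $(A_{\mathfrak p})^h = \dirlim_{(U,u)} \O(U)$ over \'etale neighborhoods of $p$ in $X$. Restricting the isomorphism $(\phi^h)^{-1}\colon (B_{\mathfrak q})^h \arr (A_{\mathfrak p})^h$ to one of the finitely generated subalgebras $\O(V)$ produces an $R$-algebra map $\O(V) \arr (A_{\mathfrak p})^h$, which, $\O(V)$ being of finite type, factors through $\O(U)$ for some \'etale neighborhood $(U,u)$; after enlarging $V$ one may also assume this is compatible with the transition maps into the henselizations. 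We thus get an $S$-morphism $\psi\colon U \arr V$ carrying $u$ to $v$ and inducing $f$ on residue fields, fitting into the desired commutative diagram with $U \arr X$ and $V \arr Y$ \'etale. By construction $\psi$ induces on henselized local rings the isomorphism $\O^h_{V,v} = (B_{\mathfrak q})^h \xrightarrow{\sim} (A_{\mathfrak p})^h = \O^h_{U,u}$; since a morphism of finite type inducing an isomorphism on henselized local rings at a point is flat and unramified there, hence \'etale in a neighborhood, I would then shrink $U$ around $u$ and take $Z \eqdef U$ with the two \'etale $S$-morphisms $Z = U \arr X$ and $Z = U \xrightarrow{\psi} V \arr Y$.

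\textbf{The main obstacle.}
The genuine difficulty is entirely in the second step: extracting an isomorphism of henselizations from an isomorphism of completions is precisely Artin's approximation theorem, together with the routine but slightly delicate ``clean-up'' turning two approximating maps into mutual inverses; in a full write-up one simply invokes \cite{Art}. Everything else is bookkeeping: keeping track of the base point and of the residue-field identification $f$ through the colimits, and using that the \'etale locus of a morphism of finite type is open.
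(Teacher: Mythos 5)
The paper does not prove Theorem~\ref{artinthm}: it is stated as a citation to Artin's paper and immediately applied, so there is no in-text proof to compare against. Your sketch reconstructs what is essentially Artin's own argument, via Artin approximation over an excellent local ring essentially of finite type over a field, together with the description of the henselization as a filtered colimit over affine \'etale neighborhoods. The easy direction is fine (the section $\spec k(p) \to Z$ is what forces $k(z)=k(p)$, not just separability of $k(z)/k(p)$ --- you have the ingredients but the ``so'' in that sentence elides the use of the retraction). The overall strategy of the hard direction is also correct, and it is the right strategy.

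The one genuine wobble is in the ``clean-up'' step. You cannot ``arrange the two composites to induce the identity on completions'': Artin approximation produces solutions congruent to $\phi$ and $\phi^{-1}$ modulo a chosen power of the maximal ideal, so the composites on completions are automorphisms congruent to the identity, not equal to it. More importantly, concluding from this that the approximating maps $u\colon (B_{\mathfrak q})^h \to (A_{\mathfrak p})^h$ and $v$ are \emph{mutually inverse isomorphisms of henselizations} requires a further argument (that a local endomorphism of a noetherian henselian local ring whose completion is an automorphism is itself an automorphism --- true, but not something you establish, and then you invoke ``iso on henselized local rings $\Rightarrow$ \'etale'' whose hypothesis you haven't verified). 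This detour is avoidable: it is enough to Artin-approximate $\phi^{-1}$ alone to order $\geq 2$, spread out the resulting map $(B_{\mathfrak q})^h\to (A_{\mathfrak p})^h$ to a morphism $\psi\colon U\to V$ of \'etale neighborhoods, and then check that $\psi$ is \'etale at the distinguished point by observing that $\wh\psi\colon \wh{B_{\mathfrak q}} \to \wh{A_{\mathfrak p}}$ agrees with the isomorphism $\phi^{-1}$ modulo $\m^2$, hence induces a surjection on $\m/\m^2$, hence (for complete noetherian local rings with the same residue field) is itself an isomorphism; \'etaleness at a point of a finite type morphism follows from an isomorphism of completions plus trivial residue extension, and the \'etale locus is open. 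After shrinking, $Z\eqdef U$ does the job. So: right idea and right ingredients, but as written the clean-up is both unnecessary and not quite justified.
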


We apply the preceding Theorem with $S=\spec k$, $X=X_0$ our curve, $Y=Y_0=V(xy)\subseteq \A^2_k$, $p$ the rational node of $X_0$, and $q$ the origin in $Y_0$.

Since we have an isomorphism $\wh{\O}_{X_0,p}\simeq k\ds{x,y}/(xy)=\wh{\O}_{Y_0,q}$ extending the identity $k\simeq k$ on the residue fields, we conclude that there exist a scheme $Z$ over $k$ with two \'{e}tale morphisms $f\colon Z\to X_0, g\colon Z\to Y_0$, and a rational point $z$ of $Z$ that gets mapped to $p$ and $q$ respectively, and fitting in a commutative diagram as above.

We will use these two \'{e}tale maps to link the deformations of $X_0$ with the ones of the ``standard'' nodal curve $Y_0$. Precisely, applying the functor $g^*$ to the standard miniversal deformation $\wh{Y}=\{Y_n,f_n\}_\nin$ of $Y_0$ (see Example~\ref{nodal.ex}) we get a miniversal deformation $\wh{Z}=\{Z_n,g_n\}_\nin$ of $Z$: here $Z_n$ can be defined inductively as a scheme over $k$ with an \'{e}tale morphism $Z_n\to Y_n$ that fits in the cartesian diagram
$$
\xymatrix{
Z_{n-1}\ar[r]^{g_{n-1}}\ar[d]& Z_n \ar[d]\\
Y_{n-1}\ar[r] & Y_n.
}
$$

Since $X_0$ has isolated singularities we can also consider a miniversal deformation $\wh{X}=\{X_n,h_n\}$ of $X_0$, say over $S \in \compl$, and apply the functor $f^*$. This way we get another miniversal deformation $\wh{Z}'=\{Z_n',g_n'\}_\nin$ of $Z$, defined the same way as the one induced by $\wh{Y}$.

Since two miniversal deformations of the same scheme over $k$ are isomorphic (Proposition~\ref{miniiso}), we have an isomorphism $(S,\wh{Z}')\to (R,\wh{Z})$, which consists of an isomorphism of $\Lambda$-algebras $\phi\colon R\to S$, together with isomorphisms $\alpha_n\colon Z_n'\to Z_n$ over $\phi_n$, and compatible with the immersions $g_n$ and $g_n'$.

Now we can consider the inverse $\psi\colon S\to R$ of $\phi$, and the pullback $X_n'$ of $X_n$ along $S_n\to R_n$. This, together with the induced arrows $h_n'\colon X_n'\to X_{n+1}'$, gives another formal deformation $\wh{X}'=\{X_n',h_n'\}_\nin$ of $X_0$ over $R$ that is easily seen to be miniversal too. Moreover the morphisms $Z_n'\to X_n$ and $Z_n'\to \spec S_n\to \spec R_n$ induce an \'{e}tale morphism $Z_n'\to X_n'$ over $\spec R_n$.

Putting everything together, for every $n$ we have a commutative diagram
$$
\xymatrix@C-15pt@-5pt{
& Z_n'\ar[rr]_{\alpha_n}^\sim\ar[ld]\ar[rdd] & & Z_n\ar[rd]\ar[ldd] & \\
X_n'\ar[rrd] & & & &Y_n\ar[lld]\\
&&\spec R_n&&
}
$$
where $Z_n'\to X_n'$ and $Z_n\to Y_n$ are \'{e}tale, and moreover the morphisms in this diagram are compatible with the closed immersions $h_n',g_n,g_n',f_n$ and $\spec R_n\to \spec R_{n+1}$.

This gives us (by Theorem~\ref{artinthm}) a sequence of compatible isomorphisms
$$
\lambda_n\colon \wh{\O}_{X_n',p}\to \wh{\O}_{Y_n,q}\simeq R_n\ds{x,y}/(xy-t)
$$
in the sense that for every $n$ the diagram
$$
\xymatrix{
\wh{\O}_{X_{n+1}',p}\ar[r]^{\lambda_{n+1}}\ar[d] & \wh{\O}_{Y_{n+1},q}\ar[d]\\
\wh{\O}_{X_n',p}\ar[r]^{\lambda_n} & \wh{\O}_{Y_n,q}
}
$$
commutes, where the vertical maps are the projections.

From this discussion we get the following result, which gives a description of the complete local ring of a global deformation of a curve around a rational node, which is a ``formal'' description of the deformation around the node.

\begin{prop}\label{nodaldef}
Suppose that $f\colon X\to S$ is a flat morphism of finite type, the fiber $X_0=f^{-1}(s_0)$ over a point $s_0 \in S$ is a curve over $k(s_0)$ with isolated singularities, and $p \in X_0$ is a rational node. Then there exists $u \in \wh{\O}_{S,s_0}$ and an isomorphism of $\wh{\O}_{S,s_0}$-algebras
$$
\wh{\O}_{X,p}\simeq \wh{\O}_{S,s_0}\ds{x,y}/(xy-u)\,.
$$
\end{prop}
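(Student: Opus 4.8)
The strategy is to reduce to the local/miniversal situation already analyzed, via the following chain of observations. The statement is about $\wh\O_{X,p}$ as a $\wh\O_{S,s_0}$-algebra, and both of these only depend on the formal deformation of $X_0$ over $\wh\O_{S,s_0}$ obtained by pulling back $X \to S$ along $\spec \wh\O_{S,s_0} \to S$. So first I would replace $S$ by $\spec\wh\O_{S,s_0}$ and hence work with a formal deformation $(\wh\O_{S,s_0}, \wh X)$ of $X_0$; nothing about $\wh\O_{X,p}$ changes. Since $X_0$ has isolated singularities, $T_{X_0}\catdef_{X_0}$ is finite-dimensional (it is supported at the singular points), so by Theorem~\ref{miniexist} there is a miniversal deformation $(R,\rho)$ of $X_0$. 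By versality (Proposition~\ref{liftingvers}, applied to the surjection $\wh\O_{S,s_0} \arr \wh\O_{S,s_0}$... more precisely, using the lifting property for the formal object $(\wh\O_{S,s_0},\wh X)$) there is a morphism of formal objects $(\wh\O_{S,s_0},\wh X) \to (R,\rho)$, inducing a $\Lambda$-algebra homomorphism $R \to \wh\O_{S,s_0}$ and an identification of $\wh X$ with the pullback of $\rho$. Consequently $\wh\O_{X,p}$ is obtained from the corresponding complete local ring of the miniversal total space by base change along $R \to \wh\O_{S,s_0}$, so it suffices to prove the statement for a miniversal deformation of $X_0$, and then base-change the resulting isomorphism.

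\textbf{The miniversal case via Artin approximation.} For the miniversal deformation $(R,\rho)$ of $X_0$: near the rational node $p$, the complete local ring $\wh\O_{X_0,p} \simeq k\ds{x,y}/(xy)$ agrees with that of $Y_0 = V(xy) \subseteq \A^2_k$ at the origin. By Artin's Theorem~\ref{artinthm} (with $S = \spec k$) there is a $k$-scheme $Z$ with \'etale maps $f\colon Z \to X_0$, $g\colon Z \to Y_0$ matching up $p$ and the origin $q$, with $k(p) = k(q) = k$. Now I would use Proposition~\ref{etale-pullback}: the pullback morphisms $f^*\colon \catdef_{X_0} \to \catdef_Z$ and $g^*\colon \catdef_{Y_0} \to \catdef_Z$ are formally smooth with bijective differential, and carry miniversal deformations to miniversal deformations (here the hypotheses of that proposition are met because $X_0$, $Y_0$ are affine nodal curves, hence local complete intersections smooth away from one point, and $Z$ is affine with the fiber conditions guaranteed by Theorem~\ref{artinthm}). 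So $f^*(R,\rho)$ and $g^*(\Lambda\ds t, \wh Y)$ (where $\wh Y$ is the standard miniversal deformation $V(xy-t)$ from Example~\ref{nodal.ex}) are two miniversal deformations of $Z$. By Proposition~\ref{miniiso}(ii) they are isomorphic; in particular $R \simeq \Lambda\ds t$ and, pulling the isomorphism back along the étale maps, I get compatible isomorphisms $\wh\O_{\text{(total space of }\rho\text{ over }R_n),p} \simeq R_n\ds{x,y}/(xy - t)$ for all $n$ (this uses Theorem~\ref{artinthm} again in the relative form over $\spec R_n$, exactly as in the discussion preceding Proposition~\ref{nodaldef}). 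Passing to the inverse limit over $n$ gives an isomorphism $\wh\O_{X^{\mathrm{mv}},p} \simeq R\ds{x,y}/(xy-t)$ of $R$-algebras, where $R \simeq \Lambda\ds t$.

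\textbf{Conclusion and the main obstacle.} Finally I base-change along the homomorphism $\psi\colon R \to \wh\O_{S,s_0}$ coming from versality: set $u \eqdef \psi(t) \in \wh\O_{S,s_0}$. Since $\wh X$ is the pullback of $\rho$, the complete local ring $\wh\O_{X,p}$ is the $\m$-adic completion of $\wh\O_{X^{\mathrm{mv}},p} \otimes_R \wh\O_{S,s_0}$, which is $\wh\O_{S,s_0}\ds{x,y}/(xy - u)$, as desired. I expect the main technical obstacle to be the careful bookkeeping of \emph{compatibility across orders $n$}: one must check that the isomorphisms $\lambda_n$ produced by Artin's theorem at each finite level can be chosen compatibly with the projections, so that they assemble into an isomorphism of the inverse-limit rings — and that the identification of $\wh X$ with a pullback of $\rho$ is compatible with these choices. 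The excerpt's discussion before Proposition~\ref{nodaldef} essentially does this, so the plan is mainly to organize that argument and then perform the base change. A secondary point requiring care is that passing to $\spec\wh\O_{S,s_0}$ at the start genuinely does not change $\wh\O_{X,p}$ — this is the standard fact that completion of the local ring is insensitive to such a base change, since $X \to S$ is of finite type and $p$ lies over the closed point.
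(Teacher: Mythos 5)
Your proposal is correct and follows essentially the same route as the paper: set $\Lambda = \wh\O_{S,s_0}$, use versality of the miniversal deformation of $X_0$ to obtain a $\Lambda$-algebra map $R \to \wh\O_{S,s_0}$ (and hence $u$), identify the complete local ring of the miniversal deformation at $p$ with $R\ds{x,y}/(xy-t)$ via Artin's theorem and the \'etale-comparison argument in the preceding discussion, and then base-change along $R \to \wh\O_{S,s_0}$ — the only stylistic difference being that the paper keeps the bookkeeping at each finite order $n$ and passes to the inverse limit at the very end rather than speaking of the ``total space'' of the miniversal formal deformation. You correctly flag as a hidden ingredient the fact (used implicitly in the paper) that $\wh\O_{X,p} = \invlim_n \wh\O_{X_n,p}$, which holds because a noetherian local ring complete with respect to its maximal ideal is also complete with respect to any smaller ideal, here $\m_{s_0}\wh\O_{X,p}$.
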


\begin{proof}
Since the statement is local, we can assume that $p$ is the unique singular point of $X_0$. We take $\Lambda=\wh{\O}_{S,s_0}$, and consider the formal deformation $\wh{X}=\{X_n,f_n\}_\nin$ over $\Lambda$ defined by $X_n=f^{-1}(\spec\Lambda_n)$, where we see $\spec\Lambda_n \to \spec\Lambda\to S$ as the $n$-th infinitesimal neighborhood of $s_0$; the morphisms $f_n$ are the induced closed immersions.

Since $(\Lambda,\wh{X})$ is a formal deformation, we have a morphism of formal objects $(\Lambda,\wh{X})\to (\Lambda\ds{t},\wh{X}')$ (where $\wh{X}'$ is the miniversal deformation of $X_0$ constructed above), that is, a homomorphism of $\Lambda$-algebras $\Lambda\ds{t}\to \Lambda$ and closed immersions $X_n\to X_n'$ such that for every $n$ the diagram
$$
\xymatrix{
X_n\ar[r]\ar[d]& X_n'\ar[d]\\
\spec \Lambda\ar[r] & \spec \Lambda\ds{t}
}
$$
is cartesian.

We call $u$ the image of $t$ along the homomorphism $\Lambda\ds{t}\to \Lambda$, which we can see as the quotient map $\Lambda\ds{t}\to \Lambda\ds{t}/(t-u)\simeq \Lambda$. Using the preceding discussion and pulling back to $\spec \Lambda$ we get a sequence of compatible isomorphisms of $\Lambda$-algebras
$$
\wh{\O}_{X_n,p}\simeq \Lambda\ds{t}_n\ds{x,y}/(xy-t,t-u)\,.
$$
Finally, passing to the projective limit, this sequence induces an isomorphism
$$
\wh{\O}_{X,p}\simeq \Lambda\ds{t}\ds{x,y}/(xy-t,t-u)\simeq \Lambda\ds{x,y}/(xy-u)
$$
which is what we want.
\end{proof}

Proposition~\ref{nodaldef} can be generalized to the case of local complete intersections with isolated singularities.

\begin{examp}\label{cusp}
If instead of $xy=0$ we take $y^2-x^3=0$ as standard singularity (and we assume that $\car k\neq 2,3$), then by Example~\ref{cubic.ex} we know that a miniversal deformation of $V(y^2-x^3)\subseteq \A^2_k$ is given by the pullbacks $X_n=X|_{R_n}$ of $X=V(y^2-x^3+t+ux)\subseteq \A^2_R$ to the quotients of $R=\Lambda\ds{t,u}$, together with the induced immersions $X_n\to X_{n+1}$.

Then in the same way one can prove: if $f\colon X\to S$ is a flat morphism of finite type such that $X_0=f^{-1}(s_0)$ (with $\car(k(s_0))\neq 2,3$) is a curve with isolated singularities over $k(s_0)$, and $p \in X_0$ is a rational point such that $\wh{\O}_{X_0,p}\simeq k(s_0)\ds{x,y}/(y^2-x^3)$, then there exist $v,w \in \wh{\O}_{S,s_0}$ and an isomorphism of $\wh{\O}_{S,s_0}$-algebras
$$
\wh{\O}_{X,p}\simeq\wh{\O}_{S,s_0}\ds{x,y}/(y^2-x^3+v+wx)\,.
$$
\end{examp}

Now we analyze briefly the more general case of affine curves with more than one node. In this case one can repeat the argument we described for the curve with one node, using instead of $Y_0=V(xy)\subseteq \A^2_k$ a disjoint union of copies of $Y_0$, one for each node of $X_0$. We can get a miniversal deformation of this disjoint union from the fact that its deformation category will be a product of some copies of the deformation category of $Y_0$, and once we have a miniversal deformation we can repeat the argument above.

Here is the final results (analogous to the ones for curves with one node) that one can get by the analysis above.

\begin{prop}\label{miniversnod}
Let $X_0$ be an affine nodal curve over $k$, with $r$ rational nodes $p_{1}$, \dots,~$p_{r}$ and smooth elsewhere. Then there is a miniversal deformation $\wh{X}=\{X_n,f_n\}_\nin$ of $X_0$ over $\Lambda\ds{t_1,\hdots,t_r}$, with compatible isomorphisms of $\Lambda$-algebras
$$
\wh{\O}_{X_n,p_i}\simeq \Lambda\ds{t_1,\hdots,t_r}_n \ds{x,y}/(xy-t_i)
$$
for every $n$ and $i$.
\end{prop}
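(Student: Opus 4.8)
The plan is to reduce the general statement, Proposition~\ref{miniversnod}, to the single-node case handled in Proposition~\ref{nodaldef} and its underlying construction, by exploiting the fact that the deformation category of a disjoint union decomposes as a product. First I would observe that since $X_0$ is nodal with nodes $p_1,\dots,p_r$, each complete local ring $\wh\O_{X_0,p_i}$ is isomorphic to $k\ds{x,y}/(xy)$, the complete local ring at the origin of $Y_0 \eqdef V(xy) \subseteq \A^2_k$. By the standard miniversal deformation of a hypersurface singularity computed in Example~\ref{nodal.ex}, the deformation category $\catdef_{Y_0}$ has miniversal deformation $\wh Y$ over $\Lambda\ds{t}$ given by $V(xy-t) \subseteq \A^2_{\Lambda\ds{t}}$.

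The key structural point is that for a disjoint union $W_0 = Y_0^{(1)} \sqcup \cdots \sqcup Y_0^{(r)}$ of $r$ copies of $Y_0$, the deformation category $\catdef_{W_0}$ is equivalent to the product $\prod_{i=1}^r \catdef_{Y_0^{(i)}}$: a flat deformation of $W_0$ over an artinian base is (by locality of the topological space and flatness) the same as an $r$-tuple of flat deformations of the components, and arrows decompose correspondingly. One checks this respects the RS condition. Consequently a miniversal deformation of $W_0$ is obtained by taking the product of miniversal deformations of the factors; explicitly, over $\Lambda\ds{t_1,\dots,t_r}$ the scheme $\bigsqcup_i V(x_i y_i - t_i)$ is miniversal, with tangent space the direct sum $\bigoplus_i T_{Y_0^{(i)}}\catdef$ and Kodaira--Spencer map the product of the individual (iso)morphisms; the criterion of Proposition~\ref{miniversalcrit} applies verbatim since $\Lambda\ds{t_1,\dots,t_r}$ is a power series ring and the Kodaira--Spencer map is an isomorphism.

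With this in hand, I would run the argument of the single-node case with $W_0$ in place of $Y_0$. By Theorem~\ref{artinthm} applied with $S = \spec k$ and the given isomorphisms $\wh\O_{X_0,p_i} \simeq \wh\O_{W_0,q_i}$ (where $q_i$ is the origin of the $i$-th copy), one obtains a scheme $Z$ over $k$ with \'etale morphisms $f\colon Z \to X_0$ and $g\colon Z \to W_0$ and rational points mapping compatibly. Then $g^*\wh W$ is a miniversal deformation $\wh Z$ of $Z$, and starting from a miniversal deformation $\wh X$ of $X_0$ (which exists since $X_0$ has isolated singularities, hence finite-dimensional tangent space by Theorem~\ref{kodaira}) one gets another miniversal deformation $f^*\wh X = \wh Z'$ of $Z$. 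Here Proposition~\ref{etale-pullback} is what guarantees these pullbacks along \'etale maps are again miniversal. Since miniversal deformations of $Z$ are isomorphic (Proposition~\ref{miniiso}), one transports $\wh X$ along the resulting isomorphism $\Lambda\ds{t_1,\dots,t_r} \simeq S$ of base rings to get a miniversal deformation $\wh X' = \{X_n,f_n\}$ of $X_0$ over $R \eqdef \Lambda\ds{t_1,\dots,t_r}$, equipped for each $n$ with an \'etale morphism $Z_n' \to X_n'$ over $\spec R_n$ fitting into a diagram with $Z_n' \simeq Z_n \to W_n$. Applying the iff of Theorem~\ref{artinthm} at each level $n$ and each point $p_i$ yields compatible isomorphisms $\wh\O_{X_n',p_i} \simeq \wh\O_{W_n,q_i} \simeq R_n\ds{x,y}/(xy - t_i)$; passing to the limit if desired gives the stated formula.

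The main obstacle is the bookkeeping around the product decomposition and the \'etale-neighborhood diagram: one must check carefully that the \'etale morphism $Z \to W_0$ separates the nodes (so that near $q_i$ only the $i$-th deformation parameter $t_i$ is seen), that the isomorphism of miniversal deformations of $Z$ can be chosen compatibly with the decomposition into neighborhoods of the $q_i$, and that all the identifications are compatible with the transition maps $R_{n+1} \to R_n$ and the closed immersions of the formal objects. None of this is deep, but it requires keeping track of several layers of canonical-up-to-canonical-isomorphism data; the honest statement is that, component by component, the argument of Proposition~\ref{nodaldef} localizes around each $p_i$ and the only new input is the product structure of $\catdef_{W_0}$, which isolates the correct single parameter $t_i$ at each node.
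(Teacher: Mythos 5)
Your proof takes essentially the same route as the paper's (terse) treatment: the paper itself only remarks that one should "repeat the argument described for the curve with one node, using instead of $Y_0=V(xy)\subseteq \A^2_k$ a disjoint union of copies of $Y_0$," obtaining the miniversal deformation of the disjoint union from the product decomposition of its deformation category. You have filled in exactly the details the paper leaves implicit — the product structure of $\catdef_{W_0}$, the explicit miniversal deformation $\bigsqcup_i V(x_iy_i-t_i)$ over $\Lambda\ds{t_1,\dots,t_r}$, the application of Artin's theorem and Proposition~\ref{etale-pullback}, and the transport along the isomorphism of miniversal deformations of the common étale neighborhood $Z$ — and you have correctly flagged the bookkeeping needed to ensure the étale neighborhood separates the nodes so the hypotheses of Proposition~\ref{etale-pullback} apply.
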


\begin{prop}\label{nodaldef1}
Let $f\colon X\to S$ be a flat morphism of finite type, and suppose that the fiber $X_0=f^{-1}(s_0)$ over a point $s_0 \in S$ is a nodal curve over $k(s_0)$ with $r$ nodes $p_1,\hdots,p_r \in X_0$. Then there exist $u_1,\hdots,u_r \in \Lambda$ and isomorphisms of $\O_{S,s_0}$-algebras
$$
\wh{\O}_{X,p_i}\simeq \wh{\O}_{S,s_0}\ds{xy}/(xy-u_i)\,.
$$
\end{prop}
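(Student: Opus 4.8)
The plan is to deduce Proposition~\ref{nodaldef1} directly from the one-node case, Proposition~\ref{nodaldef}, applied separately at each of the nodes $p_1,\dots,p_r$. The point is that both the assertion and the argument of Proposition~\ref{nodaldef} are entirely local around a single point — they involve only the complete local ring of $X$ at that point — so the different nodes do not interfere with one another, and there is nothing to glue together globally. What I would first verify is that the hypotheses of Proposition~\ref{nodaldef} are satisfied at each $p_i$: by the definition of a nodal curve, $X_0$ is a curve smooth away from the finitely many points $p_1,\dots,p_r$, so $X_0$ has isolated singularities, and each $p_i$ is by assumption a rational node.

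Concretely, I would fix an index $i$ and observe that the data $f\colon X\to S$, $s_0\in S$, $X_0=f^{-1}(s_0)$, together with the point $p=p_i$, meet exactly the hypotheses of Proposition~\ref{nodaldef}. Applying that proposition produces an element $u_i\in\wh{\O}_{S,s_0}$ and an isomorphism of $\wh{\O}_{S,s_0}$-algebras $\wh{\O}_{X,p_i}\simeq\wh{\O}_{S,s_0}\ds{x,y}/(xy-u_i)$. Letting $i$ range over $1,\dots,r$ yields the desired elements $u_1,\dots,u_r$ and the desired isomorphisms, which completes the proof. In this formulation there is essentially no obstacle: the result is recorded separately only because this is the shape in which it gets used when passing to global nodal curves.

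Alternatively, and closer to the discussion preceding the statement, one can run the proof of Proposition~\ref{nodaldef} only once, using as local model the disjoint union $Y_0\eqdef\coprod_{i=1}^r V(xy)\subseteq\coprod_{i=1}^r\A^2_k$. Its deformation category is the product of $r$ copies of $\catdef_{V(xy)}$, so its miniversal deformation is the ``product'' of $r$ copies of the standard one, defined over $\Lambda\ds{t_1,\dots,t_r}$; this is precisely Proposition~\ref{miniversnod}. One then picks, via Artin's Theorem~\ref{artinthm} applied at each $p_i$, an \'etale neighborhood of $\{p_1,\dots,p_r\}$ in $X_0$ dominating $Y_0$, and repeats verbatim the chain of identifications of miniversal deformations and the passage to complete local rings carried out in Proposition~\ref{nodaldef}, this time keeping the coordinate $t_i$ attached to the node $p_i$. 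The only point needing care in this route is the bookkeeping that keeps the $r$ nodes separated throughout — which is exactly why the first approach is preferable for a short proof.
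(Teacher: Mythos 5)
Your first argument is correct, and it is a genuine simplification of what the paper does. The paper does not supply an explicit proof of Proposition~\ref{nodaldef1}; it instead says (in the paragraph before Proposition~\ref{miniversnod}) to rerun the whole argument of the one-node case with the disjoint union $\coprod_{i=1}^r V(xy)$ as local model, obtaining Proposition~\ref{miniversnod} along the way, and then deduce Proposition~\ref{nodaldef1} ``by the analysis above.'' This is essentially your second approach. Your preferred route --- applying Proposition~\ref{nodaldef} verbatim at each $p_i$ --- works because the hypotheses of Proposition~\ref{nodaldef} only ask that $X_0$ be a curve with isolated singularities and that $p$ be a rational node; they do not ask that $p$ be the \emph{only} singular point. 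A nodal curve meets the first requirement and each $p_i$ meets the second, so the conclusion at each $p_i$ is immediate, and since the conclusion is a statement about the single complete local ring $\widehat{\mathcal{O}}_{X,p_i}$ there is no gluing to do. What you lose by taking the shortcut is Proposition~\ref{miniversnod} itself (the explicit miniversal deformation of the affine multi-node curve over $\Lambda\ds{t_1,\dots,t_r}$), which the paper needs later in the proof of Proposition~\ref{projalgebr}; your second approach recovers it, and is therefore what the authors had in mind. For the sole purpose of proving Proposition~\ref{nodaldef1}, though, your first argument is cleaner.
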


\subsection{Deformations of projective nodal curves}

Now we turn to the case of projective nodal curves. Let $X_0$ be a projective nodal curve over $k$, and call $p_1,\hdots,p_r \in X_0$ its rational nodes. In this case one shows, using the local-to-global spectral sequence for Ext groups, that $\Ext^2_{\O_{X_0}}(\diff_{X_0},\O_{X_0})=0$, so (by Theorem~\ref{schemesobstr}) $X_0$ is unobstructed.

Now take an open affine subscheme $U_0\subseteq X_0$, containing all the nodes $p_1,\hdots,p_r$. Since the open immersion $i\colon U_0\to X_0$ is \'{e}tale, we have an induced restriction functor $i^*\colon \catdef_{X_0}\to \catdef_{U_0}$, which in this case is a ``true'' restriction (in the sense that $i^*(X)$ is the open subscheme of $X$ with underlying topological space $U_0$). Then one shows that the differential
$d_{X_0}i^*\colon T_{X_0}\catdef\to T_{U_0}\catdef$ of this restriction functor is surjective.

\begin{ex}
Prove that $\Ext^2_{\O_{X_0}}(\diff_{X_0},\O_{X_0})=0$, and that the differential
   \[
   d_{X_0}i^*\colon T_{X_0}\catdef\arr T_{U_0}\catdef
   \]
of the restriction functor is surjective.
\end{ex}

Finally from these two facts, using a reasoning similar to that in the proof of Proposition~\ref{immers}, one shows that every formal deformation of $U_0$ is isomorphic to the restriction of a formal deformation of $X_0$.

Using this and the results on algebraization of Section~\ref{capform}, we get the following Proposition.

\begin{prop}\label{projalgebr}
Let $X_0$ be a projective nodal curve over $k$, with rational nodes $p_1,\hdots,p_r \in X_0$, and $u_1,\hdots,u_r \in \Lambda$ be arbitrary elements. Then there exists a flat and projective scheme $X$ over $\Lambda$, having closed fiber isomorphic to $X_0$, and such that
$$
\wh{\O}_{X,p_i}\simeq \Lambda\ds{x,y}/(xy-u_i)\,.
$$
for every node $p_i \in X_0$.
\end{prop}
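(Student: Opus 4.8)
The plan is to build the scheme $X$ over $\Lambda$ by first producing a formal deformation of $X_0$ over $\Lambda$ that looks, at each node, like the ``standard'' node $xy - u_i$, and then to algebraize it using the machinery of Section~\ref{algebraization}. Concretely: choose an affine open $U_0 \subseteq X_0$ containing all the nodes $p_1, \dots, p_r$ and no other singular point; by Proposition~\ref{miniversnod} there is a miniversal deformation $\wh{U}$ of $U_0$ over $\Lambda\ds{t_1,\dots,t_r}$ with $\wh{\O}_{U_n,p_i}\simeq \Lambda\ds{t_1,\dots,t_r}_n\ds{x,y}/(xy-t_i)$. Specializing $t_i \mapsto u_i$, i.e.\ pulling back along the homomorphism $\Lambda\ds{t_1,\dots,t_r}\to \Lambda$ sending $t_i$ to $u_i$, we obtain a formal deformation $\wh{U}^{(u)}$ of $U_0$ over $\Lambda$ whose completed local ring at $p_i$ is $\Lambda\ds{x,y}/(xy-u_i)$ at each order $n$, compatibly.

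The next step is to lift this formal deformation of $U_0$ to a formal deformation of the projective curve $X_0$. This is exactly what the exercise preceding Proposition~\ref{projalgebr} provides: since $\Ext^2_{\O_{X_0}}(\diff_{X_0},\O_{X_0})=0$ (so $X_0$ is unobstructed by Theorem~\ref{schemesobstr}) and the differential $d_{X_0}i^*\colon T_{X_0}\catdef \to T_{U_0}\catdef$ of the restriction functor $i^*$ is surjective, the argument modeled on the proof of Proposition~\ref{immers} shows that every formal deformation of $U_0$ is isomorphic to the restriction of a formal deformation of $X_0$. Applying this to $\wh{U}^{(u)}$, I get a formal deformation $\wh{X} = \{X_n, f_n\}_\nin$ of $X_0$ over $\Lambda$ whose restriction to $|U_0|$ is isomorphic to $\wh{U}^{(u)}$; in particular $\wh{\O}_{X_n, p_i} \simeq \wh{\O}_{U_n, p_i} \simeq \Lambda\ds{t_1,\dots,t_r}_n\ds{x,y}/(xy-t_i, t_i - u_i) \simeq \Lambda\ds{x,y}/(xy-u_i)$ compatibly in $n$, since the completed local ring only depends on an affine (or \'etale) neighborhood.

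Now I invoke algebraization. The scheme $X_0$ is a projective curve, so $\H^2(X_0, \O_{X_0}) = 0$ for dimension reasons; hence Proposition~\ref{algebr} applies to the formal deformation $\wh{X}$, and moreover (as remarked right after that Proposition) the algebraization $X$ it produces is flat and projective over $\Lambda$, with closed fiber $X_0$, inducing $\wh{X}$ by pullback. It remains only to identify the completed local rings of $X$ at the nodes. Since $X$ induces $\wh{X}$, we have $\wh{\O}_{X,p_i} \simeq \invlim_n \O_{X_n, p_i} \simeq \invlim_n \Lambda\ds{x,y}/(xy-u_i, \m_\Lambda^{n+1})$; passing to the limit (using that $\Lambda$ is $\m_\Lambda$-adically complete and that $\Lambda\ds{x,y}/(xy-u_i)$ is a finite-type $\Lambda\ds{x,y}$-algebra, hence $\m$-adically complete) gives $\wh{\O}_{X,p_i} \simeq \Lambda\ds{x,y}/(xy-u_i)$, as required.

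The main obstacle I anticipate is the bookkeeping in the second step: making sure that the lifting of $\wh{U}^{(u)}$ to a formal deformation of $X_0$ can be done compatibly at all orders (so that one genuinely gets a formal object, not just a compatible sequence of truncations), and that the isomorphism $\wh{X}|_{|U_0|} \simeq \wh{U}^{(u)}$ really does identify the completed local rings at the $p_i$ with $\Lambda\ds{x,y}/(xy-u_i)$ rather than merely with some abstract lifting. Both points are handled by the techniques already in place --- Remark~\ref{morphism.approx} for assembling compatible truncated morphisms into a morphism of formal objects, and the \'etale-invariance of completed local rings (Theorem~\ref{artinthm}, or simply the fact that restriction to an open subscheme does not change stalks) for the identification --- so the proof is a matter of chaining these together carefully rather than proving anything genuinely new.
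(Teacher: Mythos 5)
Your proposal is correct and takes essentially the same route as the paper's proof: specialize the miniversal deformation of an affine neighborhood of the nodes via $t_i\mapsto u_i$, extend the resulting formal deformation of $U_0$ to one of $X_0$ using the surjectivity of $d_{X_0}i^*$ and the vanishing of the obstruction space, algebraize using Proposition~\ref{algebr} (since $\H^2(X_0,\O_{X_0})=0$ for dimension reasons), and identify the completed local rings by passing to the limit as in Proposition~\ref{nodaldef1}. The only blemishes are notational: the intermediate formula $\Lambda\ds{t_1,\dots,t_r}_n\ds{x,y}/(xy-t_i,\,t_i-u_i)$ should read $\Lambda_n\ds{x,y}/(xy-u_i)$ (one must kill all the $t_j$'s, not just $t_i$, and then truncate), and the limiting identity $\wh\O_{X,p_i}\simeq\invlim_n\O_{X_n,p_i}$ implicitly replaces $\O_{X_n,p_i}$ by its $\m_{p_i}$-adic completion and uses that a noetherian local ring complete for its maximal ideal is also complete for the coarser $\m_\Lambda$-adic topology; neither affects the correctness of the argument.
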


\begin{proof}
Let $\Lambda\ds{t_1,\hdots,t_r}$ be the base ring of the miniversal deformation of $U_0$ of Proposition~\ref{miniversnod}, and consider the homomorphism of $\Lambda$-algebras $\Lambda\ds{t_1,\hdots,t_r}\to \Lambda$ defined by $t_i\mapsto u_i$. This induces by pullback a formal deformation $\wh{U}=\{U_n,f_n\}_\nin$ of $U_0$ over $\Lambda$.

By the remarks above, we can find a formal deformation $\wh{X}=\{X_n,g_n\}_\nin$ such that the restriction $\wh{i^*}(\wh{X})$ is isomorphic to $\wh{U}$. Since $X_0$ is projective and $\H^2(X_0,\O_{X_0})=0$, by Theorem~\ref{algebr} the formal deformation $\wh{X}$ is algebraizable, that is, we can find a flat and projective scheme $X\to \spec \Lambda$ inducing $\wh{X}$.

In particular $X$ has closed fiber isomorphic to $X_0$, and since it restricts to a formal deformation isomorphic to $\wh{U}$ constructed above, by Proposition~\ref{nodaldef1} we have
$$
\wh{\O}_{X,p_i}\simeq \Lambda\ds{x,y}/(xy-u_i)\,.
$$
for every node $p_i$.
\end{proof}

In particular we deduce the following corollary, which shows that if $\Lambda$ is one-dimensional, we can always deform $X_0$ to a smooth curve.

\begin{cor}
Let $X_0$ be a projective nodal curve as in the preceding Proposition, and suppose that $\dim \Lambda = 1$ (for example $\Lambda=k\ds{t}$). Then there exists a flat and projective morphism $X\xrightarrow{\pi} \spec \Lambda$ such that the closed fiber is isomorphic to $X_0$, and $X \setminus X_0 \to \spec \Lambda\setminus \{\m_\Lambda\}$ is smooth.
\end{cor}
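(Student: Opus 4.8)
The plan is to deduce this corollary directly from Proposition~\ref{projalgebr} by making a judicious choice of the elements $u_1,\dots,u_r \in \Lambda$. First I would invoke Proposition~\ref{projalgebr} with $u_1 = \cdots = u_r = t$, where $t$ is a generator of the maximal ideal $\m_\Lambda$ (possible since $\dim \Lambda = 1$ means $\m_\Lambda$ is generated by a single element modulo nilpotents, and one may reduce to the reduced case; alternatively just take any $t \in \m_\Lambda$ that is not nilpotent). This produces a flat and projective morphism $\pi\colon X \to \spec \Lambda$ with closed fiber isomorphic to $X_0$ and $\wh{\O}_{X,p_i} \simeq \Lambda\ds{x,y}/(xy - t)$ at each node $p_i$.

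The key step is then to verify that $X \setminus X_0 \to \spec\Lambda \setminus \{\m_\Lambda\}$ is smooth. Since $X \to \spec\Lambda$ is already smooth away from the finitely many points $p_1,\dots,p_r$ (because $X_0$ is smooth away from the nodes, and smoothness is an open condition on the source whose complement, being proper over $\spec\Lambda$, can only meet the closed fiber — this is exactly the argument used in the proof of Proposition~\ref{algebr}), it suffices to check smoothness of $\pi$ at each $p_i$ after removing the closed fiber. For this I would work with the complete local ring $\wh{\O}_{X,p_i} \simeq \Lambda\ds{x,y}/(xy-t)$ and show that the scheme $\spec\bigl(\Lambda\ds{x,y}/(xy-t)\bigr)$ is smooth over $\spec\Lambda$ outside the locus $t = 0$. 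Over the open set where $t$ is invertible, the equation $xy = t$ forces $x$ to be invertible (hence $y = t/x$ is determined), so the fiber is, roughly, a copy of $\mathbb{G}_m$; more precisely the Jacobian criterion shows that the partial derivatives of $xy - t$ with respect to $x$ and $y$ are $y$ and $x$, which cannot both lie in a prime ideal not containing $xy = t$ when $t$ is a unit, so the relative differentials $\Omega$ are locally free of the expected rank there. One must be slightly careful to transfer smoothness of $\pi$ from the completed local ring back to $X$ itself: smoothness at a point can be checked on the completion (or using that $\pi$ is locally of finite presentation and flat, plus geometric regularity of the fibers), so this is routine.

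I expect the only real subtlety to be the passage between $X$ and its completion at $p_i$, and the precise handling of the case where $\Lambda$ is not a domain (nilpotents in $\Lambda$), but neither of these is a genuine obstacle: one may either assume $\Lambda$ is a complete discrete valuation ring (the typical application, as in $\Lambda = k\ds{t}$) where $t$ is a uniformizer, or argue with the reduced structure. Everything else follows formally from Proposition~\ref{projalgebr} and the openness-of-smoothness argument already deployed twice in Section~\ref{capform}.

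\begin{proof}
Choose $t \in \m_\Lambda$ which is not nilpotent; such an element exists since $\dim\Lambda = 1$. Applying Proposition~\ref{projalgebr} with $u_1 = \cdots = u_r = t$, we obtain a flat and projective morphism $\pi\colon X \to \spec\Lambda$ with closed fiber isomorphic to $X_0$, and isomorphisms of $\Lambda$-algebras $\wh{\O}_{X,p_i} \simeq \Lambda\ds{x,y}/(xy - t)$ at each node $p_i$ of $X_0$.

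We claim $X \setminus X_0 \to \spec\Lambda \setminus \{\m_\Lambda\}$ is smooth. The non-smooth locus of $\pi$ is closed in $X$; since $\pi$ is proper, its image in $\spec\Lambda$ is closed, so if it is nonempty it contains $\m_\Lambda$. Hence it is enough to check that $\pi$ is smooth at every point of $X$ not lying over $\m_\Lambda$. Away from the points $p_1,\dots,p_r$ the curve $X_0$ is smooth over $k$, and by the same argument $\pi$ is smooth at all points of $X$ outside $p_1,\dots,p_r$. It remains to treat a point $p_i$; here we may check smoothness of $\pi$ at $p_i$, restricted to the open subscheme $X \setminus X_0$, on the completed local ring. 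Over the locus where $t$ is invertible, the equation $xy = t$ shows that $x$ is a unit and $y = t x^{-1}$; the Jacobian of $xy - t$ with respect to $x, y$ is $(y, x)$, which generates the unit ideal wherever $xy = t$ is invertible. Thus $\Omega_{X/\Lambda}$ is free of rank one there, and $\pi$ is smooth at $p_i$ away from the fibre over $\m_\Lambda$. This proves the claim.
\end{proof}
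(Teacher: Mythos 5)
Your proof follows the same overall strategy as the paper's (apply Proposition~\ref{projalgebr} with a well-chosen element of $\m_\Lambda$ and analyze the non-smooth locus via the formal model $\Lambda\ds{x,y}/(xy-u)$), but there is a genuine gap in the choice of that element. You pick $t\in\m_\Lambda$ that is merely not nilpotent, and in the plan you also claim $\dim\Lambda=1$ forces $\m_\Lambda$ to be principal modulo nilpotents; both statements are wrong. What the argument actually requires is that $t$ be a \emph{system of parameters} for $\Lambda$, i.e.\ that $\sqrt{(t)}=\m_\Lambda$, equivalently that $t$ lie in no minimal prime. For a non-nilpotent $t$ that lies in some minimal prime $P$, the locus $V(t)\subseteq\spec\Lambda$ strictly contains $\{\m_\Lambda\}$; then $\wh{\O}_{X,p_i}\simeq\Lambda\ds{x,y}/(xy-t)$ has $V(x,y)\simeq\spec(\Lambda/(t))$ of positive dimension, so the family is singular over $P\neq\m_\Lambda$ and the corollary fails. (Example: $\Lambda=k\ds{s,w}/(sw)$, $t=s$; note this $\Lambda$ is reduced of dimension one, yet $\m_\Lambda$ is not principal, which also refutes your parenthetical.) The existence of a parameter comes from prime avoidance, since $\m_\Lambda$ is not a minimal prime. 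The paper correctly takes $u$ to be a system of parameters; it is precisely this choice that makes $\sqrt{(x,y)}=\m_R$ in $R=\wh{\O}_{X,p_i}$, so that the Jacobian ideal $(y,x)$ fails to be the unit ideal only at the closed point.

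Two further points, less serious but worth attention. First, your intermediate sentence ``Hence it is enough to check that $\pi$ is smooth at every point of $X$ not lying over $\m_\Lambda$'' is just a restatement of the claim, not a consequence of the closedness/properness observation; what properness actually buys you is that every irreducible component of the non-smooth locus meets $X_0$, and hence (by the fiberwise smoothness criterion for flat maps) contains one of the $p_i$. You then still have to prove each such component is a single point. Second, ``smoothness at $p_i$ \dots can be checked on the completion'' is not quite what is needed: you need smoothness of $\pi$ at points $q\neq p_i$ that specialize to $p_i$, and the completed ring sees them only indirectly. The paper's route is to show $\wh{\diff}_{R/\Lambda}$ is locally free of rank one on $\spec R\setminus\{\m_R\}$, then use that $\spec R\setminus\{\m_R\}\to\spec\O_{X,p_i}\setminus\{\m_{p_i}\}$ is flat and surjective (faithfully flat descent of local freeness), and finally observe that the generic point of any non-smooth component through $p_i$ is a generization of $p_i$, hence lies in that punctured spectrum, forcing the component to be $\{p_i\}$. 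Your sketch correctly identifies where the subtlety lies but does not carry out the descent, and with the wrong choice of $t$ the identification $\spec R\setminus V(x,y)=\spec R\setminus\{\m_R\}$ on which it rests is false.
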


\begin{proof}
Let $u \in \m_\Lambda$ be a system of parameters for $\Lambda$, and take the deformation $X\to \spec \Lambda$ of $X_0$ given by Proposition~\ref{projalgebr}, with $u_i=u$ for every $i$.

Let $U$ be the open subset of $X$ on which the coherent sheaf $\diff_{X/\Lambda}$ is locally free of rank $1$ (or equivalently where $X$ is smooth over $\Lambda$). We want to show that $U=X\setminus \{p_1,\hdots,p_r\}$.

Consider an irreducible component $V$ of $X\setminus U$, with generic point $p \in V\subseteq X$. Since $V$ is closed in $X$ and $X\to \spec \Lambda$ is proper, we must have $V\cap X_0\neq \emptyset$ (because the image of $X\to \spec \Lambda$ contains the maximal ideal $\m_\Lambda$). Since $X_0$ is smooth outside $p_1,\hdots,p_r$, there exists an $i$ such that $p_i \in V\cap X_0$; we will show that $V=\{p_i\}$, and this will conclude the proof.

We consider the complete local ring $R=\wh{\O}_{X,p_i}\simeq \Lambda\ds{x,y}/(xy-u)$, and the module of continuous K\"{a}hler differentials $\wh{\diff}_{R/\Lambda}$ (see Appendix~\ref{appb}), which is an $R$-module with two generators $dx, dy$, and the relation $ydx+xdy=0$. This can also be seen as the cokernel of the homomorphism $R\to R\oplus R$ given by multiplication by the vector $(y,x)$.

If $p \in \spec R$ does not contain the ideal $(x,y)$, then one of $x,y$ is invertible in $p$, and then $\wh{\diff}_{R/\Lambda}$ is locally free of rank 1 over $R_p$. Since the radical of $(x,y)$ is $\m_R$, we conclude that $\wh{\diff}_{R/\Lambda}|_{\spec R\setminus \{\m_R\}}$ is locally free of rank 1.

Now the natural morphism $\O_{X,p_i}\to \wh{\O}_{X,p_i}=R$ is faithfully flat, and then $\spec R\to \spec\O_{X,p_i}$ is flat and surjective. Moreover the inverse image of the closed point $\m_{p_i}$ is $\{\m_R\}$, and so we can restrict the morphism above to
$$
\spec R\setminus \{\m_R\} \arr \spec\O_{X,p_i}\setminus \{\m_{p_i}\}
$$
that is flat and surjective too. From this, and the fact that the pullback of $\diff_{X/\Lambda}$ to $\spec R\setminus \{\m_R\}$ is locally free of rank 1 (since it is isomorphic to $\wh{\diff}_{R/\Lambda}|_{\spec R\setminus \{\m_R\}}$, as one easily checks), we get that its pullback to $\spec\O_{X,p_i}\setminus \{\m_{p_i}\}$ along the morphism
$$
\spec\O_{X,p_i}\setminus \{\m_{p_i}\}\arr X
$$
is also locally free of rank 1.

Finally, the generic point $p$ of $V$ is in the image of the morphism above (since this image it is the set of the generic points of irreducible components of $X$ containing $p_i$), but the stalk $\diff_{X/\Lambda,p}$ is not free of rank 1 by hypothesis. From this we get that the maximal ideal $\m_{p_i}$ goes to $p$, or in other words $p=p_i$, and $V=\{p_i\}$ (since $p_i$ is closed), as we claimed.
\end{proof}

\appendix

\section{Linear functors}\label{appa}

In this Appendix we give some results on functors from categories of modules (or vector spaces) that preserve finite products. Throughout this Appendix $A$ will be a noetherian ring (commutative and with identity, as usual).

Let $F\colon \fmod\to \set$ be a functor. If $M, N\in\fmod$ , the two projections $M\oplus N\to M$ and $M\oplus N\to N$ induce two functions $F(M\oplus N)\to F(M)$ and $F(M\oplus N) \to F(N)$, and in turn these induce $\phi_{M,N}\colon F(M\oplus N)\to F(M)\times F(N)$.

\begin{defin}\label{def:preserve-products}
A functor $F\colon \fmod\to \set$ is said to \gr{preserve finite products} if the function $\phi_{M,N}$ is bijective for every $M,N \in \fmod$, and $F(0)\neq \emptyset$.
\end{defin}


\begin{defin}
A functor $F\colon \fmod\to \mod$ is said to be $A$\gr{-linear} if for every $M,N \in\fmod$ the function
$$
\Hom_A(M,N)\arr \Hom_A(F(M),F(N))
$$
is a homomorphism of $A$-modules.
\end{defin}

It is easy to see that if $F\colon \fmod\to \mod$ is $A$-linear, then the induced functor $\fmod\to \set$ preserves finite products, and the bijection $\phi_{M,N}\colon F(M\oplus N)\to F(M)\times F(N)$ is actually an isomorphism of $A$-modules (with the product structure on the target).

The following Proposition shows that if a functor $F\colon \fmod\to \set$ preserves finite products, then each $F(M)$ has a canonical structure of $A$-module.

\begin{prop}\label{app.lifting}
Let $F\colon \fmod\to \set$ be a functor that preserves finite products. Then there exists a unique $A$-linear lifting $\widetilde{F}\colon \fmod\to \mod$ of $F$, that is, an $A$-linear functor such that its composite with the forgetful functor $\mod\to \set$ is $F$.
\end{prop}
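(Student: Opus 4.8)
The plan is to construct the lifting $\widetilde F$ explicitly, by equipping each set $F(M)$ with the structure of an $A$-module, using the product-preserving property to transport the algebraic operations of $M$ (addition, scalar multiplications) across $F$. The whole argument is the standard ``functorial group object / Eckmann--Hilton'' argument, adapted to the $A$-linear setting, and the only genuine work is bookkeeping; there is no serious obstacle, but one must be careful about which axioms require the product-preservation hypothesis.

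First I would recall that for any $M \in \fmod$ the zero module $0$ admits a unique map $0 \to M$ and a unique map $M \to 0$, and that $F$ applied to the composite $0 \to 0$ shows $F(0)$ is a singleton (it is nonempty by hypothesis, and $\phi_{0,0}\colon F(0) \to F(0)\times F(0)$ is injective, forcing $|F(0)| = 1$). Hence there is a distinguished element $0_{F(M)} \in F(M)$, the image of the unique element of $F(0)$ under $F(0 \to M)$; this will be the zero of the module structure. Next, addition: the map $+\colon M \oplus M \to M$ induces $F(M\oplus M) \to F(M)$, and composing with $\phi_{M,M}^{-1}\colon F(M)\times F(M) \to F(M\oplus M)$ (which exists precisely because $F$ preserves products) gives a binary operation $F(M)\times F(M) \to F(M)$. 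For each $a \in A$, scalar multiplication $\mu_a\colon M \to M$ induces $F(\mu_a)\colon F(M) \to F(M)$, which will be the action of $a$. The inverse is $F(\mu_{-1})$.

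Then I would verify the module axioms. Commutativity and associativity of addition, and the fact that $0_{F(M)}$ is a neutral element, follow by applying $F$ to the corresponding commutative diagrams of $A$-modules (e.g. associativity from the two ways $M^{\oplus 3} \to M$ factor, using that $\phi$ is a bijection, hence that $F(M^{\oplus 3}) \cong F(M)^{3}$ canonically; here one uses that $\phi_{M\oplus M, M}$ and $\phi_{M, M\oplus M}$ are bijections, again from product preservation). That $F(\mu_{-1})$ provides additive inverses follows from $\mathrm{id}_M + \mu_{-1} = (M \to 0 \to M)$. The axioms $a(x+y) = ax + ay$, $(a+b)x = ax+bx$, $(ab)x = a(bx)$, $1\cdot x = x$ all come from applying $F$ to the identities $\mu_a \circ + = + \circ (\mu_a \oplus \mu_a)$, $\mu_{a+b} = + \circ (\mu_a \oplus \mu_b) \circ \Delta$, $\mu_{ab} = \mu_a \circ \mu_b$, $\mu_1 = \mathrm{id}$ in $\fmod$, together with naturality of $\phi$. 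Functoriality of $\widetilde F$ (that $F(f)$ is $A$-linear for every $f\colon M \to N$) is similar: $F(f)$ commutes with $+$ and with each $\mu_a$ because $f$ does. This defines $\widetilde F\colon \fmod \to \mod$, and by construction its composite with the forgetful functor is $F$, and the structure maps $\phi_{M,N}$ become $A$-linear isomorphisms.

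Finally, uniqueness: if $\widetilde F'$ is another $A$-linear lifting, then on objects $\widetilde F'(M)$ has some $A$-module structure refining the set $F(M)$; but $A$-linearity of $\widetilde F'$ forces $\widetilde F'$ to send $+\colon M\oplus M \to M$, the zero maps, and each $\mu_a$ to the addition, zero, and scalar actions of $\widetilde F'(M)$ respectively, and these are exactly the maps $F(+)$, $F(0 \to M)$, $F(\mu_a)$ — which are what we used to define the module structure on $\widetilde F(M)$. Hence $\widetilde F'(M) = \widetilde F(M)$ as $A$-modules, and $\widetilde F' = \widetilde F$. The main (very mild) obstacle is simply keeping track of the canonical identifications $F(M^{\oplus n}) \cong F(M)^{n}$ induced by iterating $\phi$, and checking they are compatible so that the diagram-chases go through; I would state this compatibility once at the start and then use it freely.
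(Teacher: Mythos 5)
Your proposal follows essentially the same approach as the paper: show $F(0)$ is a singleton, define zero via $F(0\to M)$, addition via $F(+)\circ\phi_{M,M}^{-1}$, and scalar action via $F(\mu_a)$, then verify axioms and functoriality by applying $F$ to the corresponding module identities. One small slip: the argument that $|F(0)|=1$ is not that $\phi_{0,0}$ is injective (the diagonal map $S\to S\times S$ is always injective and this gives no constraint), but rather that the two projections $0\oplus 0\to 0$ coincide, so $\phi_{0,0}$ is the diagonal $F(0)\to F(0)\times F(0)$, and \emph{surjectivity} of the diagonal forces $|F(0)|\leq 1$; combined with $F(0)\neq\emptyset$ this gives a singleton. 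The rest, including the uniqueness argument (which the paper leaves to the reader but which you spell out correctly), matches the paper's construction.
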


With ``unique'' above we mean really unique, not only up to isomorphism.

\begin{proof}
We define a structure of $A$-module on every $F(M)$, and call $\widetilde{F}(M)$ the set $F(M)$ with this $A$-module structure. First, notice that $F(0)$ (where $0$ is the zero $A$-module) has exactly one element. In fact the two projections $0\oplus 0\to 0$ are the same function, and then the same is true for the two induced functions $F(0\oplus 0)\to F(0)$. But now we know that $F(0\oplus 0)\simeq F(0)\times F(0)$, and for the projections on the two factors to be the same function, we must have that $F(0)$ has at most one element. Finally, it has at least one, since $F(0)\neq \emptyset$ by hypothesis.

Now fix $M \in \fmod$, and notice that we have a unique homomorphism $0\to M$. We define the image of the induced $F(0)\to F(M)$ to be the zero vector of $F(M)$.

Next, we define scalar multiplication: if $a \in A$, we have a homomorphism $\mu_A\colon M\to M$ given by scalar multiplication by $a$. We define then scalar multiplication by $a$ in $F(M)$ to be the induced function $F(\mu_A)\colon F(M)\to F(M)$.

Finally, we define addition. Consider the ``sum'' homomorphism $+\colon M\oplus M\to M$ defined by $(m,n)\mapsto m+n$; this induces a function $F(M\oplus M)\to F(M)$, which, using the bijection $\phi_{M,M}\colon F(M\oplus M)\simeq F(M)\times F(M)$, gives a function $F(M)\times F(M)\to F(M)$. We define the sum in $F(M)$ by means of the last function.

Now one should verify that the data defined give a structure of $A$-module on $F(M)$, and that if $M\to N$ is a homomorphism, then the induced $F(M)\to F(N)$ is a homomorphism too. The method to verify the various axioms (and also the last fact about homomorphisms) is to rewrite everything using of commutative diagrams, and then use the appropriate functorialities to conclude. We leave these verifications (as well as those for uniqueness and linearity) to the reader.
\end{proof}

Now we turn to natural transformations.

\begin{defin}
Let $F$, $G\colon \fmod\to \mod$ be two functors. A natural transformation $\alpha\colon F\to G$ is said to be $A$-linear if for every $M\in\fmod$ the function $\alpha_M\colon F(M)\to G(M)$ is $A$-linear.
\end{defin}

The following Proposition is useful when one has to show that a bijection is an isomorphism of modules, and can be proved by using the defining property of a natural transformation, and the structure of module given in the previous Proposition.

\begin{prop}\label{app.nat.linear}
Let $F$, $G\colon \fmod\to \set$ be two functors that preserve finite products, $\widetilde{F},\widetilde{G}\colon \fmod\to \mod$ the two $A$-linear liftings coming from the preceding Proposition, and $\alpha\colon F\to G$ a natural transformation. Then for every $M \in \fmod$ the function $\alpha_M\colon \widetilde{F}(A)\rightarrow \widetilde{G}(A)$ is $A$-linear, and so $\alpha$ induces an $A$-linear natural transformation $\widetilde{\alpha}\colon \widetilde{F}\to \widetilde{G}$.
\end{prop}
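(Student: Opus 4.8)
Let $F$, $G\colon \fmod\to \set$ be two functors that preserve finite products, $\widetilde{F},\widetilde{G}\colon \fmod\to \mod$ their $A$-linear liftings, and $\alpha\colon F\to G$ a natural transformation. Then each $\alpha_M\colon \widetilde{F}(M)\to \widetilde{G}(M)$ is $A$-linear.

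\medskip

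The plan is to unwind the definition of the module structures on $\widetilde F(M)$ and $\widetilde G(M)$ given in the proof of Proposition~\ref{app.lifting}, and check that $\alpha_M$ respects each of the three pieces of data (the zero element, scalar multiplication, and addition), using in each case the naturality square of $\alpha$ for the relevant structure homomorphism of $M$. The point is purely formal: every structural map of the $A$-module $\widetilde F(M)$ is, by construction, the image under $F$ of a fixed $A$-module homomorphism built from $M$ (namely $0\to M$, $\mu_a\colon M\to M$, and $+\colon M\oplus M\to M$), and the analogous statement holds for $\widetilde G(M)$; since $\alpha$ is a natural transformation, it commutes with the application of $F$ and $G$ to any such homomorphism.

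First I would treat the zero element. The zero of $\widetilde F(M)$ is the image of the unique point of $F(0)$ under $F(0\to M)$, and similarly for $\widetilde G(M)$. The naturality square of $\alpha$ for the homomorphism $0\to M$ gives $\alpha_M\circ F(0\to M) = G(0\to M)\circ \alpha_0$, and since $F(0)$ and $G(0)$ are singletons, $\alpha_0$ carries the unique point of $F(0)$ to the unique point of $G(0)$; hence $\alpha_M$ sends the zero of $\widetilde F(M)$ to the zero of $\widetilde G(M)$. Next, for a scalar $a\in A$, scalar multiplication by $a$ on $\widetilde F(M)$ is $F(\mu_a)$ and on $\widetilde G(M)$ is $G(\mu_a)$, so the naturality square of $\alpha$ for $\mu_a\colon M\to M$ reads $\alpha_M\circ F(\mu_a) = G(\mu_a)\circ \alpha_M$, which is exactly compatibility of $\alpha_M$ with scalar multiplication. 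Finally, for addition one uses the homomorphism $+\colon M\oplus M\to M$: addition on $\widetilde F(M)$ is defined as the composite $F(M)\times F(M)\xrightarrow{\phi_{M,M}^{-1}} F(M\oplus M)\xrightarrow{F(+)}F(M)$, and likewise for $G$. Here one needs the compatibility of $\alpha$ with the product-projection bijections, i.e.\ that $\alpha_{M\oplus M}$ corresponds to $\alpha_M\times\alpha_M$ under $\phi_{M,M}$ on the $F$ side and on the $G$ side; this is immediate from the naturality of $\alpha$ applied to the two projections $M\oplus M\to M$. Combining this with the naturality square of $\alpha$ for $+\colon M\oplus M\to M$ gives $\alpha_M(x + y) = \alpha_M(x) + \alpha_M(y)$ for all $x,y\in F(M)$.

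Having verified that $\alpha_M$ is $A$-linear for every $M$, the collection $\{\alpha_M\}$ automatically constitutes an $A$-linear natural transformation $\widetilde\alpha\colon \widetilde F\to\widetilde G$, since the underlying natural transformation of sets is $\alpha$ itself and we have just shown each component lives in $\mod$. I do not anticipate a genuine obstacle here; the only mild subtlety is the bookkeeping with the product-preservation isomorphisms $\phi_{M,M}$, where one must be careful that the bijection used to define addition on the $F$ side and on the $G$ side are intertwined by $\alpha$ — but this is forced by naturality applied to the two coordinate projections, so it is a one-line diagram chase rather than a real difficulty.
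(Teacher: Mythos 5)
Your proof is correct and takes exactly the approach the paper implicitly indicates: the paper only sketches "use the defining property of a natural transformation and the module structure from the previous Proposition," and your argument spells out precisely the three naturality squares (for $0\to M$, $\mu_a\colon M\to M$, and $+\colon M\oplus M\to M$, together with the projections defining $\phi_{M,M}$) that make this work.
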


Finally, we see that if $A=k$ is a field, then $k$-linear functors are particularly simple to describe.

\begin{prop}\label{app.natural}
Let $G\colon \fvect\to \vect$ be a $k$-linear functor. Then for every $V\in\fvect$ there is a functorial isomorphism
$$
G(V)\simeq V\otimes_k G(k)\,.
$$
In particular $G$ is an exact functor (carries exact sequences into exact sequences), since the functor $-\otimes_k G(k)$ is.
\end{prop}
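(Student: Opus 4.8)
The statement to prove is Proposition~\ref{app.natural}: for a $k$-linear functor $G\colon \fvect\to \vect$, there is a functorial isomorphism $G(V)\simeq V\otimes_k G(k)$ for every $V\in\fvect$.

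\textbf{Approach.} The plan is to use the fact that every finite-dimensional $k$-vector space is a finite direct sum of copies of $k$, together with the observation (already recorded in the excerpt, just after Definition~\ref{def:preserve-products}) that a $k$-linear functor carries the biproduct $M\oplus N$ to $G(M)\oplus G(N)$, with the canonical projections and inclusions going to the canonical projections and inclusions. The key point is that the isomorphism $V\simeq k^n$ is not canonical, so I must build the comparison map $V\otimes_k G(k)\to G(V)$ in a basis-free way and only afterwards check it is an isomorphism by choosing a basis.

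\textbf{Key steps, in order.} First, I would define the natural map $\eta_V\colon V\otimes_k G(k)\to G(V)$ as follows: an element $v\in V$ is the same as a $k$-linear map $v\colon k\to V$, which $G$ sends to a $k$-linear map $G(v)\colon G(k)\to G(V)$; so the assignment $(v,w)\mapsto G(v)(w)$ is $k$-bilinear in $(v,w)\in V\times G(k)$ — bilinearity in $w$ is clear, and bilinearity in $v$ uses precisely that $G$ is $k$-linear on $\Hom$-sets, so that $v\mapsto G(v)$ is a $k$-linear map $V=\Hom_k(k,V)\to \Hom_k(G(k),G(V))$. This bilinear map factors through $V\otimes_k G(k)$, giving $\eta_V$. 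Second, I would check naturality: for a $k$-linear $f\colon V\to W$, the square relating $\eta_V,\eta_W$ and $f\otimes\id$, $G(f)$ commutes, which is immediate from $G(f)\circ G(v)=G(f\circ v)$ and the definition of $f\otimes\id$ on simple tensors. Third, I would verify $\eta_k$ is an isomorphism: it is the map $k\otimes_k G(k)\to G(k)$ sending $1\otimes w$ to $G(\id_k)(w)=w$, i.e.\ the canonical scalar-multiplication isomorphism. Fourth, I would choose an isomorphism $V\simeq k^n$ and use that both $V\otimes_k G(k)\to (V\otimes_k G(k))^{\oplus?}$—more precisely, that $V\otimes_k(-)$ commutes with the finite biproduct and that $G$ sends the biproduct $k^n$ to $G(k)^{\oplus n}$ compatibly with the structure maps; combining this with the case $n=1$ and naturality of $\eta$ with respect to the inclusions/projections $k\hookrightarrow k^n\twoheadrightarrow k$, one concludes $\eta_{k^n}$ is an isomorphism, hence $\eta_V$ is too. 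Finally, the exactness of $G$ follows because $V\mapsto V\otimes_k G(k)$ is exact over a field and $G$ is naturally isomorphic to it.

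\textbf{Main obstacle.} The only real subtlety is the non-canonical step of trivializing $V$: I need to make sure the identification $G(k^n)\cong G(k)^{\oplus n}$ is the one induced by the canonical biproduct structure maps and that this identification is compatible — via naturality of $\eta$ — with the corresponding decomposition of $V\otimes_k G(k)$. This is where one has to be slightly careful, but it is exactly the content of the remark following Definition~\ref{def:preserve-products} (that a $k$-linear functor preserves finite biproducts, projections and all), applied to $G$, combined with the elementary fact that $-\otimes_k G(k)$ also preserves finite biproducts. Everything else is bookkeeping with commutative diagrams, which I would leave to the reader in the style of the rest of the appendix.
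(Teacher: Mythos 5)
Your proposal is correct and follows essentially the same route as the paper: you define the same comparison map (your $\eta_V$ is the paper's $\tau_V$, sending $v\otimes w$ to $G(\phi_v)(w)$), check it is natural and an isomorphism at $V=k$, and then reduce the general case to $k^n$ using that a $k$-linear functor preserves finite biproducts. The only difference is stylistic: you spell out more carefully the compatibility of the biproduct decompositions on both sides, which the paper compresses into a single commutative diagram.
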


\begin{proof}
We define a natural transformation $\tau\colon -\otimes_k G(k)\to G$ as follows: for $V \in \fvect$ we define $\tau(V)\colon V\otimes_k F(k)\to F(V)$ by
$$
\tau(V)(v\otimes \alpha)=F(\phi_v)(\alpha)
$$
where $\phi_v\colon k\to V$ is the $k$-linear function sending $1$ into $v$. It is readily checked that $\tau$ is a natural transformation.

We check that each $\tau(V)$ is an isomorphism. If $V=k$, then $\tau_k\colon k\otimes_k F(k)\to F(k)$ is easily seen to be the canonical isomorphism defined by $a\otimes \alpha\mapsto a\cdot \alpha$.

If $V=k^n$, then we have a commutative diagram
$$
\xymatrix@C+10pt{
k^n\otimes_k F(k)\ar[r]^>>>>>>>>{\tau_{k^n}}\ar[d] & F(k^n)\ar[d] \\
(k\otimes_k F(k))^n\ar[r]^>>>>>>>{(\tau_k)^n} & F(k)^n
}
$$
where the left vertical arrow is the canonical isomorphism, the right hand vertical one is the isomorphism given by $k$-linearity of $F$ to $k^n\simeq k\oplus \cdots \oplus k$ (applied $n-1$ times), and the bottom one is an isomorphism because $\tau_k$ is. It follows that $\tau_{k^n}$ is an isomorphism too.

Finally, for a general $V \in \fvect$, we take an isomorphism $V\simeq k^n$ where $n$ is the dimension of $V$, and reduce this case to the preceding one.
\end{proof}

\section{Noetherian complete local rings}\label{appb}

In this Appendix we gather some definitions and facts about noetherian complete local algebras over $\Lambda$ (which is as usual a noetherian complete local ring) with residue field $k$, which we apply in Sections \ref{capform} and \ref{capcurv}. We denote the category of such rings by $\compl$, where as usual we consider only local homomorphisms (which are also precisely the continuous ones with respect to the natural topologies).

\subsection*{Vertical tangent and cotangent spaces}

First of all we set up some notation. Let $R \in \compl$, and $\m_R\subseteq R$ be the maximal ideal as usual; there is another important ideal of $R$, the extension $\m_\Lambda R\subseteq \m_R \subseteq R$ of the maximal ideal of $\Lambda$. In this situation, we denote by $R_n$ the quotient $R/\m_R^{n+1}$, which is an object of $\artl$, and by $\overline{R}$ the quotient $R/\m_\Lambda R$,  an object of $(\Comp_k)$. If $\phi\colon R\to S$ is a homomorphism in $\compl$, we will denote by $\phi_n\colon R_n\to S_n$ and $\overline{\phi}\colon \overline{R}\to \overline{S}$ the induced ones.

So $\overline{R}_n \in \art$ will be the quotient $\overline{R}/\m_{\overline{R}}^{n+1}\simeq R_n/\m_\Lambda R_n\simeq R_n\otimes_\Lambda k$, and in particular we have
$$
\overline{R}_1\simeq k\oplus \m_{\overline{R}_1} \simeq k\oplus \m_{\overline{R}}/\m_{\overline{R}}^2
$$
because $\m_{\overline{R}_1}^2=(0)$.

\begin{defin}
The \gr{vertical cotangent space} of $R$ is the finite-dimensional $k$-vector space
$$
T^\vee_\Lambda R=\m_R/(\m_\Lambda R+\m_R^2)\,.
$$
Its dual
$$
T_\Lambda R=(\m_R/(\m_\Lambda R+\m_R^2))^\vee
$$
is called the \gr{vertical tangent space} of $R$.
\end{defin}

The name ``vertical tangent space'' comes from the fact that $T_\Lambda R$ is the tangent space of the fiber of the morphism $\spec R\to \spec \Lambda$ over the maximal ideal (which is $\spec\overline{R}$), at the only closed point. In fact one easily checks that there is a canonical isomorphism
$$
T_\Lambda R=(\m_R/(\m_\Lambda R+\m_R^2))^\vee\simeq (\m_{\overline{R}}/\m_{\overline{R}}^2)^\vee
$$

As one expects, there is a related notion of differential of a homomorphism $\phi\colon R\to S$ in $\compl$. This comes from the fact that $\phi(\m_R)\subseteq \m_S$ and $\phi(\m_\Lambda R+\m_R^2)\subseteq \m_\Lambda S+\m_S^2$, so $\phi$ induces a $k$-linear map
$$
\phi_*\colon \m_R/(\m_\Lambda R+\m_R^2)\arr \m_S/(\m_\Lambda S+\m_S^2)
$$
between the cotangent spaces, which we call the \gr{codifferential} of $\phi$.

Dualizing, we get another $k$-linear map
$$
d\phi\colon (\m_S/(\m_\Lambda S+\m_S^2))^\vee \arr (\m_R/(\m_\Lambda R+\m_R^2))^\vee
$$
that we call \emph{the differential of $\phi$}. It is the differential of the morphism induced by $\phi$ between the closed fibers $\spec\overline{S}\to \spec\overline{R}$.

These constructions are clearly functorial, in the sense that differential and codifferential of a composite coincides with the composites of the differentials and codifferentials respectively.

We have the following important Proposition.

\begin{prop}\label{codiff.surj}
Let $R,S \in \compl$, and $\phi\colon R\to S$ be a homomorphism. If the codifferential $\phi_*\colon T^\vee_\Lambda R\to T^\vee_\Lambda S$ is surjective, than $\phi$ itself is surjective.
\end{prop}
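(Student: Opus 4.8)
The plan is to reduce the claim to a statement about the completed local rings and then invoke completeness together with Nakayama's lemma. First I would observe that surjectivity of the codifferential $\phi_{*}\colon \m_R/(\m_\Lambda R+\m_R^2)\to \m_S/(\m_\Lambda S+\m_S^2)$ is really a statement about $\overline{\phi}\colon \overline{R}\to\overline{S}$, since these two $k$-vector spaces are canonically $\m_{\overline{R}}/\m_{\overline{R}}^2$ and $\m_{\overline{S}}/\m_{\overline{S}}^2$. So I would first prove that $\overline{\phi}\colon \overline{R}\to\overline{S}$ is surjective, where both rings are in $(\Comp_k)$, and then deduce surjectivity of $\phi$ from it.

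For the statement about $\overline{\phi}$: I would pick elements $t_1,\dots,t_n\in\m_{\overline{R}}$ whose images form a $k$-basis of $\m_{\overline{R}}/\m_{\overline{R}}^2$, and note that by surjectivity of the codifferential their images $\overline{\phi}(t_1),\dots,\overline{\phi}(t_n)$ span $\m_{\overline{S}}/\m_{\overline{S}}^2$ over $k$. Since $\overline{S}$ is noetherian local complete with residue field $k$, the $\overline{\phi}(t_i)$ generate $\m_{\overline{S}}$ as an ideal (this is the completed/graded version of Nakayama: an element of $\m_{\overline{S}}$ can be written as a polynomial in the $\overline{\phi}(t_i)$ with coefficients adjusted order by order, and the series converges by completeness). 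Therefore the image of $\overline{\phi}$ contains $k$ (the residue field, via the structure map) and $\m_{\overline{S}}$, hence is all of $\overline{S}$. Concretely, given $s\in\overline{S}$, write $s = c_0 + m$ with $c_0\in k$ and $m\in\m_{\overline{S}}$; then $c_0$ is in the image trivially, and $m$ is a convergent power series in the $\overline{\phi}(t_i)$, so $m = \overline{\phi}(\text{that same power series in the }t_i)$, which lies in $\overline{R}$ by completeness of $\overline{R}$.

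Next I would lift this back to $\phi\colon R\to S$. The idea is the same: choose $t_1,\dots,t_n\in\m_R$ lifting a basis of $T^\vee_\Lambda R$, together with generators $\lambda_1,\dots,\lambda_m$ of $\m_\Lambda$; then the images under $\phi$ of the $t_i$ together with the $\lambda_j\cdot 1_S$ generate $\m_S$ as an ideal of $S$. Indeed, modulo $\m_\Lambda S$ the $\phi(t_i)$ generate $\m_{\overline{S}}$ by the previous paragraph, so $\m_S = (\phi(t_1),\dots,\phi(t_n)) + \m_\Lambda S = (\phi(t_1),\dots,\phi(t_n),\lambda_1,\dots,\lambda_m)$; again the completeness of $S$ lets one upgrade "generate modulo higher powers" to "generate", using $\bigcap_n\m_S^{n+1}=0$ and $S\simeq\invlim S/\m_S^{n+1}$. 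Now the image of $\phi$ is a subring of $S$ containing $k=\Lambda/\m_\Lambda$ (mapped in via the structure map), containing all $\lambda_j$, and containing the $\phi(t_i)$; since it is closed under the $\m_S$-adic limits that occur (each being an image of a convergent series in $R$, and $R$ is complete), it contains every element $s\in S$ by writing $s$ as its constant term in $k$ plus a convergent series in the chosen generators of $\m_S$. Hence $\phi$ is surjective.

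The main obstacle I expect is making the "convergent power series" arguments precise, i.e. being careful that the $\m_S$-adic topology on $S$ agrees with the topology induced by the filtration coming from powers of the generated ideals, and that the partial sums one constructs in $R$ actually converge in $R$ (which uses that $R$ is complete and that $\phi$ is continuous, equivalently local). This is exactly the content of the standard fact that a local homomorphism of complete noetherian local rings is surjective iff it is surjective on associated graded rings, or iff it is surjective modulo the square of the maximal ideal in the equicharacteristic setting; here the extra wrinkle is just keeping track of the coefficient ring $\Lambda$, which is handled by throwing in the generators $\lambda_j$ of $\m_\Lambda$. Once the topological bookkeeping is set up, the rest is the graded Nakayama argument carried out degree by degree.
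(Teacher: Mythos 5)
Your approach is genuinely different from the paper's and is essentially sound, but a few spots need care.

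The paper works entirely at the finite level: it shows $\overline{\phi}_n\colon\overline{R}_n\to\overline{S}_n$ is surjective by an induction that uses the surjection $\operatorname{Sym}^n(\m_{\overline{R}}/\m_{\overline{R}}^2)\twoheadrightarrow\m_{\overline{R}}^n/\m_{\overline{R}}^{n+1}$, then upgrades to $\phi_n\colon R_n\to S_n$ by Nakayama for the finite $\Lambda$-modules $R_n,S_n$, and finally passes to the limit via Mittag--Leffler for the system of kernels. Your route instead produces generators of $\m_S$ in $\phi(\m_R)$ and then argues that every element of $S$ is a convergent power series in those generators with ``constants'' coming from $\Lambda$, which pulls back to $R$ by completeness. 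Both strategies work and are of comparable length once fleshed out; yours trades the Mittag--Leffler bookkeeping for bookkeeping about convergence.

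Two points to tighten. First, the sentence ``the $\overline{\phi}(t_i)$ generate $\m_{\overline{S}}$ as an ideal (this is the completed/graded version of Nakayama: an element of $\m_{\overline{S}}$ can be written as a polynomial in the $\overline{\phi}(t_i)$ with coefficients adjusted order by order\dots)'' conflates two distinct statements. ``Generate $\m_{\overline{S}}$ as an ideal'' is ordinary Nakayama and only gives $m=\sum s_i\overline{\phi}(t_i)$ with $s_i\in\overline{S}$, which is not yet a power series in the generators with scalar coefficients. The stronger claim---that every element is a (convergent) $k$-coefficient power series in the $\overline{\phi}(t_i)$---requires the separate inductive step $k[\overline{\phi}(t_1),\dots,\overline{\phi}(t_n)]+\m_{\overline{S}}^{N+1}=\overline{S}$ (peel off the $k$-part of each coefficient and absorb the rest into higher degree), followed by passing to the limit. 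You do describe this, but it should not be presented as a consequence of Nakayama; it is the main inductive content of the proof.

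Second, the step ``the image of $\phi$ contains $k=\Lambda/\m_\Lambda$'' and ``writing $s$ as its constant term in $k$ plus a convergent series'' is imprecise when $\Lambda\ne k$, since $k$ is a quotient, not a subring, of $\Lambda$ or $S$ (think of $\Lambda=\mathbb Z_p$). What you actually have is that the image of $\Lambda$ in $S$ lies in $\phi(R)$ and that $\Lambda\to k$ is surjective, so you can lift the residue of $s$ to an element of $\Lambda$, not of $k$. Correspondingly, you should phrase the conclusion as: every element of $S$ is a convergent power series in $\phi(t_1),\dots,\phi(t_n)$ with coefficients in the image of $\Lambda$. This makes the intermediate detour through $\overline{\phi}$ unnecessary (it is where the ``$k$-constant'' language crept in); working directly with $\Lambda$-coefficients and the generators $\phi(t_i),\lambda_j$ of $\m_S$ is cleaner and avoids the confusion. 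You also need, when constructing the preimage in $R$, that the $N$-th correction term lies in $\m_R^N$ (so the partial sums are Cauchy in $R$); this holds because every degree-$N$ monomial in the $t_i$ and $\lambda_j$ lies in $\m_R^N$, but it should be said.
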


\begin{proof}
Let us consider first the homomorphisms of $k$-algebras $\overline{\phi}_n\colon \overline{R}_n\to \overline{S}_n$ induced by $\phi$. We show inductively that $\overline{\phi}_n$ is surjective for every $n$.

To do this, notice that surjectivity of the codifferential $\phi_*\colon \m_{\overline{R}}/\m_{\overline{R}}^2\to \m_{\overline{S}}/\m_{\overline{S}}^2$ implies that of the map
$$
f_n\colon \m_{\overline{R}}^n/\m_{\overline{R}}^{n+1}\arr \m_{\overline{S}}^n/\m_{\overline{S}}^{n+1}
$$
induced by $\phi_n$, for any $n$ (as is easily checked). Now we come to $\overline{\phi}_n$: if $n=1$, we have that $\overline{\phi}_1\colon k\oplus T^\vee_\Lambda R\to k\oplus T^\vee_\Lambda S$ is surjective because the codifferential $\phi_*$ is, by hypothesis. Suppose that we know that $\overline{\phi}_{n-1}$ is surjective; we have a commutative diagram with exact rows
$$
\xymatrix{
0\ar[r]& \m_{\overline{R}}^n/\m_{\overline{R}}^{n+1}\ar[r]\ar[d]^{f_n} & \overline{R}_n \ar[r]\ar[d]^{\overline{\phi}_n} & \overline{R}_{n-1}\ar[r]\ar[d]^{\overline{\phi}_{n-1}} & 0\\
0\ar[r] & \m_{\overline{S}}^n/\m_{\overline{S}}^{n+1}\ar[r]& \overline{S}_n \ar[r] & \overline{S}_{n-1}\ar[r] & 0
}
$$
and by diagram chasing the surjectivity of $\overline{\phi}_{n-1}$ and $f_n$ implies that of $\overline{\phi}_n$.

Now consider $\phi_n\colon R_n\to S_n$; we show that all these homomorphisms are surjective as well. Notice that $R_n$ and $S_n$ are finite as $\Lambda$-modules, because they have a finite filtration (given by the powers of the maximal ideal), such that successive quotients are finite-dimensional $k$-vector spaces.

Recall also that $\overline{R}_n\simeq R_n\otimes_\Lambda k$ and $\overline{S}_n\simeq S_n\otimes_\Lambda k$, and $\overline{\phi}_n$ is the homomorphism induced by $\phi_n$. Since $\overline{\phi}_n$ is surjective, we can apply Nakayama's Lemma and deduce that $\phi_n$ is surjective too.

Finally, we pass to $\phi\colon R\to S$, which is the projective limit of the homomorphisms $\phi_n$. If we set $K_n \eqdef \ker(R_n\to S_n)$, we have for every $n$ an exact sequence
$$
\xymatrix{
0\ar[r] & K_n\ar[r] & R_n\ar[r] & S_n\ar[r] & 0
}
$$
that together give an exact sequence of projective systems. Since in our case $R_n$ is artinian (and so $K_n$ is as well), the Mittag-Leffler condition (for every $n$ the image of $K_{n+k}\to K_n$ is the same for all $k$'s large enough) is certainly satisfied, and then the induced homomorphism
$$
\invlim \phi_n=\phi\colon \invlim R_n\simeq R\longrightarrow \invlim S_n\simeq S
$$
is surjective.
\end{proof}

From the last Proposition we get the following corollary.

\begin{cor}\label{complll}
Let $R,S \in \compl$, and $\phi\colon R\to S$ be a homomorphism such that the codifferential $\phi_*\colon T^\vee_\Lambda R\to T^\vee_\Lambda S$ is surjective. Then:

\begin{enumeratei}

\item If $\ell(R_n)=\ell(S_n)$ for all $n$ (where $\ell(-)$ denotes the length as a $\Lambda$-module), then $\phi$ is an isomorphism.

\item If $\psi\colon S\to R$ is a homomorphism, and the codifferential $\psi_*\colon T^\vee_\Lambda S\to T^\vee_\Lambda R$ is surjective, then $\phi$ is an isomorphism.

\item If $R$ and $S$ are isomorphic, then $\phi$ is an isomorphism.

\end{enumeratei}

\end{cor}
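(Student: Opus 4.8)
The plan is to reduce all three parts to Proposition~\ref{codiff.surj} together with the elementary fact that a surjective homomorphism between modules of the same finite length is an isomorphism.

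First I would handle (i). By Proposition~\ref{codiff.surj}, the hypothesis that the codifferential $\phi_*\colon T^\vee_\Lambda R\to T^\vee_\Lambda S$ is surjective already forces $\phi\colon R\to S$ to be surjective, and hence each induced homomorphism $\phi_n\colon R_n\to S_n$ is surjective as well. Now $R_n$ and $S_n$ are $\Lambda$-modules of finite length, and $\ell(R_n)=\ell(S_n)$ by hypothesis, so the kernel of $\phi_n$ has length zero, i.e. $\phi_n$ is an isomorphism. Passing to the projective limit, and using that $R$ and $S$ are complete, $\phi=\invlim\phi_n$ is an isomorphism.

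For (ii), I would apply Proposition~\ref{codiff.surj} to both $\phi$ and $\psi$, obtaining that both are surjective. Then for each $n$ the composite $(\psi\circ\phi)_n=\psi_n\circ\phi_n\colon R_n\to R_n$ is a surjective endomorphism of the finite-length module $R_n$, hence an isomorphism; similarly $(\phi\circ\psi)_n$ is an isomorphism. Therefore $\psi\circ\phi$ and $\phi\circ\psi$ are isomorphisms, so $\phi$ is injective (it has a left inverse, namely $(\psi\circ\phi)^{-1}\circ\psi$) and surjective, hence an isomorphism. (Alternatively, one could just invoke the standard fact that a surjective endomorphism of a noetherian ring is an isomorphism, applied to $\phi\circ\psi$.) Finally (iii) is immediate: if $R$ and $S$ are isomorphic as $\Lambda$-algebras, then $\ell(R_n)=\ell(S_n)$ for every $n$, since length is an isomorphism invariant, so part (i) applies; equivalently, compose $\psi$ with a chosen isomorphism $S\simeq R$ (whose codifferential is bijective, in particular surjective) and apply (ii).

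The argument is essentially routine; the only point that needs a word of care is the passage to the inverse limit in (i) and (ii), which is legitimate because the objects of $\compl$ are complete and each $R_n$ is artinian, so the relevant inverse systems of kernels satisfy the Mittag--Leffler condition, exactly as in the proof of Proposition~\ref{codiff.surj}. I do not expect any genuine obstacle here.
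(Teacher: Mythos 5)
Your proof is correct. Parts (i) and (iii) follow the paper's argument essentially verbatim: Proposition~\ref{codiff.surj} gives surjectivity of each $\phi_n$, the length hypothesis then forces each $\phi_n$ to be an isomorphism, and one passes to the inverse limit; for (iii) one either observes that length is an isomorphism invariant and invokes (i), or takes the given isomorphism as the $\psi$ required by (ii), both of which appear in the paper.

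The one place you diverge slightly is in (ii). The paper's route is to observe that $\phi_n$ and $\psi_n$ are both surjective between finite-length $\Lambda$-modules, conclude $\ell(R_n)=\ell(S_n)$, and then reduce (ii) to (i). You instead argue directly: $(\psi\circ\phi)_n$ and $(\phi\circ\psi)_n$ are surjective endomorphisms of finite-length modules, hence isomorphisms, so the composites $\psi\circ\phi$ and $\phi\circ\psi$ are isomorphisms after passing to the limit, which forces $\phi$ to be invertible. Both arguments use the same ingredients (Proposition~\ref{codiff.surj} plus a finiteness pigeonhole); yours avoids the explicit length comparison at the cost of a small two-sided-inverse manipulation, while the paper's has the small advantage that (iii) then falls out as a formal consequence of (i) and (ii) with no extra input. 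Either organization is fine. One minor slip in your (iii): you speak of ``composing $\psi$ with a chosen isomorphism,'' but there is no $\psi$ in the hypotheses of (iii) — what you mean, and what works, is simply to \emph{take} $\psi$ to be a chosen isomorphism $S\simeq R$ and apply (ii).
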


\begin{proof}
The first assertion follows from the fact that $\ell(R_n)=\ell(S_n)$ implies $\ell(K_n)=0$ (with the notation of the preceding proof), and consequently that each $\phi_n\colon R_n\to S_n$ is an isomorphism. In conclusion $\phi=\invlim \phi_n$ is an isomorphism as well.

For the second statement, if $\psi\colon S\to R$ is a homomorphism with surjective codifferential $\psi_*\colon T^\vee_\Lambda S\to T^\vee_\Lambda R$, by the proof of the preceding Proposition we deduce that $\psi_n\colon S_n\to R_n$ is surjective for every $n$, and this, together with the fact that $\phi_n\colon R_n\to S_n$ is surjective as well, implies that $\ell(R_n)=\ell(S_n)$, so we can apply the first part of the corollary.

This last argument clearly proves (iii) as well (because if $\psi\colon S\to R$ is an isomorphism, then in particular the codifferential will be surjective), and this concludes the proof.
\end{proof}

Notice that it is not sufficient to have a surjective map $T^\vee_\Lambda S\to T^\vee_\Lambda R$ to conclude that $\phi$ above is an isomorphism, but we must have a homomorphism $S\to R$ with surjective codifferential.

In particular the fact that $\phi_*$ is an isomorphism does not imply that $\phi$ itself is.

\subsection*{Power series rings}

Now we turn to power series rings over $\Lambda$. For any $n$, the power series ring on $n$ indeterminates $R \eqdef \Lambda\ds{x_1,\hdots,x_n}$ is an object of $\compl$. Since the ideal $\m_\Lambda R\subseteq R$ coincides with the kernel of the natural homomorphism $\Lambda\ds{x_1,\hdots,x_n}\to k\ds{x_1,\hdots,x_n}$ (as one easily checks, using noetherianity of $\Lambda$), we get that $\overline{R} \simeq k\ds{x_1,\hdots,x_n}$. In particular
$$
T^\vee_\Lambda R\simeq \m_{k\ds{x_1,\hdots,x_n}}/\m_{k\ds{x_1,\hdots,x_n}}^2
$$
is a $k$-vector space of dimension $n$, with basis $[x_1],\hdots,[x_n]$.

The next Proposition shows that power series rings have properties similar to those of polynomial rings, with respect to complete algebras.

\begin{prop}\label{psr}
Let $R \in \compl$, and $a_1,\hdots,a_n \in \m_R$. Then there exists a unique homomorphism $\phi\colon \Lambda\ds{x_1,\hdots,x_n} \to R$ such that $\phi(x_i)=a_i$.
\end{prop}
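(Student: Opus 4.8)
The plan is to construct the homomorphism $\phi$ by first specifying its action on polynomials and then passing to the completion, using continuity with respect to the $\m_R$-adic topology. First I would define, for each $N$, a homomorphism $\phi_N\colon \Lambda[x_1,\dots,x_n] \arr R/\m_R^{N+1}$ by sending $x_i$ to the class of $a_i$; this is well defined because $\Lambda[x_1,\dots,x_n]$ is the free $\Lambda$-algebra on $x_1,\dots,x_n$, so there is no relation to check. Since $a_i \in \m_R$, any monomial of degree $\geq N+1$ in the $x_i$ maps to $\m_R^{N+1}$, hence to zero in $R/\m_R^{N+1}$; therefore $\phi_N$ factors through the quotient $\Lambda[x_1,\dots,x_n]/(x_1,\dots,x_n)^{N+1}$, which is precisely the quotient through which $\Lambda\ds{x_1,\dots,x_n} \arr \Lambda\ds{x_1,\dots,x_n}/\m^{N+1}$ factors as well (here $\m = (x_1,\dots,x_n) + \m_\Lambda\cdot{}$, or more simply one uses that a power series modulo degree $N+1$ is a polynomial modulo degree $N+1$). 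So I get an induced homomorphism $\overline{\phi}_N\colon \Lambda\ds{x_1,\dots,x_n}/\m_{\text{ps}}^{N+1} \arr R/\m_R^{N+1}$, where $\m_{\text{ps}}$ is the maximal ideal of the power series ring, and these are visibly compatible as $N$ varies.

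Next I would take the projective limit. Since $R$ is complete, $R \simeq \invlim R/\m_R^{N+1}$, and the power series ring is complete by construction, $\Lambda\ds{x_1,\dots,x_n} \simeq \invlim \Lambda\ds{x_1,\dots,x_n}/\m_{\text{ps}}^{N+1}$; passing to the limit of the compatible system $\{\overline{\phi}_N\}$ yields a ring homomorphism $\phi\colon \Lambda\ds{x_1,\dots,x_n} \arr R$. By construction $\phi$ is a $\Lambda$-algebra homomorphism (each $\phi_N$ is), it is local (it carries $\m_{\text{ps}}$ into $\m_R$ since it does so at each finite level), and $\phi(x_i) = a_i$.

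For uniqueness, suppose $\psi\colon \Lambda\ds{x_1,\dots,x_n} \arr R$ is another homomorphism in $\compl$ with $\psi(x_i) = a_i$. Then $\phi$ and $\psi$ agree on $\Lambda$ and on the $x_i$, hence on the subring $\Lambda[x_1,\dots,x_n]$. Since both $\phi$ and $\psi$ are local homomorphisms, they are continuous for the adic topologies (carrying $\m_{\text{ps}}^{N+1}$ into $\m_R^{N+1}$), so they each induce maps on the quotients $\Lambda\ds{x_1,\dots,x_n}/\m_{\text{ps}}^{N+1} \arr R/\m_R^{N+1}$; these induced maps agree because $\Lambda[x_1,\dots,x_n]$ surjects onto $\Lambda\ds{x_1,\dots,x_n}/\m_{\text{ps}}^{N+1}$ and $\phi,\psi$ agree on polynomials. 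Taking the limit, $\phi = \psi$.

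The only mild subtlety — the ``main obstacle,'' such as it is — is bookkeeping about the maximal ideal of the power series ring and the fact that $\m_{\text{ps}}^{N+1}$ (the $(N+1)$-st power of the full maximal ideal, which includes $\m_\Lambda$) is the right filtration to use so that the quotients are the artinian truncations and the polynomial ring still surjects onto them; this is handled by the observation, already recorded in the excerpt, that $\m_\Lambda R_{\text{ps}}$ is the kernel of $\Lambda\ds{x_1,\dots,x_n} \arr k\ds{x_1,\dots,x_n}$ together with noetherianity of $\Lambda$, so everything is finitely generated and the Mittag-Leffler / completeness machinery applies without incident. Alternatively one can bypass this entirely by working with the filtration by the ideals $(x_1,\dots,x_n)^{N} + \m_\Lambda^N$, which also defines the $\m_{\text{ps}}$-adic topology (Exercise~\ref{filtration}-style reasoning), and is manifestly reached by both polynomials and power series. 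Either way the argument is routine.
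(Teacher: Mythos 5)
Your approach is the same as the paper's: construct compatible homomorphisms at each finite truncation and pass to the inverse limit, using completeness of both rings. There is one step whose justification, as written, doesn't quite close. You argue that since $a_i\in\m_R$, any monomial of degree $\geq N+1$ in the $x_i$ dies in $R/\m_R^{N+1}$, and conclude that $\phi_N$ factors through $\Lambda[x_1,\dots,x_n]/(x_1,\dots,x_n)^{N+1}$. That much is correct — but when $\m_\Lambda\neq 0$, this ring is strictly larger than $\Lambda\ds{x_1,\dots,x_n}/\m_{\mathrm{ps}}^{N+1}$, since the latter also kills mixed terms $\lambda m$ with $\lambda\in\m_\Lambda^{a}$, $m\in(x_1,\dots,x_n)^{b}$, $a+b\geq N+1$ (for instance, when $\Lambda=k\ds{t}$ and $n=1$, $\Lambda[x]/(x)^{2}=k\ds{t}\oplus k\ds{t}\,x$, whereas $\Lambda\ds{x}/\m_{\mathrm{ps}}^{2}=k\oplus kt\oplus kx$). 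What you actually need is that $\phi_N$ annihilates $\m_{\Lambda[x_1,\dots,x_n]}^{N+1}$ where $\m_{\Lambda[x_1,\dots,x_n]}=\m_\Lambda+(x_1,\dots,x_n)$, and the reason is slightly different: $\phi_N$ is a local homomorphism of $\Lambda$-algebras (all arrows in $\compl$ are local, and $R\to R/\m_R^{N+1}$ is local), so it sends $\m_\Lambda$ as well as the $a_i$ into the maximal ideal of $R/\m_R^{N+1}$, hence $\m_{\Lambda[x_1,\dots,x_n]}^{N+1}$ into $\m_{R_N}^{N+1}=0$. Your closing paragraph shows you are aware of the wrinkle, but it would be better to put this observation exactly where it is used, instead of gesturing at Mittag--Leffler and noetherianity, neither of which is needed: completeness of $R$ and of the power series ring already gives the limit. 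With that local fix your argument is the same as the paper's in all essentials, including the uniqueness step.
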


\begin{proof}
By the properties of polynomial rings, for every $k$ we have a unique homomorphism
$$
\phi_k\colon \Lambda\ds{x_1,\hdots,x_n}_k\simeq \Lambda[x_1,\hdots,x_n]/\m_{\Lambda[x_1,\hdots,x_n]}^{k+1}\longrightarrow R_k
$$
sending $[x_i]$ into $[a_i]$. By completeness we get a homomorphism
$$
\phi \eqdef \invlim \phi_k \colon \invlim\Lambda\ds{x_1,\hdots,x_n}_k\simeq \Lambda\ds{x_1,\hdots,x_n} \longrightarrow \invlim R_k \simeq R
$$
such that $\phi(x_i)=a_i$.

Moreover if $\psi\colon \Lambda\ds{x_1,\hdots,x_n}\to R$ is a homomorphism with this property, then for every $k$ the induced homomorphism
$$
\psi_k\colon \Lambda[x_1,\hdots,x_n]/\m_{\Lambda[x_1,\hdots,x_n]}^{k+1}\arr R_k
$$
sends $[x_i]$ into $[a_i]$, and so coincides with $\phi_k$ above. This implies $\psi=\invlim \psi_k =\invlim \phi_k=\phi$ and concludes the proof.
\end{proof}

The following is an immediate consequence of Proposition~\ref{psr} and part (ii) of Corollary~\ref{complll}.

\begin{cor}\label{psr.iso}
Let $R \in \compl$, and assume we have a homomorphism $\phi\colon R\to \Lambda\ds{x_1,\hdots,x_n}$ such that the codifferential $\phi_*\colon T^\vee_\Lambda R\to T^\vee_\Lambda \Lambda\ds{x_1,\hdots,x_n}$ is an isomorphism. Then $\phi$ is an isomorphism.
\end{cor}

\begin{proof}
Let us take elements $a_1,\hdots,a_n \in \m_R$ such that $\phi_*([a_i])=[x_i]$, and such that $[a_1],\hdots,[a_n]$ form a basis of $T^\vee_\Lambda R$. By Proposition~\ref{psr} we can find then a homomorphism $\psi\colon \Lambda\ds{x_1,\hdots,x_n}\to R$ such that $\psi(x_i)=a_i$; its codifferential will then be surjective, and part (ii) of Corollary~\ref{complll} lets us conclude that $\phi$ is an isomorphism.
\end{proof}

From this we get a description of noetherian complete local rings as quotients of power series rings.

\begin{cor}\label{quot.psr}
Every $R \in \compl$ is a quotient of the ring $\Lambda\ds{x_1,\hdots,x_n}$ for some $n$. Moreover, the minimum such $n$ is the dimension of the vertical cotangent space $T^\vee_\Lambda R$ of $R$.
\end{cor}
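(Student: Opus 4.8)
The plan is to prove Corollary~\ref{quot.psr} by combining Proposition~\ref{psr} with Proposition~\ref{codiff.surj}. First I would choose elements $a_1, \dots, a_n \in \m_R$ whose images $[a_1], \dots, [a_n]$ form a basis of the vertical cotangent space $T^\vee_\Lambda R = \m_R/(\m_\Lambda R + \m_R^2)$; this is possible since $R$ is noetherian, so $\m_R$ is finitely generated and $T^\vee_\Lambda R$ is finite-dimensional over $k$. By Proposition~\ref{psr} there is a homomorphism $\phi\colon \Lambda\ds{x_1, \dots, x_n} \to R$ with $\phi(x_i) = a_i$. By construction the codifferential $\phi_*\colon T^\vee_\Lambda \Lambda\ds{x_1,\dots,x_n} \to T^\vee_\Lambda R$ sends the basis $[x_1], \dots, [x_n]$ to the basis $[a_1], \dots, [a_n]$, hence is an isomorphism, in particular surjective. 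Proposition~\ref{codiff.surj} then gives that $\phi$ itself is surjective, so $R$ is a quotient of $\Lambda\ds{x_1, \dots, x_n}$.

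For the second assertion, I would argue that $n = \dim_k T^\vee_\Lambda R$ is the minimum. That this value is achievable is exactly the construction above. For minimality, suppose $R$ is a quotient of $P = \Lambda\ds{y_1, \dots, y_m}$ via a surjection $\psi\colon P \to R$. Passing to vertical cotangent spaces, the codifferential $\psi_*\colon T^\vee_\Lambda P \to T^\vee_\Lambda R$ is surjective (a surjection of rings induces a surjection on $\m/(\m_\Lambda\cdot + \m^2)$, as one checks directly). Since $\dim_k T^\vee_\Lambda P = m$ (noted just before the statement of Proposition~\ref{psr}), we get $m \geq \dim_k T^\vee_\Lambda R = n$, as desired.

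I do not expect any serious obstacle: the corollary is essentially an assembly of Proposition~\ref{psr} (existence of the map out of the power series ring) and Proposition~\ref{codiff.surj} (surjectivity from surjectivity of the codifferential), together with the elementary observation that a ring surjection induces a surjection on vertical cotangent spaces. The only point requiring a line of care is checking that the chosen $a_i$ really do give a surjective — in fact bijective — codifferential, which is immediate from the fact that $\phi_*([x_i]) = [\phi(x_i)] = [a_i]$ and the $[a_i]$ were chosen to be a basis.
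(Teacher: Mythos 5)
Your proof is correct and follows essentially the same route as the paper: pick lifts of a basis of $T^\vee_\Lambda R$, use Proposition~\ref{psr} to get $\phi$, observe the codifferential is surjective, and invoke Proposition~\ref{codiff.surj}; minimality comes from a surjection inducing a surjection on vertical cotangent spaces. No discrepancies.
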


\begin{proof}
Set $n \eqdef \dim_k T^\vee_\Lambda R$, and consider elements $a_1,\hdots,a_n \in \m_R$ such that $[a_1],\hdots,[a_n]$ is a basis of $T^\vee_\Lambda R$. By Proposition~\ref{psr} we can define a homomorphism $\phi\colon \Lambda\ds{x_1,\hdots,x_n}\to R$ such that $\phi(x_i)=a_i$; its codifferential will then be surjective, and by Proposition~\ref{codiff.surj} $\phi$ will be surjective too. In other words, $R$ is a quotient of $\Lambda\ds{x_1,\hdots,x_n}$.

On the other hand if $\phi\colon \Lambda\ds{x_1,\hdots,x_r}\to R$ is surjective then the codifferential $\phi_*\colon T^\vee_\Lambda \Lambda\ds{x_1,\hdots,x_r}\to T^\vee_\Lambda R$ is surjective too, and this implies that $r\geq n$.
\end{proof}

Finally we prove a criterion that characterizes power series rings as formally smooth algebras in $\compl$.

\begin{thm}\label{app.smoothcrit}
Let $R \in \compl$. Then $R$ is a power series ring if and only if for every surjection $A'\to A$ in $\artl$ and every homomorphism $R\to A$, we can find a lifting $R\to A'$.
\end{thm}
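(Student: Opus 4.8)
The plan is to prove both implications, the easy one first. If $R = \Lambda\ds{x_1,\dots,x_n}$ is a power series ring, then given a surjection $A' \arr A$ in $\artl$ and a homomorphism $\phi\colon R \arr A$, I would pick for each $i$ an element $a'_i \in A'$ lifting $\phi(x_i) \in \m_A$; since the kernel of $A' \arr A$ is nilpotent, each $a'_i$ lies in $\m_{A'}$. By Proposition~\ref{psr} there is a (unique) homomorphism $R \arr A'$ sending $x_i$ to $a'_i$, and by the uniqueness part of the same Proposition its composite with $A' \arr A$ equals $\phi$. This gives the desired lifting.

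For the converse, suppose $R$ has the stated lifting property. Set $n \eqdef \dim_k T^\vee_\Lambda R$; by Corollary~\ref{quot.psr} there is a surjection $\pi\colon P \arr R$ with $P = \Lambda\ds{x_1,\dots,x_n}$ whose codifferential is an isomorphism (choosing the generators to be a basis of $T^\vee_\Lambda R$). Let $I = \ker\pi \subseteq P$; since the codifferential of $\pi$ is an isomorphism, $I \subseteq \m_\Lambda P + \m_P^2$. The goal is to show $I = (0)$. The strategy is to produce a section $s\colon R \arr P$ of $\pi$, i.e.\ a homomorphism with $\pi \circ s = \id_R$; once we have this, $s$ is injective, its codifferential is an isomorphism (being a one-sided inverse of an isomorphism), and by Corollary~\ref{psr.iso} $s$ is an isomorphism, so $R \cong P$ is a power series ring.

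To build the section I would work with the artinian quotients. For each $m$ let $P^{(m)} \eqdef P/I\cap\m_P^{m+1}$; then $P^{(m)} \arr R_m$ is a surjection in $\artl$ with nilpotent kernel, so it can be factored into a finite composite of small extensions, and $R_{m+1} \arr R_m$ is also such a composite. I construct compatible homomorphisms $s_m\colon R_m \arr P^{(m)}$ splitting $P^{(m)} \arr R_m$ inductively: $s_0$ is the identity of $k$; given $s_m$, apply the lifting hypothesis to the homomorphism $R \arr R_m \xrightarrow{s_m} P^{(m)}$ along the surjection $P^{(m+1)} \arr P^{(m)}$ (a composite of small extensions, so one lifts step by step) to get a lift $R \arr P^{(m+1)}$; this factors through $R_{m+1}$ since $P^{(m+1)}$ is artinian, giving $s_{m+1}$, and one checks it splits $P^{(m+1)} \arr R_{m+1}$ and is compatible with $s_m$ up to replacing it by a compatible choice — I expect the compatibility to require a small amount of care, perhaps re-choosing the lift at each stage so that the composite $R_{m+1} \arr P^{(m+1)} \arr P^{(m)}$ agrees with $R_{m+1} \arr R_m \xrightarrow{s_m} P^{(m)}$, which is arranged by construction. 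Passing to the projective limit, $s \eqdef \invlim s_m \colon R = \invlim R_m \arr \invlim P^{(m)}$; since $I = \bigcap_m (I \cap \m_P^{m+1})$ by Krull's intersection theorem, $\invlim P^{(m)} \cong P$, so $s\colon R \arr P$ is a section of $\pi$.

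The main obstacle is the bookkeeping in the inductive construction of the compatible sections $s_m$: the lifting property is only asserted for surjections in $\artl$, so each step must be broken into small extensions and the resulting lifts assembled coherently, and one must verify that the liftings can always be chosen to respect the previously built $s_m$ (not merely to exist). This is exactly the kind of argument used in the proof of Theorem~\ref{miniexist}, where surjections are lifted successively through factorizations into small extensions, so the technique is available; the identification $\invlim P/(I\cap\m_P^{m+1}) \cong P$ via Krull's intersection theorem is then routine.
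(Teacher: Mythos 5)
Your forward direction is fine, but the converse has a genuine gap: $P^{(m)} = P/(I \cap \m_P^{m+1})$ is not an artinian ring unless $R$ already is. Its maximal ideal $\m_P/(I \cap \m_P^{m+1})$ is nilpotent only if $\m_P^k \subseteq I$ for some $k$, which forces $R = P/I$ to be artinian; in fact since $I \subseteq \m_P$ we have $P^{(0)} = P/(I\cap\m_P) = P/I = R$, so the base of your induction (``$s_0$ is the identity of $k$'') does not even typecheck. Since $P^{(m)} \notin \artl$, the surjection $P^{(m+1)} \arr P^{(m)}$ is not a surjection in $\artl$ and the lifting hypothesis cannot be applied to it; moreover the resulting map $R \arr P^{(m+1)}$ would not factor through $R_{m+1}$, since that requires $\m_{P^{(m+1)}}^{m+2} = 0$, which again fails. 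There is also a structural difficulty with insisting on sections: the lifting hypothesis produces a map $R \arr P^{(m+1)}$ whose composite with $P^{(m+1)} \arr P^{(m)}$ agrees with the previous stage, but nothing forces that lift to split $P^{(m+1)} \arr R_{m+1}$; a lift of a splitting need not be a splitting.

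The endgame you aim for (a homomorphism $R \arr P$ with invertible codifferential, then Corollary~\ref{psr.iso}) is the right one, and the paper's proof repairs the above in two ways: it uses the honest artinian truncations $P_k = P/\m_P^{k+1}$ in place of $P^{(m)}$, and it does not ask for sections, instead pinning down the codifferential at the start. Concretely, one lifts the composite $R \arr \overline{R}_1 \xrightarrow{(\phi_*)^{-1}} \overline{P}_1$ along the $\artl$-surjection $P_1 \arr \overline{P}_1$, then lifts inductively along each $P_{k+1} \arr P_k$; the projective limit $\psi\colon R \arr P$ then has $\psi_* = (\phi_*)^{-1}$, which is an isomorphism, and Corollary~\ref{psr.iso} shows $\psi$ is an isomorphism — no section of $\pi$ is ever needed.
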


\begin{proof}
If $R$ is a power series ring, then Proposition~\ref{psr} implies that we can lift homomorphisms along small extensions.

Conversely, suppose that the lifting property holds, and take a homomorphism $\phi\colon \Lambda\ds{x_1,\hdots,x_n}\to R$ such that the codifferential $\phi_*\colon T^\vee_\Lambda \Lambda\ds{x_1,\hdots,x_n}\to T^\vee_\Lambda R$ is an isomorphism (using the last corollary, for example).

Now notice that the quotient map $\Lambda\ds{x_1,\hdots,x_n}_1\to T^\vee_\Lambda \Lambda\ds{x_1,\hdots,x_n}$ is a surjection in $\artl$, and then by hypothesis we can lift the homomorphism $R\to T^\vee_\Lambda R \xrightarrow{(\phi_*)^{-1}}T^\vee_\Lambda \Lambda\ds{x_1,\hdots,x_n}$ to $R\to \Lambda\ds{x_1,\hdots,x_n}_1$
$$
\xymatrix{
&R\ar[d] \ar@/_10pt/@{-->}[ld] \\
\Lambda\ds{x_1,\hdots,x_n}_1\ar[r] &T^\vee_\Lambda \Lambda\ds{x_1,\hdots,x_n}.
}
$$
Likewise, since the quotient map $\Lambda\ds{x_1,\hdots,x_n}_k\to \Lambda\ds{x_1,\hdots,x_n}_{k-1}$ is a surjection in $\artl$, we can lift inductively the homomorphism $R\to \Lambda\ds{x_1,\hdots,x_n}_{k-1}$ to a homomorphism $R\to \Lambda\ds{x_1,\hdots,x_n}_k$.

Finally, taking the projective limit of the sequence of compatible homomorphisms above, we obtain a homomorphism $\psi\colon R\to \Lambda\ds{x_1,\hdots,x_n}$ such that the codifferential $\psi_*\colon  T^\vee_\Lambda R\to T^\vee_\Lambda \Lambda\ds{x_1,\hdots,x_n}$ is an isomorphism (the inverse of $\phi_*$), and by Proposition~\ref{psr.iso} this implies that $\phi$ is an isomorphism, so $R$ is a power series ring.
\end{proof}

Actually the criterion can be strengthened by replacing ``surjection'' $A'\to A$ by ``small extension''. To see this it suffices to factor a surjection as a composite of small extensions and lift the homomorphism successively, as usual.

\subsection*{Continuous K\"{a}hler differentials}

In this section we introduce a module of differentials for objects of $\compl$ that is much more useful than the standard one.

Let $R$ be an object of $\compl$. Then we have the usual module of K\"{a}hler differentials $\diff_{R/\Lambda}$ with the universal $\Lambda$-derivation $d\colon R\to \diff_{R/\Lambda}$, which has the following universal property: if $D\colon R\to M$ is a $\Lambda$-derivation then there is a unique homomorphism of $R$-modules $f\colon \diff_{R/\Lambda}\to M$ such that $D=f\circ d$.

For some applications this module is too large: for example, one can show that $\diff_{k\ds{x}/k}$ is not finitely generated over $k\ds{x}$, since the field of fractions $k\dr{x}$ of $k\ds{x}$ has infinite transcendence degree over $k$, and
$$
\diff_{k\ds{x}/k}\otimes_{k\ds{x}} k\dr{x}=\diff_{k\dr{x}/k}
$$
is not finitely generated over $k\dr{x}$.

Because of this we define another module of differentials that is better behaved. We consider derivations $D\colon R\to M$ where $M$ is a module that is separated with respect to the $\m_R$-adic topology, that is, the intersection of the submodules $\{\m_R^i M\}_{i \in \N}$ is the zero submodule. For example, by one of Krull's Theorems, finitely generated $R$-modules are separated.

We want then a finitely generated $R$-module $\wh{\diff}_{R/\Lambda}$ with a derivation $d\colon R\to \wh{\diff}_{R/\Lambda}$, such that for every derivation $D\colon R\to M$, where $M$ is a separated $R$-module, there exists a homomorphism $f\colon \wh{\diff}_{R/\Lambda}\to M$ such that $D=f\circ d$.

Write $R$ as a quotient of a power series ring (Corollary~\ref{quot.psr})
$$
R\simeq \Lambda\ds{x_1,\hdots,x_n}/I
$$
and suppose that $I=(f_1,\hdots,f_k)$. We consider the free $R$-module on $n$ elements $dx_1,\hdots,dx_n$, and its submodule $J$ generated by the elements
$$
df_i=\left[\frac{\partial f_i}{\partial x_1}\right]dx_1+\cdots+\left[\frac{\partial f_i}{\partial x_n}\right]dx_n
$$
for $i=1,\hdots,k$; we define then
$$
\wh{\diff}_{R/\Lambda}=(Rdx_1\oplus \cdots \oplus Rdx_n)/J\,.
$$
Moreover we have a derivation $d\colon R\to \wh{\diff}_{R/\Lambda}$ given by
$$
d([g])=\left[\left[\frac{\partial g}{\partial x_1}\right]dx_1+\cdots+\left[\frac{\partial g}{\partial x_n}\right]dx_n\right]
$$
for $[g] \in R$, which is easily seen to be well-defined.

\begin{prop}
The $R$-module $\wh{\diff}_{R/\Lambda}$ and the derivation $d\colon R\to \wh{\diff}_{R/\Lambda}$ have the universal property above.
\end{prop}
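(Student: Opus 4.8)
The plan is to verify the universal property of $\wh{\diff}_{R/\Lambda}$ directly from the construction, reducing everything to the ordinary universal property of $\diff_{\Lambda\ds{x_1,\hdots,x_n}/\Lambda}$ for the power series ring $P = \Lambda\ds{x_1,\hdots,x_n}$ together with the hypothesis that the target module $M$ is $\m_R$-adically separated. First I would record the one fact about power series rings that is really needed: if $M$ is a finitely generated $P$-module (or more generally an $\m_P$-adically separated one), then a $\Lambda$-derivation $D\colon P \to M$ is \emph{continuous}, in the sense that it is determined by the values $D(x_1),\hdots,D(x_n)$; indeed for a power series $g = g_{\le N} + (\text{terms in }\m_P^{N+1})$ one has $D(g) \equiv D(g_{\le N}) \bmod \m_P^N M$ by the Leibniz rule, and letting $N \to \infty$ and using $\bigcap_N \m_P^N M = 0$ pins down $D(g)$. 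Consequently there is a unique $P$-linear map $Rdx_1 \oplus \cdots \oplus Rdx_n \to M$ (here I mean the free module on the symbols $dx_i$, base-changed appropriately) sending $dx_i \mapsto D(x_i)$ and factoring $D$.

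The second step is the descent to the quotient. Given a separated $R$-module $M$ and a $\Lambda$-derivation $D\colon R \to M$, compose with the quotient map $P \to R$ to get a $\Lambda$-derivation $D'\colon P \to M$; since $M$ is separated as an $R$-module and the $\m_R$-adic and image-of-$\m_P$-adic topologies agree, $M$ is also separated over $P$, so by the first step $D'$ factors through a $P$-linear $\phi\colon \bigoplus_i P\,dx_i \to M$. Because $D'$ kills the ideal $I = (f_1,\hdots,f_k)$ (it factors through $R$), applying the Leibniz rule to $f_i = \sum_j (\partial f_i/\partial x_j) x_j + \cdots$ — more precisely, using that $D'(I) = 0$ together with $I \cdot M$ already being zero in the image since $D'$ is $R$-linear after factoring — one checks $\phi(df_i) = D'(f_i) - \sum_j (\partial f_i / \partial x_j) D'(x_j) \cdot(\text{sign bookkeeping})$... the clean way: $\phi$ annihilates the submodule $J$ generated by the $df_i$, because for $h \in I$ the element $\sum_j [\partial h/\partial x_j]\,dx_j$ maps under $\phi$ to the image of $D'(h) = 0$ in $M$ (this is exactly the compatibility of $d$ with $D'$ read backwards). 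Hence $\phi$ descends to an $R$-linear map $f\colon \wh{\diff}_{R/\Lambda} = (\bigoplus_i R\,dx_i)/J \to M$ with $D = f \circ d$. Uniqueness of $f$ is automatic since $\wh{\diff}_{R/\Lambda}$ is generated as an $R$-module by the classes $[dx_i] = d(x_i)$.

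Finally I would note the two loose ends. One must check that $\wh{\diff}_{R/\Lambda}$ and its derivation $d$ do not depend on the chosen presentation $R \simeq P/I$; this follows formally from the universal property itself (any two modules with the same universal property are canonically isomorphic), so it is enough to remark this once the universal property is established, plus a word that $d\colon R \to \wh{\diff}_{R/\Lambda}$ is a well-defined $\Lambda$-derivation — which is the Leibniz-rule computation already implicit in writing $d([g]) = [\sum_j [\partial g/\partial x_j]\,dx_j]$ and checking it kills $I$. \textbf{The main obstacle} is the continuity step for power series rings: one has to argue carefully that a $\Lambda$-derivation into a separated module automatically respects the filtration, so that its values on all of $P$ are forced by its values on the $x_i$ — this is where the hypothesis ``$M$ separated'' is used in an essential way, and it is the only non-purely-formal input. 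Everything after that is diagram-chasing with the Jacobian relations, which I would leave to the reader in the style of the surrounding text.
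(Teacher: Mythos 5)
Your proof is correct and follows essentially the same strategy as the paper's sketch: reduce everything to the observation that a $\Lambda$-derivation from $P=\Lambda\ds{x_1,\dots,x_n}$ into an $\m_P$-adically separated module is continuous — hence determined by its values on the $x_i$ — and then use this together with the vanishing $D'(I)=0$ to descend the induced map on the free module through the Jacobian relations. The two places where you add useful detail (making the continuity argument explicit via $D(\m_P^{N+1})\subseteq\m_P^N M$, and noting that uniqueness of $f$ is automatic since $\wh{\diff}_{R/\Lambda}$ is generated by the $d(x_i)$) are genuine but minor refinements of what the paper leaves implicit; the core mechanism is identical.
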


\begin{proof}
We sketch the idea of the proof, without going into details. Let $D\colon R\to M$ be a $\Lambda$-derivation of $R$ into a separated $R$-module $M$.

We start by defining $Rdx_1\oplus \cdots \oplus Rdx_n\to M$ by saying that $dx_i$ goes to $D([x_i])$, and then extending by linearity. To see that this induces a homomorphism on the quotient $\wh{\diff}_{A/\Lambda}$, the key point is to see that the derivation $D$ is completely determined by $D([x_i])$ for $i=1,\hdots,n$.

This is clearly true for $D([p])$ where $p$ is a polynomial, by using the Leibnitz rule repeatedly. The fact that $D$ is uniquely determined on power series follows from the fact that derivations are continuous with respect to the $\m_R$-adic topology, and from separateness of $M$.
\end{proof}

\begin{defin}
The $R$-module $\wh{\diff}_{R/\Lambda}$ is called the \gr{module of continuous K\"{a}hler differentials} of $R$, and $d$ is the \gr{universal continuous derivation}.
\end{defin}

The Proposition above implies in particular that changing the presentation of $R$ as a quotient of a power series ring we get isomorphic modules of continuous differentials.

Now suppose that $R,S \in \compl$, and that $\phi\colon R\to S$ is a surjection with kernel $I\subseteq R$. Because of the universal property of $\wh{\diff}_{R/\Lambda}$ and the fact that the composite $R\to S\to \wh{\diff}_{S/\Lambda}$ is a $\Lambda$-derivation, we get a homomorphism of $R$-modules $\wh{\diff}_{R/\Lambda}\to \wh{\diff}_{S/\Lambda}$, which tensoring by $S$ induces a homomorphism of $S$-modules $f\colon \wh{\diff}_{R/\Lambda}\otimes_R S\to \wh{\diff}_{S/\Lambda}$.

Moreover the universal derivation $d\colon R\to \wh{\diff}_{R/\Lambda}$ induces as usual a homomorphism of $S$-modules $I/I^2\to \wh{\diff}_{R/\Lambda}\otimes_R S$ that we still denote by $d$. The following Proposition is proved in the same way as its analogue for the standard modules of differentials.

\begin{prop}\label{cont.conorm}
If $\phi\colon R\to S$ is a surjection in $\compl$, then the sequence of $S$-modules
$$
\xymatrix{
I/I^2\ar[r]^-d & \wh{\diff}_{R/\Lambda}\otimes_R S\ar[r]^-f & \wh{\diff}_{S/\Lambda}\ar[r] & 0
}
$$
is exact.
\end{prop}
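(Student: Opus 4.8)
The plan is to reduce the statement to an explicit computation with presentations, mirroring the classical proof of the conormal sequence for Kähler differentials but using the concrete description of $\wh{\diff}$ recalled above. First I would fix compatible presentations of $R$ and $S$. Since $\Lambda$ is noetherian, so is $P\eqdef\Lambda\ds{x_1,\dots,x_n}$; by Corollary~\ref{quot.psr} write $R\simeq P/J$ with $J=(f_1,\dots,f_k)$. As $R$ is noetherian the ideal $I\subseteq R$ is finitely generated, so we may choose lifts $g_1,\dots,g_m\in P$ of a finite generating set of $I$; then $S\simeq P/J'$ with $J'=(f_1,\dots,f_k,g_1,\dots,g_m)\supseteq J$. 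By the Proposition establishing the universal property of $\wh{\diff}_{R/\Lambda}$ (which implies that the module of continuous differentials does not depend on the chosen presentation), computing $\wh{\diff}$ from these presentations yields the correct $R$- and $S$-modules.

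Next I would write everything out explicitly. By definition
$$\wh{\diff}_{R/\Lambda}=(R\,dx_1\oplus\cdots\oplus R\,dx_n)/N_R,\qquad N_R=\langle df_1,\dots,df_k\rangle,\quad df_\ell=\sum_i[\partial f_\ell/\partial x_i]\,dx_i,$$
and similarly $\wh{\diff}_{S/\Lambda}=(S\,dx_1\oplus\cdots\oplus S\,dx_n)/N_S$ with $N_S=\langle df_1^S,\dots,df_k^S,dg_1^S,\dots,dg_m^S\rangle$, where a superscript $S$ indicates that the coefficients of the differential are read in $S$. Tensoring the first presentation with $S$ over $R$ gives $\wh{\diff}_{R/\Lambda}\otimes_R S=(S\,dx_1\oplus\cdots\oplus S\,dx_n)/\langle df_1^S,\dots,df_k^S\rangle$; and since the $R$-linear map $\wh{\diff}_{R/\Lambda}\to\wh{\diff}_{S/\Lambda}$ arises from the universal property applied to the derivation $R\to S\to\wh{\diff}_{S/\Lambda}$, it sends $dx_i\mapsto dx_i$, so $f$ is simply the canonical further quotient by the classes of the $dg_j^S$. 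Hence $f$ is surjective and $\ker f$ is the $S$-submodule generated by the classes of $dg_1^S,\dots,dg_m^S$ in $\wh{\diff}_{R/\Lambda}\otimes_R S$. On the other hand $I/I^2$ is generated as an $S$-module by the classes $\bar g_1,\dots,\bar g_m$ of the $g_j$ in $R$, and by the definition of $d$ (induced by the universal continuous derivation) one has $d(\bar g_j)=(d\bar g_j)\otimes 1$, which in the presentation above is exactly the class of $dg_j^S=\sum_i[\partial g_j/\partial x_i]_S\,dx_i$. Therefore $\operatorname{im}d$ equals $\ker f$, and, together with the surjectivity of $f$, this is precisely the asserted exactness.

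The main obstacle is essentially nil: this is a transcription of the classical argument. The points that require genuine care are the bookkeeping — extending the chosen presentation of $R$ to one of $S$, and keeping straight whether a given partial derivative is read in $P$, $R$, or $S$ — together with the verification that $f$ is the naive projection rather than some twisted map. The routine checks that $d\colon I/I^2\to\wh{\diff}_{R/\Lambda}\otimes_R S$ is a well-defined $S$-linear map (the usual Leibniz computation: for $x,y\in I$, $d(xy)=x\,dy+y\,dx$ vanishes after $\otimes_R S$) can be left to the reader, as the paper does for the analogous standard statement.
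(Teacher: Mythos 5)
Your proof is correct and does exactly what the paper suggests: the paper itself gives no proof, merely remarking that the statement ``is proved in the same way as its analogue for the standard modules of differentials,'' and your argument is a faithful transcription of that classical argument to the continuous setting, with the compatible presentations $R\simeq P/J$, $S\simeq P/J'$ and the identification of $f$ as the further quotient made explicit and correct.
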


The sequence above is called the \gr{conormal sequence} associated with the homomorphism $\phi$.

\section{Some other facts and constructions}\label{appc}

In this Appendix we gather some other miscellaneous standard results and constructions that are used throughout this work.

\subsection*{Fibered products of categories}

Let $\F,\G,\mathcal{H}$ be three categories, with two functors $F\colon \F\to \mathcal{H}$ and $G\colon \G\to \mathcal{H}$. We want to define a ``fibered product'' category $\F\times_{\mathcal{H}}\G$ with two functors $\pi_\F\colon \F\times_{\mathcal{H}}\G\to \F$ and $\pi_\G\colon \F\times_{\mathcal{H}}\G\to\G$, such that the composites $F\circ \pi_\F$ and $G\circ \pi_\G$ are isomorphic as functors $\F\times_{\mathcal{H}}\G\to \mathcal{H}$, and such that for any other category $\C$ with two functors $\phi_\F\colon \C\to \F$ and $\phi_\G\colon \C\to \G$ and a fixed isomorphism of functors $F\circ \phi_\F\simeq G\circ \phi_\G$ there exists a dotted functor as in the diagram below
$$
\xymatrix@+10pt{
\C\ar@/^10pt/[rrd]^{\phi_\F}\ar@/_10pt/[ddr]_{\phi_\G}\ar@{-->}[rd]^\psi&&\\
&\F\times_{\mathcal{H}}\G\ar[r]^>>>>>>>>{\pi_\F}\ar[d]_{\pi_\G}&\F\ar[d]^F\\
&\G\ar[r]_G & \mathcal{H}
}
$$
such that $\pi_\F\circ \psi=\phi_\F$ and $\pi_\G\circ \psi=\phi_\G$ (which are actual equalities, and not merely isomorphisms of functors).

We define such a category $\F\times_{\mathcal{H}}\G$ as follows:
\begin{description}
\item[Objects] are triplets $(X,Y,f)$ where $X \in \F, Y \in \G$ and $f\colon F(X)\to G(X)$ is an isomorphism in the category $\mathcal{H}$.
\item[Arrows] from $(X,Y,f)$ to $(Z,W,g)$ are pairs $(h, k)$ of arrows $h\colon X\to Z$ of $\F$ and $k\colon Y\to W$ of $\G$, such that the diagram
$$
\xymatrix@+5pt{
F(X)\ar[r]^{F(h)}\ar[d]_f & F(Z)\ar[d]^g\\
G(Y)\ar[r]_{F(k)} & G(W)
}
$$
is commutative.
\end{description}

Composition of arrows is defined in the obvious way, as well as the two functors $\pi_\F, \pi_\G$; for example $\pi_\F\colon \F\times_{\mathcal{H}}\G\to \F$ sends an object $(X,Y,f)$ into $X \in \F$, and an arrow $(h,k)$ to $h$.

Moreover notice that the composites $F\circ \pi_\F$ and $G\circ \pi_\G$ are clearly isomorphic: starting from $(X,Y,f) \in \F\times_{\mathcal{H}}\G$ we have $(F\circ \pi_\F)(X,Y,f)=F(X)$ and $(G\circ \pi_\G)(X,Y,f)=G(Y)$, so $f\colon F(X)\to G(Y)$ gives the desired isomorphism. The compatibility property on arrows ensures that these isomorphisms altogether give a natural transformation.

\begin{prop}
The category $\F\times_{\mathcal{H}}\G$ with the functors $\pi_\F,\pi_\G$ has the property stated above.
\end{prop}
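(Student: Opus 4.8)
The plan is to verify directly that the category $\F\times_{\mathcal{H}}\G$, together with the projection functors $\pi_\F$ and $\pi_\G$ and the canonical natural isomorphism $\theta\colon F\circ\pi_\F\simeq G\circ\pi_\G$ described just above the statement, satisfies the required universal property. So suppose we are given a category $\C$ with functors $\phi_\F\colon\C\to\F$ and $\phi_\G\colon\C\to\G$ and a fixed isomorphism of functors $\alpha\colon F\circ\phi_\F\simeq G\circ\phi_\G$. First I would construct the functor $\psi\colon\C\to\F\times_{\mathcal{H}}\G$ on objects: given $C\in\C$, set $\psi(C)=(\phi_\F(C),\phi_\G(C),\alpha_C)$, which is a legitimate object of $\F\times_{\mathcal{H}}\G$ precisely because $\alpha_C\colon F(\phi_\F(C))\to G(\phi_\G(C))$ is an isomorphism in $\mathcal{H}$ (as $\alpha$ is a natural isomorphism). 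On arrows $c\colon C\to C'$ of $\C$, I would set $\psi(c)=(\phi_\F(c),\phi_\G(c))$; the commutativity condition defining arrows in $\F\times_{\mathcal{H}}\G$ is then exactly the naturality square for $\alpha$ applied to $c$, namely $\alpha_{C'}\circ F(\phi_\F(c)) = G(\phi_\G(c))\circ\alpha_C$. Functoriality of $\psi$ (preservation of identities and composition) is immediate from functoriality of $\phi_\F$ and $\phi_\G$ and the componentwise definition of composition in $\F\times_{\mathcal{H}}\G$.

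Next I would check the two equalities $\pi_\F\circ\psi=\phi_\F$ and $\pi_\G\circ\psi=\phi_\G$. These hold on the nose, not merely up to isomorphism: by definition $\pi_\F$ extracts the first component of an object or arrow, so $(\pi_\F\circ\psi)(C)=\phi_\F(C)$ and $(\pi_\F\circ\psi)(c)=\phi_\F(c)$, and symmetrically for $\pi_\G$. One should also remark that $\psi$ is compatible with the isomorphisms of functors in the evident sense, i.e.\ the natural isomorphism $\theta$ pulls back along $\psi$ to $\alpha$; this is again just the componentwise definition of $\theta$. Finally, for uniqueness: if $\psi'\colon\C\to\F\times_{\mathcal{H}}\G$ is another functor with $\pi_\F\circ\psi'=\phi_\F$ and $\pi_\G\circ\psi'=\phi_\G$, then on an object $C$ the first two components of $\psi'(C)$ are forced to be $\phi_\F(C)$ and $\phi_\G(C)$, and the third component is forced to be $\alpha_C$ by the compatibility with $\theta$; the same reasoning on arrows forces $\psi'(c)=(\phi_\F(c),\phi_\G(c))$, so $\psi'=\psi$. (If one only requires the weaker property without tracking $\theta$, the third component is still determined once one demands compatibility with the structural isomorphism, which is the natural thing to include in the statement.)

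There is no real obstacle here: every step is a routine unwinding of the definitions, and the only point that deserves a word of care is the precise bookkeeping of the natural isomorphism $\alpha$ versus the structural isomorphism $\theta\colon F\circ\pi_\F\simeq G\circ\pi_\G$ of $\F\times_{\mathcal{H}}\G$ — in particular making explicit in what sense $\psi$ is required to respect these, since that is what pins down the third component of $\psi(C)$ and hence makes $\psi$ unique. I would state this compatibility as part of the claimed universal property (matching the phrasing already used in the paragraph preceding the Proposition), and then the proof is the three short verifications above: well-definedness of $\psi$ on objects and arrows, the two strict equalities with the projections, and uniqueness.
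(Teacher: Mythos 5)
Your construction of $\psi$ is exactly the one the paper gives, and the verifications are the same routine unwindings. One small remark: the uniqueness argument you append is not needed here --- the property stated just before the Proposition only asserts the \emph{existence} of a functor $\psi$ with $\pi_\F\circ\psi=\phi_\F$ and $\pi_\G\circ\psi=\phi_\G$, and the Remark immediately after the Proposition explicitly points out that no uniqueness is being required (and that stating a genuine universal property would require the language of 2-categories). So your parenthetical caveat is on the right track; for the claim as actually stated, the existence part alone is the whole proof.
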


\begin{proof}
Suppose we have a category $\C$ with two functors $\phi_\F\colon \C\to \F$ and $\phi_\G\colon \C\to \G$, and a fixed isomorphism of functors $\alpha\colon F\circ \phi_\F\simeq G\circ \phi_\G$. We define a functor $\psi\colon \C\to \F\times_{\mathcal{H}}\G$ as follows: if $X \in \C$, we set $\psi(X) \eqdef (\phi_\F(X),\phi_\G(X),\alpha(X))$, and an arrow $f\colon X\to Y$ of $\C$ goes to the arrow $(\phi_\F(f),\phi_\G(f))$ of $\F\times_\mathcal{H}\G$.

It is immediate to check that $\psi$ is well-defined, and that $\pi_\F\circ \psi=\phi_\F$ and $\pi_\G\circ \psi=\phi_\G$.
\end{proof}

\begin{defin}
The category $\F\times_{\mathcal{H}}\G$ is called the \gr{fibered product} of $\F$ and $\G$ over $\mathcal{H}$.
\end{defin}

\begin{rmk}
The property that we used as starting point to define the fibered product looks much like a universal property (which should define it up to equivalence), apart from the fact that there is no uniqueness required on the functor $\psi$. On the other hand we defined the fibered product explicitly, and we will not need this ``uniqueness'' part.

Nevertheless, we remark that it is possible to give a universal property that identifies the fibered product up to equivalence, but the natural setting in which this property is stated is that of 2-categories.
\end{rmk}

\subsection*{The local flatness criterion}

The following Theorem gives an important flatness criterion.

\begin{thm}[Local flatness criterion]\label{local-flatness}
Let $A$ be a ring, $I\subseteq A$ a proper ideal, and $M$ an $A$-module. If either

\begin{enumeratei}

\item $I$ is nilpotent, or

\item $A$ is a noetherian local ring and $M$ is a finitely generated $B$-module, where $B$ is a noetherian local ring with a local homomorphism $\phi\colon A\to B$ and the two structures of module on $M$ are compatible with $\phi$

\end{enumeratei}

then the following conditions are equivalent:
\begin{itemize}
\item $M$ is a flat $A$-module.
\item $M/IM$ is flat over $A/I$ and $\Tor^A_1(M,A/I)=0$.
\item $M/I^nM$ is flat over $A/I^n$ for every $n\geq 1$.
\end{itemize}
\end{thm}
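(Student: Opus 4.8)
The implication ``$M$ flat over $A$ $\Rightarrow$ the other two conditions'' is formal: $M/I^{n}M = M\otimes_{A}A/I^{n}$ is flat over $A/I^{n}$ by base change, and $\Tor^{A}_{1}(M,A/I)=0$ because $M$ is flat; conversely, once $M$ is known to be flat the remaining equivalences follow the same way, so the whole theorem reduces to proving that, under hypothesis (i) or (ii), the conditions ``$\Tor^{A}_{1}(A/I,M)=0$ and $M/IM$ flat over $A/I$'' (or ``$M/I^{n}M$ flat over $A/I^{n}$ for all $n$'') force $M$ to be flat over $A$. I would organize everything around a single \emph{engine}, valid for an arbitrary ring $A$: if $\Tor^{A}_{1}(A/I,M)=0$ and $M/IM$ is flat over $A/I$, then $\Tor^{A}_{1}(N,M)=0$ for every $A$-module $N$ annihilated by a power of $I$, and consequently $M/I^{n}M$ is flat over $A/I^{n}$ for every $n$ (the last statement by a routine change-of-rings argument, using that $\Tor^{A}_{1}(A/I^{n},M)=0$, so that $\Tor^{A/I^{n}}_{1}(M/I^{n}M,N)\cong\Tor^{A}_{1}(M,N)$ for $A/I^{n}$-modules $N$).

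The plan for the engine is a two-step dévissage. First, for an $A/I$-module $\bar N$, choose a presentation $0\to K\to F\to\bar N\to 0$ with $F$ free over $A/I$; the $\Tor$ long exact sequence over $A$ yields $\Tor^{A}_{1}(F,M)\to\Tor^{A}_{1}(\bar N,M)\to\ker\bigl(K\otimes_{A}M\to F\otimes_{A}M\bigr)$, where the left-hand term vanishes by the hypothesis $\Tor^{A}_{1}(A/I,M)=0$ together with additivity of $\Tor$ over direct sums, and the right-hand term vanishes because $K\otimes_{A}M=K\otimes_{A/I}(M/IM)$ injects into $F\otimes_{A/I}(M/IM)$ by flatness of $M/IM$ over $A/I$; hence $\Tor^{A}_{1}(\bar N,M)=0$. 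Second, if $I^{k}N=0$, the \emph{finite} filtration $N\supseteq IN\supseteq\cdots\supseteq I^{k}N=0$ has $A/I$-module quotients, so descending induction on the $\Tor$ long exact sequences of $0\to I^{j}N/I^{j+1}N\to N/I^{j+1}N\to N/I^{j}N\to 0$ propagates the vanishing from the graded pieces up to $N$ itself.

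Case (i) is then immediate: if $I$ is nilpotent, \emph{every} $A$-module is annihilated by a power of $I$, so the engine gives $\Tor^{A}_{1}(-,M)\equiv 0$, i.e.\ $M$ is flat over $A$; combined with the formal implications this closes the cycle of equivalences.

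For case (ii) the ideal $I$, being proper in the local ring $A$, lies in $\m_{A}$ but need not be nilpotent, so the dévissage no longer terminates and one must pass to a complete setting. The plan is: note that by Artin--Rees / Krull's intersection theorem $M$ and the relevant finitely generated modules are $I$-adically separated; reduce flatness of $M$ over $A$ to flatness over the $\m_{A}$-adic completion $\widehat A$ of the completed module $\widehat M$, using that flatness and the vanishing of $\Tor_{1}$ against the residue field are insensitive to completion (here the finiteness of $M$ over $B$ and faithful flatness of completions are used); observe that, $k=A/\m_{A}$ being annihilated by $I$, the engine already gives $\Tor^{A}_{1}(k,M)=0$, so the engine may be re-run with $\m_{A}$ in place of $I$ to obtain $\Tor^{\widehat A}_{1}(N/\m^{n}N,\widehat M)=0$ for every finitely generated $\widehat A$-module $N$ and all $n$; finally assemble these finite-level vanishings into flatness of $\widehat M$ over $\widehat A$. \textbf{The main obstacle} is exactly this last step: turning ``flat modulo every power of the maximal ideal'' into genuine flatness requires the noetherian hypotheses and an Artin--Rees argument to control the inverse system of kernels (Mittag--Leffler) and the completeness of $\widehat M$, in contrast to the nilpotent case which is pure homological bookkeeping.
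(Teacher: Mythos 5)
The paper does not prove this theorem; it only states it and refers the reader to Matsumura, \S 22, so there is no in-paper argument to compare yours against. Judged on its own, the ``engine'' you describe is correct and is the standard dévissage: additivity of $\Tor$ kills $\Tor^A_1(F,M)$ for $F$ free over $A/I$, flatness of $M/IM$ over $A/I$ supplies the needed injectivity on the $\Tor_0$ level, a finite $I$-adic filtration propagates the vanishing to all $I$-power-torsion modules, and the base-change spectral sequence converts $\Tor^A_1(A/I^n,M)=0$ into flatness of $M/I^nM$ over $A/I^n$. Case (i) is then complete, since every module is $I$-power torsion when $I$ is nilpotent.

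For case (ii) your outline has a real gap, which you yourself flag. The detour through $\m_A$-adic completions is not needed and creates the very difficulties you list (whether completion preserves and detects the relevant flatness, Mittag--Leffler for the inverse system of kernels, and so on). The direct route, which is essentially Matsumura's, stays at the level of $A$ and $B$: given a finitely generated $A$-module $N$, choose a presentation $0\to R\to F\to N\to 0$ with $F$ finite free over $A$, so that $\Tor^A_1(N,M)=\ker(R\otimes_A M\to F\otimes_A M)$. Your engine gives $\Tor^A_1(N/I^kN,M)=0$ for every $k$, hence $(R/(R\cap I^kF))\otimes_A M\to(F/I^kF)\otimes_A M$ is injective; any element of the kernel therefore lies in the image of $(R\cap I^kF)\otimes_A M$, and Artin--Rees (namely $R\cap I^kF\subseteq I^{k-c}R$ for all large $k$) puts it inside $I^{k-c}(R\otimes_A M)$. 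Since $R\otimes_A M$ is a finitely generated $B$-module and $\phi(I)B\subseteq\m_B$, Krull's intersection theorem over $B$ shows $\bigcap_m I^m(R\otimes_A M)=0$, forcing the kernel to vanish. This is precisely where the noetherian and $B$-finiteness hypotheses enter, and no completion is required.
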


See \cite[$\S$~22]{Mat} for this.

%

\subsection*{A base change result}

Let $X$ be a scheme over a noetherian ring $A$, and $\E$ be a coherent sheaf on $X$. We want to understand the relation between the $A$-modules $\H^i(X,M\otimes_A \E)$ and $M\otimes_A \H^i(X,\E)$ (this is a particular case of the ``base change problem'').

There is a natural homomorphism
$$
\phi_M^i\colon M\otimes_A \H^i(X,\E)\arr \H^i(X,M\otimes_A \E)
$$
that is defined as follows. An element $m\in M$ corresponds to a homomorphism of $A$-modules $m\colon A\to M$, defined by $a\mapsto a\cdot m$. We can consider then the homomorphism $m\otimes \id\colon \E\simeq A\otimes_A \E\to M\otimes_A \E$, which induces a homomorphism in cohomology $(m\otimes \id)_*\colon \H^i(X,\E)\to \H^i(X,M\otimes_A \E)$.

We define then a function $F\colon M\times \H^i(X,\E)\to \H^i(X,M\otimes_A \E)$ by $F(m,\alpha)=(m\otimes \id)_*(\alpha)$; this function is $A$-bilinear in both variables, and so it induces a homomorphism of $A$-modules $\phi_M^i\colon M\otimes_A \H^i(X,\E)\to \H^i(X,M\otimes_A \E)$.

We have the following classical result.

\begin{thm}
Let $X$ be a proper scheme over $A$, $\E$ a coherent sheaf on $X$, flat over $A$, and assume that for every closed point $p \in \spec A$ and a fixed $i$ the homomorphism
$$
\phi_{k(p)}^i\colon k(p)\otimes_A \H^i(X,\E)\arr \H^i(X,k(p)\otimes_A \E)
$$
is surjective. Then for every $A$-module $M$ the homomorphism $\phi_M^i$ is an isomorphism.
\end{thm}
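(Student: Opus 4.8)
The plan is to reduce the geometric statement to a statement about a single complex of free $A$-modules, and then to a short piece of commutative algebra over a noetherian ring. First I would reduce to $M$ finitely generated: since $X$ is noetherian, hence quasi-compact and quasi-separated, $\H^i(X,-)$ commutes with filtered colimits, while $-\otimes_A\mathcal{E}$ commutes with all colimits and the formation of $\phi^i_M$ is compatible with filtered colimits in $M$; as every $A$-module is the filtered colimit of its finitely generated submodules, it suffices to treat finitely generated $M$. Next I would invoke the classical ``cohomology and base change'' complex: because $X$ is proper over the noetherian ring $A$ and $\mathcal{E}$ is coherent and flat over $A$, there is a bounded complex $K^{\bullet}\colon 0\to K^{0}\to\cdots\to K^{n}\to 0$ of finitely generated free $A$-modules with natural isomorphisms $\H^{i}(X,\mathcal{E}\otimes_{A}N)\simeq \H^{i}(K^{\bullet}\otimes_{A}N)$ for all $A$-modules $N$ (see \cite[Chapter~8]{FGA} or EGA~III), under which $\phi^{i}_{N}$ becomes the natural map $N\otimes_{A}\H^{i}(K^{\bullet})\to \H^{i}(K^{\bullet}\otimes_{A}N)$. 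Writing $T^{i}(N):=\H^{i}(K^{\bullet}\otimes_{A}N)$, the theorem becomes: if $\phi^{i}_{k(p)}$ is surjective for every closed point $p\in\spec A$, then $\phi^{i}_{M}$ is an isomorphism for every finitely generated $M$.

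The first half of the algebraic statement is that right exactness of the functor $T^{i}$ implies $\phi^{i}_{M}$ is an isomorphism for all finitely generated $M$. For this I would pick a presentation $A^{a}\to A^{b}\to M\to 0$, note that $\phi^{i}$ is an isomorphism on finite free modules since $T^{i}(A^{b})=T^{i}(A)^{b}=A^{b}\otimes_{A}\H^{i}(K^{\bullet})$, and compare the right-exact sequence $A^{a}\otimes T^{i}(A)\to A^{b}\otimes T^{i}(A)\to M\otimes T^{i}(A)\to 0$ with $T^{i}(A^{a})\to T^{i}(A^{b})\to T^{i}(M)\to 0$; the five lemma gives that $\phi^{i}_{M}$ is an isomorphism.

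The second half — surjectivity of $\phi^{i}_{k(p)}$ at all closed points forces $T^{i}$ to be right exact — is the heart of the matter. Right exactness can be checked locally: for $0\to M'\to M\to M''\to 0$ with all terms finitely generated, $\coker\!\big(T^{i}(M)\to T^{i}(M'')\big)$ is a finitely generated $A$-module (as $A$ is noetherian and $K^{\bullet}$ is finite free), hence vanishes iff it vanishes at every maximal ideal, and both $T^{i}$ and cokernels commute with localization. So I fix a closed point $p$, work over $A_{p}$ with residue field $\kappa=k(p)$ and maximal ideal $\m$, and replace $K^{\bullet}$ by a homotopy-equivalent \emph{minimal} complex $L^{\bullet}$ of finite free $A_{p}$-modules with all differentials $\equiv 0 \pmod{\m}$ (homotopy equivalence is preserved by $\otimes N$, so $T^{i}(N)$ is unchanged). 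Then $T^{i}(\kappa)=L^{i}\otimes\kappa$ and, by minimality, $\operatorname{im}(d^{i-1})\subseteq \m L^{i}$, so surjectivity of $\phi^{i}_{\kappa}$ is exactly the statement that $\ker(d^{i})+\m L^{i}=L^{i}$; by Nakayama $\ker(d^{i})=L^{i}$, i.e. $d^{i}=0$. Therefore $T^{i}(N)=\coker(d^{i-1}\otimes N)$ for all $A_{p}$-modules $N$, which is manifestly right exact. Combining the two halves yields the theorem.

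The only genuinely non-formal input is the existence of the Grothendieck complex $K^{\bullet}$, which is where properness, coherence, and flatness of $\mathcal{E}$ are really used; I would take it as known. After that, the work is entirely commutative-algebraic, and the single delicate step is the Nakayama argument identifying ``$\phi^{i}_{k(p)}$ surjective'' with ``the relevant differential of the minimal complex vanishes at $p$'' — the rest (reduction to finitely generated modules, locality of right exactness, the five-lemma comparison) is routine.
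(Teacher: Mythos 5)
Your proof is correct, but it is worth noting that the paper does not actually prove this theorem: it states it in Appendix~C and simply refers the reader to EGA~III (7.7--7.8) and Hartshorne~III.12, so there is no proof in the text against which to compare. Your argument is essentially the standard one found in those references, organized as cleanly as I have seen it: (1) reduce to a purely algebraic statement via the Grothendieck complex $K^{\bullet}$ of finite free $A$-modules computing $\H^{i}(X,\mathcal{E}\otimes_{A}-)$; (2) the ``easy'' direction that right exactness of $T^{i}=\H^{i}(K^{\bullet}\otimes_{A}-)$ gives the isomorphism for finitely generated modules by a five-lemma comparison with a free presentation; (3) the ``hard'' direction that surjectivity of $\phi^{i}_{k(p)}$ at closed points forces right exactness, localized at a maximal ideal, passed to a \emph{minimal} homotopy-equivalent free complex $L^{\bullet}$, where surjectivity of $\phi^{i}_{\kappa}$ unwinds (via Nakayama) to $d^{i}=0$, whence $T^{i}(N)=\coker(d^{i-1}\otimes N)$ is manifestly right exact. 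Every step checks out: the reduction to finitely generated $M$ via filtered colimits is sound (and in fact becomes automatic once one has $K^{\bullet}$); the cokernel you need to kill is finitely generated because $A$ is noetherian and $K^{\bullet}$ is finite free, so vanishing is a local question; localization commutes with $T^{i}$ by flatness; and the homotopy equivalence $K^{\bullet}\simeq L^{\bullet}$ is preserved by $\otimes_{A}N$, so $T^{i}(N)$ is unchanged. The use of a minimal complex, rather than the more circuitous exactness-properties-of-$\delta$-functors route of Hartshorne~III.12, is a genuine simplification and makes the Nakayama step transparent; EGA's treatment is closer in spirit to yours. No gaps.
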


\begin{defin}
If the conclusion of this theorem holds for a coherent sheaf $\E$ and a natural number $i$, we say that the cohomology group $\H^i(X,\E)$ \gr{satisfies base change}.
\end{defin}

For a discussion of base change and the theorem above, see for example Sections 7.7 and 7.8 of \cite{EGAIII}, or \cite[III, 12]{Har}.


The following theorem tells us that sheaves of differentials satisfy base change in a particular case (see \cite[Theorem 5.5 (i)]{Del}).
\begin{thm}[Deligne]\label{deligne}
Let $X$ be a proper and smooth scheme over a noetherian $\Q$-algebra $A$, and consider the coherent sheaf of K\"{a}hler differentials $\diff_{X/A}$, and its exterior powers $\diff_{X/A}^i=\bigwedge^i\diff_{X/A}$. Then all the cohomology groups $\H^i(X,\diff_{X/A}^j)$ satisfy base change.
\end{thm}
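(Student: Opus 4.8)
We must show that for every $A$-module $M$ the natural homomorphism $M\otimes_A\H^i(X,\diff^j_{X/A})\to \H^i(X,M\otimes_A\diff^j_{X/A})$ is an isomorphism. Since $X\to\spec A$ is smooth, each $\diff^j_{X/A}$ is locally free, hence $A$-flat; so by the semicontinuity theorem (\cite[III, 12]{Har}) the function $s\mapsto h^{i,j}(s)\eqdef\dim_{k(s)}\H^i(X_s,\diff^j_{X_s/k(s)})$ is upper semicontinuous on $\spec A$, and by the cohomology and base change theorem (in the form: constant fibre dimension implies $\H^i$ locally free with formation commuting with arbitrary base change; cf.\ \cite[III, 12]{Har} and the theorem stated above) it suffices to show that each $h^{i,j}$ is \emph{locally constant} on $\spec A$. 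This is what I would aim for.

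The plan is to compare Hodge cohomology with algebraic de Rham cohomology. Write $\H^n_{\mathrm{dR}}(X/A)\eqdef\mathbb{H}^n\bigl(X,\diff^\bullet_{X/A}\bigr)$ for the hypercohomology of the relative de Rham complex, and recall the Hodge--de Rham spectral sequence $E_1^{p,q}=\H^q(X_s,\diff^p_{X_s/k(s)})\Rightarrow\H^{p+q}_{\mathrm{dR}}(X_s/k(s))$ attached to each fibre $X_s$. I would establish two facts: \emph{(i)} $\H^n_{\mathrm{dR}}(X/A)$ is a locally free $A$-module whose formation commutes with arbitrary base change; \emph{(ii)} for every $s\in\spec A$ this spectral sequence degenerates at $E_1$, because $k(s)$ has characteristic $0$. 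Granting (i) and (ii): degeneration gives $\sum_{i+j=n}h^{i,j}(s)=\dim_{k(s)}\H^n_{\mathrm{dR}}(X_s/k(s))$ for every $s$, and by (i) the right-hand side equals $\rk_A\H^n_{\mathrm{dR}}(X/A)$, which is locally constant on $\spec A$. Thus $\sum_{i+j=n}h^{i,j}$ is locally constant while every summand is upper semicontinuous; the elementary observation that a locally constant finite sum of integer-valued upper semicontinuous functions must have all its summands locally constant then forces each $h^{i,j}$ to be locally constant, as required.

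It remains to supply (i) and (ii). For (i): $X\to\spec A$ is of finite presentation, so a standard spreading-out argument descends the whole situation to a proper smooth family over a finitely generated $\Q$-subalgebra of $A$, and then — using that the Hodge and Betti numbers of a fixed smooth proper variety are unchanged under field extension, together with faithfully flat descent of local constancy along $A_0\to A_0\otimes_\Q\mathbb{C}$ — one reduces to the case where $A$ is of finite type over $\mathbb{C}$. Over $\mathbb{C}$, Grothendieck's algebraic de Rham comparison theorem identifies $\H^n_{\mathrm{dR}}(X_s/\mathbb{C})$ with the singular cohomology $\H^n(X_s^{\mathrm{an}},\mathbb{C})$ of each fibre; since a proper smooth morphism of complex analytic spaces is a locally trivial topological fibre bundle (Ehresmann's theorem), these dimensions are locally constant on $\spec A$, so $\H^n_{\mathrm{dR}}(X/A)$ has locally constant rank and is therefore locally free with formation commuting with base change (again by the base change criterion). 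For (ii), the degeneration at $E_1$ of the Hodge--de Rham spectral sequence of a smooth proper variety over a field of characteristic $0$ is precisely the classical theorem of Deligne: over $\mathbb{C}$ it follows from Hodge theory (the $E_1$ terms are the graded pieces of the Hodge filtration on $\H^n_{\mathrm{dR}}$), a general characteristic-$0$ field reduces to $\mathbb{C}$ by the Lefschetz principle, and a purely algebraic proof is due to Deligne--Illusie via reduction to positive characteristic.

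The genuine obstacle is (ii), the degeneration of the Hodge--de Rham spectral sequence in characteristic zero: it is the one step with real content, and there is no soft proof of it, whereas everything else is bookkeeping with semicontinuity, base change, spreading out and descent. One could instead bypass de Rham cohomology altogether and quote directly that the fibrewise Hodge numbers of a smooth proper family over a base of finite type over $\mathbb{C}$ are locally constant — a standard output of the theory of variations of Hodge structure, the Hodge bundles being the graded pieces of a holomorphic subbundle of the de Rham bundle — and then run the same reduction of a general noetherian $\Q$-algebra to the finite-type $\mathbb{C}$ case; but this merely repackages the same Hodge-theoretic input.
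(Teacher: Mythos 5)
Note first that the paper itself does \emph{not} prove this theorem; it is quoted as an external result, with a direct reference to Deligne's paper \cite[Th\'eor\`eme~5.5(i)]{Del}, so there is no in-paper argument for your proposal to be compared against. That said, your sketch does reconstruct the main lines of Deligne's own argument: spread out to a finitely generated $\Q$-algebra, base change to $\mathbb{C}$, invoke Ehresmann's theorem and the algebraic de Rham comparison theorem to show that the relative de Rham cohomology has locally constant ranks, and use fibrewise Hodge--de Rham degeneration in characteristic zero to pin the upper-semicontinuous Hodge numbers against the locally constant de Rham Betti numbers. Your diagnosis that fibrewise degeneration (Deligne, or Deligne--Illusie) is the one step with real mathematical content is exactly right, and the ``locally constant sum of upper-semicontinuous integer-valued functions forces each summand locally constant'' observation is the correct closing move over a reduced base.

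There is, however, a genuine gap in the opening reduction. You claim it suffices to show the Hodge numbers $h^{i,j}(s)$ are locally constant on $\spec A$, by appeal to the criterion ``constant fibre dimension implies the higher direct images are locally free and commute with base change.'' In the form you invoke (Grauert's theorem, \cite[III, 12.9]{Har}), this criterion requires the base to be \emph{integral}, and it genuinely fails over a non-reduced base; and the base change theorem recalled in the paper's Appendix~\ref{appc} asks for surjectivity of $\phi^i_{k(p)}$ at closed points, not constancy of fibre ranks, so it does not bridge the gap either. Since the statement allows an arbitrary noetherian $\Q$-algebra $A$ --- and the non-reduced case is precisely the one that matters in a deformation-theoretic paper, where artinian bases are the whole point --- this must be filled, and it cannot be done by Grauert alone. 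The way Deligne handles it is with a separate length-theoretic argument over artinian local bases: for $(A,\mathfrak{m},k)$ artinian local, the $\mathfrak{m}$-adic filtration of $\diff^p_{X/A}$ gives
\[
\operatorname{length}_A \H^q(X,\diff^p_{X/A}) \le \operatorname{length}(A)\cdot \dim_k\H^q(X_0,\diff^p_{X_0})\,,
\]
and freeness of $\H^n_{\mathrm{dR}}(X/A)$ of rank $\sum_{p+q=n}h^{p,q}$, combined with the relative Hodge--de Rham spectral sequence, forces all of these inequalities to be equalities; an $A$-module of finite length whose length equals $\operatorname{length}(A)$ times the dimension of its reduction modulo $\mathfrak{m}$ is automatically free, so each $\H^q(X,\diff^p_{X/A})$ is free of the expected rank, and one then globalizes via the exchange formalism. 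So the theorem is not ``degeneration plus bookkeeping'': a second, independent argument is required to pass from reduced finite-type bases to arbitrary noetherian $\Q$-algebras.
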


\section{The Rim--Schlessinger condition for flat modules and algebras}
\label{appd}

In this Appendix we prove Proposition~\ref{schemes.rs}. For any ring $A$ we denote by $\Modfl A$ the category of flat $A$-modules. Given a ring homomorphism $A \arr B$, we have a functor $\Modfl A \arr \Modfl B$ obtained by tensoring with $B$ over $A$. Suppose that we have a diagram
   \[
   A' \stackrel{\rho'}{\larr} A \stackrel{\rho''}{\longleftarrow} A''
   \]
of ring homomorphisms, such that $\rho''\colon A'' \arr A$ is surjective with nilpotent kernel; we denote this diagram by $A_{\bullet}$. Let $B$ be the fibered product $A'\times_{A}A''$. An object of the fibered product $\Modfl A'\times_{\Modfl A} \Modfl A''$ consists of a triplet $(M', M'', \alpha_{M})$, where $M'$ and $M''$ are flat modules over $A'$ and $A''$ respectively, and $\alpha_{M}$ is an isomorphism of $A$-modules $\alpha_{M}\colon M' \otimes_{A'}A \simeq M'' \otimes_{A''}A$. An arrow $(M_{1}', M_{1}'', \alpha_{M_{1}}) \arr (M_{2}', M_{2}'', \alpha_{M_{2}})$ is a pair $(f', f'')$ of homomorphisms $f'\colon M'_{1} \arr M'_{2}$ and $f''\colon M''_{1} \arr M''_{2}$ that compatible with the $\alpha$'s, in the obvious sense. We denote the category $\Modfl A'\times_{\Modfl A} \Modfl A''$ by $\Modfl A_{\bullet}$; its objets will be called \emph{flat $A_{\bullet}$-modules}.

The natural functor $\Phi\colon \Modfl B  \arr \Modfl A_{\bullet}$ sends a flat $B$-module $N$ into the triplet $N\otimes_{B}A_{\bullet} \eqdef (N\otimes_{B}A', N\otimes_{B}A'', \alpha_{N})$, where $\alpha_{N}$ is the isomorphism $(N\otimes_{B}A') \otimes_{A'} A \simeq (N\otimes_{B}A'')\otimes_{A''}A$ obtained by composing the canonical isomorphisms $(N\otimes_{B}A') \otimes_{A'} A \simeq N\otimes_{B}A$ and $N\otimes_{B}A \simeq (N\otimes_{B}A') \otimes_{A'} A$.

One can also consider the category $\Algfl A$ of flat $A$-algebras, define the category $\Algfl A_{\bullet}$ as the fibered product $\Algfl A' \times_{\Algfl A} \Algfl A''$, and define a functor $\Phi\colon \Algfl B \arr \Algfl A_{\bullet}$ in a completely similar fashion.

\begin{prop}\label{rs-alg}
The functors $\Modfl B \arr \Modfl A_{\bullet}$ and $\Algfl B \arr \Algfl A_{\bullet}$ are equivalences of categories.
\end{prop}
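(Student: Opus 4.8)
The plan is to exhibit an explicit quasi-inverse to $\Phi$ and to check that the two composites are isomorphic to the identity, treating the statement for modules first and deducing the one for algebras from it. Write $B = A'\times_A A''$ and let $\tilde I \subseteq B$ be the kernel of the projection $\mathrm{pr}'\colon B \to A'$; since $\rho''$ is surjective, $\mathrm{pr}'$ is surjective and $\tilde I$ is isomorphic to $\ker\rho''$, hence nilpotent, so that $A' = B/\tilde I$. I also intend to reduce to the case in which $\ker\rho''$ is square-zero: writing $I = \ker\rho''$ and factoring $A'' \to A$ through the quotients $A''/I^{j}$, one has $A'\times_A A'' \simeq (A'\times_A A''/I^{j})\times_{A''/I^{j}} A''$ by associativity of fibered products, and the analogous associativity holds for $2$-fibered products of categories; so the general statement follows from the square-zero one by induction on the order of nilpotence of $I$.

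Given a flat $A_\bullet$-module $(M',M'',\alpha_M)$, set $M := M'\otimes_{A'}A$, identified with $M''\otimes_{A''}A$ via $\alpha_M$, and put $\Psi(M',M'',\alpha_M) := M'\times_M M''$, a $B$-module, with the evident action on arrows. For the composite $\Psi\circ\Phi$ I would argue that the sequence of $B$-modules $0 \to B \to A'\oplus A'' \to A \to 0$ (with maps $b\mapsto(\mathrm{pr}'(b),\mathrm{pr}''(b))$ and $(a',a'')\mapsto\rho'(a')-\rho''(a'')$) is exact, the right-hand surjectivity being exactly surjectivity of $\rho''$; tensoring with a flat $B$-module $N$ then gives an exact sequence $0 \to N \to (N\otimes_B A')\oplus(N\otimes_B A'') \to N\otimes_B A \to 0$, i.e. the canonical map $N \to (N\otimes_B A')\times_{N\otimes_B A}(N\otimes_B A'') = \Psi\Phi(N)$ is an isomorphism, and this is visibly natural in $N$.

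For the composite $\Phi\circ\Psi$, given $(M',M'',\alpha_M)$ put $N := M'\times_M M''$. The projection $N \to M'$ is surjective (using that $M''\to M$ is surjective, which follows from surjectivity of $\rho''$), with kernel $\ker(M''\to M)$, so one has an exact sequence of $B$-modules $0 \to \ker(M''\to M) \to N \to M' \to 0$. One must then show that $N$ is flat over $B$ and that the canonical maps $N\otimes_B A' \to M'$ and $N\otimes_B A'' \to M''$ are isomorphisms, compatibly with $\alpha_M$. Flatness I plan to obtain from the local flatness criterion (Theorem~\ref{local-flatness}) applied with the nilpotent ideal $\tilde I$: it suffices to check that $N\otimes_B A'$ is flat over $A'$ and that $\Tor_1^B(N,A') = 0$. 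These facts, and the identification of the base changes, should come out of chasing the long exact $\Tor^B(-,A')$- and $\Tor^B(-,A'')$-sequences attached to the displayed short exact sequence, using flatness of $M'$, $M''$ and $M$ together with the elementary identities $M'\otimes_B A' = M'$ and $A''\otimes_B A' = A$.

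I expect this last step to be the main obstacle. The difficulty is that $M'$ and $M''$ are in general \emph{not} flat over $B$ (only over $A'$, resp.\ $A''$), so the relevant $\Tor^B$ groups do not vanish for free and must actually be computed — for instance $\Tor_*^B(A',A')$ and $\Tor_*^B(A',A'')$ — and the hypotheses are asymmetric, only $\rho''$ being surjective, so the arguments on the $A'$- and $A''$-sides are not parallel; the reduction to the square-zero case is designed precisely to keep these computations tractable. Once the module equivalence is in hand, the algebra case is formal: if $R'$, $R''$, $R$ form an object of $\Algfl A_\bullet$, then $N = R'\times_R R''$ carries a canonical $B$-algebra structure (componentwise multiplication descends to the fibered product), it is flat over $B$ by the module statement, and the base-change isomorphisms $N\otimes_B A' \cong R'$ and $N\otimes_B A'' \cong R''$ are ring homomorphisms; hence $N \mapsto \Phi(N)$ and this construction are mutually quasi-inverse on flat algebras too.
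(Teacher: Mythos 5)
Your quasi-inverse $\Psi(M',M'',\alpha_M) = M'\times_M M''$ is the same as the paper's (there written as $\ker\bigl(\delta_M\colon M'\times M''\to M''\otimes_{A''}A\bigr)$), and your treatment of $\Psi\Phi N\simeq N$ by tensoring $0\to B\to A'\times A''\to A\to 0$ with a flat $N$ matches the paper exactly. The genuine gap — and you flag it yourself — is in the other half: flatness of $\Psi M$ over $B$ and the natural isomorphism $\Phi\Psi M\simeq M$ for an arbitrary flat $A_\bullet$-module $M$. Your plan is to extract $\Tor_1^B(\Psi M,A')=0$ from the long exact sequence of $0\to\ker(M''\to M)\to\Psi M\to M'\to 0$, but this forces one to compute $\Tor_*^B(M',A')$ and $\Tor_*^B(\ker(M''\to M),A')$, which do \emph{not} vanish: even after your square-zero reduction one finds $\Tor_1^B(M',A')\simeq M\otimes_A I$ where $I=\ker\rho''$, so one must identify several nonzero groups and pin down the connecting maps. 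None of that is carried out in the proposal, and the reduction to the square-zero case, while correct, does not by itself resolve the difficulty; as written the proof is missing its central step.

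The paper avoids these $\Tor$ computations altogether by a bootstrap through a free cover, which is the idea missing from your plan. Choose a free $B$-module $F_0$ with a surjection $F_0\otimes_B A'\twoheadrightarrow M'$; lift the induced map $F_0\otimes_B A\to M''\otimes_{A''}A$ across the nilpotent surjection $A''\to A$ to a surjection $F_0\otimes_B A''\twoheadrightarrow M''$, obtaining a surjection $\Phi F_0\to M$ in $\Modfl A_\bullet$ whose kernel $K$ is again a flat $A_\bullet$-module (by flatness of $M'$, $M''$). The nine-term diagram built from the three defining sequences of $\Psi$ applied to $K\to\Phi F_0\to M$ gives an exact column $0\to\Psi K\to\Psi(\Phi F_0)\to\Psi M\to 0$. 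Tensoring with $A'$, using that the comparison map is an isomorphism in the free case, first shows that $\Psi M\otimes_B A'\to M'$ is surjective for \emph{every} flat $A_\bullet$-module $M$; applying this to $K$ as well and chasing the diagram upgrades it to an isomorphism, and then exactness of $\Psi K\otimes_B A'\to\Psi(\Phi F_0)\otimes_B A'\to\Psi M\otimes_B A'\to 0$, with $\Psi(\Phi F_0)=F_0$ free, yields $\Tor_1^B(\Psi M,A')=0$ and hence flatness via the local criterion with the nilpotent ideal $\ker(B\to A')$. This route closes the argument without computing any nonzero $\Tor$ group and needs no reduction to the square-zero case. Your deduction of the algebra case from the module case is correct.
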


\begin{proof}
Let us construct a quasi-inverse $\Psi\colon \Modfl A_{\bullet} \arr \Modfl B$. Given a flat $A\bullet$-module $M = (M', M'', \alpha_{M})$, we define $\Psi M$ as the kernel of the surjective homomorphism $\delta_{M}\colon M'\times M''\arr M''\otimes_{A''}A$ defined by $(m', m'') \mapsto \alpha_{M}(m'\otimes 1) - m''\otimes 1$. The product $M' \times M''$ is an $A' \times A''$-module, and $\Psi M$ is a sub-$B$-module. This defines a functor from $\Modfl A_{\bullet}$ to the category of $B$-modules.

Notice that by tensoring the exact sequence of $B$-modules
   \[
   \xymatrix{
   0 \ar[r]& B \ar[r]& A' \times A'' \ar[r]^-{\rho' - \rho''}& A \ar[r]& 0
   }
   \]
with a flat $B$-module $N$ we get an exact sequence
   \[
   \xymatrix{
   0 \ar[r]& N \ar[r]& (N\otimes_{B}A') \times (N\otimes_{B}A'')
   \ar[r]& N \otimes_{B} A \ar[r]& 0
   }\,,
   \]
which yields a functorial isomorphism $\Psi\Phi N \simeq N$ of $B$-modules. 

So we have left to show two things:

\begin{enumeratea}

\item $\Psi M$ is always flat, and 

\item for all $M$ in $\Modfl A'\times_{\Modfl A} \Modfl A''$, there is a functorial isomorphism $\Phi\Psi M \simeq M$.

\end{enumeratea}

So, let $M = (M', M'', \alpha_{M})$ be a flat $A_{\bullet}$-module. The inclusion $\Psi M \subseteq M' \times M''$ induces $B$-linear maps $\Psi M \arr M'$ and $\Psi M \arr M''$, which in turn induce homomorphisms $\phi'_{M}\colon \Psi M\otimes_{B}A' \arr M'$ and $\phi''_{M}\colon \Psi M\otimes_{B}A'' \arr M''$. We will show that $\Psi M$ is a flat $B$-module, and that $\phi'_{M}$ and $\phi''_{M}$ are isomorphisms. The functorial isomorphism $\Phi\Psi M \simeq M$ is $(\phi'_{M}, \phi''_{M})$. (Notice that $\phi'_{M}$ and $\phi''_{M}$ are isomorphism when $M$ is of the form $\Phi N$, as was shown above).

Choose a free $B$-module $F_{0}$ with a surjective homomorphism of $A'$-modules $f'\colon F_{0}\otimes_{B}A' \arr M'$. The induced homomorphism of $A$-modules
   \[
   \alpha \circ (f\otimes \id_{A})\colon  F_{0} \otimes_{B}
   A \arr M''\otimes_{A''}A
   \]
lifts to a homomorphism of $A''$ modules $f''\colon F_{0}\otimes_{B}A'' \arr M''$, which is still surjective, because the kernel of $A'' \arr A$ is nilpotent. Then $(f',f'')$ gives a surjective homomorphism $F \eqdef \Phi F_{0} \arr M$ in $\Modfl A_{\bullet}$. Denote by $K'$ and $K''$ the kernels of $f'$ and $f''$ respectively. Because of the flatness of $M'$ and $M''$, the two sequences
   \[
   \xymatrix{
   0 \ar[r]& K'\otimes_{A'}A \ar[r]& F'\otimes_{A'}A \ar[r]&
   M'\otimes_{A'}A \ar[r]& 0 
   }
   \]
and
   \[
   \xymatrix{
   0 \ar[r]& K''\otimes_{A''}A \ar[r]& F''\otimes_{A''}A \ar[r]&
   M'\otimes_{A''}A \ar[r]& 0 
   }
   \]
are exact; hence the isomorphism $\alpha_{F}\colon F'\otimes_{A'}A \simeq F''\otimes_{A''}A$ restricts to an isomorphism $\alpha_{K}\colon K'\otimes_{A'}A \simeq K''\otimes_{A''}A$. This defines a flat $A_{\bullet}$-module $K \eqdef (K', K'', \alpha_{K})$. We have a commutative diagram
   \[
   \xymatrix{
   &0\ar[d]&0\ar[d]&0\ar[d]&\\
   0\ar[r]& \Psi K \ar[r]\ar[d] & K' \times K'' \ar[r]^-{\delta_{K}}\ar[d] &
   K''\otimes_{A''}A \ar[r]\ar[d] & 0\\
   0\ar[r]& \Psi F \ar[r]\ar[d] & F'
   \times F'' \ar[r]^-{\delta_{F}}\ar[d] &
   F''\otimes_{A''}A \ar[r]\ar[d] & 0\\
   0\ar[r]& \Psi M \ar[r]\ar[d] & M' \times M'' \ar[r]^-{\delta_{M}}\ar[d] &
   M''\otimes_{A''}A \ar[r]\ar[d] & 0\\
   &0&0&0&\\
   }
   \]
with exact rows, whose middle and right hand columns are exact. This implies that the left hand column is also exact. Tensoring this with $A'$ we get a  commutative diagram
   \[
   \xymatrix{
   &\Psi K\otimes_{B}A' \ar[r]\ar[d]^-{\phi'_{K}}&
   \Psi F\otimes_{B}A' \ar[r]\ar[d]^-{\phi'_{F}}&
   \Psi M\otimes_{B}A' \ar[r]\ar[d]^-{\phi'_{M}}& 0\\
   0\ar[r] & K' \ar[r]& F' \ar[r]& M' \ar[r]& 0
   }
   \]
with exact rows. Since $\phi'_{F}$ is easily seen to be an isomorphism, we see that $\phi'_{M}$ is also surjective. But here $M$ is a arbitrary flat $A_{\bullet}$-module, so $\phi'_{K}$ must also be surjective; and by diagram chasing we get that $\phi'_{M}$ is an isomorphism. The argument for $\phi''_{M}$ is completely analogous.

Then $\phi'_{K}$ is also an isomorphism; hence the sequence
   \[
   \xymatrix{
   0\ar[r]&\Psi K\otimes_{B}A' \ar[r]&
   \Psi F\otimes_{B}A' \ar[r]&
   \Psi M\otimes_{B}A' \ar[r]& 0
   }
   \]
is exact. Since $F_{0} = \Psi F$ is free, this means that $\tor_{1}^{A'}(\Psi M, A') = 0$. The projection $B \arr A'$ is surjective with nilpotent kernel, since it is obtained by base change from $\rho''\colon A'' \arr A$; hence by the local criterion of flatness (Theorem~\ref{local-flatness}) we have that $\Psi M$ is flat over $B$, and this completes the proof for the categories of flat modules.

For the case of flat algebras, it is sufficient to notice that for any $A_{\bullet}$ algebra $M$, the module $\Psi M$ has a natural algebra structure, and that the natural isomorphisms $\Phi\Psi \simeq \id$ and $\Psi\Phi \simeq \id$ defined above preserve the algebra structures.
\end{proof}

\begin{proof}[Proof of Proposition~\ref{schemes.rs}]
The fact that the restriction of the functor
   \[
   \Phi\colon \catdef(A')\times_{\catdef(A)}\catdef(A'')\to 
   \catdef(A'\times_A A'')
   \]
to affine schemes is an equivalence follows immediately from Proposition~\ref{rs-alg}.

Let us extend the definition of the quasi-inverse
$$
\Psi\colon \catdef(A')\times_{\catdef(A)}\catdef(A'')\to \catdef(A'\times_A A'')
$$
of $\Phi$ that we described above to non-necessarily affine schemes. Take an object
$$
(X',X'',\theta) \in \catdef(A')\times_{\catdef(A)}\catdef(A'')
$$
that is, a pair of flat schemes $X'\to \spec A'$ and $X''\to \spec A''$ with an isomorphism $\theta\colon X''|_{\spec A}\simeq X'|_{\spec A}$ of the pullbacks to $A$. We can see this object as the following diagram
$$
\xymatrix@-15pt{
X'\ar@{|->}[dd]&&X''\ar@{|->}[dd]\\
&X=X'|_{\spec A}\ar@{|->}[dd]\ar[ur]\ar[ul]&\\
A'\ar[rd]&&A''\ar[ld]\\
&A&
}
$$
where the morphism $X'|_{\spec A}\to X''$ is the composite of the inverse of $\theta$ and of the closed immersion $X''|_{\spec A}\to X''$.

We consider then the sheaf of $A'\times_A A''$-algebras $\O_{X'}\times_{\O_{X}}\O_{X''}$ on the topological space $X$. The locally ringed space $\widetilde{X}=(X,\O_{X'}\times_{\O_{X}}\O_{X''})$ is a scheme by the affine case, and moreover it is flat over $A'\times_A A''$, since flatness is a local property. We set $\Psi(X',X'',\theta) \eqdef \widetilde{X}$.

By the universal property of fibered products one can easily see that an arrow $(X',X'',\theta)\to (Y',Y'',\nu)$ gives a morphism $\widetilde{X}\to \widetilde{Y}$, and routine verifications show that $\Phi$ and $\Psi$ are quasi-inverse to each other.
\end{proof}

\bibliographystyle{amsalpha}
\bibliography{def}

\newcommand{\etalchar}[1]{$^{#1}$}
\providecommand{\bysame}{\leavevmode\hbox to3em{\hrulefill}\thinspace}
\providecommand{\MR}{\relax\ifhmode\unskip\space\fi MR }
\providecommand{\MRhref}[2]{%
  \href{http://www.ams.org/mathscinet-getitem?mr=#1}{#2}
}
\providecommand{\href}[2]{#2}
\begin{thebibliography}{FGI{\etalchar{+}}05}

\bibitem[Art69]{Art}
Michael Artin, \emph{Algebraic approximation of structures over complete local
  rings}, Inst. Hautes \'Etudes Sci. Publ. Math. (1969), no.~36, 23--58.

\bibitem[Art74]{artin-stacks}
\bysame, \emph{Versal deformations and algebraic stacks}, Invent. Math.
  \textbf{27} (1974), 165--189.

\bibitem[Art76]{artin-tata}
\bysame, \emph{Lectures on deformations of singularities}, Tata Institute of
  Fundamental Research, Bombay, 1976.

\bibitem[BF97]{bf-intrinsic}
Kai Behrend and Barbara Fantechi, \emph{The intrinsic normal cone}, Invent.
  Math. \textbf{128} (1997), no.~1, 45--88.

\bibitem[Del68]{Del}
Pierre Deligne, \emph{Th\'eor\`eme de {L}efschetz et crit\`eres de
  d\'eg\'en\'erescence de suites spectrales}, Inst. Hautes \'Etudes Sci. Publ.
  Math. (1968), no.~35, 259--278.

\bibitem[DMZ07]{doubek-markl-zima}
M.~Doubek, M.~Markl, and P.~Zima, \emph{Deformation theory (lecture notes)},
  Arch. Math. (Brno) \textbf{43} (2007), no.~5, 333--371.

\bibitem[FGI{\etalchar{+}}05]{FGA}
Barbara Fantechi, Lothar G{\"o}ttsche, Luc Illusie, Steven~L. Kleiman, Nitin
  Nitsure, and Angelo Vistoli, \emph{Fundamental algebraic geometry},
  Mathematical Surveys and Monographs, vol. 123, American Mathematical Society,
  Providence, RI, 2005, Grothendieck's FGA explained.

\bibitem[FL85]{Fulton-Lang}
William Fulton and Serge Lang, \emph{Riemann-{R}och algebra}, Grundlehren der
  Mathematischen Wissenschaften, vol. 277, Springer-Verlag, New York, 1985.

\bibitem[FM98]{Fan}
Barbara Fantechi and Marco Manetti, \emph{Obstruction calculus for functors of
  {A}rtin rings. {I}}, J. Algebra \textbf{202} (1998), no.~2, 541--576.

\bibitem[GH85]{Griff}
Phillip Griffiths and Joe Harris, \emph{On the {N}oether-{L}efschetz theorem
  and some remarks on codimension-two cycles}, Math. Ann. \textbf{271} (1985),
  no.~1, 31--51.

\bibitem[GLS07]{greuel-lossen-shustin}
G.-M. Greuel, C.~Lossen, and E.~Shustin, \emph{Introduction to singularities
  and deformations}, Springer Monographs in Mathematics, Springer, Berlin,
  2007.

\bibitem[God58]{God}
Roger Godement, \emph{Topologie alg\'ebrique et th\'eorie des faisceaux},
  Actualit'es Sci. Ind. No. 1252. Publ. Math. Univ. Strasbourg. No. 13,
  Hermann, Paris, 1958.

\bibitem[Gro63]{EGAIII}
Alexander Grothendieck, \emph{\'{E}l\'ements de g\'eom\'etrie alg\'ebrique.
  {III}. \'{E}tude cohomologique des faisceaux coh\'erents. {II}}, Inst. Hautes
  \'Etudes Sci. Publ. Math. (1963), no.~17, 91.

\bibitem[Gro67]{EGAIV}
\bysame, \emph{\'{E}l\'ements de g\'eom\'etrie alg\'ebrique. {IV}. \'{E}tude
  locale des sch\'emas et des morphismes de sch\'emas {IV}}, Inst. Hautes
  \'Etudes Sci. Publ. Math. (1967), no.~32, 361.

\bibitem[Gro71]{SGA1}
\bysame, \emph{Rev\^etements \'etales et groupe fondamental}, Lecture Notes in
  Mathematics, vol. Vol. 224, Springer-Verlag, Berlin, 1971, S{\'e}minaire de
  G{\'e}om{\'e}trie Alg{\'e}brique du Bois Marie 1960--1961 (SGA 1), Dirig{\'e}
  par Alexandre Grothendieck. Augment{\'e} de deux expos{\'e}s de M. Raynaud.

\bibitem[Har77]{Har}
Robin Hartshorne, \emph{Algebraic geometry}, Graduate Texts in Mathematics,
  no.~52, Springer-Verlag, New York, 1977.

\bibitem[Har10]{hartshorne-deformation}
\bysame, \emph{Deformation theory}, Graduate Texts in Mathematics, vol. 257,
  Springer, New York, 2010.

\bibitem[Igu55]{igusa}
Jun-ichi Igusa, \emph{On some problems in abstract algebraic geometry}, Proc.
  Nat. Acad. Sci. U. S. A. \textbf{41} (1955), 964--967.

\bibitem[Ill71]{Ill}
Luc Illusie, \emph{Complexe cotangent et d\'eformations. {I}}, Springer-Verlag,
  Berlin, 1971, Lecture Notes in Mathematics, Vol. 239.

\bibitem[Ill72]{illusie-cotangent2}
\bysame, \emph{Complexe cotangent et d\'eformations. {II}}, Springer-Verlag,
  Berlin, 1972, Lecture Notes in Mathematics, Vol. 283.

\bibitem[Kaw92]{Kaw}
Yujiro Kawamata, \emph{Unobstructed deformations. {A} remark on a paper of {Z}.
  {R}an: ``{D}eformations of manifolds with torsion or negative canonical
  bundle'' [{J}. {A}lgebraic {G}eom.\ {\bf 1} (1992), no.\ 2, 279--291]}, J.
  Algebraic Geom. \textbf{1} (1992), no.~2, 183--190.

\bibitem[Kod86]{kodaira-deformation}
Kunihiko Kodaira, \emph{Complex manifolds and deformation of complex
  structures}, Grundlehren der Mathematischen Wissenschaften [Fundamental
  Principles of Mathematical Sciences], vol. 283, Springer-Verlag, New York,
  1986, Translated from the Japanese by Kazuo Akao, With an appendix by Daisuke
  Fujiwara.

\bibitem[Kol96]{kollar-rational-curves}
J{\'a}nos Koll{\'a}r, \emph{Rational curves on algebraic varieties}, Ergebnisse
  der Mathematik und ihrer Grenzgebiete. 3. Folge., vol.~32, Springer-Verlag,
  Berlin, 1996.

\bibitem[LMB00]{laumon-moret-bailly}
G{\'e}rard Laumon and Laurent Moret-Bailly, \emph{Champs alg\'ebriques},
  Ergebnisse der Mathematik und ihrer Grenzgebiete. 3. Folge. A Series of
  Modern Surveys in Mathematics [Results in Mathematics and Related Areas. 3rd
  Series. A Series of Modern Surveys in Mathematics], vol.~39, Springer-Verlag,
  Berlin, 2000.

\bibitem[LO09]{lieblich-osserman}
Max Lieblich and Brian Osserman, \emph{Functorial reconstruction theorems for
  stacks}, J. Algebra \textbf{322} (2009), no.~10, 3499--3541.

\bibitem[Man99]{manetti}
Marco Manetti, \emph{Deformation theory via differential graded {L}ie
  algebras}, Algebraic {G}eometry {S}eminars, 1998--1999 ({P}isa), Scuola Norm.
  Sup., Pisa, 1999, posted as arXiv:math/0507284, pp.~21--48.

\bibitem[Man09]{manetti-differential-formal-def}
\bysame, \emph{Differential graded {L}ie algebras and formal deformation
  theory}, Algebraic geometry---{S}eattle 2005. {P}art 2, Proc. Sympos. Pure
  Math., vol.~80, Amer. Math. Soc., Providence, RI, 2009, pp.~785--810.

\bibitem[Mat86]{Mat}
Hideyuki Matsumura, \emph{Commutative ring theory}, Cambridge Studies in
  Advanced Mathematics, vol.~8, Cambridge University Press, Cambridge, 1986,
  Translated from the Japanese by M. Reid.

\bibitem[Mil80]{Mil}
James~S. Milne, \emph{\'{E}tale cohomology}, Princeton Mathematical Series,
  vol.~33, Princeton University Press, Princeton, N.J., 1980.

\bibitem[Oss05]{Oss}
Brian Osserman, \emph{Deformations and automorphisms: a framework for
  globalizing local tangent and obstruction spaces}, arXiv:0805.4473; accepted
  for publication in Ann. Sc. Norm. Super. Pisa Cl. Sci., 2005.

\bibitem[Rim72]{Rim}
D.S. Rim, \emph{Formal deformation theory}, Expos\'e VI in {\em Groupes de
  monodromie en g\'eom\'etrie alg\'ebrique. {I}}. S{\'e}minaire de
  G{\'e}om{\'e}trie Alg{\'e}brique du Bois-Marie 1967--1969 (SGA 7 I),
  Dirig{\'e} par A. Grothendieck. Avec la collaboration de M. Raynaud et D. S.
  Rim. Lecture Notes in Mathematics 288, Springer Verlag, 1972.

\bibitem[Sch68]{Schl}
Michael Schlessinger, \emph{Functors of {A}rtin rings}, Trans. Amer. Math. Soc.
  \textbf{130} (1968), 208--222.

\bibitem[Ser06]{Ser}
Edoardo Sernesi, \emph{Deformations of algebraic schemes}, Grundlehren der
  Mathematischen Wissenschaften, vol. 334, Springer-Verlag, Berlin, 2006.

\bibitem[Vis97]{Vis}
Angelo Vistoli, \emph{The deformation theory of local complete intersections},
  arXiv:alg-geom/9703008v2, 1997.

\end{thebibliography}

\end{document}